\setlist[enumerate]{label=\alph*)} 
\let\OLDthebibliography\thebibliography
\renewcommand\thebibliography[1]{
	\OLDthebibliography{#1}
	\setlength{\parskip}{3pt}
	\setlength{\itemsep}{3pt plus 0.3ex}
}
\definecolor{Red}{RGB}{228,26,28}
\definecolor{Green}{RGB}{30,188,77}
\definecolor{Blue}{RGB}{49, 147, 226}
\definecolor{Orange}{RGB}{255,127,0}
\definecolor{lightOrange}{RGB}{255, 183, 112}
\definecolor{Purple}{RGB}{152,78,163}
\definecolor{Lightgray}{RGB}{200,200,200}
\definecolor{Gray}{RGB}{175,175,175}
\definecolor{Superlightgray}{RGB}{225,225,225}
\theoremstyle{plain}
\newtheorem{thm}{Theorem}[chapter]
\newtheorem*{thm*}{Theorem}
\newtheorem{prop}[thm]{Proposition}
\newtheorem{lem}[thm]{Lemma}
\newtheorem{cor}[thm]{Corollary}
\newtheorem{oq}[thm]{Open Question}
\newtheorem{conj}[thm]{Conjecture}
\newtheorem{intro}{Theorem}
\newtheorem{shifting}[thm]{Shifting Property}
\newtheorem{subword}[thm]{Subword Property}
\newtheorem{prefix}[thm]{Prefix Property}
\newtheorem{carter}[thm]{Carter's Lemma}
\newtheorem{nohole}[thm]{Link Property}
\newtheorem{gromov}[thm]{Gromov's Link Criterion}
\newtheorem{bowditch}[thm]{Bowditch's Criterion}
\newtheorem{unique}[thm]{Unique Geodesics Criterion}
\newtheorem{curvcon}[thm]{Curvature Conjecture}
\theoremstyle{definition}
\newtheorem{defi}[thm]{Definition}
\newtheorem{exa}[thm]{Example}
\newtheorem{con}[thm]{Convention}
\theoremstyle{remark}
\newtheorem{rem}[thm]{Remark}
\newtheorem{nota}[thm]{Notation}
\newtheorem{obs}[thm]{Observation}
\numberwithin{equation}{thm}
\newcommand{\A}{\mathcal{A}}
\newcommand{\C}{\mathcal{C}}
\newcommand{\E}{\mathcal{E}}
\newcommand{\F}{\mathbb{F}}
\newcommand{\Z}{\mathbb{Z}}
\newcommand{\R}{\mathbb{R}}
\newcommand{\N}{\mathbb{N}}
\newcommand{\B}{\mathcal{B}}
\newcommand{\D}{\mathcal{D}}
\renewcommand{\S}{\mathcal{S}}
\newcommand{\DD}{\hat{\mathcal{D}}}
\newcommand{\De}{\D^\ast}
\newcommand{\gl}{\mathrm{GL}}
\newcommand{\di}{\mathrm{d}}
\newcommand{\din}{\di_{\nc}}
\newcommand{\dip}{\di_{\pn}}
\newcommand{\dig}{\di_{\si}}
\newcommand{\eps}{\varepsilon}
\newcommand{\inv}{^{-1}}
\newcommand{\com}{^{\perp}}
\newcommand{\st}{^\ast}
\newcommand{\tr}{^\intercal}
\newcommand{\ith}{i^{\text{th}}}
\newcommand{\pow}{\leq_S}
\newcommand{\mi}{\hat{0}}
\newcommand{\ma}{\hat{1}}
\newcommand{\dka}{(\!(} 
\newcommand{\dkz}{)\!)}
\newcommand{\catz} {\mathrm{CAT}(0)}
\newcommand{\cato} {\mathrm{CAT}(1)}
\newcommand{\cox}{c}
\renewcommand{\l}{l}
\renewcommand{\r}{r}
\newcommand{\auto}{\varphi}
\newcommand{\ncauto}{\varphi}
\newcommand{\vecauto}{\psi}
\newcommand{\lamauto}{\Psi}
\newcommand{\antiauto}{\psi}
\newcommand{\aab}{\antiauto_\bil}
\newcommand{\ncaa}{\phi}
\newcommand{\autn}{\auto_{n}}
\newcommand{\autl}{\auto_{\l}}
\newcommand{\autr}{\auto_{\r}}
\newcommand{\autd}{\ncaa_{\l}}
\newcommand{\emb}{f}
\newcommand{\bil}{b}
\newcommand{\bila}{\bil_{A}}
\newcommand{\bilb}{\bil_{B}}
\newcommand{\bild}{\bil_{D}}
\newcommand{\bo}{\bil_1}
\newcommand{\bt}{\bil_2}
\newcommand{\bou}{\bo^U}
\newcommand{\btu}{\bt^U}
\newcommand{\subo}{\mathrm{sub}}
\newcommand{\ncb}{\mathrm{NCB}}
\newcommand{\ncd}{\mathrm{NCD}}
\newcommand{\exo}{\zeta}
\newcommand{\si}{\Delta}
\newcommand{\ord}{\mathrm{ord}}
\newcommand{\ls}{\ell_S}
\newcommand{\pal}{\alpha^{(p)}}
\newcommand{\Vp}{V^{(p)}}
\newcommand{\pmov}{\mov^{(p)}}
\newcommand{\VZ}{V_{\Z}}
\newcommand{\prho}{\rho^{(p)}}
\newcommand{\pemb}{\emb^{(p)}}
\newcommand{\hp}{^{(p)}}
\newcommand{\vs}{\F_2^{n-1}}
\newcommand{\tphi}{\tilde{\Phi}}
\newcommand{\tal}{\tilde{\alpha}}
\newcommand{\tpal}{\tilde{\alpha}^{(p)}}
\newcommand{\tmov}{\widetilde{\mov}}
\newcommand{\tpmov}{\tmov^{(p)}}
\newcommand{\ab}{\mathrm{cov}_\gtrdot} 
\newcommand{\bel}{\mathrm{cov}_\lessdot} 
\newcommand{\invn}{\mathrm{inv}}
\newcommand{\typ}{t}
\newcommand{\Typ}{T}
\newcommand{\conv}{\mathrm{conv}}
\newcommand{\convn}{\conv_{\nc}}
\newcommand{\convg}{\conv_{\si}}
\newcommand{\Cn}{\C(\ncpn)}
\newcommand{\An}{\A(\ncpn)}
\DeclareMathOperator{\ecc}{ecc} 
\DeclareMathOperator{\rad}{rad} 
\DeclareMathOperator{\diam}{diam} 
\DeclareMathOperator{\sd}{sd} 
\DeclareMathOperator{\id}{id} 
\DeclareMathOperator{\im}{im} 
\DeclareMathOperator{\rk}{rk} 
\DeclareMathOperator{\fix}{fix} 
\DeclareMathOperator{\mov}{mov} 
\DeclareMathOperator{\lk}{lk} 
\DeclareMathOperator{\sta}{st} 
\DeclareMathOperator{\osta}{st^\circ} 
\DeclareMathOperator{\aut}{Aut}
\newcommand{\nc}{\mathrm{NC}}
\newcommand{\ocncw}{|\mathrm{NC}(W)|}
\newcommand{\ocnc}{|\mathrm{NC}|}
\newcommand{\ocncpn}{|\mathrm{NCP}_n|}
\newcommand{\on}{\ocncpn}
\newcommand{\pn}{\mathrm{P}_n}
\newcommand{\p}{\mathrm{P}}
\newcommand{\op}{|\pn|}
\newcommand{\ncp}{\mathrm{NCP}}
\newcommand{\ncpn}{\mathrm{NCP}_n}
\newcommand{\str}{:\,}
\newcommand{\lam}{\Lambda}
\tikzset{kpunkt/.style={circle, fill, inner sep=0, minimum size=3pt}}
\tikzset{skpunkt/.style={circle, fill, inner sep=0, minimum size=2pt}}
\tikzset{lkpunkt/.style={circle, fill = white, draw= black, inner sep=0, minimum size=3pt}}
\tikzset{mpunkt/.style={circle, fill, inner sep=0, minimum size=5pt}}
\tikzset{gpunkt/.style={circle, fill, inner sep=0, minimum size=7pt}}
\tikzset{frage/.style = {rectangle, rounded corners, draw=black, fill=white,  text centered, align = center}}
\tikzset{info/.style = {rectangle, rounded corners, fill=black!10,   text centered, align = center}}
\tikzset{verm/.style = {rectangle, rounded corners, fill=Orange!30,   text centered, align = center}}
\tikzset{anwei/.style = {rectangle, rounded corners, fill=Blue!30,  text centered, align = center}}
\tikzset{janein/.style = {ellipse, fill=white,draw=black,  text centered, align = center}}
\tikzset{blub/.style = {circle, minimum size = 26pt, fill=white,draw=black,  text centered, align = center}}
\tikzset{fertig/.style = {ellipse, fill=Green!70, text centered, align = center}}
\tikzset{elli/.style = {rounded rectangle, rounded rectangle arc length=180, fill=black!5, inner sep = 3mm,  text centered, align = center}}
\tikzset{krei/.style = {circle, fill=KITgreen!50, inner sep = 1.5mm,  text centered, align = center}}
\tikzset{ellig/.style = {rounded rectangle, rounded rectangle arc length=180, fill=KITgreen!50, inner sep = 3mm,  text centered, align = center}}
\tikzset{elligg/.style = {rounded rectangle, rounded rectangle arc length=180, fill=KITgreen!30, inner sep = 3mm,  text centered, align = center}}
\tikzset{verband/.style = {rectangle, rounded corners, draw=KITgreen!80, very thick, inner sep = 3.8mm, text centered, align = center}}
\tikzset{verbandB/.style = {rectangle, rounded corners, draw=KITgreen!80, very thick, inner sep = 4.3mm, text centered, align = center}}
\tikzset{verbandS/.style = {rectangle, rounded corners, draw=KITgreen!80, very thick, inner sep = 1.2mm, text centered, align = center}}
\tikzset{verbandL/.style = {rectangle, rounded corners, draw=KITgreen!80, very thick, inner sep = 0.83mm, text centered, align = center}}
\newcommand{\viereck}{
	\foreach \w in {1,2,3,4} 
	\node (p\w) at (-\w * 360/4 + 135  : 4mm) [kpunkt] {};
}
\newcommand{\mpviereck}{
	\node[kpunkt] (p0) at (0,0){};
	\foreach \w in {1,2,3,4} 
	\node (p\w) at (-\w * 360/4 + 135  : 4mm) [kpunkt] {};
}
\newcommand{\sechseck}{
	\foreach \w in {1,...,6} 
	\node (p\w) at (-\w * 360/6 +60  : 4mm) [kpunkt] {};
}
\newcommand{\sechseckmp}{
	\sechseck
	\node (p0) at (0,0) [kpunkt] {};
}
\newcommand{\gsechsecklab}{
	\foreach \w in {1,...,6} 
	\node (p\w) at (-\w * 360/6 +60  : 6mm) [kpunkt] {};
	\foreach \w in {1,...,6} 
	\node (q\w) at (-\w * 360/6 +60  : 9mm)  {$\w$};
}
\newcommand{\gsechseck}{
	\foreach \w in {1,...,6} 
	\node (p\w) at (-\w * 360/6 +60  : 6mm) [kpunkt] {};
}
\newcommand{\fnfeck}{
	\foreach \w in {1,...,5} 
	\node (p\w) at (-\w * 360/5 +90  : 9mm) [kpunkt] {};
}
\newcommand{\skfnfeck}{
	\foreach \w in {1,...,5} 
	\node (p\w) at (-\w * 360/5 +90  : 4mm) [kpunkt] {};
}
\newcommand{\kfnfeck}{
	\foreach \w in {1,...,5} 
	\node (p\w) at (-\w * 360/5 +90  : 6mm) [kpunkt] {};
}
\newcommand{\achtecklab}{
	\foreach \w in {1,...,8} 
	\node (p\w) at (-\w  * 360/8 +67.5   : 8mm) [kpunkt] {};
	\foreach \w in {1,...,8} 
	\node (q\w) at (-\w  * 360/8 +67.5  : 11mm) {$\w$};
}
\newcommand{\achteckmp}{
	\foreach \w in {1,...,8} 
	\node (p\w) at (-\w  * 360/8 +67.5   : 6mm) [kpunkt] {};
	\node[kpunkt](p0) at (0,0){};
}
\newcommand{\sgsechseck}{
	\foreach \w in {1,...,6}
	\node (p\w) at (-\w * 360/6 +60  : 8mm) [kpunkt] {};
}
\newcommand{\sgsechseckmp}{
	\sgsechseck
	\node (p0) at (0,0) [kpunkt] {};
}
\newcommand{\gachteck}{
	\foreach \w in {1,...,8}
	\node (p\w) at (-\w  * 360/8 +67.5   : 8mm) [kpunkt] {};
}
\newcommand{\grauKreis}{
	\def\r{0.8} 
	
	\draw[Lightgray] (0,0) circle (\r);
	\node[mpunkt] (1) at (0:\r) {};
	\node[mpunkt] (-1) at (180:\r) {};
	\node[mpunkt] (-k) at (60:\r) {};
	\node[mpunkt] (k) at (60:-\r) {};
	
	
	\draw (-1) -- (-k) (k) -- (1);
	
}
\newcommand{\ez}{\viereck\draw (p1) -- (p2);}
\newcommand{\zd}{\viereck\draw (p3) -- (p2);}
\newcommand{\dv}{\viereck\draw (p3) -- (p4);}
\newcommand{\ev}{\viereck\draw (p1) -- (p4);}
\newcommand{\ed}{\viereck\draw (p1) -- (p3);}
\newcommand{\zv}{\viereck\draw (p2) -- (p4);}
\newcommand{\ezd}{\viereck\draw (p1) -- (p2) -- (p3) -- (p1);}
\newcommand{\zdv}{\viereck\draw (p4) -- (p2) -- (p3) -- (p4);}
\newcommand{\edv}{\viereck\draw (p1) -- (p3) -- (p4) -- (p1);}
\newcommand{\ezv}{\viereck\draw (p1) -- (p2) -- (p4) -- (p1);}
\newcommand{\ezudv}{\viereck\draw (p1) -- (p2);\draw(p4) -- (p3);}
\newcommand{\evuzd}{\viereck\draw (p1) -- (p4);\draw(p2) -- (p3);}
\newcommand{\pez}{\begin{tikzpicture}\ez\end{tikzpicture}}
\newcommand{\pzd}{\begin{tikzpicture}\zd\end{tikzpicture}}
\newcommand{\pdv}{\begin{tikzpicture}\dv\end{tikzpicture}}
\newcommand{\pev}{\begin{tikzpicture}\ev\end{tikzpicture}}
\newcommand{\ped}{\begin{tikzpicture}\ed\end{tikzpicture}}
\newcommand{\pzv}{\begin{tikzpicture}\zv\end{tikzpicture}}
\newcommand{\pezd}{\begin{tikzpicture}\ezd\end{tikzpicture}}
\newcommand{\pzdv}{\begin{tikzpicture}\zdv\end{tikzpicture}}
\newcommand{\pedv}{\begin{tikzpicture}\edv\end{tikzpicture}}
\newcommand{\pezv}{\begin{tikzpicture}\ezv\end{tikzpicture}}
\newcommand{\pezudv}{\begin{tikzpicture}\ezudv\end{tikzpicture}}
\newcommand{\pevuzd}{\begin{tikzpicture}\evuzd\end{tikzpicture}}
\newcommand{\peduzv}{\begin{tikzpicture}\foreach \w in {1,2,3,4} 
	\node (p\w) at (-\w * 360/4 + 135  : 4mm) [kpunkt, Lightgray] {};\draw[, Lightgray] (p1) -- (p3)(p2) -- (p4);\end{tikzpicture}}
\newcommand{\pfull}{\begin{tikzpicture}\viereck\draw (p1) -- (p2)(p2) -- (p3)(p3) -- (p4)(p4) -- (p1);\end{tikzpicture}}
\newcommand{\Bez}{\sechseck\draw (p1) -- (p2);\draw (p5) -- (p4);}
\newcommand{\Bzd}{\begin{scope}[rotate = -60] \Bez\end{scope}}
\newcommand{\Bemd}{\begin{scope}[rotate = -120] \Bez\end{scope}}
\newcommand{\Be}{\sechseck\draw (p1) -- (p4);}
\newcommand{\Bz}{\begin{scope}[rotate = -60] \Be\end{scope}}
\newcommand{\Bd}{\begin{scope}[rotate = -120] \Be\end{scope}}
\newcommand{\Bed}{\sechseck\draw (p1) -- (p3);\draw (p4) -- (p6);}
\newcommand{\Bemz}{\begin{scope}[rotate = -60] \Bed\end{scope}}
\newcommand{\Bzmd}{\begin{scope}[rotate = -120] \Bed\end{scope}}
\newcommand{\Bezd}{\sechseck\draw (p1)--(p2)--(p3)--(p1);\draw (p4)--(p5)--(p6)--(p4);}
\newcommand{\Bemzmd}{\begin{scope}[rotate = -60] \Bezd\end{scope}}
\newcommand{\Bezmd}{\begin{scope}[rotate = -120] \Bezd\end{scope}}
\newcommand{\BZez}{\sechseck\draw (p1)--(p2)--(p4)--(p5)--(p1);}
\newcommand{\BZzd}{\begin{scope}[rotate = -60] \BZez\end{scope}}
\newcommand{\BZed}{\begin{scope}[rotate = -120] \BZez\end{scope}}
\newcommand{\Beuzd}{\sechseck\draw (p1)--(p4);\draw (p2)--(p3); \draw (p6)--(p5);}
\newcommand{\Bemduz}{\begin{scope}[rotate = -60] \Beuzd\end{scope}}
\newcommand{\Bezud}{\begin{scope}[rotate = -120] \Beuzd\end{scope}}
\newcommand{\pBfull}{\begin{tikzpicture}\sechseck\draw (p1)--(p2)(p3)--(p2)(p3)--(p4)(p4)--(p5)(p5)--(p6)(p1)--(p6);\end{tikzpicture}}
\newcommand{\pBez}{\begin{tikzpicture}\Bez\end{tikzpicture}}
\newcommand{\pBzd}{\begin{tikzpicture}\Bzd\end{tikzpicture}}
\newcommand{\pBemd}{\begin{tikzpicture}\Bemd\end{tikzpicture}}
\newcommand{\pBe}{\begin{tikzpicture}\Be\end{tikzpicture}}
\newcommand{\pBz}{\begin{tikzpicture}\Bz\end{tikzpicture}}
\newcommand{\pBd}{\begin{tikzpicture}\Bd\end{tikzpicture}}
\newcommand{\pBed}{\begin{tikzpicture}\Bed\end{tikzpicture}}
\newcommand{\pBemz}{\begin{tikzpicture}\Bemz\end{tikzpicture}}
\newcommand{\pBzmd}{\begin{tikzpicture}\Bzmd\end{tikzpicture}}
\newcommand{\pBezd}{\begin{tikzpicture}\Bezd\end{tikzpicture}}
\newcommand{\pBemzmd}{\begin{tikzpicture}\Bemzmd\end{tikzpicture}}
\newcommand{\pBezmd}{\begin{tikzpicture}\Bezmd\end{tikzpicture}}
\newcommand{\pBZez}{\begin{tikzpicture}\BZez\end{tikzpicture}}
\newcommand{\pBZzd}{\begin{tikzpicture}\BZzd\end{tikzpicture}}
\newcommand{\pBZed}{\begin{tikzpicture}\BZed\end{tikzpicture}}
\newcommand{\pBeuzd}{\begin{tikzpicture}\Beuzd\end{tikzpicture}}
\newcommand{\pBemduz}{\begin{tikzpicture}\Bemduz\end{tikzpicture}}
\newcommand{\pBezud}{\begin{tikzpicture}\Bezud\end{tikzpicture}}
\newcommand{\Dev}{\sechseckmp \draw (p0)--(p1);}
\newcommand{\Dzv}{\begin{scope}[rotate = -60] \Dev\end{scope}}
\newcommand{\Ddv}{\begin{scope}[rotate = -60*2] \Dev\end{scope}}
\newcommand{\Dmev}{\begin{scope}[rotate = -60*3] \Dev\end{scope}}
\newcommand{\Dmzv}{\begin{scope}[rotate = -60*4] \Dev\end{scope}}
\newcommand{\Dmdv}{\begin{scope}[rotate = -60*5] \Dev\end{scope}}
\newcommand{\Dez}{\sechseckmp\draw (p1) -- (p2);\draw (p5) -- (p4);}
\newcommand{\Dzd}{\begin{scope}[rotate = -60] \Dez\end{scope}}
\newcommand{\Demd}{\begin{scope}[rotate = -60*2] \Dez\end{scope}}
\newcommand{\Ded}{\sechseckmp \draw (p1)--(p3);\draw (p4)--(p6);}
\newcommand{\Demz}{\begin{scope}[rotate = -60] \Ded\end{scope}}
\newcommand{\Dzmd}{\begin{scope}[rotate = -60*2] \Ded\end{scope}}
\newcommand{\Dezv}{\sechseckmp \draw (p0)--(p1)--(p2)--(p0);}
\newcommand{\Dzdv}{\begin{scope}[rotate = -60] \Dezv\end{scope}}
\newcommand{\Ddmev}{\begin{scope}[rotate = -60*2] \Dezv\end{scope}}
\newcommand{\Dmemzv}{\begin{scope}[rotate = -60*3] \Dezv\end{scope}}
\newcommand{\Dmzmdv}{\begin{scope}[rotate = -60*4] \Dezv\end{scope}}
\newcommand{\Dmdev}{\begin{scope}[rotate = -60*5] \Dezv\end{scope}}
\newcommand{\Devuzd}{\sechseckmp \draw (p0)--(p1);\draw (p2)--(p3);\draw (p5)--(p6);}
\newcommand{\Dzvuemd}{\begin{scope}[rotate = -60] \Devuzd\end{scope}}
\newcommand{\Ddvuez}{\begin{scope}[rotate = -60*2] \Devuzd\end{scope}}
\newcommand{\Dmevuzd}{\begin{scope}[rotate = -60*3] \Devuzd\end{scope}}
\newcommand{\Dmzvuemd}{\begin{scope}[rotate = -60*4] \Devuzd\end{scope}}
\newcommand{\Dmdvuez}{\begin{scope}[rotate = -60*5] \Devuzd\end{scope}}
\newcommand{\Deme}{\sechseckmp \draw (p1)--(p0)--(p4);}
\newcommand{\Dzmz}{\begin{scope}[rotate = -60*1] \Deme\end{scope}}
\newcommand{\Ddmd}{\begin{scope}[rotate = -60*2] \Deme\end{scope}}
\newcommand{\Dezmd}{\sechseckmp \draw (p1)--(p2)--(p6)--(p1);\draw (p3)--(p4)--(p5)--(p3);}
\newcommand{\Dezd}{\begin{scope}[rotate = -60] \Dezmd\end{scope}}
\newcommand{\Demzmd}{\begin{scope}[rotate = -60*2] \Dezmd\end{scope}}
\newcommand{\Dmdzv}{\sechseckmp \draw (p2)--(p0)--(p6)--(p2);}
\newcommand{\Dedv}{\begin{scope}[rotate = -60] \Dmdzv\end{scope}}
\newcommand{\Dzmev}{\begin{scope}[rotate = -60*2] \Dmdzv\end{scope}}
\newcommand{\Ddmzv}{\begin{scope}[rotate = -60*3] \Dmdzv\end{scope}}
\newcommand{\Dmemdv}{\begin{scope}[rotate = -60*4] \Dmdzv\end{scope}}
\newcommand{\Dmzev}{\begin{scope}[rotate = -60*5] \Dmdzv\end{scope}}
\newcommand{\Dezdv}{\sechseckmp \draw (p1)--(p2)--(p3)--(p0)--(p1);}
\newcommand{\Dzdmev}{\begin{scope}[rotate = -60] \Dezdv\end{scope}}
\newcommand{\Ddmemzv}{\begin{scope}[rotate = -60*2] \Dezdv\end{scope}}
\newcommand{\Dmemzmdv}{\begin{scope}[rotate = -60*3] \Dezdv\end{scope}}
\newcommand{\Dmzmdev}{\begin{scope}[rotate = -60*4] \Dezdv\end{scope}}
\newcommand{\Dmdezv}{\begin{scope}[rotate = -60*5] \Dezdv\end{scope}}
\newcommand{\DZez}{\sechseckmp \draw (p1)--(p2)--(p4)--(p5)--(p1);}
\newcommand{\DZzd}{\begin{scope}[rotate = -60] \DZez\end{scope}}
\newcommand{\DZed}{\begin{scope}[rotate = -60*2] \DZez\end{scope}}
\newcommand{\Demeuzd}{\sechseckmp \draw (p1)--(p0)--(p4);\draw (p2)--(p3);\draw (p5)--(p6);}
\newcommand{\Dzmzuemd}{\begin{scope}[rotate = -60] \Demeuzd\end{scope}}
\newcommand{\Ddmduez}{\begin{scope}[rotate = -60*2] \Demeuzd\end{scope}}
\newcommand{\pDev}{\begin{tikzpicture}\Dev\end{tikzpicture}}
\newcommand{\pDzv}{\begin{tikzpicture}\Dzv\end{tikzpicture}}
\newcommand{\pDdv}{\begin{tikzpicture}\Ddv\end{tikzpicture}}
\newcommand{\pDmev}{\begin{tikzpicture}\Dmev\end{tikzpicture}}
\newcommand{\pDmzv}{\begin{tikzpicture}\Dmzv\end{tikzpicture}}
\newcommand{\pDmdv}{\begin{tikzpicture}\Dmdv\end{tikzpicture}}
\newcommand{\pDez}{\begin{tikzpicture}\Dez\end{tikzpicture}}
\newcommand{\pDzd}{\begin{tikzpicture}\Dzd\end{tikzpicture}}
\newcommand{\pDemd}{\begin{tikzpicture}\Demd\end{tikzpicture}}
\newcommand{\pDed}{\begin{tikzpicture}\Ded\end{tikzpicture}}
\newcommand{\pDemz}{\begin{tikzpicture}\Demz\end{tikzpicture}}
\newcommand{\pDzmd}{\begin{tikzpicture}\Dzmd\end{tikzpicture}}
\newcommand{\pDezv}{\begin{tikzpicture}\Dezv\end{tikzpicture}}
\newcommand{\pDzdv}{\begin{tikzpicture}\Dzdv\end{tikzpicture}}
\newcommand{\pDdmev}{\begin{tikzpicture}\Ddmev\end{tikzpicture}}
\newcommand{\pDmemzv}{\begin{tikzpicture}\Dmemzv\end{tikzpicture}}
\newcommand{\pDmzmdv}{\begin{tikzpicture}\Dmzmdv\end{tikzpicture}}
\newcommand{\pDmdev}{\begin{tikzpicture}\Dmdev\end{tikzpicture}}
\newcommand{\pDevuzd}{\begin{tikzpicture}\Devuzd\end{tikzpicture}}
\newcommand{\pDzvuemd}{\begin{tikzpicture}\Dzvuemd\end{tikzpicture}}
\newcommand{\pDdvuez}{\begin{tikzpicture}\Ddvuez\end{tikzpicture}}
\newcommand{\pDmevuzd}{\begin{tikzpicture}\Dmevuzd\end{tikzpicture}}
\newcommand{\pDmzvuemd}{\begin{tikzpicture}\Dmzvuemd\end{tikzpicture}}
\newcommand{\pDmdvuez}{\begin{tikzpicture}\Dmdvuez\end{tikzpicture}}
\newcommand{\pDeme}{\begin{tikzpicture}\Deme\end{tikzpicture}}
\newcommand{\pDzmz}{\begin{tikzpicture}\Dzmz\end{tikzpicture}}
\newcommand{\pDdmd}{\begin{tikzpicture}\Ddmd\end{tikzpicture}}
\newcommand{\pDezmd}{\begin{tikzpicture}\Dezmd\end{tikzpicture}}
\newcommand{\pDezd}{\begin{tikzpicture}\Dezd\end{tikzpicture}}
\newcommand{\pDemzmd}{\begin{tikzpicture}\Demzmd\end{tikzpicture}}
\newcommand{\pDmdzv}{\begin{tikzpicture}\Dmdzv\end{tikzpicture}}
\newcommand{\pDedv}{\begin{tikzpicture}\Dedv\end{tikzpicture}}
\newcommand{\pDzmev}{\begin{tikzpicture}\Dzmev\end{tikzpicture}}
\newcommand{\pDdmzv}{\begin{tikzpicture}\Ddmzv\end{tikzpicture}}
\newcommand{\pDmemdv}{\begin{tikzpicture}\Dmemdv\end{tikzpicture}}
\newcommand{\pDmzev}{\begin{tikzpicture}\Dmzev\end{tikzpicture}}
\newcommand{\pDezdv}{\begin{tikzpicture}\Dezdv\end{tikzpicture}}
\newcommand{\pDzdmev}{\begin{tikzpicture}\Dzdmev\end{tikzpicture}}
\newcommand{\pDdmemzv}{\begin{tikzpicture}\Ddmemzv\end{tikzpicture}}
\newcommand{\pDmemzmdv}{\begin{tikzpicture}\Dmemzmdv\end{tikzpicture}}
\newcommand{\pDmzmdev}{\begin{tikzpicture}\Dmzmdev\end{tikzpicture}}
\newcommand{\pDmdezv}{\begin{tikzpicture}\Dmdezv\end{tikzpicture}}
\newcommand{\pDZez}{\begin{tikzpicture}\DZez\end{tikzpicture}}
\newcommand{\pDZzd}{\begin{tikzpicture}\DZzd\end{tikzpicture}}
\newcommand{\pDZed}{\begin{tikzpicture}\DZed\end{tikzpicture}}
\newcommand{\pDemeuzd}{\begin{tikzpicture}\Demeuzd\end{tikzpicture}}
\newcommand{\pDzmzuemd}{\begin{tikzpicture}\Dzmzuemd\end{tikzpicture}}
\newcommand{\pDdmduez}{\begin{tikzpicture}\Ddmduez\end{tikzpicture}}
\newcommand{\Tetraeder}{
	\begin{scope}[every path/.style={thick}]
		\node[Blue] (e1) at (-4.95,2.16) [gpunkt] {};
		\node[Blue] (e2) at (0,0.45) [gpunkt] {};
		\node[Blue] (e3) at (-0.45,8) [gpunkt] {};
		\node[Blue] (e4) at (3.96,2.16) [gpunkt] {};
		
		\draw[black!20,dashed] (e1)--(e4);
		\foreach \i/\j in {1/3,1/2,2/3,3/4,2/4}
		\node[Green] (f\i\j) at ($(e\i)!0.5!(e\j)$) [gpunkt] {};
		\draw[Orange] (e1) -- (f12) -- (e2) -- (f23) -- (e3) -- (f34) --
		(e4) -- (f24) -- (e2) (e1) -- (f13) -- (e3);
		
		\path[name path=A] (e1)--(f23)--(e4);
		\path[name path=B] (e3)--(f12);
		\node (AB) [Orange, name intersections={of=A and B, by=AB}] at
		(AB) [gpunkt] {};
		
		\path[name path=C] (e3)--(f24);
		\node (AC) [Orange, name intersections={of= A and C, by=AC}] at
		(AC) [gpunkt] {};
		
		\draw[Blue] (f12) -- (AB) -- (f23) -- (AC) -- (f34) (f13) --
		(AB) (f24) -- (AC);
		\draw[Green] (e1) -- (AB) (AB) -- (e2) -- (AC) (AB) -- (e3) --
		(AC) -- (e4);
		
	\end{scope}
}
\newcommand{\CoxA}{
	\begin{tikzpicture}
	\foreach \w in {0,1,...,5}
	\node[kpunkt] (p\w) at (\w,0){};
	\draw (p0)--(p1)--(p2) (p3)--(p4)--(p5);
	\node at(2.5,0){$\ldots$};
	\end{tikzpicture}
}
\newcommand{\CoxB}{
	\begin{tikzpicture}
	\foreach \w in {0,1,...,5}
	\node[kpunkt] (p\w) at (\w,0){};
	\draw (p0)--(p1)--(p2) (p3)--(p4)--(p5);
	\node at(2.5,0){$\ldots$};
	\node[above] at(4.5,0){$4$};
	\end{tikzpicture}
}
\newcommand{\CoxD}{
	\begin{tikzpicture}
	\foreach \w in {0,1,...,5}
	\node[kpunkt] (p\w) at (\w,0){};
	\node[kpunkt] (p6) at (4,1){};
	\draw (p0)--(p1)--(p2) (p3)--(p4)--(p5) (p4)--(p6); 
	\node at(2.5,0){$\ldots$};
	\end{tikzpicture}
}
\newcommand{\CoxEsechs}{
	\begin{tikzpicture}
	\foreach \w in {0,1,...,4}
	\node[kpunkt] (p\w) at (\w,0){};
	\node[kpunkt] (p6) at (2,1){};
	\draw (p0)--(p1)--(p2)--(p3)--(p4) (p2)--(p6); 
	\end{tikzpicture}
}
\newcommand{\CoxEsieben}{
	\begin{tikzpicture}
	\foreach \w in {0,1,...,5}
	\node[kpunkt] (p\w) at (\w,0){};
	\node[kpunkt] (p7) at (2,1){};
	\draw (p0)--(p1)--(p2)--(p3)--(p4)--(p5) (p2)--(p7); 
	\end{tikzpicture}
}
\newcommand{\CoxEacht}{
	\begin{tikzpicture}
	\foreach \w in {0,1,...,6}
	\node[kpunkt] (p\w) at (\w,0){};
	\node[kpunkt] (p8) at (2,1){};
	\draw (p0)--(p1)--(p2)--(p3)--(p4)--(p5)--(p6) (p2)--(p8); 
	\end{tikzpicture}
}
\newcommand{\CoxF}{
	\begin{tikzpicture}
	\foreach \w in {0,1,...,3}
	\node[kpunkt] (p\w) at (\w,0){};
	\draw (p0)--(p1)--(p2)--(p3); 
	\node[above] at(1.5,0){$4$};
	\end{tikzpicture}
}
\newcommand{\CoxHdrei}{
	\begin{tikzpicture}
	\foreach \w in {0,1,2}
	\node[kpunkt] (p\w) at (\w,0){};
	\draw (p0)--(p1)--(p2); 
	\node[above] at(0.5,0){$5$};
	\end{tikzpicture}
}
\newcommand{\CoxHvier}{
	\begin{tikzpicture}
	\foreach \w in {0,1,...,3}
	\node[kpunkt] (p\w) at (\w,0){};
	\draw (p0)--(p1)--(p2)--(p3); 
	\node[above] at(0.5,0){$5$};
	\end{tikzpicture}
}
\newcommand{\CoxIm}{
	\begin{tikzpicture}
	\foreach \w in {0,1}
	\node[kpunkt] (p\w) at (\w,0){};
	\draw (p0)-- (p1) node[midway,above] {$m$}; 
	\end{tikzpicture}
}
\begin{document}

\includepdf{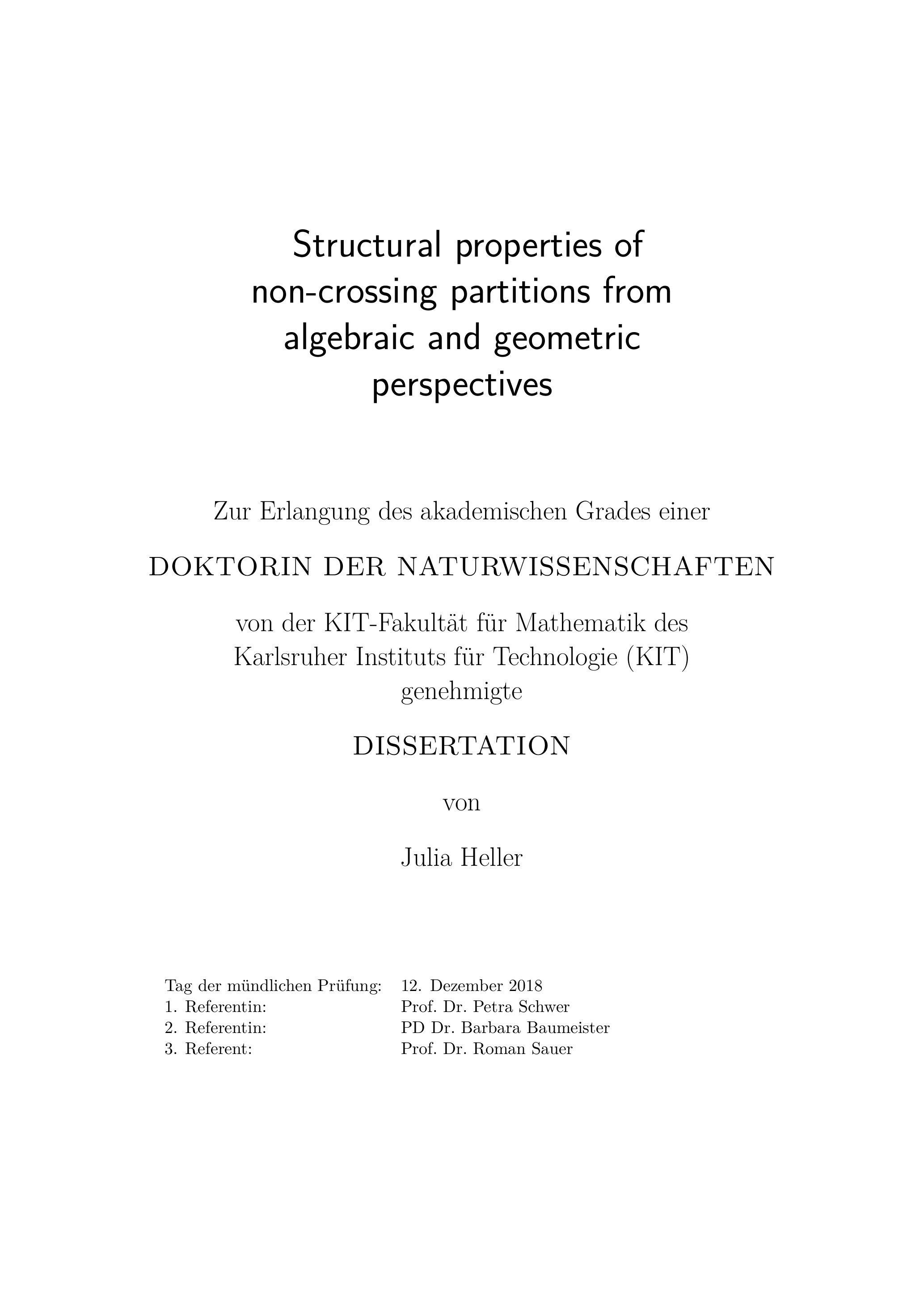}

\thispagestyle{empty}
\vspace*{5cm}
\vfill
\doclicenseThis

\chapter*{Introduction}
\addcontentsline{toc}{chapter}{Introduction}

Consider visualizations of partitions of, for instance, a six-element set. The elements of this set are represented by circularly ordered vertices in the plane with clockwise numbering. Each part of the partition is visualized by a polygon on the corresponding vertices. For instance, the two pictures
\begin{center}
	\begin{tikzpicture}
	
	\begin{scope}
	\gsechsecklab
	\draw (p1)--(p3)--(p4)--(p1)(p5)--(p6);
	\end{scope}
	
	\begin{scope}[xshift=4cm]
	\gsechsecklab
	\draw (p1)--(p3)--(p4)--(p1)(p5)--(p2);
	\end{scope}
	
	\end{tikzpicture}
\end{center}
correspond to the partitions $\Set{\Set{1,3,4}, \Set{2}, \Set{5,6}}$ and $\Set{\Set{1,3,4}, \Set{2,5}, \Set{6}}$ of the set $\Set{1, 2, 3, 4, 5, 6}$, respectively. It is immediately apparent that the left partition classifies as \emph{non-crossing}, whereas the right is \emph{crossing}.\medskip

In the 1970s Kreweras started the systematic study of non-crossing partitions of circularly arranged $n$-element sets, which are now called the \emph{classical non-crossing partitions} \cite{kre}. He showed that with the refinement order, they form a graded lattice and described first descriptive and enumerative properties. From that moment on, the non-crossing partitions drew attention not only in enumerative combinatorics and poset theory \cite{edel}, but also in a wide range of fields such as geometric combinatorics \cite{simion_ass}, free probability theory \cite{speicher}, and even molecular biology \cite{pen_water}. 
The article \cite{simion_ncp} surveys these works in the early years of non-crossing partitions. 
See also \cite{mcc_surp} for non-crossing partitions in surprising locations. Aside from that, non-crossing partitions appear in representation theory of quivers \cite{thom}, the theory of cluster algebras \cite{fom_zel}, and they are a central object of Coxeter-Catalan combinatorics \cite{armstr}.\medskip

A new direction of research started with the works of Biane, Brady, Bessis, Watt, and Reiner, namely the connections of non-crossing partitions to Coxeter groups \cite{biane}, \cite{bra_kpi}, \cite{bes}, \cite{bra_watt_par_ord} \cite{bra_watt_kpi}, \cite{rei}.

Motivated from probability theory, Biane showed that the lattice of non-crossing partitions as defined by Kreweras is isomorphic to a partially ordered subset of the symmetric group. This allowed him to compute the group of lattice automorphisms and lattice skew-automorphisms of the classical non-crossing partitions. 

The same partial order on the symmetric group was studied by Brady, whose motivation came from a geometric origin. 
This partial order, now known as absolute order, allowed him to find a new generating set for the braid groups. These generators are exactly the non-crossing partitions and they give rise to new classifying spaces for the braid groups.

The key in the studies of both Biane and Brady was to consider the symmetric group with the generating set of all transpositions. Bessis, and independently Brady and Watt, systematically studied finite Coxeter groups with a set of generators given by the set of all reflections. This initiated the so-called dual study of finite Coxeter groups and thus the study of non-crossing partitions from an algebraic perspective. 

Non-crossing partitions can be seen as a subset of finite Coxeter groups. This allows to transfer the types of Coxeter groups to the corresponding non-crossing partitions. For example, the symmetric group $S_n$ is the Coxeter group of type $A_{n-1}$, hence the classical non-crossing partitions of an $n$-element set correspond to the non-crossing partitions of type $A_{n-1}$. The subscript refers to the rank of the Coxeter group. Since the Coxeter groups of type $A$, $B$ and $D$ are called the Coxeter groups of classical types, the non-crossing partitions corresponding to these groups are called of \emph{classical type} as well.

Generalizations of non-crossing partitions to well-generated complex reflection groups exist as well \cite{bes_cor}, but in the present thesis we only consider non-crossing partitions inside finite Coxeter groups.

In the 1990s Reiner took yet a different approach and realized that the classical non-crossing partition lattice can be interpreted as a poset of intersection subspaces arising from a hyperplane arrangement of type $A$ \cite{rei}. Using this interpretation, he defined a pictorial analog for the non-crossing partitions for type $B$ and $D$. Later it turned out that Reiner's non-crossing partitions of type $B$ are pictorial representations of the group-theoretic ones \cite{bra_watt_kpi}. Pictorial representations fitting to the group-theoretic non-crossing partitions of type $D$ were later constructed by Athanasiadis and Reiner \cite{ath_rei}.

By the work of Brady and McCammond, the non-crossing partitions became popular in geometric group theory as well \cite{bra-mcc}. Based on the construction of the classifying space from Brady, they observed that the curvature of the order complex of the classical non-crossing partition lattice is closely related to the curvature of the universal cover of the classifying space for the braid group.

\medskip

The present thesis studies structural properties of non-crossing partitions from different perspectives and makes use of the resulting interactions. 
For the elements of the non-crossing partition lattices of classical types, that is for the types $A$, $B$ and $D$, three different interpretations exist. Either, we see them as elements of a Coxeter group, as a set-theoretic partition, or as an embedded graph, which is what the pictorial representations are.
Also the whole set of non-crossing partitions can be considered from different perspectives.
On the one hand, we regard the non-crossing partitions inside an arbitrary finite Coxeter group as a partially ordered set. On the other hand, we see them as a simplicial complex whose vertices are labeled with non-crossing partitions. This complex is called the \emph{complex of non-crossing partitions} subsequently.

One aspect is to compute the automorphism group of non-crossing partition lattices. A lattice automorphism preserves the partial order, where a lattice skew-automorphism may reverse the partial order.
Biane used the relation of group theory and set theory to 
show that the group of lattice automorphisms of the non-crossing partitions of type $A_{n-1}$ is isomorphic to the dihedral group of order $2n$. Moreover, he showed that the group of lattice skew-automorphisms is isomorphic to the dihedral group of order $4n$ \cite{biane}.
The interaction of the pictorial and group-theoretic non-crossing partitions of type $B$ allows us to prove an analogous statement in \cref{thm:typeB_full_aut}. 

\begin{intro}\label{i1}
	The group of lattice automorphisms of the non-crossing partitions of type $B_n$ is isomorphic to the dihedral group of order $2n$.
\end{intro}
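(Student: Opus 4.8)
The plan is to pass to Reiner's pictorial model \cite{rei}: by the identification of Brady and Watt \cite{bra_watt_kpi}, the lattice $\nc(B_n)$ of group-theoretic non-crossing partitions of type $B_n$ is isomorphic to the lattice of centrally symmetric non-crossing partitions of the $2n$ circularly arranged points $1,\dots,n,\bar 1,\dots,\bar n$ (with $i$ and $\bar i$ antipodal), ordered by refinement, subject to the constraint that at most one block --- the \emph{zero block} --- is invariant under the antipodal rotation. Throughout I use three standard reductions: a lattice automorphism $\varphi$ respects the rank function, hence fixes $\mi$ and $\ma$ and permutes the atoms; since $\nc(B_n)$ is join-generated by its atoms, $\varphi$ is determined by the permutation it induces on the atom set; and the atoms are of exactly two kinds, the $n$ \emph{diametral} atoms $D_1,\dots,D_n$ (zero block $\{i,\bar i\}$, all else singletons) and the $n(n-1)$ \emph{chordal} atoms (one symmetric pair of non-crossing chords, no zero block). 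I assume $n\ge 3$; the cases $n\le 2$ are exceptional (for instance $\nc(B_2)$ is the rank-two lattice with four atoms, whose automorphism group is $S_4$) and are discussed separately.

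For the lower bound I exhibit the dihedral group of order $2n$ concretely. The symmetry group of the regular $2n$-gon acts on the pictorial model by lattice automorphisms, because rotations and reflections preserve both the non-crossing relation and central symmetry. A priori this is a dihedral group of order $4n$, but the half-turn $r^n$ is the antipodal map, which fixes every centrally symmetric partition and hence acts as the identity on $\nc(B_n)$; since every rotation other than the identity and the half-turn permutes the diametral atoms nontrivially, $\langle r^n\rangle$ is the entire kernel, so the rotations act as a cyclic group of order $n$, and each axial reflection is identified only with its composite with $r^n$. The resulting group of automorphisms is a faithful copy of the dihedral group of order $2n$ inside $\aut(\nc(B_n))$.

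The substance of the proof is the reverse inclusion. First one checks that $\varphi$ preserves the decomposition of the atom set into diametral and chordal atoms; this is routine once a discriminating lattice invariant is isolated, for instance the isomorphism type of the interval $[a,\ma]$, or a refined count of the atoms $b$ with $\rk(a\vee b)=2$ together with the relative positions encoded by those joins. Granting this, the key step is to reconstruct the cyclic geometry of the $2n$-gon from the order relation: the diametral atoms inherit a cyclic order from the positions of the corresponding diameters, and this cyclic order --- together with which chordal atoms "separate" a given pair of consecutive diametral atoms --- can be read off from joins and ranks in $\nc(B_n)$. Consequently $\varphi$ restricted to $\{D_1,\dots,D_n\}$ must be a symmetry of this cyclic datum, i.e.\ it agrees there with some element $g$ of the dihedral group constructed above. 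Replacing $\varphi$ by $g^{-1}\varphi$, one reduces to the case that $\varphi$ fixes every diametral atom, and a rigidity argument --- a chordal atom is pinned down by the set of diametral atoms compatible with it together with the ranks of the corresponding joins --- forces $\varphi$ to fix every chordal atom as well, hence $\varphi=\id$. Combined with the lower bound this yields $\aut(\nc(B_n))\cong$ dihedral group of order $2n$.

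I expect the main obstacle to be this reconstruction-and-rigidity step: encoding the circular combinatorics of the $2n$ vertices purely in terms of the partial order, and verifying that no exotic permutation of the atoms --- one not induced by a symmetry of the picture --- is compatible with all joins and meets. This is where the genuine case analysis lives, where one must keep track of the fact that the ambient $2n$-gon symmetry collapses to an $n$-fold one on $\nc(B_n)$, and where the smallest values of $n$ (certainly $n\le 2$, and possibly $n=3$) have to be checked by hand, because the invariants used to separate the atom types or to recover the cyclic order can degenerate there.
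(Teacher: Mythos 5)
Your overall strategy is the same as the paper's: exhibit the dihedral group of order $2n$ as picture symmetries of the $2n$-gon (with the half-turn acting trivially because partitions are centrally symmetric), then show every lattice automorphism preserves enough combinatorial invariants of the atoms to be forced to agree with one of these symmetries. The paper implements the second half by assigning a \emph{type} to each atom and to each rank-$2$ element, proving type-preservation by counting covers, anchoring on the $n$ short chordal atoms $\dka i\;i{+}1\dkz$, and propagating around the circle; you anchor instead on the $n$ diametral atoms $[i]$ and their crossing data. Your flagging of the small-rank degeneracy is a genuine improvement in care: for $n=2$ the lattice $\nc(B_2)$ really is the rank-two lattice with four atoms, its automorphism group is $S_4$, and the theorem as stated (and the paper's cover-counting, where the discriminating count $2n-k-2$ vanishes) silently requires $n\ge 3$.

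There is, however, one concretely identifiable insufficiency in your rigidity step as written. The invariant you propose for pinning down a chordal atom $a$ --- the set of diametral atoms compatible with $a$ together with the \emph{ranks} of the joins $a\vee[k]$ --- does not separate the atoms it most needs to separate: all $n$ short chordal atoms $\dka 1\,2\dkz,\dots,\dka n{-}1\;n\dkz,\dka 1\,{-}n\dkz$ cross no diameter and satisfy $\rk(a\vee[k])=2$ for every $k$, so they carry identical rank data against the diametral atoms. Likewise the cyclic order on the diameters is invisible from ranks alone, since $\rk([i]\vee[j])=2$ for \emph{every} pair $i\ne j$. Both problems are repaired by using equality of joins rather than just their ranks: $a$ is incident to the diameters $i$ and $j$ exactly when $a\vee[i]=a\vee[j]$ has rank $2$, and among the atoms crossing no diameter the one incident to both $[i]$ and $[i{+}1]$ is unique, namely $\dka i\;i{+}1\dkz$ (its partner $\dka i\,{-}(i{+}1)\dkz$ has a large crossing set and is already ruled out). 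This incidence bookkeeping is precisely where the paper's propagation argument does its work, so your plan closes once the invariant is strengthened from ranks of joins to the joins themselves; as stated, though, the rigidity claim does not yet follow.
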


The non-crossing partitions of type $D$ are more involved and the existing pictorial representations from \cite{ath_rei} do not fit our purpose of computing the automorphism group for type $D$. Therefore, we introduce a new pictorial representation, which is based on the aforementioned one from Athanasiadis and Reiner. Based on this, we show the following in \cref{thm:typeD_full_aut}.

\begin{intro}\label{i2}
	The group of lattice automorphisms of the non-crossing partitions of type $D_n$ is isomorphic to the dihedral group of order $4(n-1)$ for $n\neq 4$. If $n=4$, then the automorphism group contains a proper subgroup that is isomorphic to the dihedral group of order $12$.
\end{intro}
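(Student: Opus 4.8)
The argument has the same shape as the computations for types $A$ and $B$: first exhibit a large group of lattice automorphisms, then show that no others occur (when $n\neq 4$). Realize the lattice of type-$D_n$ non-crossing partitions as the interval $[\mi,c]$ in the absolute order on the Coxeter group $W$ of type $D_n$, so that its atoms are precisely the reflections of $W$ and its rank function is absolute length; recall also that every element of $\ncd(n)$ is the join of the atoms below it.

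For the lower bound I would argue inside the new pictorial representation of $\ncd(n)$ constructed above. Its underlying marked configuration carries a visible dihedral symmetry group of order $4(n-1)$: a cyclic group of order $2(n-1)$ --- the Coxeter number of $D_n$ --- generated by the rotational symmetries of the configuration, together with a reflection. Each such geometric symmetry sends valid type-$D$ diagrams to valid type-$D$ diagrams and respects refinement, hence induces a lattice automorphism; this yields a subgroup $G_0\le\aut(\ncd(n))$ isomorphic to the dihedral group of order $4(n-1)$. (Group-theoretically the rotation is conjugation by $c$, composed where necessary with the diagram automorphism of $W$ swapping the two fork generators, and the reflection is an inversion-type automorphism stabilizing $[\mi,c]$, just as in Biane's treatment of type $A$.)

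For the upper bound, let $\phi\in\aut(\ncd(n))$. It fixes $\mi$ and $\ma$, preserves rank, and hence permutes the atoms (the reflections) and the rank-two elements while preserving the incidence between them: for atoms $a,b$ and a rank-two element $x$, $a,b\le x$ if and only if $\phi(a),\phi(b)\le\phi(x)$, and since $\phi$ restricts to an isomorphism $[\mi,x]\cong[\mi,\phi(x)]$, the number of atoms below $x$ equals the number of atoms below $\phi(x)$. Using the pictorial model I would enumerate the rank-two elements of $\ncd(n)$ and sort them by this atom count --- the ``triangle'' diagrams, the ``two disjoint edges'' diagrams, and the type-$D$-specific rank-two configurations built from the central structure, the last of these carrying a distinguishing value. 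This equips the set of reflections with a combinatorial incidence structure --- in essence the relevant combinatorics of the $D_n$ root system --- that $\phi$ must preserve, and because $\ncd(n)$ is atomic, $\phi$ is determined by its action on that structure. The problem is thereby reduced to the rigidity statement that the automorphism group of this incidence structure equals $G_0$. I would prove it by singling out the distinguished ``central'' reflections coming from the fork of the Coxeter diagram, showing that $\phi$ must permute them compatibly with a rotation or a reflection of the configuration, and then checking that the action on all remaining reflections is forced. This rigidity analysis is where essentially all of the work lies and is the step I expect to be the main obstacle; it is also exactly the point at which type $D$ departs from types $A$ and $B$, the central structure having no analogue there.

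Finally, for $n=4$ the rigidity genuinely fails: the $D_4$ root system admits the exceptional group $S_3$ of diagram automorphisms (``triality''), this symmetry survives into the rank-two incidence structure, and it produces lattice automorphisms outside $G_0$. The lower-bound construction is unaffected, so $G_0$, the dihedral group of order $4(n-1)=12$, still embeds into $\aut(\ncd(4))$; exhibiting a single triality-induced automorphism not contained in $G_0$ --- for instance a non-trivial diagram automorphism of $W(D_4)$ post-composed with a suitable conjugation so as to fix $c$ --- shows that this embedding is proper, which is precisely what the statement claims in that case.
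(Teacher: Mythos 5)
Your lower bound is essentially the paper's: the dihedral group of order $4(n-1)$ of symmetries of the labelled $2(n-1)$-gon with midpoint, realized group-theoretically as the group the paper calls $\De$, generated by conjugation by $\cox$ composed with the fork-swapping diagram automorphism (conjugation by $[n]$) together with an inversion-type reflection. One caution here: you silently assume the polygon-symmetry action on $\ncd_n$ is \emph{faithful}. In type $B$ the analogous assumption fails (the $180$-degree rotation acts trivially because $B_n$-diagrams are centrally symmetric, so the $4n$ polygon symmetries give only $2n$ automorphisms), so for type $D$ you must actually check that the $180$-degree rotation moves some partition — it does, e.g.\ it sends the block pair of $\dka 1\,\;n\dkz$ to that of $\dka -1\,\;n\dkz$ — or argue via the orders of the group-theoretic generators as the paper does.

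The genuine gap is the upper bound for $n\neq 4$, which is the main content of the theorem. You reduce correctly to a rigidity statement about the atom/rank-two incidence structure (automorphisms are determined by their action on atoms since the lattice is atomic, and they preserve the incidence with rank-two elements), but you then defer exactly that rigidity statement, acknowledging it is ``where essentially all of the work lies.'' The paper spends several lemmas here: it attaches a type to each reflection and each rank-two element, computes the number of covers of each rank-one type sorted by the type of the cover, shows these counts force every automorphism to preserve types of rank-one and rank-two elements when $n\neq 4$ (the counting identity that would allow a type-$1$ rank-two element to map to a type-$2$ one has the unique solution $n=4$, which is precisely where the argument, and the theorem, break), and then runs a circular induction pinning down all reflections after composing with a polygon symmetry. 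None of this is present in your proposal, so as written it does not prove the $n\neq 4$ statement. For $n=4$ your triality suggestion is a genuinely different and more conceptual route than the paper's explicit exotic involution (the paper states it knows no group-theoretic description of its example): a diagram automorphism $\sigma$ of order three sends $\cox$ to another Coxeter element, hence can be corrected by an inner automorphism to fix $\cox$ and so induces a lattice automorphism of $\nc(D_4)$; but you still owe the argument that it lies outside $\De$. This does follow — every element of $\De$ acts on the set of reflections by conjugation by an element of $W(B_4)$, so if the triality-induced map agreed with one of them on atoms, a nontrivial outer automorphism of $W(D_4)$ would fix every reflection, which is impossible — but that step must be supplied, since a lattice automorphism is determined by its action on atoms and that is the only way to certify non-membership in $\De$.
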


Further, we give a group-theoretic description of the automorphism groups  and show that every automorphism can be realized as a symmetry of the pictorial representations, provided that $n\neq4$ for type $D$.

Let us now turn to the different interpretations of the set of non-crossing partitions. The study of non-crossing partitions as simplicial complexes is not very established yet, and various properties of the simplicial complex, such as shellability, are proven in a poset-theoretic setting \cite{abw}. Based on poset-theoretic results \cite{bra_watt_kpi,bra_watt_par_ord}, we show together with Schwer that the complexes of non-crossing partitions have a nice simplicial structure \cite{hs}, which is \cref{thm:simplicial_embedding} in the present thesis. For type $A$, this was already remarked in \cite{bra-mcc}. 

\begin{intro}\label{i3}
	For every finite Coxeter group, the non-crossing partition complex is isomorphic to a chamber subcomplex of a spherical building. Moreover, if the Coxeter group is crystallographic, then the building can be chosen to be finite.
\end{intro}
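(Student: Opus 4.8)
The plan is to realise $\ncw$ inside the lattice of linear subspaces of its reflection representation $V$ and to recognise the order complex of that lattice as a type-$A$ building. Write $n$ for the rank of $W$, equip $V$ with a $W$-invariant inner product, and fix a Coxeter element $\cox$, so that $\ncw=[e,\cox]$ in the absolute order and the non-crossing partition complex is the order complex $\ocncw$ of the proper part $\ncw\setminus\{e,\cox\}$ (so its maximal simplices have $n-1$ vertices). For $w\in W$ put $\mov(w)=\im(w-\id)\subseteq V$. Two facts feed the argument. By Carter's Lemma, the roots in a reduced reflection factorisation of $w$ are linearly independent and span $\mov(w)$, so $\dim\mov(w)=\ell_T(w)$, the poset $\ncw$ is graded of rank $n$, and $e$, $\cox$ are its only elements of reflection length $0$ and $n$. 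By the description of intervals of $\ncw$ in \cite{bra_watt_par_ord,bra_watt_kpi}, for $v\le\cox$ the subgroup $W_v=\langle t\in T:t\le v\rangle$ is parabolic with moved space $\mov(v)$ --- equivalently, $W_v$ is generated by all reflections whose root lies in $\mov(v)$ --- and $[e,v]=[e,\cox]\cap W_v$.

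The first and central step is to show that $w\mapsto\mov(w)$ is an order-embedding of $\ncw$ onto an \emph{induced} subposet of the subspace lattice $\mathbb{L}(V)$. Order-preservation is immediate from Carter's Lemma. Conversely, if $u,v\in\ncw$ and $\mov(u)\subseteq\mov(v)$, then each reflection $t\le u$ has its root in $\mov(u)\subseteq\mov(v)$, hence lies in $W_v$; so $u$, a product of such reflections, lies in $W_v$, and since $u\le\cox$ the identity $[e,v]=[e,\cox]\cap W_v$ gives $u\le v$. Used in both directions, this also yields injectivity. Thus $\mov$ identifies $\ncw$ with an induced subposet of $\mathbb{L}(V)$ --- and it is the order-\emph{reflection} here, not merely order-preservation, that will make the induced map of order complexes an embedding.

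Now pass to complexes. The order complex of the poset of proper nonzero subspaces of $V$ is, by definition, the spherical building $\mathcal{B}(V)$ of $\gl(V)$ (the flag complex of the real space $V$), of type $A_{n-1}$, and it has finitely many simplices exactly when the ground field is finite. For $u\in\ncw\setminus\{e,\cox\}$ one has $1\le\ell_T(u)\le n-1$, so $\mov(u)$ is a proper nonzero subspace; hence the first step restricts to an order-embedding of the proper part of $\ncw$ onto an induced subposet of the proper part of $\mathbb{L}(V)$, and taking order complexes makes it a simplicial embedding $\ocncw\hookrightarrow\mathcal{B}(V)$ onto a full subcomplex. Because $\ncw$ is graded of rank $n$, every maximal simplex of $\ocncw$ is a maximal chain $u_1\lessdot\dots\lessdot u_{n-1}$ with $\ell_T(u_i)=i$, whose image is a complete flag $\mov(u_1)\subsetneq\dots\subsetneq\mov(u_{n-1})$, that is, a chamber of $\mathcal{B}(V)$; moreover the image is gallery-connected since $\ncw$ is a lattice. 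Hence $\ocncw$ is realised as a chamber subcomplex of the spherical building $\mathcal{B}(V)$, proving the first assertion for every finite Coxeter group.

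For the refinement, suppose $W$ is crystallographic. Then $V$ carries a $W$-stable lattice and, by Carter's Lemma again, every $\mov(w)$ with $w\in\ncw$ is spanned by roots, hence defined over $\Z$. Only finitely many primes divide one of the finitely many minors that record the dimensions of these subspaces together with the containments and non-containments among them; for any other prime $p$, reduction modulo $p$ carries $\mov$ into the subspace lattice of $\F_p^n$ as the same induced subposet, and rerunning the argument over $\F_p$ realises $\ocncw$ as a chamber subcomplex of the finite building $\mathcal{B}(\F_p^n)$, the building of $\gl_n(\F_p)$. I expect the only genuinely non-formal ingredient to be the interval identity $[e,v]=[e,\cox]\cap W_v$ of \cite{bra_watt_par_ord,bra_watt_kpi} exploited in the first step; granting it, the remaining steps are standard facts about order complexes of induced subposets, about type-$A$ buildings as flag complexes, and about good reduction of rational subspaces modulo a prime.
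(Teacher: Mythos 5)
Your proposal follows essentially the same route as the paper: embed $\nc(W)$ into the subspace lattice $\lam(V)$ via $w\mapsto\mov(w)$, identify the order complex of $\lam(V)$ with the type-$A_{n-1}$ spherical building, and, in the crystallographic case, reduce the integrally defined moved spaces modulo a prime avoiding finitely many bad determinants. The only substantive difference is in packaging: where the paper cites Brady--Watt directly for injectivity of $w\mapsto\mov(w)$ and isolates the good primes as ``compatible primes'' (those for which every basis of positive roots stays a basis mod $p$), you re-derive order-reflection from the parabolic interval identity $[e,v]=[e,\cox]\cap W_v$ and select primes by preserving the containments \emph{and non-containments} among the moved spaces. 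Both reductions rest on the same Brady--Watt results, and your non-containment bookkeeping is arguably a cleaner way to secure injectivity mod $p$ than the paper's argument via injectivity of the root projection.

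One step is misjustified. You assert that the image is gallery-connected ``since $\nc(W)$ is a lattice.'' Being a graded lattice does not imply that the order complex of the proper part is gallery-connected: the rank-$3$ graded lattice with atoms $a_1,a_2$, coatoms $b_1,b_2$ and only the cover relations $a_i<b_i$ has proper part equal to two disjoint edges. Gallery-connectedness of $|\nc(W)|$ needs a genuine input, e.g.\ shellability of the non-crossing partition lattice (which the paper cites from Athanasiadis--Brady--Watt) or, equivalently, transitivity of the Hurwitz action on reduced reflection factorisations of $\cox$. With that reference supplied, your argument is complete.
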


This theorem has direct consequences on the structure of non-crossing partition complexes. For instance, they are unions of Coxeter complexes of type $A$. This gives the non-crossing partition complex a building-like structure, which we investigate further. 
As an application of the above theorem, we give a bound on the radius of the Hurwitz graph, partially answering a question of \cite{ar}. Together with Schwer we show \cref{thm:hurwitz}, which is the following.

\begin{intro}\label{i4}
	The radius of the Hurwitz graph of a finite Coxeter group of rank $n$ is bounded from below by $\binom{n}{2}$.
\end{intro}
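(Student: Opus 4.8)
The plan is to recast the statement inside the spherical building $\Delta$ of type $A_{n-1}$ furnished by \cref{i3}, using that the vertices of the Hurwitz graph are naturally the chambers of the non-crossing partition complex $\mathrm{NC}(W)$. Concretely, for a Coxeter element $c$, a reduced reflection factorization $c=r_1r_2\cdots r_n$ corresponds to the maximal chain $e=w_0\lessdot w_1\lessdot\cdots\lessdot w_n=c$ of $\mathrm{NC}(W)$ via $w_i=r_1\cdots r_i$, and these maximal chains are exactly the chambers of $\mathrm{NC}(W)$. I would then bound Hurwitz distances below by gallery distances in $\Delta$, and for each chamber produce a far-away chamber of $\mathrm{NC}(W)$ from the apartment structure.

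First I would check that a single Hurwitz move sends a chamber of $\mathrm{NC}(W)$ to an adjacent chamber. Writing $\sigma_i\cdot(r_1,\dots,r_n)=(r_1,\dots,r_{i-1},r_ir_{i+1}r_i,r_i,r_{i+2},\dots,r_n)$, a direct computation shows that the associated chain $(w_j')_j$ satisfies $w_j'=w_j$ for all $j\neq i$ and $w_i'=w_{i-1}(r_ir_{i+1}r_i)\neq w_i$ (otherwise $r_i=r_{i+1}$, contradicting reducedness), and symmetrically for $\sigma_i^{-1}$. Hence the two chambers share the panel obtained by deleting their rank-$i$ vertex; since $\mathrm{NC}(W)$ is a chamber subcomplex of $\Delta$ by \cref{i3}, they lie at gallery distance $1$ in $\Delta$. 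A path of length $k$ in the Hurwitz graph therefore projects to a gallery of length at most $k$, so
\[
d_{\mathrm{Hur}}(\mathbf r,\mathbf r')\ \ge\ d_\Delta\!\bigl(C_{\mathbf r},C_{\mathbf r'}\bigr)
\]
for all reduced reflection factorizations $\mathbf r,\mathbf r'$ of $c$, where $C_{\mathbf r}$ is the chamber attached to $\mathbf r$.

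Next I would invoke the structural consequence of \cref{i3} that $\mathrm{NC}(W)$ is a union of Coxeter complexes of type $A_{n-1}$, that is, of apartments of $\Delta$: every chamber $C$ of $\mathrm{NC}(W)$ lies in an apartment $\mathcal A$ with $\mathcal A\subseteq\mathrm{NC}(W)$. Let $C^{\mathrm{op}}$ be the chamber of $\mathcal A$ opposite to $C$. Then $C^{\mathrm{op}}$ is again a chamber of $\mathrm{NC}(W)$, and because apartments are isometrically embedded in a building, $d_\Delta(C,C^{\mathrm{op}})=d_{\mathcal A}(C,C^{\mathrm{op}})$ is the diameter of the apartment, namely the length $\binom{n}{2}$ of the longest element of $S_n$. (Alternatively, one can take $C^{\mathrm{op}}$ to be the chamber obtained from $C$ by the Kreweras complement, which reverses a maximal chain, and verify that $C$ and $C^{\mathrm{op}}$ occupy opposite positions of a common apartment.) Combining this with the displayed inequality gives $d_{\mathrm{Hur}}(C,C^{\mathrm{op}})\ge\binom{n}{2}$, so every vertex of the Hurwitz graph has eccentricity at least $\binom{n}{2}$; taking the minimum over all vertices shows that the radius is at least $\binom{n}{2}$. (Transitivity of the Hurwitz action on the reduced reflection factorizations of $c$ keeps the Hurwitz graph connected, so its radius is finite; the lower bound itself does not need this.)

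The main obstacle is the step just used, namely that every chamber of $\mathrm{NC}(W)$ is contained in an apartment of $\Delta$ lying inside $\mathrm{NC}(W)$ — equivalently, that $\mathrm{NC}(W)$ contains, for each of its chambers, a chamber opposite to it in $\Delta$. This is precisely where the description of the type-$A$ Coxeter subcomplexes of $\mathrm{NC}(W)$ underpinning \cref{i3} does the real work. The remaining ingredients — the bookkeeping for a single Hurwitz move, and the standard facts that apartments are isometrically embedded, that a spherical building has diameter $\ell(w_0)$, and that opposition is witnessed in any common apartment — are routine.
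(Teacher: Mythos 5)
Your argument is correct and is essentially the paper's own proof: both embed $|\mathrm{NC}(W)|$ as a chamber subcomplex of the type-$A_{n-1}$ building, use that every chamber of $\mathrm{NC}(W)$ lies in an apartment contained in $\mathrm{NC}(W)$ (coming from the Boolean sublattice spanned by a reduced decomposition of the Coxeter element), and take the opposite chamber in that apartment, at gallery distance $\binom{n}{2}$, as the witness for the eccentricity bound. The only cosmetic difference is that you verify adjacency via Hurwitz moves, whereas the paper's definition of $H(W)$ (chains differing in exactly one element) is already the chamber-graph adjacency; either way each edge joins chambers at gallery distance one, so the comparison with building distance goes through.
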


Note that lattice automorphisms of the non-crossing partition lattice give rise to simplicial automorphisms of the corresponding simplicial complex. The automorphisms of the non-crossing partition complexes interact with simplicial automorphisms of buildings as follows, which is proven in its poset-theoretic version in \cref{thm:autos_commute}.

\begin{intro}\label{i5}
	For every finite Coxeter group there exists a dihedral group of automorphisms of the corresponding non-crossing partition complex whose elements extend uniquely to type-preserving simplicial automorphisms of the ambient spherical building. For the classical types, every automorphism extends uniquely, where for type $D$ we further have to assume that $n\neq 4$.
\end{intro}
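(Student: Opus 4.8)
\emph{Plan of proof.} I would realise the embedding of \cref{i3} concretely via moved subspaces. Let $V$ be the reflection representation of the rank-$n$ group $W$, and take for the ambient building $\mathcal B$ the building of type $A_{n-1}$ whose vertices are the proper nonzero subspaces of $V$ over the field generated by the root coordinates — a finite prime field when $W$ is crystallographic, by \cref{i3}; in general we only need that no nontrivial field automorphism of that field fixes all root coordinates. Each $w\in\ncw$ is realised by $\mov(w)=\im(w-\id)\subseteq V$, of dimension $\ls(w)$, and a maximal chain of $\ncw$ becomes a complete flag, so $\mathcal X$ sits inside $\mathcal B$ as a chamber subcomplex with the rank of $w$ equal to its $\mathcal B$-type. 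Since lattice automorphisms of $\ncw$ preserve rank, the induced simplicial automorphisms of $\mathcal X$ are type-preserving; these are the maps to be extended. The cases $n\le 2$ are immediate (for $n=2$ one may take $\mathcal B=\mathcal X$), so I assume $n\ge 3$.

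\emph{Uniqueness.} I would treat this first, since the whole statement rests on it. If two type-preserving automorphisms of $\mathcal B$ agree on $\mathcal X$, their quotient $\psi$ fixes every vertex of $\mathcal X$; as $\mathcal B$ has projective rank $\ge 2$, the fundamental theorem of projective geometry furnishes a semilinear $\bar\psi\in\gl(V)$ inducing $\psi$. The decisive point is that \emph{every} reflection $r$ of $W$ lies below $\cox$: indeed $\ls(r\cox)\ge n-1$, is at most $n$, and has the parity of $\ls(r)+\ls(\cox)=n+1$, hence equals $n-1$, so $r\pow\cox$. Consequently every root line $\langle\alpha_r\rangle=\mov(r)$ is a vertex of $\mathcal X$, so $\bar\psi$ fixes all root lines; because the Coxeter diagram is connected (so $\alpha_i$ and $s_i(\alpha_j)$ are simultaneously roots for adjacent nodes $i,j$), this forces $\bar\psi$ to be a scalar times a field automorphism fixing all root coordinates — the identity, by the choice of $\mathcal B$. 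Hence $\psi=\id$.

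\emph{A dihedral group that extends.} Take $\cox=c_1c_2$ to be a bipartite Coxeter element, with $c_1,c_2$ involutions (each a product of commuting simple reflections), and set $\rho$ equal to conjugation by $\cox$ and $\sigma\colon w\mapsto c_1w^{-1}c_1$. Since $c_1\cox c_1=c_2c_1=\cox^{-1}$, and $w\pow\cox\iff w^{-1}\pow\cox^{-1}$, the map $\sigma$ preserves $[e,\cox]$; both $\rho$ and $\sigma$ preserve the absolute order, $\sigma^2=\id$, and $\sigma\rho\sigma^{-1}=\rho^{-1}$, so $\Gamma=\langle\rho,\sigma\rangle\subseteq\aut(\ncw)$ is dihedral. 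On moved subspaces $\mov(\cox^{-1}w\cox)=\cox^{-1}\mov(w)$ and $\mov(c_1w^{-1}c_1)=c_1\mov(w)$ (using $\mov(w^{-1})=\mov(w)$ for orthogonal $w$), so $\rho$ and $\sigma$ are the restrictions to $\mathcal X$ of the type-preserving automorphisms of $\mathcal B$ induced by $\cox^{-1}$ and $c_1$ in $\gl(V)$. Thus every element of $\Gamma$ extends, and uniquely by the previous step.

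\emph{Classical types.} For types $A$ and $B$ the group $\Gamma$ constructed above already has order $2n$, which is the full automorphism group by Biane \cite{biane} and \cref{i1}, so there is nothing further to prove. For type $D$ with $n\ne 4$ I would invoke the fact, obtained together with \cref{i2}, that every lattice automorphism of $\mathrm{NC}(D_n)$ is realised as a symmetry of the new pictorial model; since that model is a configuration of vectors built inside $V$ from the $D_n$ root system, each such symmetry is the restriction of an orthogonal transformation of $V$ and hence extends to a type-preserving automorphism of $\mathcal B$, uniquely by the uniqueness step. I expect the $D_4$ phenomenon to be the genuine obstacle: the extra, triality-type automorphisms of $\mathrm{NC}(D_4)$ produced in \cref{i2} are not realised by linear maps of $V$, and one must verify by hand that they admit no type-preserving extension to the building — this, together with making the rigidity argument of the uniqueness step uniform across the non-crystallographic and small-rank cases (so that the root lines really do detect every semilinear map), is where the work concentrates.
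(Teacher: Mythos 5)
Your proposal is correct and follows the same overall strategy as the paper's proof of \cref{thm:autos_commute}: the same dihedral group $\D=\langle\autl,\autr\rangle$ attached to a bipartite Coxeter element, extension via the linear action of $W$ on $V$, and uniqueness forced by the fact that every reflection lies below the Coxeter element (your parity computation showing $\ell(rc)=n-1$ is a correct substitute for the paper's citation of Armstrong). The differences are in packaging. For the extension you observe directly that each element of $\D$ is conjugation, or inverse-conjugation, by some $u\in W$ acting orthogonally on $V$, so that $\mov(\varphi(w))=u\,\mov(w)$ and the extension is just $U\mapsto u(U)$; the paper reaches the same map in two steps (\cref{lem:defi_lamauto}, \cref{lem:lamauto_alpha_t}) by defining a linear map on a root basis and then verifying it sends $\alpha_t$ to $\alpha_{\varphi(t)}$ for all $t$. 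Your version is cleaner, though for the finite building one should add that $W$ stabilizes $\VZ$, so the action descends to $\Vp$ and commutes with $\prho$. For uniqueness you appeal to the fundamental theorem of projective geometry to produce a semilinear map, where the paper uses the unproved frame-rigidity statement \cref{lem:lamauto_induces_vecautos}; both routes then need the same final step, that a diagonal map fixing every root line is scalar, and your explicit appeal to connectedness of the Coxeter diagram makes visible an irreducibility hypothesis the paper leaves tacit (for reducible $W$ the uniqueness claim genuinely fails, e.g.\ already for $A_1\times A_1$). Two small corrections: in type $D$ the full group for $n\neq4$ is $\De$, which is strictly larger than $\D$ for even $n$, so one must also extend $\autn\colon w\mapsto[n]w[n]$ — this is again conjugation by an orthogonal transformation of $V$ (an element of $W(B_n)$ permuting the $D_n$ roots), so your argument covers it, but "invoke the pictorial symmetries" is looser than what is needed. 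And you misread the $D_4$ caveat: the statement only excludes $D_4$ from the claim that \emph{every} automorphism extends; it does not require showing that the exotic automorphism $\exo$ fails to extend (the paper leaves that as an open question), so no by-hand verification is called for there — for $D_4$ one only needs the dihedral group $\De$, which extends as above.
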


For the non-crossing partitions of classical type, there exist type-reversing automorphisms that extend as well. The following is the simplicial version of \cref{thm:anti-autos_extend}.

\begin{intro}\label{i6}
	For the Coxeter groups of classical types, there exists a dihedral group of skew-automorphisms of the non-crossing partition complex whose elements extend to simplicial automorphisms of the ambient spherical building.
\end{intro}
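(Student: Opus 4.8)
The plan is to deduce \cref{i6} from \cref{i5} by exhibiting one explicit anti-automorphism of the non-crossing partition complex and proving that it extends to the building. Throughout, let $W$ be of type $A$, $B$, or $D$ and rank $n$, write $X$ for its non-crossing partition complex, and recall from \cref{i3} that $X$ sits as a chamber subcomplex inside a spherical building $\si$ which, for the classical types, is the flag complex of a vector space $V$ over $\C$ (or over a suitable finite field in the crystallographic case), with embedding $w\mapsto\mov(w)$. By \cref{i5} the dihedral group $G$ of simplicial automorphisms of $X$ produced there extends --- uniquely for the classical types, and, when $W$ is of type $D$ with $n=4$, after replacing $G$ by its order-$12$ dihedral subgroup --- to type-preserving simplicial automorphisms of $\si$. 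I will produce an order-reversing skew-automorphism $\kappa$ of $X$ such that $G$ and $\kappa$ together generate a dihedral group $\widehat G=\langle G,\kappa\rangle$ of skew-automorphisms of $X$ (the full such group in Biane's type-$A$ analysis, and its counterparts for types $B$ and $D$). Since $G$ already extends, it then suffices to show that $\kappa$ extends to $\si$.

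For $\kappa$ I take the Kreweras complement $\kappa\colon\ncw\to\ncw$, $w\mapsto w\inv\cox$, where $\cox$ is the Coxeter element underlying the definition of $\ncw$ and the embedding of \cref{i3}. It is the classical order-reversing bijection of the non-crossing partition lattice, hence it carries chains to chains and induces a simplicial automorphism of $X$ interchanging the rank-$i$ and rank-$(n-i)$ vertices; moreover $\kappa^2$ is the rotation $w\mapsto\cox\inv w\,\cox$, which already lies in $G$ and therefore extends by \cref{i5}.

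It remains to extend $\kappa$ itself. A type-reversing simplicial automorphism of the flag complex $\si$ of $V$ is exactly a polarity of the projective geometry of $V$, i.e.\ a map of the form $U\mapsto U^{\perp_\bil}$ for some nondegenerate bilinear --- or, over the relevant fields, sesquilinear --- form $\bil$ on $V$. So it suffices to produce such a $\bil$ with
\[
	\mov(w)^{\perp_\bil}\;=\;\mov\!\bigl(w\inv\cox\bigr)\qquad\text{for every } w\pow\cox ;
\]
then $U\mapsto U^{\perp_\bil}$ restricts to $\kappa$ on $X$, and it is automatically compatible with the already known extension of $\kappa^2$ because $\bigl(\mov(w)^{\perp_\bil}\bigr)^{\perp_\bil}=\mov\bigl(\cox\inv w\,\cox\bigr)=\cox\inv\mov(w)$. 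The natural candidate for $\bil$ is the $W$-invariant form on $V$ twisted by the action of $\cox$, which makes $\mov(w)^{\perp_\bil}$ a Coxeter-shift of $\fix(w)$; the content then reduces to an identity relating $\fix(w)$ and $\mov(w\inv\cox)$, which I would verify directly on the explicit pictorial models --- the type-$B$ model of \cite{bra_watt_kpi} and the new type-$D$ model of the present thesis --- where $\kappa$ visibly acts as a geometric duality of the annular, respectively linear, picture. For type $A$ this identification is essentially the observation of \cite{bra-mcc}.

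The main obstacle is precisely this last identification of the Kreweras complement with a polarity of $\si$: for type $A$ it follows from the classical annulus picture, but for types $B$ and $D$ it has to be matched against the concrete combinatorial models, and the type-$D$ case --- where the pictorial representation and the behaviour of $\fix$ and $\mov$ under $\kappa$ are least transparent --- is where I expect the real work to lie. Once $\bil$ is pinned down, $\hat\kappa\colon U\mapsto U^{\perp_\bil}$ extends $\kappa$ to $\si$, and together with the extensions of $G$ from \cref{i5} this realises the dihedral group $\widehat G=\langle G,\kappa\rangle$ of skew-automorphisms of $X$ as simplicial automorphisms of the ambient building, which is the assertion of the theorem.
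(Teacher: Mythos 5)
Your overall strategy is the one the paper itself follows: take the Kreweras complement $\ncaa(w)=w\inv\cox$, observe that together with the dihedral group of automorphisms it generates a dihedral group $\DD$ of skew-automorphisms (\cref{lem:skew-autos_nc}), and extend $\ncaa$ to the building by an anti-automorphism $U\mapsto U\com$ of $\lam$ coming from a non-degenerate bilinear form $\bil$ satisfying $\mov(w)\com=\mov(w\inv\cox)$ for all $w\leq\cox$. So the architecture is right. But the step you explicitly defer --- pinning down $\bil$ and verifying the orthogonality identity, ``where I expect the real work to lie'' --- is the entire content of the proof, and your plan for it is both incomplete and heavier than necessary. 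You propose to check $\mov(w)\com=\mov(w\inv\cox)$ for \emph{every} $w\leq\cox$ directly on the pictorial models; for type $D$ with the midpoint representation this is a large, unstructured verification, and you give no argument that your candidate form (the invariant form ``twisted by the action of $\cox$'', which you leave ambiguous --- the naive reading $\bil(u,v)=(u,\cox v)$ already fails in type $A_2$ on the decomposition $(1\,2)(2\,3)$ of the Coxeter element) actually satisfies the identity.

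The idea you are missing is the reduction to reflections. Since $\emb$ sends a reduced decomposition $t_1\ldots t_k$ to the join $\emb(t_1)\vee\ldots\vee\emb(t_k)$ (\cref{lem:join_and_em_interchange}) and lattice anti-automorphisms turn joins into meets (\cref{lem:lattice_antiauto_join_interchange}), it suffices to find $\bil$ with $\bil(\alpha_s,\alpha_t)=0$ for all pairs of reflections with $st\in\nc$; this is \cref{thm:criterion_extension_anti-auto}, whose proof uses Carter's Lemma and the Shifting Property to produce, for each letter $t_i$ of a reduced decomposition of $\cox$, another reduced decomposition beginning with $t_i$ and still containing $t_{i+1},\ldots,t_n$, so that the general identity follows from the rank-one case. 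This turns the problem into a finite list of rank-$2$ configurations, for which the paper writes down explicit triangular forms $\bila$, $\bilb$, $\bild$ and checks the vanishing case by case (Propositions \ref{prop:typeA_anti-auto_extends}--\ref{prop:typeD_anti-auto_extends}, with the type $B$ and $D$ computations carried out in the appendices). Without this reduction, or without actually carrying out your proposed verification, the argument is not complete. Two smaller points: the ambient building is over $\R$ or $\F_p$, not $\C$; and since $\bil$ is not symmetric one has $(U\com)\com\neq U$ in general, so the compatibility of $\hat\kappa^2$ with the known extension of $\ncaa^2$ should be deduced from the uniqueness of extensions of automorphisms in \cref{thm:autos_commute} rather than asserted from a double-complement identity.
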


Let us now focus on the classical non-crossing partitions, their relation to the braid groups and the resulting impact on geometric group theory, initiated by Brady and McCammond \cite{bra_kpi}, \cite{bra-mcc}. Their starting point was the new classifying space for the braid group constructed by Brady, 
and the question whether braid groups are $\catz$ groups. An affirmative answer to this question is guaranteed if the universal cover, endowed with a suitable metric, is a $\catz$ space. Brady and McCammond reduced the question whether this infinite simplicial complex is $\catz$ to the question whether the finite complex of non-crossing partitions with a spherical metric is $\cato$. They could prove that this is the case for the braid groups up to five strands. The case for six strands was solved in \cite{hks}. 
The proof techniques of both works heavily depend on the number of strands of the braid groups and with this on the dimension of the non-crossing partition complex.\medskip

In this thesis we do not prove that the braid groups are $\catz$ for any new dimensions. However, we provide a uniform strategy that might prove the \emph{\cref{conj:ncpn_cat1}}, which states that the complex of classical non-crossing partitions is $\cato$. Further, we show some intermediate results concerning the simplicial structure. We often use the fact that the non-crossing partition complex is a subcomplex of a finite spherical building, which is $\cato$ \cite{davis}.
Moreover, we investigate the partition complex, which we regard as a subcomplex of the spherical building as well.

In our strategy, we make use of the criterion that a spherical simplicial complex is $\cato$ if and only if between any two points of distance less than $\pi$ there exists a unique geodesic \cite{bh}. 
The following two relations of distances of maximal simplices, so-called chambers, measured in the different complexes are shown in \cref{thm:dist_pn_si} and \cref{thm:dist_pn_ncpn}, respectively.

\begin{intro}\label{i7}
	The distance of chambers measured in the partition complex coincides with the one measured in the spherical building.
\end{intro}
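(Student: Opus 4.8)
The inequality $\dig(C,D)\le\dip(C,D)$ is immediate: by \cref{thm:simplicial_embedding} the partition complex $\pn$ is a chamber subcomplex of the spherical building $\si$, so every gallery in $\pn$ is a gallery in $\si$. Hence the content of the statement is the reverse inequality, and it is enough to prove that for any two chambers $C,D$ of $\pn$ there is a minimal gallery of $\si$ from $C$ to $D$ all of whose chambers lie in $\pn$.

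The argument rests on one closure property. Call a vertex $v$ of $\pn$ a \emph{rank-one vertex} if $v=\mov(\tau)$ for an atom $\tau$ of the partition lattice, so that geometrically $v$ is a line; write $R_v=\lk_\si(v)$ for the associated residue of $\si$. I claim that $\mathrm{proj}_{R_v}(D)\in\pn$ for every rank-one vertex $v$ and every chamber $D$ of $\pn$. To see it, realise $D$ as the flag of moved spaces $\mov(\rho_1)\subsetneq\mov(\rho_2)\subsetneq\cdots$ of the maximal chain of partitions $\rho_\bullet$ carrying it, and let $i_0$ be least with $v\subseteq\mov(\rho_{i_0})$. By the standard description of the projection of a complete flag onto the star of a line, the chamber $\mathrm{proj}_{R_v}(D)$ has $j$-th term $v+\mov(\rho_{j-1})$ for $j\le i_0$ and $\mov(\rho_j)$ for $j\ge i_0$. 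Now adjoining the line $v$ to a partition subspace produces another partition subspace, $v+\mov(\rho)=\mov(\rho\vee\tau)$, because the span of the edge set of a graph is the moved space of its partition into connected components; so every term of $\mathrm{proj}_{R_v}(D)$ has the form $\mov(\pi)$, and $\mathrm{proj}_{R_v}(D)$ is a chamber of $\pn$. (For a higher-rank vertex $v$ the projection also involves the intersections $v\cap\mov(\rho_j)$, which in general are not partition subspaces — this is exactly why rank one is needed.)

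Granting the claim, the assertion follows by induction on the pair $\bigl(n,\dig(C,D)\bigr)$ in lexicographic order. If $\dig(C,D)\le1$ there is nothing to do: a codimension-one face of $C$ lies in $\pn$, so when $\dig(C,D)=1$ the chambers $C$ and $D$ are already adjacent in $\pn$ (this also covers $n\le3$). Assume $\dig(C,D)\ge2$, and let $v$ and $w$ be the rank-one vertices of $C$ and $D$. If $v=w$, then $C$ and $D$ both lie in the residue $R_v$; its intersection with $\pn$ is the partition complex $\p_{n-1}$ obtained by contracting the pair underlying $v$, realised as a chamber subcomplex of the spherical building $R_v$ of smaller rank, and since residues of buildings are convex a minimal gallery inside $\p_{n-1}$ is minimal in $\si$; the inductive hypothesis in the parameter $n$ supplies one. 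If $v\ne w$, put $D'=\mathrm{proj}_{R_v}(D)$ and $C''=\mathrm{proj}_{R_w}(C)$, both chambers of $\pn$ by the claim, and use the gate relations $\dig(C,D)=\dig(C,D')+\dig(D',D)=\dig(C,C'')+\dig(C'',D)$ (valid as $v$ is a vertex of $C$ and $w$ one of $D$). Not both $D'=C$ and $C''=D$ can hold, for a short computation with the projection formula shows that in that case $C$ and $D$ differ only in their rank-one vertices, so $\dig(C,D)=1$, against our assumption. By symmetry take $D'\ne C$. As $v\ne w$, the line $v$ is not a vertex of $D$, so $D'\ne D$ and $\dig(D',D)\ge1$; together with $\dig(C,D')\ge1$ and the gate relation this makes both $\dig(C,D')$ and $\dig(D',D)$ strictly smaller than $\dig(C,D)$. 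Since moreover $D'$ has rank-one vertex $v$, the pair $(C,D')$ falls under the case $v=w$, while $(D',D)$ has strictly smaller distance; applying the inductive hypothesis to $(C,D')$ and to $(D',D)$ produces minimal galleries of $\si$ from $C$ to $D'$ and from $D'$ to $D$ that lie in $\pn$, and by the gate relation their concatenation is a minimal gallery from $C$ to $D$ inside $\pn$.

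The crux, and the step that I expect to require the most care, is the closure property of the second paragraph: one has to know precisely how a complete flag of a type-$A$ building projects onto the star of a vertex, and to recognise that only for rank-one vertices does this projection necessarily stay inside $\pn$. Given that, the case analysis is just the bookkeeping required to push an arbitrary pair of chambers, via a single such projection, into the situation of a common rank-one vertex, where the rank $n$ drops.
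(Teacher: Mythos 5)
Your proof is correct, and the crux is the same as in the paper's argument (the proof of \cref{thm:dist_pn_si}): adjoining a rank-one partition vertex to a partition subspace yields a partition subspace, because $v+\mov(\rho)=\mov(\rho\vee\tau)$ when $v=\mov(\tau)$ for an atom $\tau$. Where you differ is in the packaging. The paper fixes an apartment $A$ of $\si$ containing $C$ and $D$, writes the two chambers as total orderings of the frame, and builds an explicit minimal gallery by the inversion-counting procedure of \cref{exa:inv_number_gallery}, checking at each step that the single new vertex $C_{j-1}\vee D_1$ is a partition subspace; once the rank-one vertex of $D$ has been installed, all remaining chambers lie in its star and induction on the rank finishes the job. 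You instead work globally with the gate property: you project $D$ onto the residue of the rank-one vertex $v$ of $C$, verify via the standard projection formula for complete flags that the gate stays in $\op$, and run a lexicographic induction on $(n,\dig(C,D))$. The two are close to dual to one another — the paper's intermediate chamber $C^{k-1}$ is precisely the gate $\mathrm{proj}_{R_{D_1}}(C)$ — but your version buys independence from the choice of apartment and makes the role of convexity of residues explicit, at the price of invoking the projection formula and of checking the degenerate cases ($D'=C$ and $C''=D$ cannot both occur) that the paper's explicit gallery sidesteps. One small point to keep in view: in your case $v=w$ you need that $R_v\cap\op$ together with its ambient residue is isomorphic, \emph{as a pair}, to $\op[n-1]$ inside the building of $\vs/v$; this follows because edge vectors map to edge vectors under the contraction of the pair underlying $v$, but it deserves a sentence, just as the paper's corresponding reduction to $\lk(D_1,A)$ does.
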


\begin{intro}\label{i8}
	If two chambers of the non-crossing partition complex are contained in a common apartment of the partition complex, then their distance measured in the non-crossing partition complex coincides with the one measured in the partition complex.
\end{intro}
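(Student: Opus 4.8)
The plan is to reduce, via \cref{thm:dist_pn_si}, to producing a minimal gallery between $C$ and $D$ inside $\ncpn$, and then to build such a gallery one chamber at a time. Write $\din$, $\dip$ and $\dig$ for the gallery distances in $\ncpn$, in $\pn$ and in the ambient spherical building $\si$. Since $\ncpn$ is a chamber subcomplex of $\pn$, every gallery of $\ncpn$ joining two chambers is also a gallery of $\pn$, so $\dip(C,D)\le\din(C,D)$ and only the reverse inequality needs proof. For this I would first rewrite $\dip(C,D)=\dig(C,D)$ using \cref{thm:dist_pn_si}. As $C$ and $D$ lie in the common apartment $\A$ of $\pn$, and $\A$ is a Coxeter complex isometrically embedded in $\si$, there is a minimal gallery from $C$ to $D$ of length $\dig(C,D)=\dip(C,D)$ running inside $\A$; it therefore suffices to find one such minimal gallery lying in $\ncpn$.

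I would do this by induction on $d:=\dip(C,D)$, the case $d=0$ being trivial. For the inductive step it is enough to find a chamber $C'$ of $\ncpn$ lying in $\A$, adjacent to $C$, with $\dig(C',D)=d-1$. Indeed, the common panel $C\cap C'$ is then a face of the non-crossing chamber $C$ and hence belongs to $\ncpn$, while $C'$ and $D$ are again two chambers of $\ncpn$ in the common $\pn$-apartment $\A$ with $\dip(C',D)=d-1$; the induction hypothesis applied to the pair $C',D$, together with $C$ prepended, delivers the required gallery in $\ncpn$.

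To produce such a $C'$ I would argue combinatorially inside the Coxeter complex $\A$. Its chambers are the maximal chains of a Boolean sublattice $\B$ of the partition lattice; fixing the atoms of $\B$, a chamber of $\A$ amounts to a linear ordering of these atoms, two chambers are adjacent exactly when their orderings differ by a transposition of two consecutive atoms, and a chamber lies in $\ncpn$ precisely when all partitions in the corresponding chain are non-crossing. The chambers of $\A$ adjacent to $C$ and strictly closer to $D$ are those obtained from the ordering of $C$ by transposing two consecutive atoms whose relative order is reversed in the ordering of $D$, and at least one such transposition exists since $C\neq D$. The crux is then the claim that at least one of these distance-decreasing transpositions keeps every partition in the chain non-crossing. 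Here I would call on the combinatorial description of how a maximal chain of $\nc(n)$ --- equivalently a reduced reflection factorization of the Coxeter element --- may be reordered while staying inside $\nc(n)$, i.e. on the Prefix, Subword and Shifting Properties established earlier: locating the first position where the chains of $C$ and $D$ diverge, one uses these properties to exhibit a non-crossing reordering of $C$ that moves one step towards $D$.

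I expect this last step to be the main obstacle. The naive recipe --- always reorder at the first divergence --- can fail: splitting an atom off earlier in the chain may turn a non-crossing partition into a crossing one even when $C$ and $D$ are both non-crossing, because passing within a chain to a coarser or to a finer partition does not respect the non-crossing property in general. So $C$ cannot simply be bubble-sorted into $D$ inside $\ncpn$; the distance-decreasing transposition has to be chosen with care, and carrying this out --- guided by the Shifting and Subword Properties and by the disc picture of non-crossing partitions --- is the heart of the argument. Once it is secured, the induction closes and yields $\din(C,D)\le\dip(C,D)$, hence equality.
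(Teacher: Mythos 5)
Your reduction is sound: it suffices to exhibit one minimal gallery from $C$ to $D$ inside the common $\pn$-apartment $A$ all of whose chambers lie in $\on$, and your inductive framing (find a non-crossing chamber $C'$ of $A$ adjacent to $C$ with $\dig(C',D)=d-1$, then recurse) would indeed close the argument. But you stop exactly at the point where the proof actually lives. You write that the distance-decreasing transposition "has to be chosen with care" and that carrying this out "is the heart of the argument," and then you do not carry it out. As it stands, the existence of even one non-crossing distance-decreasing neighbour of $C$ is an unproved claim, so the proposal is a strategy outline rather than a proof; the gap is precisely the combinatorial lemma you defer.

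The paper closes this gap by making a specific global choice of minimal gallery rather than a local existence argument. Writing $C$ as the ordering $v_1\cdots v_{r+1}$ and $D$ as $v_{w(1)}\cdots v_{w(r+1)}$ of the frame of $A$, it sorts \emph{from the top down}: first move $w(r+1)$ to the last position, then $w(r)$, and so on (the dual of the bubble-sort you describe). The payoff is that during the first phase every vertex that changes is replaced by $C_{j+1}\wedge D_r$, the meet (intersection of subspaces) of a vertex of $C$ with the top vertex of $D$; after that phase all chambers share the rank-$r$ vertex $D_r$ and one inducts on the rank inside $\sta(D_r,A)$. The non-crossing property of $C_{j+1}\wedge D_r$ is then proved directly: if the meet contained two crossing edges $e_a+e_c$ and $e_b+e_d$ with $a<b<c<d$, then since both vectors lie in the non-crossing subspaces $D_r$ and $C_{j+1}$, the elements $a,b,c,d$ must lie in a single block of each, so $e_a+e_b$ and $e_c+e_d$ also lie in the intersection and the alleged crossing disappears. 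This is the concrete mechanism that replaces your "choose the transposition with care," and without it (or an equivalent argument — for instance your suggestion of invoking the Shifting and Subword Properties, which is never made precise) the proof is incomplete.
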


The key of our approach is the study of convex hulls of maximal simplices in the non-crossing partition complex, for which the knowledge of chamber distances is most helpful. More concretely, we use that if each convex hull of chambers in the non-crossing partition complex is $\cato$, then the non-crossing partition complex is $\cato$.

Hence the problem of finding out whether the non-crossing partition complex is $\cato$ reduces to the question whether the convex hulls of its chambers are $\cato$.
If two chambers are contained in a common apartment, then their convex hull equals the convex hull in the building and therefore is $\cato$. We show for another class of chamber pairs that their convex hull is $\cato$ in  \cref{cor:conv_intersec}.

\begin{intro}\label{i9}
	If two chambers of the non-crossing partition complex have non-empty intersection, then their convex hull is $\cato$.
\end{intro}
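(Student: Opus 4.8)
The plan is to identify $\convn(C,D)$, the convex hull of the two chambers formed inside the non-crossing partition complex, with the convex hull $\conv_\Delta(C,D)$ formed inside the ambient spherical building $\Delta$ of \cref{i3}. Once this is done the statement is immediate, since $\Delta$ is $\cato$ \cite{davis} and a closed convex subset of a $\cato$ space is again $\cato$ in the induced metric \cite{bh}. So everything reduces to showing that, when $C\cap D\neq\emptyset$, the building convex hull of $C$ and $D$ does not leave the non-crossing partition complex and agrees there with $\convn(C,D)$.

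The first step is to localise at the common face $F=C\cap D$; write $C=F*C'$ and $D=F*D'$ with $C',D'$ chambers of $\lk(F)$. Inside any apartment a minimal gallery between two chambers whose largest common face is $F$ meets only chambers containing $F$, because such chambers lie on the same side of every wall through $F$; as a combinatorially convex chamber subcomplex of $\Delta$ is convex in the $\cato$-realisation, this puts $\conv_\Delta(C,D)$ inside $\overline{\sta}_\Delta(F)=F*\lk_\Delta(F)$, and in this spherical join the convex hull splits off $F$, so $\conv_\Delta(C,D)=F*\conv_{\lk_\Delta(F)}(C',D')$. The interval factorisation of $\nc(W)$ now gives a compatible decomposition $\lk_\Delta(F)\cong\Delta_1*\cdots*\Delta_k$ into spherical buildings of strictly smaller rank in which $\lk(F)$ is a join of non-crossing partition complexes $L_i\subseteq\Delta_i$, with $C'=C_1*\cdots*C_k$ and $D'=D_1*\cdots*D_k$. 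Since convex hulls in spherical joins are joins of convex hulls, and a spherical join of closed convex subsets of $\cato$ spaces is again a closed convex (hence $\cato$) subset of the ($\cato$) join — and likewise after coning off with the vertices of $F$ — it suffices to show for each factor that $\conv_{\Delta_i}(C_i,D_i)$ lies in $L_i$ and equals the convex hull computed inside $L_i$.

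For the factors I would bring in the chamber-distance comparisons: by \cref{i7} the chamber metric of the partition complex equals that of the building, and by \cref{i8} it equals the chamber metric of the non-crossing partition complex whenever the two chambers lie in a common apartment of the partition complex. The point one must establish is that the factor-chambers $C_i,D_i$, obtained by restricting the intersecting chambers $C,D$ to the $i$-th interval, always lie in a common apartment of the partition complex of $\Delta_i$. Granting this, \cref{i7} and \cref{i8} force the length-minimising $\Delta_i$-path between any $p\in C_i$ and $q\in D_i$ to be realisable inside $L_i$, hence (by uniqueness of geodesics of length $<\pi$) to be the $\Delta_i$-geodesic; since all these geodesics stay in a single apartment of $\Delta_i$ — a Coxeter complex of type $A$, which is $\cato$ — one then checks combinatorially that the whole region $\conv_{\Delta_i}(C_i,D_i)$ already lies in $L_i$ and coincides with $\conv_{L_i}(C_i,D_i)$. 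Reassembling the joins and the simplex $F$ gives $\convn(C,D)=\conv_\Delta(C,D)$, which is $\cato$.

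The suspension- and join-bookkeeping of the first two steps and the generic facts about $\cato$ spaces (closed convex subsets, and spherical joins, of $\cato$ spaces are $\cato$) are routine. The one genuinely hard ingredient — and the main obstacle — is the combinatorial claim that the link-pieces of two intersecting chambers are jointly contained in an apartment of the partition complex (in particular are not opposite in $\Delta_i$); this is exactly the extra input that is unavailable for an arbitrary pair of chambers, which is why the intersecting case can be settled while the general statement that the whole non-crossing partition complex is $\cato$ remains open. A lesser technical point is the passage from ``every minimal building-geodesic between the two chambers lies in the subcomplex'' to ``the entire convex hull lies in the subcomplex'', which is handled by working inside the single apartment that contains all those geodesics.
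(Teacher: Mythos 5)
There is a genuine gap, and it lies exactly where you placed your bet: the claim that the link-pieces of two intersecting chambers always lie in a common apartment of the partition complex (equivalently, that $\convn(C,D)$ can be identified with the convex hull taken in the ambient building) is false, so the reduction to ``closed convex subset of a $\cato$ building'' cannot be carried out. The paper itself contains a counterexample. The two chambers of $|\ncp_5|$ of \cref{fig:chambers_far_away} are opposite in the building, so $\dig=6$ and they lie in a \emph{unique} building apartment; that apartment's frame contains vectors with three non-zero entries, hence it is not an apartment of the partition complex, and indeed $\din=7>6=\dig$ for this pair. Now cone this pair off at a common rank~$1$ vertex: in the discussion around \cref{fig:nbs}, the chambers $C$ and $A$ of $|\ncp_6|$ both contain the vertex $p$ corresponding to $(1\,2)$ and restrict, under the identification of $\sta(p,|\ncp_6|)$ with $|\ncp_5|$, to exactly this bad pair. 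So $C\cap A\neq\emptyset$, yet $\din(C,A)=7>6=\dig(C,A)$, the $\on$-convex hull is the cone over the three-stranded, non--simply-connected complex of \cref{fig:conv_hull}, and it is not contained in the building convex hull. Non-empty intersection therefore does \emph{not} force the factor-chambers in the link to be jointly contained in a partition apartment, and your ``granting this'' step is irrecoverable as stated.

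The paper proves the statement by a weaker route that sidesteps the identification entirely. \cref{lem:star} shows only that $\convn(C,D)\subseteq\sta(C\cap D,\on)$ (every minimal $\on$-gallery keeps every vertex of $C\cap D$, by a monotonicity comparison of $\din$ with $\dig$), so every vertex of the convex hull is within combinatorial distance two of a fixed vertex $v\in C\cap D$ and the whole convex hull sits in a ball of radius less than $\pi$ about $v$. Geodesic contraction towards $v$ then shrinks every rectifiable loop, and \cref{bow} — together with local $\cato$-ness of $\on$, which is supplied inductively by \cref{prop:locally_cato} and is an explicit hypothesis of \cref{cor:conv_intersec} — gives the $\cato$ conclusion. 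Your first localisation step ($\convn(C,D)\subseteq\sta(C\cap D,\on)$ and the splitting of the star as a join over $C\cap D$) is sound and is essentially the paper's \cref{lem:star}; it is the subsequent attempt to flatten each link factor into a single building apartment that fails.
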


Proving that a space is $\cato$ means, roughly speaking, that the space does not have any \enquote{small holes}. 
We prove the \cref{nohole} for the non-crossing partition complex, which is as follows. An analog holds for the partition complex as well.

\begin{intro}\label{i10}
	Consider an apartment of the building. If the link complex of a vertex in this apartment is contained in the non-crossing partition complex, then the vertex is a non-crossing partition as well, provided that $n \geq 5$.
\end{intro}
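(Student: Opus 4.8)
The plan is to argue inside the spherical building $\Delta$ of type $A_{n-2}$ in which $\ncpn$ embeds as a full chamber subcomplex (\cref{thm:simplicial_embedding}). I use the realisation underlying that embedding: over a fixed finite field $\mathbb{F}$, the vertices of $\Delta$ are the proper nonzero subspaces of the reflection representation $H=\{x\in\mathbb{F}^n:\sum_i x_i=0\}$ of $S_n$; an apartment $\Sigma$ comes from a frame $\{\ell_1,\dots,\ell_{n-1}\}$ of $H$, its vertices being the coordinate subspaces $V_T:=\bigoplus_{i\in T}\ell_i$ for $\emptyset\neq T\subsetneq\{1,\dots,n-1\}$; and (after possibly applying the type-reversing correlation of $\Delta$) the vertices of $\ncpn$ are exactly the subspaces $\mov(\pi)=\{x:\sum_{i\in B}x_i=0\ \text{for all blocks } B \text{ of } \pi\}$ with $\pi$ a non-crossing partition of $\{1,\dots,n\}$, $\mi\neq\pi\neq\ma$. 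Here $\mov$ is injective and order preserving, and the rank-$1$ vertices of $\ncpn$ are precisely the lines $\langle e_a-e_b\rangle$. For a vertex $v=V_S$ of $\Sigma$ the vertices of $\lk_\Sigma(v)$ are the $V_T$ with $\emptyset\neq T\subsetneq S$ or $S\subsetneq T\subsetneq\{1,\dots,n-1\}$, so the hypothesis says in particular that all of these are non-crossing subspaces; the goal is $V_S\in\ncpn$. I split into three cases according to the rank $r$ of $v$: $r=1$, $r=n-2$, and $2\le r\le n-3$.

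The case $r=1$ is the heart of the matter. Write $v=\ell_{j_0}$. For each $i\neq j_0$ the rank-$2$ vertex $P_i:=v\oplus\ell_i=V_{\{j_0,i\}}$ lies in $\lk_\Sigma(v)$, hence in $\ncpn$, so $P_i=\mov(\rho_i)$ for a non-crossing rank-$2$ partition $\rho_i$; consequently $P_i$ is either a \emph{triangle} $U_T=\{x:\operatorname{supp}(x)\subseteq T,\ \sum_{t\in T}x_t=0\}$ with $|T|=3$, or a \emph{bigon} $\langle e_a-e_b,e_c-e_d\rangle$, and the $P_i$ are pairwise distinct (else $\ell_{j_0},\ell_i,\ell_k$ would be dependent). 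If $v$ happens to lie in one of the two coordinate directions of some bigon $P_i$, then $v$ is a standard line and we are done; otherwise every bigon among the $P_i$ forces $\operatorname{supp}(v)$ to equal its full $4$-set, while every triangle forces $\operatorname{supp}(v)\subseteq T_i$. Hence the $P_i$ are either all triangles — then $\operatorname{supp}(v)\subseteq T_1\cap T_2$ is the intersection of two distinct $3$-sets, so $|\operatorname{supp}(v)|=2$ and $v=\langle e_a-e_b\rangle\in\ncpn$ — or all bigons on one common $4$-set $F$, in which case each $\ell_i$ lies in the $3$-dimensional subspace $U_F$; but there are $n-2\ge 3$ indices $i$, so $v$ together with three of the $\ell_i$ gives four frame lines inside a $3$-dimensional space, a contradiction. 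Thus $v\in\ncpn$. This is the only place where $n\ge 5$ enters, precisely to supply the three planes $P_i$ needed to kill the all-bigon configuration; for $n=4$ there are only two such planes and the statement is genuinely false.

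For $r=n-2$ I reduce to the previous case: by \cref{thm:anti-autos_extend} there is a skew-automorphism of $\ncpn$ that extends to a type-reversing simplicial automorphism $\hat\delta$ of $\Delta$; then $\hat\delta\Sigma$ is an apartment, $\hat\delta v$ is a rank-$1$ vertex of it, and $\hat\delta\big(\lk_\Sigma(v)\big)=\lk_{\hat\delta\Sigma}(\hat\delta v)\subseteq\hat\delta(\ncpn)=\ncpn$, so $\hat\delta v\in\ncpn$ and hence $v\in\ncpn$. For $2\le r\le n-3$ I induct on $n$. The base case $n=5$ forces $r=2$; writing $S=\{1,2\}$, the below-neighbours $\ell_1=\langle e_{a_1}-e_{b_1}\rangle$ and $\ell_2=\langle e_{a_2}-e_{b_2}\rangle$ are standard lines, so $v=\mov(\pi_G)$ for the graph $G$ on $\{1,\dots,5\}$ with these two distinct edges; $G$ is a forest and $\pi_G$ is non-crossing unless the two edges are disjoint crossing chords, but in that case each of the above-neighbours $V_{\{1,2,3\}},V_{\{1,2,4\}}$ is forced to be the unique non-crossing rank-$3$ subspace containing $\ell_1$ and $\ell_2$, namely $\mov(\{\{a_1,b_1,a_2,b_2\},\{e\}\})$, which would put $\ell_3$ and $\ell_4$ into a common $3$-dimensional subspace — impossible, so $v\in\ncpn$. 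For $n\ge 6$ pick $i\in S$; since $r\ge 2$, the line $\ell_i=V_{\{i\}}$ is a below-neighbour, so $\ell_i\in\ncpn$ and $\ell_i=\mov(t)$ for a transposition $t$. Passing to the building $\Delta':=\lk_\Delta(\ell_i)$ of type $A_{n-3}$, fullness gives $\ncpn\cap\Delta'=\lk_{\ncpn}(\ell_i)\cong\mathrm{NCP}_{n-1}$ (from the interval identification $[t,c]\cong\nc(A_{n-2})$). Now $\bar\Sigma:=\lk_\Sigma(\ell_i)$ is an apartment of $\Delta'$, $\bar v:=v$ is one of its vertices (here $\ell_i\subsetneq v\subsetneq H$ uses $2\le r\le n-3$), and $\lk_{\bar\Sigma}(\bar v)=\lk_\Sigma(\{\ell_i,v\})\subseteq\lk_\Sigma(v)\subseteq\ncpn$, hence $\subseteq\mathrm{NCP}_{n-1}$; by the inductive hypothesis $\bar v\in\mathrm{NCP}_{n-1}$, i.e.\ $v\in\ncpn$. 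The same scheme proves the analogue for the partition complex, the only change being that in the case $r=1$ the $\rho_i$ need not be non-crossing, so a bigon may now be any $\langle e_a-e_b,e_c-e_d\rangle$ with $\{a,b\}\cap\{c,d\}=\emptyset$; the four-lines-in-a-$3$-space argument is unaffected.

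The hard part is clearly the case $r=1$: there is no soft reason (of the "a link or a meet of non-crossing things is non-crossing" type) for it, and indeed it fails for $n=4$, so one must run the small type analysis above, with $n\ge 5$ entering exactly through the third rank-$2$ neighbour. The one external ingredient I would want to make sure of is the compatibility, already implicit in the proof of \cref{thm:simplicial_embedding}, between the identifications $\lk_{\ncpn}(u)\cong\mathrm{NCP}_{n-1}$ at rank-$1$ vertices $u$ and the building structures on $\Delta$ and $\Delta'$; this is what makes the induction in the case $2\le r\le n-3$ legitimate.
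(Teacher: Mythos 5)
Your rank-$1$ case is correct and is, up to bookkeeping, the paper's own argument: the paper likewise pins down the rank-$2$ vertices of the apartment through $v$ (each spanned by two partition vectors, forcing $v$ to have three or four non-zero coordinates in its model inside $\F_2^{n-1}$) and derives a contradiction; you reach it slightly more cleanly via the triangle/bigon dichotomy and the observation that four frame lines cannot lie in a $3$-dimensional subspace, whereas the paper also has to invoke the rank-$3$ vertices of the link (two pairwise joins coincide). Where you genuinely diverge is for ranks $r\ge 2$, and there the paper is much shorter: the $k=\rk(v)$ rank-$1$ vertices $p_1,\dots,p_k$ of the apartment below $v$ lie in the link, hence are edges; they form no cycle by frame independence; their pairwise joins also lie in the link, hence are non-crossing; so the edges are pairwise non-crossing and $v=p_1\vee\dots\vee p_k$ is a non-crossing partition. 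Your route --- duality via the extended anti-automorphism for $r=n-2$, and an induction on $n$ through links of rank-$1$ vertices for $2\le r\le n-3$ --- does work, but it is heavier and it rests on exactly the compatibility you flag: that $\lk(\ell_i,\si)$ together with its subcomplex $\lk(\ell_i,\on)$ is isomorphic \emph{as a pair} to the standard pair $\bigl(|\lam(\F_2^{n-2})|,\,|\ncp_{n-1}|\bigr)$. That statement is true, but it is not proved anywhere in the paper in the form your induction needs (the paper only records the abstract interval isomorphism $[t,c]\cong\nc(S_{n-1})$), so it would have to be established before the inductive step is legitimate; the paper's direct join argument avoids the issue entirely and I would substitute it for your $r\ge2$ cases.

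One small credit to your version: for $k=2$ the paper's step ``$p_i\vee p_j\in\lk(v,A)$'' degenerates, since $p_1\vee p_2=v$ itself is not in its own link, so non-crossingness of the two edges must be extracted from the vertices \emph{above} $v$; your base case $n=5$, $r=2$ --- ruling out two crossing chords because both rank-$3$ vertices of the apartment above $v$ would be forced to equal the unique non-crossing rank-$3$ subspace over them --- is precisely the argument that patches this.
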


We use the Link Property to show the next statement, which is \cref{thm:large_holes}.

\begin{intro}\label{i11}
	Every vertex of the spherical building that is not a non-crossing partition lies on a building geodesic of length $\pi$ that does not intersect the non-crossing partition complex, provided that $n \geq 5$.
\end{intro}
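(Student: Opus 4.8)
The plan is to bootstrap the Hole Property from the Link Property. Since $v\notin\ncpn$ and $\ncpn$ is a subcomplex, no simplex of $\si$ having $v$ as a vertex lies in $\ncpn$, so the open star $\osta(v)$ is disjoint from $|\ncpn|$; hence a geodesic issuing from $v$ automatically avoids $|\ncpn|$ near $v$, and the only real obstruction comes from its far ends. Writing $r(\xi)$ for the largest $t\le\pi$ such that the geodesic ray from $v$ in a direction $\xi\in\lk_\si(v)$ stays out of $|\ncpn|$ up to time $t$ (so $r(\xi)>0$ always), it suffices to find two directions $\xi,\xi'$ at $v$ that are antipodal inside some apartment and satisfy $r(\xi)+r(\xi')>\pi$: concatenating the two rays then yields a geodesic of length $\pi$ through $v$ that misses $|\ncpn|$.

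First I would fix an apartment $\A\ni v$ and invoke the Link Property: since $v$ is not a non-crossing partition, $\lk_\A(v)\not\subseteq\ncpn$, so some vertex $w$ adjacent to $v$ in $\A$ has $w\notin\ncpn$, and then the edge $vw$, hence a non-trivial initial segment of the ray toward $w$, avoids $|\ncpn|$. I would like to do the same on the opposite side of $v$, obtaining $w'\notin\ncpn$ whose direction at $v$ is antipodal in $\lk_\A(v)$ to that of $w$, so that $w'\!-\!v\!-\!w$ is already a (short) geodesic through $v$ in the complement of $|\ncpn|$. Arranging this antipodality — more generally, choosing $\A$ so that the $\ncpn$-free directions at $v$ inside $\A$ line up along a great semicircle — is where I expect to need the sharper information coming out of the \emph{proof} of the Link Property, namely a description of precisely which vertices adjacent to $v$ are forced to lie in $\ncpn$.

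I would then extend along the semicircle: at $w$ the direction away from $v$ is some $\eta\in\lk_\si(w)$, and to prolong the geodesic past $w$ while staying outside $|\ncpn|$ I need a chamber of $\lk_\si(w)$ near $\eta$ that is not in $\ncpn$, and likewise at each successive vertex, until the accumulated length reaches $\pi$. To keep the whole semicircle inside $\A$ and — crucially — to force its two endpoints out of $|\ncpn|$ as well, I would exploit that $\ncpn$ is a union of Coxeter complexes of type $A$: this limits how much of $|\A|$ the subcomplex $\A\cap\ncpn$ can cover and lets one steer the semicircle through the simplices of $\A$ that are not in $\ncpn$. It then remains to verify that the resulting great semicircle has length $\pi$, runs through $v$, and meets no simplex of $\A$ lying in $\ncpn$, hence misses $|\ncpn|$ entirely.

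The hard part will be this last, global step: the Link Property is a single-vertex statement, whereas the conclusion concerns a geodesic of the maximal length $\pi$, so the real work is to choose the apartment and the successive turning directions so that the locally available escape directions assemble into \emph{one} great semicircle that never re-enters $|\ncpn|$ — the most delicate point being the two endpoints, which sit far from $v$ where the open-star observation gives no help. In particular one must rule out the a priori possibility that in every apartment through $v$ the subcomplex $\A\cap\ncpn$ is so large that no semicircle through $v$ avoids it, which would force either a cleverer choice of $\A$ or a geodesic that leaves every apartment. Throughout, the hypothesis $n\ge5$ is used only through the Link Property.
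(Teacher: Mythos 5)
Your starting point coincides with the paper's: fix an apartment $A$ through the vertex $p=v$ and use the \cref{nohole} to produce a neighbor $q$ of $p$ in $A$ with $q\notin\ncpn$, then try to prolong through $p$ and $q$ to a geodesic of length $\pi$ avoiding $\on$. But the proposal stops exactly where the work begins. The iterative scheme you sketch --- re-apply the Link Property at each successive vertex and steer along a great semicircle --- does not close up: at an intermediate vertex the Link Property only supplies \emph{some} direction in the link leading out of $\ncpn$, with no control over whether that direction is antipodal to the incoming one (which is what is needed to stay geodesic), no mechanism forcing the accumulated length to come out to exactly $\pi$, and no handle on the two far endpoints. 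You correctly flag all of this as "the hard part," but flagging it is not resolving it, so as written the argument has a genuine gap.

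The paper's proof needs no iteration and no steering; the geodesic has only four vertices, independently of $n$. With $q$ the bad neighbor, one takes $p_u$ to be the vertex \emph{opposite} to $q$ in $\lk(p,A)$ and $p_v$ the vertex opposite to $p$ in $\lk(q,A)$. By \cref{lem:geodesics_in_links}, opposite vertices of a vertex link in $A$ are joined by the geodesic consisting of the two edges through that vertex, so the concatenation $e_{p_u,p}\circ e_{p,q}\circ e_{q,p_v}$ is a local geodesic, hence a geodesic. The punchline is then purely metric: opposition determines the ranks of $p_u$ and $p_v$ in terms of those of $p$ and $q$, and plugging these into the explicit orthoscheme edge-length formula of \cref{lem:edge_lengths} shows the three edge lengths sum to exactly $\pi$ (one verifies $\cos(A+B+C)=-1$ directly). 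Note also that the path is not symmetric about $v$: only the single bad neighbor $q$ is used, and $v$ need not be the midpoint. Avoidance of $\on$ along the open edges is then automatic because $\on$ is an induced subcomplex and every edge of the path has $p$ or $q$ as a vertex. This opposition-plus-length computation is the missing idea; without it, or a substitute that simultaneously certifies geodesy and total length $\pi$, your plan cannot be completed along the lines you describe.
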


If we consider the non-crossing partition complex as being cut out from the building, then the above statement means that whenever one vertex is removed, a large portion of the building is removed as well.

The collection of statements above is not a proof for the non-crossing partition complex being $\cato$, but it promises to pave the way towards proving it.

\medskip

This thesis is divided into three parts and structured as follows.

\cref{part1} reviews the background material of lattices and simplicial complexes, Coxeter groups, and buildings in the Chapters \ref{chap:lattices}, \ref{chap:cox}, and \ref{cha:buildings}, respectively. 

\cref{part2} is devoted to the non-crossing partitions associated to Coxeter groups. We define non-crossing partitions and consider their basic properties in \cref{chap:elem_props_nc}. 
\cref{chap:classical_types} reviews the non-crossing partitions of  classical types. 
In particular, the interaction of the algebraic and pictorial characterizations of the non-crossing partitions is described in detail. The new pictorial representations of non-crossing partitions of type $D$ is introduced here. 
\cref{chap:embedding} deals with the embedding of non-crossing partitions from poset-theoretic and simplicial viewpoints with emphasis on the embeddings into finite spherical buildings.
In this chapter we also investigate the Hurwitz graph.
\cref{chap:autos} is the main chapter of this part of the thesis and is concerned with the study of automorphisms and anti-automorphisms of the non-crossing partition lattices.

\cref{part3} studies the geometry of the classical non-crossing partitions particularly with regard to its curvature properties. 
\cref{chap:motivation} serves multiple purposes. First, we explain the motivation from geometric group theory to study the geometry of the classical non-crossing partition complex in more detail. Then, we provide the necessary background concerning geodesic metric spaces and in particular $\catz$ and $\cato$ spaces. Finally, we outline a strategy that could prove the Curvature Conjecture. 
Before we fill this strategy with details, we study the simplicial structure of the classical non-crossing partition complex as well as the one of the partition complex, seen as subcomplexes of a finite spherical building, in \cref{sec:simpl_structure}. 
We translate the algebraic embedding of the classical non-crossing partitions into purely pictorial terms. This embedding is compatible with a pictorial embedding of the partitions into the same building, which is constructed here as well. The rest of this chapter is a collection of structural properties and descriptions, known and new ones, of the chambers and apartments of both the partition and the non-crossing partition complexes. 
The last chapter of this thesis, \cref{chap:dist}, provides the details corresponding to the different cases of the strategy outlined before.

The appendix contains two technical proofs, which we relocated there for the sake of readability of the main part.

\subsubsection*{Acknowledgments}

At this point I want to express my gratitude to all those people who accompanied me during my PhD studies. Foremost, I would like to thank my supervisor Petra Schwer and my referees Barbara Baumeister and Roman Sauer. Moreover, I thank my friends and my former and present colleagues from the Institute of Algebra and Geometry at the KIT and from the RTG, as well as all those people I met at numerous conferences and seminars I had the chance to visit. Finally, I would like to thank my parents, sisters and Benni for always being there for me.

\cleardoublepage
\tableofcontents

\cleardoublepage
\part{Preliminaries}\label{part1}

\cleardoublepage
\chapter{Posets and simplicial complexes}\label{chap:lattices}

This chapter is a summary of the main concepts in use, with emphasis on the interaction of posets and simplicial complexes. The main references are \cite{staece} and \cite{graetzer} for posets, and \cite{ab} and \cite{bb} for simplicial complexes. The definitions and statements we are reviewing here are taken from these books.

\section{Posets and lattices}

\begin{defi}
	A \emph{partially ordered set}, or \emph{poset} for short, $(P,\leq)$ consists of a set $P$ and a binary relation $\leq$, called \emph{partial order}, such that for all $x,y,z\in P$ it holds that
	\begin{enumerate}
		\item $x\leq x$,
		\item if $x \leq y$ and $y \leq x$, then $x = y$, and
		\item if $x \leq y$ and $y\leq z$, then $x \leq z$.
	\end{enumerate}
\end{defi}

If it is clear from the context which partial order is used, we also refer to $P$ alone as poset. For $x,y\in P$ the symbol $y \geq x$ means $x \leq y$. By $x < y$ we mean $x \leq y$ and $x \neq y$, and by $y > x$ we mean $x < y$. We say that $y$ \emph{covers} $x$, and denote it by $x \lessdot y$, if $x \leq y$ and no element $z\in P$ satisfies $x < z <y$. We also say $x$ is \emph{covered} by $y$. By $y \gtrdot x$ we mean $x \lessdot y$.    

\begin{defi}\label{defi:cov_sets}
	Let $P$ be a poset and $x\in P$ an element.
	The set 
	\[
	\ab(x)\coloneqq \set{ y \in P \str  y \gtrdot x}
	\] 
	consists of all elements $y\in P$ that cover $x$ and is called the \emph{cover set} of $x$. Analogously, the set 
	\[
	\bel(x)\coloneqq \set{ y \in P \str y \lessdot x}
	\]
	consists of all elements $y\in P$ that are covered by $x$ and is called the \emph{covered set} of $x$.
\end{defi}

A finite poset is completely determined by its cover relations and it is convenient to encode the information in a graph.

\begin{defi}
	The \emph{Hasse diagram} of a finite poset $P$ is an embedded graph. Its vertices are the elements of $P$ and there is an edge between $x$ and $y$ if $x \lessdot y$. The vertices are arranged in the plane in such a way that $x$ is drawn \enquote{below} $y$ whenever $x \lessdot y$.
\end{defi}

\begin{exa}\label{exa:bool3}
	Let $P$ be the set $\B_3$ of subsets of the set $\set{1,2,3}$ with partial order given by inclusion. This is indeed a poset and its Hasse diagram is depicted in \cref{fig:hasse_bool3}. 
\end{exa}

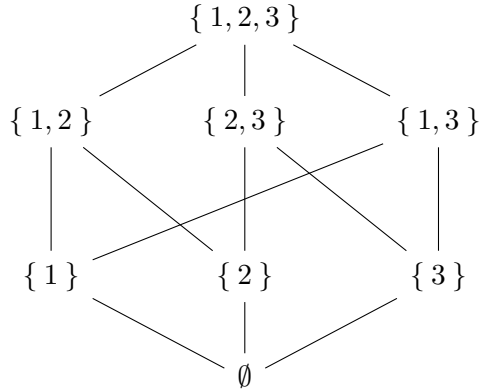
\begin{figure}
	\begin{center}
		\begin{tikzcd}
		&\Set{1,2,3} \ar[d,dash] \ar[dr, dash] \ar[dl,dash]&\\
		\Set{1,2}\ar[dd,dash]\ar[ddr,dash]&\Set{2,3}\ar[dd,dash]\ar[ddr,dash]&\Set{1,3}\ar[dd,dash]\ar[ddll,dash]\\
		&&\\
		\Set{1}\ar[dr,dash]&\Set{2}\ar[d,dash]&\Set{3}\ar[dl,dash]\\
		&\emptyset&
		\end{tikzcd}
		\caption{The Hasse diagram of the poset $\B_3$.}
		\label{fig:hasse_bool3}
	\end{center}
\end{figure}

Let $P$ be a poset. A subset $Q \subseteq P$ is an \emph{induced subposet}, or \emph{subposet} for short, of $P$ if for $x,y\in Q$ it holds that $x \leq y$ in $Q$ if and only if $x \leq y$ in $P$. A subposet $C=\set{x_0, x_1, \ldots,x_k} \subseteq P$ is called \emph{chain} if $x_0 < x_1 < \ldots < x_k$. We also denote the chain as ordered tupel $C=(x_0, \ldots, x_k)$. The integer $k$ is then called \emph{length} of the chain $C$, and $C$ is said to be a \emph{$k$-chain}. A chain is called \emph{maximal} if it is not contained in any longer chain of $P$. The supremum of the lengths of all maximal chains in $P$ is the \emph{rank} of $P$. If all maximal chains in $P$ are of the same length $k<\infty$, then $P$ is called a \emph{graded} poset. In this case, there exists a unique \emph{rank function} $\rk\colon P \to \set{0, 1, \ldots, k}$ such that $\rk(m)=0$ for all minimal elements $m\in P$ and $\rk(y)=\rk(x)+1$ if $x\lessdot y$. The integer $\rk(x)$ is called the \emph{rank} of the element $x\in P$. Note that the rank of $x\in P$ equals the rank of the subposet $\set{z \in P\str z\leq x} \subseteq P$. For $x,y\in P$ we call the subposet
\[
[x,y]\coloneqq \Set{z\in P\str x \leq z \leq y} \subseteq P
\]
the \emph{interval} between $x$ and $y$.

\begin{exa}
	We continue \cref{exa:bool3} from above. The poset $\B_3$ is a graded poset of rank $3$ and the rank of $X \subseteq \set{1,2,3}$ is given by the number of its elements, that is $\rk(X)=\#X$. Note that the maximal chains in $\B_3$ are given by the total orderings of the set $\set{1,2,3}$ and vice versa. For instance, the order $2-1-3$ gives rise to the maximal chain 
	\[
	\emptyset < \set{2} < \set{1,2} < \set{1,2,3}.
	\]
\end{exa}

Let $P$ and $Q$ be posets. The \emph{direct product} of $P$ and $Q$ is the poset 
\[
P\times Q = \Set{(p,q) \str p \in P, q\in Q}
\]
with partial order given by $(p',q') \leq (p,q)$ if and only if $p'\leq p$ in $P$ and $q' \leq q$ in $Q$. If $P$ and $Q$ are graded, then $P\times Q$ is graded as well.  

The rest of this section is devoted to lattices.
Let $P$ be a poset and $x,y\in P$. An \emph{upper bound} of $x$ and $y$ is an element $z\in P$ satisfying $x,y \leq z$. An upper bound $z\in P$ of $x$ and $y$ is called \emph{join} if for all upper bounds $u$ of $x$ and $y$ it holds that $z \leq u$, that is $z$ is a least upper bound. If a join of $x$ and $y$ exists, it is unique and denoted by $x \vee y$. \emph{Lower bounds} of $x$ and $y$ are defined dually. If a greatest lower bound of $x$ and $y$ exists, it is unique and called \emph{meet} of $x$ and $y$. It is denoted by $x \wedge y$.

\begin{defi}
	A \emph{lattice} is a poset $L$ in which for every pair of elements there exists both a meet and a join.
\end{defi}

Let $L$ be a lattice. A \emph{minimum} is an element $\mi\in L$ satisfying $\mi \leq x$ for all $x\in L$. Dually, a \emph{maximum} is an element $\ma\in L$ satisfying $x \leq \ma$ for all $x\in L$. If a minimum or a maximum exists, it is unique. Clearly every finite lattice has a minimum and a maximum. 
An \emph{atom} of $L$ is an element covering the minimum $\mi$. The lattice is called \emph{atomic} if every element $x\in L$ is a join of atoms. If the lattice is graded, then the atoms are exactly the elements of rank $1$.  A subposet $K \subseteq L$ is called \emph{sublattice} if $K$ is closed under the meet and join operations of $L$, that is if $x,y\in K \subseteq L$, then also $x\vee y\in K$ and $x\wedge y\in K$.
Let $M \subseteq L$ be a finite subset of a lattice $L$ with $\mi \in L$. The \emph{span} of $M$ in $L$ is defined to be the smallest sublattice $\braket{M} \subseteq L$ that contains $M$, that is if $K$ is a sublattice of $L$ containing the set $M$, then $\braket{M} \subseteq K$.   
We say that a set $M \subseteq L$ \emph{spans} a sublattice $K$ of $L$ if $\braket{M}=K$.

\begin{exa}
	The poset $\B_3$ from the examples above is a lattice with minimum $\mi = \emptyset$ and maximum $\ma=\set{1,2,3}$. The join is given by union of sets and the meet is given by intersection. Clearly the lattice $\B_3$ is atomic, since every set $X \subseteq \set{1,2,3}$ can be written as the union of its singletons $\bigcup_{x\in X} \set{x} = X$.  
\end{exa}

\subsection{Automorphisms and anti-automorphisms}

\begin{defi}
	Let $P$ and $Q$ be posets and let $\auto\colon P \to Q$ be a map. Then $\auto$ is called \emph{order-preserving} if $x \leq y$ in $P$ implies $\auto(x)\leq \auto(y)$ in $Q$ for all $x,y\in P$.
	The map $\auto$ is called \emph{order-reversing} if $x \leq y$ in $P$ implies $\auto(y)\leq \auto(x)$ in $Q$ for all $x,y\in P$. The map $\auto$ is called \emph{poset map} if it is order-preserving or order-reversing.
	An injective and order-preserving poset map is called a \emph{poset embedding} and a \emph{poset isomorphism} is a surjective poset embedding with order-preserving inverse. If $P$ and $Q$ are lattices, then a poset isomorphism $P \to Q$ is called \emph{lattice isomorphism}. Two posets are called \emph{isomorphic} if there exists a poset isomorphism between them.
\end{defi} 

We can generalize the example of the lattice $\B_3$ from above to $\B_n$ for all $n\in \N$.

\begin{exa}\label{exa:boolean_lattice}	
	Let $\B_n$ be the set of subsets of the set $\set{1,\ldots,n}$ with partial order given by inclusion. Then $\B_n$ is a graded poset of rank $n$. The rank of $X\subseteq \set{1,\ldots,n}$ is $\rk(X)=\#X$. With the meet operation given by intersection of sets and the join operation given by union of sets, $\B_n$ is an atomic lattice.  
\end{exa}

\begin{defi}
	A lattice that is isomorphic to the lattice $\B_n$ for some $n\in \N$ is called \emph{Boolean lattice} of rank $n$.
\end{defi}

Poset embeddings are central for \cref{chap:embedding}. The next definition introduces the main objects of \cref{chap:autos}.

\begin{defi}
	Let $L$ be a lattice and $\auto\colon L \to L$ a map. Then $\auto$ is called \emph{lattice automorphism} if it is a poset isomorphism.  
	The map $\auto$ is called \emph{lattice anti-automorphism} if it is a order-reversing bijection with order-reversing inverse. A \emph{lattice skew-automorphism} is a map $L\to L$ that is either a lattice automorphism or a lattice anti-automorphism.
\end{defi}

Note that for a lattice automorphism $\auto$ of a lattice $L$ it holds for $x,y\in L$ that $x \leq y$ if and only if $\auto(x) \leq \auto(y)$. Dually, for a lattice anti-automorphism we have $x\leq y$ if and only if $\auto(y)\leq \auto(x)$. Moreover, the join and meet operations of a lattice are compatible with automorphisms and anti-automorphisms.

\begin{lem}\label{lem:lattice_auto_join_interchange}
	If $\auto \colon L \to L$ is a lattice automorphism, then
	\[
	\auto(x\vee y)=\auto(x)\vee\auto(y)\quad \text{ and } \quad\auto(x\wedge y)=\auto(x)\wedge\auto(y)
	\]
	hold for all $x,y \in L$.
\end{lem}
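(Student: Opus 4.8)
The statement is that a lattice automorphism commutes with the binary operations $\vee$ and $\wedge$. My plan is to prove the join identity directly from the universal property of the join, and then obtain the meet identity by the dual argument (or, if one prefers, by applying the join identity to the opposite lattice $L^{\mathrm{op}}$, on which $\auto$ is again an automorphism).

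First I would record the two facts about $\auto$ that I intend to use, both already noted in the text preceding the lemma: since $\auto$ is a poset isomorphism with order-preserving inverse, for all $a,b\in L$ we have $a\leq b$ if and only if $\auto(a)\leq\auto(b)$; and $\auto$ is surjective. Then, to show $\auto(x\vee y)=\auto(x)\vee\auto(y)$, I would verify that $\auto(x\vee y)$ satisfies the defining property of the join of $\auto(x)$ and $\auto(y)$. It is an upper bound: from $x\leq x\vee y$ and $y\leq x\vee y$ the order-preservation of $\auto$ gives $\auto(x)\leq\auto(x\vee y)$ and $\auto(y)\leq\auto(x\vee y)$. It is the least such: given any upper bound $z$ of $\auto(x)$ and $\auto(y)$, surjectivity yields $w\in L$ with $z=\auto(w)$; then $\auto(x)\leq\auto(w)$ and $\auto(y)\leq\auto(w)$ force $x\leq w$ and $y\leq w$ by the "only if" direction above, so $x\vee y\leq w$ by definition of the join, and applying $\auto$ gives $\auto(x\vee y)\leq\auto(w)=z$. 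Since the join is unique, $\auto(x\vee y)=\auto(x)\vee\auto(y)$.

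For the meet, the argument is verbatim the same with all inequalities reversed and "upper bound"/"least" replaced by "lower bound"/"greatest"; alternatively one observes that $\auto$ is also an automorphism of $L^{\mathrm{op}}$, in which meets become joins, and quotes the case already done. I do not expect any genuine obstacle here: the only point that needs a moment's care is the use of surjectivity of $\auto$ to produce the preimage $w$ in the minimality step — without it one could not conclude that an arbitrary upper bound of $\auto(x),\auto(y)$ dominates $\auto(x\vee y)$.
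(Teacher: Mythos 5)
Your proposal is correct and follows essentially the same route as the paper's proof: both directions come from order-preservation of $\auto$ and $\auto\inv$ together with surjectivity to pull an upper bound back to $L$, and the meet identity is obtained by the dual argument. The only cosmetic difference is that you verify minimality against an arbitrary upper bound, whereas the paper applies surjectivity directly to the element $\auto(x)\vee\auto(y)$; the content is identical.
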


\begin{proof}
	We only show the first equality, the second follows analogously.
	Let $x,y\in L$ be arbitrary. Then $x,y \leq x\vee y$ by definition and since $\auto$ is an automorphism, we have $\auto(x), \auto(y)\leq \auto(x\vee y)$. The definition of the join then yields $\auto(x) \vee \auto(y)\leq \auto(x\vee y)$. To prove the other inequality, note that there is a $z\in L$ such that $\auto(x)\vee\auto(y)=\auto(z)$, since $\auto$ is a bijection. This means that $x,y \leq z$, as $\auto\inv$ is order-preserving, and hence $x \vee y \leq z$. Applying the automorphism $\auto$ once again yields $\auto(x\vee y) \leq \auto(z)=\auto(x)\vee \auto(y)$.
\end{proof}

\begin{lem}\label{lem:lattice_antiauto_join_interchange}
	If $\antiauto \colon L \to L$ is a lattice anti-automorphism, then
	\[
	\antiauto(x\vee y)=\antiauto(x)\wedge\antiauto(y) \quad\text{ and } \quad	\antiauto(x\wedge y)=\antiauto(x)\vee\antiauto(y)
	\]
	hold for all $x,y \in L$.
\end{lem}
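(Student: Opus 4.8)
The plan is to imitate the proof of \cref{lem:lattice_auto_join_interchange} almost verbatim, replacing each use of order-preservation by order-reversal, and to prove only the first identity since the second is entirely symmetric (one simply exchanges the roles of $\vee$ and $\wedge$ throughout, which is legitimate because the defining properties of an anti-automorphism are self-dual).

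First I would fix $x,y \in L$ and establish the inequality $\antiauto(x\vee y) \leq \antiauto(x)\wedge\antiauto(y)$. Since $x \leq x\vee y$ and $y \leq x\vee y$, and $\antiauto$ is order-reversing, we obtain $\antiauto(x\vee y) \leq \antiauto(x)$ and $\antiauto(x\vee y)\leq\antiauto(y)$; thus $\antiauto(x\vee y)$ is a lower bound of $\antiauto(x)$ and $\antiauto(y)$, and by definition of the meet we conclude $\antiauto(x\vee y) \leq \antiauto(x)\wedge\antiauto(y)$.

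Next I would prove the reverse inequality using bijectivity, exactly as in the previous lemma. Because $\antiauto$ is surjective, there is some $z \in L$ with $\antiauto(z) = \antiauto(x)\wedge\antiauto(y)$, so $\antiauto(z)\leq\antiauto(x)$ and $\antiauto(z)\leq\antiauto(y)$. Since $\antiauto\inv$ is also order-reversing, this gives $x \leq z$ and $y \leq z$, hence $x\vee y \leq z$ by the definition of the join. Applying the order-reversing map $\antiauto$ once more yields $\antiauto(z) \leq \antiauto(x\vee y)$, that is, $\antiauto(x)\wedge\antiauto(y) \leq \antiauto(x\vee y)$. Combining the two inequalities proves $\antiauto(x\vee y)=\antiauto(x)\wedge\antiauto(y)$, and the identity $\antiauto(x\wedge y)=\antiauto(x)\vee\antiauto(y)$ follows by the same argument with $\vee$ and $\wedge$ interchanged.

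There is no genuine obstacle here; the only point requiring a little care is the second inequality, where — just as in \cref{lem:lattice_auto_join_interchange} — one must explicitly invoke that the inverse of a lattice anti-automorphism is again order-reversing (which is part of the definition) rather than merely that $\antiauto$ is a bijection.
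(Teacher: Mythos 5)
Your proof is correct and follows essentially the same route as the paper's: the first inequality from order-reversal applied to $x,y\leq x\vee y$, and the reverse inequality by pulling back a preimage $z$ of $\antiauto(x)\wedge\antiauto(y)$ and using that the inverse is order-reversing. If anything, your explicit appeal to $\antiauto\inv$ being order-reversing at that step is slightly more careful than the paper's phrasing, which attributes it to $\antiauto$ itself.
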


\begin{proof}
	The proof is the dual version of the proof of \cref{lem:lattice_auto_join_interchange}. We only show the first equality, the second is the analog.
	Let $x,y\in L$ be arbitrary. Since $x,y\leq x \vee y$ holds in $L$ and $\antiauto$ is an anti-automorphism, it follows that $\antiauto(x\vee y)\leq \antiauto(x),\antiauto(y)$ and hence $\antiauto(x\vee y)\leq \antiauto(x)\wedge\antiauto(y)$. For the other inequality, let $z\in L$ be such that $\antiauto(x)\wedge\antiauto(y)=\antiauto(z)$. Such an element exists, because $\antiauto$ is a self-bijection. By definition of the meet we have $\antiauto(z) = \antiauto(x)\wedge\antiauto(y) \leq \antiauto(x), \antiauto(y)$, which in turn implies $x,y\leq z$, since $\antiauto$ is order-reversing. Hence we have that $x \vee y \leq z$ and applying the anti-automorphism $\antiauto$ gives $\antiauto(x)\wedge\antiauto(y) = \antiauto(z) \leq \antiauto(x \vee y)$.
\end{proof}

The cover sets and covered sets behave nicely under lattice skew-automorphisms as well. We state the version for automorphisms here, because this is the one we need in \cref{chap:autos}.

\begin{lem}\label{lem:cov_set_autos}
	Let $L$ be a lattice and $\ncauto$ be a lattice automorphism of $L$. Then it holds for all $x\in L$ that $\ncauto(\bel(x))=\bel(\ncauto(x))$ and $\ncauto(\ab(x))=\ab(\ncauto(x))$. In particular, the cardinalities of cover sets and covered sets are preserved under automorphisms.
\end{lem}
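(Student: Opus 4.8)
The plan is to reduce everything to the single observation that a lattice automorphism preserves the cover relation in both directions. So the first step is to prove the claim: for all $x,y\in L$ we have $x\lessdot y$ if and only if $\ncauto(x)\lessdot\ncauto(y)$. For the forward implication, assume $x\lessdot y$. Since $\ncauto$ is an order-preserving injection, $\ncauto(x)<\ncauto(y)$. If there were a $z'\in L$ with $\ncauto(x)<z'<\ncauto(y)$, then, using that $\ncauto$ is surjective, write $z'=\ncauto(z)$; applying the order-preserving inverse $\ncauto\inv$ gives $x<z<y$, contradicting $x\lessdot y$. Hence $\ncauto(x)\lessdot\ncauto(y)$. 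The converse direction is obtained by running the same argument for the automorphism $\ncauto\inv$.

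Granting the claim, the two set identities are immediate. For $y\in L$ we have $y\in\bel(x)$ iff $y\lessdot x$ iff $\ncauto(y)\lessdot\ncauto(x)$ iff $\ncauto(y)\in\bel(\ncauto(x))$; since $\ncauto$ is a bijection this yields $\ncauto(\bel(x))=\bel(\ncauto(x))$. The statement for $\ab$ is the order-dual and follows by the identical argument applied to $\gtrdot$ in place of $\lessdot$ (equivalently, by noting that $\ncauto$ is also an automorphism of the opposite lattice, on which $\ab$ and $\bel$ are interchanged). Finally, since the restriction of $\ncauto$ to $\bel(x)$ is injective, $\#\bel(x)=\#\ncauto(\bel(x))=\#\bel(\ncauto(x))$, and likewise $\#\ab(x)=\#\ab(\ncauto(x))$, which is the ``in particular'' assertion.

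There is no genuine difficulty in this lemma; the only point that needs care is that ruling out an intermediate element $z'$ between $\ncauto(x)$ and $\ncauto(y)$ requires surjectivity of $\ncauto$ and order-preservation of $\ncauto\inv$ — both of which are part of the definition of a poset isomorphism. Without invoking the inverse one could only conclude $\ncauto(x)<\ncauto(y)$ with no information about the interval $[\ncauto(x),\ncauto(y)]$, so it is worth spelling this step out explicitly rather than asserting it.
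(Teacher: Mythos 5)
Your proof is correct and follows essentially the same route as the paper's: both arguments hinge on pulling back (or pushing forward) a hypothetical intermediate element using the order-preservation of $\ncauto$ and $\ncauto\inv$ to contradict the cover relation. Isolating the preservation of $\lessdot$ as a separate claim before deriving the set identities is a minor organizational difference, not a different method.
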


\begin{proof}
	We show that $y \in \ncauto(\bel(x))$ if and only if $y \in \bel(\ncauto(x))$. The proof for $\ncauto(\ab(x))=\ab(\ncauto(x))$ works analogously. 
	
	If $y \in \ncauto(\bel(x))$, then there is a $z \lessdot x$ such that $y=\ncauto(z)$. Suppose that there is a $v\in L$ such that $\ncauto(z) < v < \ncauto(x)$. Since $\ncauto\inv$ is order-preserving, we get that $z < \ncauto\inv(v) < x$, which contradicts $z \lessdot x$. Hence $y = \ncauto(z) \lessdot \ncauto(x)$, which means that $y \in \bel(\ncauto(x))$.
	
	For the other direction let $z\in L$ be such that $y=\ncauto(z) \lessdot \ncauto(x)$, that is $y \in \bel(\auto(x))$.
	If there were a $v \in L$ such that $z < v < x$, then $\ncauto(z) < \ncauto(v) < \ncauto(x)$ because $\ncauto$ is order-preserving. This is a contradiction to $y=\ncauto(z) \lessdot \ncauto(x)$. Hence $y=\ncauto(z)$ for $z\lessdot x$, which means that $y \in \ncauto(\bel(x))$.
\end{proof}

\subsection{The lattice associated to a vector space}\label{sec:lattice_vec_space}

To every finite dimensional vector space we associate a lattice as follows. Let $V$ be a vector space of dimension $n$ defined over the field $\F$. Let
\[
\lam(V) \coloneqq \Set{U \subseteq V \str U \text{ linear subspace of } V}
\]
be the set of all linear subspaces of $V$. We abbreviate $\lam$ for $\lam(V)$ if the vector space $V$ is understood. A partial order $\leq$ on $\lam$ is defined by
\[
U' \leq U \text{ if and only if } U' \subseteq U,
\]
that is by the inclusion of subspaces. With this partial order, the poset $\lam$ is a graded lattice of rank $n$. The rank of an element $U\in\lam$ is given by its dimension as a vector space, that is $\rk(U)=\dim(U)$. The join operation $\vee$ is given by the sum of subspaces and the meet operation $\wedge$ is the intersection of subspaces.
\cref{fig:lattice_F23} shows the lattice $\lam$ associated to the finite $3$-dimensional vector space $\F_2^3$. By $e_i$ we denote the $\ith$ standard basis vector of $\F_2^3$ and the brackets $\braket{\cdot }$ denote the linear span.

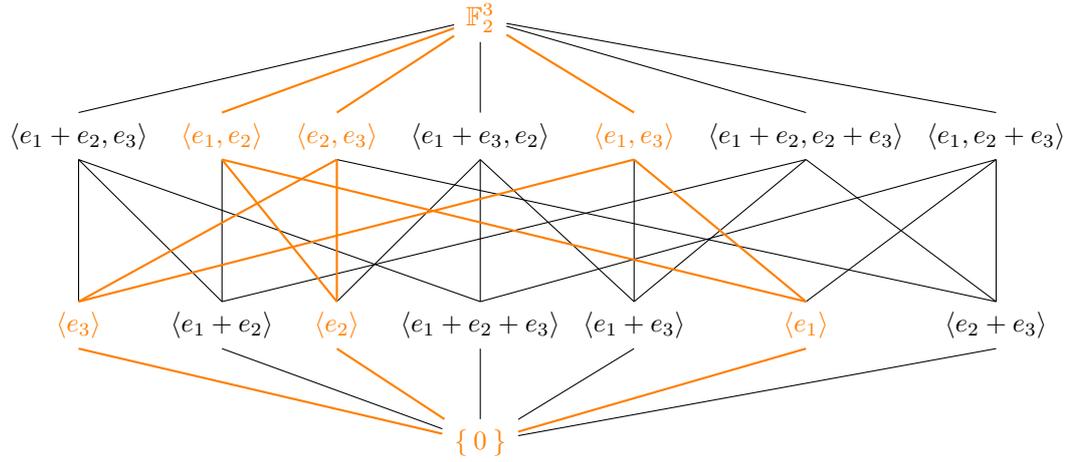
\begin{figure}
	\begin{center}
		\small{		
			\begin{tikzcd}[column sep=0mm,row sep = large]
			&&&\textcolor{Orange}{\F_2^3}
			\ar[d,dash,end anchor = north]\ar[dl,dash,Orange,thick,end anchor = north]\ar[dll,dash,Orange,thick,end anchor = north]\ar[dlll,dash,end anchor = north]\ar[dr,dash,Orange,thick,end anchor = north]\ar[drr,dash,end anchor = north]\ar[drrr,dash,end anchor = north]&&&\\
			
			\Braket{e_1+e_2,e_3} & \textcolor{Orange}{\Braket{e_1,e_2}} &\textcolor{Orange}{\Braket{e_2,e_3}}&\Braket{e_1+e_3,e_2}&
			\textcolor{Orange}{\Braket{e_1,e_3}}&\Braket{e_1+e_2,e_2+e_3} &\Braket{e_1,e_2+e_3}\\
			&&&&&&\\
			\textcolor{Orange}{\Braket{e_3}}
			\ar[uu,dash, start anchor=north, end anchor = south]
			&
			\Braket{e_1+e_2}
			\ar[uul,dash, start anchor=north, end anchor = south]
			\ar[uu,dash, start anchor=north, end anchor = south]
			\ar[uurrrr,dash, start anchor=north, end anchor = south]&
			\textcolor{Orange}{\Braket{e_2}}
			\ar[uur,dash, start anchor=north, end anchor = south]&
			\Braket{e_1+e_2+e_3}
			\ar[uu,dash, start anchor=north, end anchor = south]
			\ar[uurrr,dash, start anchor=north, end anchor = south]
			\ar[uulll,dash, start anchor=north, end anchor = south]&
			\Braket{e_1+e_3}
			\ar[uul,dash, start anchor=north, end anchor = south]
			\ar[uu,dash, start anchor=north, end anchor = south]
			\ar[uur,dash, start anchor=north, end anchor = south]&
			\textcolor{Orange}{\Braket{e_1}}
			\ar[uur,dash, start anchor=north, end anchor = south]&
			\Braket{e_2+e_3}
			\ar[uu,dash, start anchor=north, end anchor = south]
			\ar[uul,dash, start anchor=north, end anchor = south]
			\ar[uullll,dash, start anchor=north, end anchor = south]
			\ar[from=llllll,uull,Orange,thick,dash,start anchor=north, end anchor = south]
			\ar[from=llllll,uullll,Orange,thick,dash,start anchor=north, end anchor = south]
			\ar[from=llll,uulllll,Orange,thick,dash,start anchor=north, end anchor = south]
			\ar[from=llll,uullll,Orange,thick,dash,start anchor=north, end anchor = south]
			\ar[from=l,uulllll,Orange,thick,dash,start anchor=north, end anchor = south]
			\ar[from=l,uull,Orange,thick,dash,start anchor=north, end anchor = south]\\
			&&&\textcolor{Orange}{\Set{0}}
			\ar[u,dash,end anchor = south]\ar[ul,dash,Orange,thick,end anchor = south]\ar[ull,dash,end anchor = south]\ar[ulll,dash,Orange,thick,end anchor = south]\ar[ur,dash,end anchor = south]\ar[urr,dash,Orange,thick,end anchor = south]\ar[urrr,dash,end anchor = south]&&&
			\end{tikzcd}
		}
		\caption{The Hasse diagram of the lattice $\lam(\F_2^3)$. The  subset highlighted in orange is a Boolean sublattice.}
		\label{fig:lattice_F23}
	\end{center}
\end{figure}

If we again take a look at \cref{fig:hasse_bool3} and compare it to the Hasse diagram of the lattice $\lam=\lam(\F_2^3)$, we see that $\lam$ contains subgraphs that are isomorphic to the Hasse diagram of $\B_3$. Hence $\lam$ has sublattices that are Boolean lattices. Note that the atoms of the Boolean sublattice highlighted in \cref{fig:lattice_F23} are $\braket{e_1}$, $\braket{e_2}$ and $\braket{e_3}$ and that $\set{e_1,e_2,e_3}$ is a basis of $\F_2^3$.  
We can describe the Boolean sublattices of $\lam$ by bases, or more generally, by frames of $V$. A \emph{frame} of an $n$-dimensional vector space is a set of $n$ $1$-dimensional subspaces $L_1, \ldots, L_n$ such that $V$ is the direct sum $L_1\oplus \ldots \oplus L_n =V$. Note that every basis of $V$ induces a frame of $V$. Moreover, two bases give rise to the same frame if and only if their elements differ only by scalar multiples. This correspondence allows us to describe lattice automorphisms of $\lam$ using linear algebra.

\begin{lem}
	Every vector space automorphism $\vecauto\colon V \to V$ induces a unique lattice automorphism $\lamauto\colon \lam(V)\to\lam(V)$.
\end{lem}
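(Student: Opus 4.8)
The plan is to write down the obvious candidate and check the axioms. Define $\lamauto\colon\lam(V)\to\lam(V)$ by $\lamauto(U)\coloneqq\vecauto(U)=\Set{\vecauto(u)\str u\in U}$.

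First I would note that this is well defined: as $\vecauto$ is linear, the image $\vecauto(U)$ of any linear subspace $U\subseteq V$ is again a linear subspace of $V$, so $\lamauto$ indeed maps $\lam(V)$ to itself (and, since $\vecauto$ is injective, $\dim\vecauto(U)=\dim U$, so $\lamauto$ also preserves rank). Order-preservation is immediate from monotonicity of images: $U'\subseteq U$ forces $\vecauto(U')\subseteq\vecauto(U)$, i.e.\ $U'\leq U$ implies $\lamauto(U')\leq\lamauto(U)$. For the inverse, $\vecauto\inv\colon V\to V$ is again a vector space automorphism, so by the same construction $U\mapsto\vecauto\inv(U)$ is an order-preserving self-map of $\lam(V)$; because $\vecauto$ is a set-bijection of $V$, this map is a two-sided inverse of $\lamauto$. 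Thus $\lamauto$ is a poset isomorphism with order-preserving inverse, i.e.\ a lattice automorphism.

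It remains to address the word ``induces'' and the uniqueness claim. The natural reading is that a lattice automorphism $\Psi'$ of $\lam(V)$ is \emph{induced by} $\vecauto$ if it is compatible with $\vecauto$ on the atoms of $\lam(V)$, that is $\Psi'(\braket{v})=\braket{\vecauto(v)}$ for all $v\in V\setminus\Set{0}$; our $\lamauto$ plainly has this property. Uniqueness then follows from the atomicity of $\lam(V)$: since $V$ is finite dimensional, any $U\in\lam(V)$ can be written as a finite join $U=\braket{v_1}\vee\dots\vee\braket{v_k}$ with $v_1,\dots,v_k$ a basis of $U$, and by \cref{lem:lattice_auto_join_interchange} every lattice automorphism commutes with finite joins, so $\Psi'(U)=\Psi'(\braket{v_1})\vee\dots\vee\Psi'(\braket{v_k})$ is determined by the values $\braket{\vecauto(v_i)}$ alone. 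Hence $\Psi'=\lamauto$.

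The existence part of the argument is entirely routine. The only genuine subtlety — and the place I would be most careful in the write-up — is pinning down what ``induced'' is meant to mean, since without the compatibility condition on lines the uniqueness assertion would be false; once that condition is fixed, atomicity together with \cref{lem:lattice_auto_join_interchange} closes the argument with no computation.
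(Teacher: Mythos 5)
The paper states this lemma without giving a proof, so there is nothing to compare against; judged on its own, your argument is correct and is the standard one. The existence part (linearity gives well-definedness, monotonicity of images gives order-preservation, and $\vecauto\inv$ supplies the order-preserving inverse) is complete, and your reading of \enquote{induces \ldots\ unique} as uniqueness among lattice automorphisms agreeing with $\vecauto$ on the lines $\braket{v}$ is the sensible one — the atomicity argument via \cref{lem:lattice_auto_join_interchange} then closes it cleanly.
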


\begin{lem}\label{lem:lamauto_induces_vecautos}
	Let $V$ be a vector space defined over $\F$ and $\Set{ \alpha_1, \ldots, \alpha_n }$ a basis of $V$. 
	Let $\lamauto\colon \lam(V) \to \lam(V)$ be a lattice automorphism such that $\lamauto(\braket{ \alpha_i})=\braket{ \alpha_i}$ for all $i\in \set{1, \ldots, n}$.
	Then there exist $\lambda_1, \ldots, \lambda_n \in \F\setminus \set{0}$ such that the vector space automorphism $\vecauto\colon V \to V$ defined via $\alpha_i \mapsto \lambda_i\alpha_i$ induces the lattice automorphism $\lamauto$. 
	Moreover, the vector $(\lambda_1, \ldots, \lambda_n)\in \F^n$ is unique up to non-zero scalar multiples.
\end{lem}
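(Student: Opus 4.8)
The plan is to establish existence and uniqueness of $(\lambda_1,\dots,\lambda_n)$ separately, with existence carrying essentially all the content. For uniqueness, suppose $\vecauto$ and $\vecauto'$ are diagonal vector space automorphisms, $\vecauto(\alpha_i)=\lambda_i\alpha_i$ and $\vecauto'(\alpha_i)=\lambda_i'\alpha_i$, both inducing $\lamauto$. Then $\vecauto'\circ\vecauto\inv$ induces the identity automorphism of $\lam(V)$, hence fixes every $1$-dimensional subspace; evaluating it on $\braket{\alpha_1+\alpha_i}$ forces $\lambda_i'\lambda_i\inv=\lambda_1'\lambda_1\inv$ for all $i$, so that $(\lambda_1',\dots,\lambda_n')$ is a non-zero scalar multiple of $(\lambda_1,\dots,\lambda_n)$.

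For existence I would first note that $\lam(V)$ is atomic and that $\lamauto$ commutes with joins and meets by \cref{lem:lattice_auto_join_interchange}; hence $\lamauto$ is determined by its restriction to $1$-dimensional subspaces, and it suffices to produce scalars $\lambda_i$ for which the induced lattice automorphism agrees with $\lamauto$ on every line. Applying \cref{lem:lattice_auto_join_interchange} once more, $\lamauto$ fixes each coordinate subspace $W_S\coloneqq\bigvee_{i\in S}\braket{\alpha_i}=\braket{\alpha_i\str i\in S}$, and therefore preserves the support of a line: if $v=\sum_{i\in S}c_i\alpha_i$ with all $c_i\neq 0$, then $\braket{v}$ is distinguished inside $\lam(V)$ as a line contained in $W_S$ but in none of the $W_{S\setminus\set{j}}$, a property stable under $\lamauto$. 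In particular, for each pair $i\neq j$ there is a bijection $\sigma_{ij}\colon\F\setminus\set{0}\to\F\setminus\set{0}$ with $\lamauto(\braket{\alpha_i+c\alpha_j})=\braket{\alpha_i+\sigma_{ij}(c)\alpha_j}$; since a diagonal $\vecauto$ necessarily sends $\braket{\alpha_i+c\alpha_j}$ to $\braket{\alpha_i+(\lambda_j\lambda_i\inv)\,c\,\alpha_j}$, I am forced to set $\lambda_1\coloneqq 1$ and $\lambda_j\coloneqq\sigma_{1j}(1)$ for $j\geq 2$.

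It then remains to verify that this $\vecauto$ really induces $\lamauto$, and I claim this reduces to the single identity $\sigma_{ij}(c)=\lambda_j\lambda_i\inv c$ for all $i\neq j$ and all $c\in\F\setminus\set{0}$. Granting it, $\vecauto$ and $\lamauto$ agree on every line of support size at most $2$ (size at most $1$ being the hypothesis on the $\alpha_i$), and for a line $\braket{v}$ of support $S$ with $\lvert S\rvert\geq 3$ one picks distinct $m,m'\in S$, writes $\braket{v,\alpha_m}=\braket{v_m}\vee\braket{\alpha_m}$ with $v_m$ supported on $S\setminus\set{m}$ and likewise for $m'$, and uses $\braket{v}=\braket{v,\alpha_m}\wedge\braket{v,\alpha_{m'}}$ (the two planes being distinct because $\lvert S\rvert\geq 3$) together with the compatibility of both automorphisms with $\vee$ and $\wedge$; an induction on $\lvert S\rvert$ propagates agreement to every line, whence $\vecauto$ induces $\lamauto$.

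Proving the crux identity $\sigma_{ij}(c)=\lambda_j\lambda_i\inv c$ is where I expect the real difficulty; it is essentially the fundamental theorem of projective geometry relative to a fixed frame. If $\lvert\F\rvert\leq 3$ it is automatic, because every bijection of a set with at most two elements is multiplication by a scalar. In general one brings in a third index $k$ --- this is where $n\geq 3$ is used --- and works in the rank-$3$ subspace $W_{\set{i,j,k}}$, where the lines $\braket{\alpha_i+a\alpha_j}$, $\braket{\alpha_j+b\alpha_k}$ and $\braket{\alpha_k+c\alpha_i}$ are coplanar precisely when $abc=-1$, a relation that $\lamauto$ must respect; this yields the functional equation $\sigma_{ij}(a)\,\sigma_{jk}(b)\,\sigma_{ki}(c)=-1$ whenever $abc=-1$, and combined with the relations $\sigma_{ji}(x)=\sigma_{ij}(x\inv)\inv$ coming from $\braket{\alpha_i+x\alpha_j}=\braket{\alpha_j+x\inv\alpha_i}$, the standard argument forces every $\sigma_{ij}$ to be one fixed field automorphism of $\F$ followed by a multiplication. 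Since that field automorphism fixes $0$ and $1$, it is the identity as soon as $\F$ has no nontrivial automorphisms --- which holds for $\F=\F_2$, the case relevant to this thesis, and for prime fields generally --- giving $\sigma_{ij}(c)=\lambda_j\lambda_i\inv c$. (For $n=1$ the lemma is trivial, and for $n=2$ one additionally needs $\lvert\F\rvert\leq 3$; over $\F_2$ every step collapses, since $\F_2\setminus\set{0}=\set{1}$ and then $\lamauto$ is forced to be the identity.)
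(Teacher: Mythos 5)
The paper states this lemma without proof, so there is nothing to compare your argument against; it has to stand on its own. Most of it does: the uniqueness argument via $\vecauto'\circ\vecauto\inv$ evaluated on $\braket{\alpha_1+\alpha_i}$ is correct; the reduction to atoms, the lattice-theoretic characterisation of the support of a line via the coordinate subspaces $W_S$, and the induction on support size using $\braket{v}=\braket{v,\alpha_m}\wedge\braket{v,\alpha_{m'}}$ are all sound. Your observation that the statement secretly requires $\F$ to have no nontrivial field automorphisms is a genuine catch: over $\F_4$ the coordinatewise Frobenius induces a lattice automorphism fixing every $\braket{\alpha_i}$ that is induced by no diagonal \emph{linear} map, so the lemma as stated is false in the generality claimed, and is rescued only because the thesis applies it over $\R$ and $\F_p$.

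The gap is in the crux step. The coplanarity relation $\sigma_{ij}(a)\sigma_{jk}(b)\sigma_{ki}(c)=-1$ whenever $abc=-1$, together with $\sigma_{ji}(x)=\sigma_{ij}(x\inv)\inv$, forces the normalised maps $\tau_{ij}(c)\coloneqq\sigma_{ij}(1)\inv\sigma_{ij}(c)$ to be \emph{multiplicative} (set $b=1$ to get $\tau(-a\inv)=-\tau(a)\inv$, substitute back to get $\tau(ab)=\tau(a)\tau(b)$) --- but nothing more. A multiplicative bijection of $\F\setminus\set{0}$ fixing $1$ need not be additive: $c\mapsto c^3$ over $\R$, or $c\mapsto c^3$ over $\F_5$, satisfies every relation you write down and is neither a scalar multiplication nor a field automorphism. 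So "the standard argument" does not follow from the configurations you have encoded; the fundamental theorem of projective geometry needs configurations that see the additive structure of $\F$ (classically, collinearity of triples such as $\braket{u}$, $\braket{v}$, $\braket{u+v}$ for suitably chosen $u,v$ of overlapping support, which yields $\tau(a+b)=\tau(a)+\tau(b)$). Without that extra input your proof establishes the lemma only over $\F_2$ and $\F_3$, where multiplicativity already forces $\tau=\id$; for $\R$ and for $\F_p$ with $p\geq 5$ --- both of which the thesis uses --- the argument is incomplete. (A minor secondary point: your remark that the case $\lvert\F\rvert\leq 3$ is "automatic" because each $\sigma_{ij}$ is some scalar multiplication does not by itself give the required consistency $\sigma_{ij}(1)=\lambda_j\lambda_i\inv$ for $i,j\neq 1$; that still needs the three-index relation, which fortunately you do derive.)
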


\section{Simplicial complexes}\label{sec:simpl_compl}

There are different ways to define a simplicial complex in the literature. We use the definition of \cite{staece}.

\begin{defi}
	A \emph{simplicial complex} with vertex set $V$ is a collection $\si$ of subsets of $V$ such that
	\begin{enumerate}
		\item $\set{v} \in \si$ for all $v\in V$, and
		\item if $A \in \si$, then $A' \in \si$ for all $A' \subseteq A$.
	\end{enumerate}	
	The elements of $\si$ are called \emph{simplices}. If $A' \subseteq A \in \si$, then $A'$ is said to be a \emph{face} of $A$. 
	The \emph{rank} of a simplex $A$ is defined to be its cardinality, that is $\rk(A) \coloneqq \#A$, and the \emph{dimension} of $A$ is $\dim(A)\coloneqq \rk(A)-1$. A simplex of dimension $k$ is called \emph{$k$-simplex}. The dimension of the simplicial complex is the supremum of the dimensions of its simplices, that is $\dim(\si)\coloneqq \sup_{A\in \si}\dim(A)$. The rank $1$ elements of $\si$ are called \emph{vertices}. A subset $\si'\subseteq \si$ is called \emph{subcomplex} of $\si$ if for all $A\in \si'$ and  $A' \subseteq A$ it holds that $A' \in \si'$. 
	The \emph{$k$-skeleton} of $\si$ is the $k$-dimensional subcomplex $\si^{(k)}$ of $\si$ that consists of all simplices of $\si$ that have dimension at most $k$.
	A subcomplex $\si'$ with vertex set $V'\subseteq V$ is said to be \emph{spanned} by $V'$ if for every $A\in \si$ with $A \subseteq V'$ also $A\in \si'$. We then say that $\si'$ is an \emph{induced} subcomplex.   
\end{defi}

If $\si_1$ and $\si_2$ are simplicial complexes with respective vertex sets $V_1$ and $V_2$, then the \emph{(simplicial) join} of $\si_1$ and $\si_2$ is the simplicial complex $\si_1 \ast \si_2$ with vertex set $V_1\cup V_2$ and simplices $A_1 \cup A_2$ for every $A_1\in \si_1$ and $A_2 \in \si_2$. We call $\si_1$ and $\si_2$ the \emph{join factors} of $\si_1\ast\si_2$. Note that the join factors can be naturally interpreted as subcomplex of the join by considering simplices of the form $A \cup \emptyset$ for $A\in \si_i$. The join of a finite collection of simplicial complexes is defined analogously. 

The \emph{affine realization} of a $d$-dimensional simplex is the convex hull of its vertices in general position in $\R^d$. The \emph{affine realization} of a simplicial complex is the union of the affine realizations of all its simplices, hence it is a topological space. It is convenient to abuse notation and use the same symbol for both abstract simplices and simplicial complexes, and their affine realizations. This allows us to speak, for instance, of the homotopy type of the simplicial complex, which is the homotopy type of its affine realization \cite[Chap. I.7]{bh}. In \cref{defi:metric} we define the analog in spherical geometry, the spherical realization of a simplicial complex.

\begin{defi}
	Let $\si$ be a $d$-dimensional simplicial complex. If every simplex of $\si$ is a face of a $d$-simplex, then $\si$ is called \emph{pure}. Now suppose that $\si$ is pure. The simplices of dimension $d-1$ are called \emph{panels}. Two $d$-dimensional simplices $A,B \in \si$ are called \emph{adjacent} if their intersection is a panel. 
	
	A \emph{gallery} of length $k$ in $\si$ is a sequence $\gamma = (C_0, \ldots, C_k)$ of maximal simplices such that $C_{i-1}$ and $C_i$ are adjacent for all $1\leq i <k$. We say that the gallery $\gamma$ \emph{connects} $C_0$ and $C_k$. If any two maximal simplices of $\si$ can be connected by a gallery, then $\si$ is called a \emph{chamber complex} and its maximal simplices are called \emph{chambers}. The set of chambers of a chamber complex $\si$ is denoted by $\C(\si)$. The \emph{(gallery) distance} of two chambers $C,C' \in \si$ is the minimal length of a gallery connecting $C$ and $C'$ and is denoted by $\dig(C,C')$. A gallery connecting $C$ and $C'$ is called \emph{minimal} if it has length $\dig(C,C')$. 
	
	A subcomplex $\si'$ of a chamber complex $\si$ is called \emph{chamber subcomplex} of $\si$ if $\si'$ is a chamber complex of dimension $\dim(\si)$. The \emph{convex hull} of two chambers $C, D\in \si$ is the chamber subcomplex $\convg(C,D)\subseteq \si$ whose chambers are the chambers of minimal galleries connecting $C$ and $D$. 
\end{defi} 

The following concepts allow the local study of simplicial complexes. 
Let $\si$ be a simplicial complex and $A\in \si$ a simplex. The \emph{open star} of $A$ in $\si$ can be seen as the analog of an open neighborhood in a metric space and is defined as
\[
\osta(A,\si) \coloneqq \Set{A'\in \si \str A \subseteq A'} \subseteq \si,
\]
that is the collection of all simplices that contain $A$.
Note that the open star is \emph{not} a subcomplex of $\si$ in general, since it is not closed under containment. However, if we remove an open star from a simplicial complex, the remaining part is still a simplicial complex. The simplicial analog of the boundary of a neighborhood is the \emph{link} of $A$ in $\si$, which is defined as  
\[
\lk(A,\si) \coloneqq \Set{A'\in\si \str A \cap A' = \emptyset \text{ and } A \cup A' \in \si} \subseteq \si.
\]
The link is always a subcomplex of $\si$, as is its union with the open star 
\[
\sta(A,\si) \coloneqq \lk(A,\si) \cup \osta(A,\si),
\]
which is called the \emph{star}, or \emph{closed star} if we want emphasize the closure property, of $A$ in $\si$. \cref{fig:link_star} shows an example of a link and an open star. If $\si$ is a chamber complex, then the link and the star are chamber complexes themselves, provided they are connected. Note that the link is \emph{not} a chamber subcomplex of $\si$ in general, since $\dim(\lk(A,\si))<\dim(\si)$ for all non-empty simplices $A\in\si$. However, the star of each simplex is a chamber subcomplex of $\si$.
\begin{figure}
	\begin{center}
		\begin{tikzpicture}
		\def\r{2.1} 
		\def\grauOut{Gray} 
		\def\grauIn{Superlightgray} 
		\def\ColorOut{Blue} 
		\def\SurfColorIn{lightOrange} 
		\def\EdgeColorIn{Orange} 
		\foreach \w in {1,2,...,6}
		\coordinate (e\w) at (\w * 60 : \r );
		\coordinate (f1) at ($(e5) + (\r,0)$);
		\coordinate (f2) at ($(e2) - (\r,0)$);
		\filldraw[fill=\grauIn, draw=\grauOut] (e2)--(e3)--(f2)--(e2)
		(e5)--(e6)--(f1)--(e5);
		\filldraw[fill=\SurfColorIn,  draw=\ColorOut,very thick] (e1) -- (e2) --
		(e3) -- (e4) -- (e5) -- (e6) -- (e1);
		\foreach \n in {1,2,...,6}
		\draw[\EdgeColorIn,very thick] (0,0) -- (e\n);
		\foreach \w in {1,2,...,6}
		\node[\ColorOut] at (e\w) [mpunkt] {};
		\node[Orange] at (0,0) [mpunkt] {};
		\foreach \n in {1,2}
		\node[\grauOut] at (f\n) [mpunkt] {};
		\end{tikzpicture}
		\caption{The link of the central vertex is the subcomplex highlighted in blue and the simplices of the open star are colored in orange. The star is the colored subcomplex spanned by the blue and orange vertices.}%
		\label{fig:link_star}%
	\end{center}
\end{figure}
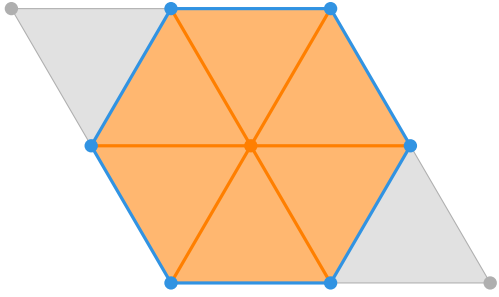

\begin{rem}\label{rem:star_contractible}
	The closed star of a simplex $A\in\si$ is the simplicial join of the link with the simplex $A$ itself, that is
	\[
	\sta(A,\si)=\lk(A,\si) \ast A.
	\]
	In metric terms, the closed star is the cone of the link with apex $A$, and as such it is contractible.
\end{rem}

\begin{rem}\label{rem:link_induced_subcplx}
	If $\si' \subseteq \si$ is a subcomplex and $A\in \si'$ a simplex, then it holds that $\lk(A,\si')\subseteq \lk(A,\si)\cap \si'$. In general, the equality does not hold, as one can see for instance if $\si$ is a $2$-simplex and $\si'$ its boundary. However, if we further assume that $\si'$ is an \emph{induced} subcomplex of $\si$, we get that $\lk(A,\si')= \lk(A,\si)\cap \si'$. In this case it also holds that $\sta(A,\si')= \sta(A,\si)\cap \si'$.
\end{rem}

\begin{defi}
	Let $\si$ and $\si'$ be two simplicial complexes with vertex sets $V$ and $V'$, respectively. Let $\auto\colon V \to V'$ be a map. The induced map $\auto\colon \si \to \si'$ is called \emph{simplicial map} if simplices are mapped to simplices. It is called \emph{non-degenerate} if $\auto$ preserves the dimension of all simplices. A non-degenerate simplicial map between chamber complexes is called \emph{chamber map}. An \emph{isomorphism} between two simplicial complexes is a bijective simplicial map $\si \to \si'$. In this case, $\si$ and $\si'$ are said to be \emph{isomorphic}. 
\end{defi}

\subsection{Order complexes}\label{sec:order_complex}

For every poset we can construct a simplicial complex, which basically \enquote{encodes} the same information as the poset itself.

\begin{defi}
	For a poset $P$, we define $\|P\|$ to be the simplicial complex  with
	\begin{enumerate}
		\item vertex set $P$, and
		\item $\set{p_0, p_1, \ldots, p_k}$ is a $k$-simplex of $\|P\|$ if and only if $(p_0,p_1,\ldots, p_k)$ is a chain in $P$.
	\end{enumerate}
\end{defi}

If $P$ is a graded poset of rank $n$, then the complex $\|P\|$ is a pure simplicial complex of dimension $n$. Moreover, we assign the rank $k$ of the element $p\in P$ to the vertex $p\in \|P\|$ and call $p\in \|P\|$ a \emph{rank $k$ vertex}.
A poset map $P \to Q$ induces a simplicial map $\|P\| \to \|Q\|$. If the poset map is injective, then the simplicial map is an embedding and in particular non-degenerate.

\begin{rem}\label{rem:oc_ind_subcplx}
	If $Q \subseteq P$ is a subposet, then $\|Q\| \subseteq \|P\|$ is an induced subcomplex, since the simplices of $\|P\|$ and $\|Q\|$ are uniquely determined by their vertex sets.
\end{rem}

If $L$ is a graded lattice with minimum $\mi$ and maximum $\ma$, then every maximal simplex in $\|L\|$ contains the two vertices $\mi$ and $\ma$.

\begin{defi}
	Let $L$ be a lattice with minimum $\mi$ and maximum $\ma$. Then the \emph{order complex} of the lattice $L$ is the simplicial complex $|L|\coloneqq \|L \setminus \set{\mi,\ma}\|$. 
\end{defi}

For a lattice $L$ with minimum $\mi$ and maximum $\ma$, the simplicial complex $\|L\|$ is the simplicial join of the order complex $|L|$ with the edge $\Set{\mi, \ma}$. Hence $\|L\|$ is contractible whereas the complex $|L|$ may have interesting homotopy type. This is for instance the case for a Boolean lattice $\B_3$, whose order complex is depicted in \cref{fig:order_complex_bool3}. Note that it is isomorphic to the barycentric subdivision of the boundary of a triangle and therefore has the homotopy type of a circle.

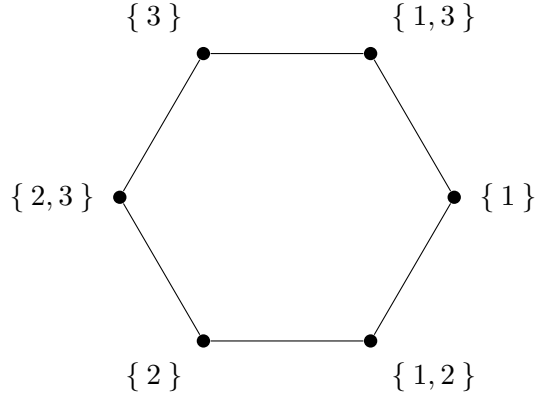
\begin{figure}
	\begin{center}
		\begin{tikzpicture}
		\foreach \w in {1,...,6} 
		\node[mpunkt] (p\w) at (-\w * 360/6 +60  : 2.2cm)  {};
		\foreach \w/\lab/\pos in {1/\Set{1}/right,2/\Set{1,2}/below right,3/\Set{2}/below left,4/\Set{2,3}/left,5/\Set{3}/above left,6/\Set{1,3}/above right} 
		\node[\pos=2mm]  at (p\w)  {$\lab$};
		
		\draw (p1)--(p2)--(p3)--(p4)--(p5)--(p6)--(p1);
		\end{tikzpicture}
		
		\caption{The order complex of the Boolean lattice $\B_3$.}
		\label{fig:order_complex_bool3}
	\end{center}
\end{figure}

Recall that the \emph{barycentric subdivision} of a simplicial complex $\si$ is a simplicial complex $\sd(\si)$, which has
\begin{enumerate}
	\item vertex set $\si \setminus \emptyset$, and
	\item $\set{A_0, A_1, \ldots, A_k}$ is a $k$-simplex of $\sd(\si)$ if and only if $A_0 \subsetneq A_1 \subsetneq \ldots \subsetneq A_k$ holds in $\si$.
\end{enumerate}

Hence the non-empty simplices of $\si$ are in bijection with the vertices of $\sd(\si)$.

\begin{lem}\label{rem:boolean_lattice_sd_bdry}
	The order complex of the Boolean lattice $\B_n$ is isomorphic to the barycentric subdivision of the boundary of the $(n-1)$-simplex.
\end{lem}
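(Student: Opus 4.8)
The plan is to identify both the order complex $|\B_n|$ and the subdivision $\sd(\partial\Delta^{n-1})$ with the order complex of one and the same poset, namely the poset of nonempty proper subsets of $[n] \coloneqq \set{1,\ldots,n}$ ordered by inclusion; the asserted isomorphism is then the identity map on vertices.

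First I would describe the face poset of the boundary of the $(n-1)$-simplex. Realizing the $(n-1)$-simplex $\Delta^{n-1}$ as an abstract simplicial complex on the vertex set $[n]$, its faces are exactly the subsets of $[n]$, so the faces of its boundary $\partial\Delta^{n-1}$ are exactly the proper subsets of $[n]$. Hence, as a poset under inclusion and with the empty face included, the face poset of $\partial\Delta^{n-1}$ equals $\B_n \setminus \set{[n]} = \B_n \setminus \set{\ma}$.

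Next I would invoke the definitions of the barycentric subdivision and of the order complex: for any simplicial complex $\Gamma$, the complex $\sd(\Gamma)$ has vertex set $\Gamma \setminus \set{\emptyset}$, and $\set{A_0, \ldots, A_k}$ is a $k$-simplex of $\sd(\Gamma)$ precisely when $A_0 \subsetneq A_1 \subsetneq \cdots \subsetneq A_k$, i.e.\ precisely when $(A_0, \ldots, A_k)$ is a chain in the poset $(\Gamma \setminus \set{\emptyset}, \subseteq)$; thus $\sd(\Gamma) = \|(\Gamma \setminus \set{\emptyset}, \subseteq)\|$. Taking $\Gamma = \partial\Delta^{n-1}$, using the previous paragraph, and recalling that $\mi = \emptyset$ in $\B_n$, we obtain
\[
\sd(\partial\Delta^{n-1}) = \|\B_n \setminus \set{\emptyset, \ma}\| = \|\B_n \setminus \set{\mi, \ma}\| = |\B_n|,
\]
where the last equality is the definition of the order complex of the lattice $\B_n$.

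Since this is just a chain of definitional identifications, I do not expect any genuine obstacle. The only points that need a moment's care are the standard fact that the faces of $\partial\Delta^{n-1}$ on the vertex set $[n]$ are precisely the proper subsets of $[n]$, and the translation between a chain $(A_0,\ldots,A_k)$ in $\B_n \setminus \set{\mi,\ma}$ and a strictly increasing sequence $A_0 \subsetneq \cdots \subsetneq A_k$ of nonempty proper subsets of $[n]$. Alternatively, one can bypass face posets entirely and simply check that the identity map on the common vertex set $\B_n \setminus \set{\mi,\ma}$ is a simplicial isomorphism $|\B_n| \to \sd(\partial\Delta^{n-1})$, verifying that it and its inverse both carry simplices to simplices; this comes down to the same observation.
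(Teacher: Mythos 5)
Your proof is correct and follows essentially the same route as the paper's: both arguments unwind the definitions to identify the vertices of each complex with the nonempty proper subsets of $\set{1,\ldots,n}$ and the simplices with strictly increasing chains of such subsets, concluding that the identity on vertices is a simplicial isomorphism. Your packaging via the common face poset is a slightly tidier way of saying the same thing, but there is no substantive difference.
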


\begin{proof}
	Let $V=\Set{1, \ldots, n}$ and let $\si$ be the $(n-1)$-simplex with vertex set $V$. Its boundary is the $(n-2)$-skeleton, which we denote by $\partial\si$. The vertices of $\sd(\partial\si)$ are  the proper subsets of $V$. The simplices of $\sd(\partial \si)$ are given by nested sequences $A_0 \subsetneq A_1 \subsetneq \ldots \subsetneq A_k$ of vertices $A_0, \ldots, A_k$. If $A,A' \in \sd(\partial\si)$ are two simplices, then $A' \subseteq A$ if and only if the sequence of vertices corresponding to $A'$ is a subsequence of the one corresponding to $A$. 
	
	On the other hand, the vertices of the order complex $|\B_n|$ of the Boolean lattice $\B_n$ are the proper subsets of $V$. The simplices in the order complex are given by chains in the lattice $\B_n$. Let $C,C' \subseteq \B_n$ be chains and $A,A'\in |\B_n|$ the corresponding simplices. Then $C' \subseteq C$ if and only if $A' \subseteq A$ by definition of the order complex. 
	
	As $|\B_n|$ and $\sd(\partial\si)$ have the same vertices, simplices and inclusion relations, they are isomorphic simplicial complexes.
\end{proof}

We saw that the essential information of a Boolean lattice $\B_n$ is equivalently stored in its order complex, the barycentric subdivision of the boundary of the $(n-1)$-simplex. Sometimes it is useful to have the theory of posets and lattices available, so we use the poset-theoretic interpretations. This perspective is mainly used in \cref{part2}. But at other points, mainly in \cref{part3} of this thesis, it is more convenient to study simplicial complexes and use the theory established there.

Having available two different but equivalent perspectives allows us to transfer concepts from one to the other. In \cref{cha:buildings} we see that the order complex of the lattice $\lam(V)$ is a spherical building. Buildings are a well-studied class of simplicial complexes surrounded by a rich theory.

\cleardoublepage
\chapter{Coxeter groups}\label{chap:cox}

Coxeter groups play an important role in various fields of mathematics. They are central to both buildings and non-crossing partitions. The former are built-up from Coxeter complexes, and the latter sit inside a Coxeter group. The main references for Coxeter groups are the books \cite{ab}, \cite{bro}, \cite{davis} and \cite{hum}.

\section{Coxeter systems}

\begin{defi}
	Let $S=\set{s_1, \ldots, s_n}$ be a finite set and $m=(m(s_i,s_j))_{1\leq i,j \leq n}$ be a matrix with entries in $\N \cup \infty$. Then $m$ is called a \emph{Coxeter matrix} if it satisfies
	\begin{enumerate}
		\item $m(s_i,s_j)=m(s_j,s_i)$ for all $1\leq i,j \leq n$, and
		\item $m(s_i,s_j)=1$ if and only if $i=j$.
	\end{enumerate}
	A Coxeter matrix $m$ gives rise to a group $W$ via the presentation
	\[
	\Braket{ S | (s_is_j)^{m(s_i,s_j)} \text{ if } m(s_i,s_j)\neq \infty}
	\]
	and the pair $(W,S)$ is called \emph{Coxeter system}. We refer to $W$ as \emph{Coxeter group} and to $S$ as the set of \emph{Coxeter generators} or \emph{Coxeter generating set}. The \emph{rank} of the Coxeter system $(W,S)$ is the cardinality of $S$. A Coxeter system or a Coxeter group is called \emph{finite} or \emph{spherical} if $W$ is a finite group.
\end{defi}

Note that a Coxeter system is always equipped with a group presentation and a Coxeter matrix.

Although it is not a priori clear, the Coxeter generators are different elements in the group and the Coxeter generating set $S$ is a minimal generating set for $W$. Moreover, the order in $W$ of the product $s_is_j$ is indeed $m(s_i,s_j)$ \cite[Prop. 1.1.1, Cor. 1.4.8]{bb}. To simplify notation we set $m_{ij}\coloneqq m(s_i,s_j)$. 

It is convenient to encode the information of a Coxeter matrix $m$, which completely determines the Coxeter system $(W,S)$ up to isomorphism, in a graphical way.

\begin{defi}
	Let $(W,S)$ be a Coxeter system with Coxeter matrix $m$. The \emph{Coxeter diagram} corresponding to $(W,S)$ is an undirected, labeled graph with vertex set $S$. There is an edge between $s_i$ and $s_j$ if and only $m_{ij} \geq 3$. An edge $\set{s_i,s_j}$ is labeled with $m_{ij}$ whenever $m_{ij}>3$. 
\end{defi} 

A Coxeter system $(W,S)$ is said to be \emph{irreducible} if its Coxeter diagram is connected, and \emph{reducible} otherwise. The finite Coxeter systems are completely classified up to isomorphism via the classification of possible Coxeter diagrams \cite[Chap. 2, Thm. 6.4]{hum}. The complete list of Coxeter diagrams for finite irreducible Coxeter systems is shown in \cref{fig:cox_diagrams}. Every connected Coxeter diagram comes equipped with a type $X_n$, which is shown in \cref{fig:cox_diagrams}, where the index $n$ equals the number of vertices of the diagram. The \emph{type} of a irreducible Coxeter system, and also of the Coxeter group, is defined to be the type of the corresponding Coxeter diagram. Note that a Coxeter system of type $X_n$ has rank $n$. We denote the Coxeter group of type $X_n$ by $W(X_n)$. Whenever we refer to a Coxeter group $W$ only, we assume that the corresponding Coxeter generating set $S$ and hence the Coxeter system $(W,S)$ is understood.

In this thesis, we are only interested in finite Coxeter groups, but there is a rich theory of infinite Coxeter groups as well. A class of infinite Coxeter systems, the so called \emph{affine} Coxeter systems, are also classified by its Coxeter diagrams \cite{hum}. 

\renewcommand{\arraystretch}{1,5}
\begin{table}
	\begin{center}
		\begin{tabular}{|m{2.4cm}||m{6.2cm}|}
			\hhline{|-||-|}
			type of $W$ & Coxeter diagram\\
			\hhline{:=::=:}
			$A_n$, $n \geq 1$ & \CoxA\\
			$B_n$, $n \geq 2$ & \CoxB\\[3mm]
			$D_n$,  $n \geq 3$ & \CoxD\\
			$E_6$  & \CoxEsechs\\
			$E_7$  & \CoxEsieben\\
			$E_8$  & \CoxEacht\\
			$F_4$  & \CoxF\\
			$H_3$  & \CoxHdrei\\
			$H_4$  & \CoxHvier\\
			$I_2(m)$, $m\geq 3$  & \CoxIm\\
			\hhline{|-||-|}
		\end{tabular}
		\caption{The complete list of connected Coxeter diagrams that give rise to all finite irreducible Coxeter groups.}
		\label{fig:cox_diagrams}
	\end{center}
\end{table}
\renewcommand{\arraystretch}{1}

Note that all Coxeter diagrams corresponding to finite Coxeter systems are trees. There are three infinite families of finite Coxeter groups with increasing rank, those of type $A_n$ for $n\geq 1$, type $B_n$ for $n \geq 2$, and type $D_n$ for $n\geq 3$. These three families are called the \emph{classical types}. A thorough study of the Coxeter groups of classical types, and in particular their corresponding non-crossing partitions, is given in \cref{chap:classical_types}. Note that the Coxeter group of type $A_{n-1}$ is the symmetric group $S_n$ on $n$ letters. 

There is another infinite family of finite irreducible Coxeter systems of rank $2$, the family of type $I_2(m)$ for an integer $m \geq 3$. The Coxeter group of type $I_2(m)$ is the dihedral group $D_m$ of order $2m$, which is the symmetry group of the regular $m$-gon. Dihedral groups play an important role in \cref{chap:autos}, where we study automorphisms and anti-automorphisms of non-crossing partitions.

\begin{figure}
	\begin{center}
		\begin{tikzpicture}
		\def\l{3} 
		\def\d{\l/6} 
		\def\rd{\l/1.3} 
		\def\FarbemarkObj{Orange} 
		\def\FarbeObj{black} 
		
		f\clip (-\l -1, -\l +1) rectangle (\l + 1, \l + 1);
		\draw[Lightgray] (330:\rd) -- (90:\rd) -- (210:\rd) -- cycle;
		
		
		\foreach \winkel/\farbe in {5/\FarbeObj, 11/\FarbeObj}
		\draw[\farbe] (0,0) -- (\winkel * 30: \l);
		
		\foreach \winkel/\farbe in {1/\FarbemarkObj, 3/\FarbemarkObj, 7/\FarbemarkObj, 9/\FarbemarkObj}
		\draw[\farbe, thick] (0,0) -- (\winkel * 30: \l);

		\begin{scope}[Blue]
		\node[above right=0mm and 0.9mm, fill=white, fill opacity = 0.8, text opacity = 1, inner sep=0] at (0,0) {$\frac{2\pi}{3}$};	
		\draw (0,0)+(330:\rd/3) arc (330:450:\rd/3);
		\draw[->, thick] (0,0)+(335:\rd) arc (335:445:\rd) node[pos=0.75,above right] {$\sigma$};
		\end{scope}
		
		\begin{scope}[\FarbemarkObj]
		\node[below left=1mm and -0.5mm] at (0,0) {$\frac{\pi}{3}$};
		\draw[thick] (0,0)+(210:\rd/3) arc (210:270:\rd/3);
		\end{scope}
		
		\draw[thick, \FarbemarkObj,<->] (30 : \l + \d/2) + (0,-\d/1.3) arc (-10:80:\d) node[midway,above right] {$\sigma_2$}; 
		\draw[thick, \FarbemarkObj,<->] (90 : \l + \d/2) + (\d/1.5, -\d/4) arc (45:135:\d) node[midway, above] {$\sigma_1$}; 
		\end{tikzpicture}	
		\caption{The symmetries of the triangle are generated by reflections.}
		\label{fig:symmetry_triangle}
	\end{center}
\end{figure}
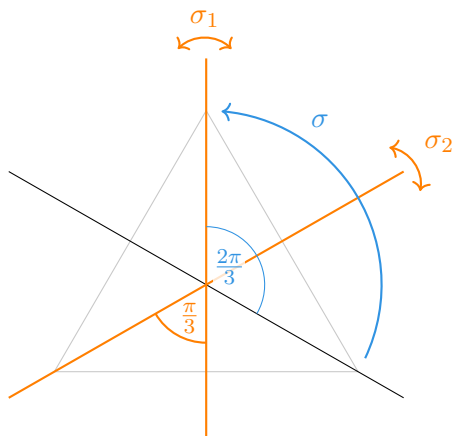

\begin{exa}\label{exa:dihedral_group}
	Let $(W,S)$ be the Coxeter system of type $I_2(m)$ for an integer $m \geq 3$. Then the Coxeter group $W(I_2(m))$ has the presentation 
	\[
	\Braket{ s_1, s_2 | s_1^2,\,\; s_2^2,\,\; (s_1s_2)^m }
	\]
	and can be interpreted as the symmetry group of the regular $m$-gon, that is the dihedral group $D_m$ of order $2m$, as follows. Consider two lines $H_1$ and $H_2$ in the Euclidean plane that intersect in the origin at an angle of $\frac{\pi}{m}$. \cref{fig:symmetry_triangle} depicts the situation for $m=3$. Let $\sigma_1$ and $\sigma_2$ be the Euclidean reflections on $H_1$ and $H_2$, respectively, which have order $2$. The composition $\sigma_1\circ\sigma_2$ is rotation through an angle of $\frac{2\pi}{m}$ and hence of order $m$. The group $D_m$ of isometries of the Euclidean plane generated by $\sigma_1$ and $\sigma_2$ leaves an inscribed $m$-gon invariant if the lines $H_1$ and $H_2$ go through diagonals of the $m$-gon. If $m$ is even, a \emph{diagonal} of the $m$-gon is either the segment connecting two opposite vertices, or the segment connecting the midpoints of two opposite sides. If $m$ is odd, then a \emph{diagonal} of an $m$-gon is a segment connecting a vertex with the midpoint of its opposite side. Note that since the angle between $H_1$ and $H_2$ is $\frac{\pi}{m}$, this guarantees that $H_1$ and $H_2$ pass through consecutive diagonals of the $m$-gon.
	
	The map $W(I_2(m)) \to D_m$ given by $s_1 \mapsto \sigma_1$ and $s_2\mapsto \sigma_2$ is a group isomorphism. In particular, the product $s_1s_2$ gets mapped to a rotation through an angle of $\frac{2\pi}{m}$.
	Another canonical generating set for the dihedral group $D_m$ is given by a reflection and a rotation through an angle of $\frac{2\pi}{m}$. Hence the Coxeter group $W(I_2(m))$ has a group presentation
	\[
	\Braket{ s,  \rho | s^2,\,\; \rho^m,\,\; (s\rho)^2 },
	\]
	where we get from the second presentation to the first by mapping $s \mapsto s_1$ and $\rho \mapsto s_1s_2$. Note that although this presentation is a presentation for the group $W(I_2(m))$, it does \emph{not} give rise to a Coxeter system, since the generator $\rho$ has order $m$, which is different from $2$.
	Since the Coxeter diagrams of type $I_2(3)$ and $A_2$ are the same, the corresponding Coxeter groups are the same. Hence this example also shows how the symmetric group $S_3$ acts on $\R^2$.
\end{exa}

We realized the Coxeter group of type $I_2(m)$ as finite reflection group. This is not a phenomenon of the dihedral group. The finite Coxeter groups are exactly the finite reflection groups, that is finite groups generated by Euclidean reflections \cite[Thm. 6.5]{hum}.

\section{The geometric representation and roots}\label{sec:geom_rep}

In this section we are sketching how a finite Coxeter group can be realized as Euclidean reflection group via the \emph{geometric representation}. The geometric representation is defined for all Coxeter groups in a more general setting, but we restrict ourselves to the finite case, since this is the only one we need. We follow \cite[Chap. 5f.]{hum}.

Let $(W,S)$ be a finite Coxeter system of rank $n$ with $S=\set{s_1, \ldots, s_n}$ and let $V\cong \R^n$ be a Euclidean vector space with the inner product $(\cdot,\cdot)$. For every element $s\in S$ choose a vector $\alpha_s$ such that $\set{\alpha_s \str s\in S }$ forms a basis of $V$ and that
\[
(\alpha_{s_i}, \alpha_{s_j}) = - \cos \frac{\pi}{m_{ij}}
\]
holds for all $1 \leq i,j \leq n$. The last condition ensures that the hyperplanes $\alpha_{s_i}\com$ and $\alpha_{s_j}\com$ meet at an angle of $\frac{\pi}{m_{ij}}$. 
Moreover, note that $(\alpha_{s_i}, \alpha_{s_i}) = - \cos \pi =1$ for all $1\leq i \leq n$. Hence all vectors $\alpha_{s_i}$ have the same length.

For every $s\in S$ define the \emph{reflection} $\sigma_s \colon V \to V$ by
\[
\sigma_s(v)=v-2(\alpha_s,v)\alpha_s,
\]
which fulfills $\sigma(\alpha_s)=-\alpha_s$ and fixes $\alpha_s\com$ pointwise. Then the unique homomorphism
\begin{align*}
\sigma \colon W \to \gl(V)
\end{align*}
that maps $s$ to $\sigma_s$ is a faithful representation of the Coxeter group $W$, called the \emph{geometric representation}. 
Note that $\sigma(W)$ preserves the inner product on $V$. To simplify notation, we write $w(v)$ instead of $\sigma(w)(v)$ for $w\in W$ and $v\in V$.

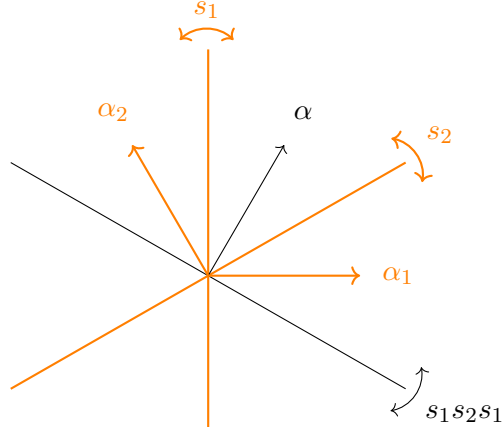
\begin{figure}
	\begin{center}
		\begin{tikzpicture}
		\def\l{3} 
		\def\d{\l/6} 
		\def\FarbemarkObj{Orange} 
		\def\FarbeObj{black} 

		\clip (-\l -1, -\l +1) rectangle (\l + 1, \l + 1);
		\foreach \winkel/\farbe in {5/\FarbeObj, 11/\FarbeObj}
		\draw[\farbe] (0,0) -- (\winkel * 30: \l);	
		
		\foreach \winkel/\farbe in {1/\FarbemarkObj, 3/\FarbemarkObj, 7/\FarbemarkObj, 9/\FarbemarkObj}
		\draw[\farbe, thick] (0,0) -- (\winkel * 30: \l);  
		
		\draw[thick, \FarbemarkObj,<->] (30 : \l + \d/2) + (0,-\d/1.3) arc (-10:80:\d)  node[midway,above right] {$s_2$}; 
		\draw[thick, \FarbemarkObj,<->] (90 : \l + \d/2) + (\d/1.5, -\d/4) arc (45:135:\d)  node[midway,above] {$s_1$}; 
		\draw[\FarbeObj, <->] (330 : \l + \d/2) + (-\d/1.2,-\d/3) arc (280 : 370 : \d)  node[midway,below right] {$s_1s_2s_1$}; 
		
		\foreach \winkel/\farbe in {2/\FarbeObj}
		\draw[\farbe,->] (0,0) -- (\winkel * 30 : \l/1.5);
		
		\foreach \winkel/\farbe in {0/\FarbemarkObj,4/\FarbemarkObj}
		\draw[thick, \farbe,->] (0,0) -- (\winkel * 30 : \l/1.5);
		
		\foreach \winkel/\farbe/\desc in {0/\FarbemarkObj/\alpha_1, 60/\FarbeObj/\alpha, 120/\FarbemarkObj/\alpha_2}
		\node[\farbe] at (\winkel : \l/1.5 + \d) {$\desc$};
		\end{tikzpicture}
		\caption{The roots and reflections corresponding to the symmetric group $S_3$. The simple roots and simple reflections are highlighted in orange.}
		\label{fig:roots_A2}
	\end{center}
\end{figure}

The vectors of the form $\alpha_s$ for $s\in S$ are called \emph{simple roots}. \cref{fig:roots_A2} shows the simple roots $\alpha_1=\alpha_{s_1}$ and $\alpha_2=\alpha_{s_2}$ corresponding to the Coxeter group $W(I_2(3))=S_3$, which we already discussed in \cref{exa:dihedral_group}. Note that the reflection $s_1$ maps the hyperplane $H_2 = \alpha_2\com$ to a hyperplane $H$, on which $s_1s_2s_1$ acts as reflection. For $\alpha = s_1(\alpha_2)$ this reflection is given by $v \mapsto v-2(\alpha,v)\alpha$.

We define the \emph{root system} of $(W,S)$ to be the set
\[
\Phi \coloneqq \Set{ w(\alpha_s) \str w \in W, s \in S }
\]
and call its elements \emph{roots}. 
Since $\set{\alpha_s \str s\in S }$ is a basis of $V$, every root $\alpha \in \Phi$ can be uniquely written as $\alpha = \sum_{i=1}^n a_i \alpha_{s_i}$ for $a_i \in \R$. A root is called \emph{positive} if all $a_i \geq 0$ and \emph{negative} if all $a_i \leq 0$. The set of positive roots is denoted by $\Phi^+$.

As we have already seen in the example of the Coxeter group $S_3$ above, we now can associate a reflection $s_\alpha\in W$ to every root $\alpha\in \Phi$ by setting
\[
s_\alpha(v)= v - 2(\alpha,v)\alpha.
\]
If $\alpha = w(\alpha_s)$ for $w\in W$ and $s\in S$, then $s_\alpha=wsw\inv$. The set
\[
T \coloneqq \set{wsw\inv \str w\in W, s \in S }
\]
is called the \emph{set of reflections} of the Coxeter system $(W,S)$ and its elements \emph{reflections}. The map $\Phi \to T$ given by $w(\alpha_s) \to wsw\inv$ is well-defined and two-to-one, where the restriction to $\Phi^+$ is one-to-one. Moreover, it maps the subset of simple roots to the set $S$ of Coxeter generators. The elements of $S$ therefore are called \emph{simple reflections} as well. For a root $\alpha \in \Phi$ we denote the corresponding reflection in $T$ by $s_\alpha$. The positive root corresponding to $t\in T$ is denoted by $\alpha_t$ and the negative root by $-\alpha_t$.

To every element in $W$ we associate two subspaces of $V$.

\begin{defi}
	The \emph{fixed space} of $w\in W$ is $\fix(w) \coloneqq \ker(w-\id)$ and the \emph{moved space} of $w$ is defined as $\mov(w) \coloneqq \im(w-\id)$.
\end{defi}

The fixed space consists, as the name suggests, of all elements of $V$ that are fixed under the linear transformation $w$. The orthogonal complement of $\fix(w)$ in $V$ is precisely the moved space $\mov(w)=\fix(w)\com$  \cite[Prop. 1]{bra_watt_par_ord}. If $w=t\in T$, then the fixed space is $\braket{ \alpha_t }\com$, also denoted by $\alpha_t\com$, and is called \emph{hyperplane}. 

\begin{rem}\label{rem:simplicial_structure_sphere}
	The hyperplanes $\alpha\com$ for $\alpha \in \Phi^+$ induce a simplicial structure on the unit sphere. In the case of the symmetric group $S_3$, \cref{fig:symmetry_triangle} indicates the simplicial structure on the circle, which is isomorphic to the barycentric subdivision of the boundary of a triangle. 
\end{rem}

Note that the set of vectors $\Phi$, which arises from the geometric representation, indeed satisfies the axioms of a \emph{root system}, which are
\begin{enumerate}
	\item $\Phi \cap \R\alpha = \set{ -\alpha, \alpha }$ for all $a\in \Phi$, and
	\item $s_\alpha \Phi = \Phi$ for all $\alpha \in \Phi$.
\end{enumerate}
A root system is called \emph{crystallographic} if it additionally satisfies
\begin{enumerate}\setcounter{enumi}{2}
	\item $\frac{2(\alpha,\beta)}{(\beta,\beta)} \in \Z$ for all  roots $\alpha$ and $\beta$.
\end{enumerate}

A finite Coxeter group $W$ is called \emph{crystallographic} if it admits a crystallographic root system $\Psi$, which means that $W$ is isomorphic to the group generated by the reflections $s_\alpha$ for $\alpha \in \Psi$. As a consequence we have that $s_\alpha \beta = \beta + n \alpha$ for an \emph{integer} $n\in \Z$. This implies that for every root $\beta \in \Psi$ there exist integers $n_s$ for $s\in S$ such that
\[
\beta = \sum_{s\in S} n_s \alpha_s,
\]
that is every root is an \emph{integer} linear combination of simple roots.
Note that the root systems arising from the geometric representation of Coxeter groups are \emph{not} crystallographic in general.
The existence of crystallographic root systems for finite Coxeter groups depends only on the isomorphism type of the Coxeter system  \cite[Chap. 2.9]{hum}. A finite Coxeter system $(W,S)$ is crystallographic if and only if $m(s,t) \in \set{2,3,4,6}$ for all different $s,t \in S$. Hence \enquote{most} finite Coxeter groups are crystallographic. In particular, the three infinite families of Coxeter groups of classical type are crystallographic and their crystallographic root systems differ from the ones arising from the geometric representation only by the lengths of roots.

\section{Coxeter complexes}\label{sec:cox_cplx}

To every Coxeter system $(W,S)$ of rank $n$ with $S=\set{ s_1, \ldots, s_n }$ there is an associated labeled chamber complex $\Sigma=\Sigma(W,S)$ of dimension $n-1$, called the \emph{Coxeter complex} \cite[Thm. III.1]{bro}. Its simplices are all \emph{special cosets} of $W$, which are subsets of the form $w\braket{ S' } \subseteq W$ for a $w\in W$ and $S' \subsetneq S$. The vertex set is 
\[
V\coloneqq \Set{w\braket{ S  \setminus \set{s_i} }\str w\in W,\ 1\leq i \leq n }
\]
and $\set{v_1, \ldots, v_k}$ with $v_i=w_{j_i}\braket{ S \setminus \set{s_{j_i} }} \in V$ is a simplex in $\Sigma$ if and only if
\[
w_{j_1}\braket{ S \setminus \set{s_{j_1} }} \cap \ldots \cap w_{j_k}\braket{ S \setminus \set{s_{j_k} }} =w\braket{ S' }
\]
for some $w\in W$ and $S'\subseteq S$. The chambers of $\Sigma$ are special cosets of the form $w\braket{ \emptyset } = \set{w}$, hence they are in bijection with the Coxeter group $W$. This allows us to identify the set of chambers $\C=\C(\Sigma)$ of $\Sigma$ with the Coxeter group $W$. 

The \emph{type} of a simplex $w\braket{ S' } \in \si$ is defined to be $S \setminus S'$. Hence the vertices have types with values in $S$ and the types of the $n$ vertices of a chamber are exactly $s_1, \ldots, s_n$. We say that the assignment of a type to every simplex is a \emph{labeling}.
The Coxeter group $W$ acts naturally on its Coxeter complex $\Sigma$ by left-multiplication. This action is simply transitive on chambers and type-preserving. 

\begin{rem}\label{rem:cox_cplx_homeo_sphere}
	The Coxeter complex of a finite Coxeter group $W$ is isomorphic to the simplicial complex arising from the geometric representation we presented in \cref{rem:simplicial_structure_sphere}. Consequently, the Coxeter complex is homeomorphic to a sphere. The action of $W$ on the triangulated sphere induced from the geometric representation coincides with the action of $W$ on its Coxeter complex. \cref{fig:cox_cplx_A3} shows the Coxeter complex of the symmetric group $S_4$ with labeled vertices, depicted as a $2$-dimensional sphere.
\end{rem}

\begin{figure}
	\begin{center}
		\includegraphics[width=6cm]{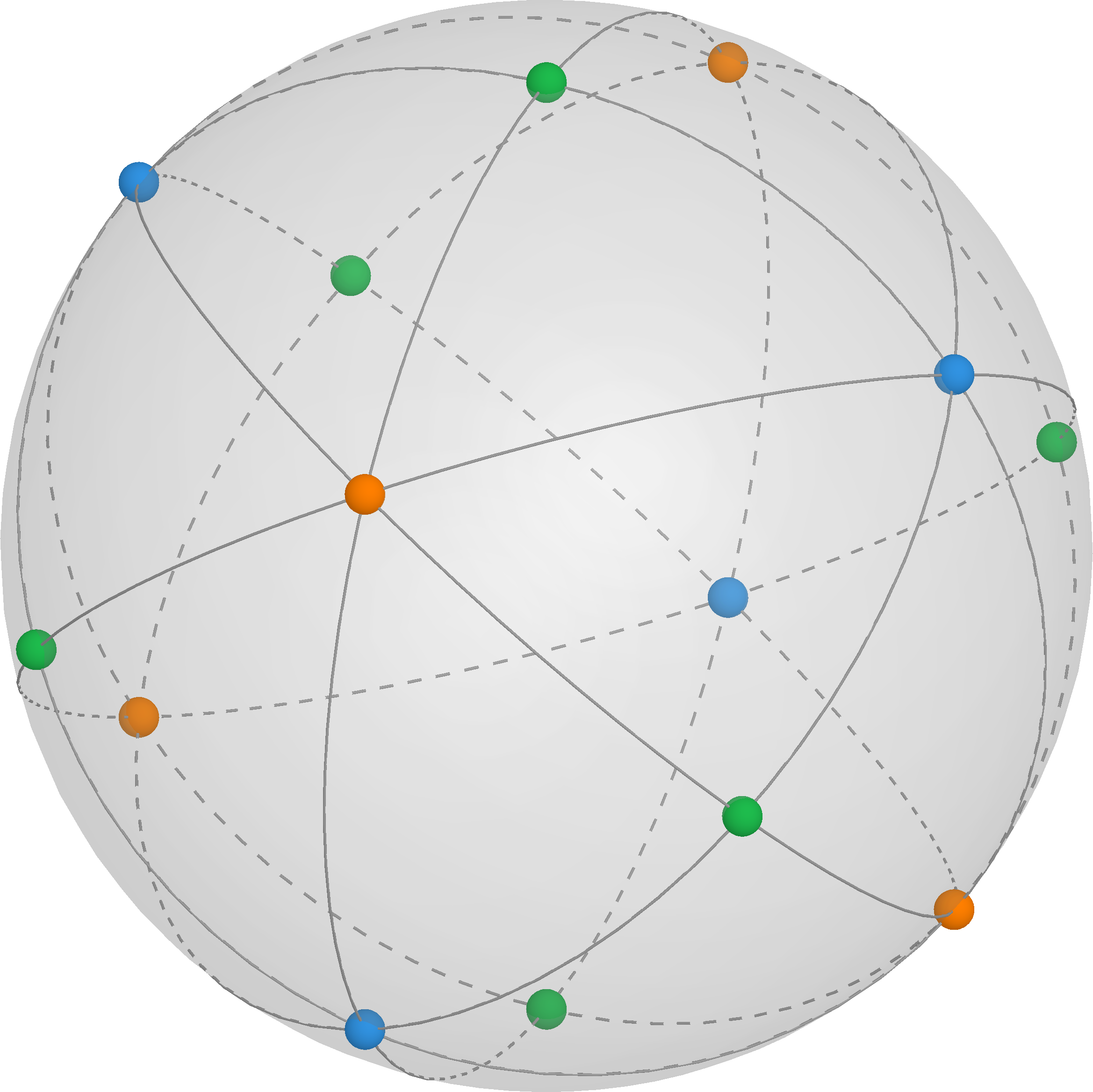}
		\caption{The Coxeter complex of $S_4$ with labeled vertices, where $s_1$ is depicted in blue, $s_2$ in green, and $s_3$ in orange.}%
		\label{fig:cox_cplx_A3}%
	\end{center}
\end{figure}

It is convenient to \emph{color} the panels with colors in $S$, that is the color of a panel $w\braket{ s } \coloneqq w\braket{ \set{s} }$ equals $s \in S$. 
\cref{fig:cox_cplx_A2} shows the Coxeter complex $\Sigma=\Sigma(S_3)$ of the symmetric group $S_3$ with Coxeter generators $s_1$ and $s_2$. Note that the vertices of $\Sigma$ are also the panels of $\Sigma$. Hence the \emph{panel} $\braket{ s_1}$ has \emph{color} $s_1$, while the \emph{vertex} $\braket{ s_1 }$ has \emph{type} $s_2$. Note that if $C$ is a chamber containing a panel $p$, then the color of $p$ equals the type of the unique vertex in $C$ that is not contained in $p$.

\begin{figure}
	\begin{center}
		\begin{tikzpicture}[thick]
		\foreach \w in {1,3,5} 
		\node[gpunkt, Blue] (p\w) at (-\w * 360/6 +60  : 2.5cm)  {};
		\foreach \w in {2,4,6} 
		\node[gpunkt, Orange] (p\w) at (-\w * 360/6 +60  : 2.5cm)  {};
		\foreach \w/\lab/\pos in 
		{1/\textcolor{Blue}{\Braket{s_2}}=\Set{1,s_2}/right,
			2/s_2\textcolor{Orange}{\Braket{s_1}}=\Set{s_2, s_2s_1}/below right,
			3/s_2s_1\textcolor{Blue}{\Braket{s_2}}=\Set{s_2s_1,s_2s_1,s_2}/below left,
			4/s_1s_2\textcolor{Orange}{\Braket{s_1}}=\Set{s_1s_2,s_1s_2s_1}/left,
			5/s_1\textcolor{Blue}{\Braket{s_2}}=\Set{s_1,s_1s_2}/above left,
			6/\textcolor{Orange}{\Braket{s_1}}=\Set{1,s_1}/above right} 
		\node[\pos=2mm]  at (p\w)  {$\lab$};
		
		\draw 	(p1) to node[midway,right=2mm]{$s_2$}
		(p2)to node[midway,below=2mm]{$s_2s_1$}
		(p3)to node[midway,left=2mm]{$s_1s_2s_1=s_2s_1s_2$}
		(p4)to node[midway,left=2mm]{$s_1s_2$}
		(p5)to node[midway,above=2mm]{$s_1$}
		(p6)to node[midway,right=2mm]{$1$}(p1);
		\end{tikzpicture}
		\caption{The Coxeter complex of $S_3$ with colored panels is depicted here. The chambers are labeled with elements of $S_3$.}
		\label{fig:cox_cplx_A2}
	\end{center}
\end{figure}
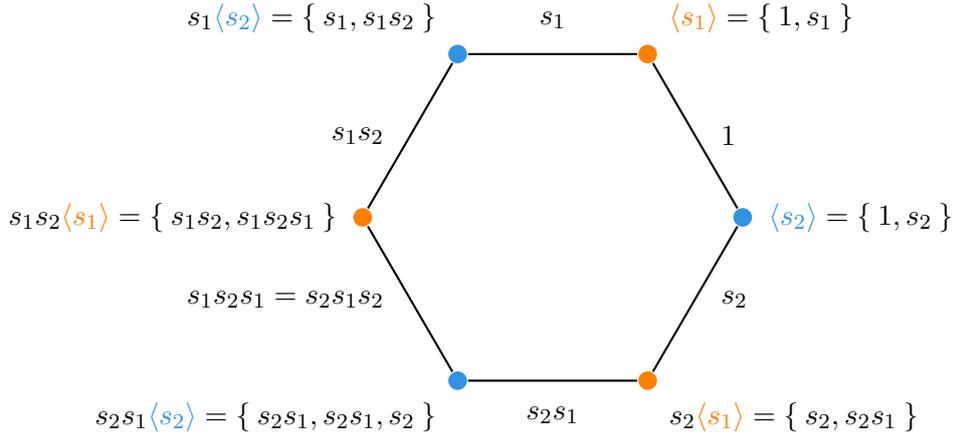

The colors of panels allow us to describe galleries in the Coxeter complex. For this we define two adjacent chambers $C,C' \in \C(\Sigma)$ to be \emph{$s$-adjacent} if their common panel has color $s \in S$. A gallery $(C_0, \ldots, C_k)$ has \emph{type} $(s_{j_1}, \ldots, s_{j_k})$ for $s_{j_i}\in S$  if $C_{i-1}$ and $C_{i}$ are $s_{j_i}$ adjacent for all $1\leq i \leq k$. In this case it holds that $C_k=wC_0$ for $w=s_{j_1}\ldots s_{j_k}$. For a fixed chamber $C$, a tupel $(s_{j_1}, \ldots, s_{j_k})$ uniquely determines a gallery connecting $C$ to $s_{j_1}\ldots s_{j_k}C$. The other way around, there are multiple galleries connecting two chambers $C$ and $wC$, since there are multiple ways to express $w\in W$ as a product of Coxeter generators.

Recall that for every generating set $X$ of a group $G$ the \emph{length} of $g\in G$ with respect to $X$ is the minimal $k$ such that there exist $x_1, \ldots, x_k \in X$ such that $g=x_1\ldots x_k$. We denote the length with respect to $X$ by $\ell_X$.

Since $S$ generates $W$, galleries in $\Sigma$ connecting $\id$ to $w$ are in bijection with decomposition of $w$ by sending the type $(s_{j_1}, \ldots, s_{j_k})$ to the decomposition $s_{j_1} \ldots s_{j_k}$. In particular, minimal galleries correspond to reduced decompositions with respect to $S$ and the gallery distance from $C$ to $wC$ coincides with the length $\ls(w)$ \cite[Cor. 1.75]{ab}. 

Suppose that $W$ is finite. Then for every chamber $C\in \C(\Sigma)$ there is a unique chamber $C' \in \C(\Sigma)$ such that they have maximal distance, that is $\di(C, C') = \max_{D,E\in\C}\di(D,E)$ \cite[Prop. 1.57]{ab}. In this case, the chambers $C$ and $C'$ are called \emph{opposite} chambers. In particular, there is a unique chamber opposite to the chamber corresponding to $\id \in W$, which corresponds to the \emph{longest element} $w_0 \in W$ in the Coxeter group \cite[Prop. 1.77]{ab}. The chamber corresponding to $\id \in W$ is called \emph{fundamental chamber} of $\Sigma$. In \cref{cha:buildings} and \cref{sec:more_Cox} we investigate the Coxeter complex of type $A$ in more detail.  

The geometry of the Coxeter complex $\Sigma(W,S)$ gives rise to a partial order on its chambers $\C$ by calling a chamber $C'$ smaller than a chamber $C$ if there is a minimal gallery connecting the fundamental chamber with $C$ that goes through $C'$. Since chambers of $\Sigma$ are identified with the elements of $W$, this also induces a partial order on $W$, which is defined in group-theoretic terms as follows.
Let $v,w \in W$. We say that $v \pow w$ if $\ls(w)=\ls(v)+\ls(v\inv w)$. This indeed defines a partial order on $W$, called the \emph{weak order}. Actually $(W, \pow)$ is a lattice, ranked by $\ls$, with minimal element $\id$ and maximal element $w_0$  for all finite Coxeter systems \cite[Chap. 3.2]{bb}.

\section{The absolute order and reduced words}\label{sec:abs_ord}

In the last section we saw how the geometry of the Coxeter complex encodes the group-theoretic weak order. 
But what happens if we substitute the generating set $S$ by a larger one, for instance the set of all reflections $T$, and define a partial order analogously? This question initiated the \emph{dual study} of Coxeter groups, which started systematically in \cite{bes}. Important fundamentals for the dual study of Coxeter groups are already established in \cite{car}.
In this section, we introduce the absolute order 
and some of its basic properties.
If not stated otherwise, the definitions and statements of this section can be found in \cite{armstr} or \cite{hum}.\\

Let $(W,S)$ be a finite Coxeter system of rank $n$ with $S=\set{s_1, \ldots, s_n}$, and let $V\cong \R^n$ be the vector space from the geometric representation $\sigma\colon W \to \gl(V)$.
Recall that the set of reflection of $W$ is given by 
\[
T = \Set{ wsw\inv \str w\in W, s\in S }
\]
and that $T$ is a generating set of $W$, since it contains the set of Coxeter generators $S$. We denote the \emph{length} of an element $w\in W$ with respect to $T$ by $\ell(w)\coloneqq \ell_T(w)$ and call it the \emph{absolute length}. In the literature, this length is also known as reflection length. Note that the absolute length is invariant under conjugation, since $T$ is. 
By a \emph{decomposition} of an element $w\in W$ we always mean a word $t_1\ldots t_k$ with $t_1, \ldots, t_k \in T$ such that the product $t_1\cdot \ldots \cdot t_k$ equals the element $w\in W$. 
A decomposition of $w\in W$ is called \emph{reduced} if it consists of $\ell(w)$ letters.

\begin{defi}
	For $v,w\in W$ we set $v \leq w$ if $\ell(w)= \ell(v)+\ell(v\inv w)$ and call the partial order $\leq$ on $W$ the \emph{absolute order}.
\end{defi}

The absolute order can be characterized in terms of subwords. Recall that if $t_1\ldots t_k$ is a decomposition, a \emph{subword} is a decomposition that arises from the former one by deleting entries, that is it is of the form $t_{i_1}\ldots t_{i_m}$ for $1\leq i_1 < \ldots < i_m \leq k$, or the empty decomposition.

\begin{subword}\label{subword_prop}
	For $v,w \in W$ we have $v \leq w$ if and only if there exists a reduced decomposition of $w$ that contains a reduced decomposition of $v$ as a subword. 
\end{subword}

The Coxeter group $W$ together with the absolute order is a graded poset $(W,\leq)$ with rank function $\ell$. We simply write $W$ instead of $(W,\leq)$. Since the absolute length $\ell$ is invariant under conjugation, every element $w\in W$ induces a poset automorphism by conjugation.
The identity $\id\in W$ is the unique minimal element, but in general, there are multiple maximal elements. Hence the absolute order does not induce a lattice structure on $W$. The rank of the poset $W$ equals the rank of the Coxeter group, which follows from the following important facts.

Recall that the set of reflections $T$ is in bijection with the set of positive roots $\Phi^+$ 
via $t \mapsto \alpha_t$. The following is \cite[Le. 3]{car}.

\begin{carter}\label{lem:carter}
	Let $w\in W$ with decomposition $t_1\ldots t_k$. 
	Then this decomposition is reduced if and only if the roots $\alpha_{t_1}, \ldots, \alpha_{t_k}$ are linearly independent in $V$.
\end{carter}

In particular, $\ell(w) \leq n$ for all $w\in W$ and every reflection appears at most once in a reduced decomposition. We can say even more about the set of roots associated to a reduced decomposition. 

\begin{lem}\label{lem:basis_of_moved_space}
	Let $w\in W$ and let $t_1\ldots t_k$ be a reduced decomposition of it. Then the set $\set{\alpha_{t_1}, \ldots, \alpha_{t_k}}$ of roots in $V$ is a basis for the moved space $\mov(w)$.
\end{lem}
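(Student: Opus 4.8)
The plan is to argue by induction on $k=\ell(w)$, the number of letters in the reduced decomposition $t_1\cdots t_k$. The base case $k=0$ is immediate: then $w=\id$, $\mov(w)=\{0\}$, and the empty set is a basis. For the inductive step, the first thing I would do is invoke \cref{lem:carter}: since $t_1\cdots t_k$ is reduced, the roots $\alpha_{t_1},\dots,\alpha_{t_k}$ are linearly independent, so $U:=\braket{\alpha_{t_1},\dots,\alpha_{t_k}}$ has dimension exactly $k$; the same lemma shows that $t_2\cdots t_k$ is a reduced decomposition of $w':=t_2\cdots t_k$ of length $k-1$, so the induction hypothesis applies to $w'$ and gives $\mov(w')=\braket{\alpha_{t_2},\dots,\alpha_{t_k}}$ and hence $\fix(w')=\mov(w')\com=\bigcap_{i=2}^{k}\alpha_{t_i}\com$ of dimension $n-k+1$.

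Next I would establish the easy inclusion $\mov(w)\subseteq U$, which in fact needs no reducedness. Expanding the telescoping identity $w-\id=\sum_{i=1}^{k}t_1\cdots t_{i-1}(t_i-\id)$ shows that $\mov(w)=\im(w-\id)$ lies in the sum of the subspaces $t_1\cdots t_{i-1}\bigl(\im(t_i-\id)\bigr)=t_1\cdots t_{i-1}\braket{\alpha_{t_i}}$; since each reflection $t_j$ maps a root $\alpha$ into $\braket{\alpha,\alpha_{t_j}}$, a short inner induction shows $t_1\cdots t_{i-1}(\alpha_{t_i})\in\braket{\alpha_{t_1},\dots,\alpha_{t_i}}\subseteq U$, so $\mov(w)\subseteq U$. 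In particular $\dim\mov(w)\le k$, and it remains to prove the reverse bound $\dim\mov(w)\ge k$, that is $\dim\fix(w)\le n-k$.

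This last point is where the work lies, and I expect it to be the main obstacle. The key claim is that $\fix(w)\subseteq\alpha_{t_1}\com$: if $w(v)=v$, then $w'(v)=t_1(v)=v-2(\alpha_{t_1},v)\alpha_{t_1}$, so $(w'-\id)(v)$ is a scalar multiple of $\alpha_{t_1}$ lying in $\mov(w')=\braket{\alpha_{t_2},\dots,\alpha_{t_k}}$; linear independence of the $\alpha_{t_i}$ forces $(\alpha_{t_1},v)=0$. Then $t_1$ fixes such a $v$, so $w'(v)=t_1(v)=v$ and $v\in\fix(w')$; hence $\fix(w)\subseteq\fix(w')\cap\alpha_{t_1}\com$. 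Because $\alpha_{t_1}\notin\braket{\alpha_{t_2},\dots,\alpha_{t_k}}=\fix(w')\com$, the hyperplane $\alpha_{t_1}\com$ does not contain $\fix(w')$, so $\dim\bigl(\fix(w')\cap\alpha_{t_1}\com\bigr)=n-k$, giving $\dim\fix(w)\le n-k$. Combining the two bounds, $\dim\mov(w)=k$, and since $\mov(w)\subseteq U$ with $\dim U=k$ we conclude $\mov(w)=U=\braket{\alpha_{t_1},\dots,\alpha_{t_k}}$. The only external facts used are \cref{lem:carter} and the identity $\mov(\cdot)=\fix(\cdot)\com$, both available above.
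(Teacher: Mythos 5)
Your proof is correct. Note that the paper does not actually prove this lemma: it is quoted from the literature (the preamble to that section defers to \cite{armstr} and \cite{hum}, and the statement is essentially Brady--Watt's result that $\mov(ab)=\mov(a)\oplus\mov(b)$ when lengths add), so there is no in-paper argument to compare against. Your argument is a clean self-contained version of the standard one. The two halves both check out: the telescoping identity $w-\id=\sum_{i=1}^{k}t_1\cdots t_{i-1}(t_i-\id)$ together with the observation that a reflection $t_j$ sends any vector $\beta$ into $\braket{\beta,\alpha_{t_j}}$ gives $\mov(w)\subseteq\braket{\alpha_{t_1},\ldots,\alpha_{t_k}}$ without using reducedness; and the inductive step for the reverse inequality is sound, since $t_2\cdots t_k$ is reduced by \cref{lem:carter}, the vector $(w'-\id)(v)=-2(\alpha_{t_1},v)\alpha_{t_1}$ for $v\in\fix(w)$ must lie in $\mov(w')=\braket{\alpha_{t_2},\ldots,\alpha_{t_k}}$, and linear independence then forces $(\alpha_{t_1},v)=0$, whence $\fix(w)\subseteq\fix(w')\cap\alpha_{t_1}\com$ has dimension at most $n-k$. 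The only external inputs are \cref{lem:carter} and $\mov(w)=\fix(w)\com$, both of which the paper records before this point, so there is no circularity.
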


Hence the absolute length of an element $w\in W$ coincides with the dimension of its moved space $\mov(w)\subseteq V$, that is $\ell(w)=\dim (\mov(w))$.
The following basic relations in the absolute order are frequently used in the course of this thesis \cite[Le. 3.9, 3.10]{bra_kpi}.

\begin{lem}\label{obs:abs_ord}
	Let $u,v,w \in W$ be such that $u\leq v \leq w$. Then it holds that
	\begin{enumerate}
		\item $v \inv w \leq w$,
		\item $wv\inv \leq w$,
		\item $u\inv v \leq u\inv w$, and
		\item $v\inv w \leq u \inv w $.
	\end{enumerate}
\end{lem}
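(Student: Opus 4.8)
The statement to prove is \cref{obs:abs_ord}: for $u\leq v\leq w$ in the absolute order, we have (a) $v\inv w\leq w$, (b) $wv\inv\leq w$, (c) $u\inv v\leq u\inv w$, and (d) $v\inv w\leq u\inv w$. The plan is to use the \cref{subword_prop} characterization of the absolute order together with the length additivity it encodes, and to exploit the conjugation-invariance of absolute length. First I would record the basic fact that $\ell(x\inv)=\ell(x)$ and $\ell(gxg\inv)=\ell(x)$ for all $x,g\in W$, since $T$ is closed under inversion and conjugation.

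For part (a): since $v\leq w$ means $\ell(w)=\ell(v)+\ell(v\inv w)$, fix a reduced decomposition of $w$ obtained by concatenating a reduced decomposition $t_1\cdots t_p$ of $v$ with a reduced decomposition $t_{p+1}\cdots t_q$ of $v\inv w$ (this concatenation is reduced because the lengths add up, by \cref{lem:carter} the $q=\ell(w)$ roots are linearly independent). Then $t_{p+1}\cdots t_q$ is a reduced decomposition of $v\inv w$ appearing as a subword of a reduced decomposition of $w$, so $v\inv w\leq w$ by the Subword Property. For part (b): apply part (a) to the reversed reduced word. Concretely, from a reduced decomposition $t_1\cdots t_q$ of $w$ in which $t_1\cdots t_p$ is a reduced decomposition of $v$, the reflections $t_p\cdots t_1$ give a reduced decomposition of $v\inv$ — wait, more carefully, I would instead use that $wv\inv = w v\inv$ and note $\ell(w)=\ell(v\inv w v\cdot v\inv) $; the clean route is: $wv\inv$ is conjugate to $v\inv w$ via $wv\inv = v(v\inv w)v\inv\cdot$... let me instead argue directly. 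Since $v\le w$, write $w = v\cdot (v\inv w)$ with $\ell(w)=\ell(v)+\ell(v\inv w)$. Then $wv\inv = v(v\inv w)v\inv$, which is a conjugate of $v\inv w$, hence $\ell(wv\inv)=\ell(v\inv w)=\ell(w)-\ell(v)$. It remains to check $\ell((wv\inv)\inv w) = \ell(vw\inv w)=\ell(v)$, and indeed $(wv\inv)\inv w = vw\inv w = v$, so $\ell(w)=\ell(wv\inv)+\ell((wv\inv)\inv w)$, giving $wv\inv\le w$.

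For parts (c) and (d): here I would combine (a)/(b) with left-translation. From $u\le v\le w$ I get the triple length additivity $\ell(w)=\ell(u)+\ell(u\inv v)+\ell(v\inv w)$ (splitting $\ell(w)=\ell(u)+\ell(u\inv w)$ and $\ell(u\inv w)=\ell(u\inv v)+\ell(v\inv w)$, the latter because $u\inv v \le u\inv w$ is what we're trying to establish — so I must be careful about circularity). The cleaner approach for (c): take a reduced decomposition of $w$ of the form (reduced for $u$)(reduced for $u\inv v$)(reduced for $v\inv w$), concatenated, which exists and is reduced by the same Carter-lemma argument as above since all three length contributions add to $\ell(w)\le n$. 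Then "(reduced for $u\inv v$)(reduced for $v\inv w$)" is a reduced decomposition of $u\inv w$ containing "(reduced for $u\inv v$)" as a subword, so $u\inv v\le u\inv w$ by the Subword Property. For (d), the same reduced decomposition of $u\inv w$ contains "(reduced for $v\inv w$)" as a (suffix) subword, so $v\inv w\le u\inv w$.

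**Main obstacle.** The only real subtlety I anticipate is justifying that the relevant concatenations of reduced words are themselves reduced — i.e. that when the length contributions add up correctly, the combined word has linearly independent roots and hence is reduced by \cref{lem:carter}. This is essentially bookkeeping: $v\le w$ already guarantees $\ell(v)+\ell(v\inv w)=\ell(w)$, so any decomposition realizing this split has exactly $\ell(w)$ letters and is therefore reduced by definition, no independence check needed. So the apparent obstacle dissolves once one unwinds the definition of the absolute order carefully, and the proof reduces to routine manipulation of the length function and the Subword Property.
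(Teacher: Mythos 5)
Your argument is correct. Note that the paper does not prove this lemma at all: it is quoted from Brady's work (Lemmas 3.9 and 3.10 of \cite{bra_kpi}), so there is no in-paper proof to compare against; your write-up in effect supplies the missing argument, and it matches the standard one. Two remarks. First, the cleanest uniform treatment is the pure length bookkeeping you use in part (b): for each claim $x\leq y$ one checks $\ell(y)=\ell(x)+\ell(x\inv y)$ directly, using only that $\ell$ is invariant under inversion and conjugation and the two hypotheses $\ell(v)=\ell(u)+\ell(u\inv v)$, $\ell(w)=\ell(v)+\ell(v\inv w)$; for instance (a) is $\ell(v\inv w)+\ell(w\inv v w)=\ell(v\inv w)+\ell(v)=\ell(w)$, and (c), (d) both reduce to the identity $\ell(u\inv w)=\ell(w)-\ell(u)=\ell(u\inv v)+\ell(v\inv w)$. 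The Subword Property detour you take in (a), (c), (d) is valid but strictly unnecessary. Second, where you do invoke the Subword Property, the step that a subword of a reduced decomposition is automatically a reduced decomposition of its own product deserves to be said explicitly; it follows from Carter's Lemma (a subset of a linearly independent set of roots is linearly independent), which you gesture at but should state, since the Subword Property as formulated requires the subword to be \emph{reduced} for the smaller element. The visible false starts in your treatment of (b) should of course be edited out, but the final chain $\ell(wv\inv)=\ell(v(v\inv w)v\inv)=\ell(v\inv w)$ and $(wv\inv)\inv w=v$ is exactly right.
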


Furthermore, the manipulation of decompositions, and in particular reduced decomposition, is a useful tool.

\begin{shifting}\label{shifting_prop}
	Let $w\in W$ and let  $t_1\ldots t_k$ be a decomposition of it. Then 
	\begin{enumerate}
		\item $t_1\; t_2\;\ldots\; t_{i-1}\;(t_i \;t_{i+1}\; t_i)\; t_i \; t_{i+2} \;\ldots\; t_k$ and
		\item $t_1 \; t_2 \; \ldots \; t_{i-1} \; t_{i+1} \;(t_{i+1}\; t_i\; t_{i+1}) \; t_{i+2} \;\ldots\; t_k$
	\end{enumerate}
	are again decompositions of $w$ of the same length $k$ for all $1\leq i < k$.
\end{shifting}

The operation in \emph{a)} is called \emph{$i^{\text{th}}$ right-shift} and the one in \emph{b)} is called \emph{$i^{\text{th}}$ left-shift}. Note that the $\ith$ right-shift shifts $t_i$ to the right, and the $i^{\text{th}}$ left-shift shifts $t_{i+1}$ to the left. This makes sense, since the first left-shift one can perform is shifting $t_2$ to the left.

The combination of the shifting property and the subword property allows us to characterize the absolute order of $W$ with prefixes. A \emph{prefix} of a reduced decomposition $t_1\ldots t_k$ is a subword of the form $t_1\ldots t_i$ for some $i\leq k$ or the empty decomposition. 

\begin{prefix}\label{prefix}
	For $v,w\in W$ it holds that $v \leq w$ if and only if there exists a reduced decomposition of $w$ such that a prefix of it is a reduced decomposition of $v$.
\end{prefix}

In particular this means that if $v\leq w$, then every reduced decomposition $t_1\ldots t_i$ of $v$ can be extended to a reduced decomposition $t_1\ldots t_i\ldots t_k$ of $w$.

\cleardoublepage

\chapter{Buildings}\label{cha:buildings}

This section is devoted to define buildings and provide some basic facts about them. The focus is on spherical buildings of type $A$, which are the buildings that appear in this thesis. The construction of the type $A$ spherical building associated to a vector space is essential for this thesis. The books \cite{bro} and \cite{ab} serve as main reference for buildings. 


\begin{defi}
	A simplicial complex $\si$ is called \emph{building} if it can be expressed as a union of subcomplexes, which are called \emph{apartments}, such that the following axioms are satisfied. 	
	\begin{itemize}
		\item[(B0)]
		Each apartment is isomorphic to a Coxeter complex.
		\item[(B1)]\label{ax:b2}
		For any two simplices in $\si$ there is an apartment containing both of them.
		\item[(B2)]
		If two simplices $A, B \in \si$ are both contained in the apartments $\Sigma$ and $\Sigma'$, then there exists an isomorphism $\Sigma \to \Sigma'$ fixing $A$ and $B$ pointwise.
	\end{itemize}
	If all apartments are finite Coxeter complexes, then $\si$ is called a \emph{spherical building}. 
\end{defi}

In particular, every Coxeter complex is a building in its own right.
Buildings are chamber complexes, since any two maximal simplices of $\si$ are contained in a common apartment, where they can be connected by a gallery. Therefore, the maximal simplices of a building $\si$ are called \emph{chambers}. The set of chambers of $\si$ is denoted by $\C(\si)$ or $\C$. 

Any two apartments of $\si$ are isomorphic by axiom (B2), hence they all come from the same Coxeter system $(W,S)$. This allows us to define the \emph{type} and the \emph{rank} of the building as the type and the rank of the Coxeter system corresponding to the apartments, respectively. \cref{fig:building_typeA2} shows a spherical building of type $A_2$. 

\begin{figure}%
	\begin{center}
		\begin{tikzpicture}
		\foreach \w in {1,2,...,26}
		\node (e\w) at (-\w * 360/26 : 3) [mpunkt] {};
		
		\foreach \i [evaluate=\i as \j using int(\i+1)] in {1,...,25}
		\draw[thick] (e\i) -- (e\j);
		\draw[thick] (e1) -- (e26);
		
		\foreach \i [evaluate=\i as \j using int(\i+5)] in {1,3,...,19,21}
		\draw[thick] (e\i) -- (e\j) ;   
		\draw[thick] (e23) -- (e2); 
		\draw[thick] (e25) -- (e4);
		
		\foreach \i [evaluate=\i as \j using int(\i+9)] in {2,4,...,16}
		\draw[thick] (e\i) -- (e\j);
		\foreach \i [evaluate=\i as \j using int(\i+17)] in {1,3,...,9}
		\draw[thick] (e\i) -- (e\j);
		
		\end{tikzpicture}
		\caption{A spherical building of type $A_2$.}%
		\label{fig:building_typeA2}%
	\end{center}
\end{figure}
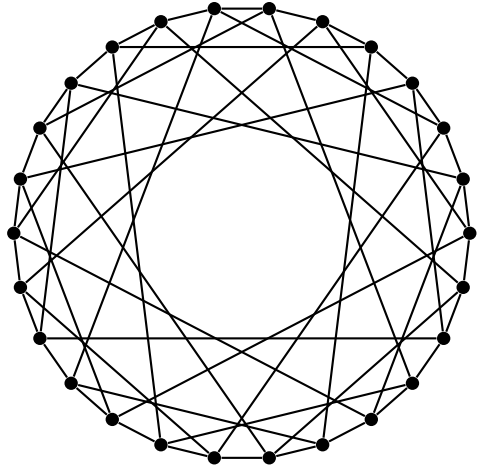

We associate a \emph{type} $S'\subseteq S$ to every simplex in $\si$ such that the labeling of apartments fits to the labeling of the Coxeter complex described in \cref{sec:cox_cplx}.
Since the type of a vertex is an element in $S$, we can color the panels with colors in $S$ as before. This allows us speak of \emph{types of galleries}, which are defined analogously to the Coxeter complex case. 
The following is \cite[Prop. 4.40]{ab}. 

\begin{prop}\label{prop:convex_aptms}
	Let $C,D\in \si$ be two chambers and $A\in \A(\si)$ an apartment containing them. Then every minimal gallery connecting $C$ and $D$ is contained in $A$.
\end{prop}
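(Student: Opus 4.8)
The plan is to argue by induction on $k := \dig(C,D)$, which is the length of the given minimal gallery $\gamma = (C_0, C_1, \ldots, C_k)$ with $C_0 = C$ and $C_k = D$. For $k \le 1$ the gallery only involves the chambers $C$ and $D$, both of which lie in $A$, so there is nothing to prove. For the inductive step $k \ge 2$ I would reduce in one stroke to the shorter gallery $(C_1, \ldots, C_k)$: this is a minimal gallery of length $k-1$ between the chambers $C_1$ and $C_k = D$, so once we know $C_1 \in A$ the induction hypothesis gives $(C_1, \ldots, C_k) \subseteq A$, and therefore $\gamma \subseteq A$.

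The main tool is the retraction $\rho = \rho_{A,C}$ of $\si$ onto the apartment $A$ centered at the chamber $C$ (standard building machinery; see \cite{ab}): it is a type-preserving chamber map, it fixes $A$ pointwise, and it preserves distances from $C$, i.e.\ $\dig(C, \rho(E)) = \dig(C, E)$ for every chamber $E$. Applying $\rho$ to $\gamma$ gives a gallery $\rho(\gamma) = (C, \rho(C_1), \ldots, \rho(C_k) = D)$ inside $A$. Since $\gamma$ is minimal, $\dig(C, C_i) = i$, hence $\dig(C, \rho(C_i)) = i$ as well; thus the chambers $\rho(C_i)$ are pairwise distinct and consecutive ones are adjacent, so $\rho(\gamma)$ has no repetitions and is a minimal gallery of length $k$ in $A$. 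In particular the distance from $C$ to $D$ measured in $A$ is again $k$.

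It remains to prove the claim $C_1 \in A$. Put $p := C_0 \cap C_1$; this is a panel, and $p \subseteq C_0 \subseteq A$, so $p$ lies in $A$. As $A$ is a Coxeter complex, hence thin, precisely two chambers of $A$ contain $p$: the chamber $C_0$ itself and a second chamber, call it $C_0'$. Now $\rho(C_1)$ is a chamber of $A$ containing $\rho(p) = p$ and is different from $C_0$ (the two have distinct distances from $C$), so $\rho(C_1) = C_0'$; moreover, since $\rho(\gamma)$ is a minimal gallery in $A$ passing through $C_0'$, we get $\dig(C_0', D) \le k-1$, and then $\dig(C_0', D) = k-1$ by the triangle inequality with $C_0$. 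On the other hand $\dig(C_1, D) = k-1$ because $(C_1, \ldots, C_k)$ is minimal, while $\dig(C_0, D) = k$. Since $k \ge 2$, the chamber $D$ cannot contain $p$ (otherwise $D$ and $C_0$, both containing the panel $p$, would be equal or adjacent), so $C_0$, $C_1$ and $C_0'$ all lie in the same rank-one residue $\mathcal{R}_p$ (the set of chambers having $p$ as a face), which by the gate property has a unique chamber closest to $D$, every other chamber of $\mathcal{R}_p$ lying one step farther from $D$. As $C_0$ is at distance $k$ from $D$ while $C_1$ and $C_0'$ are at distance $k-1$, that closest chamber must sit at distance $k-1$; by uniqueness $C_1 = C_0'$, and hence $C_1 \in A$.

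I expect the real obstacle to be this last step — singling out $C_1$ from among the (in a thick building, possibly many) chambers on the panel $p$. The content is that lying on a minimal gallery toward $D$ characterizes $C_1$ as the gate $\mathrm{proj}_p(D)$, and the retraction together with the thinness of $A$ identifies that gate with the apartment chamber $C_0'$. A secondary point requiring care is the logical order: in some treatments the gate property of residues is itself deduced from convexity of apartments, so one must avoid a circular argument; if that is a concern, the equality $C_1 = C_0'$ can instead be obtained from a second retraction $\rho_{A,D}$ together with the exchange condition inside the Coxeter complex $A$.
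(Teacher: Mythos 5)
Your proof is correct. Note first that the paper does not prove this statement itself: it is quoted directly from Abramenko--Brown [Prop.\ 4.40], so the only meaningful comparison is with the proof in that reference. Your argument and theirs share the same skeleton (induction on the gallery length, reduction to showing $C_1\in A$ via the panel $p=C_0\cap C_1$ and the thinness of $A$), but the key identification $C_1=C_0'$ is obtained differently. You invoke the gate property of the panel residue $\mathcal{R}_p$: distances from $D$ to chambers of $\mathcal{R}_p$ take exactly two consecutive values, the smaller attained exactly once, which forces $C_1=C_0'$ since both lie at distance $k-1$ while $C_0$ lies at distance $k$. The reference instead uses the retraction $\rho_{A,C_0'}$ centered at the \emph{other} apartment chamber on $p$: every chamber of $\mathcal{R}_p$ other than $C_0'$ is sent to $C_0$, so if $C_1\neq C_0'$ the image of $\gamma$ under $\rho_{A,C_0'}$ stutters at the first step and yields a gallery from $C$ to $D$ of length at most $k-1$, contradicting minimality. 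The latter argument uses nothing beyond the distance-preserving property of retractions and is therefore immune to the circularity you rightly flag (in several developments the gate property is established only after, or by means of, convexity of apartments); your version is fine provided the gate property has been proved independently, as it is in Abramenko--Brown via the Weyl distance function, but this should be checked against whatever source you cite. Finally, your closing suggestion to use $\rho_{A,D}$ instead is not yet an argument: it yields $\rho_{A,D}(C_1)=C_0'$, i.e.\ it tells you where $C_1$ retracts to, not that $C_1$ itself lies in $A$; if you want to avoid the gate property, the retraction to use is $\rho_{A,C_0'}$ as above.
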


\section{Spherical buildings}

From now on we are restricting ourselves to spherical buildings. In a spherical building $\si$, there exists a unique collection of apartments satisfying the axioms of a building. This set of apartments of $\si$ is denoted by $\A(\si)$. 
The apartments are the convex hulls of opposite chambers in $\si$, where we call two chambers of $\si$ \emph{opposite} if their gallery distance is maximal. Hence, a pair of opposite chambers determines a unique apartment $A\in \A(\si)$.

\begin{rem}\label{rem:homotopy_type_building}
	Let $C$ be a chamber of a spherical building $\si$ of rank $n$. For all chambers $D\in \C$ there exists an apartment containing the chambers $C$ and $D$. Hence $\si$ is the union of all apartments containing the fixed chamber $C$, that is
	\[
	\si = \bigcup \Set{\Sigma \in \A(\si) \str C \in \C(\Sigma) }.
	\]
	Since each apartment is homeomorphic to a $(n-1)$-sphere, spherical buildings are homotopy equivalent to a wedge of $(n-1)$-spheres. 
\end{rem}

\section{The spherical building associated to a vector space}\label{sec:sph_build}

In \cref{sec:lattice_vec_space} we associated to every vector space $V$ its lattice of linear subspaces. Its order complex is a spherical building of type $A$ \cite[Chap. 4.3]{ab}. 
The aim of this section is to emphasize the relations between the vector space $V$, the lattice $\lam(V)$ and its order complex $|\lam|$.

\begin{prop}\label{prop:LV_building}
	Let $V$ be a vector space of dimension $n\geq 2$. Then the order complex  $|\lam(V)|$ of the lattice $\lam(V)$ of linear subspaces of $V$ is a spherical building of type $A_{n-1}$ and dimension $n-2$.
\end{prop}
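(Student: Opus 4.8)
The plan is to equip $|\lam(V)|$ with an explicit family of apartments, one for each frame of $V$, and then to verify the building axioms (B0)--(B2) directly. First I would unwind the definitions: since $\lam=\lam(V)$ is a graded lattice of rank $n$ with minimum $\mi=\Set{0}$ and maximum $\ma=V$, its order complex $|\lam|=\|\lam\setminus\Set{\mi,\ma}\|$ has the proper non-zero subspaces of $V$ as vertices and the chains $U_0\subsetneq\dots\subsetneq U_k$ of such subspaces, that is the partial flags, as $k$-simplices. A maximal such chain contains exactly one subspace of each dimension $1,\dots,n-1$, hence has $n-1$ members, so $|\lam|$ is pure of dimension $n-2$.

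Next I would introduce the apartments via the correspondence between Boolean sublattices of $\lam$ and frames recalled in \cref{sec:lattice_vec_space}. A frame $\mathcal F=\Set{L_1,\dots,L_n}$ with $L_1\oplus\dots\oplus L_n=V$ spans the Boolean sublattice $B_{\mathcal F}=\Set{\bigoplus_{i\in I}L_i\str I\subseteq\Set{1,\dots,n}}\cong\B_n$ of $\lam$, and I let the apartment attached to $\mathcal F$ be $\Sigma_{\mathcal F}\coloneqq\|B_{\mathcal F}\setminus\Set{\mi,\ma}\|$, which is an induced subcomplex of $|\lam|$ by \cref{rem:oc_ind_subcplx}. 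By \cref{rem:boolean_lattice_sd_bdry} the complex $\Sigma_{\mathcal F}$ is the barycentric subdivision of the boundary of the $(n-1)$-simplex, hence is the Coxeter complex of $S_n=W(A_{n-1})$ (see \cref{rem:simplicial_structure_sphere} and \cref{rem:cox_cplx_homeo_sphere}). This verifies axiom (B0) and at the same time pins down the type as $A_{n-1}$, consistent with $\dim|\lam|=n-2$ being one less than the rank $n-1$ of a Coxeter system of type $A_{n-1}$.

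For axiom (B1) I must show that any two partial flags $F,F'$ lie in a common $\Sigma_{\mathcal F}$; taking $F=F'$ this will also show that the $\Sigma_{\mathcal F}$ cover $|\lam|$. The key step is the linear-algebra fact that $V$ admits a basis adapted simultaneously to $F$ and to $F'$: refine $F$ and $F'$ to complete flags and argue by induction on $n$, or equivalently invoke the Bruhat normal form of a pair of flags. Reading off the frame $\mathcal F$ determined by such a basis, every member of $F$ and of $F'$ is of the form $\bigoplus_{i\in I}L_i$, so $F,F'\in\Sigma_{\mathcal F}$.

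Axiom (B2) is the step I expect to be the main obstacle. Given simplices $A,B$ of $|\lam|$ that lie in both $\Sigma_{\mathcal F}$ and $\Sigma_{\mathcal F'}$, an isomorphism $\Sigma_{\mathcal F}\to\Sigma_{\mathcal F'}$ is the same thing as a bijection $\sigma$ from the lines of $\mathcal F$ onto the lines of $\mathcal F'$, and fixing $A$ and $B$ pointwise means that for every subspace $U$ occurring in $A$ or $B$ the set of $\mathcal F$-lines summing to $U$ is carried by $\sigma$ to the set of $\mathcal F'$-lines summing to $U$. The subspaces occurring in $A\cup B$ generate a finite sublattice of $\lam$ whose atoms cut $\mathcal F$ into \enquote{cells} and cut $\mathcal F'$ into \enquote{cells}; by inclusion--exclusion the cardinality of each cell is a fixed alternating sum of dimensions $\dim(U_1\cap\dots\cap U_r)$ and $\dim(U_1+\dots+U_r)$ of subspaces from $A\cup B$, which is intrinsic and does not refer to any frame. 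Hence the two cell partitions have matching block sizes, a block-respecting $\sigma$ exists, and any such $\sigma$ fixes the vertices of $A$ and $B$; alternatively one promotes $\sigma$ to an element of $\gl(V)$, or one simply cites \cite[Chap.~4.3]{ab}. The one point genuinely requiring care is this observation that the combinatorial position of the pair $(A,B)$ inside an apartment is already determined by the intrinsic invariants $\dim U$, $\dim(U\cap U')$ and $\dim(U+U')$, so that the two pictures of $(A,B)$ in $\Sigma_{\mathcal F}$ and in $\Sigma_{\mathcal F'}$ literally coincide.
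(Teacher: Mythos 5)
Your argument is essentially correct, but note that the paper does not prove this proposition at all: it is stated with a bare citation to \cite[Chap.\ 4.3]{ab}, so there is no internal proof to compare against. What you have written is the standard textbook argument from that reference — apartments indexed by frames, (B0) via the identification of the order complex of $\B_n$ minus $\mi,\ma$ with the barycentric subdivision of $\partial\si_{n-1}$ (the paper's \cref{rem:boolean_lattice_sd_bdry}), (B1) via a basis adapted to two flags, and the purity/dimension count giving $\dim = n-2$ for type $A_{n-1}$ — and all of these steps go through as you describe.

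The only place worth tightening is (B2). Your cell-counting argument works, but you only need intersections, not sums: for a frame $\mathcal F$ and a coordinate subspace $W$ of $\mathcal F$, the number of frame lines contained in $W$ is $\dim W$, and intersections of coordinate subspaces are again coordinate subspaces; so the size of the cell of lines lying in exactly the subspaces of a given subfamily $T\subseteq A\cup B$ is the alternating sum $\sum_{T'\supseteq T}(-1)^{\#T'-\#T}\dim\bigl(\bigcap_{U\in T'}U\bigr)$, which is frame-independent. Since $A$ and $B$ are chains, these intersections are particularly simple. Any cell-respecting bijection $\sigma\colon\mathcal F\to\mathcal F'$ then induces a lattice isomorphism of the two Boolean sublattices fixing every vertex of $A$ and $B$, hence a simplicial isomorphism $\Sigma_{\mathcal F}\to\Sigma_{\mathcal F'}$ fixing $A$ and $B$ pointwise, which is exactly (B2). (The reference \cite{ab} in fact verifies the stronger variant in which the isomorphism fixes $\Sigma_{\mathcal F}\cap\Sigma_{\mathcal F'}$ pointwise, but the version you prove suffices for the building axioms as stated in this thesis.)
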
	

The spherical building of type $A_2$ associated to the vector space $\F_2^3$ is shown in \cref{fig:A2-F_2^3}. It is labeled with the proper linear subspaces of $\F_2^3$, where $e_i$ denotes the $\ith$ standard basis vector of $\F_2^3$. \cref{fig:building_typeA2} depicts the spherical building associated to $\F_3^3$.

\begin{figure}
	\begin{center}
		
		\begin{tikzpicture}
		
		\foreach \w in {1,2,...,14}
		\node (e\w) at (-\w * 360/14 : 3) [mpunkt] {};
		
		\foreach \w/\s/\p in {1/{e_2,e_1+e_3}/right,
			2/{e_2}/below right,
			3/{e_2,e_3}/below,
			4/{e_2+e_3}/below,
			5/{e_1,e_2+e_3}/below left,
			6/{e_1+e_2+e_3}/left,
			7/{e_1+e_2,e_3}/left,
			8/{e_3}/left,
			9/{e_1,e_3}/above left,
			10/{e_1}/above,
			11/{e_1,e_2}/above,
			12/{e_1+e_2}/above right,
			13/{e_1+e_2,e_1+e_3}/right,
			14/{e_1+e_3}/right}
		\node[\p=2mm] (f\w) at (-\w * 360/14  : 3)  {$\Braket{\s}$};

		\foreach \i [evaluate=\i as \j using int(\i+1)] in {1,...,13}
		\draw[thick] (e\i) -- (e\j);
		\draw[thick] (e1) -- (e14);
		
		\foreach \i [evaluate=\i as \j using int(\i+5)] in {1,3,5,7,9}
		\draw[thick] (e\i) -- (e\j) ;   
		\foreach \i [evaluate=\i as \j using int(\i+9)] in {2,4}
		\draw[thick] (e\i) -- (e\j);
		
		\end{tikzpicture}
		\caption{The spherical building associated to the vector space $\F_2^3$, whose standard basis vectors are denoted by $e_1$, $e_2$ and $e_3$.}
		\label{fig:A2-F_2^3}
	\end{center}
\end{figure}
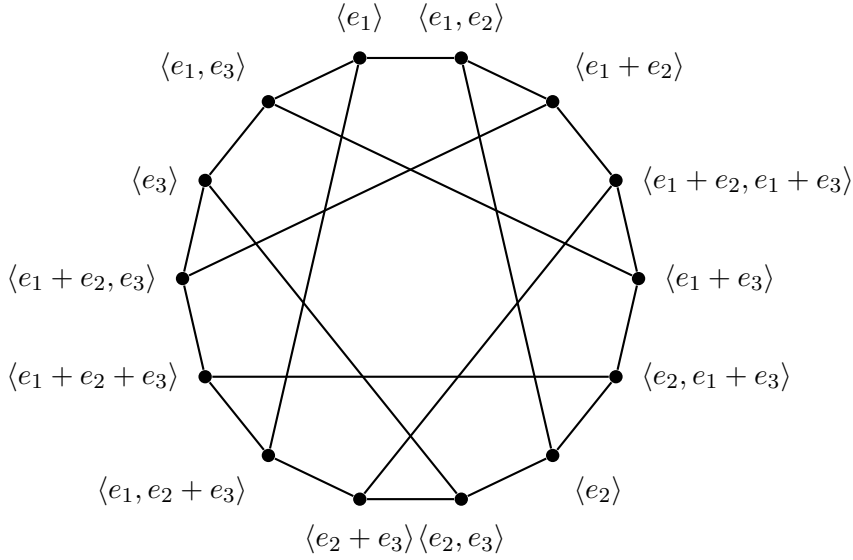

Let $\si=|\lam(V)|$ be the order complex of the lattice $\lam=\lam(V)$.
We start with the study of Coxeter complexes of type $A_{n-1}$, which are the apartments of $\si$. This study of the simplicial structure of the apartments is crucial for \cref{part3} of this thesis. 

Let $\Sigma$ be the Coxeter complex of type $A_{n-1}$ and $\Xi$ the barycentric subdivision of the boundary of the $(n-1)$-simplex $\si_{n-1}$.   
The simplicial complexes $\Sigma$ and $\Xi$ are isomorphic \cite[Exc. 1.112]{ab}. In the following we demonstrate the correspondence and describe how the respective labels of chambers and vertices fit together. For $n=4$, \cref{fig:bary_sd_3simpl} depicts the complex $\Xi$, whose chambers are labeled with total orderings of the set $\Set{1,2,3,4}$, and \cref{fig:cox_cplx_3simpl} shows the labeled chamber complex $\Sigma$, whose chambers are labeled with elements of $S_4$. 

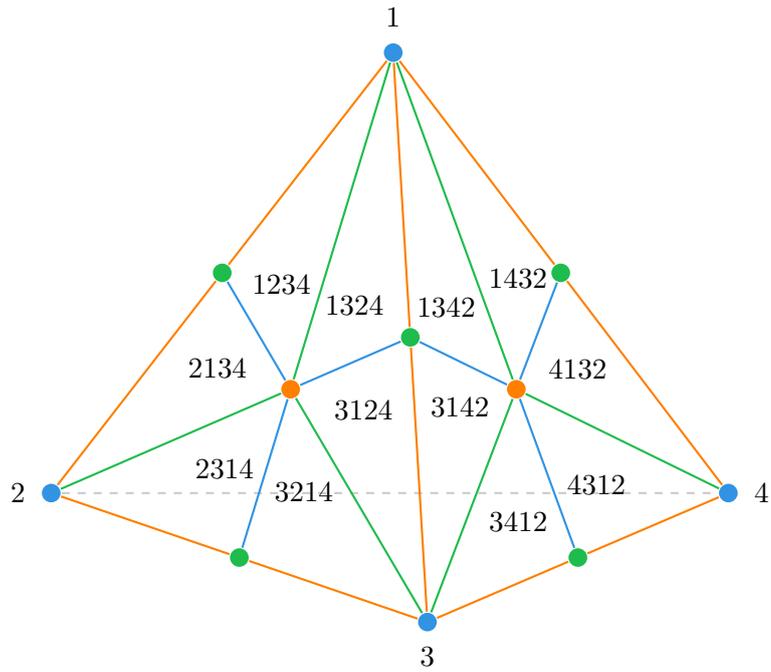
\begin{figure}
	\begin{center}
		\begin{tikzpicture}[every path/.style={thick}]
		\def\r{6}
		\Tetraeder
		
		\node[above=2mm] at (e3) {1};
		\node[left=2mm] at (e1) {2};
		\node[below=2mm] at (e2) {3};
		\node[right=2mm] at (e4) {4};
		
		\begin{scope}[rotate=-16]

		\node at ($(AB) + (0 * 60:  1cm and 1.5cm)$) {$3124$};
		\node at ($(AB) + (1 * 60:  1cm and 1.5cm)$) {$1324$};
		\node at ($(AB) + (2 * 60:  1cm and 1.5cm)$) {$1234$};
		\node at ($(AB) + (3 * 60:  1cm and 1.5cm)$) {$2134$};
		\node at ($(AB) + (4 * 60-3:  1cm and 1.5cm)$) {$2314$};
		\node at ($(AB) + (5 * 60+3:  1cm and 1.5cm)$) {$3214$};
		\end{scope}
		
		\begin{scope}[rotate=18]
		\node at ($(AC) + (0 * 60  : 0.85cm and 2cm)$) {$4132$};
		\node at ($(AC) + (1 * 60  : 0.95cm and 1.6cm)$) {$1432$};
		\node at ($(AC) + (2 * 60 +5 : 0.95cm and 1.6cm)$) {$1342$};
		\node at ($(AC) + (3 * 60  : 0.78cm and 2cm)$) {$3142$};
		\node at ($(AC) + (4 * 60 -3 : 0.95cm and 2cm)$) {$3412$};
		\node at ($(AC) + (5 * 60 + 10 : 0.95cm and 2cm)$) {$4312$};
		\end{scope}
		
		\end{tikzpicture}
	\end{center}
	\caption{A part of the chamber complex $\Xi$, which is the barycentric subdivision of the boundary of the $3$-simplex. The chambers are labeled with total orderings of $\set{1,2,3,4}$ and the different colors of panels and vertices correspond to their respective types, where type $1$ is blue, type $2$ is green and type $3$ is orange. The labels $1,2,3,4$ of the type $1$ vertices are the labels of the $3$-simplex from which $\Xi$ arose.}
	\label{fig:bary_sd_3simpl}
\end{figure}

\begin{figure}
	\begin{center}
		\begin{tikzpicture}[every path/.style={thick}]
		\def\r{6}
		\Tetraeder
		
		\node[Blue, above=2mm] at (e3) {$s_1$};
		\node[Blue, left=2mm] at (e1) {$s_1$};
		\node[Blue, below=2mm] at (e2) {$s_1$};
		\node[Blue, right=2mm] at (e4) {$s_1$};

		\foreach \place/\direction in {f12/below, f13/above left, f23/left, f24/below, f34/above right}
		\node[\direction=2mm, Green] at (\place) {$s_2$};
		
		\node[Orange] at ($(AB) + (100:  0.6cm)$) {$s_3$};
		\node[Orange] at ($(AC) + (90:  0.6cm)$) {$s_3$};
		
		\begin{scope}[rotate=-16]
		
		\node at ($(AB) + (0 * 60:  1cm and 1.5cm)$) {$(1\,3\,2)$};
		\node at ($(AB) + (1 * 60:  1cm and 1.5cm)$) {$(2\,3)$};
		\node at ($(AB) + (2 * 60:  1cm and 1.5cm)$) {$\id$};
		\node at ($(AB) + (3 * 60:  1cm and 1.5cm)$) {$(1\,2)$};
		\node at ($(AB) + (4 * 60-8:  1cm and 1.5cm)$) {$(1\,2\,3)$};
		\node at ($(AB) + (5 * 60+3:  1cm and 1.5cm)$) {$(1\,3)$};
		\end{scope}

		\begin{scope}[rotate=18]
		
		\node at ($(AC) + (0 * 60  : 0.85cm and 2cm)$) {$(1\,4\,2)$};
		\node at ($(AC) + (1 * 60  : 0.95cm and 1.6cm)$) {$(2\,4)$};
		\node at ($(AC) + (2 * 60 +8 : 0.9cm and 1.5cm)$) {$(2\,3\,4)$};
		\node at ($(AC) + (3 * 60 -2 : 0.74cm and 1.9cm)$) {$(1\,3\,4\,2)$};
		\node at ($(AC) + (4 * 60 -11 : 1cm and 2.6cm)$) {$(1\,3)(2\,4)$};
		\node at ($(AC) + (5 * 60 + 10 : 1.3cm and 2.3cm)$) {$(1\,4\,2\,3)$};
		\end{scope}	
		
		\end{tikzpicture}
		\caption{A part of the Coxeter complex $\Sigma$ of type $A_3$. The chambers are labeled with elements of the symmetric group $S_4$, and the vertices are labeled with the simple reflections $s_1$ in blue, $s_2$ in green and $s_3$ in orange.}
		\label{fig:cox_cplx_3simpl}
	\end{center}
\end{figure}
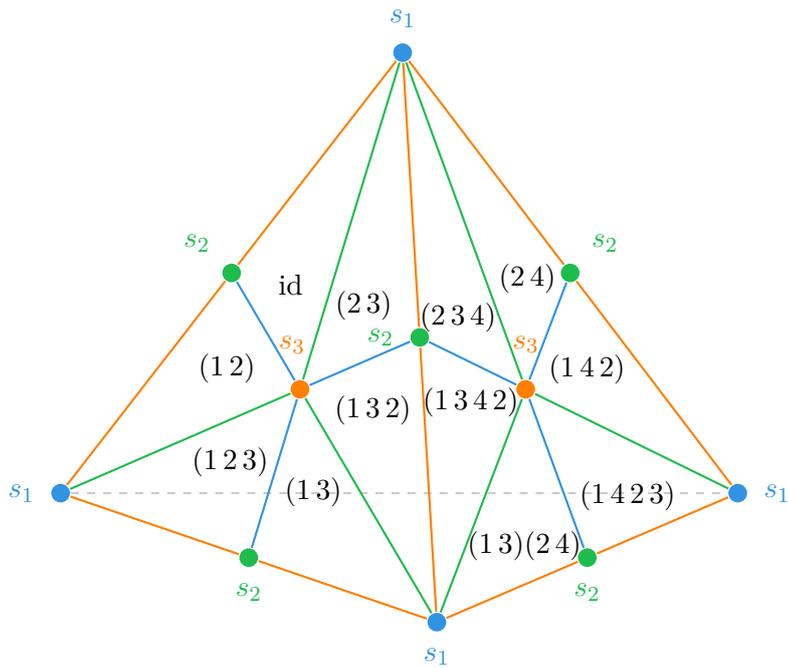

We start with describing the labeling of the complex $\Xi$. Let $P=\set{1,\ldots, n}$ be the vertex set of $\si_{n-1}$. Recall that the vertices of $\Xi$ correspond to the proper subsets of $P$. The \emph{type} of a vertex $X\in \Xi$ is defined to be the cardinality $\#X$ of the subset $X\subseteq P$. Maximal simplices in $\Xi$ have exactly $n-1$ vertices, which all have different types, and a panel has $n-2$ vertices of different types. The \emph{color} of a panel is defined to be $k$ if none of its vertices has type $k$. A maximal simplex $A_1 \subsetneq A_2 \subsetneq \ldots \subsetneq A_{n-1}$ defines a total order 
\[
A_1,\ \ A_2\setminus A_1, \ \ldots \ , \ A_i\setminus A_{i-1}, \ \ldots\ , \ A_{n-1}\setminus A_{n-2}, \ P \setminus A_{n-1}
\]
of the set $\set{1,\ldots, n}$. Hence every maximal simplex can be labeled uniquely by this sequence of type $1$ vertices and different chambers have different labels. In \cref{fig:bary_sd_3simpl}, the chamber labels are the total orders of blue rank $1$ vertices.
If two maximal simplices intersect in a blue panel of color $k<n$, then the corresponding sequences agree in all positions except for $k$ and $k+1$, whose entries are swapped. For instance in \cref{fig:bary_sd_3simpl}, the two maximal simplices with labels $1324$ and $3124$ intersect in a panel of color $1$. We can pass from one sequence to the other by swapping the first and the second entry. Note that two maximal simplices are adjacent if and only if their corresponding sequences differ in exactly two entries.

Every total order $i_1\ldots i_n$ of $P=\set{1, \ldots, n}$ gives rise to a self-bijection of $P$ via $j \mapsto i_j$ and every bijection $P\to P$ arises in this way. A sequence of the form $i_1\ldots i_n$ is called \emph{one line notation} of the bijection it describes. 
Two sequences differ in position $k$ and $k+1$ if and only if the corresponding bijections differ by the pre-composition with the transposition that swaps $k$ and $k+1$, which corresponds to the simple reflection $s_k$. The more familiar cycle notation of the bijection with one line notation $1324$ is $(2\,\;3)$ and $3124$ corresponds to $(1\,\;3\,\;2)$. It holds that $(1\,\;3\,\;2)=(2\,\;3)(1\,\;2)$, which means that the two chambers $(2\,\;3)$ and $(1\,\;3\,\;2)$ are $s_1$-adjacent in $\Sigma$.

Hence the sets of maximal simplices of both $\Xi$ and $\Sigma$ are in bijection with $S_n$. The adjacency relations in $\Xi$ and $\Sigma$ coincide after the identification of maximal simplices with group elements of $S_n$ as well. Finally note that the vertex in $\Xi$ of type $k$ of a chamber corresponding to $w\in W$ corresponds to a vertex of type $s_k$ of the chamber in $\Sigma$ with label $w$.

Recall that Boolean sublattices of $\lam(V)$ are given by frames of $V$, which we discussed in \cref{sec:lattice_vec_space}. Therefore apartments in the building $\si$, which are order complexes of Boolean sublattices, correspond to frames in $V$, and vice versa. 

When we say \enquote{apartment} of $\si=|\lam|$, we mean it as part of the building, that is as the order complex of a Boolean sublattice of $\lam$, which we denoted by $\Xi$ in this discussion. Nevertheless, we stick to the standard notation of the building theory and denote apartments with the symbol $\Sigma$.   

\cref{tab:relations_V_lam_buil} summarizes the correspondences of the different concepts of $V$, $\lam(V)$ and $|\lam(V)|$.

\renewcommand{\arraystretch}{1,5}
\begin{table}
	\begin{center}
		\begin{tabular}{|c||c||c|}
			\hhline{|-||-||-|}
			
			vector space $V$ & lattice $\lam = \lam(V)$ & spherical building $\si=|\lam|$ \\
			\hhline{:=::=::=:}

			proper subspaces & elements different from $\mi,\ma$ & vertices\\
			\hhline{|-||-||-|}
			
			dimension $k$ of subspace & rank $k$ of element & type $s_k$ of vertex\\
			\hhline{|-||-||-|}

			chains  of proper subspaces & $k$-chains &  $k$-simplices\\
			
			$U_0 \subsetneq U_1 \subsetneq \ldots \subsetneq U_k$&$U_0 < U_1 < \ldots < U_k$&$\set{U_0, U_1, \ldots, U_k}$
			\\
			\hhline{|-||-||-|}
			
			$\dim(V)=n$ & $\rk(\lam)=n$ & $\dim(\si)=n-2$\\
			&& $\rk(\si)=n-1$\\
			\hhline{|-||-||-|}
			
			frames  & Boolean sublattices  & apartments\\
			$\set{L_1, \ldots, L_n}$& $\Braket{L_1, \ldots, L_n}\cong\B_n$ &Coxeter complexes \\[-6pt]
			&& of type $A_{n-1}$\\
			\hhline{|-||-||-|}
			
		\end{tabular}
		\caption{The correspondence of different concepts in a vector space $V$, its associated lattice of linear subspaces $\lam(V)$, and the corresponding order complex $|\lam(V)|$, which is a spherical building.}
		\label{tab:relations_V_lam_buil}
	\end{center}
\end{table}

\cleardoublepage
\part{Non-crossing partitions}\label{part2}

\cleardoublepage

\chapter{Elementary definitions and properties}\label{chap:elem_props_nc}

In this chapter we are laying the foundations for the study of non-crossing partitions. After defining the non-crossing partitions as a subposet of the absolutely ordered Coxeter group, we are highlighting important properties, which are frequently used in the sequel.

From now on, let $(W,S)$ be a finite Coxeter system of rank $n$.

\begin{defi}
	A \emph{standard Coxeter element} of a Coxeter system $(W,S)$ is a product of the simple generators $S$ in any order. A \emph{Coxeter element} is an element of $W$ that is conjugate to a standard Coxeter element.
\end{defi}

A standard Coxeter element has length $n$ by \cref{lem:carter}. Since the absolute length is invariant under conjugation, all Coxeter elements have length $n$ and therefore are maximal elements of the absolute order. Note that in particular, any two standard Coxeter elements are conjugate. 
Hence the Coxeter elements form a single conjugacy class in $W$. In type $A$, all maximal elements are Coxeter elements, but in general, this is \emph{not} the case.  

The \emph{Coxeter number} of a Coxeter group $W$ is the order $h$ of a Coxeter element. This number is well-defined, since any two of them are conjugate, and depends on the type of $W$. \cref{tab:Coxeter_numbers} shows the Coxeter numbers of all finite Coxeter groups.

\renewcommand{\arraystretch}{1,15}
\begin{table}
	\begin{center}
		\begin{tabular}{|r|*{10}{|c}|}
			\hhline{|-||*{10}{-|}}
			type of $W$&$A_n$&$B_n$&$D_n$&$E_6$&$E_7$&$E_8$&$F_4$&$H_3$&$H_4$&$I_2(m)$\\
			\hhline{|-||*{10}{-|}}
			h&$n+1$&$2n$&$2(n-1)$&$12$&$18$&$30$&$12$&$10$&$30$&$m$\\
			\hhline{|-||*{10}{-|}}
		\end{tabular}
		\caption{The Coxeter numbers of finite Coxeter groups are shown here.}
		\label{tab:Coxeter_numbers}
	\end{center}
\end{table}

The notion of a Coxeter element finally allows us to define the central object of this thesis, the non-crossing partitions. 

\begin{defi}
	Let $(W,S)$ be a finite Coxeter system and $\cox \in W$ be a Coxeter element. The \emph{non-crossing partition poset} of $W$ with respect to the Coxeter element $\cox$ is defined to be the subposet
	\[
	\nc(W,\cox) \coloneqq \Set{w\in W\str w \leq \cox}
	\]
	of $W$ endowed with the absolute order.
\end{defi}

\begin{nota}
	If the Coxeter element $c$ is fixed, or the statement is independent from the chosen Coxeter element, we write $\nc(W)$ instead of $\nc(W,\cox)$. If moreover the Coxeter group is chosen arbitrarily, we also write $\nc$ for $\nc(W)$. We refer to $\nc$ as \emph{non-crossing partitions} as well and leave out the term \enquote{poset}. If $W$ is a Coxeter group of type $X_n$, we denote the corresponding non-crossing partitions by $\nc(X_n)$.
\end{nota}

\cref{tab:elements_ncw} depicts the cardinalities of the non-crossing partitions $\nc(W)$ and the corresponding Coxeter groups $W$ for the finite irreducible types. Comparing these numbers shows that the non-crossing partitions form a relatively small part of the Coxeter group when the cardinality of $W$ is large. In low rank examples it may be misleading how large the set of non-crossing partitions inside the Coxeter group is.

\renewcommand{\arraystretch}{1,5}
\begin{table}
	\begin{center}
		\begin{tabular}{|c|c|c|}
			\hhline{|-|-|-|}
			type of $W$ & $\#\nc(W)$ & $\#W$\\
			\hhline{:=:=:=:}
			$A_n$ & $\frac{1}{n+1}\binom{2n}{n}$ & $(n+1)!$ \\
			\hhline{|-|-|-|}
			
			$B_n$  & $\binom{2n}{n}$ & $2^nn!$ \\
			\hhline{|-|-|-|}
			
			$D_n$ & $\frac{3n-2}{n}\binom{2n-2}{n-1}$ & $2^{n-1}n!$ \\
			\hhline{|-|-|-|}
			
			$E_6$ & $833$ & $>10^5$ \\
			\hhline{|-|-|-|}
			
			$E_7$ & $4160$  & $>10^7$ \\
			\hhline{|-|-|-|}
			
			$E_8$ & $25080$ & $>10^9$ \\
			\hhline{|-|-|-|}
			
			$F_4$ & $105$ & $>10^4$ \\
			\hhline{|-|-|-|}
			
			$H_3$ & $32$ & $120$ \\
			\hhline{|-||-|-|}
			
			$H_4$ & $280$ & $>10^5$ \\
			\hhline{|-|-|-|}	
			
			$I_2(m)$  & $m+2$  & $2m$  \\
			\hhline{|-|-|-|}		
		\end{tabular}
		\caption{Cardinalities of the non-crossing partitions and the Coxeter groups for the finite irreducible types.}
		\label{tab:elements_ncw}
	\end{center}
\end{table}

\begin{figure}%
	\begin{center}

		\begin{tikzpicture}[every path/.style={ line join = round}]
		\def\a{1.9cm} 
		\def\b{1.2cm} 

		\node (pg) at (2.5*\a,2.5*\b){$(1\,\;2\,\;3\,\;4)$};
		\foreach \x/\l in {0/{(1\,\;2)(3\,\;4)},
			1/{(1\,\;3\,\;4)},
			2/{(1\,\;2\,\;3)},
			3/{(1\,\;2\,\;4)},
			4/{(2\,\;3\,\;4)},
			5/{(1\,\;4)(2\,\;3)}}
		{
			\node (p\x) at (\x*\a,\b){$\l$};
			\draw (pg) -- (p\x.north);
		}
		
		\node (pk) at (2.5*\a,-2.5*\b){$\id$};
		\foreach \x/\l in {0/{(1\,\;2)},
			1/{(3\,\;4)},
			2/{(1\,\;3)},
			3/{(2\,\;4)},
			4/{(1\,\;4)},
			5/{(2\,\;3)}}
		{
			\node (q\x) at (\x*\a,-\b){$\l$};
			\draw (pk) -- (q\x.south);
		}
		
		\foreach \o/\u in {0/0, 0/1, 1/1, 1/2, 1/4, 2/0, 2/2, 2/5, 3/0, 3/3, 3/4, 4/1, 4/3, 4/5, 5/4, 5/5}
		\draw (p\o.south) -- (q\u.north);

		\end{tikzpicture}

		\caption{The Hasse diagram of $\nc(S_4)$. }
		\label{fig:ncp4_poset}		
	\end{center}
\end{figure}
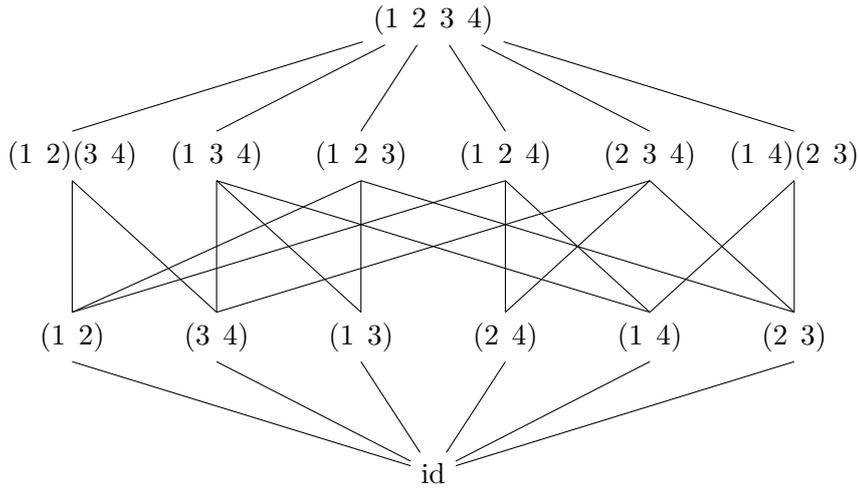

\cref{fig:ncp4_poset} shows the Hasse diagram of the non-crossing partitions $\nc(S_4)$ of the symmetric group. If we take a closer look at the Hasse diagram, three structural properties attract attention:
\begin{enumerate}
	\item For all $t\in T$ we have $t \in \nc(S_4)$.
	\item $\nc(S_4)$ is a lattice.
	\item $\nc(S_4)$ contains numerous Boolean sublattices.
\end{enumerate}
These are not phenomena of $\nc(S_4)$, they hold for general $\nc(W)$. 
We are explaining these properties subsequently.

The fact that all reflections are indeed in $\nc(W)$ for every finite Coxeter group $W$ is shown by using basic properties of the absolute order \cite[Le. 2.6.2]{armstr}. Brady and Watt uniformly show the following \cite{bra_watt_lattice}.

\begin{thm}
	For every finite reflection group $W$ the non-crossing partition poset $\nc(W)$ is a lattice.
\end{thm}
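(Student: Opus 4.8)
The plan is to derive the lattice property from a finiteness observation together with the geometric model of $\nc(W,\cox)$ given by moved spaces. First, since $\nc(W,\cox)$ is finite and has $\cox$ as greatest element, it is enough to show that any two elements $u,v\in\nc(W,\cox)$ admit a meet: their set of common upper bounds is then a non-empty finite set, whose meet is the required join. Two reductions are harmless here. The poset $\nc(W,\cox)$, its absolute order, and the moved spaces of its elements are all unchanged if we replace $V$ by the orthogonal complement of the subspace fixed pointwise by all of $W$, so we may assume the action of $W$ on $V$ is essential; and since the reflection set of a product splits as $T=T_1\sqcup T_2$ with additive reflection length, one has $\nc(W_1\times W_2,\cox_1\cox_2)\cong\nc(W_1,\cox_1)\times\nc(W_2,\cox_2)$, and a direct product of lattices is a lattice, so we may assume $W$ irreducible. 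Then $\cox$ has no non-zero fixed vector, that is $\fix(\cox)=\set{0}$ and $\mov(\cox)=V$. Finally, as all Coxeter elements are conjugate and conjugation by a fixed element is a poset automorphism of $(W,\leq)$, the choice of $\cox$ is immaterial.

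The main device is the map $w\mapsto\mov(w)$ from $\nc(W,\cox)$ into the lattice $\lam(V)$ of linear subspaces of $V$. By \cref{lem:basis_of_moved_space} one has $\dim\mov(w)=\ell(w)$, and moreover if $u\leq v$ then every reduced reflection decomposition of $u$ extends to a reduced decomposition of $v$ by \cref{prefix}; applying \cref{lem:basis_of_moved_space} to both decompositions exhibits a basis of $\mov(u)$ as a subset of a basis of $\mov(v)$, so $u\leq v$ forces $\mov(u)\subseteq\mov(v)$. The substantive geometric input, due to Brady and Watt, is the converse for elements below $\cox$: exploiting that $\cox$ is fixed-point free, one shows that $w\leq\cox$ is determined by $\mov(w)$, and that for $u,v\leq\cox$ the inclusion $\mov(u)\subseteq\mov(v)$ already implies $u\leq v$. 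Granting this, $w\mapsto\mov(w)$ is an order embedding of $\nc(W,\cox)$ into $\lam(V)$.

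It then remains to prove that the image $\set{\mov(w)\str w\leq\cox}$ is closed under intersection inside $\lam(V)$. Indeed, suppose $\mov(u)\cap\mov(v)=\mov(w)$ for some $w\leq\cox$, which is then unique by the previous paragraph. Since $\mov(w)=\mov(u)\cap\mov(v)$ is contained in both $\mov(u)$ and $\mov(v)$, the order embedding gives $w\leq u$ and $w\leq v$; and if $z\leq\cox$ satisfies $z\leq u$ and $z\leq v$, then $\mov(z)\subseteq\mov(u)\cap\mov(v)=\mov(w)$, so $z\leq w$. Hence $w=u\wedge v$, $\nc(W,\cox)$ is a meet-semilattice with a maximum, and therefore a lattice.

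The main obstacle is exactly this closure under intersection: given \enquote{$\cox$-admissible} subspaces $\mov(u)$ and $\mov(v)$ — those occurring as $\mov(w)$ for some $w\leq\cox$ — one must produce an element of $W$ below $\cox$ realising the subspace $\mov(u)\cap\mov(v)$. This calls for an intrinsic description of the $\cox$-admissible subspaces that is stable under intersection; concretely, one has to control how reduced reflection factorisations of $\cox$ behave under passage to parabolic subgroups, so that intersecting two admissible subspaces corresponds to an honest factorisation of a Coxeter element of a suitable parabolic. Brady and Watt carry this out uniformly, without invoking the classification of finite Coxeter groups, and the theorem follows.
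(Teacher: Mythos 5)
The paper does not actually prove this statement: it records it as a theorem of Brady and Watt and cites \cite{bra_watt_lattice}, so there is no internal argument to compare yours against. Your surrounding reductions are correct and standard: in a finite poset with a maximum, binary meets suffice (the join of $u,v$ is the meet of the finite non-empty set of common upper bounds); the product and essentiality reductions are harmless; and the forward implication $u\leq v\Rightarrow\mov(u)\subseteq\mov(v)$ follows from the \cref{prefix} together with \cref{lem:basis_of_moved_space}, exactly as in \cref{thm:brady_watt_embedding}.

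The difficulty is that everything you have actually argued is routine, and the two statements you defer to Brady and Watt carry the entire content of the theorem. Order-reflection below $\cox$ (that $\mov(u)\subseteq\mov(v)$ forces $u\leq v$ for $u,v\leq\cox$) is the main theorem of \cite{bra_watt_par_ord}; granting it, the assertion that the image of $w\mapsto\mov(w)$ is closed under intersection is, by your own meet construction, \emph{equivalent} to the assertion that $\nc(W,\cox)$ is a lattice. So the final paragraph of your proposal does not reduce the theorem to something easier --- it restates it in the language of moved spaces and then cites \cite{bra_watt_lattice} for the restatement. That is a legitimate expository move (and it is in effect what the thesis does), but as a proof it has a genuine gap: no argument is given, or even sketched, for why a subspace of the form $\mov(u)\cap\mov(v)$ is again the moved space of some element of $W$ lying below $\cox$. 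The unique element of the \emph{orthogonal group} below $\cox$ with that moved space exists by \cite{bra_watt_par_ord}, but showing that this element lies in the discrete group $W$ is precisely where all the work of \cite{bra_watt_lattice} is concentrated; if you want a self-contained proof, that is the step you must supply.
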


Armstrong shows how the group operation of $W$ interacts with the lattice structure of $\nc(W)$ \cite[Le. 2.6.13]{armstr}.

\begin{lem}
	Let $w\in \nc(W)$ and let $t_1\ldots t_k$ be a reduced decomposition of it. Then $w$ equals the join $t_1\vee \ldots \vee t_k \in \nc(W)$.
\end{lem}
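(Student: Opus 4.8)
The plan is to prove the identity by a linear-independence count in the geometric representation, using only that $\nc(W)$ is a lattice (so that the join exists), together with the Subword Property, \cref{lem:carter}, and the basis description of moved spaces in \cref{lem:basis_of_moved_space}.

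First I would record that $w$ is an upper bound of $\Set{t_1, \ldots, t_k}$ inside $\nc(W)$. Since $t_1\ldots t_k$ is a reduced decomposition of $w$, each single letter $t_i$ is itself a reduced decomposition of the reflection $t_i$, occurring as a subword of $t_1\ldots t_k$, so the Subword Property gives $t_i \leq w$; transitivity with $w\leq c$ then shows $t_i \in \nc(W)$ for all $i$. Hence the join $j \coloneqq t_1 \vee \ldots \vee t_k$, which exists because $\nc(W)$ is a lattice, satisfies $j \leq w$. It remains to show the reverse, $w \leq j$, for which it suffices to prove $\ell(j) \geq k$: indeed $j \leq w$ means $\ell(w) = \ell(j) + \ell(j\inv w)$, so together with $\ell(w) = k$ and $\ell(j) \geq k$ we get $k = \ell(w) \geq \ell(j) \geq k$, forcing $\ell(j\inv w) = 0$, i.e. $j = w$.

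The core step is therefore the bound $\ell(j) \geq k$, which I would obtain by examining the moved space $\mov(j) \subseteq V$. For each $i$ we have $t_i \leq j$ in $\nc(W)$, hence in $W$, so by the Subword Property some reduced decomposition $r_1 \ldots r_m$ of $j$ has $t_i$ as one of its letters, say $t_i = r_\ell$. By \cref{lem:basis_of_moved_space} the roots $\alpha_{r_1}, \ldots, \alpha_{r_m}$ form a basis of $\mov(j)$, so in particular $\alpha_{t_i} = \alpha_{r_\ell} \in \mov(j)$. Thus $\spa\Set{\alpha_{t_1}, \ldots, \alpha_{t_k}} \subseteq \mov(j)$. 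Since $t_1 \ldots t_k$ is reduced, \cref{lem:carter} tells us that $\alpha_{t_1}, \ldots, \alpha_{t_k}$ are linearly independent, so $\ell(j) = \dim \mov(j) \geq k$, as required.

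I do not expect a serious obstacle; the one point requiring care is the direction of the length inequality. One should resist bounding $\dim\mov(j)$ from above — that is already handled by $j \leq w$ — and instead use $t_i \leq j$ only to push roots into $\mov(j)$, giving the lower bound $\ell(j)\geq k$. Associativity of $\vee$ is not needed, since the $t_i$ enter symmetrically; if an inductive presentation is preferred, the same count proves $w = (t_1 \vee \ldots \vee t_{k-1}) \vee t_k$ verbatim, the prefix $t_1\ldots t_{k-1}$ being reduced by \cref{lem:carter}.
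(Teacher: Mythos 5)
Your proof is correct; the paper itself states this lemma without proof (citing Armstrong), and your argument is essentially the standard one behind that citation: the \cref{subword_prop} gives $t_i \leq w$ and hence $j \coloneqq t_1 \vee \ldots \vee t_k \leq w$, while the linear independence of $\alpha_{t_1},\ldots,\alpha_{t_k}$ from \cref{lem:carter} together with $\alpha_{t_i}\in\mov(j)$ forces $\ell(j)=\dim\mov(j)\geq k=\ell(w)$ and thus $j=w$. All the supporting facts you invoke are available in the paper, and the one delicate point — using $t_i\leq j$ only to push roots into $\mov(j)$ and bound its dimension from below, rather than from above — is handled correctly.
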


As a consequence we obtain a description of Boolean sublattices in $\nc(W)$.

\begin{cor}\label{cor:construction_boolean_sublatice}
	Let $t_1\ldots t_n$ be a reduced decomposition of the Coxeter element $\cox$. Then the span of the reflections of the reduced decomposition $\braket{t_1, \ldots, t_n}$ in $\nc(W,\cox)$ is a Boolean sublattice of $\nc(W,c)$.
\end{cor}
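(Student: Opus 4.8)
The plan is to produce an explicit isomorphism from the Boolean lattice $\B_n$ on the index set $\{1,\dots,n\}$ onto the span $\braket{t_1,\dots,t_n}$. For a subset $I=\{i_1<\dots<i_k\}$ the word $t_{i_1}\dots t_{i_k}$ is a subword of the reduced decomposition $t_1\dots t_n$ of $\cox$; since $\{\alpha_{t_1},\dots,\alpha_{t_n}\}$ is linearly independent, so is every subset of it, and Carter's Lemma shows that $t_{i_1}\dots t_{i_k}$ is a reduced decomposition of some element $w_I\in W$. The Subword Property then gives $w_I\leq\cox$, so $w_I\in\nc(W,\cox)$, while \cref{lem:basis_of_moved_space} says that $\{\alpha_{t_i}:i\in I\}$ is a basis of $\mov(w_I)$, whence $\ell(w_I)=\#I$. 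Finally, by the lemma immediately preceding this corollary, $w_I$ equals the join $\bigvee_{i\in I}t_i$ taken in $\nc(W,\cox)$; as every sublattice is closed under joins, each $w_I$ lies in $\braket{t_1,\dots,t_n}$.

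Next I would check that $I\mapsto w_I$ is a poset embedding. If $I\subseteq J$, then $t_{i_1}\dots t_{i_k}$ is a subword of the reduced decomposition $\prod_{j\in J}t_j$ of $w_J$, so $w_I\leq w_J$ by the Subword Property. Conversely, $w_I\leq w_J$ forces, via the Prefix Property and \cref{lem:basis_of_moved_space}, that $\mov(w_I)\subseteq\mov(w_J)$, that is $\spa\{\alpha_{t_i}:i\in I\}\subseteq\spa\{\alpha_{t_j}:j\in J\}$, and by linear independence of $\alpha_{t_1},\dots,\alpha_{t_n}$ this is possible only if $I\subseteq J$. In particular the map is injective. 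The join is then immediate from the preceding lemma: $w_I\vee w_J=\bigl(\bigvee_{i\in I}t_i\bigr)\vee\bigl(\bigvee_{j\in J}t_j\bigr)=\bigvee_{k\in I\cup J}t_k=w_{I\cup J}$.

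For the meet I would argue as follows. Since $\nc(W,\cox)$ is a lattice, $w_I\wedge w_J$ exists in it. From $I\cap J\subseteq I$ and $I\cap J\subseteq J$ we get $w_{I\cap J}\leq w_I$ and $w_{I\cap J}\leq w_J$, hence $w_{I\cap J}\leq w_I\wedge w_J$ and so $\#(I\cap J)=\ell(w_{I\cap J})\leq\ell(w_I\wedge w_J)$. On the other hand $w_I\wedge w_J\leq w_I$ and $w_I\wedge w_J\leq w_J$ give, again through the Prefix Property and \cref{lem:basis_of_moved_space}, $\mov(w_I\wedge w_J)\subseteq\mov(w_I)\cap\mov(w_J)=\spa\{\alpha_{t_k}:k\in I\cap J\}$, the last equality being the elementary fact that for a linearly independent family the spans of two subsets intersect in the span of their common part. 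Using $\ell=\dim\circ\mov$ this yields $\ell(w_I\wedge w_J)\leq\#(I\cap J)$, so both inequalities are equalities; as $w_{I\cap J}\leq w_I\wedge w_J$ with equal absolute length, we conclude $w_I\wedge w_J=w_{I\cap J}$. Hence $\{\,w_I:I\subseteq\{1,\dots,n\}\,\}$ is closed under meet and join, so it is a sublattice of $\nc(W,\cox)$ containing every $t_i=w_{\{i\}}$; as $\braket{t_1,\dots,t_n}$ is the smallest such sublattice, the two coincide, and $I\mapsto w_I$ is the desired lattice isomorphism $\B_n\to\braket{t_1,\dots,t_n}$.

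The subword bookkeeping is routine; the one genuinely delicate point is the meet identity, which does not follow formally from the preceding lemma. It forces a passage to moved spaces, the monotonicity $a\leq b\Rightarrow\mov(a)\subseteq\mov(b)$ (itself extracted from the Prefix Property and \cref{lem:basis_of_moved_space}) together with the equality of absolute length and dimension of the moved space, and the use of the fact that $\nc(W,\cox)$ is a lattice so that $w_I\wedge w_J$ is available at all. I expect this to be the main obstacle.
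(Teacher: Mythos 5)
Your proof is correct and follows the same route as the paper's: subwords of the reduced decomposition lie in $\nc(W,\cox)$ by the Subword Property, the order on them corresponds to inclusion of index sets, and the resulting map to $\B_n$ is a lattice isomorphism. The paper's proof essentially asserts this order-correspondence and stops; your verification that the family $\{w_I\}$ is closed under meets (via monotonicity of moved spaces, linear independence of the roots $\alpha_{t_1},\dots,\alpha_{t_n}$, and $\ell=\dim\circ\mov$) is precisely the step the paper leaves implicit, and it is what guarantees that the span $\braket{t_1,\dots,t_n}$ equals this family rather than something larger.
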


\begin{proof}
	Every subword of $t_1\ldots t_n$ is an element of $\nc(W)=\nc(W,c)$ by the \cref{subword_prop}. Moreover, for $i_1 < \ldots< i_k$ and $j_1 < \ldots<j_m $ it holds that
	\[
	t_{i_1} \cdot \ldots \cdot t_{i_k} \leq t_{j_1} \cdot \ldots \cdot t_{j_m}
	\]
	if and only if $\set{i_1, \ldots, i_k} \subseteq \set{j_1, \ldots, j_m}$. Hence the map $\braket{t_1, \ldots, t_n} \to \B_n$ defined by
	\[
	t_{i_1} \vee \ldots \vee t_{i_k} \mapsto \set{i_1, \ldots, i_k}
	\]
	is a lattice isomorphism.
\end{proof}

In general, not every Boolean sublattice of $\nc(W)$ arises in this way. For an example of a Boolean sublattice from a different origin, we need the notation of the type $B$ Coxeter group, which will be introduced in \cref{sec:typeB}. 

\begin{exa}
	The element $\cox= [1\,\;2\,\;3]$ is a Coxeter element for type $B_3$. The span of the set $\set{[1], [2], [3]}$ of reflections is a Boolean sublattice in $\nc(B_3)$, but there is no order of the set of reflections that is a reduced decomposition of the Coxeter element $[1\,\;2\,\;3]$. Moreover, note that two different reduced decompositions of the Coxeter element can induce the same Boolean sublattice. This is for instance the case if two reflections in a reduced decomposition commute.
\end{exa}

In this thesis, mainly in \cref{part3}, but also in the next chapter, we work with the order complexes of the non-crossing partition lattices. The order complex $\ocncw$ is a chamber complex, which is homotopy equivalent to a wedge of $(\rk(W)-2)$-spheres. This follows from the more general fact that the non-crossing partition lattices for finite Coxeter groups are shellable \cite{abw}.
The maximal chains of $\nc$, and hence the chambers of $|\nc|$, can be explicitly described and are in bijection with reduced decompositions of the Coxeter element \cite[Cor. 4]{bra_watt_par_ord}.

\begin{lem}\label{lem:construction_chain}
	Let $t_1\ldots t_n$ be a reduced decomposition of the Coxeter element $\cox \in W$. Then $(\id,t_1, t_1\cdot t_2, \ldots, t_1\cdot \ldots \cdot t_n)$ is a maximal chain in $\nc(W)$. In the reverse case, if $(\id, w_1, \ldots, w_n=\cox)$ is a maximal chain in $\nc(W)$, then 
	$w_1 (w_1\inv w_2) \ldots (w_{n-1}\inv w_n)$ is a reduced decomposition of the Coxeter element $\cox$.
\end{lem}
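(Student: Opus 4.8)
The plan is to prove the two directions separately by short direct computations using \cref{lem:carter} and the \cref{prefix}, and to settle maximality of chains via the monotonicity of the absolute length. For the forward direction I would set $w_i \coloneqq t_1 \cdots t_i$ for $0 \le i \le n$, so that $w_0 = \id$ and $w_n = \cox$. Since $t_1 \cdots t_n$ is reduced, \cref{lem:carter} gives that $\alpha_{t_1}, \dots, \alpha_{t_n}$ are linearly independent, hence so are $\alpha_{t_1}, \dots, \alpha_{t_i}$ for each $i$; applying \cref{lem:carter} again shows $t_1 \cdots t_i$ is a reduced decomposition of $w_i$, so $\ell(w_i) = i$. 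As $t_1 \cdots t_i$ is a prefix of the reduced decomposition $t_1 \cdots t_n$ of $\cox$, the \cref{prefix} yields $w_i \le \cox$, that is $w_i \in \nc(W)$; and as $t_1 \cdots t_{i-1}$ is a prefix of the reduced decomposition $t_1 \cdots t_i$ of $w_i$, the same property gives $w_{i-1} \le w_i$. Thus $(\id = w_0, w_1, \dots, w_n = \cox)$ is a chain in $\nc(W)$. To see it is maximal I would note that $x < y$ in the absolute order forces $\ell(x) < \ell(y)$, since $\ell(y) = \ell(x) + \ell(x^{-1}y)$ with $x^{-1}y \ne \id$; consequently nothing of $\nc(W)$ fits strictly between the consecutive elements $w_{i-1}$ and $w_i$, whose ranks differ by $1$, and since $\id$ and $\cox$ are the minimum and maximum of $\nc(W)$ the chain admits no extension. (Equivalently, one may invoke that $\nc(W)$ is graded by $\ell$ of rank $\ell(\cox) = n$.)

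For the converse, let $(\id = w_0, w_1, \dots, w_n = \cox)$ be a maximal chain. By the monotonicity of $\ell$ just used, together with $\ell(w_0) = 0$ and $\ell(w_n) = \ell(\cox) = n$, the strictly increasing integers $\ell(w_0) < \dots < \ell(w_n)$ lying in $\{0, \dots, n\}$ must be $0, 1, \dots, n$, so $\ell(w_i) = i$. Setting $t_i \coloneqq w_{i-1}^{-1} w_i$, the relation $w_{i-1} \le w_i$ unwinds by definition to $\ell(w_i) = \ell(w_{i-1}) + \ell(w_{i-1}^{-1} w_i)$, whence $\ell(t_i) = 1$, so $t_i \in T$ is a reflection. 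The product telescopes to $t_1 t_2 \cdots t_n = w_0^{-1} w_n = \cox$, and being a word of length $n = \ell(\cox)$ over $T$ it is a reduced decomposition of $\cox$. This establishes the statement.

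I expect the only genuinely delicate point to be the maximality assertion in the forward direction — that the constructed chain cannot be refined — but this collapses immediately to the fact that the absolute length is strictly order-preserving on the absolute order, so the proof should otherwise be entirely routine.
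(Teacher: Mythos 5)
Your proof is correct: both directions are handled cleanly via \cref{lem:carter}, the \cref{prefix}, and the fact that the absolute length is strictly increasing along the absolute order, which settles maximality and forces $\ell(w_i)=i$ in the converse. The paper itself gives no proof of this lemma (it cites Brady--Watt, Cor.~4), but your argument is exactly the standard one that the cited result rests on, so there is nothing to add.
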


\begin{rem}\label{rem:boolean_sublat_chain}
	Note that the maximal chain arising from a reduced decomposition of a Coxeter element as described in \cref{lem:construction_chain} is contained in the Boolean sublattice of $\nc$ associated to the same reduced decomposition. 
\end{rem}

\cleardoublepage

\chapter{The classical types}\label{chap:classical_types}

In this chapter we study the Coxeter groups of the classical types $A$, $B$ and $D$ and their non-crossing partitions. After investigating the group-theoretic structure of $\nc$, we give pictorial representations of the non-crossing partitions in the different types. The background for Coxeter groups is taken from \cite{bb} and \cite{ath_rei}, and the one for partitions can be found in \cite{staece}. In type $A$, the pictorial representation goes back to Kreweras. In his article \cite{kre} he introduced the \enquote{non-crossing partitions of a cycle} and laid the foundation for the rich study of non-crossing partitions. The pictorial representation of type $B$ is due to Reiner \cite{rei} and the pictorial representation of type $D$ we define here is inspired by the one introduced in \cite{ath_rei}, but has the advantage that different non-crossing partitions are represented by different pictures.

\section{Non-crossing partitions of type $A$}

The \enquote{classical} non-crossing partitions, introduced by Kreweras, which are now seen as the pictorial representations of the type $A$ non-crossing partitions, are not only the origin of the whole study of non-crossing partitions, but they are also a prototype for the other non-crossing partitions of classical type. Hence the study of the type $A$ non-crossing partitions is absolutely essential for the general understanding of non-crossing partitions.
The type $A$ non-crossing partitions, and also the corresponding \enquote{classical} set partitions, are the central objects of study in \cref{part3} of this thesis. For these reasons, we include an expanded discussion of non-crossing partitions of type $A$.

\subsection{The symmetric group}\label{sec:typeA_descr}
The Coxeter group $W$ of type $A_{n-1}$ is the \emph{symmetric group} $S_{n}$, whose elements are the bijective self-maps of $\Set{1, \ldots, n}$ with composition of maps as group operation. We use the cycle notation to express group elements in $S_n$. For instance, for $n \geq 3$ the symbol $(1\,\;2\,\;3)$ denotes the bijection that maps $1\mapsto 2$, $2\mapsto 3$, $3\mapsto 1$ and fixes all other elements. If $z$ is a cycle, we denote by $z\inv$ the cycle such that $zz\inv$ represents the identity map. Moreover, we use the convention that evaluation of compositions is from right to left, that is $(1\,\;2)(2\,\;3)=(1\,\;2\,\;3)$. Equipped with this notation, we choose the standard Coxeter generating set of type $A_{n-1}$ to be
\[
S(A_{n-1}) \coloneqq \Set{ (i\,\;i+1)\str 1 \leq i < n   },
\]
the set of adjacent transpositions. We denote the simple reflection $(i\,\;i+1)$ by $s_i$ for all $1 \leq i < n$. As the standard Coxeter element of type $A_{n-1}$ we choose the product of simple reflections
\[
\cox \coloneqq s_1 s_2 \ldots s_{n-1} = (1\,\;2\ldots n).
\]

\begin{figure}
	\begin{center}
		\begin{tikzpicture}
		\foreach \w in {0,1,...,5}
		\node[kpunkt] (p\w) at (\w+0.5*\w,0){};
		\draw (p0)--(p1)--(p2) (p3)--(p4)--(p5);
		\node at(3.75,0){$\ldots$};
		\node[left of=p0, xshift=-1cm]{$A_{n-1}$};
		\node[below of=p0, yshift = 0.3cm]{$s_1$};
		\node[below of=p1, yshift = 0.3cm]{$s_2$};
		\node[below of=p2, yshift = 0.3cm]{$s_3$};
		\node[below of=p3, yshift = 0.3cm]{$s_{n-3}$};
		\node[below of=p4, yshift = 0.3cm]{$s_{n-2}$};
		\node[below of=p5, yshift = 0.3cm]{$s_{n-1}$};
		\end{tikzpicture}
		\caption{The Coxeter diagram of type $A_{n-1}$ with corresponding standard simple reflections $s_1, \ldots, s_{n-1}$.}
		\label{fig:txpeA_cox_graph}
	\end{center}
\end{figure}
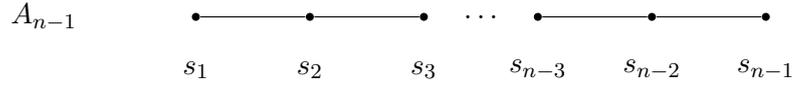

The Coxeter diagram of type $A_{n-1}$ is decpicted in \cref{fig:txpeA_cox_graph}.
The set of all reflections is the union of all conjugates of $S(A_{n-1})$, which is the set 
\[
T(A_{n-1}) \coloneqq \Set{ (i\,\;j) \str 1\leq i < j \leq n  }
\]
of all transpositions of $\Set{1,\ldots, n}$. Note that the symbols $(i\,\;j)$ and $(j\,\;i)$ describe the same maps. We prefer to write $(i\,\;j)$ if $i<j$. We say that two cycles $a,b\in S_n$ are \emph{equivalent}, and write $a\equiv b$, if they represent the same element in $S_n$. For instance, the cycles $(1\,\;2\,\;3)$ and $(2\,\;3\,\;1)$ are equivalent. By abuse of notation, we identify elements of $S_n$ with products of cycles and prefer, most of the time, to use the representatives with the smallest number on the first position. Two cycles $(i_1 \ldots i_k)$ and $(j_1 \ldots j_m)$ are called \emph{disjoint} if the sets $\Set{i_1, \ldots, i_k}$ and $\Set{j_1, \ldots, j_m}$ are disjoint. Every element in the symmetric group $S_n$ has a shortest cycle decomposition consisting of disjoint cycles that is unique up to order and equivalence of cycles. We call this decomposition the \emph{disjoint cycle decomposition}.

Our aim is to analyze which elements of the Coxeter group $W$ are in the interval below the standard Coxeter element $\cox=(1\ldots n)$ in absolute order, that is which elements are in $\nc(S_n)$. For this we need the following definitions, which are the analogs of \cite[Def. 4.4, 4.5]{bra_watt_kpi}. 

\begin{defi}
	Let $i=(i_1 \ldots i_k)$ be a cycle in $S_n$. We say that the cycle $i$ has \emph{consistent orientation} or is \emph{oriented consistently} with respect to the Coxeter element $\cox=(1 \ldots n)\in S_n$, or simply that $i$ has \emph{consistent orientation} or is \emph{oriented consistently} in $S_n$, if $i_1 < i_2 <\ldots < i_k$. A cycle that is equivalent to a cycle with consistent orientation is said to have consistent orientation as well. 
		
	Let $j=(j_1 \ldots j_l)$ be a cycle different from $i$ and suppose that both $i$ and $j$ are oriented consistently. The cycles $i$ and $j$ are called \emph{crossing} if there exist $1\leq a<b<c<d\leq n$ with
	$a=i_\alpha$,  $b=j_\beta$, $c=i_\gamma$ and $d=j_\delta$, 
	or $a=j_\alpha$,  $b=i_\beta$, $c=j_\gamma$ and $d=i_\delta$,
	for some $\alpha,\beta,\gamma,\delta$. Otherwise the cycles are called \emph{non-crossing}. An element in $S_n$ that has a disjoint cycle decomposition consisting of non-crossing cycles only is called \emph{non-crossing}.
\end{defi}

\begin{exa}
	The cycles $(1\,\;2\,\;4)$ and $(3\,\;5)$ are crossing, whereas the cycles $(1\,\;2\,\;6)$ and $(3\,\;5)$ are non-crossing. Hence $(1\,\;2\,\;6)(3\,\;5)$ is non-crossing and  $(1\,\;2\,\;4)(3\,\;5)$ is crossing.
\end{exa}

\begin{rem}
	Having consistent orientation and being non-crossing is a property of elements of $S_n$, not only of the cycles themselves.
\end{rem}

The following proposition is deduced from Theorem 4.1.3 and Lemma 4.1.4 of \cite{armstr} and justifies the notion of \enquote{non-crossing element}.

\begin{prop}\label{prop:typeA_consistent_orientation_nc}
	An element $w$ of $S_n$ is in $\nc(S_n)$ if and only if all cycles of the disjoint cycle decomposition of $w$ are oriented consistently in $S_n$ and are non-crossing.
\end{prop}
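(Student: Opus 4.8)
The plan is to reduce the statement to an identity about the number of cycles, and then prove that identity by an induction that deletes the largest letter $n$.

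\textbf{Step 1: reduction to a cycle count.} For $w \in S_n$ the moved space $\mov(w)$ is the orthogonal complement of the fixed space, and decomposing $w$ into disjoint cycles $z_1\cdots z_m$ (fixed points included as $1$-cycles) shows $\mov(w)$ is the direct sum of the $\mov(z_i)$; hence $\ell(w) = \dim\mov(w) = \sum_i(|z_i|-1) = n - \kappa(w)$, where $\kappa(w)$ denotes the number of cycles of $w$. Applying this to $w$, $\cox$ and $w\inv\cox$, the defining condition $w \le \cox$, namely $\ell(w)+\ell(w\inv\cox) = \ell(\cox) = n-1$, becomes
\[
\kappa(w) + \kappa(w\inv\cox) = n+1 .
\]
So it suffices to prove: for every $w \in S_n$ one has $\kappa(w)+\kappa(w\inv\cox)\le n+1$, with equality exactly when the disjoint cycle decomposition of $w$ consists of consistently oriented, pairwise non-crossing cycles.

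\textbf{Step 2: the ``if'' direction, via an explicit reduced word.} Assume $w$ is non-crossing in the pictorial sense. For a single consistently oriented cycle $(a_1\,\ldots\,a_k)$ with $a_1<\cdots<a_k$, using $(a_1\,\ldots\,a_k)=(a_1\,a_k)(a_1\,a_{k-1})\cdots(a_1\,a_2)$ one checks that $(a_1\,\ldots\,a_k)\inv\cox$ is obtained from $\cox$ by $k-1$ successive left-multiplications that each \emph{split} the cycle currently containing $a_1$ (a split occurs because $a_1$ is the smallest element of the support); hence $\kappa((a_1\ldots a_k)\inv\cox)=k$ and the required identity holds in this case. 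For a general non-crossing $w$, I would either assemble these blocks into one reduced decomposition of $\cox$ having a reduced decomposition of $w$ as a prefix — whence $w \le \cox$ by the Prefix Property — or, more robustly, fold the argument into the induction of Step 3.

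\textbf{Step 3: the induction on $n$.} Delete the largest letter $n$ and use $\cox = \cox'\,(n-1\; n)$ where $\cox'=(1\,\ldots\,n-1)$. If $w(n)=n$, then viewing $w$ in $S_{n-1}$ one computes $\kappa_{S_n}(w) = \kappa_{S_{n-1}}(w)+1$ and $\kappa_{S_n}(w\inv\cox)=\kappa_{S_{n-1}}(w\inv\cox')$ (the transposition $(n-1\; n)$ merges the cycle of $n-1$ in $w\inv\cox'$ with the fixed point $n$), so the claim for $(w,\cox)$ follows from the inductive claim for $(w,\cox')$, and $w$ is non-crossing in $S_n$ iff it is non-crossing in $S_{n-1}$ since an isolated point creates no crossing. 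If $w(n)\ne n$, remove $n$ from its cycle of $w$ to get $\bar w\in S_{n-1}$ with $\kappa(\bar w)=\kappa(w)$, track the effect of this removal on $\bar w\inv\cox'$ versus $w\inv\cox$, and read off the inequality, with equality forcing — and being forced by — the absence of a crossing or a mis-orientation at the letter $n$. The main obstacle I expect is precisely this $w(n)\ne n$ case: one has to pin down, in terms of the cyclic position of $w\inv(n)$ relative to $w(n)$ and to the other blocks, exactly when deleting $n$ changes $\kappa(w\inv\cox)$ and whether it introduces or destroys a crossing — this bookkeeping is where the geometry of the circular picture is doing the real work. (If one prefers to keep the proof short, the statement is also immediate from Armstrong's identification of $\nc(S_n)$ with Kreweras' classical non-crossing partition lattice together with his cycle-and-orientation description; see Theorem 4.1.3 and Lemma 4.1.4 of \cite{armstr}.)
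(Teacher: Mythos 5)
Your Step 1 reduction is correct: $\ell(w)=n-\kappa(w)$ for $w\in S_n$ (where $\kappa$ counts cycles including fixed points), so $w\leq\cox$ is equivalent to $\kappa(w)+\kappa(w\inv\cox)=n+1$, and the inequality $\kappa(w)+\kappa(w\inv\cox)\leq n+1$ always holds by subadditivity of $\ell$. The single-cycle computation in Step 2 also checks out: left-multiplying $\cox$ successively by $(a_1\,a_k), (a_1\,a_{k-1}),\ldots,(a_1\,a_2)$ splits the cycle of $a_1$ at every stage, precisely because consistent orientation keeps each $a_j$ inside the interval $[a_1,a_{j+1}-1]$ that is the current cycle of $a_1$. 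The problem is that everything beyond this is deferred. The multi-cycle \enquote{if} direction needs the fact that a union of pairwise non-crossing, consistently oriented cycles extends to a non-crossing spanning tree whose edges can be ordered into a reduced decomposition of $\cox$ (this is \cref{lem:nc_spanning_tree_red_decomp}, quoted from Goulden--Yong), and you only gesture at it. More seriously, the entire \enquote{only if} direction lives in the $w(n)\neq n$ case of Step 3, which you explicitly leave as bookkeeping. That bookkeeping is not local at the letter $n$: for $w=(1\,2\,4)(3\,5)\in S_5$, deleting $5$ gives $\bar w=(1\,2\,4)$, which is consistently oriented and below $\cox'=(1\,2\,3\,4)$, while $w$ itself is crossing and one computes $\kappa(w)+\kappa(w\inv\cox)=4<6$. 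Detecting this forces you to compare the cyclic positions of $w\inv(n)$ and $w(n)$ against \emph{every other} cycle of $w$, which is essentially the whole content of the proposition. As written, the argument establishes the \enquote{if} direction for single cycles and nothing else.

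For comparison, the paper takes a different route and is itself only a sketch: it describes the cover relations of the absolute order by \enquote{breaking cycles} and argues downward from $\cox$ that exactly the consistently oriented non-crossing elements arise, attributing the complete argument to Theorem 4.1.3 and Lemma 4.1.4 of \cite{armstr} --- which is precisely your parenthetical fallback. So citing Armstrong is a legitimate way to discharge the statement; but if you want a self-contained proof along your lines, the $w(n)\neq n$ case must actually be carried out, and you should expect the case analysis there to reproduce the crossing and orientation conditions rather than merely confirm them. The cycle-breaking argument avoids that case analysis and is the shorter path to a complete proof.
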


We include a sketch of the proof here, because it is instructive and also useful to understand the structure of the non-crossing partitions of type $B$ and $D$.

\begin{proof}[Sketch of proof]
	The idea is to describe the cover relations in the absolute order by \enquote{breaking cycles}. A break of a cycle $(i_1\ldots i_k)$ is obtained by inserting a separation \enquote{$)($} at any position between two numbers, for instance $(i_1\ldots i_m )(i_{m+1}\ldots i_k)$, which gives rise to another element of $S_n$. Recall that there are different, but equivalent, ways to represent an element of $S_n$ by a cycle. This has to be taken into account when breaking the cycles. For instance, let $w$ be the element described by the two equivalent cycles $(1\,\;2\,\;3)\equiv (2\,\;3\,\;1)$. 
	We can break the cycle $(1\,\;2\,\;3)$ into $(1\,\;2)(3)\equiv (1\,\;2)$ and into $(1)(2\,\;3)\equiv(2\,\;3)$.
	Performing breaks on the equivalent cycle $(2\,\;3\,\;1)$ similarly gives the two elements $(2\,\;3)$ and $(3\,\;1)\equiv(1\,\;3)$. Hence \enquote{breaking the cycle} means more precisely \enquote{breaking all cycles in the equivalence class}. 
	
	Now it can be checked that breaking cycles always produces consistently oriented cycles, which are also non-crossing. The other way around, every consistently oriented cycle arises in this way.
\end{proof}

As a direct consequence we get the following.

\begin{cor}\label{cor:length_subcycle}
	Let $a=(i_1 \ldots i_n)$ and $b= (i_{j_1}\ldots i_{j_k})$ be  such that $1 \leq j_1 < \ldots < j_k \leq n$. Then the absolute length of $b\inv a$ equals $n-k$. 
	Conversely, if $\ell(b\inv a)=\ell(a)-\ell(b)$ for any two cycles $a$ and $b$, then there is a cycle $\tilde{b}$ in the equivalence class of $b$ such that $\tilde{b}$ arises from $a$ by removing entries.
\end{cor}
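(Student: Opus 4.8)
The plan is to reduce both implications to \cref{prop:typeA_consistent_orientation_nc}. The starting observation is that, for cycles $a,b\in S_n$, the hypothesis $\ell(b\inv a)=\ell(a)-\ell(b)$ is exactly the relation $b\leq a$ in the absolute order. I will also use the elementary fact that a cycle of length $l$ has absolute length $l-1$: the decomposition $(a_1\,a_2\ldots a_l)=(a_1\,a_2)(a_2\,a_3)\cdots(a_{l-1}\,a_l)$ has linearly independent roots, so it is reduced by \cref{lem:carter}.

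For the first assertion, write $a=(i_1\ldots i_n)$, an $n$-cycle, which is a Coxeter element of $S_n$. Conjugating by the permutation $\pi$ determined by $\pi(r)=i_r$ carries $a$ to the standard Coxeter element $\cox=(1\,2\ldots n)$ and $b=(i_{j_1}\ldots i_{j_k})$ to the cycle $(j_1\ldots j_k)$ with $j_1<\dots<j_k$; since absolute length is conjugation invariant it suffices to treat this case. A single cycle with increasing entries is oriented consistently in $S_n$, and being the only cycle it crosses nothing, so $(j_1\ldots j_k)\in\nc(S_n,\cox)$ by \cref{prop:typeA_consistent_orientation_nc}. Hence $b\leq a$, and therefore $\ell(b\inv a)=\ell(a)-\ell(b)=(n-1)-(k-1)=n-k$.

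For the converse I assume $b\leq a$ and write $a=(i_1\ldots i_m)$, so that $\mov(a)$ is spanned by the roots attached to the transpositions $(i_r\,i_{r+1})$, all of which are supported on $J\coloneqq\Set{i_1,\ldots,i_m}$. By \cref{prefix} a reduced decomposition of $b$ extends to one of $a$, and by \cref{lem:basis_of_moved_space} the roots of the latter form a basis of $\mov(a)$; hence every reflection occurring in a reduced decomposition of $b$ is a transposition of two elements of $J$, so $b$ lies in the parabolic subgroup $W_J\cong S_m$, inside which $a$ is a Coxeter element and $b\leq a$ still holds. Applying \cref{prop:typeA_consistent_orientation_nc} within $W_J$ (after the relabelling $i_r\leftrightarrow r$), the single cycle $b$ must be oriented consistently with respect to $a$, which means precisely that $b$ is equivalent to a cycle $(i_{j_1}\ldots i_{j_l})$ with $1\leq j_1<\dots<j_l\leq m$. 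Such a cycle is obtained from $a=(i_1\ldots i_m)$ by deleting the entries $i_r$ with $r\notin\Set{j_1,\ldots,j_l}$, so $\tilde b\coloneqq(i_{j_1}\ldots i_{j_l})$ is the required cycle.

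The length bookkeeping and the conjugation step are routine. The point that needs genuine care — the main obstacle — is the passage to the parabolic subgroup $W_J$ in the converse, together with the observation that "oriented consistently with respect to $a$" is a property of the equivalence class of a cycle, so the conclusion delivers an \emph{equivalent} cycle $\tilde b$ rather than $b$ on the nose, exactly as the corollary is phrased.
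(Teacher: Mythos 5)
Your proof is correct and follows essentially the same route as the paper's: relabel the support of $a$ via $i_l \mapsto l$ so that $a$ becomes the standard Coxeter element and then invoke \cref{prop:typeA_consistent_orientation_nc}. The paper compresses this into two sentences, and your extra care with the reduction to the support of $a$ in the converse direction is exactly the step the paper leaves implicit.
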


\begin{proof}
	Sending $i_l \mapsto l$ gives a bijection from $\Set{i_1, \ldots, i_n}$ to $\Set{1, \ldots, n}$. Applying \cref{prop:typeA_consistent_orientation_nc} yields the result.
\end{proof}

\subsection{Pictorial representations}
Now we turn to set partitions and their pictorial representations.

\begin{defi}
	Let $B_1, \ldots, B_k$ be disjoint non-empty subsets of $\Set{ 1, \ldots, n }$ such that $ B_1 \cup \ldots \cup B_k = \Set{1, \ldots, n}$. The set $\pi=\Set{ B_1, \ldots, B_k }$ is called a \emph{(set) partition} of $\Set{ 1, \ldots, n }$ and its elements are called \emph{blocks}. A block with exactly one element is called \emph{trivial}. A non-trivial block is called a \emph{base block}.
	Two different blocks $B_i$ and $B_j$ of $\pi$ are called \emph{crossing} if there exist $1\leq a <b<c<d\leq n$ such that $a,c\in B_i$ and $b,d\in B_j$ or vice versa.
	A partition is called \emph{crossing} if it has crossing blocks and \emph{non-crossing} otherwise. The set of partitions of $\Set{ 1, \ldots, n }$ is denoted by $\pn$ and the set of non-crossing partitions is denoted by $\ncpn$.
\end{defi}

The reason for the terms crossing and non-crossing becomes clear in the pictorial representation of a partition.

\begin{defi}
	Let $\pi=\Set{ B_1, \ldots, B_k }$ be a set partition of $\Set{1, \ldots, n}$. 
	We define an embedded graph in the following way.
	Label the vertices of a regular $n$-gon in the plane with $1, 2, \ldots, n$ clockwise in this order. This circularly ordered set is the vertex set of the graph, and the edges are obtained as follows. For every block $B \in \pi$ add the $\#B$-gon on the vertices of $B$, where a $1$-gon is a point and a $2$-gon is an edge. The resulting graph is called the \emph{pictorial representation} of $\pi$. 
\end{defi}

\begin{nota}
	If there is no danger of confusion, we refer to the pictorial representation of a set partition just as \emph{partition}. The symbols $\ncpn$ and $\pn$ may also refer to the sets of pictorial representations of the non-crossing partitions and partitions, respectively.
\end{nota}

If a partition has two crossing blocks, then in its pictorial representation there exist two edges that \emph{cross}, that is they intersect outside their endpoints. The pictorial representations of two set partitions 
are shown in \cref{fig:typeA_pict_rep}.

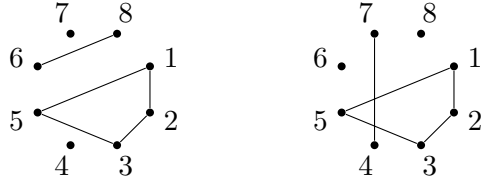
\begin{figure}
	\begin{center}
		\begin{tikzpicture}
		\begin{scope}
		\achtecklab
		\draw(p1)--(p2)(p3)--(p2)(p3)--(p5)(p1)--(p5)(p6)--(p8);
		\end{scope}
		\begin{scope}[xshift= 4cm]
		\achtecklab
		\draw(p1)--(p2)(p3)--(p2)(p3)--(p5)(p1)--(p5)(p7)--(p4);
		\end{scope}
		\end{tikzpicture}
	\end{center}
	\caption{The left-hand side depicts the non-crossing partition $\Set{ \Set{1,2,3,5},\Set{4},\Set{6,8},\Set{7} }$ and the right-hand side shows the crossing partition $\Set{ \Set{1,2,3,5},\Set{4,7},\Set{6},\Set{8} }$.}
	\label{fig:typeA_pict_rep}
\end{figure}

The set of partitions $\pn$ is a graded lattice, where the partial order is given by  \emph{refinement}, that is $\pi_1 \leq \pi_2$ if and only if for every block $B$ in $\pi_1$ there exists a block in $\pi_2$ that contains $B$. Equivalently, this means that $\pi_2$ can be obtained from $\pi_1$ by merging the blocks of $\pi_1$. The \emph{rank} of a partition $\pi$ is defined as $\rk(\pi) \coloneqq n- \#\pi$, where $\#\pi$ denotes the number of blocks of $\pi$. The Hasse diagram of $\p_4$, and $\ncp_4$ inside, is shown in \cref{fig:p4}.

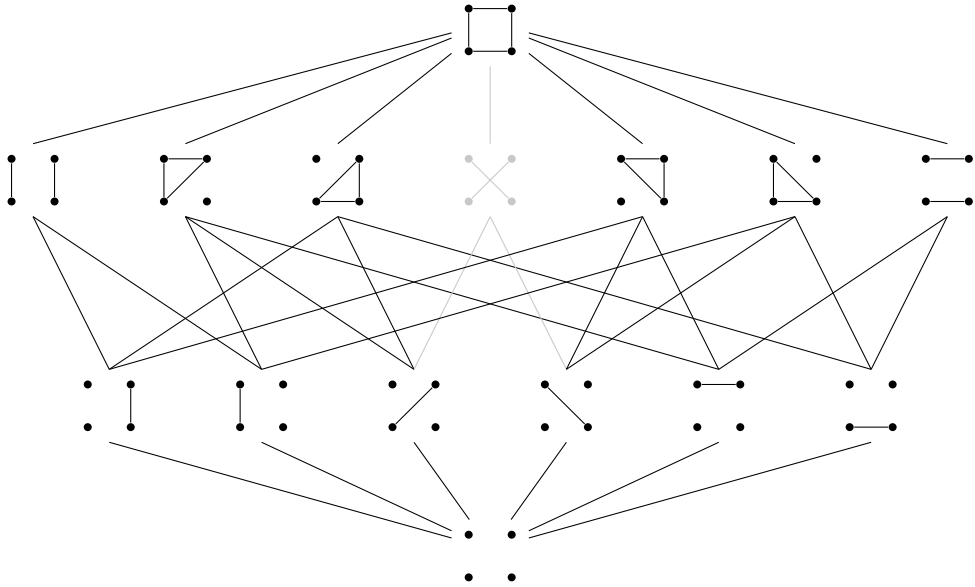
\begin{figure}%
	\begin{center}

		\begin{tikzcd}[column sep=0mm, row sep = large]
		&&&&&&\pfull
		\ar[dllllll,dash, end anchor = north]
		\ar[dllll,dash, end anchor = north]
		\ar[dll,dash, end anchor = north]
		\ar[d,dash, end anchor = north, Lightgray]
		\ar[drr,dash, end anchor = north]
		\ar[drrrr,dash, end anchor = north]
		\ar[drrrrrr,dash, end anchor = north]
		
		&&&&&&\\
		\pezudv 
		\ar[dr,dash, end anchor = north, start anchor = south]
		\ar[drrr,dash, end anchor = north, start anchor = south]
		&&
		\pedv 
		\ar[dr,dash, end anchor = north, start anchor = south]
		\ar[drrr,dash, end anchor = north, start anchor = south]
		\ar[drrrrrrr,dash, end anchor = north, start anchor = south]
		&&
		\pezd 
		\ar[dlll,dash, end anchor = north, start anchor = south]
		\ar[dr,dash, end anchor = north, start anchor = south]
		\ar[drrrrrrr,dash, end anchor = north, start anchor = south]
		&&
		\peduzv
		\ar[dl,dash, end anchor = north, start anchor = south, Lightgray]
		\ar[dr,dash, end anchor = north, start anchor = south, Lightgray]
		&&
		\pezv
		\ar[dr,dash, end anchor = north, start anchor = south]
		\ar[dl,dash, end anchor = north, start anchor = south]
		\ar[dlllllll,dash, end anchor = north, start anchor = south]
		&&
		\pzdv 
		\ar[dr,dash, end anchor = north, start anchor = south]
		\ar[dlll,dash, end anchor = north, start anchor = south]
		\ar[dlllllll,dash, end anchor = north, start anchor = south]
		&&
		\pevuzd
		\ar[dl,dash, end anchor = north, start anchor = south]
		\ar[dlll,dash, end anchor = north, start anchor = south]
		\\[1cm]
		
		&\pez &&
		\pdv &&
		\ped &&
		\pzv &&
		\pev &&
		\pzd&\\
		&&&&&& \begin{tikzpicture}\viereck\end{tikzpicture} 
		\ar[ulllll,dash, end anchor = south]
		\ar[ulll,dash, end anchor = south]
		\ar[ul,dash, end anchor = south]
		\ar[ur,dash, end anchor = south]
		\ar[urrr,dash, end anchor = south]
		\ar[urrrrr,dash, end anchor = south]
		&&&&&&
		\end{tikzcd}
		\caption{The Hasse diagram of the non-crossing partition lattice $\ncp_4$ is shown in black. The gray partition is the only crossing partition in $\p_4$.}%
		\label{fig:p4}%
		
	\end{center}
\end{figure}

Let us now take a closer look at the join and meet operation in $\pn$. 
The \emph{join} of $\pi_1$ and $\pi_2$ is the least partition $\pi_1\vee \pi_2 \in \pn$ such that every block of $\pi_1$ and every block of $\pi_2$ is contained in a block of $\pi_1\vee \pi_2$.
The \emph{meet} of $\pi_1$ and $\pi_2$ is the greatest partition $\pi_1\wedge \pi_2 \in \pn$ such that every block of $\pi_1\wedge \pi_2$ is contained in a block of $\pi_1$ and $\pi_2$.

Kreweras showed \cite{kre} that the set of non-crossing partitions $\ncpn$ is a graded lattice for all $n \in \N$, where the partial order, the rank and the meet is inherited from the lattice $P_n$. The join in $\ncpn$ of two non-crossing partitions $\pi_1$ and $\pi_2$ is the least \emph{non-crossing partition} $\pi_1\vee \pi_2 \in \ncpn$ such that every block of $\pi_1$ and every block of $\pi_2$ is contained in a block of $\pi_1\vee \pi_2$.
Explicitly, the join in $\ncpn$ is obtained from the join in $\pn$ by merging all blocks that cross.

\begin{exa}
	We illustrate the join and meet operations in $\ncp_6$ and $\p_6$ with an example. The corresponding pictures are shown in \cref{fig:typeA_meet_join}. Consider the two non-crossing partitions $\pi_1=\Set{ \Set{1,3,4},\Set{2},\Set{5},\Set{6} }$ and $\pi_2=\Set{ \Set{1},\Set{2,6},\Set{3,4},\Set{5} }$. The meet, which is the same in both $\p_6$ and $\ncp_6$, is the partition that has as only non-trivial block $\Set{3,4}$. The join of $\pi_1$ and $\pi_2$ in $\p_6$ is $\Set{ \Set{1,3,4},\Set{2,6},\Set{5} }$. Because their non-trivial blocks cross, the join of $\pi_1$ and $\pi_2$ in $\ncp_6$ differs from the join in $\p_6$. To obtain the join in $\ncp_6$ from the one in $\p_6$, we join the crossing blocks and get the partition $\Set{ \Set{1,2,3,4,6},\Set{5} }$, which is the join of $\pi_1$ and $\pi_2$ in $\ncp_6$.
	The meet and join operations also can be described pictorially. 
	
	The meet $\pi_1\wedge \pi_2$ is the same in both $\pn$ and $\ncpn$ and is obtained as follows. For every block $B$ in $\pi_1$ and $\pi_2$ add the complete graph on the vertices $B$. Now take the intersection of the two constructed graphs, that is if an edge is contained in both the graph for $\pi_1$ and $\pi_2$, then keep it, otherwise delete it. The resulting graph is a collection of complete graphs. Replacing a complete graph on the vertex set $B \subseteq \Set{1, \ldots, n}$ by the $\#B$-gon on $B$ gives the pictorial representation of $\pi_1\wedge \pi_2$.
	
	The join in $\pn$ of two partitions $\pi_1$ and $\pi_2$ is obtained as follows. Consider the graph that has a $\#B$-gon on each vertex set $B$ in $\pi_1 \cup \pi_2$, which pictorially is the union of the pictorial representations of $\pi_1$ and $\pi_2$. The pictorial representation of the join $\pi_1\vee_{\pn}\pi_2$ has a polygon on the vertices of each connected component of $\pi_1 \cup \pi_2$. 
	
	The join of two non-crossing partitions $\pi_1$ and $\pi_2$ results from their join in $\pn$ by taking the union of all crossing blocks.
\end{exa}

\begin{figure}%
	\begin{center}
		\begin{tikzpicture}
		
		\begin{scope}
		\gsechsecklab
		\draw(p1)--(p3)(p4)--(p3)(p1)--(p4);
		\node at (0,-1.5){$\pi_1$};
		\end{scope}
		
		\begin{scope}[xshift=3cm]
		\gsechsecklab
		\draw(p2)--(p6)(p4)--(p3);
		\node at (0,-1.5){$\pi_2$};
		\end{scope}
		
		\begin{scope}[xshift=6cm]
		\gsechsecklab
		\draw(p4)--(p3);
		\node at (0,-1.5){$\pi_1 \wedge \pi_2$};
		\end{scope}
		
		\begin{scope}[xshift=9cm]
		\gsechsecklab
		\draw(p2)--(p6)(p1)--(p3)(p4)--(p3)(p1)--(p4);
		\node at (0,-1.5){$\pi_1\vee_{\pn}\pi_2$};
		\end{scope}
		
		\begin{scope}[xshift=12cm]
		\gsechsecklab
		\draw(p1)--(p6)(p2)--(p3)(p2)--(p1)(p4)--(p3)(p4)--(p6);
		\node at (0,-1.5){$\pi_1\vee_{\ncpn}\pi_2$};
		\end{scope}
		\end{tikzpicture}
		\caption{From left to right, we see two non-crossing partitions $\pi_1$ and $\pi_2$, their meet $\pi_1 \wedge \pi_2=\pi_1 \wedge_{\pn} \pi_2=\pi_1 \wedge_{\ncpn} \pi_2$ in both $\p_6$ and $\ncp_6$, their crossing join $\pi_1\vee_{\pn}\pi_2$ in $\p_6$, and their non-crossing join $\pi_1\vee_{\ncpn}\pi_2$ in $\ncp_6$.}%
		\label{fig:typeA_meet_join}%
	\end{center}
\end{figure}
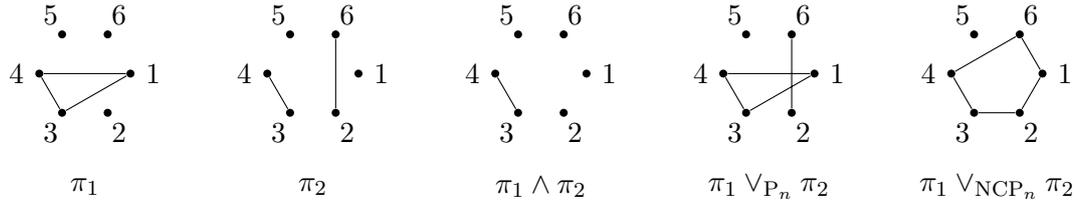

\subsection{Connecting the concepts}

Every element $w \in S_n$ gives rise to a partition $\Set{w}$ of $\Set{1,\ldots,n }$ by defining the blocks of $\Set{w}$ as the orbits of $w$ on $\Set{1, \ldots, n}$. Equivalently, the blocks of $\Set{w}$ are precisely the cycles of $w$, where trivial cycles are considered. Biane \cite[Thm. 1]{biane} and Brady \cite[Le. 3.2]{bra_kpi} show the following.

\begin{thm}\label{thm:typeA_nc_iso}
	The map $\nc(S_n) \to \ncpn$ defined by $w \mapsto \Set{w}$ is a lattice isomorphism.
\end{thm}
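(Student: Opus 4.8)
The plan is to establish that the map $\Phi\colon \nc(S_n)\to\ncpn$, $w\mapsto\Set{w}$, is a well-defined bijection that preserves and reflects the partial order; since both sides are lattices, this suffices to conclude it is a lattice isomorphism. I will split the argument into three parts: well-definedness, bijectivity, and order-preservation in both directions.

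First, well-definedness. Given $w\in\nc(S_n)$, I need $\Set{w}\in\ncpn$. By \cref{prop:typeA_consistent_orientation_nc}, the disjoint cycle decomposition of $w$ consists of consistently oriented, pairwise non-crossing cycles. The blocks of $\Set{w}$ are exactly the supports of these cycles (including singletons for fixed points). Two cycles being non-crossing in the sense of the cycle definition translates directly into the two corresponding blocks being non-crossing in the sense of the partition definition: both say there are no indices $a<b<c<d$ with $a,c$ in one block and $b,d$ in the other. Hence $\Set{w}$ is a non-crossing partition. Injectivity is the key observation that a consistently oriented cycle is uniquely determined by its support: if $B=\{i_1<\dots<i_k\}$ is a block, the only consistently oriented cycle on $B$ is $(i_1\,i_2\,\ldots\,i_k)$. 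So $w$ is recovered from $\Set{w}$ by orienting each block increasingly, which shows $\Phi$ is injective. For surjectivity, given any $\pi=\{B_1,\dots,B_m\}\in\ncpn$, define $w$ to be the product of the consistently oriented cycles on the $B_j$; these cycles are disjoint, consistently oriented, and pairwise non-crossing (again by the translation of the non-crossing condition), so by \cref{prop:typeA_consistent_orientation_nc} we get $w\in\nc(S_n)$ with $\Phi(w)=\pi$.

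Next, order-preservation. Suppose $v\le w$ in the absolute order on $\nc(S_n)$. By the \cref{prefix} (or \cref{subword_prop}) and \cref{cor:length_subcycle}, each cycle of $v$ is obtained from a cycle of $w$ by deleting entries; in particular the support of each $v$-cycle is contained in the support of some $w$-cycle. Therefore every block of $\Set{v}$ refines into a block of $\Set{w}$, i.e. $\Set{v}\le\Set{w}$ in $\ncpn$. For the converse, suppose $\Set{v}\le\Set{w}$, so every block of $\Set{v}$ sits inside a block of $\Set{w}$. Working one $w$-cycle $a=(i_1\,i_2\,\ldots\,i_n)$ at a time: the $v$-cycles whose supports lie in $\{i_1,\dots,i_n\}$ are consistently oriented and non-crossing, and by a straightforward induction (peeling off one block at a time, using \cref{cor:length_subcycle} to track absolute lengths) their product $b$ satisfies $\ell(b^{-1}a)=\ell(a)-\ell(b)$, hence $b\le a$; multiplying these relations over all $w$-cycles, which act on disjoint index sets, gives $v\le w$. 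Once both implications are in place, $\Phi$ is an order-preserving bijection with order-preserving inverse, hence a poset isomorphism between lattices, i.e. a lattice isomorphism.

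The main obstacle is the precise combinatorial bookkeeping in the backward order direction: turning ``each $v$-block lies in a $w$-block'' into the absolute-length identity $\ell(w)=\ell(v)+\ell(v^{-1}w)$. The subtlety is that one must account for equivalence of cycle representatives (as in the sketch of \cref{prop:typeA_consistent_orientation_nc}, breaking $(1\,2\,3)$ vs.\ $(2\,3\,1)$), so that a $v$-block $\{i_a,i_{a+1},\ldots\}$ genuinely corresponds to deleting entries from some rotation of the $w$-cycle. Handling this carefully—perhaps by reducing to the case of a single $w$-cycle via the normalization $i_\ell\mapsto\ell$ already used in the proof of \cref{cor:length_subcycle}, and then inducting on the number of blocks—is where the real work lies; everything else is a direct translation of definitions. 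I would also remark that because both $\nc(S_n)$ and $\ncpn$ are known to be graded lattices with the same rank function ($\rk=n-\#\text{cycles}=n-\#\text{blocks}$), one could alternatively verify that $\Phi$ preserves covers and ranks, but the direct two-sided order argument is cleaner.
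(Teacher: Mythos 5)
The paper does not actually prove this theorem itself: it is imported from Biane \cite[Thm. 1]{biane} and Brady \cite[Le. 3.2]{bra_kpi}, with only a description of the inverse map given afterwards. So there is no in-paper proof to compare against; what follows is an assessment of your argument on its own terms.

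Your overall structure — well-definedness via \cref{prop:typeA_consistent_orientation_nc}, bijectivity via the uniqueness of the consistently oriented cycle on a given support, and the two order implications — is the right one and matches the standard Biane--Brady argument. Two steps are glossed over more than they should be. In the forward direction, the claim that every cycle of $v$ has support contained in a cycle of $w$ does not follow from \cref{cor:length_subcycle} alone (that corollary concerns a single pair of cycles); the clean route is via \cref{lem:carter} and the \cref{prefix}: a reduced decomposition $t_1\cdots t_k$ of $v$ is a forest of edges whose connected components are exactly the orbits of $v$, and extending it to a reduced decomposition of $w$ only adds edges, so each component of $v$ sits inside a component of $w$. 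In the backward direction — which you correctly identify as the crux — the induction ``peel off one block at a time'' needs the fact that every non-crossing partition has a block $B$ of consecutive elements, and the computation that after multiplying by $c_B^{-1}$ the remaining permutation is a single consistently oriented cycle whose support is the complement of $B$ together with \emph{one leftover element} of $B$ (the support shrinks by $\#B-1$, not $\#B$); one must then verify that the remaining blocks are still non-crossing in the induced cyclic order before recursing. Nothing here is wrong, and the strategy is workable, but as written the hardest implication is announced rather than carried out, so the proposal is a correct plan rather than a complete proof.
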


The inverse map has a nice description, which is as follows. Let $\pi \in \ncpn$ be a non-crossing partition. Every block of $\pi$ yields a unique consistently oriented cycle. The product of all cycles coming from the blocks of $\pi$ is a permutation and indeed an element of $\nc(S_n)$. The consistent orientation of a cycle corresponding to a block can be read off the pictorial representation by going clockwise around the block.

The lattice isomorphism of \cref{thm:typeA_nc_iso} allows us to pass freely from the group-theoretic non-crossing partitions $\nc(S_n)$ to the set-theoretic non-crossing partitions $\ncpn$, and in particular their pictorial representations, and back. We are using this frequently in the study of non-crossing partitions.

\section{Non-crossing partitions of type $B$}\label{sec:typeB}

The non-crossing partitions of type $B$ are the easiest generalization of the classical non-crossing partitions, since most concepts directly translate from type $A$. The study of the type $B$ analog of both partitions and non-crossing partitions was initiated by Reiner in \cite{rei}. We only treat the non-crossing partitions of type $B$ here.

\subsection{The signed symmetric group}\label{sec:typeB_descr}
The Coxeter group $W(B_n)$ of type $B_n$ is the \emph{signed symmetric group}. Its elements, which are called \emph{signed permutations}, are those bijective self-maps $w$ of $\Set{\pm 1, \ldots, \pm n }$ that satisfy $w(-i)=-w(i)$ for all $-n \leq i \leq n$. The group operation is given by composition of maps.
We identify the signed symmetric group $W(B_n)$ with a subgroup of the symmetric group $S_{2n}$ via the \emph{canonical identification}
\[
\Set{\pm1, \ldots, \pm n} \to \Set{1, \ldots, 2n }, \quad i \mapsto
\begin{cases}
i &\text{if } i>0,\\
n-i &\text{if } i<0. 
\end{cases}
\]
This allows us to use the cycle notation of the symmetric group for the signed symmetric group. For example, we denote the element of $W(B_n)$ that exchanges $1$ and $2$, and necessarily also $-1$ and $-2$,  by $(1\,\;2)(-1\,\;-2)$. But the whole information of this element is already stored in the piece $(1\,\;2)$, because the permutation is signed. 

Brady and Watt introduced the shorthand notations of paired and balanced cycles in \cite{bra_watt_kpi}, which we use here.
Let $i_1, \ldots, i_k \in \Set{\pm 1, \ldots, \pm n }$ be different elements such that $\Set{i_1, \ldots, i_k}$ is disjoint from $\Set{-i_1, \ldots, -i_k}$. We write
\[
\dka i_1 \ldots i_k \dkz \coloneqq (i_1\ldots i_k)(-i_1 \ldots -i_k)
\] 
and call $\dka i_1 \ldots i_k \dkz$ a \emph{paired cycle}, and we write
\[
[i_1 \ldots i_k] \coloneqq (i_1 \ldots i_k\,\; -i_1 \ldots -i_k)
\]
and call $[i_1 \ldots i_k]$ a \emph{balanced cycle}. Note that the notion of equivalent cycles applies here. For instance, the four balanced cycles $\dka i\,\;j\dkz$, $\dka j\,\;i\dkz$, $\dka -i\,\;-j\dkz$ and $\dka -j\,\;-i\dkz$ all describe the same bijection and therefore are equivalent. We prefer to use the symbol $\dka i\,\;j \dkz$ if $0<i<|j|$. However, note that the two balanced cycles $[1\,\;2]$ and $[2\,\;1]$ are \emph{not} equivalent, whereas $[i]\equiv[-i]$ for all $i \in \Set{\pm 1, \ldots, \pm n }$. 
Let $i_1, \ldots, i_k \in \Set{\pm 1, \ldots, \pm n}$ be different elements as well as $j_1, \ldots, j_m \in \Set{\pm 1, \ldots, \pm n}$. Then the balanced or paired cycles corresponding to $\Set{i_1, \ldots, i_k}$ and $\Set{j_1, \ldots, j_m}$ as above are called \emph{disjoint} if and only if the sets $\Set{\pm i_1, \ldots, \pm i_k}$ and $\Set{\pm j_1, \ldots, \pm j_m}$ are disjoint. 
Every element in $W(B_n)$ can be written as a product of disjoint paired and balanced cycles and this product is unique up to order and equivalence of cycles \cite[Prop. 3.1]{bra_watt_kpi}. We call this decomposition the \emph{disjoint cycle decomposition}.

We choose the standard Coxeter generating set of $W(B_n)$ to be 
\[
S(B_n) \coloneqq \Set{ \dka i\,\; i+1 \dkz\str 1\leq i <n } \cup \Set{[n]}
\]
and denote the simple reflections by $s_i\coloneqq \dka i \,\; i+1 \dkz$ for $1\leq i < n$ and $s_n\coloneqq [n]$. The standard Coxeter element of type $B_n$ is defined to be
\[
\cox \coloneqq s_1s_2\ldots s_{n-1}s_n =[1\,\;2\ldots n].
\]
The Coxeter diagram of type $B_n$ is depicted in \cref{fig:typeB_cox_graph} and the set of all reflections is given by
\[
T(B_n) \coloneqq \Set{\dka i\,\;j\dkz \str 1 \leq i < |j| \leq n } \cup \Set{ [i]\str 1\leq i \leq n }.
\]

The absolute length of an element in $W$ can be computed using its disjoint cycle decomposition \cite[Le. 3.3, Le. 3.4]{bra_watt_kpi}.

\begin{figure}
	\begin{center}
		\begin{tikzpicture}
		\foreach \w in {0,1,...,5}
		\node[kpunkt] (p\w) at (\w+0.5*\w,0){};
		\draw (p0)--(p1)--(p2) (p3)--(p4)--(p5);
		\node at(3.75,0){$\ldots$};
		\node[above] at(6.75,0){$4$};
		\node[left of=p0, xshift=-1cm]{$B_{n}$};
		\node[below of=p0, yshift = 0.3cm]{$s_1$};
		\node[below of=p1, yshift = 0.3cm]{$s_2$};
		\node[below of=p2, yshift = 0.3cm]{$s_3$};
		\node[below of=p3, yshift = 0.3cm]{$s_{n-2}$};
		\node[below of=p4, yshift = 0.3cm]{$s_{n-1}$};
		\node[below of=p5, yshift = 0.3cm]{$s_{n}$};
		\end{tikzpicture}
		\caption{The Coxeter diagram of type $B_n$ with corresponding standard simple reflections $s_1, \ldots, s_n$.}
		\label{fig:typeB_cox_graph}
	\end{center}
\end{figure}
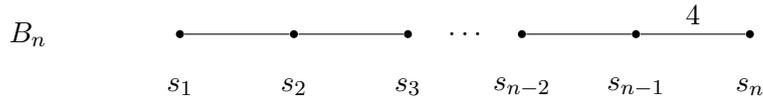

\begin{lem}\label{lem:typeB_length}
	Let $w\in W(B_n)$ and $z_1\ldots z_k$ be its disjoint cycle decomposition. Then the absolute length of $w$ is $\ell(w)= \ell(z_1) + \ldots + \ell(z_k)$, where the length of a paired cycle $\dka i_1 \ldots i_k \dkz$ equals $k-1$ and the length of a balanced cycle $[j_1 \ldots j_m]$ equals $m$. Moreover, every reduced decomposition of a paired cycle involves only reflections of the form $\dka i \,\; j\dkz$, whereas every reduced decomposition of a balanced cycle involves exactly one reflection of the form $[i]$.
\end{lem}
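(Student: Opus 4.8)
The plan is to pass to the geometric representation and use that absolute length equals the dimension of the moved space (\cref{lem:basis_of_moved_space}), with Carter's Lemma (\cref{lem:carter}) governing the structure of reduced decompositions. Throughout I realize $W(B_n)$ on $V\cong\R^n$ in the usual way, so that a signed permutation $w$ sends the $i$-th standard basis vector $\varepsilon_i$ to $\mathrm{sgn}(w(i))\,\varepsilon_{|w(i)|}$; this is equivalent to the geometric representation, and both $\ell$ and $\dim\mov$ are conjugation-invariant. First I would reduce the length formula to a single cycle: if $z_1,\dots,z_k$ is the disjoint cycle decomposition of $w$, the supports $\mathrm{supp}(z_j)\subseteq\{1,\dots,n\}$ (the absolute values occurring in $z_j$) are pairwise disjoint and $z_j$ fixes every $\varepsilon_i$ with $i\notin\mathrm{supp}(z_j)$, so $\mov(w)=\bigoplus_{j}\mov(z_j)$ and hence $\ell(w)=\dim\mov(w)=\sum_j\dim\mov(z_j)$. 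By conjugation-invariance it then suffices to compute $\dim\mov$ for the paired cycle $(1\,2\dots k)(-1\,-2\dots -k)$ and the balanced cycle $(1\,2\dots m\;-1\,-2\dots -m)$. The paired cycle acts on $V$ as the cyclic permutation of $\varepsilon_1,\dots,\varepsilon_k$ (fixing the rest), whose matrix minus the identity has rank $k-1$, the kernel being spanned by $\varepsilon_1+\dots+\varepsilon_k$; so $\dim\mov=k-1$. The balanced cycle acts as the map $M$ with $M\varepsilon_i=\varepsilon_{i+1}$ for $i<m$ and $M\varepsilon_m=-\varepsilon_1$ (fixing the rest); on $\spa(\varepsilon_1,\dots,\varepsilon_m)$ one checks $M^m=-\id$, so every eigenvalue $\lambda$ of $M$ on that subspace satisfies $\lambda^m=-1$ and in particular $\lambda\neq1$, whence $M-\id$ is invertible there and $\dim\mov=m$. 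This establishes the length formula and the values $\ell(\dka i_1\dots i_k\dkz)=k-1$, $\ell([j_1\dots j_m])=m$.

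For the ``moreover'' I would use two homomorphisms out of $W(B_n)$. Let $q\colon W(B_n)\to S_n$ be the surjection forgetting signs (induced by $\pm i\mapsto i$). It sends the reflection $\dka i\,j\dkz$ to the transposition $(|i|\,|j|)$ and the reflection $[i]$ to the identity, and it sends the paired cycle $\dka i_1\dots i_k\dkz$ to the $k$-cycle $(|i_1|\dots|i_k|)$ and the balanced cycle $[j_1\dots j_m]$ to the $m$-cycle $(|j_1|\dots|j_m|)$. Let $\nega\colon W(B_n)\to\{\pm1\}$ be defined by $\nega(w)=(-1)^{\#\{i\in\{1,\dots,n\}\,:\,w(i)<0\}}$; this is a homomorphism, since $\#\{i:w(i)<0\}$ is the weight of the sign-change part of $w$ and conjugation by a permutation preserves that weight, hence the count is additive modulo $2$. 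One checks $\nega(\dka i\,j\dkz)=+1$, $\nega([i])=-1$, and (after conjugating) $\nega([j_1\dots j_m])=-1$.

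Now fix a reduced decomposition $z=t_1\cdots t_p$ of a cycle $z$, and let $a$ be the number of letters of type $\dka\dkz$ and $b=p-a$ the number of letters of type $[\cdot]$. Applying $q$ exhibits $q(z)$ as a product of $a$ transpositions, so its reflection length in $S_n$ is at most $a$; applying $\nega$ gives $\nega(z)=(-1)^b$. If $z=\dka i_1\dots i_k\dkz$, then $p=\ell(z)=k-1$ while $q(z)$ is a $k$-cycle of reflection length $k-1$, so $a\ge k-1=p$, forcing $b=0$: every letter is of type $\dka\dkz$. If $z=[j_1\dots j_m]$, then $p=\ell(z)=m$ while $q(z)$ is an $m$-cycle of reflection length $m-1$, so $a\ge m-1$, i.e.\ $b\le1$; combined with $(-1)^b=\nega(z)=-1$, which forces $b$ odd, this gives $b=1$: exactly one letter is of type $[\cdot]$.

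The linear-algebra computations (the rank of $M-\id$, the behaviour of $q$ and $\nega$ on reflections and cycles) are all elementary. The one genuine subtlety is the final assertion for balanced cycles: the forgetful homomorphism $q$ only yields the inequality $b\le1$, and one must additionally produce the second sign homomorphism $\nega$ to obtain the matching lower bound $b\ge1$. Verifying that $\nega$ really is a group homomorphism — equivalently, that $w\mapsto\#\{i:w(i)<0\}$ is additive modulo $2$ on $W(B_n)$ — is the step that deserves the most care.
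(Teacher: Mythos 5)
Your proof is correct. There is nothing in the paper to compare it against: \cref{lem:typeB_length} is stated as a quoted result, attributed to Lemmas 3.3 and 3.4 of Brady--Watt \cite{bra_watt_kpi}, and no proof is given in the text. So the only question is whether your argument stands on its own, and it does. The reduction of the length formula to $\ell(w)=\dim\mov(w)$ (which the paper makes available via \cref{lem:carter} and \cref{lem:basis_of_moved_space}), the block decomposition $\mov(w)=\bigoplus_j\mov(z_j)$ for disjoint supports, and the two rank computations (the cyclic permutation matrix for a paired cycle, and $M^m=-\id$ ruling out the eigenvalue $1$ for a balanced cycle) are all sound. For the \enquote{moreover} clause, your pair of homomorphisms is exactly the right device: the forgetful map $q$ to $S_n$ gives $b=0$ for paired cycles and only $b\le 1$ for balanced ones, and you correctly flag that the matching lower bound $b\ge 1$ needs the second homomorphism $\nega(w)=(-1)^{\#\{i>0\,:\,w(i)<0\}}$, whose multiplicativity follows from the wreath-product structure $W(B_n)\cong \Z_2^n\rtimes S_n$ (the permutation part acts on $\Z_2^n$ by permuting coordinates, hence preserves the weight mod $2$). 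One could alternatively get the existence of at least one $[i]$ in any reduced decomposition of a balanced cycle from the index-two subgroup generated by the $\dka i\,\;j\dkz$ (the kernel of $\nega$ is $W(D_n)$, and a balanced cycle lies outside it), which is the same observation the paper later uses in \cref{rem:typeD_even_number_bal_cyc}; your formulation is equivalent.
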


The next aim is to analyze which elements of $W(B_n)$ are non-crossing partitions with respect to the standard Coxeter element $\cox=[1\ldots n]$. A necessary condition on the disjoint cycle decomposition is given by \cite[Cor. 4.3]{bra_watt_kpi}.

\begin{lem}\label{lem:typeB_cycle_decomp_nc}
	Let $w$ be in $\nc(B_n)$. Then its disjoint cycle decomposition either has no or exactly one balanced cycle.
\end{lem}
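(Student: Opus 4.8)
The plan is to reduce the statement to counting how often reflections of the form $[i]$ occur in reduced decompositions. The starting observation is that the fixed standard Coxeter element $\cox = [1\,\;2\ldots n]$ is itself a single balanced cycle, so by \cref{lem:typeB_length} \emph{every} reduced decomposition of $\cox$ involves exactly one reflection of the form $[i]$. Hence it suffices to show that an arbitrary $w \in \nc(B_n)$ admits a reduced decomposition that contains exactly as many reflections of the form $[i]$ as $w$ has balanced cycles, and which moreover is a prefix of some reduced decomposition of $\cox$.

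First I would take $w \in \nc(B_n)$ with disjoint cycle decomposition $z_1\ldots z_k$, and let $b$ be the number of balanced cycles among the $z_j$. Choosing a reduced decomposition of each $z_j$ and concatenating them produces a decomposition of $w$ of length $\ell(z_1)+\cdots+\ell(z_k)$; by \cref{lem:typeB_length} this sum equals $\ell(w)$, so the concatenation is a reduced decomposition of $w$. Applying the other part of \cref{lem:typeB_length} — a reduced decomposition of a paired cycle uses no reflection of the form $[i]$, while a reduced decomposition of a balanced cycle uses exactly one — this particular reduced decomposition of $w$ contains precisely $b$ reflections of the form $[i]$.

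Next, since $w \le \cox$, I would invoke the \cref{prefix} (in the form of the remark immediately following it) to extend this reduced decomposition of $w$ to a reduced decomposition of $\cox$. That reduced decomposition of $\cox$ then contains at least the $b$ reflections of the form $[i]$ coming from $w$, while the opening observation says it contains exactly one such reflection. Therefore $b \le 1$, which is exactly the assertion.

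I do not anticipate a real obstacle: the argument rests entirely on \cref{lem:typeB_length} and the \cref{prefix}, both already available. The one point that deserves care is the claim that concatenating reduced decompositions of the disjoint cycles $z_j$ yields a \emph{reduced} decomposition of $w$ — this uses additivity of absolute length over the disjoint cycle decomposition together with the definition of "reduced" as "using $\ell(w)$ reflections". Routing the argument through the Prefix Property and a concretely chosen decomposition, rather than through the \cref{subword_prop}, is deliberate: it avoids having to verify separately that the number of $[i]$-type reflections is the same in every reduced decomposition of $w$.
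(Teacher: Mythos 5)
Your argument is correct. Note first that the paper itself does not prove this lemma at all: it imports it as \cite[Cor. 4.3]{bra_watt_kpi}, so any self-contained argument is necessarily "a different route." What you give is a clean counting argument resting only on \cref{lem:typeB_length} and the \cref{prefix}, both of which are available at that point in the text. The two places that need care both check out. First, the concatenation of reduced decompositions of the disjoint cycles $z_1,\ldots,z_k$ is a decomposition of $w$ of length $\sum_j \ell(z_j)$, which equals $\ell(w)$ by the additivity in \cref{lem:typeB_length}, hence is reduced; and by the second half of that lemma it contains exactly $b$ reflections of the form $[i]$, where $b$ is the number of balanced cycles. Second, you are right to route the extension step through the remark following the \cref{prefix} (\emph{every} reduced decomposition of $v\le w$ extends to one of $w$) rather than through the bare existence statement or the \cref{subword_prop}: the weaker versions would only hand you \emph{some} reduced decomposition of $w$ as a prefix, and \cref{lem:typeB_length} controls the count of $[i]$-type reflections only for reduced decompositions of individual cycles, not for an arbitrary reduced decomposition of a product of disjoint cycles. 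With your concretely chosen prefix, the extension to a reduced decomposition of $\cox=[1\,\;2\ldots n]$ contains at least $b$ reflections of the form $[i]$, while $\cox$ is a single balanced cycle and so admits exactly one, giving $b\le 1$. This is a complete proof of the statement the paper only cites.
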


The following definitions are deduced from \cite[Def. 4.4, 4.5]{bra_watt_kpi}.

\begin{defi}
	A balanced or a paired cycle in $W(B_n)$ is \emph{oriented consistently} or has \emph{consistent orientation} with respect to the Coxeter element $\cox=[1\ldots n]$, or simply is \emph{oriented consistently} or has \emph{consistent orientation} in $W(B_n)$, if their cycles in the image in $S_{2n}$ under the canonical identification are consistently oriented.
\end{defi}

Also the notion of \enquote{non-crossing} in type $B$ is analog to the one in type $A$. 

\begin{defi}
	Two consistently oriented disjoint cycles $a$ and $b$ in $W(B_n)$ are called \emph{crossing} if their canonical images in the symmetric group $S_{2n}$ cross. Otherwise they are called \emph{non-crossing}. An element in $W(B_n)$ that admits a disjoint cycle decomposition consisting of non-crossing cycles is called \emph{non-crossing}.
\end{defi}

\begin{rem}\label{rem:typeB_one_balanced_cycle}
	Note that any two different balanced cycles cross. Hence a non-crossing element in $W(B_n)$ has at most one balanced cycle in its disjoint cycle decomposition. 
\end{rem}

In analogy to type $A$, we characterize the elements in $\nc(B_n)$ in terms of their cycle structure \cite[Prop. 4.6, Prop. 4.7]{bra_watt_kpi}. 

\begin{prop}\label{prop:typeB_consistent_orientation-NC}
	An element $w\in W$ satisfies $w\leq \cox$ if and only if all cycles of the disjoint cycle decomposition of $w$ are non-crossing and oriented consistently.
\end{prop}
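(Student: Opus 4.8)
I would prove this by pushing everything into the symmetric group $S_{2n}$ via the canonical identification $\iota\colon W(B_n)\hookrightarrow S_{2n}$ from \cref{sec:typeB_descr}, and then invoking the already-established type~$A$ characterization \cref{prop:typeA_consistent_orientation_nc}. The first observation is that $\iota$ sends the standard Coxeter element $\cox=[1\,2\ldots n]=(1\,2\ldots n\;\,{-}1\,{-}2\ldots{-}n)$ to $(1\,2\ldots 2n)$, i.e.\ to the standard Coxeter element of $S_{2n}$. Moreover, by the definitions in \cref{sec:typeB}, a cycle of $w\in W(B_n)$ is consistently oriented exactly when its image cycle(s) in $S_{2n}$ are, and distinct cycles of $w$ are non-crossing exactly when their images are; so the statement "all cycles of $w$ are consistently oriented and non-crossing in $W(B_n)$" translates verbatim into "the disjoint cycle decomposition of $\iota(w)$ is consistently oriented and pairwise non-crossing in $S_{2n}$" (here the defining property of "non-crossing in $W(B_n)$" is set up precisely to include the two halves of a paired cycle not crossing each other). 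Given this, \cref{prop:typeA_consistent_orientation_nc} reduces the whole proposition to the single claim
\[
w\leq\cox \text{ in } W(B_n)\quad\Longleftrightarrow\quad \iota(w)\leq\iota(\cox)\text{ in } S_{2n}.
\]

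\textbf{The length bookkeeping.} To prove this equivalence I would compare absolute lengths. Using \cref{lem:typeB_length} together with the fact that the reflection length of a permutation of $\{1,\dots,N\}$ equals $N$ minus its number of orbits (so it is additive over disjoint cycles), a paired cycle of cycle-length $k$ has $B_n$-length $k-1$ and maps to two disjoint $k$-cycles of total $S_{2n}$-length $2(k-1)$, whereas a balanced cycle of cycle-length $m$ has $B_n$-length $m$ and maps to a single $2m$-cycle of $S_{2n}$-length $2m-1$. Hence, writing $\beta(w)$ for the number of balanced cycles in the disjoint cycle decomposition of $w$,
\[
\ell_{S_{2n}}(\iota(w))=2\,\ell(w)-\beta(w)\qquad\text{for all }w\in W(B_n).
\]
In particular $\ell_{S_{2n}}(\iota(w))\equiv\beta(w)\pmod 2$, so $(-1)^{\beta(w)}=\operatorname{sgn}(\iota(w))$; since $\operatorname{sgn}\circ\iota$ is a homomorphism $W(B_n)\to\{\pm1\}$, the function $\beta\bmod 2$ is a homomorphism, and from $\beta(\cox)=1$ we get that $\beta(w)+\beta(w^{-1}\cox)$ is always odd. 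Now, since $\iota(w^{-1}\cox)=\iota(w)^{-1}\iota(\cox)$, substituting the length formula for all three of $w,w^{-1}\cox,\cox$ turns the identity $\ell(\cox)=\ell(w)+\ell(w^{-1}\cox)$ into
\[
\ell_{S_{2n}}(\iota(\cox))+1=\ell_{S_{2n}}(\iota(w))+\ell_{S_{2n}}(\iota(w)^{-1}\iota(\cox))+\beta(w)+\beta(w^{-1}\cox),
\]
and conversely. Combining the triangle inequality for $\ell_{S_{2n}}$ with the parity of $\beta(w)+\beta(w^{-1}\cox)$ forces $\beta(w)+\beta(w^{-1}\cox)=1$ and $\ell_{S_{2n}}(\iota(\cox))=\ell_{S_{2n}}(\iota(w))+\ell_{S_{2n}}(\iota(w)^{-1}\iota(\cox))$ to hold simultaneously; this gives both directions of the boxed equivalence. (For the backwards direction one also uses that $\iota(w)\leq\iota(\cox)$ forces both $\iota(w)$ and $\iota(w)^{-1}\iota(\cox)$ to be non-crossing below $(1\ldots2n)$, hence to have at most one balanced cycle, so that $\beta(w)+\beta(w^{-1}\cox)\le 2$ and, being odd, equals $1$.)

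\textbf{Conclusion and main obstacle.} Chaining the boxed equivalence with \cref{prop:typeA_consistent_orientation_nc} and the definitional translation of "consistently oriented / non-crossing" under $\iota$ yields the proposition. As a by-product one recovers \cref{lem:typeB_cycle_decomp_nc}: an element of $\nc(B_n)$ has at most one balanced cycle, since $\beta(w)\le 1$ whenever $\iota(w)$ sits below $(1\ldots2n)$. The main obstacle is the bookkeeping of the middle paragraph: getting the length formula $\ell_{S_{2n}}(\iota(w))=2\ell(w)-\beta(w)$ right, and then realizing that the parity of $\beta$ (equivalently, the sign of $\iota(w)$) is the exact extra ingredient needed to pin down $\beta(w)+\beta(w^{-1}\cox)$ and close both implications. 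A technical point that must be handled with care — rather than a genuine difficulty — is the translation of "non-crossing in $W(B_n)$" into "non-crossing in $S_{2n}$" for the two halves of a single paired cycle: consistent orientation of each half is \emph{not} automatic enough, and one has to make sure the intended definition (as in \cite{bra_watt_kpi}) is exactly "the image in $S_{2n}$ is non-crossing".

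\textbf{Alternative.} One could instead imitate the "breaking cycles" sketch given for \cref{prop:typeA_consistent_orientation_nc}, describing the covering relations below $\cox$ directly on paired and balanced cycles (breaking a balanced cycle yields either a shorter balanced cycle together with a paired cycle, or two paired cycles); this works but essentially re-proves in type $B$ what the embedding $\iota$ lets us inherit for free from type $A$, so I would favour the route above.
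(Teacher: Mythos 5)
Your proposal is correct, and it necessarily takes a different route from the paper, because the paper offers no proof of this proposition at all: it is imported as Propositions 4.6 and 4.7 of \cite{bra_watt_kpi}, and the only argument in this spirit the thesis sketches is the ``breaking cycles'' induction for the type $A$ analogue \cref{prop:typeA_consistent_orientation_nc}. Your reduction to $S_{2n}$ checks out. The length formula $\ell_{S_{2n}}(\iota(w))=2\ell(w)-\beta(w)$ follows from additivity of both reflection lengths over disjoint cycles together with \cref{lem:typeB_length}; the sign of $\iota(w)$ then shows that $\beta\bmod 2$ is a homomorphism, so $\beta(w)+\beta(w\inv\cox)$ is odd; in the forward direction subadditivity of reflection length in $S_{2n}$ forces this sum to be $1$, and in the backward direction the bound $\beta\le 1$ for non-crossing elements (any two balanced cycles cross, \cref{rem:typeB_one_balanced_cycle}) does the same. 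What this buys over a direct breaking-cycles induction in type $B$ is that the whole characterization is inherited from type $A$ at the cost of one parity computation; the price is that everything hinges on the translation of ``non-crossing'' under the canonical identification being exact.

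That translation is precisely where your closing caveat matters, and it is not a matter of taste: with the paper's literal definitions the ``if'' direction is false. The element $w=\dka 1\;3\;-2\dkz\in W(B_4)$ has a disjoint cycle decomposition consisting of a single paired cycle whose image cycles $(1\,3\,6)$ and $(2\,5\,7)$ in $S_8$ are each consistently oriented, so $w$ is consistently oriented and vacuously ``non-crossing'' in the stated sense; yet those two image cycles cross, and indeed $w\inv\cox=[1\;-3\;-4\;-2]$ has length $4\neq\ell(\cox)-\ell(w)=2$, so $w\not\leq\cox$. The definition must therefore be read, as you say, as requiring the full image in $S_{2n}$ to be non-crossing — including the two halves of each paired cycle being mutually non-crossing — which is how \cite{bra_watt_kpi} and the paper's own remark on $\ncb_n$ set things up. With that reading your proof is complete.
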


Although we defined the notion of crossing and non-crossing cycles in type $B$ in analogy to type $A$, we should not think of the concepts as being \enquote{the same}. In type $A$ the notion of crossing is in some sense a stronger \enquote{obstruction} than the notion of crossing in type $B$. Let us illustrate this with a concrete example.

\begin{exa}\label{exa:crossings_in_A_and_B}
	Let  $a_1=(1\,\;4)$ and $a_2=(2\,\;5)$ be elements of $\nc(S_6)$ and let $b_1=[1]$ and $b_2=[2]$ be the elements in $\nc(B_3)$ corresponding to $a_1$ and $a_2$ under the canonical identification. Then the product $a_1a_2=a_2a_1=(1\,\;4)(2\,\;5)$ has crossing cycles, as well as the product $b_1b_2=b_2b_1=[1][2]$. The smallest element in $\nc(S_6)$ that is greater than both $a_1$ and $a_2$, that is the join of $a_1$ and $a_2$, is $a=(1\,\;2\,\;4\,\;5)$ and has length $\ell(a)=3$, whereas the join of $b_1$ and $b_2$ in $\nc(B_3)$, which is $b=[1\,\;2]$, has length $\ell(b)=2$, since $\dka 1\,\; 2 \dkz [2]$ is a reduced decomposition of $[1\,\;2]$. 
	Hence we see that crossing cycles of two different elements in type $A$ are already an obstruction for having a common cover, whereas in type $B$ this is \emph{not} the case. But note that there obviously is no reduced decomposition of the common cover $[1\,\;2]$ involving both elements $[1]$ and $[2]$. In type $A$ one can easily check that if $w=r \vee t$ is the join of two reflections and $\ell(w)=2$, then $w=rt$ or $w=tr$. In type $B$ this statement is false.
\end{exa}

\subsection{Pictorial representations}

Reiner introduced the type $B$ partitions and their pictorial representations \cite{rei}.

\begin{defi}
	A partition $\pi=\Set{B_1, \ldots, B_k }$ of $\Set{\pm1, \ldots, \pm n }$ is called a \emph{$B_n$-partition} if $\pi$ satisfies 
	\begin{enumerate}
		\item $B\in \pi$ if and only if $-B \in \pi$ and
		\item there is at most one block $B\in\pi$ with $B=-B$.
	\end{enumerate}
	A block $B$ is called \emph{zero block} if $B=-B$, and \emph{non-zero block} otherwise.
	We use the canonical identification of  $\Set{\pm1, \ldots, \pm n }$ with $\Set{1, \ldots, 2n}$ as before. A $B_n$-partition is called \emph{crossing} if its image in $S_{2n}$ is crossing and \emph{non-crossing} otherwise. The notion of crossing and non-crossing blocks translates analogously. We denote the set of non-crossing $B_n$-partitions by $\ncb_n$.
\end{defi}

\begin{rem}
	Note that the images of the non-crossing $B_n$-partitions under the canonical identification are the non-crossing partitions of $\Set{1,\ldots, 2n}$ by definition.
\end{rem}

The pictorial representation of a $B_n$-partition is defined analogously to the pictorial representation of a set partition, again by using the canonical identification of the sets $\Set{\pm1, \ldots, \pm n }$ and $\Set{1, \ldots, 2n}$.

\begin{nota}
	If there is no danger of confusion, we identify $B_n$-partitions with their pictorial representation and refer to a pictorial representation simply as \emph{$B_n$-partition} or \emph{partition of type $B_n$}. In particular, $\ncb_n$ refers to the set of pictorial representations of non-crossing $B_n$-partitions as well.
\end{nota}

Reiner showed that $\ncb_n$ is a graded lattice, ordered by refinement, where the rank of $\pi \in \ncb_n$ is given by $\rk(\pi)=n-\lfloor\frac{\#\pi}{2}\rfloor$ \cite[Prop. 2]{rei}. Note that the rank function on $\ncb_n$ differs from the rank function of set partitions. Whereas for instance the $B_6$-partition $\Set{\Set{1,-1},\Set{2,3},\Set{-2,-3}}\in \ncb_3$ has rank $2$, its image $\Set{\Set{1,4},\Set{2,3},\Set{5,6}} \in \ncp_6$ under the canonical identification has rank $3$. 

To describe the meet and join operations in $\ncb_n$ we identify the non-crossing $B_n$-partitions with their canonical images in $\ncp_{2n}$, perform the meet or join on the images in $\ncp_{2n}$, and use the identification once again to retrieve a non-crossing $B_n$-partition, which is the meet or join of the two $B_n$-partitions we started with. 
Note that this procedure does not contradict the observations from above or \cref{exa:crossings_in_A_and_B}.
The Hasse diagram of the lattice $\ncb_3$ is shown in \cref{fig:typeB_hasse_diagram_B4}.

\begin{figure}%
	\begin{center}		
		\begin{tikzcd}[column sep=4mm]
		&&&& \pBfull 
		\ar[dllll, dash,end anchor = north]
		\ar[dlll, dash,end anchor = north]
		\ar[dll, dash,end anchor = north]
		\ar[dl, dash,end anchor = north]
		\ar[d, dash,end anchor = north]
		\ar[dr, dash,end anchor = north]
		\ar[drr, dash,end anchor = north]
		\ar[drrr, dash,end anchor = north]
		\ar[drrrr, dash,end anchor = north]
		&&&&\\
		
		\pBZed
		\ar[d,dash, start anchor = south, end anchor = north]
		\ar[drrr,dash, start anchor = south, end anchor = north]
		\ar[drrrrr,dash, start anchor = south, end anchor = north]
		\ar[drrrrrrr,dash, start anchor = south, end anchor = north] & 
		\pBezud 
		\ar[d,dash, start anchor = south, end anchor = north]
		\ar[dl,dash, start anchor = south, end anchor = north]& 
		\pBezmd 
		\ar[d,dash, start anchor = south, end anchor = north]
		\ar[dl,dash, start anchor = south, end anchor = north]
		\ar[drrrrr,dash, start anchor = south, end anchor = north]& 
		\pBZez 
		\ar[d,dash, start anchor = south, end anchor = north]
		\ar[dll,dash, start anchor = south, end anchor = north]
		\ar[drrr,dash, start anchor = south, end anchor = north]
		\ar[drrrrr,dash, start anchor = south, end anchor = north]& 
		\pBeuzd 
		\ar[d,dash, start anchor = south, end anchor = north]
		\ar[dl,dash, start anchor = south, end anchor = north]& 
		\pBezd 
		\ar[d,dash, start anchor = south, end anchor = north]
		\ar[dl,dash, start anchor = south, end anchor = north]
		\ar[dllll,dash, start anchor = south, end anchor = north]& 
		\pBZzd 
		\ar[d,dash, start anchor = south, end anchor = north]
		\ar[dll,dash, start anchor = south, end anchor = north]
		\ar[dllll,dash, start anchor = south, end anchor = north]
		\ar[dllllll,dash, start anchor = south, end anchor = north]& 
		\pBemduz
		\ar[d,dash, start anchor = south, end anchor = north]
		\ar[dl,dash, start anchor = south, end anchor = north] & 
		\pBemzmd
		\ar[d,dash, start anchor = south, end anchor = north]
		\ar[dl,dash, start anchor = south, end anchor = north]
		\ar[dllll,dash, start anchor = south, end anchor = north]\\[2cm]
		
		\pBd & \pBez & \pBzmd & \pBe & \pBzd & \pBed & \pBz & \pBemd & \pBemz\\
		&&&& \begin{tikzpicture} \sechseck\end{tikzpicture} 
		\ar[ullll, dash,end anchor = south]
		\ar[ulll, dash,end anchor = south]
		\ar[ull, dash,end anchor = south]
		\ar[ul, dash,end anchor = south]
		\ar[u, dash,end anchor = south]
		\ar[ur, dash,end anchor = south]
		\ar[urr, dash,end anchor = south]
		\ar[urrr, dash,end anchor = south]
		\ar[urrrr, dash,end anchor = south]&&&&
		\end{tikzcd}
		\caption{The Hasse diagram of $\ncb_3$.}%
		\label{fig:typeB_hasse_diagram_B4}%
	\end{center}
\end{figure}
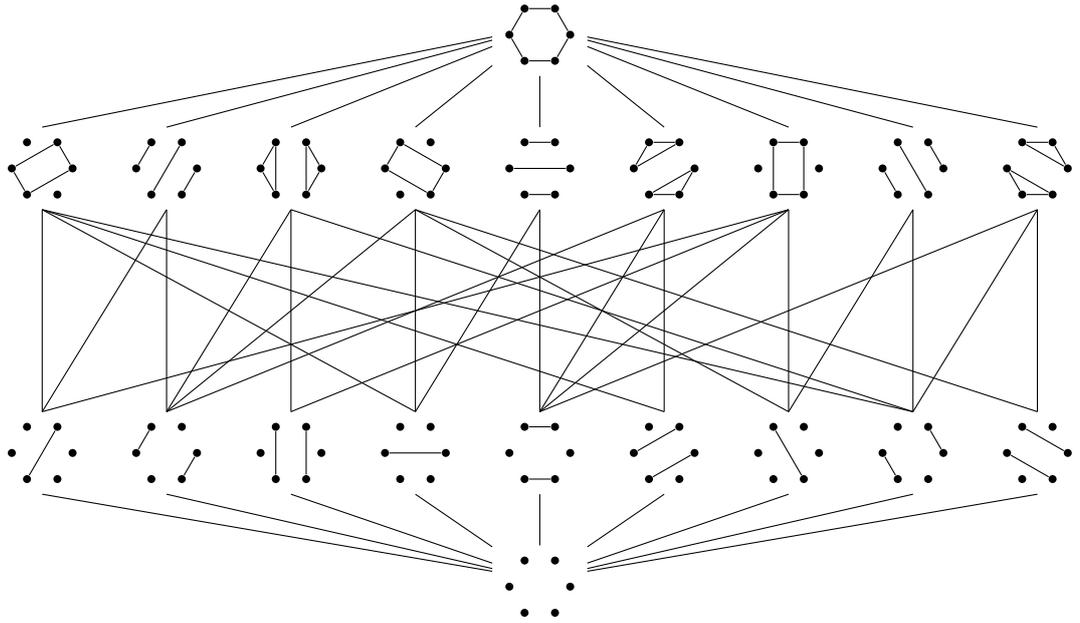

\subsection{Connecting the concepts}

Fortunately the two concepts of non-crossing partitions of type $B_n$ are compatible \cite[Thm. 4.9]{bra_watt_kpi}.
Recall that for a permutation $w$, the partition $\Set{w}$ has as non-trivial blocks the non-trivial cycles of $w$. For instance, the element $w=\dka 1\,\;2\dkz\dka3\,\;4\dkz$ in $\nc(B_4)$ defines the corresponding $B_4$-partition $\Set{w}=\Set{\Set{1,2},\Set{-1,-2},\Set{3,4},\Set{-3,-4}}$.

\begin{thm}\label{thm:typeB_nc_iso}
	The map $\nc(B_n) \to \ncb_n$ with $w \mapsto \Set{w}$ is a lattice isomorphism.
\end{thm}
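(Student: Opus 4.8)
The plan is to follow the template established in type $A$ (\cref{thm:typeA_nc_iso}), transporting everything through the canonical identification of $\{\pm1,\ldots,\pm n\}$ with $\{1,\ldots,2n\}$. First I would check that the map is well-defined and lands in $\ncb_n$: given $w\in\nc(B_n)$, its disjoint cycle decomposition consists of non-crossing, consistently oriented paired and balanced cycles with at most one balanced cycle by \cref{lem:typeB_cycle_decomp_nc} and \cref{prop:typeB_consistent_orientation-NC}. A paired cycle $\dka i_1\ldots i_k\dkz$ contributes the non-zero block pair $\{i_1,\ldots,i_k\}$, $\{-i_1,\ldots,-i_k\}$, and a balanced cycle $[j_1\ldots j_m]$ contributes the single zero block $\{j_1,\ldots,j_m,-j_1,\ldots,-j_m\}$; fixed points contribute trivial blocks. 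Thus $\{w\}$ satisfies the two defining axioms of a $B_n$-partition, and since its canonical image in $S_{2n}$ is the orbit partition of the canonical image of $w$ — which lies in $\ncp_{2n}$ because $w$'s cycles are non-crossing after identification — $\{w\}$ is indeed in $\ncb_n$.

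Next I would exhibit the inverse. Given $\pi\in\ncb_n$, to each non-zero block pair $\{B,-B\}$ assign the unique consistently oriented paired cycle on $B$ (read off by going clockwise around $B$ in the pictorial representation), to the zero block (if present) assign the unique consistently oriented balanced cycle, and take the product of all these disjoint cycles. By \cref{prop:typeB_consistent_orientation-NC} this product lies in $\nc(B_n)$, and by construction its orbit partition is $\pi$. One then checks the two assignments are mutually inverse: a paired cycle determines and is determined by its block pair together with consistent orientation, and likewise for the balanced cycle; disjointness of blocks corresponds to disjointness of cycles, so no interaction arises. This gives a bijection $\nc(B_n)\leftrightarrow\ncb_n$.

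It remains to show the bijection is order-preserving in both directions. I would argue via the canonical identification: a non-crossing $B_n$-partition $\pi$ is determined by its image $\hat\pi\in\ncp_{2n}$, and refinement $\pi_1\leq\pi_2$ in $\ncb_n$ holds iff $\hat\pi_1\leq\hat\pi_2$ in $\ncp_{2n}$. On the group side, for $v,w\in\nc(B_n)$ one has $v\leq w$ in the absolute order of $W(B_n)$; I would relate this to $v\leq w$ in $\nc(S_{2n})$ via the subword property (\cref{subword_prop}) and the length formula of \cref{lem:typeB_length}, using that a reflection $\dka i\,j\dkz$ maps to a transposition and $[i]$ maps to the transposition $(i\;-i)$ under the identification — so a reduced decomposition of $w$ in type $B$ yields a compatible decomposition in type $A$ on $2n$ letters. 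Combining this with \cref{thm:typeA_nc_iso} applied to $S_{2n}$ (which says $\nc(S_{2n})\cong\ncp_{2n}$ via orbit partitions) gives $v\leq w$ in $\nc(B_n)$ iff $\{v\}\leq\{w\}$ in $\ncb_n$. A bijective order isomorphism between lattices is automatically a lattice isomorphism.

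The main obstacle I anticipate is the order-preservation step in the direction $\pi_1\leq\pi_2\Rightarrow \{\pi_1\}^{-1}\leq\{\pi_2\}^{-1}$, i.e. that merging blocks of a non-crossing $B_n$-partition corresponds to moving up in the type-$B$ absolute order. The subtlety is the zero block: merging two non-zero block pairs $\{B,-B\}$, $\{C,-C\}$ into a single zero block (when $C=-B$-adjacent patterns force a balanced cycle) changes the cycle type, and one must verify the length bookkeeping $\ell(w)=\ell(v)+\ell(v^{-1}w)$ still works out — this is exactly the phenomenon flagged in \cref{exa:crossings_in_A_and_B}, so I would need to handle it carefully rather than by a naive appeal to type $A$. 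Routing everything through $\ncp_{2n}$ and \cref{thm:typeA_nc_iso} is what makes this manageable, but the translation of the type-$B$ rank function $n-\lfloor\#\pi/2\rfloor$ into type-$A$ terms must be done explicitly to confirm covers go to covers.
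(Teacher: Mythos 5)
First, a remark on context: the paper does not prove this theorem at all --- it is quoted from Brady--Watt \cite[Thm.\ 4.9]{bra_watt_kpi} --- so there is no in-paper argument to compare yours against; I am judging the proposal on its own. The bijection half of your plan is sound: well-definedness follows from \cref{lem:typeB_cycle_decomp_nc} and \cref{prop:typeB_consistent_orientation-NC} as you say, and the cycle-from-block inverse is correct. One factual slip in the order-preservation step: a reflection $\dka i\,\;j\dkz$ does \emph{not} map to a transposition of $S_{2n}$ under the canonical identification; it maps to a product of \emph{two} transpositions $(i\;j)(-i\;-j)$, while only $[i]$ maps to a single transposition. So a type-$B$ reduced decomposition of length $k$ pushes forward to a type-$A$ word of length $2k-b$, where $b$ is the number of balanced reflections used, and one needs $\ell(\hat{w})=2\ell(w)-b(w)$ together with the statement in \cref{lem:typeB_length} that a reduced decomposition of $w\in\nc(B_n)$ contains exactly $b(w)\in\Set{0,1}$ reflections of the form $[i]$ to conclude that the pushed-forward word is actually reduced. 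With that repair, the implication $v\leq w$ in $\nc(B_n)$ $\Rightarrow$ $\hat{v}\leq\hat{w}$ in $\nc(S_{2n})$ $\Rightarrow$ $\Set{v}$ refines $\Set{w}$ goes through via the \cref{subword_prop} and \cref{thm:typeA_nc_iso}.

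The genuine gap is the converse, and you name it yourself as the ``main obstacle'' without resolving it: if $\Set{v}$ refines $\Set{w}$ (equivalently $\hat{v}\leq\hat{w}$ in $\nc(S_{2n})$), why is $\ell(w)=\ell(v)+\ell(v\inv w)$ in the \emph{type-$B$} reflection length? The length identity one gets from the type-$A$ relation is
\[
2\bigl(\ell(v)+\ell(v\inv w)-\ell(w)\bigr)=b(v)+b(v\inv w)-b(w),
\]
and since $v\inv w$ need not lie below the Coxeter element a priori, $b(v\inv w)$ is not bounded by $1$; for instance when $b(v)=b(w)=0$ the right-hand side could be $2$, so the arithmetic alone does not force the defect to vanish. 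Confirming that ``covers go to covers'' under the forward map (your closing sentence) also does not suffice: that gives order-preservation of the map, not of its inverse --- you would additionally need every cover relation of $\ncb_n$ to be \emph{hit}, i.e.\ either a count showing both posets have the same number of cover relations in each rank, or the type-$B$ analogue of the ``breaking cycles'' analysis sketched for \cref{prop:typeA_consistent_orientation_nc}, showing that every minimal merge of blocks (including the delicate merge of a block pair into a zero block) is realized by multiplication by a reflection below $w$. That analysis is precisely the content of Brady--Watt's proof and is the piece missing from your proposal.
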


In analogy to type $A$, we can construct an element in $\nc(B_n)$ for every non-crossing $B_n$-partition. The cycles are given by the blocks, and the order is obtained from ordering going clockwise around each block. This procedure yields a cycle with consistent orientation.

\begin{exa}
	Let us illustrate the isomorphism and derived observations with an example. Consider the signed permutation $w$ given by $[2\,\;5]\dka 3\,\;4 \dkz $. This is indeed an element of $\nc(B_5)$ because the two cycles do not cross. The corresponding $B_5$-partition, depicted in \cref{fig:typeB_example}, is given by $\Set{\Set{1},\Set{-1},\Set{2,5,-2,-5},\Set{3,4},\Set{-3,-4}}$.
	The balanced cycle is mapped to a zero block and the paired cycle is mapped to a pair of blocks. Note that the pictorial representation is invariant under rotation by 180 degrees. The reason is the condition that the elements of $W$ are \emph{signed} permutations. Changing the sign of the vertices of the polygon corresponds to a rotation by 180 degrees. Since singed permutations are \enquote{invariant} under sign change, the resulting pictorial representations are invariant under rotation by 180 degrees.
\end{exa}

\begin{figure}
	\begin{center}
		\begin{tikzpicture}
		\foreach \w in {1,...,10} 
		\node (p\w) at (-\w  * 360/10 +36   : 9mm) [kpunkt] {};
		\foreach \w in {1,...,5} 
		\node (q\w) at (-\w  * 360/10 +36 : 13mm) {$\w$};
		\foreach \w [evaluate = \w as \j using int(5-\w)]in {6,...,10} 
		\node (q\w) at (-\w  * 360/10 +36 : 13mm) {$\j$};
		\draw(p2)--(p5)(p5)--(p7)(p7)--(p10)(p10)--(p2)(p3)--(p4)(p8)--(p9);
		\end{tikzpicture}
		\caption{The pictorial representation of the non-crossing $B_5$-partition corresponding to the signed permutation $[2\,\;5]\dka 3\,\;4 \dkz$ is depicted here.}
		\label{fig:typeB_example}
	\end{center}
\end{figure}
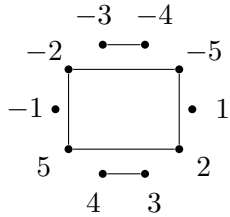

\section{Non-crossing partitions of type $D$}

The non-crossing partitions of type $D$ are the most involved ones among the classical types. The fist attempt of defining a pictorial version of type $D$ \cite{rei} did not fit to the group-theoretic definition \cite{bra_watt_kpi}, which was established later. A pictorial version fitting to the group theory is introduced in \cite{ath_rei}. A modified version with slightly different conventions is given in \cite{bg}. We introduce a new one here. We start with the description of the Coxeter group of type $D$.

\subsection{The oriflamme group}\label{sec:typeD_descr}

The Coxeter group $W(D_n)$ of type $D_n$ is the index 2 subgroup of the signed symmetric group $W(B_n)$ generated by the reflections of the form $\dka i\,\;j \dkz$ for $1\leq i < |j| \leq n$. Since $W(D_n)$ is a subgroup of the signed symmetric group, we can use the same terminology and notation as for the Coxeter group of type $B$. In particular, the group $W(D_n)$ can be canonically identified with a subgroup of the symmetric group $S_{2n}$. Hence every element of $W(D_n)$ has a disjoint cycle decomposition, which is unique up to order and equivalence of cycles.

The Coxeter group of type $D_n$ is also called \emph{oriflamme group}, because its Coxeter diagram, shown in \cref{fig:typeD_cox_graph}, resembles the medieval flag \enquote{Oriflamme} \cite{gar}.

\begin{rem}\label{rem:typeD_even_number_bal_cyc}
	Every cycle decomposition of an element in $W(D_n)$ either has no or an even number of balanced cycles, which can be seen as follows. Every generator $t=\dka i\,\; j\dkz$, seen as self-bijection of $\Set{\pm1, \ldots, \pm n }$, changes an even number of signs, that is the number of negative elements of  $\Set{t(1), \ldots, t(n)}$ is even. Consequently, every element of $W(D_n)$ has an even number of sign changes. But a balanced cycle changes exactly one sign, hence there are none or an even number of balanced cycles present in cycle decompositions of elements of the oriflamme group.
\end{rem}

The set of reflections of $W(D_n)$ is given by 
\[
T(D_n) \coloneqq \Set{ \dka i\,\; j\dkz \str 1\leq i < |j| \leq n }
\]
and we choose the standard Coxeter generating set of type $D_n$ to be
\[
S(D_n) \coloneqq \Set{ \dka i\,\; i+1 \dkz \str 1\leq i <n } \cup \Set{ \dka -(n-1)\,\; n\dkz }.
\] 
The simple reflections are denoted by $s_i \coloneqq \dka i\,\; i+1 \dkz$ for $1\leq i < n$ and $s_n \coloneqq \dka -(n-1)\,\; n\dkz$. We choose the standard Coxeter element in type $D_n$ to be the product of simple reflections
\[
\cox \coloneqq s_1s_2\ldots s_{n-1}s_n =[1\ldots n-1][n].
\]
Note that this Coxeter element is different from the standard Coxeter element in \cite{bra_watt_kpi}, but is is the same as the Coxeter element in \cite{ath_rei}, which is our main source for the type $D$. The results from \cite{bra_watt_kpi} hold in analogous ways. 

Moreover, note that we chose to write $s_n=\dka -(n-1)\,\;n \dkz$ instead of $\dka n-1\,\; -n\dkz$. When we are working in type $D$, we always prefer to have $n$ instead of $-n$ present in the cycle notation. This convention becomes clear when we define the new pictorial representation of non-crossing partitions of type $D$ in \cref{sec:typeD_pict} below. 

\begin{figure}
	\begin{center}
		\begin{tikzpicture}
		\foreach \w in {0,1,...,5}
		\node[kpunkt] (p\w) at (1.5*\w,0){};
		\node[kpunkt] (p6) at (6,1.5){};
		\draw (p0)--(p1)--(p2) (p3)--(p4)--(p6) (p4)--(p5); 
		\node at(3.75,0){$\ldots$};
		\node[left of=p0, xshift=-1cm]{$D_{n}$};
		\node[below of=p0, yshift = 0.3cm]{$s_1$};
		\node[below of=p1, yshift = 0.3cm]{$s_2$};
		\node[below of=p2, yshift = 0.3cm]{$s_3$};
		\node[below of=p3, yshift = 0.3cm]{$s_{n-3}$};
		\node[below of=p4, yshift = 0.3cm]{$s_{n-2}$};
		\node[below of=p5, yshift = 0.3cm]{$s_{n-1}$};
		\node[ right of=p6, xshift=-0.3cm]{$s_{n}$};
		\end{tikzpicture}
		\caption{The Coxeter diagram of type $D_n$ with corresponding standard simple reflections $s_1, \ldots, s_n$.}
		\label{fig:typeD_cox_graph}
	\end{center}
\end{figure}
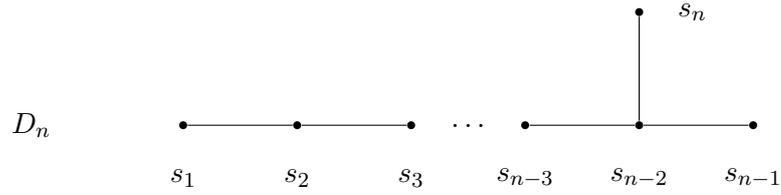

Unfortunately, it is not so easy to describe which elements of the oriflamme group are in $\nc(D_n)$, that is for which $w\in W(D_n)$ we have $w \leq \cox$. A necessary condition on the cycle structure is given by \cite[Cor. 4.3]{bra_watt_kpi}.

\begin{lem}\label{lem:typeD_cycle_decomp_nc}
	Let $w\in W(D_n)$ be such that $w \leq c$. Then the disjoint cycle decomposition of $w$ either has no balanced cycles or exactly two balanced cycles. In the second case, one of these balanced cycles is $[n]$.
\end{lem}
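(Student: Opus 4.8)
The statement is the type-$D$ counterpart of \cite[Cor.~4.3]{bra_watt_kpi} for our Coxeter element $\cox=[1\ldots n-1][n]$, and I would adapt their method. The first step is to transport the problem into the ambient group $W(B_n)\supseteq W(D_n)$: for every $x\in W(D_n)$ the reflection length of $x$ with respect to $T(D_n)$ equals the reflection length of $x$ with respect to $T(B_n)$, since by \cref{lem:basis_of_moved_space} both equal $\dim\mov(x)$, and the restriction of the geometric representation of $W(B_n)$ and the geometric representation of $W(D_n)$ realise $W(D_n)$ by the same group of signed coordinate permutations of $\R^n$, so $\dim\mov(x)$ does not depend on which one is used. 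Hence $w\leq\cox$ in $(W(D_n),\leq)$ implies $w\leq\cox$ in $(W(B_n),\leq)$, and it suffices to bound the balanced cycles of elements below $\cox=[1\ldots n-1]\cdot[n]$ in the absolute order of $W(B_n)$. Write $W(B_{n-1})$ for the subgroup of $W(B_n)$ of signed permutations fixing the coordinate $n$; its reflections are exactly the reflections of $W(B_n)$ fixing $n$, its standard Coxeter element is $[1\ldots n-1]$, and on it reflection length again equals $\dim\mov$.

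The second step isolates the coordinate $n$ by a dimension count. Assume first that $w\leq\cox$ fixes $n$. Since $[1\ldots n-1]$ also fixes $n$ and commutes with $[n]$, we have $w\inv\cox=(w\inv[1\ldots n-1])\cdot[n]$ with the two factors supported on disjoint coordinate sets, whence $\dim\mov(w\inv\cox)=\dim\mov(w\inv[1\ldots n-1])+1$. Substituting this and $\ell(w)=\dim\mov(w)$ into $n=\ell(\cox)=\ell(w)+\ell(w\inv\cox)$ gives $\dim\mov(w)+\dim\mov(w\inv[1\ldots n-1])=n-1=\dim\mov([1\ldots n-1])$; as $w\cdot(w\inv[1\ldots n-1])=[1\ldots n-1]$, this is precisely the relation $w\leq[1\ldots n-1]$ in $W(B_{n-1})$. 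By \cref{lem:typeB_cycle_decomp_nc} (valid since $n-1\geq 2$) the element $w$ then has at most one balanced cycle, hence none by \cref{rem:typeD_even_number_bal_cyc}. The same computation with $w'$ in place of $w$, where $w=w'\cdot[n]$, settles the case in which $[n]$ is itself a cycle of $w$: there $w'$ fixes $n$, so $w'\leq[1\ldots n-1]$ in $W(B_{n-1})$, so $w'$ has at most one balanced cycle, and $w=w'[n]$ has at most two balanced cycles, one of them being $[n]$.

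The remaining case — the coordinate $n$ lies in a cycle of $w$ of length at least two — is the heart of the matter, and I expect the main obstacle to lie here. Following \cite{bra_watt_kpi}, I would use the \enquote{cycle-breaking} description of the covering relations of $\nc(B_n,\cox)$ (the signed analogue of the procedure sketched after \cref{prop:typeA_consistent_orientation_nc}) and track which balanced cycles can survive as one descends from $\cox$ to $w$. The facts to be verified are that the balanced cycle $[n]$ of $\cox$ can only persist as $[n]$ or be absorbed into a non-balanced cycle, never growing into a longer balanced cycle, whereas the single balanced cycle $[1\ldots n-1]$ of $\cox$ contributes at most one balanced cycle to $w$, by the reasoning underlying \cref{lem:typeB_cycle_decomp_nc}; in particular, if $n$ sits in a cycle of $w$ of length $\geq 2$ then $w$ inherits no balanced cycle from $[n]$, so it has at most one balanced cycle, hence none by \cref{rem:typeD_even_number_bal_cyc}. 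The delicate point is the sign bookkeeping needed to make these claims precise, and it is exactly here that the special shape of $\cox$ is used, namely that $[n]$ is a length-one balanced cycle whose support is disjoint from that of $[1\ldots n-1]$. Combining the three cases, every $w\leq\cox$ in $W(D_n)$ has at most two balanced cycles; by \cref{rem:typeD_even_number_bal_cyc} their number is $0$ or even, hence $0$ or exactly $2$, and in the latter case the analysis above forces one of them to be $[n]$.
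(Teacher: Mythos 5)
First, a point of reference: the paper does not prove this lemma at all --- it is imported verbatim from \cite[Cor.~4.3]{bra_watt_kpi} --- so there is no in-paper argument to measure yours against, and your attempt has to stand on its own. Your reduction and your first two cases do stand: the identity $\ell_{T(D_n)}(x)=\ell_{T(B_n)}(x)=\dim\mov(x)$ for $x\in W(D_n)$ (both groups act on $\R^n$ by the same signed permutations, so $\mov(x)$ is the same subspace, and \cref{lem:basis_of_moved_space} applies in each group) legitimately transports $w\leq_D\cox$ to $w\leq_B\cox$, and the length bookkeeping for $w$ fixing $n$, respectively for $[n]$ being a cycle of $w$, correctly lands you in $\nc(B_{n-1})$, where \cref{lem:typeB_cycle_decomp_nc} and \cref{rem:typeD_even_number_bal_cyc} finish those cases.

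The gap is your third case, and it is genuine: you do not prove it, you list \enquote{facts to be verified} and defer the \enquote{sign bookkeeping}. Moreover, the tool you propose to use --- the cycle-breaking description of covering relations --- is available in this paper and in \cite{bra_watt_kpi} only for the interval below the \emph{Coxeter element} $[1\ldots n]$ of $W(B_n)$; your reduction places you in the interval below $[1\ldots n-1][n]$, which has full reflection length in $W(B_n)$ but is not a Coxeter element there (indeed this interval is strictly larger than $\nc(D_n)$, e.g.\ it contains $[1]$), so \cref{prop:typeB_consistent_orientation-NC} and its proof method simply do not apply. Part of your case three can be rescued by your own case-one computation: if $w$ had two balanced cycles $b_1,b_2$ neither meeting $\pm n$, then $b_1b_2\leq w\leq\cox$ by additivity of length over disjoint cycles, $b_1b_2$ fixes $n$, hence $b_1b_2\leq[1\ldots n-1]$ in $W(B_{n-1})$, contradicting \cref{lem:typeB_cycle_decomp_nc}; since disjointness allows at most one cycle of $w$ to meet $\pm n$, this already caps the number of balanced cycles at two. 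What remains, and what your proposal does not supply, is the claim that no balanced cycle of support size at least two containing $\pm n$ lies below $\cox$ (such a cycle $b$ would satisfy $b\leq_B\cox$ by \cref{obs:abs_ord}-type transitivity, and one must rule this out, e.g.\ by computing $\ell(b^{-1}\cox)$ or via the fixed-space characterization of the partial order in \cite{bra_watt_par_ord}). That is exactly the content the paper outsources to Brady--Watt, and it is the one step your argument leaves open.
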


The absolute length of an element in $W(D_n)$ can be computed using its cycle decomposition. 

\begin{lem}\label{lem:typeD_length}
	The absolute length in $W(D_n)$ of a paired cycle $\dka i_1 \ldots i_k \dkz$ equals $k-1$ and the length of the product of two balanced cycles $[j_1 \ldots j_m][j_{m+1}]$ for  $1\leq m <n$ is $m+1$.
	If $w\in W(D_n)$ is a product of disjoint paired cycles $p_1\ldots p_k$ for $0\leq k\leq n$, then the absolute length of $w$ is 
	\[
	\ell(w)= \ell(p_1) + \ldots + \ell(p_k).
	\]
	If $w\in W(D_n)$ is a product $p_1\ldots p_k b_1 b_2$ of disjoint cycles, where $p_1, \ldots, p_k$ are paired with $0\leq k \leq n-2$, and $b_1$ and $b_2$ are balanced with $1\leq m <n$, then the absolute length of $w$ is  
	\[
	\ell(w)= \ell(p_1) + \ldots + \ell(p_k) + \ell(b_1b_2).
	\]
\end{lem}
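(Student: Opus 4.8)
The statement is \cref{lem:typeD_length}, which computes the absolute length of elements of $W(D_n)$ from their disjoint cycle decomposition. The natural strategy is to reduce everything to two ingredients: (i) the lengths of the ``building blocks'' (a single paired cycle, and a product $[j_1\ldots j_m][j_{m+1}]$ of two balanced cycles), and (ii) the additivity of absolute length over disjoint cycles. For (ii) the key point is that disjoint cycles have moved spaces that only interact through standard coordinate subspaces, so one expects the moved spaces to be in direct sum; then $\ell(w)=\dim\mov(w)$ (stated after \cref{lem:basis_of_moved_space}) gives additivity immediately. For (i) the analogous facts in the Coxeter element of \cite{bra_watt_kpi} are \cref{lem:typeB_length}; since the excerpt remarks that ``the results from \cite{bra_watt_kpi} hold in analogous ways,'' the paired-cycle length $k-1$ is inherited directly. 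The only genuinely new computation is the length of $[j_1\ldots j_m][j_{m+1}]$ inside $W(D_n)$, where balanced cycles must come in pairs (\cref{rem:typeD_even_number_bal_cyc}).

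\textbf{Key steps.} First I would recall that $\ell(w)=\dim\mov(w)$ and that for disjoint cycles the supports are disjoint; a paired cycle $\dka i_1\ldots i_k\dkz$ moves exactly the coordinate subspace spanned by $e_{i_1},\dots,e_{i_k}$ (its moved space has dimension $k-1$, being the ``sum-zero'' subspace of that coordinate block), while the product $[j_1\ldots j_m][j_{m+1}]$ moves the full coordinate block on $j_1,\dots,j_{m+1}$, of dimension $m+1$. Because the index sets of disjoint cycles are disjoint, the corresponding moved subspaces of $\R^n$ are in direct sum inside $V$, so $\dim\mov(p_1\cdots p_k)=\sum\dim\mov(p_i)=\sum\ell(p_i)$ and likewise with the extra summand $\ell(b_1b_2)$ in the second case; this is exactly the claimed formula. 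Second, I would establish $\ell\bigl([j_1\ldots j_m][j_{m+1}]\bigr)=m+1$: the upper bound follows by exhibiting an explicit decomposition into $m+1$ reflections of the form $\dka i\,\;j\dkz$ (e.g.\ $\dka j_1\,j_2\dkz\dka j_2\,j_3\dkz\cdots\dka j_{m-1}\,j_m\dkz$ contributes a paired cycle of length $m-1$, and then two more reflections $\dka j_1\,j_{m+1}\dkz$, $\dka j_1\,{-j_{m+1}}\dkz$ — or a similar choice — convert it into the two balanced cycles, but the cleanest is to just cite the type $D$ analogue of \cref{lem:typeB_length}); the lower bound is \cref{lem:carter} together with the dimension count $\dim\mov=m+1$. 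One should double-check that $[j_1\ldots j_m][j_{m+1}]$ is indeed always the ``two balanced cycle'' shape allowed by \cref{lem:typeD_cycle_decomp_nc} when $w\in\nc(D_n)$, but the lemma as stated only requires $1\le m<n$, not membership in $\nc$, so this is purely a statement about $W(D_n)$.

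\textbf{Main obstacle.} The routine part — additivity over disjoint supports via direct sums of moved spaces — is straightforward. The delicate point is the length of the two-balanced-cycle block: one must verify both that no shorter decomposition into reflections from $T(D_n)=\set{\dka i\,\;j\dkz}$ exists (which follows cleanly from $\ell(w)=\dim\mov(w)$ once the moved space is correctly identified as the full $(m+1)$-dimensional coordinate block, using that the two balanced cycles together act without a common fixed vector on that block) and that the value $m+1$ is achieved (an explicit reduced decomposition, or invocation of the $D$-analogue of \cite[Le.~3.3, 3.4]{bra_watt_kpi}). So the crux is really the clean identification $\mov\bigl([j_1\ldots j_m][j_{m+1}]\bigr)=\spa(e_{j_1},\dots,e_{j_{m+1}})$; everything else is bookkeeping with disjoint supports and \cref{lem:carter}.
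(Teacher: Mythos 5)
Your proposal is correct, but it takes a genuinely different route from the paper. The paper handles the balanced pair by the explicit factorization $[j_1\ldots j_m][j_{m+1}]=\dka j_1\ldots j_{m+1}\dkz\dka -j_m\,\; j_{m+1}\dkz$, which yields an $(m+1)$-reflection decomposition, and it gets additivity by observing that reduced decompositions of disjoint cycles involve disjoint sets of reflections, so the concatenation is reduced. You instead compute $\dim\mov(w)$ block by block and invoke $\ell(w)=\dim\mov(w)$: the paired cycle fixes a line in its coordinate block (moved space of dimension $k-1$), the balanced $m$-cycle acts without nonzero fixed vectors on its block (dimension $m$), the $1$-cycle $[j_{m+1}]$ contributes one more dimension, and disjoint supports force the moved spaces into direct sum. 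Your route buys two things: the upper and lower bounds for the balanced pair come simultaneously, with no explicit word in the reflections needed, and the additivity step is airtight, whereas the paper's phrasing (``the reflections are not involved in each other's decompositions'') implicitly relies on exactly the linear independence of the combined root sets that your direct-sum statement makes explicit. Two small caveats: the explicit $(m+1)$-reflection word you sketch (``$\dka j_1\,j_{m+1}\dkz$, $\dka j_1\,{-j_{m+1}}\dkz$, or a similar choice'') is not actually verified to multiply to the right element and is redundant once you use $\ell=\dim\mov$, so you should either drop it or pin it down; and you should say explicitly that the moved space in the geometric representation is identified with the moved space of the signed-permutation action on $\R^n$ via the angle-preserving isomorphism $\iota$ the paper constructs in the embedding chapter, since $\ell(w)=\dim\mov(w)$ is stated there for the geometric representation. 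Also note that your description of the paired cycle's moved space as the ``sum-zero'' subspace is only literally correct when all entries are positive; for mixed signs it is a signed variant, though the dimension count $k-1$ is unaffected.
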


\begin{proof}
	The absolute length of a paired cycle $\dka i_1 \ldots i_k \dkz$ equals $k-1$ and involves only reflections of the form $\dka i\,\;j\dkz$ with $i,j\in\Set{i_1, \ldots, i_k}$ by \cref{lem:typeB_length}. This shows the first case. Now consider the second case. It follows from a direct computation that $[j_1\ldots j_m][j_{m+1}]=\dka j_1\ldots j_{m+1}\dkz\dka -j_m\,\; j_{m+1}\dkz$ and that $[j_1\ldots j_m][j_{m+1}]$ admits a reduced decomposition with $m+1$ reflections. Since the paired and balanced cycles are disjoint, the elements in the reduced decomposition of the pair of balanced cycles are not involved in the reduced decompositions of the paired cycles. Hence the sum of the length of the product of paired cycles and the length of the product of balanced cycles equals the length of $w$. 
\end{proof}

\begin{rem}\label{rem:typeBD_length_bal_cyc}
	If $b$ is a balanced cycle with entries in $\Set{\pm 1, \ldots, \pm(n-1) }$, then the absolute length of $b$ in $W(B_{n-1})$ plus one equals the absolute length of $b[n]$  in $W(D_n)$, that is $\ell_D(b[n])=\ell_B(b)+1$ by \cref{lem:typeB_length}.
\end{rem}

Let us illustrate with an example why \enquote{consistent orientation} of the cycles of a reduced decomposition with the Coxeter element $(1\,\;2 \ldots 2n)$ of the canonical image in $S_{2n}$ is \emph{not} the right concept for cycles involving $n$, as it was in type $B$. The right notion of \enquote{consistent orientation} is slightly more subtle. We introduce it in \cref{defi_typeD_consistent_orientation} below.

\begin{exa}\label{exa:typeD_not_consistent_orientation}
	Let $w = \dka -1\,\;2\,\;4 \dkz = \dka -1\,\;2 \dkz \dka 2\,\;4 \dkz$ be an element of the oriflamme group $W(D_4)$. The canonical image of $w$ in $S_8$ is $(2\,\;4\,\;5)(1\,\;6\,\;8)$, which consists of consistently oriented and non-crossing cycles. Hence it is in $\nc(S_8)$ by \cref{prop:typeA_consistent_orientation_nc}. In order to check whether $w \leq \cox$ or not, note that the length of $w\inv \cox = [1][2\,\;3\,\;4]$ equals $4$ by \cref{lem:typeD_length}, but $\ell(\cox)-\ell(w)=2$. Hence $w$ is \emph{not} in $\nc(D_4)$ as one might have suspected. 
	Also it does not suffice that the cycle $\dka -1\,\;2\,\;4 \dkz$ is consistently oriented in $W(B_3)$ after removing $4$, because $\dka -1\,\;2 \dkz$ \emph{is} consistently oriented, but $\dka -1\,\;2\,\;4 \dkz$ is \emph{not} in $\nc(D_4)$. To finish this example we note that the element $w\inv=\dka 2\,\;-1\,\;4\dkz$ is in $\nc(D_4)$, since $w\cox=\dka 1\,\;4 \dkz \dka 2\,\;3 \dkz$ has length $2$. 
\end{exa}

\begin{defi}\label{defi_typeD_consistent_orientation}
	Let $b$ be a balanced cycle $[i_1\ldots i_k]$. Then $b$ is called \emph{consistently oriented} with respect to the Coxeter element $[1\ldots n-1][n]\in W(D_n)$, or simply \emph{consistently oriented} in $W(D_n)$, if  $-n < i_j < n$ for all $1\leq j \leq k$ and $b$ is consistently oriented in $W(B_{n-1})$. 
	
	Let $p$ be a paired cycle $\dka i_1 \ldots i_k \dkz$ such that $-n < i_j < n$ for all $1\leq j \leq k$. Then $p$ is called \emph{consistently oriented} with respect to the Coxeter element $[1\ldots n-1][n]\in W(D_n)$, or simply \emph{consistently oriented} in $W(D_n)$, if $p$ is consistently oriented in $W(B_{n-1})$.

	Let $q = \dka i_1 \ldots i_k\,\; n \dkz$ be a paired cycle with $-n < i_j < n$ for all $1\leq j \leq k$. 
	There are two cases to consider. If $i_1 > 0$, then $q$ is called \emph{consistently oriented} with respect to the Coxeter element $[1\ldots n-1][n]\in W(D_n)$, or \emph{consistently oriented} in $W(D_n)$, if there exists some $l\leq k$ such that $0<i_1< \ldots< i_l$ and $0>i_{l+1}> \ldots> i_k > -i_1$.
	If $i_1 < 0$, then $q$ is called \emph{consistently oriented} with respect to the Coxeter element $[1\ldots n-1][n]\in W(D_n)$, or \emph{consistently oriented} in $W(D_n)$, if there is some $l\leq k$ 
	such that $0>i_1 > \ldots > i_l$ and $0<i_{l+1} < \ldots < i_k < -i_1$.

	A cycle that is equivalent to a consistently oriented cycle is called \emph{consistently oriented} or said to have \emph{consistent orientation}.
\end{defi}

\begin{rem}\label{rem:typeD_defi_consistent_orientation}
	Although the definition of the consistent orientation of a paired cycle involving $n$ is not in terms of consistent orientation of a cycle in $W(B_{n-1})$, as it is in the other cases, we can however connect the concepts of consistent orientation in Coxeter groups of types $B$ and $D$. Note that a cycle $q$ involving $n$ that is consistently oriented in $W(D_n)$ in general is \emph{not} consistently oriented in $W(B_n)$. This was highlighted in \cref{exa:typeD_not_consistent_orientation}.
	\begin{enumerate}
		\item If $q$ is a consistently oriented paired cycle involving $n$, then the paired  cycle $p$ arising from $q$ by removing the entry $n$ is a consistently oriented paired cycle in $W(B_{n-1})$, and hence also consistently oriented in $W(D_n)$.
		
		\item If $p$ is a consistently oriented cycle in $W(B_{n-1})$, then there is an element $\dka i_1 \ldots i_k\dkz $ in the equivalence class of $p$ such that $\dka i_1 \ldots i_k \,\; n \dkz$ is consistently oriented with respect to the Coxeter element $[1\ldots n-1][n]$ of $W(D_n)$.
	\end{enumerate}
\end{rem}

\begin{prop}
	Let $w \in W(D_n)$ be such that its disjoint cycle decomposition either consists of exactly one non-trivial paired cycle $p$ or a product of a balanced cycle $b$ not involving $\pm n$ with the balanced cycle $[n]$. Then $w$ is an element of $\nc(D_n)$ if and only if $p$, or $b$ respectively, is consistently oriented with respect to the Coxeter element $[1 \ldots n-1][n]$ of $W(D_n)$.
\end{prop}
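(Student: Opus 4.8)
The plan is to use the length criterion for the absolute order: for $w\in W(D_n)$ one has $w\leq \cox$ if and only if $\ell(\cox)=\ell(w)+\ell(w\inv\cox)$, where $\ell(\cox)=n$ by \cref{lem:typeD_length}. So it suffices to compute $\ell(w\inv\cox)$ and compare it with $n-\ell(w)$. I would split the argument into three cases according to the shape of $w$: (a) $w=p$ is a single paired cycle not involving $\pm n$; (b) $w=p$ is a single paired cycle involving $n$; (c) $w=b[n]$ with $b$ balanced and not involving $\pm n$. In all three cases the cycle-structure condition of \cref{lem:typeD_cycle_decomp_nc} holds automatically, so the real content is the orientation condition.

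Cases (a) and (c) can be reduced to the type $B$ statement \cref{prop:typeB_consistent_orientation-NC}, applied to $W(B_{n-1})$ with its standard Coxeter element $\cox_B=[1\ldots n-1]$: the cycles involved only use letters in $\Set{\pm1,\ldots,\pm(n-1)}$, and for such cycles \enquote{consistently oriented in $W(D_n)$} is \emph{by definition} \enquote{consistently oriented in $W(B_{n-1})$}. Concretely, writing $\cox=[n]\cdot\cox_B$ (the two balanced cycles of $\cox$ are disjoint, hence commute), one computes $p\inv\cox=(p\inv\cox_B)[n]$ in case (a) and, using $[n]\inv=[n]$ and that $[n]$ commutes with everything not involving $\pm n$, $w\inv\cox=b\inv\cox_B$ in case (c). Since $p\inv\cox_B$ has an odd number of sign changes and $b\inv\cox_B$ an even one, \cref{rem:typeD_even_number_bal_cyc} together with \cref{lem:typeB_cycle_decomp_nc} shows that, \emph{if} $p\leq_B\cox_B$ (resp. $b\leq_B\cox_B$), then $p\inv\cox_B$ has exactly one balanced cycle (resp. $b\inv\cox_B$ has none); combining this with \cref{lem:typeB_length}, \cref{lem:typeD_length} and \cref{rem:typeBD_length_bal_cyc} gives $\ell(w\inv\cox)=n-\ell(w)$. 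The converse direction is symmetric: if $w\leq\cox$ then $w\inv\cox\leq\cox$ by \cref{obs:abs_ord}, so by \cref{lem:typeD_cycle_decomp_nc} it has $0$ or $2$ balanced cycles, and reading off the length again yields $p\leq_B\cox_B$ (resp. $b\leq_B\cox_B$). By \cref{prop:typeB_consistent_orientation-NC} this is equivalent to consistent orientation in $W(B_{n-1})$, settling cases (a) and (c).

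The main obstacle is case (b), the paired cycle $p=\dka i_1\ldots i_k\,n\dkz$ involving $n$, for which \enquote{consistently oriented in $W(D_n)$} is genuinely \emph{not} \enquote{consistently oriented in $W(B_n)$} (cf. \cref{exa:typeD_not_consistent_orientation}), so the clean reduction above fails. Here I would compute $p\inv\cox$ directly: using $\cox=[n]\cox_B$ and the identity $p\inv[n]=[n\,\; {-i_k}\,\;\ldots\,\;{-i_1}]$ (a routine check of the action on $\Set{\pm1,\ldots,\pm n}$), one gets $p\inv\cox=[n\,\;{-i_k}\,\;\ldots\,\;{-i_1}]\cdot[1\ldots n-1]$, a product of two balanced cycles that are in general not disjoint. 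One must then determine its disjoint cycle decomposition and compute $\ell(p\inv\cox)$ via \cref{lem:typeD_length}; the way these two balanced cycles interact — and hence the resulting length — is governed exactly by the relative order and the signs of $i_1,\ldots,i_k$, which is precisely what the two sub-cases ($i_1>0$ and $i_1<0$) in \cref{defi_typeD_consistent_orientation} encode, and one finds $\ell(p\inv\cox)=n-k$ if and only if $p$ is consistently oriented. An alternative route is to reduce to case (a) applied to the paired cycle $p'=\dka i_1\ldots i_k\dkz$ obtained by deleting the entry $n$: \cref{rem:typeD_defi_consistent_orientation} links consistent orientation of $p$ in $W(D_n)$ with consistent orientation of $p'$ in $W(B_{n-1})$, and the \enquote{breaking cycles} mechanism from the proof of \cref{prop:typeA_consistent_orientation_nc} lets one move between a reduced decomposition of $\cox$ having $p'$ as a prefix and one having $p$ as a prefix; care is needed because only one decorated representative of the equivalence class of $p'$ lands below $\cox$. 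Either way I expect case (b) to carry the bulk of the work and to require the combinatorics of \cref{defi_typeD_consistent_orientation} in full.
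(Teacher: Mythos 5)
Your overall skeleton is the paper's: the length criterion $\ell(\cox)=\ell(w)+\ell(w\inv\cox)$, the same three cases, and for the cycles not involving $\pm n$ the reduction to \cref{prop:typeB_consistent_orientation-NC} in $W(B_{n-1})$ via the balanced-cycle count and the length formulas. Your computations $p\inv\cox=(p\inv\cox_B)[n]$ and $(b[n])\inv\cox=b\inv\cox_B$, together with the parity argument forcing exactly one (resp.\ no) balanced cycle in the quotient, are exactly what the paper does in its cases a) and b), so those two cases are in order.

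The gap is in the case you correctly single out as the crux, a paired cycle $q=\dka i_1\ldots i_k\,n\dkz$. Your plan there is to compute the disjoint cycle decomposition of $q\inv\cox=[n\,\;{-i_k}\ldots{-i_1}][1\ldots n-1]$ and ``find'' that its length is $n-k$ exactly when $q$ is consistently oriented --- but that equivalence \emph{is} the proposition in this case, so stating that one expects it to come out of the case analysis is not yet an argument. The paper resolves the forward direction with a specific device you do not have: writing $q=p\,\dka i_k\,n\dkz$ with $p=\dka i_1\ldots i_k\dkz$ and using conjugation-invariance of $\ell$ to replace $\ell(q\inv\cox)$ by $\ell\bigl(p\inv\cdot\cox_B[n\,\;i_k]\bigr)$, where $\cox_B[n\,\;i_k]$ is computed explicitly to be a \emph{single} paired cycle on all of $\Set{\pm1,\ldots,\pm n}$; consistent orientation of $q$ then says precisely that $p$ (or an equivalent representative) arises from that cycle by deleting entries, so \cref{cor:length_subcycle} gives $\ell=n-k$ at once. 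The converse also needs more than you provide: the paper shows that if $q\in\nc(D_n)$ then $q\inv\cox$ admits a reduced decomposition beginning with $t=\dka -i_k\,\;n\dkz$ (because $q\inv(\cox(n))=-i_k$), whence $qt=[i_1\ldots i_k][n]\in\nc(D_n)$, and then invokes the already-settled balanced-cycle case to extract the ordering conditions on $i_1,\ldots,i_k$. Your alternative route via deleting the entry $n$ and \cref{rem:typeD_defi_consistent_orientation} points in this direction, but as written neither route in your plan supplies the actual length computation or the prefix argument, so the hard case remains open.
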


\begin{proof}
	
	We consider the three cases that
	\begin{enumerate}
		\item $p$ is a paired cycle not involving $\pm n$, 
		\item $b$ is a balanced cycle not involving $\pm n$, and
		\item $p$ is a paired cycle involving $\pm n$.
	\end{enumerate}
	
	Recall that paired and balanced cycles that do not involve $\pm n$ give rise to  elements of both $W(D_n)$ and $W(B_{n-1})$. 
	Moreover, we can interpret a paired cycle not involving $\pm n$ both as an element in $W(D_n)$ and in $W(B_{n-1})$.
	Since reduced decompositions of paired cycles in $W(B_{n-1})$ only involve reflections given by paired cycles by \cref{lem:typeB_length}, the length of the element given by a paired cycle in $W(B_{n-1})$ equals the length of it in $W(D_{n-1})$. We denote the absolute length in $W(B_{n-1})$ by $\ell_B$ and the absolute length of $W(D_n)$ by $\ell_D$.
	
	Consider case a) and let $p$ be a paired cycle not involving $\pm n$. Let $w\in W(D_n)$ be the element having $p$ as disjoint cycle decomposition and length $\ell_D(w)=k$. 
	
	Suppose that $p$ is consistently oriented. Then $w\in W(B_{n-1})$ and, by definition of consistent orientation in type $D$, $p$ is consistently oriented with respect to the Coxeter element $\cox_B=[1\ldots n-1]$ of $W(B_{n-1})$. By \cref{prop:typeB_consistent_orientation-NC} consistent orientation of a single cycle implies that $w\in\nc(B_n)$, which means by definition of the absolute order that $\ell_B(p\inv\cox_B)= n-1-k$. We have to prove that $w\in\nc(D_n)$, meaning that $w\leq \cox_D=[1\ldots n-1][n]=\cox_B[n]$. Hence we have to show that $\ell_D(w\inv \cox_D)=n-k$. 
	Let $v=w\inv\cox_B$, which is an element of $\nc(B_{n-1})$ by \cref{obs:abs_ord}. Then the disjoint cycle decomposition of $v$ involves exactly one balanced cycle, since by \cref{rem:typeB_one_balanced_cycle} it has at most one balanced cycle. If there were no balanced cycle, then the product of paired cycles $wv$ changes an even number of signs, and the balanced cycle $\cox_B$, which equals the product $wv$, changes an odd number of signs as self-bijection of $\Set{\pm 1, \ldots, \pm (n-1) }$, which cannot happen. Let $v=v_1v_2$ be such that $v_1$ is a product of paired cycles and $v_2$ is a balanced cycle. Because $w\inv\cox_D = v[n]$, we have
	\begin{align*}
	\ell_D(w\inv\cox_D)&=\ell_D(v_1v_2[n])=\ell_D(v_1)+\ell_D(v_2[n])
	=\ell_B(v_1)+\ell_B(v_2)+1\\
	&=\ell_B(v)+1=\ell_B(w\inv\cox_B)=n-1+k+1=n-k,
	\end{align*}
	where we use \cref{lem:typeB_length}, \cref{lem:typeD_length} and \cref{rem:typeBD_length_bal_cyc} to compute the lengths of balanced and paired cycles in $W(B_{n-1})$ and $W(D_n)$.
	
	For the other direction, let $w$ be an element of $\nc(D_n)$. We have to prove that $p$ is consistently oriented with respect to the Coxeter element $\cox_D$, which is by definition that it is, seen as element of $W(B_{n-1})$, consistently oriented with respect to $\cox_B$. By \cref{prop:typeB_consistent_orientation-NC}, $p$ being consistently oriented is equivalent to $w\in \nc(B_{n-1})$. We show $w\in \nc(B_{n-1})$ by verifying that $\ell_B(w\inv\cox_B)=n-1-k$.
	Because $w\in \nc(D_n)$, it holds that $\ell_D(w\inv\cox_D)=n-k$ and that $w\inv\cox_D \in \nc(D_n)$ by \cref{obs:abs_ord}. Let $u=w\inv\cox_D$. Since the cycle $p$ does not involve $\pm n$, we know that $u(n)=-n$ and $u(-n)=n$. Hence the disjoint cycle decomposition of $u$ in $W(D_n)$ has $[n]$ as a cycle and therefore there is exactly one other balanced cycle in the cycle decomposition, which we call $u_2$, by \cref{lem:typeD_cycle_decomp_nc}. We write $u=u_1u_2[n]$ with $u_1$ being a product of disjoint paired cycles. By \cref{lem:typeB_length}, \cref{lem:typeD_length} and \cref{rem:typeBD_length_bal_cyc} we get
	\begin{align*}
	\ell_D(u)=\ell_D(u_1)+\ell_D(u_2[n]) = \ell_B(u_1) + \ell_B(u_2) + 1 =\ell_B(u_1u_2) + 1,
	\end{align*}
	which shows $\ell_B(u_1u_2)=n-k-1$. But $u_1u_2=u[n]=w\inv\cox_D[n]=w\inv\cox_B$ and hence $\ell_B(w\inv\cox_B)=n-k-1$.
	
	Now consider case b) and let $b$ be a balanced cycle not involving $\pm n$. Let $w$ be the element of $W(D_n)$ having the disjoint cycle decomposition $b[n]$ and let $\tilde{w}$ be the element of $W(B_{n-1})$ having the disjoint cycle decomposition $b$. Suppose that $b=[i_1 \ldots i_k]$, hence $\ell_D(w)=k+1$ and $\ell_B(\tilde{w})=k$ by \cref{lem:typeB_length} and \cref{lem:typeD_length}.
	
	Let $b$ be consistently oriented with respect to the Coxeter element $\cox_D=[1 \ldots n-1][n]$, which means by definition that $b$ is consistently oriented with respect to the Coxeter element $\cox_B=[1\ldots n-1]$ of $W(B_{n-1})$. Applying \cref{prop:typeB_consistent_orientation-NC} yields that $\tilde{w}$ is an element of $\nc(B_{n-1})$, that is $\ell_B(\cox_B) = \ell_B(\tilde{w}) + \ell_D(\tilde{w}\inv \cox_B)$ or, equivalently, that $\ell_D(\tilde{w}\inv \cox_B) = n-1-k$. 
	We have to prove that $w \in \nc(D_n)$. We do so by showing that $\ell_D(\cox_D) = \ell_D(w) + \ell_D(w\inv \cox_D)$, which is equivalent to $\ell_D(w\inv \cox_D)= n-k-1$.
	Note that $w\inv \cox_D=b\inv[n][1\ldots n-1][n]=b\inv [1\ldots n-1] = \tilde{w}\inv\cox_B$. Since $\tilde{w}\inv\cox_B$ is in $\nc(B_{n-1})$, its disjoint cycle decomposition either has no or exactly one balanced cycle by \cref{lem:typeB_cycle_decomp_nc}. Because the element $w\inv\cox_D$ in $W(D_n)$ has the same disjoint cycle decomposition, there is no balanced cycle by \cref{rem:typeD_even_number_bal_cyc}. So we have shown that $\ell_D(w\inv\cox_D)=\ell_B(\tilde{w}\inv \cox_B)=n-k-1$.
	
	To prove the other direction suppose that $w$ is an element of $\nc(D_n)$. We have to show that $b$ is consistently oriented in $W(D_n)$. As is case a), this means that $b$ is consistently oriented in $W(B_{n-1})$, which is equivalent to $\tilde{w}\in\nc(B_{n-1})$ by \cref{prop:typeB_consistent_orientation-NC}. We prove that $\tilde{w}\in \nc(B_{n-1})$ by showing that $\ell_B(\tilde{w}\inv\cox_B)=\ell_B(\cox_B)-\ell_B(\tilde{w})=n-1-k$. We use a similar argument as above. 
	Since $w\inv\cox_D$ is in $\nc(D_n)$, and $\tilde{w}\inv\cox_B$ and $w\inv\cox_D$ have the same disjoint cycle decomposition, this cycle decomposition only has paired cycles and hence $\ell_B(\tilde{w}\inv\cox_B)=\ell_D(w\inv\cox_D)$. 
	Again using the assumption $w\in\nc(D_n)$ yields $\ell_D(w\inv\cox_D)=\ell_D(\cox_D)-\ell_D(w)=n-k-1$.
	
	Let us consider the last case c). Let $q=\dka i_1\ldots i_k\,\; n\dkz$ be a paired cycle and let $w\in W(D_n)$ be the element whose disjoint cycle decomposition is $q$. The length of $w$ is $\ell_D(w)=k$.
	
	Suppose that $q=\dka i_1 \ldots i_k \,\;n\dkz$ is consistently oriented with respect to the Coxeter element $\cox_D =[1\ldots n-1][n]$ of $W(D_n)$. 
	We have to prove that $w \in \nc(D_n)$, which means $\ell_D(w\inv\cox_D)=n-k$. Let $p=\dka i_1 \ldots i_k \dkz$. Then $q=p\, \dka i_k \,\; n \dkz$ and  we can express $w\inv\cox_D$ as $q\inv\cox_D = \dka i_k\,\;n\dkz p\inv \cox_B[n]$.  Since the absolute length is invariant under conjugation, we hence get that $\ell_D(w\inv\cox_D)=\ell_D(p\inv \cox_B[n]\dka i_k\,\;n\dkz)$. Note that $[n]\dka i_k\,\;n\dkz=[n \,\; i_k]$. We show that $\ell_D(p\inv \cox_B[n\,\; i_k])=n-k$. 
	
	Suppose that $i_k < 0$. Then
	\[
	c_B[n \,\;i_k]=  \dka 1 \,\; 2 \ldots -\!i_k\,\; n \,\; i_k\!-1 \,\; i_k\!-2 \ldots -\!(n-1) \dkz\eqqcolon p_1,
	\]
	hence the disjoint cycle decomposition of $c_B[n \,\;i_k]$ consists of a single paired cycle, which we call $p_1$. Since $q=p\dka i_k\,\; n\dkz$ is consistently oriented in $W(D_n)$, $p$ arises from $p_1$ by removing entries. Hence \cref{cor:length_subcycle} implies that the length computes as $\ell_D(p\inv \cox_B[n\,\; i_k])=\ell_D(p_1)-\ell_D(p)=n-1-(k-1)=n-k$.
	
	Now suppose that $i_k>0$. We get that
	\[
	c_B[n \,\;i_k]=  \dka 1 \,\; 2 \ldots i_k\,\; -\!n \,\; -\!i_k\!-\!1 \,\; -\!i_k\!-\!2 \ldots -\!(n-1) \dkz \eqqcolon p_2
	\]
	and see that the cycle $\dka i_{l+1}\ldots i_k\,\;i_1\ldots i_l\dkz$, which is equivalent to $p$, arises from $p_2$ by deleting entries, because $q=p\dka i_k\,\; n\dkz$ is consistently oriented in $W(D_n)$. Hence $\ell_D(p\inv \cox_B[n\,\; i_k])=n-k$, again by \cref{cor:length_subcycle}.
	
	Now let $q=\dka i_1\ldots i_k\,\; n\dkz$ be contained in $\nc(D_n)$ and let $t=\dka -i_k \,\; n\dkz$.
	First we show that $qt = [i_1 \ldots i_k][n]$ is in $\nc(D_n)$.
	For this let $u=q\inv \cox$, which is in $\nc(D_n)$ by \cref{obs:abs_ord} and has length $\ell(u)=n-k$, since $q \in \nc(D_n)$. Let $z_1 \ldots z_m$ be the disjoint cycle decomposition of $u$. Then one of the cycles, say $z_1$, has a reduced decomposition starting with $t = \dka n \,\; -i_k \dkz$, since $q\inv(\cox(n))=q\inv(-n)=-i_k$. Therefore, also $u$ has a reduced decomposition starting with the reflection $t$ and the length of $tu$ equals $\ell(tu)=\ell(u)-1 = n-k-1$. Because $tu=tq\inv \cox = (qt)\inv \cox$, it follows that $\ell((qt)\inv \cox)=n-k-1=\ell(\cox)-\ell(qt)$ and hence $qt=[i_1\ldots i_k][n] \in \nc(D_n)$. 
	By the above case b) we know that $[i_1\ldots i_k]$ is consistently oriented with respect to the Coxeter element $[1\ldots n-1][n]$ in $W(D_n)$. This means that $[i_1 \ldots i_k]$ is consistently oriented in $W(B_n)$ and arises from a zero block of the corresponding pictorial representation by going around it clockwise. This implies that we either have $0 < i_1 <\ldots <i_l $ for some $l$ and $0>i_{l+1} > \ldots > i_k > -i_1$, or $0 > i_1 > \ldots > i_l$ for some $l$ and $0< i_{l+1} < \ldots < i_k < -i_1$. This shows that $q$ is consistently oriented in $\nc(D_n)$.
\end{proof}

When a paired cycle does not involve $\pm n$, then the correspondence of type $B$ and type $D$ is pleasant.

\begin{cor}\label{cor:typeBD_elements_ncd_as_ncb}
	Let $p$ be a paired cycle with entries in $\Set{\pm 1,\ldots, \pm (n-1)}$ and let $w\in W(D_n)$ and $v\in W(B_{n-1})$ be the elements with disjoint cycle decomposition $p$. Then $w$ is in $\nc(D_n)$ if and only if $v$ is in $\nc(B_{n-1})$.
\end{cor}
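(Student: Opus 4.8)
The plan is to deduce this immediately from the proposition just proved, together with the definition of consistent orientation for paired cycles that avoid $\pm n$. First I would record the essential (if trivial) observation that, since $p$ has all its entries in $\Set{\pm 1, \ldots, \pm(n-1)}$, it genuinely determines an element of $W(D_n)$ — namely $w$ — as well as an element of $W(B_{n-1})$ — namely $v$ — and that in each of these groups the disjoint cycle decomposition of the respective element consists of the single paired cycle $p$. In particular $w$ falls under the first case covered by the preceding proposition (its disjoint cycle decomposition is exactly one non-trivial paired cycle, and $p$ does not involve $\pm n$), so that proposition applies and yields: $w\in\nc(D_n)$ if and only if $p$ is consistently oriented with respect to the Coxeter element $\cox_D=[1\ldots n-1][n]$ of $W(D_n)$.

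Next I would invoke \cref{defi_typeD_consistent_orientation}: for a paired cycle $p$ not involving $n$, being consistently oriented in $W(D_n)$ is \emph{defined} to mean precisely that $p$ is consistently oriented in $W(B_{n-1})$. Finally, \cref{prop:typeB_consistent_orientation-NC} states that $v\in\nc(B_{n-1})$ if and only if every cycle of its disjoint cycle decomposition is non-crossing and oriented consistently; since that decomposition consists of the single cycle $p$, the non-crossing condition is vacuous, and this reduces to: $v\in\nc(B_{n-1})$ if and only if $p$ is consistently oriented in $W(B_{n-1})$. Chaining the three equivalences then gives
\[
w\in\nc(D_n)\iff p\text{ consistently oriented in }W(D_n)\iff p\text{ consistently oriented in }W(B_{n-1})\iff v\in\nc(B_{n-1}),
\]
which is the assertion of the corollary.

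I do not expect any genuine obstacle here: the content has already been extracted in case a) of the proof of the preceding proposition, and this corollary merely isolates that equivalence and states it symmetrically in the two groups. The only point meriting a line of care is that the hypothesis "exactly one non-trivial paired cycle" of the preceding proposition is met, which it is, as $p$ is by assumption a genuine paired cycle (length at least two); the degenerate case $p=\id$ is immediate, since then $w=\id\in\nc(D_n)$ and $v=\id\in\nc(B_{n-1})$ both hold.
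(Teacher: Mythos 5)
Your proof is correct and is exactly the derivation the paper intends: the corollary is stated without a separate proof because case a) of the preceding proposition already establishes the equivalence, and your chain through \cref{defi_typeD_consistent_orientation} and \cref{prop:typeB_consistent_orientation-NC} is the natural way to isolate it. The only point of care you flag (that the hypothesis of the proposition is satisfied) is handled appropriately.
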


\subsection{Pictorial representations}\label{sec:typeD_pict}

We start with introducing a set-theoretic notion of $D_n$-partitions \cite{ath_rei}.

\begin{defi}
	A $D_n$-partition is defined to be a $B_n$-partition with the additional property that the zero block, if present, has at least four elements. More precisely, a partition $\pi=\Set{B_1, \ldots, B_k }$ of $\Set{\pm1, \ldots, \pm n }$ is called a \emph{$D_n$-partition} if $\pi$ satisfies 
	\begin{enumerate}
		\item $B\in \pi$ if and only if $-B \in \pi$, 
		\item there is at most one block $B\in\pi$ with $B=-B$, called \emph{zero block}, and
		\item if $B$ is a zero block, then $\#B\geq 4$.
	\end{enumerate}
	If a $D_n$-partition has the additional property that the zero block, if present, contains the pair $\Set{-n,n}$, then we call it \emph{pure} $D_n$-partition. 
\end{defi}

In the following, we are only interested in pure $D_n$-partitions.

Condition a) guarantees that non-zero blocks come in pairs $\pm B \coloneqq \Set{B, -B}$, called \emph{block pairs}. By $k\in \pm B$ we mean $k\in B$ or $k \in -B$.  If $n\in \pm B$, we use the convention that $n\in B$. 

Before we define what a non-crossing $D_n$-partition is, we introduce a new pictorial representation for the class of pure $D_n$-partitions. The main advantage of this pictorial representation compared to the one defined in \cite{ath_rei} is that every vertex only has one label. Moreover, now different partitions have different pictorial representations. Hence one can reconstruct the underlying $D_n$-partitions by looking at the graph only, once a fixed labeling of the vertices is chosen. 

\begin{defi}
	Let $\pi=\Set{ B_1, \ldots, B_k }$ be a pure $D_n$-partition.
	We define an embedded graph in the following way.
	Label the vertices of a regular $2(n-1)$-gon in the plane with the numbers 
	\[
	1, 2, \ldots, n-1, -1, -2, \ldots, -(n-1)
	\]
	clockwise in this order and add the midpoint as an additional vertex and label it with $n$. This circularly ordered set with midpoint is the vertex set of the graph, and the edges are obtained as follows.
	\begin{enumerate}
		\item If $B$ is a zero block, then add the $(\#B-2)$-gon on the vertices $B\setminus \Set{-n,n}$. If $\#B=4$, then the $2$-gon is a line passing through the midpoint.
		\item If $\pm B$ is a block pair with $n \notin \pm B$, then add \emph{two} $\#B$-gons, one on the vertices of $B$ and one on the vertices of $-B$.
		\item If $\pm B$ is a block pair with $n \in \pm B$, then add \emph{one} $\#B$-gon on the vertices of $B$. Recall that by convention we have $n \in B$.
	\end{enumerate}
	The resulting graph is called the \emph{pictorial representation} of $\pi$. 
\end{defi}

\begin{rem}
	\hfill
	\begin{enumerate}
		\item 
		If a block of a $D_n$-partition neither contains $-n$ nor $n$, then it can be naturally interpreted as a block of a $B_{n-1}$ partition. 
		\item 
		If a block pair contains $\pm n$, then only \emph{one} block is present in the pictorial representation. Nevertheless, we also refer to it as \enquote{block pair} and keep in mind that there is a second, but invisible, block.
	\end{enumerate}
\end{rem}

Equipped with the pictorial representation of pure $D_n$-partitions, we can now define crossing blocks. The definition is in such a way that two blocks of a pure $D_n$-partition are called crossing if there are edges of the blocks in the pictorial representation that cross. It might be helpful to consider \cref{fig:typeD_crossing_blocks} for the different types of crossing blocks in the set-theoretic definition. 

\begin{figure}
	\begin{center}
		\begin{tikzpicture}
		\begin{scope}[xshift = 0cm]
		\achteckmp
		\draw(p1)--(p2)(p5)--(p2)(p5)--(p6)(p1)--(p6)(p4)--(p7)(p3)--(p8);
		\node at (-1.3,0){a)};
		\end{scope}
		
		\begin{scope}[xshift = 4cm]
		\achteckmp
		\draw(p1)--(p3)(p4)--(p2)(p5)--(p7)(p8)--(p6);
		\node at (-1.3,0){b)};
		\end{scope}
		
		\begin{scope}[xshift = 8cm]
		\achteckmp
		\draw[Lightgray](p4)--(p5)(p8)--(p5)(p8)--(p1);
		\draw(p1)--(p4)(p4)--(p0)(p0)--(p1)(p2)--(p7)(p3)--(p6);
		
		\node at (-1.3,0){c)};
		\end{scope}
		\end{tikzpicture}
		\caption{a) A zero block crossing a non-zero block pair.\\
			b) Two crossing block pairs that do not contain $n$.\\
			c) A block pair containing $n$ crosses another block pair. The gray line indicates the zero block $Z$ mentioned in \cref{defi:typeD_ncp}.}
		\label{fig:typeD_crossing_blocks}
	\end{center}
\end{figure}
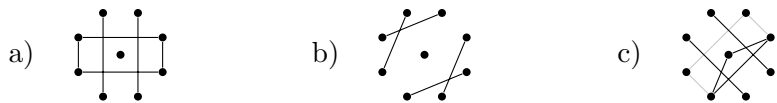

\begin{defi}\label{defi:typeD_ncp}
	A pure $D_n$-partition is called \emph{non-crossing} $D_n$-partition if it has no crossing blocks. Otherwise it is called \emph{crossing}. 
	The following list defines when two blocks are \emph{crossing}.
	\begin{enumerate}
		\item If $Z$ is a zero block and $B$ a non-zero block, then they are \emph{crossing} if $Z\setminus\Set{-n,n}$ and $B$ are crossing as $B_{n-1}$-partitions.		
		
		\item Two blocks $B,B'$ that neither contain $-n$ nor $n$ are \emph{crossing} if they cross as blocks of $B_{n-1}$-partitions.
		
		\item Let $\pm B$ and $\pm B'$ be two different block pairs of $\pi$ such that $n\in \pm B$ and set $Z\coloneqq (-B \cup B)\setminus \Set{-n,n}$. Then $\pm B$ and $\pm B'$ are \emph{crossing} if and only if the zero block $Z$ and $\pm B'$ are crossing.
	\end{enumerate}
	The set of non-crossing $D_n$-partitions is denoted by $\ncd_n$. 
\end{defi}

\begin{rem}
	We leave out the adjective \enquote{pure} in front of the non-crossing $D_n$-partition, since they are pure by definition.
\end{rem}

The notions of \enquote{crossing} in type $B$ and $D$ are not compatible, which is best seen in an example.

\begin{exa}\label{exa:crossing_and_not}
	Consider the set partitions $\pi_1$ and $\pi_2$ of the set $\Set{\pm1, \pm 2, \pm 3, \pm 4 }$ given by 
	\begin{align*}
	\pi_1&=\Set{\Set{1,-3},\Set{-1,3},\Set{2,-4},\Set{-2,4}} \text{ and }\\ \pi_2&=\Set{\Set{1,-2},\Set{-1,2},\Set{3,4},\Set{-4,-4}}.
	\end{align*}
	Then $\pi_1$ interpreted as $D_4$-partition is non-crossing, whereas $\pi_1$ seen as $B_4$-partition has crossing blocks. The other way around, the partition $\pi_2$ is crossing when considered as $D_4$-partition, and it is non-crossing when interpreted as $B_4$ partition. The pictorial representations of $\pi_1$ and $\pi_2$, both as $B_4$- and $D_4$-partitions, are shown in \cref{fig:typeBD_crossing_and_not}. 
\end{exa}

\begin{figure}
	\begin{center}
		\begin{tikzpicture}
		\begin{scope}
		\foreach \w in {1,...,6} 
		\node (p\w) at (-\w * 360/6 +60  : 8mm) [kpunkt] {};
		\node[kpunkt](p0) at (0,0){};
		\node[align = center] at (0,-2){$\pi_1$ as\\ $D_4$-partition};
		\draw(p1)--(p6)(p3)--(p4)(p0)--(p5);
		\end{scope}
		
		\begin{scope}[xshift = 3.5cm]
		\foreach \w in {1,...,8} 
		\node (p\w) at (-\w  * 360/8 +67.5   : 8mm) [kpunkt] {};
		\node[align = center] at (0,-2){$\pi_1$ as\\ $B_4$-partition};
		\draw(p1)--(p7)(p2)--(p8)(p3)--(p5)(p4)--(p6);
		\end{scope}
		
		\begin{scope}[xshift = 7cm]
		\foreach \w in {1,...,6} 
		\node (p\w) at (-\w * 360/6 +60  : 8mm) [kpunkt] {};
		\node[kpunkt](p0) at (0,0){};
		\node[align = center] at (0,-2){$\pi_2$ as\\ $D_4$-partition};
		\draw(p1)--(p5)(p2)--(p4)(p0)--(p3);
		\end{scope}
		
		\begin{scope}[xshift = 10.5cm]
		\foreach \w in {1,...,8} 
		\node (p\w) at (-\w  * 360/8 +67.5   : 8mm) [kpunkt] {};
		\node[align = center] at (0,-2){$\pi_2$ as\\ $B_4$-partition};
		\draw(p1)--(p6)(p2)--(p5)(p3)--(p4)(p7)--(p8);
		\end{scope}
		\end{tikzpicture}
		\caption{The respective pictorial representations of the two set partitions $\pi_1$ and $\pi_2$ from \cref{exa:crossing_and_not} as $D_4$- and $B_4$-partitions.}
		\label{fig:typeBD_crossing_and_not}
	\end{center}
\end{figure}
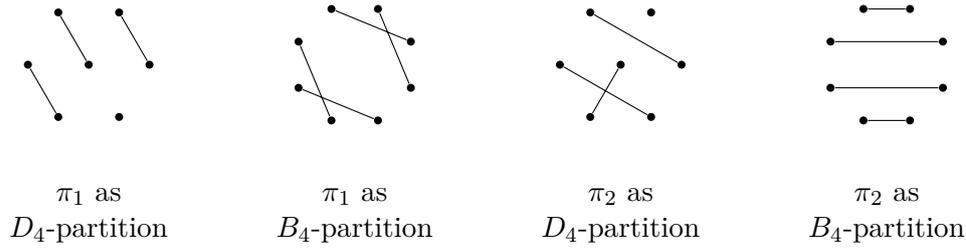 

\begin{nota}
	As in the other types before, we identify the pictorial representations of non-crossing $D_n$-partitions with the set-theoretic non-crossing $D_n$-partitions and use the term \emph{partition} for the pictorial representation as well if there is no danger of confusion. In particular, we refer to the set of pictorial representations of non-crossing $D_n$-partitions as $\ncd_n$. 
\end{nota}

\begin{rem}
	We shortly remark on the pictorial representation of Athanasiadis and Reiner, which we call \emph{AR-representation} here. 
	The AR-representation of a not necessarily pure $D_n$-partition is a graph on the same vertex set as our pictorial representation, where the midpoint gets a \enquote{flexible} labeling with both $-n$ and $n$. Depending on the $D_n$-partition, the labels of the midpoint are at different positions. The edges result from adding the convex hulls of \emph{all} blocks. See \cite[Sec. 3]{ath_rei}.
	One can easily pass from one representation to the other by either removing or adding blocks if necessary and adjusting the labeling. 
\end{rem}

Athanasiadis and Reiner \cite[Prop. 3.1]{ath_rei} show that $\ncd_n$, ordered by refinement, is a graded lattice. The rank function is given by $\rk(\pi)=n-\lfloor\frac{\#\pi}{2}\rfloor$ for $\pi\in\ncd_n$, where $\#\pi$ denotes the number of blocks of the set-theoretic non-crossing partition in $\ncd_n$. The Hasse diagram of the lattice $\ncd_4$ is shown in \cref{fig:typeD_hasse_diagram_D4}.

\begin{figure}%
	\begin{center}

		\begin{tikzpicture}
		\def\a{1.9cm} 
		\def\b{1.2cm} 
		
		\node (pg) at (2.5*\a,2.5*\b){\scalebox{1}{\begin{tikzpicture}\mpviereck \draw[ ] (p1)--(p2)--(p3)--(p4)--(p1);\end{tikzpicture}}};
		\foreach \x/\l in {0/\begin{tikzpicture}\mpviereck \draw[ ](p1)--(p0)--(p3) ;\end{tikzpicture}, 
			1/\begin{tikzpicture}\mpviereck \draw[ ](p0)--(p2)--(p3)--(p0) ;\end{tikzpicture},
			2/\begin{tikzpicture}\mpviereck \draw[ ] (p0)--(p1)--(p4)--(p0);\end{tikzpicture},
			3/\begin{tikzpicture}\mpviereck \draw[ ] (p0)--(p2)--(p1)--(p0);\end{tikzpicture},
			4/\begin{tikzpicture}\mpviereck \draw[ ] (p0)--(p4)--(p3)--(p0);\end{tikzpicture},
			5/\begin{tikzpicture}\mpviereck \draw[ ] (p2)--(p0)--(p4);\end{tikzpicture}}
		{
			\node (k\x) at (\x*\a,\b){\scalebox{1}{\l}};
			\draw (pg) -- (k\x.north);
		}

		\node (pk) at (2.5*\a,-2.5*\b){\scalebox{1}{\begin{tikzpicture}\mpviereck\end{tikzpicture}}};
		\foreach \x/\l in {0/\begin{tikzpicture}\mpviereck \draw[ ] (p1)--(p0);\end{tikzpicture},
			1/\begin{tikzpicture}\mpviereck \draw[ ] (p3)--(p0);\end{tikzpicture},
			2/\begin{tikzpicture}\mpviereck \draw[ ] (p1)--(p4)(p2)--(p3);\end{tikzpicture},
			3/\begin{tikzpicture}\mpviereck \draw[ ] (p1)--(p2)(p3)--(p4);\end{tikzpicture},
			4/\begin{tikzpicture}\mpviereck \draw[ ] (p2)--(p0);\end{tikzpicture},
			5/\begin{tikzpicture}\mpviereck \draw[ ] (p4)--(p0);\end{tikzpicture}}
		{
			\node (m\x) at (\x*\a,-\b){\scalebox{1}{\l}};
			\draw (pk) -- (m\x.south);
		}
		
		\foreach \o/\u in {0/0, 0/1, 1/1, 1/2, 1/4, 2/0, 2/2, 2/5, 3/0, 3/3, 3/4, 4/1, 4/3, 4/5, 5/4, 5/5}
		\draw (k\o.south) -- (m\u.north);
	\end{tikzpicture}

	\caption{The Hasse diagram of $\ncd_4$.}%
	\label{fig:typeD_hasse_diagram_D4}%
\end{center}
\end{figure}
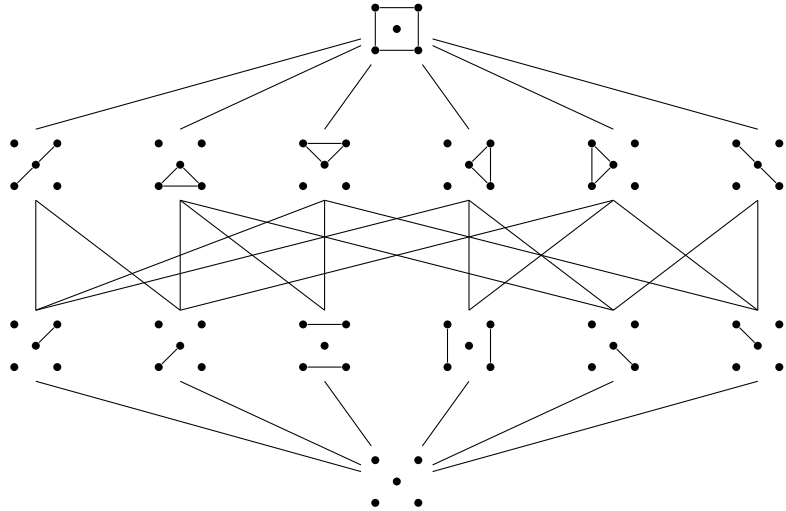

\subsection{Connecting the concepts}

In order to show that $\nc(D_n)$ and $\ncd_n$ are isomorphic lattices, we need to construct a $D_n$-partition from an element of $\nc(D_n)$ \cite[Thm. 1.1]{ath_rei}. 
\begin{defi}
Let $p=\dka i_1\ldots i_m \dkz$ be a paired cycle. We define $\Set{\!\Set{ p }\!}$ to consist the two blocks $\Set{i_1, \ldots, i_m},\Set{-i_1, \ldots, -i_m}$ corresponding to the elements of the two cycles of $p$.
If $b_1b_2=[j_1\ldots j_l][j_{l+1}]$ is a pair of balances cycles,
we define $\Set{\!\Set{ b_1b_2 }\!}$ to be the block $\Set{j_1, \ldots, j_{l+1}, -j_1, \ldots, -j_{l+1}}$ that consists of the union of elements of the two balanced cycles $b_1$ and $b_2$.
Let $w$ be in $\nc(D_n)$. Then the $D_n$-partition arising from $w$ is defined to be the $D_n$-partition $\Set{\!\Set{ w }\!}$ that has a non-trivial block $\Set{\!\Set{p}\!}$ for every paired cycle $p$ in the disjoint cycle decomposition of $w$ and a block $\Set{\!\Set{b_1b_2}\!}$ if the pair of balanced cycles $b_1b_2$ is present in the disjoint cycle decomposition. 
\end{defi}

Note that if $w\in \nc(D_n)$, then $\Set{\!\Set{ w }\!}$ is a pure $D_n$-partition, since by \cref{lem:typeD_cycle_decomp_nc} one of the balanced cycles has to be $[n]$.
The following is \cite[Thm. 1.1]{ath_rei}.

\begin{thm}\label{thm:typeD_nc_iso}
The map $\nc(D_n) \to \ncd_n$ with $w \mapsto \Set{\!\Set{ w }\!}$ is a lattice isomorphism.
\end{thm}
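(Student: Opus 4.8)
\textbf{Proof strategy for Theorem~\ref{thm:typeD_nc_iso}.}
The plan is to follow the same two-step template that worked for types $A$ and $B$ (\cref{thm:typeA_nc_iso} and \cref{thm:typeB_nc_iso}): first establish that $w \mapsto \Set{\!\Set{w}\!}$ is a well-defined bijection $\nc(D_n)\to\ncd_n$, and then verify that it and its inverse are order-preserving, from which the lattice isomorphism follows automatically by \cref{lem:lattice_auto_join_interchange} once we know it is a poset isomorphism. Since the statement is attributed to \cite[Thm.~1.1]{ath_rei}, the task here is really to reconcile their AR-representation with our new pictorial representation; I would phrase the argument so that the set-theoretic content is imported from \cite{ath_rei} and only the translation through the new conventions needs checking.

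First I would check \emph{well-definedness and injectivity}. Given $w\in\nc(D_n)$, by \cref{lem:typeD_cycle_decomp_nc} its disjoint cycle decomposition has either no balanced cycle or exactly two, one of which is $[n]$; in the first case $\Set{\!\Set{w}\!}$ has only non-zero block pairs, and in the second case the block $\Set{\!\Set{b_1 b_2}\!}$ is a zero block containing $\Set{-n,n}$ and has at least four elements because $b_1b_2 = [j_1\ldots j_l][j_{l+1}]$ contributes $2(l+1)\geq 4$ elements. Hence $\Set{\!\Set{w}\!}$ is a pure $D_n$-partition. That it is \emph{non-crossing} is exactly the content of the consistent-orientation results proved above: each cycle of $w$ is consistently oriented in $W(D_n)$ (since $w\leq \cox$), and one reads the block off the cycle by going clockwise around the corresponding polygon in the new pictorial representation — here the conventions in \cref{defi_typeD_consistent_orientation} and \cref{rem:typeD_defi_consistent_orientation} for cycles involving $\pm n$ are precisely designed so that the clockwise reading on the $2(n-1)$-gon with center labeled $n$ produces a consistently oriented cycle, and vice versa. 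Injectivity is the observation, emphasized in \cref{sec:typeD_pict}, that in the new pictorial representation distinct $D_n$-partitions have distinct pictures, combined with the fact that a consistently oriented cycle is uniquely recovered from its block. For \emph{surjectivity}, given a non-crossing $D_n$-partition $\pi$, assign to each non-zero block pair $\pm B$ the consistently oriented paired cycle obtained by listing the elements of $B$ clockwise, to a zero block $Z$ the pair of balanced cycles reading $Z\setminus\Set{-n,n}$ clockwise together with $[n]$, take the product $w$, and invoke the converse direction of the proposition preceding \cref{cor:typeBD_elements_ncd_as_ncb} (applied cycle by cycle, with \cref{lem:typeD_length} handling disjointness of the reduced decompositions) to conclude $w\in\nc(D_n)$ and $\Set{\!\Set{w}\!}=\pi$.

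Second I would check that the map is \emph{order-preserving in both directions}. Since $\nc(D_n)$ is graded by absolute length $\ell$ (which equals $\dim\mov(w)$ by \cref{lem:basis_of_moved_space}) and $\ncd_n$ is graded by $\rk(\pi)=n-\lfloor\#\pi/2\rfloor$, one first checks the map is rank-preserving using \cref{lem:typeD_length} (each paired cycle of length $k-1$ gives a block pair accounting for a drop of $k-1$ in $n-\lfloor\#\pi/2\rfloor$, and the balanced pair of length $m+1$ gives the zero block accounting for the matching drop). It then suffices to show $v\leq w$ in $\nc(D_n)$ $\iff$ $\Set{\!\Set{v}\!}\leq\Set{\!\Set{w}\!}$ in $\ncd_n$ on \emph{covers}, which by the \cref{prefix} reduces to analysing what multiplying $w$ by a single reflection $t=\dka i\,j\dkz$ on the appropriate side does to the cycle structure: it either merges two cycles or splits one, matching exactly the merging/splitting of blocks that defines the refinement cover relation on $\ncd_n$. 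This is the ``breaking cycles'' analysis sketched in the proof of \cref{prop:typeA_consistent_orientation_nc}, now carried out in the oriflamme group; the extra cases are a paired cycle meeting the zero block and a reflection touching $\pm n$, where one uses \cref{lem:typeD_cycle_decomp_nc} to control which configurations can occur.

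\textbf{Main obstacle.} The genuinely delicate point is the bookkeeping around cycles involving $\pm n$ in the order-preservation step: because consistent orientation in $W(D_n)$ is \emph{not} the naive consistent orientation of the $S_{2n}$-image (as \cref{exa:typeD_not_consistent_orientation} shows), one cannot simply quote the type-$A$ break lemma, and one must verify by hand that breaking a consistently oriented cycle $\dka i_1\ldots i_k\,n\dkz$ — in either of the two orientation regimes of \cref{defi_typeD_consistent_orientation} — yields cycles that are again consistently oriented in $W(D_n)$, and that the resulting block merge/split is exactly the one occurring in $\ncd_n$ according to clause~(c) of \cref{defi:typeD_ncp}. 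Conflating this with the zero-block case (clause~(a)), via the identity $[n]\dka i_k\,n\dkz = [n\,i_k]$ already used in the proof above, is the cleanest route, and I would organise the cover-relation argument around that identity so that the $\pm n$ cases are reduced to the zero-block case rather than treated separately.
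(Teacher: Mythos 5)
Your proposal follows essentially the same route as the paper: the paper likewise attributes the statement to Athanasiadis–Reiner and reduces the proof to showing that the cover relations in $\nc(D_n)$ and $\ncd_n$ correspond under \enquote{breaking} cycles and blocks (by inverse induction on rank), with the explicit description of the inverse map $\pi\mapsto w(\pi)$ and the identity $[n]\dka i_k\,n\dkz=[n\,i_k]$ doing the work for the cycles involving $\pm n$. Your additional scaffolding (well-definedness, purity of the image, rank-preservation) is consistent with, and slightly more explicit than, the paper's sketch.
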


The main part of the proof is to understand the cover relations in both $\nc(D_n)$ and in $\ncd_n$ and show how they correspond. Inverse induction on the rank shows that the \enquote{breaking} of either cycles or blocks, which exactly correspond to cover relations, is compatible with applying the map $w \mapsto  \Set{\!\Set{ w }\!}$ and its inverse $\pi \mapsto w(\pi)$, see \cref{defi:typeD_inverse_of_iso}.

We now describe how the inverse map is obtained. This is a little vague in \cite{ath_rei}, but knowing the precise structure of the cycles is crucial for \cref{cha:typeD_auto_group}. Private communication with Reiner clarified the impreciseness of the inverse map.

The idea of the inverse map is similar to those of types $A$ and $B$. Choose a non-crossing $D_n$-partition. For every block pair choose one block in the pictorial representation, go around it clockwise and obtain a paired cycle. The procedure for the zero block is similar. You go around the zero block clockwise and obtain a consistently oriented balanced cycle, which you multiply with the balanced cycle $[n]$. Then you multiply all cycles obtained in this way to get an element of $W(D_n)$.
The extra multiplication of $[n]$ to the balanced cycle obtained from the zero block represents the fact that every zero block has to contain the elements $-n$ and $n$, since non-crossing partitions are pure by definition. But the midpoint is not visible when going around the zero block, so it has to be added afterwards. The precise definition is as follows.

\begin{defi}\label{defi:typeD_inverse_of_iso}
Let $\pi=\Set{\pm B_1, \ldots, \pm B_k}$ or $\pi=\Set{\pm B_1, \ldots, \pm B_k,Z}$ be a non-crossing $D_n$-partition, either with or without zero block $Z$. For every non-trivial block pair $\pm B_i$ let $p_i$ be the consistently oriented paired cycle on the elements of $B_i$. 
If $\pi$ has a zero block $Z$, let $b$ be the balanced cycle on the elements of $Z\setminus \Set{-n,n}$ with consistent orientation. Then the element of $\nc(D_n)$ induced by the non-crossing $D_n$-partition $\pi$ is $w(\pi)$, given by the disjoint cycle decomposition $p_1 \ldots p_k $, or, if a zero block is present, $p_1 \ldots p_k b [n]$.
\end{defi} 

\begin{rem}
In the definition we use the term \enquote{the} consistently oriented paired cycle on the elements of $B_i$, although there is not a unique paired cycle with these properties. Nevertheless, there is a unique \emph{equivalence class} that satisfies the conditions. Hence, we may choose any of its representatives. The analog is meant when saying \enquote{the} balanced cycle.
\end{rem}

\cleardoublepage

\chapter{Embedding the non-crossing partitions}\label{chap:embedding}

In this chapter we consider embeddings of the non-crossing partitions into lattices of linear subspaces of different vector spaces. If the vector space is the Euclidean space, then this embedding already appeared in \cite{bra_watt_kpi,bra_watt_par_ord}, although in a different guise, see also \cite{armstr}. If $W$ is a crystallographic Coxeter group, then we showed in joint work with Schwer that $\nc(W)$ embeds into a finite lattice of linear subspaces in Proposition 3.14 of \cite{hs}. This proposition is a prequel of \cref{thm:embedding_Vp_VZ} presented here. Since the published proof of \cite[Prop. 3.14]{hs} is sketchy, we replace it by a new one and obtain the stronger result of \cref{thm:embedding_Vp_VZ}. 

Since the order complex of a lattice of linear subspaces is a spherical building, we obtain an embedding of the non-crossing partition complex for all finite Coxeter groups into spherical buildings. For the classical non-crossing partitions, this embedding was observed in \cite{bra-mcc} and plays the central role in \cite{hks}. We investigate the simplicial version of the embedding of $|\nc(W)|$ and its consequences in \cref{sec:simpl_emb}. Moreover, we demonstrate concrete embeddings for the classical type in \cref{sec:emb_classical}. The interpretation of $|\nc(W)|$ as a chamber subcomplex of a spherical building is essential for the rest of this thesis.\medskip

Let $(W,S)$ be a finite Coxeter system. Recall that $V$ denotes the $\R$-vector space with basis $\set{\alpha_s\str s\in S}$  
and that the moved space of an element $w\in W$ is the subspace $\mov(w) =\Braket{\alpha_{t_1}, \ldots, \alpha_{t_k}} \subseteq V$ whenever $t_1\ldots t_k$ is a reduced decomposition of $w$ by \cref{lem:basis_of_moved_space}. By $\lam(V)$ we denote the lattice of linear subspaces of $V$ introduced in \cref{sec:lattice_vec_space}.
The following theorem is a combination of results of Brady and Watt \cite{bra_watt_kpi,bra_watt_par_ord}. See also \cite[Thm. 2.4.9]{armstr}.

\begin{thm}\label{thm:brady_watt_embedding}
	Let $W$ be a finite Coxeter group of rank $n$. Then the map 
	\begin{align*}
	\emb \colon \nc(W) \to \lam(V), \quad w \mapsto \mov(w)
	\end{align*}
	is a rank-preserving poset embedding.
\end{thm}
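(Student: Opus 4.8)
The goal is to show that $\emb\colon \nc(W)\to\lam(V)$, $w\mapsto\mov(w)$, is a rank-preserving poset embedding. I would break this into three parts: (1) $\emb$ is rank-preserving, (2) $\emb$ is order-preserving, and (3) $\emb$ is injective. The key structural inputs are \cref{lem:basis_of_moved_space} (a reduced decomposition $t_1\ldots t_k$ of $w$ yields a basis $\set{\alpha_{t_1},\ldots,\alpha_{t_k}}$ of $\mov(w)$), so that $\ell(w)=\dim\mov(w)$, together with the \cref{prefix} and the basic relations of \cref{obs:abs_ord}.

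\textbf{Rank-preserving.} This is immediate from \cref{lem:basis_of_moved_space}: for $w\in\nc(W)$ with reduced decomposition $t_1\ldots t_k$, the roots $\alpha_{t_1},\ldots,\alpha_{t_k}$ form a basis of $\mov(w)$, so $\rk_{\lam}(\emb(w))=\dim\mov(w)=k=\ell(w)=\rk_{\nc}(w)$, recalling that the rank function on $\nc(W)$ is the restriction of the absolute length $\ell$ and the rank function on $\lam(V)$ is vector-space dimension.

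\textbf{Order-preserving.} Let $v\le w$ in $\nc(W)$. By the \cref{prefix} there is a reduced decomposition $t_1\ldots t_k$ of $w$ such that $t_1\ldots t_i$ is a reduced decomposition of $v$ for some $i\le k$. Then by \cref{lem:basis_of_moved_space}, $\mov(v)=\Braket{\alpha_{t_1},\ldots,\alpha_{t_i}}\subseteq\Braket{\alpha_{t_1},\ldots,\alpha_{t_k}}=\mov(w)$, i.e.\ $\emb(v)\le\emb(w)$ in $\lam(V)$.

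\textbf{Injectivity.} Suppose $\emb(v)=\emb(w)$, i.e.\ $\mov(v)=\mov(w)$; I want $v=w$. Both $v$ and $w$ lie below the fixed Coxeter element $\cox$, so I would use the partial order to reduce to a comparable situation. One clean route: since $v,w\le\cox$, pass to the join $u=v\vee w\in\nc(W)$ (which exists as $\nc(W)$ is a lattice). Then $v,w\le u$, and by the order-preserving property just established $\mov(v)=\mov(w)\subseteq\mov(u)$; I claim in fact $\mov(u)=\mov(v)$. Indeed if $v\le u$ then extending a reduced decomposition of $v$ to one of $u$ via the \cref{prefix} shows $\dim\mov(u)=\ell(u)$, and similarly for $w$; if $\ell(u)>\ell(v)$ one gets a reflection $t$ with $\alpha_t\notin\mov(v)$ appearing in the extension, contradicting $\mov(u)\subseteq$ the span of $\mov(v)$ unless $\mov(v)=\mov(w)$ forces $\ell(u)=\ell(v)=\ell(w)$; hence $v=v\vee w=w$. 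Alternatively, and perhaps more transparently, one invokes the known fact (Brady–Watt) that a non-crossing partition $w$ is recovered from its moved space as the unique element of $\nc(W)$ whose fixed space is $\mov(w)^\perp$ — but since the excerpt only gives us $\fix(w)^\perp=\mov(w)$ and the lattice structure, I would prefer to argue via ranks and the join as above, spelling out the claim $\ell(v\vee w)=\ell(v)$ carefully using \cref{obs:abs_ord} (specifically that $v^{-1}(v\vee w)\le v^{-1}\cox$ etc.) to control how the moved space can grow.

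\textbf{Main obstacle.} Parts (1) and (2) are essentially formal consequences of the lemmas already at our disposal. The genuine content is injectivity: the map $w\mapsto\mov(w)$ is \emph{not} injective on all of $W$ (e.g.\ in type $A$ a transposition and a longer cycle supported on the same coordinates have nested but distinct moved spaces, and non-comparable elements can share moved spaces), so injectivity must genuinely exploit that we are restricted to the interval $[\id,\cox]$. The crux is therefore showing that two elements of $\nc(W)$ with the same moved space must be equal; I expect the cleanest argument to combine the lattice property of $\nc(W)$ with a rank count showing $\ell(v\vee w)=\dim\mov(v\vee w)=\dim\mov(v)=\ell(v)$, whence $v=v\vee w=w$. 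Making the inequality $\dim\mov(v\vee w)\le\max(\dim\mov(v),\dim\mov(w))$ precise — equivalently, that joining in $\nc(W)$ cannot enlarge the moved space beyond the span of the two individual moved spaces — is the step I would be most careful about, and it is exactly where the Brady–Watt framework (the dimension/moved-space dictionary for $\nc(W)$) does the work.
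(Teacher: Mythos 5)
Your rank-preservation and order-preservation arguments are correct and are exactly the standard ones (Lemma \ref{lem:basis_of_moved_space} plus the Prefix Property). Note that the paper itself does not argue any of this: its proof of Theorem \ref{thm:brady_watt_embedding} consists entirely of citations to Brady--Watt, with injectivity outsourced to \cite[Thm. 1]{bra_watt_par_ord}. So the only part where there is real mathematical content to check is your injectivity argument, and there you have a genuine gap.

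The gap is the claim that $\mov(v\vee w)$ cannot exceed the span of $\mov(v)$ and $\mov(w)$ (your ``$\dim\mov(v\vee w)\le\max(\dim\mov(v),\dim\mov(w))$''). This is false in general, and the paper itself records a counterexample immediately after Lemma \ref{lem:join_and_em_interchange}: for $t=(1\,\;3)$ and $t'=(2\,\;4)$ in $\nc(S_n)$, $n\ge 4$, one has $t\vee t'=(1\,\;2\,\;3\,\;4)$, so $\dim\mov(t\vee t')=3$ while $\dim\mov(t)=\dim\mov(t')=1$ and $\dim(\mov(t)+\mov(t'))=2$. So the join in $\nc(W)$ can strictly enlarge the moved space beyond $\mov(v)+\mov(w)$, and your intermediate step ``a reflection $t$ with $\alpha_t\notin\mov(v)$ in the extension contradicts $\mov(u)\subseteq\mov(v)$'' presupposes exactly the containment $\mov(v\vee w)\subseteq\mov(v)+\mov(w)$ that fails. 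Of course your counterexample-free case is the special one $\mov(v)=\mov(w)$, but for that case you offer no argument at all that $\ell(v\vee w)=\ell(v)$ --- and I do not believe one exists at the level of lattice formalities plus Carter's Lemma. The true content of injectivity is Brady--Watt's theorem that an element $u\le w$ of the orthogonal group is uniquely determined by $w$ together with $\mov(u)$ (equivalently $\fix(u)$), which they prove by induction on reflection length using the geometry of orthogonal transformations; it is not a formal consequence of $\nc(W)$ being a lattice. Your fallback (``invoke the known fact that $w$ is recovered from its moved space'') is just a restatement of the injectivity to be proved. So either cite \cite[Thm. 1]{bra_watt_par_ord} as the paper does, or supply that induction; the join trick does not substitute for it.
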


\begin{proof}
	The map $\emb$ is well-defined \cite[Sec. 2]{bra_watt_kpi}, rank-preserving \cite[Prop. 2.2]{bra_watt_kpi} and injective \cite[Thm. 1]{bra_watt_par_ord}.	
\end{proof}

\begin{lem}\label{lem:join_and_em_interchange}
	Let $w\in \nc(W)$ and let $t_1\ldots t_k$ be a reduced decomposition of $w$. Then the image of $w$ under the embedding $\emb \colon \nc(W) \to \lam(V)$ is $\emb(w)=\emb(t_1) \vee \ldots \vee \emb(t_k)$. 
\end{lem}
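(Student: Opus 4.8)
The statement is essentially a translation of Lemma~\ref{lem:basis_of_moved_space} into the language of the lattice $\lam(V)$, using that the join operation on $\lam(V)$ is the sum of subspaces. First I would recall that for a single reflection $t\in T$ the moved space $\mov(t)=\im(t-\id)$ is one-dimensional; concretely, since $\fix(t)=\alpha_t\com$ is the hyperplane fixed by $t$ and $\mov(t)=\fix(t)\com$, we get $\mov(t)=\Braket{\alpha_t}$. Hence $\emb(t_i)=\mov(t_i)=\Braket{\alpha_{t_i}}$ for each $i$.

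Next I would invoke Lemma~\ref{lem:basis_of_moved_space}: as $t_1\ldots t_k$ is a reduced decomposition of $w$, the set $\Set{\alpha_{t_1},\ldots,\alpha_{t_k}}$ is a basis of $\mov(w)$, so that
\[
\emb(w)=\mov(w)=\Braket{\alpha_{t_1},\ldots,\alpha_{t_k}}=\Braket{\alpha_{t_1}}+\ldots+\Braket{\alpha_{t_k}}.
\]
Finally, recalling from \cref{sec:lattice_vec_space} that the join $\vee$ in $\lam(V)$ is exactly the sum of subspaces, the right-hand side equals $\emb(t_1)\vee\ldots\vee\emb(t_k)$, which gives the claim. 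One small point worth noting explicitly is that all the reflections $t_i$ do lie in $\nc(W)$ (they are subwords of a reduced decomposition of $w\le\cox$, hence $\le\cox$ by the \cref{subword_prop}), so the expression $\emb(t_i)$ makes sense; this is only a sanity check and not really needed for the computation.

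There is no genuine obstacle here: the only thing to be careful about is the identification $\mov(t)=\Braket{\alpha_t}$ for a reflection, which follows from the description of $\fix$ and $\mov$ as orthogonal complements recalled after \cref{defi:cov_sets}'s analogue in \cref{sec:geom_rep}, together with the fact that $\mov(w)=\fix(w)\com$. Everything else is the definition of the join in $\lam(V)$ and a direct appeal to Lemma~\ref{lem:basis_of_moved_space}.
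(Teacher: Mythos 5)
Your proposal is correct and follows essentially the same route as the paper: both arguments apply Lemma~\ref{lem:basis_of_moved_space} to get that $\Set{\alpha_{t_1},\ldots,\alpha_{t_k}}$ is a basis of $\mov(w)$, identify $\emb(t_i)=\Braket{\alpha_{t_i}}$, and conclude via the fact that the join in $\lam(V)$ is the sum of subspaces. The extra remarks (that $\mov(t)=\Braket{\alpha_t}$ and that each $t_i$ lies in $\nc(W)$ by the Subword Property) are harmless sanity checks that the paper leaves implicit.
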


\begin{proof}
	By \cref{lem:basis_of_moved_space} a basis for $\emb(w)=\mov(w)$ is given by the set of positive roots $\Set{\alpha_{t_1}, \ldots, \alpha_{t_k}}$ corresponding to the reflections $t_i$ for $1\leq i \leq k$. In particular, a basis for $\emb(t_i)$ is $\Set{\alpha_{t_i}}$ and hence $\emb(w)=\emb(t_1)+\ldots+\emb(t_k)=\emb(t_1)\vee \ldots \vee \emb(t_k)$.
\end{proof}

This lemma shows that the values $\emb(t)=\mov(t)=\langle \alpha_t \rangle$ in $\lam(V)$ for reflections $t \in T$ completely determine the values of $\emb(w)$ for all $w\in \nc(W)$. In general, it holds that $\emb(t \vee t') \neq \emb(t)\vee \emb(t')$, as the next example demonstrates.

\begin{exa}
	Let $n\geq 4$ and $t,t' \in T \subseteq \nc(S_n)$ be given by $(1\,\;3)$ and $(2\,\;4)$, respectively. Then $t\vee t'=(1\,\;3) \vee (2\,\;4) = (1\,\;2\,\;3\,\;4)$ and $\dim\emb(t\vee t' )=4$. But $\emb(t) \vee \emb(t')$ has dimension $2$, hence $\emb(t \vee t') \neq \emb(t)\vee \emb(t')$.
\end{exa}

\section{The crystallographic case}\label{sec:cryst_emb}

In this section we show that if $W$ is a finite crystallographic Coxeter group, then $\nc(W)$ can be embedded into a lattice of linear subspaces of certain finite vector spaces.

Let $W$ be a finite crystallographic Coxeter group of rank $n$ with Coxeter generators $S =\Set{s_1, \ldots, s_n}$. Let $\Phi = \Set{a_t \str t\in T}$ be a crystallographic root system of $W$ with simple roots $\alpha_{s_1}, \ldots, \alpha_{s_n}$. We use the same symbols for the roots as before, although they do not need to have length one. By $V$ we denote the $\R$-vector space with basis $\Set{\alpha_s \str s\in S}$, as introduced in \cref{sec:geom_rep}. Recall that $\sigma\colon W \to \gl(V)$ is the faithful geometric representation of $W$. 
Since $\Phi$ is crystallographic, every root is an integer linear combination of simple roots. This means that the action of $W$ on $V$ stabilizes the free $\Z$-module $\VZ\coloneqq \bigoplus_{s\in S}\Z \,\alpha_s \subseteq V$ and hence induces a faithful action of $W$ on $\VZ$.

Let $p$ be a prime. By $\Vp$ we denote the vector space over $\F_p$ with formal basis $\Set{\pal_s \str s \in S }$. The natural map $\Z \to \F_p$, given by $z \mapsto z \pmod p$, naturally induces a projection map $\prho\colon\VZ \to \Vp$ such that $\alpha_s \mapsto \pal_s$ for $s\in S$. The image of a positive root $\alpha\in \VZ$ by is denoted by $\pal$ and called \emph{$p$-root}. Moreover, $\prho$ induces a map  from matrices with integer entries to matrices with entries in $\F_p$, which we denote by $\prho_M$.

\begin{defi}
	Let $w\in W$ and $p$ be a prime. The \emph{integral moved space} of $w$ is defined as $\mov_\Z(w) \coloneqq \mov(w)\cap \VZ \subseteq \VZ$, and the \emph{$p$-moved space} of $w$ is the subspace $\pmov(w) \coloneqq \prho(\mov_\Z(w)) \subseteq \Vp$.
\end{defi}

The notion of $p$-moved space is only interesting for certain primes. For instance, if $v_1,v_2\in \VZ$ are linearly independent, then also $v_1+v_2$ and $v_1-v_2$ are linearly independent. But their projections to $V^{(2)}$ are linearly dependent. We are interested in primes $p$ such that linear independence of vectors is preserved under the projection $\prho$.

\begin{defi}
	Let $\Phi$ be a crystallographic root system for the Coxeter system $(W,S)$. A prime $p$ is called \emph{compatible} with $\Phi$ if for every basis $B \subseteq \Phi^+ \subseteq \VZ$ of $V$ consisting of positive roots the projection $\prho(B) \subseteq \Vp$ is a basis of $\Vp$. 
\end{defi}

Note that in the definition of \enquote{compatible prime} we require that every basis $B \subseteq \Phi^+$ of the \emph{whole vector space} $V$, and not only the bases of $\VZ$, gets mapped to a basis of $\Vp$ under the projection. The reason in the following lemma, which otherwise would not be true in general.

\begin{lem}\label{lem:p-projection_injective}
	If $p$ is a prime compatible with the root system $\Phi$, then the induced projection map $\prho\colon \Phi^+ \to \Vp$ is injective.
\end{lem}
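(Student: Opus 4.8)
The statement claims that a compatible prime $p$ makes the projection $\prho$ injective on the set $\Phi^+$ of positive roots. My strategy is to argue by contradiction: suppose two distinct positive roots $\alpha, \beta \in \Phi^+$ satisfy $\prho(\alpha) = \prho(\beta)$, and produce a basis $B \subseteq \Phi^+$ of $V$ whose image under $\prho$ fails to be a basis of $\Vp$, contradicting compatibility. The key observation is that distinct positive roots $\alpha, \beta$ are never scalar multiples of each other in $V$ (by root system axiom (1), $\Phi \cap \R\alpha = \set{-\alpha, \alpha}$, and both being positive rules out $-\alpha$), so $\set{\alpha,\beta}$ is a linearly independent set in $V$. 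Hence I can extend $\set{\alpha, \beta}$ to a basis of $V$.

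The one subtlety is that compatibility is phrased in terms of bases of $V$ \emph{consisting of positive roots}, so I need to extend $\set{\alpha,\beta}$ to a basis of $V$ using only positive roots. This is where I would invoke \cref{lem:carter} (Carter's Lemma): a decomposition $t_1 \ldots t_k$ in $W$ is reduced if and only if the roots $\alpha_{t_1}, \ldots, \alpha_{t_k}$ are linearly independent. Since $\set{\alpha, \beta}$ is linearly independent, writing $s_\alpha, s_\beta$ for the associated reflections, the word $s_\alpha s_\beta$ is reduced, and by the \cref{prefix} (Prefix Property) applied to $\nc(W,c)$ for any Coxeter element $c$ — noting $s_\alpha, s_\beta \in \nc(W)$ since all reflections lie in $\nc(W)$ — or more directly by \cref{lem:basis_of_moved_space} together with the fact that every element of $W$ of full reflection length $n$ (e.g. a Coxeter element) has $\mov$ equal to $V$, the partial word $s_\alpha s_\beta$ extends to a reduced decomposition $s_\alpha s_\beta t_3 \ldots t_n$ of a Coxeter element. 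Then $\set{\alpha_{s_\alpha}, \alpha_{s_\beta}, \alpha_{t_3}, \ldots, \alpha_{t_n}} = \set{\alpha, \beta, \alpha_{t_3}, \ldots, \alpha_{t_n}}$ is, by \cref{lem:basis_of_moved_space} applied to the Coxeter element, a basis of $\mov(c) = V$ consisting of positive roots.

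Now compatibility of $p$ forces $\prho(\set{\alpha, \beta, \alpha_{t_3}, \ldots, \alpha_{t_n}})$ to be a basis of $\Vp$, in particular a set of $n$ distinct vectors. But $\prho(\alpha) = \prho(\beta)$ means this image set has at most $n-1$ elements — a contradiction. Therefore $\prho$ is injective on $\Phi^+$.

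\textbf{Main obstacle.} The only place requiring care is the extension of $\set{\alpha,\beta}$ to a \emph{positive-root} basis; working purely with linear algebra over $V$ one could extend to an arbitrary basis, but compatibility only controls bases drawn from $\Phi^+$. The clean route is through Carter's Lemma and the moved-space description of reduced decompositions of a Coxeter element, as sketched above; alternatively, one could cite that any linearly independent subset of $\Phi^+$ extends to a basis of $V$ inside $\Phi^+$ (a standard fact about root systems / the well-generated structure), but routing it through the already-established \cref{lem:carter} and \cref{lem:basis_of_moved_space} keeps the argument self-contained.
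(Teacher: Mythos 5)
Your overall strategy is the same as the paper's: suppose $\prho(\alpha)=\prho(\beta)$ for distinct $\alpha,\beta\in\Phi^+$, extend $\{\alpha,\beta\}$ to a basis $B\subseteq\Phi^+$ of $V$, and let compatibility force $\prho(B)$ to be a basis of $\Vp$, which is impossible when two of its members coincide. The paper performs the extension step by direct linear algebra: if $\beta$ is not simple it is an integer combination of simple roots with at least two nonzero coefficients, so one simple root can be exchanged for $\beta$ in the simple-root basis, and the general case is reduced to this one by the transitivity of the $W$-action on roots.

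The mechanism you propose for the extension step, however, has a genuine gap. You want to extend the reduced word $s_\alpha s_\beta$ to a reduced decomposition of a Coxeter element via the \cref{prefix}. But that property only asserts that \emph{if} $v\leq c$ in the absolute order \emph{then} some reduced decomposition of $c$ begins with a reduced decomposition of $v$; it does not supply the hypothesis $s_\alpha s_\beta\leq c$. Knowing $s_\alpha\leq c$ and $s_\beta\leq c$ separately (all reflections lie in $\nc(W)$) does not give $s_\alpha s_\beta\leq c$: taking $s_\alpha=[1]$ and $s_\beta=[2]$ in type $B_n$, the product $[1][2]$ has reflection length $2$ but two balanced cycles in its disjoint cycle decomposition, so by \cref{lem:typeB_cycle_decomp_nc} it is not below the standard Coxeter element, and since Coxeter elements form a single conjugacy class and conjugation preserves the number of balanced cycles, it is below no Coxeter element at all. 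So your extension step fails as stated. The repair is easy, and you half-mention it yourself: either extend $\{\alpha,\beta\}$ (linearly independent by root-system axiom (1) and positivity) to a basis inside $\Phi^+$ by Steinitz exchange, using only that $\Phi^+$ spans $V$ because it contains the simple roots; or extend $s_\alpha s_\beta$ one letter at a time to a reduced word of length $n$ of an \emph{arbitrary} element of full reflection length, which \cref{lem:carter} permits by appending any reflection whose root lies outside the span of the roots collected so far. Either way the resulting $n$ positive roots form a basis of $V$ containing $\alpha$ and $\beta$, and the contradiction with compatibility goes through.
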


\begin{proof}
	The statement is clear if $W$ has rank $1$, since there is only one positive root. Hence we assume that $\rk(W) \geq 2$.
	
	We show that any pair of different positive roots $\alpha, \beta \in \Phi^+$ can be extended to a basis $B\subseteq \Phi^+$ of $V$. If $p$ is compatible with $\Phi$, then $\prho(B)$ is a basis of $\Vp$ by definition and in particular, $\pal \neq \beta\hp$.
	
	First assume that $\alpha$ is a simple root. If $\beta$ is a simple root as well, then $\Set{\alpha_s\str s\in S}$ is a basis of $V$ containing both $\alpha$ and $\beta$. If $\beta$ is not a simple root, then it can be written as an integer linear combination $\beta = \sum_{i=1}^{n}a_i \alpha_{s_i}$, where $a_j \neq 0$ for at least two values of $j$. Choose one of those indices $j$ such that $\alpha_{s_j}\neq \alpha$ and obtain a linear combination 
	\[
	\alpha_{s_j}=\sum_{i=1,\, i\neq j}^{n}\frac{a_i}{a_j}\alpha_{s_i}-\frac{1}{a_j}\beta.
	\]
	Thus the set $B=\set{\alpha_{s_i}\str 1\leq i \leq n, i \neq j} \cup \set{\beta}$ of positive roots is a basis of the vector space $V$.
	
	If $\alpha$ is not a positive root, then there exists some $w\in W$ such that $w\alpha$ is positive, since $W$ acts transitively on the set of roots. If $w\beta$ is not positive, we replace it by $-w\beta$ and the procedure from above yields a basis $B$ of $V$ containing the roots $w\alpha$ and $\pm w\beta$. Since $w$ acts on $V$ by linear transformations, the set $w\inv B$ is a basis of $V$. If $w\beta$ is positive, then $B$ is a basis containing $\alpha$ and $\beta$. Otherwise, replace $-\beta$ by $\beta$ in $B$ to obtain another basis of $V$, which then contains $\alpha$ and $\beta$.
\end{proof}

\begin{lem}\label{lem:comp_primes}
	Every crystallographic root system has infinitely many compatible primes.
\end{lem}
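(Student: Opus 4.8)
The plan is to translate compatibility into a single divisibility condition. Recall that $\dim V = \rk(W) =: n$ and that the root system $\Phi$ is finite, since it is in bijection (two-to-one on $\Phi$, one-to-one on $\Phi^+$) with the finite set $T$ of reflections; hence there are only finitely many $n$-element subsets $B \subseteq \Phi^+$. For such a subset $B = \Set{\beta_1, \ldots, \beta_n}$ I would write each $\beta_i = \sum_{j=1}^n m_{ij}\alpha_{s_j}$ with $m_{ij} \in \Z$ — possible because $\Phi$ is crystallographic — and let $M_B = (m_{ij})$ be the associated integer matrix. Then $B$ is a basis of $V$ if and only if $\det M_B \neq 0$, in which case $\det M_B$ is a nonzero integer whose absolute value is independent of the chosen ordering of the $\beta_i$.

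Next I would observe that, for a prime $p$, the set $\prho(B) = \Set{\prho(\beta_1), \ldots, \prho(\beta_n)} \subseteq \Vp$ is a basis of $\Vp$ precisely when the reduced matrix $\prho_M(M_B)$ is invertible over $\F_p$, i.e.\ when $\det M_B \not\equiv 0 \pmod p$. Consequently, a prime $p$ \emph{fails} to be compatible with $\Phi$ exactly when there exists some $n$-subset $B \subseteq \Phi^+$ that is a basis of $V$ and satisfies $p \mid \det M_B$.

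Finally I would set $N := \prod_B \det M_B$, the product running over the (finitely many, and nonempty, as the simple roots form such a $B$) subsets $B \subseteq \Phi^+$ which are bases of $V$. Each factor is a nonzero integer, so $N$ is a nonzero integer and therefore has only finitely many prime divisors. By the previous paragraph, every prime $p$ not dividing $N$ is compatible with $\Phi$, and there are infinitely many such primes, which proves the lemma.

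There is no real obstacle in this argument; the only subtlety is the one already isolated before \cref{lem:p-projection_injective}, namely that compatibility must be checked against \emph{all} bases contained in $\Phi^+$, not merely the frame of simple roots — and this is exactly accounted for by taking the product of $\det M_B$ over the full finite collection of such bases.
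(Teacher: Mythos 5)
Your proof is correct and follows essentially the same route as the paper: both arguments reduce compatibility to the nonvanishing mod $p$ of the (integer, by crystallographicity) determinants of the finitely many bases $B \subseteq \Phi^+$, and conclude that only the finitely many primes dividing some such determinant can fail to be compatible. Your explicit product $N = \prod_B \det M_B$ is just a slightly more packaged version of the paper's "only finitely many primes for each of finitely many bases" count.
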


\begin{proof}
	Let $B \subseteq \Phi^+$ be a basis of $V$ consisting of positive roots. Let $B'$ be a matrix whose columns are the basis vectors in $B$. For a prime $p$, the projection $\prho(B)$ is a basis of $\Vp$ if and only if $\det(\prho_M(B')) \neq 0$, that is the set of projected basis vectors is linearly independent in $\Vp$. 
	Since $\Phi^+$ is a finite set, only finitely many bases $B$ of $V$ are contained in $\Phi^+$. 
	Consequently, there are only finitely many primes $p$ such that $\det(\prho_M(B')) = 0$. These are exactly those primes that are not compatible with $\Phi$.
\end{proof}

We can describe bases of $p$-moved spaces for compatible primes $p$ in analogy to \cref{lem:basis_of_moved_space}.

\begin{lem}\label{lem:basis_p_mov_space}
	Let $p$ be a prime compatible with the root system $\Phi$ and let  $t_1\ldots t_k$ be a reduced decomposition of ${w\in W}$. Then the set of $p$-roots $\Set{\pal_{t_1}, \ldots, \pal_{t_k}}$ is a basis of the $p$-moved space $\pmov(w)$. In particular, $\dim(\pmov(w)) = \ell(w)$.
\end{lem}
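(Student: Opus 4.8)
The plan is to verify directly that $\Set{\pal_{t_1},\ldots,\pal_{t_k}}$ has the three properties of a basis of $\pmov(w)$, where I abbreviate $k\coloneqq\ell(w)$, so that $t_1\ldots t_k$ being reduced means it is a word of $k$ reflections with product $w$. Membership is immediate: by \cref{lem:basis_of_moved_space} the set $\Set{\alpha_{t_1},\ldots,\alpha_{t_k}}$ is a basis of $\mov(w)$, and since $\Phi$ is crystallographic each $\alpha_{t_i}$ is an integer linear combination of simple roots, hence lies in $\VZ$. Therefore $\alpha_{t_i}\in\mov(w)\cap\VZ=\mov_\Z(w)$ and $\pal_{t_i}=\prho(\alpha_{t_i})\in\prho(\mov_\Z(w))=\pmov(w)$.

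For linear independence in $\Vp$ I would first recall from \cref{lem:carter} that $\Set{\alpha_{t_1},\ldots,\alpha_{t_k}}$ is linearly independent in $V$, and then extend it to a basis $B\subseteq\Phi^+$ of $V$ consisting of positive roots. This extension is carried out greedily, exactly as in the proof of \cref{lem:p-projection_injective}: while the span $U$ of the roots chosen so far is a proper subspace, the root system — which spans $V$, since it contains the simple roots — has a root $\gamma\notin U$, and, recalling that every root of $\Phi$ is either positive or negative, whichever of $\gamma,-\gamma$ is positive is a positive root outside $U$ that can be adjoined. Once such a $B$ is available, the compatibility of $p$ with $\Phi$ says precisely that $\prho(B)$ is a basis of $\Vp$; in particular its subset $\Set{\pal_{t_1},\ldots,\pal_{t_k}}$ is linearly independent, so $\dim_{\F_p}(\pmov(w))\geq k$.

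The remaining inequality $\dim_{\F_p}(\pmov(w))\leq k$ is where the real work sits: it comes down to showing that the $\Z$-module $\mov_\Z(w)=\mov(w)\cap\VZ$ is free of rank $k$, for then a $\Z$-basis of $\mov_\Z(w)$ has $k$ elements whose $\prho$-images span $\pmov(w)$ over $\F_p$. To pin down the rank I would sandwich $\mov_\Z(w)$ between $\bigoplus_i\Z\alpha_{t_i}$ and a scaled copy $\tfrac1N\bigoplus_i\Z\alpha_{t_i}$ of it: it is a subgroup of the free $\Z$-module $\VZ$, hence free of finite rank; every element is an $\R$-combination of the $\alpha_{t_i}$ since these span $\mov(w)$; and by comparing this with the coordinates of the element in a $\mathbb{Q}$-basis of $\bigoplus_s\mathbb{Q}\alpha_s$ that extends $\Set{\alpha_{t_1},\ldots,\alpha_{t_k}}$ (also an $\R$-basis of $V$), uniqueness of coordinates forces these combinations to have rational coefficients, so clearing the single common denominator $N$ of finitely many generators gives the claimed inclusion. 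Combining the two inequalities yields $\dim_{\F_p}(\pmov(w))=k=\ell(w)$, whence the $k$ linearly independent vectors $\pal_{t_1},\ldots,\pal_{t_k}$ form a basis of $\pmov(w)$. The main obstacle is exactly this lattice computation — ruling out that $\mov(w)\cap\VZ$ is strictly finer than anticipated — while everything else follows routinely from \cref{lem:basis_of_moved_space}, \cref{lem:carter}, and the definition of a compatible prime.
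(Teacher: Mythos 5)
Your proof is correct, and its overall architecture (membership, then linear independence of the $k$ projected roots via compatibility, then the upper bound $\dim\pmov(w)\leq k$) matches the paper's; but both technical sub-steps are executed differently. For producing a basis $B\subseteq\Phi^+$ of $V$ containing $\Set{\alpha_{t_1},\ldots,\alpha_{t_k}}$, the paper invokes the \cref{prefix} to extend $t_1\ldots t_k$ to a reduced decomposition $t_1\ldots t_n$ of the Coxeter element and then applies \cref{lem:basis_of_moved_space}; your greedy extension inside $\Phi^+$ (possible because $\Phi$ spans $V$ and every root is $\pm$ a positive root) is purely linear-algebraic and never mentions the Coxeter element — a perfectly valid substitute that makes the argument independent of the combinatorics of reduced words. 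For the upper bound, the paper argues that any $k+1$ elements of $\pmov(w)$ lift to $k+1$ elements of $\mov(w)$, which are linearly dependent, and concludes that the projections are dependent; as stated this elides the descent of a dependence relation from characteristic $0$ to $\F_p$ (one must pass to an integral relation and divide out the gcd of the coefficients so that the relation does not become trivial modulo $p$). Your alternative — showing $\mov_\Z(w)$ is a free $\Z$-module of rank exactly $k$ by sandwiching it between $\bigoplus_i\Z\,\alpha_{t_i}$ and $\tfrac1N\bigoplus_i\Z\,\alpha_{t_i}$, so that $\pmov(w)$ is spanned by $k$ elements — is longer but sidesteps that subtlety entirely and is airtight. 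Either route is acceptable; yours buys a cleaner justification of the inequality $\dim\pmov(w)\leq k$ at the cost of a small lattice computation.
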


\begin{proof}
	Let $p$ be a compatible prime and $t_1\ldots t_k$  a reduced decomposition of $w\in W$. By the \cref{prefix} we can extend  the reduced decomposition of $w$ to a reduced decomposition $t_1\ldots t_k \ldots t_n$ of the Coxeter element $\cox$. By \cref{lem:basis_of_moved_space} the set of positive roots $A=\Set{\alpha_{t_1}, \ldots, \alpha_{t_k}}$ is a basis of $\mov(w)$. Moreover, it is contained in the basis $\Set{\alpha_{t_1}, \ldots, \alpha_{t_n}}=B\subseteq \Phi^+$ of $V$. Note that both bases are contained in $\VZ$ and hence they are bases of the integral moved spaces $\mov_\Z(w)$ and $\VZ$, respectively. By the assumption of $p$ being compatible with the root system, the basis $B$ is projected to the basis $\prho(B)$ of $\Vp$. Further we have that $\prho(A)\subseteq \prho(B)$ and hence the image $\prho(A) \subseteq \pmov(w)$ is a set of $k$ linearly independent elements of $\Vp$. 
	Since $\dim(\mov(w))=k$, every set of $k+1$ elements in $\mov(w)$ is linearly dependent. Consequently, every set of $k+1$ elements in $\pmov(w)$ is linearly dependent, which means that $\dim(\pmov(w))\leq k$. Hence $\prho(A)=\Set{\pal_{t_1}, \ldots, \pal_{t_k}}$ is a basis of $\pmov(w)$.
\end{proof}

In summary, we can associate to every element $w\in W$ its moved space, its integral moved space, and its $p$-moved space for a compatible prime $p$. If $t_1\ldots t_k$ is a reduced decomposition of $w$, these spaces are given by
\begin{align*}
\mov(w)&=\bigoplus_{i=1}^{k} \R\,\alpha_{t_i} \subseteq V,\\
\mov_\Z(w)&=\bigoplus_{i=1}^{k} \Z\,\alpha_{t_i} \subseteq \VZ \ \text{ and }\\
\pmov(w)&=\bigoplus_{i=1}^{k} \F_p\,\pal_{t_i} \subseteq \Vp.
\end{align*}
In particular, the $\R$-span of the integral moved space equals the moved space. The following is the main theorem of this chapter.

\begin{thm}\label{thm:embedding_Vp_VZ}
	Let $(W,S)$ be a crystallographic Coxeter system and $p$ a compatible prime. Then the map
	\begin{alignat*}{2}
	\pemb\colon &\nc(W) \to \lam(\Vp), \ \ &&w \mapsto \pmov(w)
	\end{alignat*}
	is a rank-preserving poset embedding.
\end{thm}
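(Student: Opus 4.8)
The plan is to show that $\pemb$ is well-defined, injective, order-preserving, and rank-preserving, reusing the machinery built up for the real embedding $\emb$ of \cref{thm:brady_watt_embedding} together with \cref{lem:basis_p_mov_space}. Rank-preservation is immediate: by \cref{lem:basis_p_mov_space}, if $t_1\ldots t_k$ is a reduced decomposition of $w$, then $\Set{\pal_{t_1}, \ldots, \pal_{t_k}}$ is a basis of $\pmov(w)$, so $\rk(\pemb(w)) = \dim(\pmov(w)) = k = \ell(w) = \rk(w)$. That $\pemb$ lands in $\lam(\Vp)$ is clear since $\pmov(w)$ is by definition a linear subspace of $\Vp$.

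For the order-preserving property, suppose $v \leq w$ in $\nc(W)$. By the \cref{prefix} there is a reduced decomposition $t_1\ldots t_k$ of $v$ that extends to a reduced decomposition $t_1\ldots t_k \ldots t_m$ of $w$. Applying \cref{lem:basis_p_mov_space} to both, $\Set{\pal_{t_1}, \ldots, \pal_{t_k}}$ is a basis of $\pmov(v)$ and $\Set{\pal_{t_1}, \ldots, \pal_{t_m}}$ is a basis of $\pmov(w)$; the first is contained in the second, so $\pmov(v) \subseteq \pmov(w)$, i.e.\ $\pemb(v) \leq \pemb(w)$ in $\lam(\Vp)$.

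The main obstacle is injectivity, since this is exactly where the published proof of \cite[Prop. 3.14]{hs} was too sketchy and where compatibility of $p$ must be used in an essential way. I would argue as follows. Suppose $\pemb(v) = \pemb(w)$ for $v, w \in \nc(W)$; then in particular $\ell(v) = \dim(\pmov(v)) = \dim(\pmov(w)) = \ell(w) =: k$ by the rank computation above. Now consider the real embedding: it suffices to show $\mov(v) = \mov(w)$, because $\emb$ is injective by \cref{thm:brady_watt_embedding}. Pick a reduced decomposition $t_1\ldots t_k$ of $v$; then $\Set{\alpha_{t_1},\ldots,\alpha_{t_k}}$ is a basis of $\mov(v)$ and $\Set{\pal_{t_1},\ldots,\pal_{t_k}}$ is a basis of $\pmov(v) = \pmov(w)$. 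Likewise pick a reduced decomposition $r_1\ldots r_k$ of $w$, giving a basis $\Set{\alpha_{r_1},\ldots,\alpha_{r_k}}$ of $\mov(w)$ whose projection is a basis of $\pmov(w)$. To conclude $\mov(v) = \mov(w)$ it is enough to show each $\alpha_{r_j} \in \mov(v)$ (and symmetrically), which by a dimension count reduces to showing that $\Set{\alpha_{t_1},\ldots,\alpha_{t_k},\alpha_{r_j}}$ is linearly \emph{dependent} in $V$. Suppose not; then this is a set of $k+1$ linearly independent positive roots. Extend it --- the subtle point --- to a basis $B$ of $V$ consisting of positive roots, which is possible by the same argument as in the proof of \cref{lem:p-projection_injective} (using transitivity of $W$ on roots to reduce to extending sets containing simple roots). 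Since $p$ is compatible with $\Phi$, the projection $\prho(B)$ is a basis of $\Vp$; in particular $\Set{\pal_{t_1},\ldots,\pal_{t_k},\pal_{r_j}}$ is linearly independent in $\Vp$. But $\pal_{r_j} \in \pmov(w) = \pmov(v) = \spa_{\F_p}\Set{\pal_{t_1},\ldots,\pal_{t_k}}$, a contradiction. Hence $\mov(v) = \mov(w)$, so $v = w$ by \cref{thm:brady_watt_embedding}, and $\pemb$ is injective.

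A technical subtlety I would need to check carefully in the extension step is that a linearly independent set of positive roots can always be completed to a basis of $V$ \emph{within} $\Phi^+$; for $k+1 < n$ this follows by iterating the argument of \cref{lem:p-projection_injective} (which handles adding one more positive root compatibly), and the case $k+1 = n$ is the statement that $k+1$ linearly independent positive roots already form such a basis. Modulo spelling this out, the proof reduces the $p$-adic statement entirely to the real one plus the defining property of a compatible prime.
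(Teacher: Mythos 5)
Your proof is correct and follows essentially the same route as the paper: order- and rank-preservation via the Prefix Property and the basis lemma for $p$-moved spaces, and injectivity by reducing to the injectivity of the real embedding $\emb$ together with the fact that compatibility of $p$ preserves linear independence of positive roots under $\prho$. If anything, your injectivity step (extending the independent set $\Set{\alpha_{t_1},\ldots,\alpha_{t_k},\alpha_{r_j}}$ to a basis of $V$ inside $\Phi^+$ and invoking compatibility) is spelled out more completely than the paper's, which only cites injectivity of $\prho$ on $\Phi^+$ at the corresponding point; note also that the extension step needs no $W$-transitivity --- since $\Phi$ spans $V$ one can greedily enlarge to a basis in $\Phi$ and replace any negative roots by their positives.
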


\begin{proof}
	Let $p$ be a prime compatible with the crystallographic root system corresponding to the Coxeter system $(W,S)$.
	Let $v,w\in \nc(W)$ be such that $v \leq w$. By the \cref{prefix} there is a reduced decomposition $t_1\ldots t_i \ldots t_k$ of $w$ such that $t_1 \ldots t_i$ is a reduced decomposition of $v$. The corresponding sets of positive roots, $\Set{\alpha_{t_1}, \ldots, \alpha_{t_i} }$ and $\Set{\alpha_{t_1}, \ldots, \alpha_{t_k} }$, are bases of the integral moved spaces of $v$ and $w$, where the former one is contained in the latter one, hence $\mov_\Z(v)\subseteq \mov_\Z(w)$. Similarly, we have that a basis for the $p$-moved space $\pmov(v)$ is given by $\Set{\pal_{t_1}, \ldots, \pal_{t_i} }$, which in turn is contained in $\Set{\pal_{t_1}, \ldots, \pal_{t_k} }$, a basis for the $p$-moved space $\pmov(w)$, by \cref{lem:basis_p_mov_space}. Hence $\pmov(v) \subseteq \pmov(w)$ and $\pemb$ is a order-preserving poset map. By the same lemma it follows that $\pemb$ is rank-preserving, since $\rk_\nc(w)=\ell(w)$ coincides with $\rk_\lam(\pemb(w))=\dim(\pemb(w))$.
	
	To show the injectivity of $\pemb$, suppose that $v,w\in \nc(W)$ are different elements. We show that their $p$-moved spaces are different as well.
	Since $\emb$ is injective by \cref{thm:brady_watt_embedding}, we know that the moved spaces $\mov(v)$ and $\mov(w)$ are different. Suppose without loss of generality that there exists a basis of $\mov(w)$ consisting of positive roots such that there is a basis element $\alpha$ that is not contained in $\mov(v)$. Otherwise, interchange the roles of $v$ and $w$. Then also $\alpha\notin\mov_\Z(v) \subseteq \VZ$, since $\alpha \in \VZ$ and $\mov_\Z(v)=\mov(v) \cap \VZ$. This implies that $\mov_\Z(v)\neq \mov_\Z(w)$. Further, the projection $\prho\colon \Phi^+ \to \Vp$ is injective by \cref{lem:p-projection_injective} and hence its restrictions to $\mov_\Z(v) \cap \Phi^+$ and $\set{\alpha} \cup (\mov_\Z(v) \cap \Phi^+)$ are injective as well. This means that $\alpha \notin \pmov(v)$, that is $\pmov(v)\neq \pmov(w)$. Hence the map $\pemb$ is injective.
\end{proof}

The analog of \cref{lem:join_and_em_interchange} holds for the embedding $\pemb$.

\begin{lem}\label{lem:pemb_join_interchange}
	Let $t_1\ldots t_k$ be a reduced decomposition of $w\in W$ and $p$ a compatible prime. Then the join in $\nc(W)$ commutes with the embedding $\pemb$, that is we have
	\[
	\pemb(w)=\pemb(t_1)\vee \ldots \vee \pemb(t_k).
	\]
\end{lem}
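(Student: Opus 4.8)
The statement is the exact $\F_p$-analog of \cref{lem:join_and_em_interchange}, and the plan is to mimic that proof, replacing \cref{lem:basis_of_moved_space} by \cref{lem:basis_p_mov_space}. The key input is that for a compatible prime $p$, if $t_1\ldots t_k$ is a reduced decomposition of $w$, then the set of $p$-roots $\Set{\pal_{t_1}, \ldots, \pal_{t_k}}$ is a basis of the $p$-moved space $\pmov(w)=\pemb(w)$. In particular, applying this to each single reflection $t_i$ (which has the reduced decomposition $t_i$ of length one), a basis for $\pemb(t_i)=\pmov(t_i)$ is the singleton $\Set{\pal_{t_i}}$.

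First I would invoke \cref{lem:basis_p_mov_space} to write $\pemb(w)=\pmov(w)=\bigoplus_{i=1}^{k}\F_p\,\pal_{t_i}$, which is exactly the subspace sum $\F_p\,\pal_{t_1} + \ldots + \F_p\,\pal_{t_k} = \pemb(t_1) + \ldots + \pemb(t_k)$. Then, since the join operation $\vee$ in the lattice $\lam(\Vp)$ of linear subspaces is by definition the sum of subspaces (as recalled in \cref{sec:lattice_vec_space}), this equals $\pemb(t_1)\vee \ldots \vee \pemb(t_k)$, which concludes the argument.

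There is no real obstacle here: all the work has already been done in establishing that $p$ is compatible forces linear independence of projected roots to be preserved (\cref{lem:p-projection_injective}) and hence that reduced decompositions give bases of $p$-moved spaces (\cref{lem:basis_p_mov_space}). The only thing to be mildly careful about is that \cref{lem:basis_p_mov_space} is stated for arbitrary $w\in W$, so it applies equally to $w$ itself and to each reflection $t_i\in T\subseteq \nc(W)$; once that is noted, the proof is a one-line computation and I would present it as such.

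\begin{proof}
	By \cref{lem:basis_p_mov_space}, the set of $p$-roots $\Set{\pal_{t_1}, \ldots, \pal_{t_k}}$ is a basis of $\pemb(w)=\pmov(w)$, and for each $1\leq i \leq k$ the singleton $\Set{\pal_{t_i}}$ is a basis of $\pemb(t_i)=\pmov(t_i)$. Since the join in $\lam(\Vp)$ is the sum of subspaces, we obtain
	\[
	\pemb(w)=\bigoplus_{i=1}^{k}\F_p\,\pal_{t_i}=\pemb(t_1)+\ldots+\pemb(t_k)=\pemb(t_1)\vee\ldots\vee\pemb(t_k). \qedhere
	\]
\end{proof}
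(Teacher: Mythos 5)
Your proof is correct and follows exactly the same route as the paper: both invoke \cref{lem:basis_p_mov_space} to identify $\Set{\pal_{t_1},\ldots,\pal_{t_k}}$ as a basis of $\pmov(w)$ and then use that the join in $\lam(\Vp)$ is the sum of subspaces. The only difference is that you spell out explicitly that $\Set{\pal_{t_i}}$ is a basis of $\pemb(t_i)$, which the paper leaves implicit.
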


\begin{proof}
	By \cref{lem:basis_p_mov_space}, the set $\Set{\pal_{t_1}, \ldots, \pal_{t_k}}$ is a basis for $\pemb(p)=\pmov(w)$. Hence $\pemb(w)=\pemb(t_1)+ \ldots + \pemb(t_k)$ and since the join in $\lam$ is given by the sum of subspaces, we get that $\pemb(w)=\pemb(t_1)\vee \ldots \vee \pemb(t_k)$.
\end{proof}

\section{Simplicial version}\label{sec:simpl_emb}

In this section we consider the embeddings of the non-crossing partition lattices from a simplicial perspective by using their order complexes. Recall that to every lattice $L$, there is an associated simplicial complex, its order complex $|L|$, which we introduced in \cref{sec:order_complex}. The vertex set of $|L|$ consists of the elements of $L$ except for the minimum and the maximum, and chains in $L$ give rise to simplices in $|L|$. Recall that the order complex of the lattice of linear subspaces of some vector space is a spherical building, and that the order complex of $\nc(W)$ is a chamber complex. Thus the following theorem is the translation of the Theorems \ref{thm:brady_watt_embedding} and \ref{thm:embedding_Vp_VZ} into the simplicial context. 

\begin{thm}\label{thm:simplicial_embedding}
	Let $W$ be a finite Coxeter group of rank $n$. The order complex of the non-crossing partition lattice $\ocncw$ is isomorphic to a finite chamber subcomplex of the infinite spherical building $|\lam(V)|$ of type $A_{n-1}$. Moreover, if $W$ is crystallographic and $p$ is a compatible prime, then the complex $\ocncw$ is isomorphic to a chamber subcomplex of the finite spherical building $|\lam(\Vp)|$ of type $A_{n-1}$ as well.
\end{thm}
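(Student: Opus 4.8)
The plan is to derive the statement from the poset embeddings of \cref{thm:brady_watt_embedding} and \cref{thm:embedding_Vp_VZ} together with the passage from posets to order complexes recorded in \cref{sec:order_complex}; we may assume $n\geq 2$, the rank one case being vacuous. First I would check that the Brady--Watt embedding $\emb\colon\nc(W)\to\lam(V)$, $w\mapsto\mov(w)$, respects the distinguished elements: $\mov(\id)=\im(\id-\id)=\set{0}$ is the minimum of $\lam(V)$, and, since $\emb$ is rank-preserving and $\ell(\cox)=n$ by \cref{lem:carter}, $\mov(\cox)$ is an $n$-dimensional subspace of $V$, hence equals the maximum $V$. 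Consequently $\emb$ restricts to a poset isomorphism of $\nc(W)\setminus\set{\id,\cox}$ onto its image, which is a subposet of $\lam(V)\setminus\set{\set{0},V}$.

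Next I would invoke that an injective poset map induces a non-degenerate simplicial embedding of the associated order complexes (\cref{sec:order_complex}). Applied to the restriction above, this yields an isomorphism of $\ocncw=\|\nc(W)\setminus\set{\id,\cox}\|$ onto the subcomplex $\|\emb(\nc(W)\setminus\set{\id,\cox})\|$ of $|\lam(V)|=\|\lam(V)\setminus\set{\set{0},V}\|$. By \cref{rem:oc_ind_subcplx} this is an \emph{induced} subcomplex of $|\lam(V)|$, which by \cref{prop:LV_building} is a spherical building of type $A_{n-1}$ and dimension $n-2$.

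It then remains to verify that the image is a \emph{chamber} subcomplex, that is, a chamber complex of dimension $n-2$. Since $\nc(W)$ is graded of rank $n$, every maximal chain reads $\id=w_0\lessdot w_1\lessdot\dots\lessdot w_n=\cox$; applying the rank-preserving map $\emb$ turns this into a strictly increasing chain of subspaces of dimensions $0,1,\dots,n$, i.e. a complete flag in $V$. Hence $\emb$ sends the chambers of $\ocncw$ (the $(n-2)$-simplices obtained by deleting $\id$ and $\cox$) bijectively onto a set of chambers of $|\lam(V)|$, so the image is pure of dimension $n-2$; and since $\ocncw$ is itself a chamber complex --- this follows from the shellability of the non-crossing partition lattices \cite{abw} --- and being a chamber complex is preserved by simplicial isomorphism, the image is a chamber subcomplex, finite because $\nc(W)$ is.

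Finally, the crystallographic assertion is proven verbatim with $\emb$ replaced by the embedding $\pemb\colon\nc(W)\to\lam(\Vp)$, $w\mapsto\pmov(w)$ of \cref{thm:embedding_Vp_VZ}: here $\pmov(\id)=\set{0}$, while $\dim\pmov(\cox)=\ell(\cox)=n$ by \cref{lem:basis_p_mov_space} forces $\pmov(\cox)=\Vp$, and $|\lam(\Vp)|$ is a \emph{finite} spherical building because $\Vp$ is a finite-dimensional vector space over the finite field $\F_p$ and so has only finitely many subspaces. The only step that is not formal bookkeeping --- and the one I would take care with --- is this last part of the third paragraph: making sure the image has full dimension $n-2$ and inherits the chamber-complex structure, which rests on the min/max correspondence and on maximal chains of $\nc(W)$ mapping to complete flags. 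Everything else is a direct transcription of the already-established poset embeddings into the order-complex language.
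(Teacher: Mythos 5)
Your proposal is correct and follows exactly the route the paper intends: Theorem \ref{thm:simplicial_embedding} is stated there as the direct translation of the poset embeddings of Theorems \ref{thm:brady_watt_embedding} and \ref{thm:embedding_Vp_VZ} into the order-complex language, and you supply precisely the routine verifications (minimum and maximum map to $\set{0}$ and $V$, maximal chains map to complete flags so chambers go to chambers, and gallery-connectedness via shellability \cite{abw}) that the paper leaves implicit. No gaps.
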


The interpretation of the complex of non-crossing partitions as a chamber subcomplex of a spherical building leads to a natural adaption of terms used for buildings.
Recall that $\ocnc$ is a chamber complex, hence its maximal simplices are called chambers. The set of chambers of the complex $\ocnc$ is denoted by $\C(\nc)$. 

\begin{defi}
	A chamber subcomplex $A \subseteq \ocnc$ is called \emph{apartment} if it is isomorphic to a Coxeter complex of type $A_{n-1}$.
\end{defi}

Since the order complex of a Boolean lattice of rank $n$ is a Coxeter complex of type $A_{n-1}$, every apartment in $\ocnc$ corresponds to a Boolean lattice in $\nc$ and vice versa. In \cref{cor:construction_boolean_sublatice} we already saw how we can construct some of the Boolean lattices in $\nc$ as spans of reflections of a reduced decomposition of the Coxeter element. Such a reduced decomposition also gives rise to a maximal chain in $\nc$ and hence a chamber in $\ocnc$. 

\begin{nota}
	Let $t_1\ldots t_n$ be a reduced decomposition of the Coxeter element. We denote the apartment corresponding to the Boolean sublattice $\Braket{t_1, \ldots, t_n}$ by $A(t_1\ldots t_n)$. The chamber with vertices $\Set{t_1, t_1t_2, \ldots,  t_1\ldots t_{n-1}}$ is denoted by $C(t_1\ldots t_n)$. \cref{tab:relations_nc_order_complex} summarizes the correspondences.
\end{nota}	

\renewcommand{\arraystretch}{1,5}
\begin{table}
	\begin{center}
		\begin{tabular}{|c||c|}
			\hhline{|-||-|}
			
			lattice $\nc = \nc(W)$ & simplicial complex $|\nc|$ \\
			\hhline{:=::=:}
			
			maximal chain & chamber \\[-6pt]
			
			$\id < t_1 < t_1t_2 < \ldots < t_1\ldots t_n$& $C(t_1\ldots t_n)$\\ 
			\hhline{|-||-|}
			
			Boolean sublattice & apartment \\[-6pt]
			$\Braket{t_1, \ldots, t_n}$& $A(t_1\ldots t_n)$\\
			\hhline{|-||-|}
		\end{tabular}
		\caption{To every reduced decomposition $t_1\ldots t_n$ of the Coxeter element  we associate a maximal chain and Boolean sublattice in $\nc$, as well as a chamber and an apartment in the complex of non-crossing partitions $\ocnc$.}
		\label{tab:relations_nc_order_complex}
	\end{center}
\end{table}

\begin{rem}\label{rem:chambers_bijection}
	The maximal chains of $\nc$ are in bijection with the reduced decompositions of the Coxeter element $\cox$ by \cref{lem:construction_chain}. Consequently, the chambers of $\ocnc$ are in bijection with reduced decompositions of the Coxeter element as well. 
\end{rem}

\begin{lem}\label{lem:red_decomp_chambers_apartm}
	If $\tau$ is a reduced decomposition of the Coxeter element $c$, then $C(\tau)$ is a chamber of the apartment $\A(\tau)$.
\end{lem}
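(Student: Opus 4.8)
The plan is to reduce this to a single observation: the maximal chain of $\nc(W)$ that $C(\tau)$ corresponds to lies entirely inside the Boolean sublattice that $A(\tau)$ corresponds to. Write $\tau = t_1\ldots t_n$ and let $B \coloneqq \Braket{t_1, \ldots, t_n}\subseteq\nc(W)$ be the Boolean sublattice of \cref{cor:construction_boolean_sublatice}, so that $\mi=\id$ and $\ma=\cox$ are the minimum and maximum of both $B$ and $\nc(W)$, and $A(\tau) = \|B\setminus\Set{\mi,\ma}\|$ sits inside $\ocncw=\|\nc(W)\setminus\Set{\mi,\ma}\|$. By definition $C(\tau)$ is the simplex of $\ocncw$ given by the maximal chain $\id < t_1 < t_1t_2 < \ldots < t_1\ldots t_n = \cox$ of \cref{lem:construction_chain}, after deleting the endpoints $\mi$ and $\ma$.

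The first and only real step I would carry out is to show that every partial product $w_i \coloneqq t_1\ldots t_i$ is an element of $B$. Since $\tau$ is reduced, \cref{lem:carter} gives that $\alpha_{t_1},\ldots,\alpha_{t_n}$ are linearly independent in $V$, hence so are $\alpha_{t_1},\ldots,\alpha_{t_i}$; applying \cref{lem:carter} again, the word $t_1\ldots t_i$ is a reduced decomposition of $w_i$. Now the lemma stating that an element of $\nc(W)$ equals the join of the reflections in any of its reduced decompositions (the lemma preceding \cref{cor:construction_boolean_sublatice}) yields $w_i = t_1\vee\ldots\vee t_i$, which is exactly one of the elements spanning $B$; under the isomorphism $B\cong\B_n$ of \cref{cor:construction_boolean_sublatice} it corresponds to $\Set{1,\ldots,i}$. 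In particular $w_i\notin\Set{\mi,\ma}$ for $1\leq i\leq n-1$, so all vertices of $C(\tau)$ lie in $B\setminus\Set{\mi,\ma}$. Since $A(\tau)$ is an induced subcomplex of $\ocncw$ by \cref{rem:oc_ind_subcplx}, it follows that $C(\tau)$ is a simplex of $A(\tau)$.

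Finally, to upgrade ``simplex'' to ``chamber'' I would just note the dimension count: the chain $\id < t_1 < \ldots < t_1\ldots t_n$ has length $n$, which is the rank of the graded lattice $B\cong\B_n$, so it is a maximal chain of $B$; equivalently $C(\tau)$ is an $(n-2)$-simplex in the $(n-2)$-dimensional complex $A(\tau)$, hence a chamber. I do not expect a genuine obstacle here; the only place that needs a moment of care is the assertion that a prefix of a reduced decomposition is again reduced, which is precisely what \cref{lem:carter} supplies, and the rest is unwinding the definitions of $A(\tau)$ and $C(\tau)$.
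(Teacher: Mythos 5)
Your proposal is correct and follows essentially the same route as the paper: the paper's proof simply cites \cref{rem:boolean_sublat_chain} (the maximal chain lies in the Boolean sublattice from the same reduced decomposition) and passes to order complexes, whereas you additionally supply the justification behind that remark via \cref{lem:carter} and the join formula. No gaps; the argument is sound.
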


\begin{proof}
	By \cref{rem:boolean_sublat_chain} the maximal chain in $\nc$ corresponding to $\tau$ is contained in the Boolean sublattice arising from the same reduced decomposition $\tau$. Passing to the order complexes preserves the containment and hence $C(\tau)\in \C(A(\tau))$.
\end{proof}

For type $A$, the following is Proposition 8.6 of \cite{bra-mcc}.

\begin{cor}\label{cor:union_of_aptms}
	The complex $\ocnc$ is a union of apartments.
\end{cor}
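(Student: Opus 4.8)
The plan is to show that every chamber of $\ocnc$ lies in some apartment, which immediately gives $\ocnc = \bigcup A$, the union ranging over all apartments, since each apartment is in particular a subcomplex of $\ocnc$ by definition and the chambers of $\ocnc$ are its maximal simplices. So the statement reduces to: for every chamber $C \in \C(\nc)$ there is an apartment $A \subseteq \ocnc$ with $C \in \C(A)$.

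First I would invoke \cref{rem:chambers_bijection}: every chamber of $\ocnc$ is of the form $C(\tau)$ for some reduced decomposition $\tau = t_1\ldots t_n$ of the Coxeter element $\cox$. This is the crucial input — it says that chambers are not arbitrary maximal chains but precisely those arising from reduced factorizations of $\cox$, via \cref{lem:construction_chain}. Then \cref{cor:construction_boolean_sublatice} tells us that the span $\Braket{t_1, \ldots, t_n}$ of the reflections of $\tau$ in $\nc(W,\cox)$ is a Boolean sublattice of $\nc(W,\cox)$ of rank $n$, so its order complex is a Coxeter complex of type $A_{n-1}$, i.e.\ an apartment $A(\tau)$ in the sense of the definition preceding this corollary. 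Finally, \cref{lem:red_decomp_chambers_apartm} says exactly that $C(\tau)$ is a chamber of the apartment $A(\tau)$.

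Putting these together: given an arbitrary chamber $C \in \C(\nc)$, write $C = C(\tau)$ for a reduced decomposition $\tau$ of $\cox$ using \cref{rem:chambers_bijection}; then $A(\tau)$ is an apartment of $\ocnc$ by \cref{cor:construction_boolean_sublatice} together with \cref{rem:oc_ind_subcplx} (ensuring the order complex of the Boolean sublattice sits inside $\ocnc$ as a subcomplex), and $C \in \C(A(\tau))$ by \cref{lem:red_decomp_chambers_apartm}. Hence every chamber of $\ocnc$ lies in an apartment, and since $\ocnc$ is a chamber complex it equals the union of the closed stars of its chambers, each of which is contained in the corresponding apartment; therefore $\ocnc = \bigcup_{\tau} A(\tau)$, the union over all reduced decompositions $\tau$ of $\cox$.

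There is essentially no obstacle here: every ingredient has already been established in the excerpt, and the proof is a two-line assembly. The only point to be slightly careful about is that a simplicial complex is the union of (the closures of) its maximal simplices, so covering all chambers by apartments genuinely covers all of $\ocnc$; and that not every apartment need arise from a reduced decomposition of $\cox$ (as the example with $\nc(B_3)$ in the excerpt shows), but we only need \emph{enough} apartments to cover the chambers, which the ones of the form $A(\tau)$ already do. So the proof I would write is simply: "Every chamber of $\ocnc$ is of the form $C(\tau)$ for a reduced decomposition $\tau$ of $\cox$ by \cref{rem:chambers_bijection}, and $C(\tau)$ lies in the apartment $A(\tau)$ by \cref{lem:red_decomp_chambers_apartm}. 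Since a chamber complex is the union of its chambers, $\ocnc$ is the union of these apartments."
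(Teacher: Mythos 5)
Your proof is correct and follows the paper's own argument exactly: identify each chamber with a reduced decomposition of the Coxeter element via \cref{rem:chambers_bijection}, place it in the corresponding apartment via \cref{lem:red_decomp_chambers_apartm}, and conclude using that a chamber complex is the union of its chambers while the apartments are subcomplexes of $\ocnc$. No gaps.
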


\begin{proof}
	Since $\ocnc$ is a chamber complex, it is the union of its chambers, which are in bijection with reduced decompositions of the Coxeter element $\cox$. By \cref{lem:red_decomp_chambers_apartm} every chamber is contained in the apartment corresponding to the same reduced decomposition, hence $\ocnc$ is contained in a union of apartments. But the apartments are subcomplexes of $\ocnc$, hence equality holds.
\end{proof}

Note that in general, not every apartment of $|\nc(W)|$ is given by a reduced decomposition of the Coxeter element. However, every apartment of $\on$ can be described by a reduced decomposition of the Coxeter element, see Lemmas \ref{lem:aptms} and \ref{lem:good_labeling}.

Investigations of the apartments in type $B_3$ suggest that the apartments correspond to reduced decompositions of maximal elements. This can be seen for instance for the element $[1][2][3]\in W(B_3)$.

\begin{oq}
	Is there a group-theoretic description of \emph{all} apartments in the non-crossing partition complex $|\nc(W)|$? For instance, does every reduced decomposition of a \emph{maximal element} of the absolute order on $W$ give rise to an apartment in $|\nc(W)|$?
\end{oq}

\subsection{The Hurwitz graph}

As an application of the simplicial version of the embedding we get a lower bound on the radius of the Hurwitz graph for all finite Coxeter groups. Adin and Roichman show that the radius of the Hurwitz graph for $S_n$ equals $\binom{n-1}{2}$ \cite[Thm. 10.3]{ar} and ask for the radius and diameter for a finite Coxeter group. In joint work with Schwer, we contribute the following \cite[Thm. 5.4]{hs}.

\begin{thm}\label{thm:hurwitz}
	The radius of the Hurwitz graph $H(W)$ of a finite Coxeter group $W$ of rank $n$ is bounded from below by $\binom{n}{2}$.
\end{thm}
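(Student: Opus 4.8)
The plan is to exploit the simplicial embedding $\ocncw \hookrightarrow |\lam(V)|$ from \cref{thm:simplicial_embedding} (or its finite variant from \cref{thm:embedding_Vp_VZ}) together with the fact that chambers of $\ocnc$ correspond to reduced decompositions of the Coxeter element $\cox$ (see \cref{rem:chambers_bijection}). The Hurwitz graph $H(W)$ should be exactly the adjacency graph on these chambers: its vertices are the reduced decompositions of $\cox$, and two are adjacent when they differ by a single Hurwitz move, i.e. when the corresponding maximal chains in $\nc(W)$ share all but one element. Thus $H(W)$ is the \emph{chamber graph} of the chamber complex $\ocnc$, and the radius of $H(W)$ equals the radius of $\ocnc$ with respect to the gallery distance $\dig$. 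The first step is therefore to spell out this identification carefully, so that $\rad(H(W)) = \min_{C}\max_{D}\dig(C,D)$, both maxima and minima over chambers of $\ocnc$.

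Next I would fix an arbitrary chamber $C = C(t_1\ldots t_n) \in \C(\nc)$, coming from a reduced decomposition $\tau = t_1\ldots t_n$ of $\cox$, and produce a chamber $D$ with $\dig(C,D) \geq \binom{n}{2}$. The key idea is to move to the ambient building $|\lam(V)|$, where galleries are well-understood: since $\ocncw$ is a chamber \emph{sub}complex, any gallery in $\ocnc$ is in particular a gallery in the building, so $\dig_{|\nc|}(C,D) \geq \dig_{|\lam(V)|}(C,D)$. It therefore suffices to find a chamber $D \in \ocnc$ whose distance \emph{in the building} from $C$ is at least $\binom{n}{2}$. For this I would take $D = C(t_n t_{n-1}\ldots t_1)$, the chamber attached to the reversed reduced decomposition — one checks via \cref{lem:construction_chain} that reversing a reduced decomposition again yields a reduced decomposition of $\cox$ (or of a conjugate, handled by the usual conjugation trick), so $D$ is indeed a chamber of $\ocnc$. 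In the apartment $A(\tau)$, which by \cref{rem:boolean_lattice_sd_bdry} is the barycentric subdivision of $\partial\si_{n-1}$, the chambers $C$ and $D$ correspond to a total order $i_1\ldots i_n$ and its reverse, and the building distance between them is computed as the number of transpositions needed, which is the number of inversions between the two orders, namely $\binom{n}{2}$. Concretely, $C$ has vertices $\emb(t_1), \emb(t_1)\vee\emb(t_2), \ldots$ and $D$ has vertices $\emb(t_n), \emb(t_n)\vee\emb(t_{n-1}), \ldots$; the intermediate flags in a minimal building gallery are governed by the symmetric group $S_n$ acting on the frame $\{\langle\alpha_{t_1}\rangle,\ldots,\langle\alpha_{t_n}\rangle\}$, and the group distance between the identity permutation and the longest element $w_0$ with respect to $S(A_{n-1})$ equals $\ell_S(w_0) = \binom{n}{2}$ by the discussion in \cref{sec:cox_cplx}. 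Hence $\dig_{|\lam(V)|}(C,D) = \binom{n}{2}$, giving $\ecc(C) \geq \binom{n}{2}$ for every chamber $C$, and therefore $\rad(H(W)) \geq \binom{n}{2}$.

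The main obstacle I anticipate is making the bridge between the combinatorics of Hurwitz moves and gallery distance fully rigorous: one must verify that a single edge of $H(W)$ corresponds to a pair of \emph{adjacent} chambers of $\ocnc$ (shared panel), and conversely, so that ``Hurwitz distance'' and ``gallery distance $\dig$'' genuinely agree and not merely one bounds the other. A second, subtler point is the dependence on the chosen Coxeter element: the Hurwitz graph is defined via \emph{one} Coxeter element, and one should either fix it or note that different Coxeter elements (being conjugate, cf. \cref{chap:elem_props_nc}) yield isomorphic Hurwitz graphs, so the bound is uniform. Finally, in the argument above one should double-check that the reversed word $t_n\ldots t_1$ really is a reduced decomposition of $\cox$ itself (not just of $\cox\inv$), which may require replacing $D$ by a slightly different chamber or conjugating the whole picture by $w_0$; this is a routine but necessary bookkeeping step, and it is here that the explicit correspondences recorded in \cref{tab:relations_nc_order_complex} and \cref{tab:relations_V_lam_buil} will be used.
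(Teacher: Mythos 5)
Your proposal is correct and follows essentially the same route as the paper: embed $\ocncw$ as a chamber subcomplex of the building, place the given chamber in an apartment, and use the opposite chamber of that apartment (at distance $\binom{n}{2}$) together with the fact that gallery distance in a subcomplex dominates gallery distance in the building — indeed you are slightly more explicit than the paper about this last inequality, which is the point that makes the apartment distance a genuine \emph{lower} bound. Your flagged worry about whether $t_n\ldots t_1$ is a reduced decomposition of $\cox$ rather than $\cox\inv$ is moot: the opposite chamber is simply the chamber of the apartment $A(t_1\ldots t_n)$ given by the reversed total order of the atoms $t_1,\ldots,t_n$ of the Boolean sublattice, and since the entire apartment lies in $\ocncw$ by \cref{cor:construction_boolean_sublatice}, that chamber belongs to $\ocncw$ without any further bookkeeping.
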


Since the symmetric group $S_n$ is a Coxeter group of rank $n-1$, the lower bound established in the above theorem in fact equals the radius of the Hurwitz graph $H(S_n)$.

Generalizing Definition 2.2 of \cite{ar}, which defines $H(S_n)$, to finite Coxeter groups yields the following. 

\begin{defi}
	For a finite Coxeter group $W$, its Hurwitz graph $H(W)$ is the graph whose vertices are the maximal chains of $\nc(W)$, and two vertices are connected by an edge if and only if they differ in exactly one element.	
\end{defi}

Recall that maximal chains in $\nc(W)$ are in bijection with chambers of the complex $|\nc(W)|$ of non-crossing partitions. The graph with vertex set $\C(\nc)$ and edges between two chambers if and only if they are adjacent is called \emph{chamber graph} of $\nc(W)$. Note that it is isomorphic to the Hurwitz graph $H(W)$. 
In the sequel, we identify $H(W)$ with the chamber graph of $\nc(W)$.

For a finite graph $G$ with vertex set $V$, the \emph{eccentricity} of a vertex $v\in V$ is the maximal distance of $v$ to any other vertex in $G$, that is $\ecc(v) \coloneqq \max\set{\di(v,w)\str w\in V}$, where $\di$ denotes the combinatorial distance in $G$. The \emph{radius} of $G$ is the minimal of all possible eccentricities, that is $\rad(G) \coloneqq \min\set{\ecc(v)\str v\in V}$, and the \emph{diameter} of $G$ is the maximal one, that is $\diam(G) \coloneqq \max\set{\ecc(v)\str v\in V}$. A lower bound on the eccentricity hence gives a lower bound on the radius and trivially on the diameter as well.

\begin{proof}[Proof of \cref{thm:hurwitz}]
	Let $W$ be a finite Coxeter group of rank $n$. We show that the eccentricity of each vertex in the Hurwitz graph is bounded from below by $\binom{n}{2}$. Translated to the simplicial context this means that for every chamber $C$ of $\nc=\nc(W)$ there exists a chamber $C' \in \C(\nc)$ such that $\di(C,C')\geq \binom{n}{2}$. 
	
	By \cref{lem:red_decomp_chambers_apartm}, every chamber $C$ is contained in an apartment, which is isomorphic to the Coxeter complex $\Sigma$ of type $A_{n-1}$. In $\Sigma$, every chamber has a unique chamber of maximal distance $\binom{n}{2}$, which we discussed in \cref{sec:cox_cplx}. Hence the eccentricity of each vertex in $H(W)$ has to be at least $\binom{n}{2}$.
\end{proof}

\begin{rem}
	The proof of \cite{ar} that $\rad(H(S_n))=\binom{n-1}{2}$ can be simplified using the simplicial perspective combined with the fact that $\ncp_n$ is supersolvable \cite[Thm. 4.3.2]{her}. The translation of the supersolvability of $\ncp_n$ into the simplicial perspective is that there exists a chamber $C \in \C(\ncp_n)$ such that the union of all apartments containing the chamber $C$ is already the whole complex $|\ncp_n|$. With this interpretation it is clear that $\rad(H(S_n))=\binom{n-1}{2}$. The simplicial interpretation of supersolvability in type $A$ plays an important role in \cref{part3} of this thesis. We characterize the chambers $C$ that can be used to construct $|\ncp_n|$ in \cref{prop:univ_chamber_union}.
\end{rem}

\section{Embedding the classical types}\label{sec:emb_classical}

In this section we explicitly construct embeddings of the non-crossing partitions of the classical types into finite spherical buildings. Unlike before we are now interested in concrete versions of roots with respect to the standard basis of $\R^n$. We start with crystallographic root system as described in \cite{bour} and then project the coefficients to finite fields. In this section, the standard basis vectors of $\R^n$ are denoted by $\eps_i$ and those of $\F_p^n$ by $e_i$ for all $n \in \N$ and $1\leq i \leq n$.  In joint work with Schwer, we considered concrete embeddings of $\nc(A_n)$ and $\nc(B_n)$ in Chapter 4 of \cite{hs}. For the pictorial version of type $A$, embeddings of $\ncpn$ already existed before, see \cref{rem:old_embedding_ncpn} below for the details. Moreover, for each of the classical types we classify the compatible primes.

\subsection{Type $A$}\label{sec:typeA_embedding}

Let $W$ be the Coxeter group of type $A_n$, that is the symmetric group $S_{n+1}$, with the set of adjacent transpositions as Coxeter generators as described in \cref{sec:typeA_descr}. As before, let $\Phi \subseteq V = \bigoplus_{i=1}^{n} \R \,\alpha_{s_i}$ be a crystallographic root system of $W$. By \cite[Chap. VI.4.7]{bour}, the assignment $\alpha_{s_i}\mapsto \eps_i - \eps_{i+1} \in \R^{n+1}$ for all $1\leq i \leq n$ induces an angle-preserving isomorphism $V \to E \subseteq \R^{n+1}$, where $\R^{n+1}$ is equipped with the standard Euclidean structure and 
\[  
E = \Set{(x_1, \ldots, x_n, x_{n+1})\in \R^{n+1}\str x_{n+1}=-\sum_{i=1}^{n}x_i}
\]
is the $n$-dimensional subspace of $\R^{n+1}$ whose elements have coordinates that sum up to zero.
It is more convenient to work in $\R^n$ instead of $E$ inside $\R^{n+1}$. We identify $E$ with $\R^n$ via the map $E\to \R^n$ that sends $(x_1, \ldots, x_{n+1}) \mapsto (x_1, \ldots, x_n)$. Note that this identification is \emph{not} angle-preserving. But this is not an obstacle, because the embedding of the non-crossing partitions into a lattice of linear subspaces only sees the vector space structure, whereas the geometry on it is not relevant for this procedure. We denote the identification of $V$ with $\R^n$ by 
\[
\iota\colon V \to \R^n, 
\quad \alpha_{s_i} \mapsto 
\begin{cases}
\eps_i-\eps_{i+1}& \text{ if }i<n,\\
\eps_n& \text{ if }i=n
\end{cases}
\]
and the image $\iota(\alpha)$ of a root $\alpha \in \Phi$ by $\tal$. Although $\tphi\coloneqq \iota(\Phi)$ is not a root system, as it does not respect the angles, we call its elements \emph{roots} nevertheless. Note that the elements of $\tphi$ have integer coordinates, hence $\tphi \subseteq \Z^n$. The set of \emph{positive roots} is given by
\[
\tphi^+ \coloneqq \iota(\Phi^+) =\Set{\eps_i-\eps_j\str 1\leq i<j\leq n} \cup \Set{\eps_i \str 1\leq i \leq n}.
\]
Note that the $\Z$-span of $\tphi$ is $\Z^n$, hence the analog of $\VZ$ in the present setting is $\Z^n$.
We denote the canonical projection from $\Z^n$ to $\F_p^n$ by $\prho$ for every prime $p$.
As before, we say that a prime $p$ is \emph{compatible} with $\tphi$ if for every basis $B \subseteq \tphi^+ \subseteq \Z^n$ of $\R^n$ the image $\prho(B)$ under the projection is a basis of $\F_p^n$.

\begin{lem}\label{lem:typeA_comp_primes}
	Every prime is compatible with $\tphi$.
\end{lem}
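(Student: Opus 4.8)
The plan is to prove the sharper statement that whenever $B\subseteq\tphi^+$ is a basis of $\R^n$, the matrix with the elements of $B$ as its columns has determinant $\pm1$. Since $\pm1$ is a unit in $\F_p$ for every prime $p$, its image under $\prho_M$ is still invertible, so $\prho(B)$ is linearly independent and hence a basis of $\F_p^n$; as $B\subseteq\tphi^+$ ranges over all bases of $\R^n$, this is precisely the definition of $p$ being compatible with $\tphi$.

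The first step is a graph-theoretic reformulation. Writing $\eps_{n+1}\coloneqq 0\in\R^n$, every element of $\tphi^+$ has the uniform shape $\eps_i-\eps_j$ with $1\le i<j\le n+1$, where the roots $\eps_i$ are exactly those with $j=n+1$. Given a basis $B=\Set{v_1,\ldots,v_n}\subseteq\tphi^+$ with $v_k=\eps_{i_k}-\eps_{j_k}$, let $M\in\Z^{n\times n}$ be the matrix with columns $v_1,\ldots,v_n$, and let $\widehat M\in\Z^{(n+1)\times n}$ be the matrix whose $k$-th column has entry $+1$ in row $i_k$ and $-1$ in row $j_k$ and zeros elsewhere; then $M$ is obtained from $\widehat M$ by deleting row $n+1$, and $\widehat M$ is the signed incidence matrix of the oriented graph $G$ on the vertex set $\Set{1,\ldots,n+1}$ with edges $i_k\to j_k$ for $1\le k\le n$. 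Because the columns of $M$ are linearly independent they are pairwise distinct, so $G$ has $n$ distinct edges and no loops; moreover $\rk\widehat M\ge\rk M=n$, hence $\rk\widehat M=n$. Since the rank of the signed incidence matrix of a graph on $n+1$ vertices equals $n+1$ minus the number of its connected components, $G$ is connected, and a connected graph on $n+1$ vertices with $n$ edges is a tree.

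Next comes an induction on $n$ showing $\det M=\pm1$. For $n=1$ the matrix $M$ is $1\times1$ with entry $\pm1$. For $n\ge2$ the tree $G$ has at least two leaves, so it has a leaf $v$ with $v\neq n+1$; let $e$ be the unique edge of $G$ incident to $v$. Then row $v$ of $M$ has a single nonzero entry, namely $\pm1$ in column $e$, so cofactor expansion along row $v$ gives $\det M=\pm\det M'$, where $M'$ is obtained from $M$ by deleting row $v$ and column $e$. A direct check shows that $M'$ is exactly the analogous $(n-1)\times(n-1)$ matrix attached to the tree $G-v$, whose vertex set is $\Set{1,\ldots,n+1}\setminus\Set{v}$ and whose edge set is $\Set{i_k,j_k}\setminus\Set{e}$, with the vertex $n+1$ still playing the role of the deleted coordinate. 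Hence $\det M'=\pm1$ by the inductive hypothesis, so $\det M=\pm1$. Together with the first paragraph this establishes \cref{lem:typeA_comp_primes}.

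No step in this argument is genuinely difficult; the only point requiring a little care is the choice of the leaf $v$ in the induction — it must be selected among the (at least two) leaves of the tree so that $v\neq n+1$, which is possible because at most one leaf can equal $n+1$ — together with the verification that deleting the corresponding row and column of $M$ reproduces the matrix of $G-v$. Alternatively, one could bypass the induction entirely by invoking the total unimodularity of the signed incidence matrix of a graph, combined with the classical fact that the reduced incidence matrix of a tree is nonsingular; but the leaf-peeling argument above is self-contained and makes the bound $\det M=\pm1$ transparent.
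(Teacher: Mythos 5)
Your proof is correct. Both you and the paper reduce the statement to showing that the matrix $B'$ with columns in $\tphi^+$ has determinant $\pm 1$, and both establish this by an inductive Laplace expansion along a line with a single nonzero entry; the difference lies in how that line is found. The paper locates a \emph{column} equal to some $\eps_i$ (arguing that if every column were of the form $\eps_i-\eps_j$, all coordinate sums in the span would vanish), deletes that column together with row $i$, and repeats. You instead append a ghost coordinate $\eps_{n+1}=0$, identify $B$ with the edge set of a graph on $\Set{1,\ldots,n+1}$, use the rank formula for signed incidence matrices to see that this graph is a spanning tree, and then peel a \emph{leaf} $v\neq n+1$, i.e.\ expand along a row with a single nonzero entry. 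Your route costs a little more setup (the rank-of-incidence-matrix fact, the care in choosing a leaf distinct from $n+1$, and the check that the minor is again a reduced tree matrix, which strictly speaking requires relabelling and is only correct up to column signs — harmless since the target is $\pm1$), but it buys a cleaner invariant: the statement becomes the classical unimodularity of the reduced incidence matrix of a tree, for which total unimodularity gives a one-line alternative, and the spanning-tree picture is exactly the one the paper itself exploits later (compare \cref{lem:cycles_dependent} and the tree descriptions of apartments). The paper's column-peeling is more self-contained but conceals this structure.
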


\begin{proof}
	Let $B=\set{b_1, \ldots, b_n}\subseteq \tphi^+ \subseteq \Z^n$ be a basis of $\R^n$ and let $B'$ the matrix with columns $b_1, \ldots, b_n$. We show that $\det(B')=\pm 1$. Then $\det(\prho_M(B'))=\pm 1$ for all primes $p$ and consequently, $\prho(B)$ is a basis of $\F_p^n$. Thus $p$ is compatible with $\tphi$.
	
	There is an element $b\in B$ of the form $b=\eps_i$ for some $1\leq i \leq n$. Otherwise, the coordinates of all elements in the span of $B$ would sum up to zero, which contradicts that $B$ is a basis of $\R^n$.  
	Without loss of generality suppose that $b_n=\eps_i$. Then the determinant of $B'$ equals, up to sign, the determinant of the matrix $B''$ that arises from $B'$ by removing the $n^{\text{th}}$ column and $\ith$ row by Laplace's formula. The vectors of the matrix $B''$ form a basis of $\R^{n-1}$, which allows us to apply the argument from above to $B''$. Repeatedly applying the same procedure yields after a total of $n-1$ steps that $\det(B')=\pm\det(1)=\pm 1$.
\end{proof}

The notions of the different moved spaces translate to the setting in $\R^n$ as well. We denote $\tmov(w)\coloneqq \iota(\mov(w))$. If $t_1\ldots t_k$ is a reduced decomposition of $w\in W$, then 
\begin{align*}
\tmov(w)&=\bigoplus_{i=1}^{k} \R\,\tal_{t_i} \subseteq V.
\end{align*}
Analogously, we get for the integral moved space $\tmov_\Z(w)$ and the $p$-moved space $\tpmov(w)$ for a prime $p$ that
\begin{align*}
\tmov_\Z(w)=\bigoplus_{i=1}^{k} \Z\,\tal_{t_i} \subseteq \VZ \quad \text{ and } \quad
\tpmov(w)=\bigoplus_{i=1}^{k} \F_p\,\tpal_{t_i} \subseteq \Vp,
\end{align*}
where $\tpal \coloneqq \prho(\tal)$ for a positive root $\tal \in \tphi$.
Translating \cref{thm:embedding_Vp_VZ} to the setting of type $A_n$ with $\tphi \subseteq \R^n$ yields, together with \cref{lem:typeA_comp_primes}, the following.

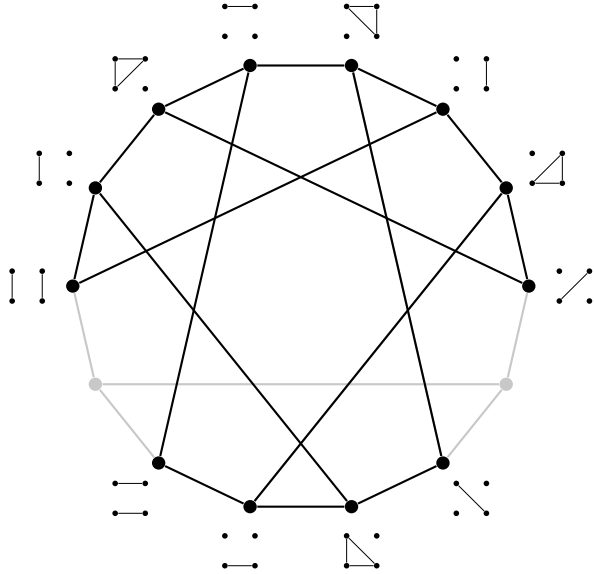
\begin{figure}
	\begin{center}
		\begin{tikzpicture}
		\def\r{30mm}
		\foreach \w in {1,6}
		\node (e\w) at (-\w * 360/14 : \r) [mpunkt,Lightgray] {};
		
		\foreach \w in {2,3,4,5,7,8,9,10,11,12,13,14}
		\node (e\w) at (-\w * 360/14 : \r) [mpunkt] {};
		
		\foreach \w/\s in {
			2/\pzv,
			3/\pzdv,
			4/\pzd,
			5/\pevuzd,
			7/\pezudv,
			8/\pdv,
			9/\pedv,
			10/\pev,
			11/\pezv,
			12/\pez,
			13/\pezd,
			14/\ped}
		\node (f\w) at (-\w * 360/14  : \r + \r/5)  {\scalebox{0.7}{\s}};

		\foreach \i [evaluate=\i as \j using int(\i+1)] in {1,5,6}
		\draw[thick,Lightgray] (e\i) -- (e\j);
		\draw[thick,Lightgray] (e1) -- (e14);

		\foreach \i [evaluate=\i as \j using int(\i+5)] in {1}
		\draw[thick,Lightgray] (e\i) -- (e\j) ;
		\foreach \i [evaluate=\i as \j using int(\i+9)] in {2,4}
		\draw[thick,Lightgray] (e\i) -- (e\j);
		
		\foreach \i [evaluate=\i as \j using int(\i+1)] in {2,3,4,7,8,9,10,11,12,13}
		\draw[thick] (e\i) -- (e\j);

		\foreach \i [evaluate=\i as \j using int(\i+5)] in {3,5,7,9}
		\draw[thick] (e\i) -- (e\j) ;
		\foreach \i [evaluate=\i as \j using int(\i+9)] in {2,4}
		\draw[thick] (e\i) -- (e\j);
		
		\end{tikzpicture}
		\caption{The complex $|\ncp_4|$ inside the spherical building $|\lam(\F_2^3)|$.}%
		\label{fig:ncp4_in_building}%
	\end{center}
\end{figure}

\begin{prop}\label{prop:typeA_emb}
	For every prime $p$ and every positive integer $n$, the map
	\[
	\pemb\colon \nc(A_n) \to \lam(\F_p^n), \quad w\mapsto \tpmov(w)\subseteq \F_p^n
	\]
	is a rank-preserving poset embedding.
\end{prop}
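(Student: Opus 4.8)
The plan is to derive \cref{prop:typeA_emb} directly from the general embedding theorem \cref{thm:embedding_Vp_VZ} together with the compatibility statement \cref{lem:typeA_comp_primes}. Recall that the Coxeter group of type $A_n$ is crystallographic, so by \cref{lem:typeA_comp_primes} every prime $p$ is compatible with the root system $\tphi$. Hence \cref{thm:embedding_Vp_VZ}, transported through the identification $\iota\colon V \to \R^n$, applies without any restriction on $p$.

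First I would note that the whole construction of \cref{sec:cryst_emb} goes through verbatim after replacing the abstract space $V$ (with basis $\set{\alpha_s\str s\in S}$) by $\R^n$ (with the roots $\tphi$), the free $\Z$-module $\VZ$ by $\Z^n$, and $\Vp$ by $\F_p^n$, since $\iota$ is a linear isomorphism carrying $\Phi^+$ to $\tphi^+$ and $\VZ$ onto $\Z^n$. Under this identification $\tpmov(w)=\iota(\pmov(w))=\prho(\tmov_\Z(w))\subseteq \F_p^n$, and a reduced decomposition $t_1\ldots t_k$ of $w$ yields the basis $\Set{\tpal_{t_1}, \ldots, \tpal_{t_k}}$ of $\tpmov(w)$ by the $\R^n$-version of \cref{lem:basis_p_mov_space}. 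In particular $\dim(\tpmov(w))=\ell(w)$.

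Then I would simply invoke \cref{thm:embedding_Vp_VZ}: since every prime $p$ is compatible with $\tphi$ by \cref{lem:typeA_comp_primes}, the map $\pemb\colon \nc(A_n)\to\lam(\F_p^n)$, $w\mapsto\tpmov(w)$, is a rank-preserving poset embedding. Concretely, order-preservation follows because $v\leq w$ in $\nc(A_n)$ gives, via the \cref{prefix}, a reduced decomposition of $w$ extending one of $v$, so the corresponding $p$-roots give a basis of $\tpmov(v)$ contained in a basis of $\tpmov(w)$, whence $\tpmov(v)\subseteq\tpmov(w)$; rank-preservation is $\rk_\nc(w)=\ell(w)=\dim(\tpmov(w))=\rk_\lam(\pemb(w))$; and injectivity follows from injectivity of the Brady--Watt embedding $\emb$ together with injectivity of the projection $\prho\colon\tphi^+\to\F_p^n$ guaranteed by the $\R^n$-analogue of \cref{lem:p-projection_injective}, exactly as in the proof of \cref{thm:embedding_Vp_VZ}.

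Since all the substantive work has already been done in \cref{sec:cryst_emb}, there is essentially no obstacle here; the only point requiring a word of care is the verification that the identification $\iota$ — which is explicitly \emph{not} angle-preserving — does not disturb the argument. This is fine because the embedding into $\lam(\F_p^n)$ only uses the vector space and lattice structure, not the inner product: linear independence of roots, their $\Z$-span, and the resulting dimensions are all preserved by any linear isomorphism. So the proof is a short two-line reduction: apply \cref{thm:embedding_Vp_VZ} in the guise adapted to $\R^n$, using \cref{lem:typeA_comp_primes} to remove the compatibility hypothesis.
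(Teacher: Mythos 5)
Your proposal is correct and matches the paper's own argument: the paper likewise obtains \cref{prop:typeA_emb} by translating \cref{thm:embedding_Vp_VZ} to the $\R^n$ setting via $\iota$ and invoking \cref{lem:typeA_comp_primes} to drop the compatibility hypothesis, having already noted that the non-angle-preserving identification is harmless because only the linear (not metric) structure is used. No further detail is needed.
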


Pre-composition with the isomorphism  $\ncp_n \to \nc(S_n)$ from \cref{thm:typeA_nc_iso} yields an embedding of the classical non-crossing partitions into finite spherical buildings. \cref{fig:ncp4_in_building} shows the order complex of $\ncp_4$ as a subcomplex of the spherical building $|\lam(\F_2^3)|$. The labeling is chosen in such a way that it coincides with the labeling with linear subspaces of the spherical building in \cref{fig:A2-F_2^3}. \cref{part3} further investigates the simplicial structure of $\on$ using the embedding $\on \to |\lam(\F_2^{n-1})|$.

\begin{rem}\label{rem:old_embedding_ncpn}
	Brady and McCammond remarked that $\ncp_n$ embeds into $\lam(\F_2^n)$ \cite[Rem. 8.5]{bra-mcc}. This was generalized to an embedding into $\lam(V)$ for an $(n-1)$-dimensional subspace $V \subseteq \F^{n}$ for an arbitrary field $\F$ in \cite[Le. 2.24]{hks}. If $\F$ is finite, then, after an appropriate identification of $V$ with $\F_p^{n-1}$, their embeddings coincide with our embeddings $\ncp_n \overset{\cong}{\longrightarrow} \nc(A_{n-1})\hookrightarrow \lam(\F_p^{n-1})$. We consider the embedding $\ncp_n \hookrightarrow \lam(\vs)$ in more detail in \cref{sec:emb_typeA} and give a purely pictorial description of it.
\end{rem}

\subsection{Type $B$}

Let $W$ be the Coxeter group of type $B_n$ with the set of Coxeter generators $S$ as described in \cref{sec:typeB_descr}. Let $\Phi \subseteq \bigoplus_{i=1}^{n}\R\,\alpha_{s_i}$ be a crystallographic root system of $W$ as before. The assignment
\[
\alpha_{s_i} \mapsto 
\begin{cases}
\eps_i - \eps_{i+1}& \text{ if }i<n,\\
\eps_n & \text{ if } i=n
\end{cases}
\]
induces an angle-preserving isomorphism $\iota\colon V \to \R^n$ with $\R^n$ being equipped with the standard Euclidean structure. Then the image of the root system $\Phi \subseteq V$ is 
\[
\tphi \coloneqq \iota(\Phi) = \Set{\pm \eps_i \pm \eps_j \str 1 \leq i<j \leq n} \cup \Set{\pm \eps_i \str 1\leq i \leq n},
\]
which is a crystallographic root system of $W$ inside $\R^n$ by \cite[Chap. VI.4.5]{bour}. 
Note that the $\Z$-span of $\tphi$ is $\Z^n$ and that the set of positive roots of $\tphi$ is given by 
\[
\tphi^+ \coloneqq \iota(\Phi^+)= \Set{\eps_i \pm \eps_j \str 1 \leq i<j \leq n} \cup \Set{\eps_i \str 1\leq i \leq n}.
\]
We continue to denote the canonical projection $\Z^n \to \F_p^n$ by $\prho$ for every prime $p$. It is immediate that the prime $2$ is \emph{not} compatible with $\tphi$, because the roots $\eps_1 + \eps_2$ and $\eps_1 - \eps_2$, which can be contained in a common basis $B \subseteq \tphi$ of $\R^n$, get mapped to the same element by $\rho^{(2)}$.

\begin{lem}\label{lem:typeB_comp_prime}
	Every prime $p \neq 2$ is compatible with $\tphi$.
\end{lem}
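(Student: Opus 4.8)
The plan is to show that for a prime $p\neq 2$, every basis $B\subseteq \tphi^+$ of $\R^n$ consisting of positive roots of type $B_n$ projects to a basis of $\F_p^n$ under $\prho$. Equivalently, writing $B'$ for the matrix whose columns are the elements of $B$, we must show $p\nmid \det(B')$. Since $B$ is a basis of $\R^n$ and its vectors have integer coordinates, $\det(B')$ is a nonzero integer; it therefore suffices to prove that $\det(B') \in \{\pm 1, \pm 2^k : k\geq 0\}$, i.e.\ that the only prime that can divide $\det(B')$ is $2$. Then for $p\neq 2$ we get $\det(\prho_M(B'))=\det(B')\bmod p\neq 0$, so $\prho(B)$ is a basis of $\F_p^n$ and $p$ is compatible with $\tphi$.

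To control $\det(B')$, first I would separate the columns of $B'$ into three groups: vectors of the form $\eps_i$, vectors of the form $\eps_i-\eps_j$, and vectors of the form $\eps_i+\eps_j$ (with $i<j$). The strategy is an induction on $n$ analogous to the proof of \cref{lem:typeA_comp_primes}, peeling off one coordinate at a time. Since $B$ spans $\R^n$, for at least one coordinate index $i$ the set $B$ must contain a root with a ``short'' support at position $i$ in a usable way; more precisely one argues that there exists $i$ such that either $\eps_i\in B$, or position $i$ is hit by exactly the right configuration of roots so that Laplace expansion along row $i$ reduces the problem. The cleanest route is probably to reduce each root $\eps_i\pm\eps_j$ modulo the sublattice generated by the $\eps_k$'s already accounted for. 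Alternatively, and perhaps more robustly, I would argue via the smith normal form / elementary divisors of $B'$: the cokernel $\Z^n/B'\Z^n$ is a finite abelian group of order $|\det(B')|$, and one shows this group is a $2$-group by exhibiting, for each root $\alpha\in\tphi^+$, a relation expressing $2\alpha$ (or $\alpha$ itself when $\alpha=\eps_i$) as an explicit $\Z$-combination of roots in a fixed spanning family together with manipulations that only introduce denominators that are powers of $2$. Concretely: $\eps_i = \tfrac12\big((\eps_i-\eps_j)+(\eps_i+\eps_j)\big)$ and $\eps_i-\eps_j = \eps_i - \eps_j$, so modulo the $\eps_k$-lattice every difference/sum root becomes $2$-torsion.

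The cleanest packaging is likely this: let $\Lambda_B\subseteq\Z^n$ be the lattice generated by $B$. One shows $2\Z^n\subseteq\Lambda_B$ by checking each standard basis vector lies in $\tfrac12\Lambda_B\cap\Z^n$ — for this one needs that $B$, being a basis of $\R^n$, ``sees'' each coordinate, which is where the spanning hypothesis enters and where a short combinatorial lemma about which coordinate configurations can occur in a basis of positive $B_n$-roots is required. From $2\Z^n\subseteq\Lambda_B\subseteq\Z^n$ we get $|\det(B')| = [\Z^n:\Lambda_B]$ divides $[\Z^n:2\Z^n]=2^n$, hence $\det(B')=\pm 2^k$ for some $0\le k\le n$, which is coprime to every odd prime $p$. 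I expect the main obstacle to be the combinatorial bookkeeping in the inductive/coordinate-peeling step: verifying that a basis of $\R^n$ drawn from $\tphi^+$ really does let one express each $\eps_i$ with only powers of $2$ in the denominator (equivalently, handling the interaction between the ``$\eps_i+\eps_j$'' roots and the ``$\eps_i-\eps_j$'' roots when several share indices). Once that is pinned down the determinant/index argument is routine, and the conclusion that $p\neq 2$ is compatible follows immediately.
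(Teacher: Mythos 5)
Your reduction of the lemma to the claim that $\det(B')$ is $\pm$ a power of $2$ is exactly the reduction the paper makes, and both of your proposed routes to that claim could in principle be made to work. The problem is that the central step is left unproven, and you flag this yourself: the assertion $2\Z^n\subseteq\Lambda_B$ (equivalently, that each $\eps_i$ is expressible over $B$ with only powers of $2$ in the denominators) is where all the content sits, and the one identity you offer, $\eps_i=\tfrac12\bigl((\eps_i-\eps_j)+(\eps_i+\eps_j)\bigr)$, does not establish it, since it requires \emph{both} $\eps_i-\eps_j$ and $\eps_i+\eps_j$ to lie in $\Lambda_B$ for a common $j$, which a basis $B\subseteq\tphi^+$ need not provide. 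The missing combinatorial input is the signed-graph description of such bases: regard $\eps_i\pm\eps_j$ as a signed edge $ij$ and $\eps_i$ as a half-edge at $i$; a set of $n$ positive roots is a basis of $\R^n$ if and only if every connected component of this graph is a spanning tree of its vertex set together with either one half-edge or one extra edge closing a cycle containing an odd number of edges of type $\eps_i+\eps_j$. In the latter case, telescoping the edge vectors around that cycle yields $2\eps_i$ for a vertex $i$ on the cycle, and then $2\eps_k\in\Lambda_B$ for every $k$ in the component by adding and subtracting edges along tree paths. Without this (or an equivalent) lemma your argument does not close; with it, you in fact get the sharper statement $\Z^n/\Lambda_B\cong(\Z/2)^c$.

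The paper sidesteps this bookkeeping with a more elementary observation: every column of $B'$ has at most two non-zero entries, all equal to $\pm1$, so after permuting columns so that the diagonal is non-zero (possible since $\det(B')\neq 0$), Gaussian elimination requires at most one row operation per column and can only produce $\pm1$ or $\pm2$ on the diagonal; hence $\det(B')$ is a product of elements of $\Set{\pm1,\pm2}$, and every odd prime is compatible. If you prefer your lattice-index packaging, you must supply the unbalanced-cycle lemma explicitly; it is true, but it is not shorter than the paper's elimination argument.
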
 

\begin{proof}
	Let $B \subseteq \tphi^+$ be a basis of $\R^n$ and $B'$ a matrix whose columns are the basis vectors of $B$ in an arbitrary order. We show that if $\det(B')\neq \pm 1$, then the only prime factor of $\det(B')$ is $2$. Then $\det(\prho_M(B'))\neq 0$ for all primes $p \neq 2$ and hence $\prho(B) \subseteq \F_p^n$ is a basis, which means that $p$ is compatible with $\tphi$.
	
	Note that every positive root in $\tphi$ has exactly one or two non-zero entries. We
	arrange the columns of $B'$ in such a way that all diagonal entries are non-zero.
	Hence $B$ is almost in row echelon form, since all diagonal entries are non-zero and there is at most one other non-zero entry in each column. Using the Gaussian elimination, at most one elementary row operation has to be performed for every column. Since all non-zero entries of $B'$ are $1$ or $-1$, the only entries that can appear on the diagonal of the row echelon form of $B'$ are $\pm 1$ and $\pm 2$. Hence $\det(B')$ is a product of elements in $\set{\pm 1, \pm 2}$.
\end{proof}

We use the same notation for the images of moved spaces under the identification map $\iota$ as in the previous section. The translation of \cref{thm:embedding_Vp_VZ} to type $B$, together with the previous lemma, is the following.

\begin{prop}
	Let $p$ be a prime with $p \neq 2$. For every positive integer $n$, the map
	\[
	\pemb\colon \nc(B_n) \to \lam(\F_p^n), \quad w\mapsto \tpmov(w)\subseteq \F_p^n
	\]
	is a rank-preserving poset embedding.
\end{prop}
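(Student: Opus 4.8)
The plan is to deduce this statement in the same way that \cref{prop:typeA_emb} was obtained, namely as a direct translation of the general embedding theorem \cref{thm:embedding_Vp_VZ} combined with the classification of compatible primes for type $B$ provided by \cref{lem:typeB_comp_prime}. The first thing to record is that the Coxeter group of type $B_n$ is crystallographic: its Coxeter diagram carries only edge labels $3$ and $4$, so $m(s,t)\in\set{2,3,4}$ for all distinct $s,t\in S$, which places it in the crystallographic range identified in \cref{sec:geom_rep}. Hence \cref{thm:embedding_Vp_VZ} applies and furnishes, for every prime $p$ compatible with the crystallographic root system, a rank-preserving poset embedding $\nc(B_n)\to\lam(\Vp)$, $w\mapsto\pmov(w)$.

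Next I would transport this embedding along the linear isomorphism $\iota\colon V\to\R^n$ introduced for type $B$ above. Since $\iota$ sends the simple roots $\alpha_{s_i}$ to the stated generators of $\tphi$, maps $\Phi$ bijectively onto $\tphi$ and restricts to a $\Z$-linear isomorphism $\VZ\xrightarrow{\ \sim\ }\Z^n$ (we noted that the $\Z$-span of $\tphi$ is $\Z^n$), it is compatible with reduction modulo $p$: the projection $\prho\colon\VZ\to\Vp$ and the canonical projection $\Z^n\to\F_p^n$ fit into a commuting square under $\iota$ and its mod-$p$ reduction. Consequently $\iota$ identifies $\pmov(w)$ with $\tpmov(w)$ and, more importantly, identifies the notion of a prime being compatible with $\Phi$ with that of being compatible with $\tphi$ in the sense of the type $B$ section, because a basis of $V$ contained in $\Phi^+$ is carried to a basis of $\R^n$ contained in $\tphi^+$ and vice versa. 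Combining this identification with \cref{lem:typeB_comp_prime}, which says precisely that every prime $p\neq 2$ is compatible with $\tphi$, yields that $\pemb\colon\nc(B_n)\to\lam(\F_p^n)$, $w\mapsto\tpmov(w)$, is rank-preserving, order-preserving and injective; the analog of \cref{lem:pemb_join_interchange} also carries over, though it is not needed for the statement.

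I do not expect any genuine obstacle here: all the real mathematical content — that linear independence of a basis of roots survives reduction modulo an odd prime — is already contained in \cref{lem:typeB_comp_prime}, which was proved via a short Gaussian-elimination argument showing that the determinant of a root-basis matrix is a product of elements of $\set{\pm1,\pm2}$. The only point that requires a sentence of care is the bookkeeping in the previous paragraph, verifying that "compatible with $\tphi\subseteq\Z^n$" agrees with the abstract notion in \cref{thm:embedding_Vp_VZ}; once that is in place the proposition is immediate, and the exclusion of $p=2$ is exactly the failure described in the remark preceding \cref{lem:typeB_comp_prime} (the roots $\eps_1+\eps_2$ and $\eps_1-\eps_2$, which lie in a common basis of $\R^n$, coincide after reduction mod $2$).
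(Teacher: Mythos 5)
Your proposal is correct and follows exactly the route the paper takes: the paper also presents this proposition as the direct translation of \cref{thm:embedding_Vp_VZ} to the type $B$ setting via the identification $\iota\colon V\to\R^n$, combined with \cref{lem:typeB_comp_prime} to cover all primes $p\neq 2$. Your extra bookkeeping about the compatibility of $\iota$ with reduction modulo $p$ is exactly the (implicit) content of the paper's one-line deduction.
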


Composition with the isomorphism $\ncb_n \to \nc(B_n)$ from \cref{thm:typeB_nc_iso} gives an embedding of the pictorial representations of the non-crossing partitions of type $B$ into a finite spherical building. The embedding of $|\ncb_3|$ into the spherical building associated to $\F_3^3$ is shown in \cref{fig:ncb_3-embedding}.

\begin{figure}
	\begin{center}
		\begin{tikzpicture}
		\def\r{30mm}
		\foreach \w in {1,2,...,26}
		\node (e\w) at (-\w * 360/26 : \r) [mpunkt] {};
		\foreach \m in {4,6,7,9,13,14,25,26}
		\node at (e\m) [mpunkt, Lightgray] {};
		
		\foreach  \i [evaluate=\i as \j using int(\i+1)] in
		{4,6,7,9,13,14,25,3,5,12,24,8}
		\draw[thick,Lightgray] (e\i) -- (e\j);
		\draw[thick,Lightgray] (e1) -- (e26);
		\foreach \i [evaluate=\i as \j using int(\i+5)] in {1,7,9,13,21}
		\draw[thick,Lightgray] (e\i) -- (e\j) ;
		\draw[thick,Lightgray] (e25) -- (e4);   
		\foreach \i [evaluate=\i as \j using int(\i+9)] in {4,6,14,16}
		\draw[thick,Lightgray] (e\i) -- (e\j);
		\foreach \i [evaluate=\i as \j using int(\i+17)] in {7,9}
		\draw[thick,Lightgray] (e\i) -- (e\j);

		\foreach \i [evaluate=\i as \j using int(\i+1)] in
		{1,2,10,11,15,16,17,18,19,20,21,22,23}
		\draw[thick] (e\i) -- (e\j);
		\foreach \i [evaluate=\i as \j using int(\i+5)] in {3,5,11,15,17,19}
		\draw[thick] (e\i) -- (e\j) ;
		\draw[thick] (e23) -- (e2);   
		\foreach \i [evaluate=\i as \j using int(\i+9)] in {2,8,10,12}
		\draw[thick] (e\i) -- (e\j);
		\foreach \i [evaluate=\i as \j using int(\i+17)] in {1,3,5}
		,        \draw[thick] (e\i) -- (e\j);   
		
		\foreach \w/\d in {    1/\pBeuzd,
			2/\pBzd,
			3/\pBemzmd,
			5/\pBezud,
			8/\pBemz,
			10/\pBd,
			11/\pBZzd,
			12/\pBzmd,
			15/\pBemduz,
			16/\pBz,                       
			17/\pBZez,
			18/\pBe,
			19/\pBZed,
			20/\pBemd,
			21/\pBezmd,
			22/\pBez,
			23/\pBezd,
			24/\pBed}
		\node (e\w) at (-\w * 360/26 : \r + \r/5) {\scalebox{0.7}{\d}};
		\end{tikzpicture}
		\caption{The complex $|\ncb_3|$ inside the spherical building $|\lam(\F_3^3)|$.}%
		\label{fig:ncb_3-embedding}%
	\end{center}
\end{figure}

\subsection{Type $D$}

Let $W$ be the Coxeter group of type $D_n$ with Coxeter generating set $S$ introduced in \cref{sec:typeD_descr}. Let $\Phi \subseteq V=\bigoplus_{i=1}^n \R\, \alpha_{s_i}$ be a crystallographic root system for it. The assignment 
\[
\alpha_{s_i} \mapsto 
\begin{cases}
\eps_i - \eps_{i+1}& \text{ if }i<n,\\
\eps_{n-1} + \eps_n & \text{ if } i=n
\end{cases}
\]
induces an angle-preserving isomorphism $\iota\colon V \to \R^n$, where $\R^n$ is equipped with the standard Euclidean structure.
The image of the root system $\Phi$ under $\iota$ is again a crystallographic root system $\tphi \subseteq \R^n$ of $W$, which is explicitly given by
\[
\tphi \coloneqq \iota(\phi) = \Set{\pm \eps_i \pm \eps_j \str 1\leq i<j \leq n}
\]
by \cite[Chap. VI.4.8]{bour}.
Note that the $\Z$-span $\braket{\tphi}$ of $\tphi$ in $\R^n$ is \emph{not} $\Z^n$, since for instance the vector $\eps_1$ is not contained in $\braket{\tphi}$. We denote the canonical projection $\braket{\tphi} \subseteq \Z^n \to \F_p^n$ by $\prho$ for every prime $p$. The set of positive roots is given by
\[
\tphi^+\coloneqq \iota(\Phi^+)= \Set{\eps_i \pm \eps_j \str 1\leq i<j \leq n}.
\]

Similar to type $B$, the prime $2$ is \emph{not} compatible with $\tphi$. But, in analogy to type $B$ as well, this is the only exception.

\begin{lem}
	Every prime $p \neq 2$ is compatible with $\tphi$.
\end{lem}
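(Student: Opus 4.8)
The plan is to deduce this from the already established type $B_n$ case, \cref{lem:typeB_comp_prime}, using that the type $D_n$ positive roots form a subset of the type $B_n$ positive roots. Under the identification $\iota$ fixed in this subsection the set of positive roots of the type $D_n$ root system is $\tphi^+ = \Set{\eps_i \pm \eps_j \str 1 \leq i < j \leq n}$, whereas in the previous subsection the positive roots of the type $B_n$ root system are $\Set{\eps_i \pm \eps_j \str 1 \leq i < j \leq n} \cup \Set{\eps_i \str 1 \leq i \leq n}$; hence every type $D_n$ positive root is also a type $B_n$ positive root. Moreover, in both cases the map $\prho$ is simply the coordinate-wise reduction modulo $p$ of integer vectors — the only difference being its formal domain, $\langle\tphi\rangle$ versus $\Z^n$ — so the reduction $\prho(v)$ of a vector $v$ does not depend on which root system we regard it as belonging to, and the notion of \emph{compatible prime} only ever refers to bases of $\R^n$ drawn from the positive roots together with their mod-$p$ reductions.

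With this observation in hand the argument is essentially immediate. I fix a prime $p \neq 2$ and let $B$ be an arbitrary basis of $\R^n$ consisting of type $D_n$ positive roots. By the containment above, $B$ is simultaneously a basis of $\R^n$ consisting of type $B_n$ positive roots, so \cref{lem:typeB_comp_prime} applies and shows that $\prho(B)$ is a basis of $\F_p^n$. Since $B$ was arbitrary, this is precisely the statement that $p$ is compatible with the type $D_n$ root system $\tphi$, which is what we want. In this form there is no real obstacle at all; the only point that needs to be noticed is the inclusion of the two root systems and the compatibility of the two projection maps.

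For completeness I would also indicate a direct proof along the lines of \cref{lem:typeB_comp_prime}: for a basis $B \subseteq \tphi^+$, form the matrix $B'$ whose columns are the vectors of $B$ and show that $|\det(B')|$ is a power of $2$, which then forces $\det(\prho_M(B')) \neq 0$ for every odd $p$. Each column of $B'$ now has \emph{exactly} two nonzero entries, each equal to $\pm 1$, so $B'$ is a signed incidence matrix of a graph $G$ on $n$ vertices with $n$ edges; since $\det(B') \neq 0$, every connected component of $G$ has exactly as many edges as vertices, hence is unicyclic, and a Leibniz expansion of the determinant organized by the cycles of $G$ (equivalently, after sign-switching the rows so that these edges become oriented) yields $\det(B') = \pm 2^{c}$ with $c$ the number of components of $G$. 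In this second route the delicate point is the sign bookkeeping in the cycle expansion — verifying that no cycle contributes a cancelling factor, which here is automatic from $\det(B') \neq 0$ — and it is exactly the step that the reduction to type $B_n$ lets one skip entirely.
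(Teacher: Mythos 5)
Your main argument is correct and takes a slightly different --- and cleaner --- route than the paper. The paper's own proof notes that all type $D_n$ roots have exactly two non-zero entries and then re-runs the determinant argument of \cref{lem:typeB_comp_prime} (arranging the columns of $B'$ so that the diagonal is non-zero and tracking the effect of Gaussian elimination). You instead observe that $\tphi^+ = \Set{\eps_i \pm \eps_j \str 1\leq i<j\leq n}$ is contained in the set of type $B_n$ positive roots, that the two projections $\prho$ agree on $\braket{\tphi}\subseteq \Z^n$, and that compatibility only quantifies over bases of $\R^n$ drawn from the positive roots; hence every basis to be tested for type $D_n$ has already been tested by \cref{lem:typeB_comp_prime}, and the statement follows with no computation at all. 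This is a genuine, if mild, simplification: it uses the type $B_n$ lemma as a black box rather than its method of proof. Your supplementary direct argument via signed incidence matrices is also sound --- once $\det(B')\neq 0$ forces every component of the associated graph to be unicyclic with an unbalanced cycle, each component contributes $\pm 2$ and $\det(B')=\pm 2^{c}$ --- and it is in fact a sharper and more transparent version of the Gaussian-elimination bookkeeping that the paper's type $B$ proof relies on. Either route suffices.
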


\begin{proof}
	We have to show that for every basis $B\subseteq \tphi^+$ of $\R^n$ the projection $\prho(B)$ is a basis of $\F_p^n$ for every prime $p \neq 2$. Since all roots have exactly two non-zero entries, we can use the same argumentation as in \cref{lem:typeB_comp_prime}.
\end{proof}

As before, we use the notation $\tpmov$ for the $p$-moved spaces that arises from a moved space $\iota(\mov)\subseteq \R^n$. Together with the previous lemma, the interpretation of \cref{thm:embedding_Vp_VZ} in type $D$ is the following.

\begin{prop}
	Let $p$ be a prime with $p \neq 2$. For every positive integer $n$, the map
	\[
	\pemb\colon \nc(D_n) \to \lam(\F_p^n), \quad w\mapsto \tpmov(w)\subseteq \F_p^n
	\]
	is a rank-preserving poset embedding.
\end{prop}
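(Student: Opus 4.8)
The statement to prove is the type $D$ analog of Propositions \ref{prop:typeA_emb} and the type $B$ embedding proposition: for every prime $p\neq 2$ and every positive integer $n$, the map $\pemb\colon \nc(D_n)\to\lam(\F_p^n)$, $w\mapsto\tpmov(w)$, is a rank-preserving poset embedding. The plan is to obtain this as a direct specialization of the general \cref{thm:embedding_Vp_VZ}, combined with the lemma immediately preceding the statement, which asserts that every prime $p\neq2$ is compatible with the concrete crystallographic root system $\tphi=\Set{\pm\eps_i\pm\eps_j\str 1\leq i<j\leq n}$ realizing $W(D_n)$ inside $\R^n$. The only genuine content beyond citing \cref{thm:embedding_Vp_VZ} is to reconcile the two settings: the general theorem is stated for $\nc(W)\to\lam(\Vp)$ with $\Vp$ the abstract $\F_p$-vector space on the formal basis $\Set{\pal_s\str s\in S}$, while the statement is phrased in terms of $\F_p^n$ with standard basis $e_1,\ldots,e_n$ and the concrete roots.

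First I would recall the angle-preserving isomorphism $\iota\colon V\to\R^n$ sending $\alpha_{s_i}$ to $\eps_i-\eps_{i+1}$ for $i<n$ and $\alpha_{s_n}$ to $\eps_{n-1}+\eps_n$, under which $\Phi^+$ maps to $\tphi^+$. Since the span $\Braket{\tphi}\subseteq\Z^n$ is a free $\Z$-module on which $W$ acts (it is the analog of $\VZ$ in this concrete model), the canonical reduction $\prho\colon\Braket{\tphi}\to\F_p^n$ is defined, and $\iota$ transports the abstract projection $\prho\colon\VZ\to\Vp$ to this concrete one; in particular $\iota$ identifies $\Vp$ with the $\F_p$-subspace of $\F_p^n$ generated by $\prho(\tphi)$, and for $p\neq 2$ this subspace is all of $\F_p^n$ because, say, $e_1=\prho\bigl(\tfrac{1}{2}((\eps_1-\eps_2)+(\eps_1+\eps_2))\bigr)$ makes sense once we note $\eps_1-\eps_2,\eps_1+\eps_2\in\tphi$ and $2$ is invertible mod $p$ — more cleanly, the compatibility lemma just proven already guarantees $\prho$ sends any basis of $\R^n$ drawn from $\tphi^+$ to a basis of $\F_p^n$, so the image lattice is genuinely $\lam(\F_p^n)$. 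Under this identification, $\tpmov(w)=\iota(\mov(w))\cap\Braket{\tphi}$ reduced mod $p$ corresponds exactly to the abstract $\pmov(w)=\prho(\mov_\Z(w))$, so the concrete map $w\mapsto\tpmov(w)$ is literally $\pemb$ from \cref{thm:embedding_Vp_VZ} composed with the lattice isomorphism $\lam(\Vp)\xrightarrow{\cong}\lam(\F_p^n)$ induced by $\iota$.

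With this dictionary in place, the proof is one line: by the compatibility lemma, $p\neq 2$ is compatible with $\tphi$; hence by \cref{thm:embedding_Vp_VZ} the map $\pemb\colon\nc(D_n)\to\lam(\Vp)$ is a rank-preserving poset embedding, and postcomposing with the lattice isomorphism induced by $\iota$ preserves this property, giving the claimed embedding into $\lam(\F_p^n)$. I would present it in exactly this form, perhaps with an explicit reminder that $\tpmov(w)=\bigoplus_{i=1}^k\F_p\,\tpal_{t_i}$ for any reduced decomposition $t_1\ldots t_k$ of $w$ (the concrete analog of \cref{lem:basis_p_mov_space}), so that rank-preservation $\dim\tpmov(w)=\ell(w)$ is manifest and injectivity follows as in the proof of \cref{thm:embedding_Vp_VZ} via \cref{lem:p-projection_injective} applied to $\tphi^+$.

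I do not anticipate a serious obstacle here — the real work was done in establishing \cref{thm:embedding_Vp_VZ} and the compatibility lemma. The one point that needs a sentence of care is the identification of $\Vp$ with $\F_p^n$ rather than with a proper subspace: this is exactly where $p\neq 2$ is used, and it is already encoded in the compatibility lemma (a compatible prime sends a full basis of $\R^n$ from $\tphi^+$ to a full basis of $\F_p^n$, forcing $\prho(\Braket{\tphi})=\F_p^n$). So the only mild subtlety is bookkeeping between the abstract and concrete root-system models, which the isomorphism $\iota$ handles cleanly.
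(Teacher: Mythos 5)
Your proposal is correct and follows essentially the same route as the paper: the paper presents this proposition as an immediate consequence of \cref{thm:embedding_Vp_VZ} together with the preceding lemma that every prime $p\neq 2$ is compatible with $\tphi$, which is exactly your argument. Your extra paragraph reconciling the abstract space $\Vp$ with the concrete $\F_p^n$ via $\iota$ is a harmless (and arguably welcome) elaboration of bookkeeping the paper leaves implicit.
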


Composition of this embedding with the isomorphism of \cref{thm:typeD_nc_iso} yields an embedding of the pictorial representations of $D_n$-partitions into a finite spherical building. \cref{fig:ncd_3-embedding} shows the order complex of $\ncd_3$ embedded into the spherical building associated to $\F_2^3$. Note that since $W(A_3) \cong W(D_3)$, the order complex of $\ncd_3 \cong \ncp_4$ embeds into the spherical building associated to $\F_2^3$ as well.   

\begin{figure}%
	\begin{center}
		\begin{tikzpicture}
		\def\r{30mm}
		\foreach \w in {1,2,...,26}
		\node (e\w) at (-\w * 360/26 : \r) [mpunkt,Lightgray] {};

		\foreach \m in {14,...,25}
		\node at (e\m) [mpunkt] {};

		\foreach \i [evaluate=\i as \j using int(\i+1)] in
		{25,1,2,3,4,5,6,7,8,9,10,11,12,13}
		\draw[thick,Lightgray] (e\i) -- (e\j);
		\draw[thick,Lightgray] (e1) -- (e26);
		\foreach \i [evaluate=\i as \j using int(\i+5)] in {1,3,5,7,9,11,13,21}
		\draw[thick,Lightgray] (e\i) -- (e\j) ;
		\draw[thick,Lightgray] (e25) -- (e4);
		\draw[thick,Lightgray] (e23) -- (e2);   
		\foreach \i [evaluate=\i as \j using int(\i+9)] in {2,4,...,12}
		\draw[thick,Lightgray] (e\i) -- (e\j);
		\draw[thick,Lightgray] (e26) -- (e9);
		\foreach \i [evaluate=\i as \j using int(\i+17)] in {1,3,5,7,9}
		\draw[thick,Lightgray] (e\i) -- (e\j);
		
		\foreach  \i [evaluate=\i as \j using int(\i+1)] in {14,...,24}
		\draw[thick] (e\i) -- (e\j);
		\foreach \i [evaluate=\i as \j using int(\i+5)] in {15,17,19}
		\draw[thick] (e\i) -- (e\j) ;   
		\foreach \i [evaluate=\i as \j using int(\i+9)] in {14,16}
		\draw[thick] (e\i) -- (e\j);
		
		\foreach \w/\d in {    14/\begin{tikzpicture}
			\mpviereck \draw(p1)--(p4)(p3)--(p2);
			\end{tikzpicture},
			15/\begin{tikzpicture}
			\mpviereck \draw (p2)--(p3)--(p0)--(p2);
			\end{tikzpicture},
			16/\begin{tikzpicture}
			\mpviereck \draw (p3)--(p0);
			\end{tikzpicture},
			17/\begin{tikzpicture}
			\mpviereck \draw (p4)--(p3)--(p0)--(p4);
			\end{tikzpicture},
			18/\begin{tikzpicture}
			\mpviereck \draw (p4)--(p3) (p1)--(p2);
			\end{tikzpicture},
			19/\begin{tikzpicture}
			\mpviereck \draw (p2)--(p1)--(p0)--(p2);
			\end{tikzpicture},
			20/\begin{tikzpicture}
			\mpviereck \draw (p2)--(p0);
			\end{tikzpicture},
			21/\begin{tikzpicture}
			\mpviereck \draw (p2)--(p0)--(p4);
			\end{tikzpicture},
			22/\begin{tikzpicture}
			\mpviereck \draw (p0)--(p4);
			\end{tikzpicture},
			23/\begin{tikzpicture}
			\mpviereck \draw (p1)--(p4)--(p0)--(p1);
			\end{tikzpicture},
			24/\begin{tikzpicture}
			\mpviereck \draw (p0)--(p1);
			\end{tikzpicture},                       
			25/\begin{tikzpicture}
			\mpviereck \draw (p3)--(p0)--(p1);
			\end{tikzpicture}}
		\node (e\w) at (-\w * 360/26 : \r + \r/5) {\scalebox{0.7}{\d}};
	\end{tikzpicture}
	\caption{The complex $|\ncd_3|$ inside the spherical building $|\lam(\F_3^3)|$.}%
	\label{fig:ncd_3-embedding}%
\end{center}
\end{figure}
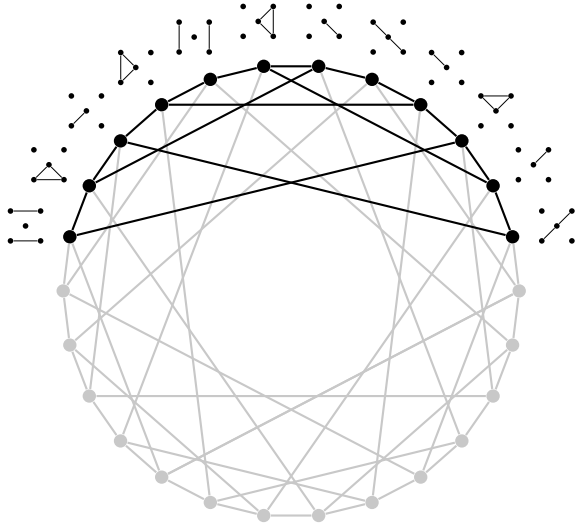

\cleardoublepage

\chapter{Automorphisms and anti-automorphisms}\label{chap:autos}

In this chapter we investigate the lattice automorphisms and anti-automorphisms of the non-crossing partition lattices. For type $A$, the automorphism group is known to be a dihedral group \cite{biane}. We compute the automorphism group of $\nc(B_n)$ for all $n$, and for $\nc(D_n)$ for $n\neq 4$ and show that they are dihedral groups. Moreover, we show that in the classical types, all automorphisms extend uniquely to the lattice of linear subspaces $\lam$ they are embedded in, where for type $D$ we have to further assume that $n\neq 4$. For the classical types, we show that a class of anti-automorphisms extends to the lattice $\lam$ as well. 
Further we show that for an arbitrary Coxeter group, there exists a dihedral group of automorphisms whose elements extend uniquely to lattice automorphisms of $\lam$. We use the poset-theoretic language in this chapter, but the results can be translated to the simplicial setting as well. For instance, the type-preserving automorphisms of the non-crossing partitions of classical type extend uniquely to type-preserving simplicial automorphisms of the spherical building.

For the rest of this chapter, let $W$ be a finite Coxeter group of rank $n$ and $V$ a $n$-dimensional vector space, which always can be chosen to be isomorphic to $\R^n$ or, if we are in the setting of a crystallographic Coxeter group, it can be chosen to be isomorphic to $\F_p^n$ for a compatible prime $p$ as well. We denote the respective embeddings of $\nc(W)$ from \cref{thm:brady_watt_embedding} and \cref{thm:embedding_Vp_VZ} by $\emb\colon \nc(W) \to \lam(V)$, independently of the chosen vector space $V$.

\section{Automorphisms}
The aim of this section is to describe lattice automorphisms of the non-crossing partitions. Recall that a lattice automorphism of a lattice $L$ is a order-preserving bijection $L \to L$ whose inverse is order-preserving as well. The group of automorphisms of the lattice $L$ is denoted by $\aut(L)$. We follow \cite{armstr} and consider a dihedral group of automorphisms. We show that these automorphisms extend uniquely to automorphisms of the lattice of linear subspaces in which the non-crossing partitions are embedded.

\subsection{Bipartitions of Coxeter elements}

In this section we introduce the notion of a bipartition of a Coxeter element, which proves useful for defining a group of lattice automorphisms of $\nc(W)$.
In the literature the terms \enquote{bipartite Coxeter element} \cite{armstr} and  \enquote{chromatic pair} \cite{bes} are used.

\begin{defi}
	A product $\l\r$ of elements $\l,\r\in W$ is called \emph{bipartition} of the Coxeter element $\cox$ if $\cox=\l\r$ and $\l^2=\r^2=\id$. The elements $\l$ and $\r$ are called \emph{left part} and \emph{right part} of the bipartition, respectively.
\end{defi}

\begin{lem}\label{lem:cox_element_has_bipartition}
	Every Coxeter element of a finite Coxeter group has a bipartition.
\end{lem}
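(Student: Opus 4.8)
I want to show that an arbitrary Coxeter element $c$ of a finite Coxeter group $W$ can be written as $c=\l\r$ with $\l^2=\r^2=\id$. Since all Coxeter elements are conjugate to a standard Coxeter element, and a bipartition is preserved under conjugation (if $c=\l\r$ then $wcw\inv = (w\l w\inv)(w\r w\inv)$ with both factors still involutions), it suffices to produce a bipartition of one standard Coxeter element $c=s_1s_2\cdots s_n$. The key structural fact to invoke is that every Coxeter diagram of a finite Coxeter group is a tree, hence bipartite as a graph: the vertex set $S$ splits as $S=S_\l\sqcup S_\r$ so that no two generators in $S_\l$ are joined by an edge, and likewise for $S_\r$.

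\textbf{Carrying it out.} Given such a two-colouring, set $\l\coloneqq \prod_{s\in S_\l} s$ and $\r\coloneqq \prod_{s\in S_\r} s$. Because the generators within $S_\l$ are pairwise non-adjacent in the Coxeter diagram, they pairwise commute and each squares to the identity, so $\l$ is a well-defined involution (the product is independent of the order of the factors), and similarly $\r^2=\id$. It remains to see that $\l\r$ is a standard Coxeter element, i.e. equals $s_{\pi(1)}\cdots s_{\pi(n)}$ for some ordering $\pi$ of $S$: just list the elements of $S_\l$ first (in any order) and then the elements of $S_\r$. This is a product of all of $S$ in some order, hence a standard Coxeter element by definition. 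Finally, any two standard Coxeter elements are conjugate (as recalled just before this lemma), so $\l\r$ is conjugate to $c$; transporting the bipartition along that conjugation gives the desired bipartition of $c$ itself.

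\textbf{Where the work is.} There is essentially no hard step here — the only input beyond elementary group theory is that finite Coxeter diagrams are trees (stated in the excerpt: ``all Coxeter diagrams corresponding to finite Coxeter systems are trees''), together with the standard fact that non-adjacent generators commute, which is immediate from the Coxeter relation $(st)^{m(s,t)}=1$ with $m(s,t)=2$. The one point deserving a sentence of care is the well-definedness of $\l$: a priori $\prod_{s\in S_\l}s$ depends on the chosen order, but pairwise commutativity of the factors removes this ambiguity. I would also remark that one does not even need trees — graph bipartiteness suffices — but since finite Coxeter diagrams happen to be trees this is automatic, and this observation is exactly why the term ``bipartite Coxeter element'' is used.
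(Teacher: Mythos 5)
Your proof is correct and follows essentially the same route as the paper: two-colour the (tree, hence bipartite) Coxeter diagram, take $\l'$ and $\r'$ as the products over the two colour classes to get a bipartition of a standard Coxeter element, and then conjugate to reach the given Coxeter element. The only cosmetic difference is that you explicitly flag the well-definedness of the unordered products, which the paper leaves implicit.
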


\begin{proof}
	Let $W$ be a finite Coxeter group with fixed simple system $S=\Set{s_1, \ldots, s_n}$ and Coxeter element $\cox$. Then the Coxeter diagram is a tree and hence there exists a partition of the vertices $S$ into two subsets $S_1$ and $S_2$ of non-adjacent vertices. This means that all elements in $S_1$ commute, and the same is true for the elements in $S_2$. There are exactly two choices of sets $S_1$ and $S_2$. Let us fix one of them. Let $\l'$ be the product of the elements of $S_1$ and let $\r'$ be the product of elements of $S_2$. 
	Then $\cox'=\l'\r'$ is a product of all the simple reflections and hence a standard Coxeter element. Since any two Coxeter elements are conjugate, there is a $w\in W$ such that $w\cox'w\inv=\cox$. Choose such an element $w$ and set $\l\coloneqq w\l'w\inv$ and $\r\coloneqq w \r w\inv$. Then $c=\l\r$ and $\l^2=\r^2=\id$, since all respective factors of $\l$ and $\r$ commute.
\end{proof}

From now on, we fix a bipartition $\l\r$ of the Coxeter element $\cox\in W$ and call it the \emph{standard bipartition} of $\cox$. 

\begin{exa}
	Let us construct bipartitions of the Coxeter element $\cox=(1\,\;2\,\;3\,\;4\,\;5)$ in the symmetric group on five elements. We follow the recipe provided by the proof of \cref{lem:cox_element_has_bipartition}. Let $S=\Set{(1\,\;2),(2\,\;3),(3\,\;4),(4\,\;5)}$ be the fixed simple generating set. One bipartition of the vertices of the Coxeter diagram of $S_5$ consists of the two sets $S_1=\Set{(1\,\;2),(3\,\;4)}$ and $S_2=\Set{(2\,\;3),(4\,\;5)}$. Multiplication of the elements of $S_1$ and of $S_2$ gives elements $\l'=(1\,\;2)(3\,\;4)$ and $\r'=(2\,\;3)(4\,\;5)$, whose product is the standard Coxeter element $\cox'=(1\,\;2\,\;4\,\;5\,\;3)$. The element $w=(3\,\;5\,\;4)$ conjugates $\cox'$ to $\cox$, that is $\cox=w\cox'w\inv$.
	Hence $\l = w \l' w\inv = (1\,\;2)(3\,\;5)$ 
	is the left part and $\r=w \r' w\inv =(2\,\;5)(3\,\;4)$
	is the right part of the bipartition $\l\r=(1\,\;2)(3\,\;5)\cdot(2\,\;5)(3\,\;4)$ of the Coxeter element $\cox=(1\,\;2\,\;3\,\;4\,\;5)$. Note that the element $(3\,\;5\,\;4)$ that conjugates $\cox'$ to $\cox$ is not uniquely determined. For the element $v=(1\,\;5\,\;3\,\;4\,\;2)$ it is also true that $v\cox' v\inv=\cox$. Hence we get another bipartition of $\cox$ with left part $a=v\l'v\inv=(1\,\;5)(2\,\;4)$
	and right part $b=v\r'v\inv=(1\,\;4)(2\,\;3)$, 
	that is $\cox= ab =(1\,\;5)(2\,\;4)\cdot(1\,\;4)(2\,\;3)$.
	
	Let us fix the bipartition $\l\r= (1\,\;2)(3\,\;5)\cdot(2\,\;5)(3\,\;4)$ as standard bipartition of the Coxeter element $\cox$.
	We compute another bipartition of $\cox$ by using a different method as above. Note that $\cox=\cox\l\cdot\l$ and $(\cox\l)^2=\l\r\l\l\r\l =\id$ as well as $\l^2=\id$. Hence we have a bipartition $\l_1\r_1$ with left element $\l_1=\cox\l=(1\,\;3)(4\,\;5)$ and right element $\r_1=\l=(1\,\;2)(3\,\;5)$. 
	Further bipartitions of $\cox$ are given by $(1\,\;3)(4\,\;5)\cdot(1\,\;2)(3\,\;5)$ and $(2\,\;5)(3\,\;4)\cdot(1\,\;5)(2\,\;4)$.
	
\end{exa}

The method of producing new bipartitions of a Coxeter element out of the standard partition is not a phenomenon of the symmetric group. It works in the general setting.

\begin{lem}\label{lem:further_bipartitions}
	Let $W$ be a finite Coxeter group and $\cox=\l\r$ the standard bipartition of the Coxeter element $\cox$. Then $\cox^{k}\l \cdot \cox^{k-1}\l$ is a bipartition of $\cox$ for all $k\in \Z$.
\end{lem}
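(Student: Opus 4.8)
The statement asks us to verify that $ab$ with $a \coloneqq \cox^k\l$ and $b\coloneqq \cox^{k-1}\l$ satisfies the two defining conditions of a bipartition, namely $ab = \cox$ and $a^2 = b^2 = \id$. The only structural input we will need is the observation that the left part $\l$ of a bipartition conjugates $\cox$ to its inverse. So the plan is: first establish this conjugation identity, then do three short computations.

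\textbf{Key identity.} From $\cox = \l\r$ together with $\l^2 = \r^2 = \id$ (so $\l\inv = \l$ and $\r\inv = \r$) we get
\[
\l\cox\l = \l(\l\r)\l = (\l\l)\r\l = \r\l = (\l\r)\inv = \cox\inv.
\]
Consequently, for every $m\in\Z$,
\[
\l\,\cox^m\,\l = (\l\cox\l)^m = (\cox\inv)^m = \cox^{-m},
\]
using again $\l^2 = \id$. This is the single fact driving everything else.

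\textbf{The three computations.} Using the key identity with $m = k-1$,
\[
a b = \cox^k\l \cdot \cox^{k-1}\l = \cox^k\,(\l\,\cox^{k-1}\,\l) = \cox^k\,\cox^{-(k-1)} = \cox,
\]
which is the first condition. For the orders, with $m = k$ and $m = k-1$ respectively,
\[
a^2 = \cox^k\l\,\cox^k\l = \cox^k\,(\l\,\cox^k\,\l) = \cox^k\,\cox^{-k} = \id,
\qquad
b^2 = \cox^{k-1}\l\,\cox^{k-1}\l = \cox^{k-1}\,\cox^{-(k-1)} = \id.
\]
Hence $\cox^k\l \cdot \cox^{k-1}\l$ is a bipartition of $\cox$ for every $k\in\Z$, as claimed.

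\textbf{On difficulty.} There is essentially no obstacle here: once the conjugation identity $\l\cox\l = \cox\inv$ is recorded, the rest is a two-line manipulation of powers of $\cox$. The only point worth stating carefully is that $\l$ and $\r$ are involutions (given), so that $\l\inv = \l$ and $\r\inv = \r$ may be used freely; this is what makes $\r\l = (\l\r)\inv = \cox\inv$ hold. I would include the preceding example-style remark that $\cox = (\cox\l)\cdot\l$ is the case $k=1$, to orient the reader, but no separate argument is needed for it.
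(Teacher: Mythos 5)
Your proof is correct and follows essentially the same route as the paper: your key identity $\l\,\cox^m\,\l = \cox^{-m}$ is just a restatement of the paper's substitution $\cox^k\l = \l\cox^{-k}$, and the remaining computations are identical. No issues.
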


\begin{proof}
	We have to show that $\cox^k\l \cdot \cox^{k-1}\l=\cox$ and that $(\cox^k\l)^2=\id$ for $k\in \Z$. 
	Note that $\cox^k\l=(\l\r)^k\l=\l(\r\l)^k=\l\cox^{-k}$. Hence $\cox^k\l \cdot \cox^{k-1}\l=\cox^k\l \cdot \l \cox^{k-1} = \cox$. Using the same substitution we get  $(\cox^k\l)^2=\cox^k\l\cdot\l\cox^{-k} =\id$.
\end{proof}

Recall that we denote the order of the Coxeter element $c\in W$ by $h$.

\begin{cor}
	There are at least $h$ different bipartitions of a Coxeter element. 
\end{cor}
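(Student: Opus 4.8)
The plan is to read off the $h$ bipartitions directly from \cref{lem:further_bipartitions} and check that they are pairwise distinct. The only conceptual point to settle first is that a bipartition $\l\r$ of $\cox$ is completely determined by its left part: if $\cox = \l\r$ with $\l^2=\r^2=\id$, then necessarily $\r = \l\inv\cox = \l\cox$, so two bipartitions with the same left part coincide. Hence it suffices to produce $h$ pairwise distinct left parts of bipartitions of $\cox$.

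\textbf{Key steps.} By \cref{lem:further_bipartitions}, for every $k\in\Z$ the product $\cox^{k}\l \cdot \cox^{k-1}\l$ is a bipartition of $\cox$, whose left part is $\cox^{k}\l$. First I would observe that $\cox^{k}\l = \cox^{j}\l$ forces $\cox^{k} = \cox^{j}$, and since $\cox$ has order $h$ this means $k \equiv j \pmod h$. Consequently the left parts $\cox^{0}\l, \cox^{1}\l, \dots, \cox^{h-1}\l$ are pairwise distinct, and by the remark above they give $h$ pairwise distinct bipartitions of $\cox$. This establishes the claimed lower bound.

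\textbf{Main obstacle.} There is essentially no obstacle here: the statement is an immediate corollary once \cref{lem:further_bipartitions} is in place, the only thing to be slightly careful about being the (trivial) observation that the left part of a bipartition determines the right part. One could additionally remark that in general the bound $h$ need not be sharp, since bipartitions arising from different conjugators as in the preceding example may land outside the cyclic family $\{\cox^{k}\l \cdot \cox^{k-1}\l\}_{k\in\Z}$, but this is not needed for the corollary.
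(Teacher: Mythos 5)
Your proof is correct and follows essentially the same route as the paper: both arguments take the family $\cox^{k}\l\cdot\cox^{k-1}\l$ from \cref{lem:further_bipartitions} and distinguish the $h$ members by noting that $\cox^{i}\l\neq\cox^{j}\l$ whenever $\cox^{i}\neq\cox^{j}$, which holds for $0\leq i\neq j<h$ since $\ord(\cox)=h$. Your extra observation that the left part determines the right part is harmless but not needed, since distinct left parts already yield distinct bipartitions.
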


\begin{proof}
	Let $0\leq i,j <h$ be different numbers. Then $\cox^i$ is not equal to $\cox^j$, since the order of $\cox$ is $h$ and $i,j<h$. Hence the left parts of the bipartitions $\cox^{i}\l \cdot \cox^{i-1}\l$ and $\cox^{j}\l \cdot \cox^{j-1}\l$ are different elements of $W$, which means that the bipartitions themselves are different. 
\end{proof}

\begin{rem}
	Since $\cox^i\l = \r \cox^j$ for $i+j-1\equiv 0 \pmod h$, we can express the bipartitions of a 
	Coxeter element as described in \cref{lem:further_bipartitions} also as $\r\cox^j \cdot \r\cox^{j-1}$ for $j\in \Z$.
\end{rem}

\subsection{A dihedral group of automorphisms}
In this section we consider a subgroup of the automorphism group $\aut(\nc)$ of the non-crossing partitions $\nc=\nc(W)$. This group appears in \cite[Chap. 3.4.6]{armstr}.

\begin{defi}
	Let $\l$ be the left and $\r$ be the right part of the standard bipartition $\cox=\l\r$. We set 
	$\autl(w) = \l w\inv\l$ 
	and $\autr(w) = \r w\inv \r$ for $w\in\nc$.	
\end{defi}

\begin{lem}\label{lem:autl_autr_are_autos}
	The assignments $w \mapsto \autl(w)$ and $w \mapsto \autr(w)$ for $w\in \nc$ define two lattice automorphisms $\autl\colon \nc \to \nc$ and $\autr\colon \nc \to \nc$, respectively. If moreover $W$ has rank at least $2$, then $\autl$ and $\autr$ have order two.
\end{lem}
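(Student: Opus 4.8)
The plan is to factor each of $\autl$ and $\autr$ as a composition of two order-preserving bijections whose effect on the Coxeter element is transparent, and then read off all the claims.

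Two preliminary observations carry the argument. First, inversion $w\mapsto w\inv$ is a poset automorphism of $(W,\leq)$: since every reflection is an involution, reversing a reduced $T$-word for $g$ yields a reduced $T$-word for $g\inv$, so $\ell(g)=\ell(g\inv)$; combined with conjugation-invariance of $\ell$ (so that $\ell(vw\inv)=\ell(w\inv v)=\ell(v\inv w)$), the identity $\ell(w)=\ell(v)+\ell(v\inv w)$ defining $v\leq w$ rearranges, after applying inversion to every element, into the identity $\ell(w\inv)=\ell(v\inv)+\ell(vw\inv)$ defining $v\inv\leq w\inv$; hence $v\leq w$ iff $v\inv\leq w\inv$. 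Second, conjugation by a fixed element of $W$ is a poset automorphism of $(W,\leq)$, as recorded in \cref{sec:abs_ord}. Since $\nc(W)=\nc(W,\cox)$ is the interval $[\id,\cox]$, any poset automorphism $f$ of $(W,\leq)$ restricts to a poset isomorphism $[\id,\cox]\to[\id,f(\cox)]$.

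Now $\autl(w)=\l w\inv\l=\l w\inv\l\inv$ since $\l\inv=\l$, so $\autl$ is inversion followed by conjugation by $\l$, and likewise $\autr$ is inversion followed by conjugation by $\r$. Tracking intervals: inversion sends $\nc(W,\cox)$ onto $\nc(W,\cox\inv)$, and conjugation by $\l$ sends $\nc(W,\cox\inv)$ onto $\nc(W,\l\cox\inv\l)$. From the bipartition $\cox=\l\r$ with $\l^2=\r^2=\id$ we get $\cox\inv=\r\l$, hence $\l\cox\inv\l=\l\r\l\l=\l\r=\cox$, and symmetrically $\r\cox\inv\r=\r\r\l\r=\l\r=\cox$. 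Therefore $\autl$ and $\autr$ are poset automorphisms of the lattice $\nc(W)$, i.e.\ lattice automorphisms.

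For the order statement, a direct computation using $\l\inv=\l$ gives $\autl\bigl(\autl(w)\bigr)=\l(\l w\inv\l)\inv\l=\l(\l w\l)\l=w$, so $\autl^2=\id$, and similarly $\autr^2=\id$; thus each has order $1$ or $2$. To exclude order $1$ when $n=\rk(W)\geq 2$, observe that on a reflection $t\in T\subseteq\nc(W)$ the map acts by $\autl(t)=\l t\inv\l=\l t\l$, i.e.\ by conjugation by $\l$ (in particular it maps $T$ to $T$). If $\autl$ were the identity, $\l$ would centralise $T$, hence all of $W$ (which is generated by $T$); in particular $\l$ and $\r$ would commute, forcing $\cox=\l\r=\r\l=\cox\inv$ and so $\cox^2=\id$, contradicting $\ord(\cox)=h\geq 3$, which holds for rank $\geq 2$ by \cref{tab:Coxeter_numbers}. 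The same argument applies to $\autr$. The only points needing care are the bookkeeping of which interval each intermediate map lands in, so that $\l\cox\inv\l=\cox$ (respectively $\r\cox\inv\r=\cox$) is invoked at the right moment, and the appeal to $\ord(\cox)\geq 3$, which is precisely where the rank-$\geq 2$ hypothesis enters.
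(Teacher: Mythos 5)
Your proof is correct and follows essentially the same route as the paper's: both establish that $w\mapsto \l w\inv\l$ preserves $\nc$ and the order by combining order-preservation of inversion with conjugation-invariance of the absolute order and the identity $\l\cox\inv\l=\cox$, and both rule out $\autl=\id$ by deriving $\cox^2=\id$ and appealing to the Coxeter numbers. The only cosmetic difference is that you exclude triviality by noting $\l$ would centralise $T$ and hence $W$, whereas the paper simply evaluates $\autl(\r)=\r$ to get $\l\r\l=\r$ directly; both land on the same contradiction.
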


The fact that $\autl$ and $\autr$ are lattice automorphisms of $\nc$ is \cite[Le. 3.4.12]{armstr}. For the sake of completeness we include a complete proof of the above lemma here.

\begin{proof}
	We only consider $\autl$ as the proof for $\autr$ is analogously. 
	In order to show that $\autl$ is an automorphism, we show that $\autl$ maps $\nc$ to itself and preserves the order. Since $\autl\inv=\autl$, the assertion follows.
	
	Let $v,w\in \nc$ be such that $v\leq w$. By the \cref{subword_prop} we have that $v\inv \leq w\inv\leq \cox\inv=\r\l$. Conjugation with $\l$ yields $\l v\inv \l \leq \l w\inv\l\leq \l\r\l\l=\cox$ since conjugation is an automorphism of the absolute order on $W$.
	
	Since $\autl\inv=\autl$ we know that $ \autl$ has at most order $2$. For a contradiction, suppose that $\autl=\id$. Then we have $\autl(\r)=\r$ and hence $\l\r\l=\r$, which is equivalent to $\cox^2=\id$. But the Coxeter element has order two if and only if $W$ has rank $1$, which can be checked by looking at \cref{tab:Coxeter_numbers}.
\end{proof}

\begin{defi}
	Let $\D \leq \aut(\nc)$ be the subgroup of the automorphism group $\aut(\nc)$ generated by the two automorphisms $\autl$ and $\autr$.
\end{defi}

Recall that the Coxeter number $h$ denotes the order of the Coxeter element $\cox$.

\begin{lem}\label{lem:dihedral_auto_group}
	If $W$ has rank at least 2, then the order $d$ of composition $\autl \circ \autr$ is a divider of the Coxeter number $h$. Moreover, the group of automorphisms $\D$ is a dihedral group of order $2d \leq 2h$.
\end{lem}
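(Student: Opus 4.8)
The plan is to identify the composition $\autl\circ\autr$ explicitly and then invoke the structure of groups generated by two involutions. First I would carry out the short computation: for $w\in\nc$, using $\l^2=\r^2=\id$ and $\cox=\l\r$,
\[
(\autl\circ\autr)(w)=\autl(\r w\inv \r)=\l(\r w\inv\r)\inv\l=\l\r\,w\,\r\l=\cox\,w\,\cox\inv,
\]
since $\r\l=(\l\r)\inv=\cox\inv$. So $\autl\circ\autr$ is precisely the restriction to $\nc$ of conjugation by the Coxeter element $\cox$. (That this restriction is a well-defined automorphism of $\nc$ we already know, being the composite of the two automorphisms $\autl,\autr$ from \cref{lem:autl_autr_are_autos}; it also reproves, in passing, that conjugation by $\cox$ preserves $\nc$.)

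Next I would use that $\cox$ has order $h$ in $W$, by the definition of the Coxeter number. Then $(\autl\circ\autr)^h$ is conjugation by $\cox^h=\id$, hence the identity on $\nc$, and therefore the order $d$ of $\autl\circ\autr$ in $\aut(\nc)$ divides $h$; in particular $d$ is finite and $d\le h$.

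Finally I would appeal to the elementary fact that a group generated by two involutions is dihedral. Here $\D=\langle\autl,\autr\rangle$ with $\autl^2=\autr^2=\id$ — this is exactly where the hypothesis $\rk(W)\ge 2$ enters, via \cref{lem:autl_autr_are_autos} — and $\autl\circ\autr$ has order $d$. Since $\autl(\autl\circ\autr)\autl=\autr\circ\autl=(\autl\circ\autr)\inv$, the cyclic subgroup $\langle\autl\circ\autr\rangle$ is normal of index at most two, so $|\D|=2d$ (the degenerate case $d=1$, i.e.\ $\autr=\autl$, simply gives $|\D|=2$, still consistent with the formula). Combining this with $d\mid h$ gives that $\D$ is the dihedral group of order $2d\le 2h$. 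I do not anticipate a genuine obstacle: the one computational insight is the identification of $\autl\circ\autr$ with conjugation by $\cox$, and the only points requiring a little care are the well-definedness remark and checking that the two-involutions argument still yields $|\D|=2d$ in the small degenerate cases.
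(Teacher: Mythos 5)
Your proposal is correct and follows essentially the same route as the paper: identify $\autl\circ\autr$ as conjugation by $\cox$, deduce $d\mid h$ from $\ord(\cox)=h$, and conclude that two involutions generating the group force a dihedral structure of order $2d$. The paper states these steps more tersely (omitting the explicit computation $\l(\r w\inv\r)\inv\l=\cox w\cox\inv$ and the normality argument), so your write-up simply supplies details the paper leaves implicit.
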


\begin{proof}
	Let $W$ be of rank at least $2$. By \cref{lem:autl_autr_are_autos} the two generators $\autl$ and $\autr$ of $\D$ have order two. 
	The composition $\autl \circ \autr$ is conjugation with the Coxeter element $\cox=\l\r$. Since the order of $\cox$ is $h$, we have $(\autl\circ \autr)^h=\id$ and the order $d$ of the composition $\autl\circ \autr$ divides $h$. Hence $\autl$ and $\autr$ generate a dihedral group of order $2d$.
\end{proof}

\begin{rem}
	Note that Armstrong states after Lemma $3.4.12$ in \cite{armstr} that $\D$ \emph{has} order $2h$. This is true for example in type $A$ \cite{biane}, but it is false in types $B$ and $D$. We treat the automorphisms of $\nc(B_n)$ and $\nc(D_n)$ in Sections \ref{cha:typeB_auto_group} and \ref{cha:typeD_auto_group}, respectively. There we compute the order of $\D$ explicitly and in particular provide examples that in these cases, $\D$ has order different from $2h$.
\end{rem}

For the case of $S_n$, Biane showed that $\D$ is the full automorphism group of $\nc(S_n)$ and that $\aut(\nc(S_n)) \cong D_n$ \cite{biane}. This means in particular that every automorphism of $\nc(S_n)$ is an automorphism of the pictorial representations. \cref{fig:auto_NCP4} convinces that the automorphism group of $\ncp_4$ is indeed isomorphic to the symmetry group pf the square, that is the dihedral group $D_4$. We compute the automorphism groups of $\nc(B_n)$ and $\nc(D_n)$ in \cref{thm:typeB_full_aut} and \cref{thm:typeD_full_aut}, respectively.

\begin{figure}
	\begin{center}
		\begin{tikzpicture}
		\def\r{1.8} 
		\foreach \x/\d in {0/l,1/m,2/r}   
		\foreach \y/\e in {0/u,2/o}
		\node (\d\e) at (\r*\x,\r*\y) [mpunkt] {};
		\node (l) at (0,\r) [mpunkt] {};
		\node (r) at (2*\r,\r) [mpunkt] {};
		\draw (lu) -- (mu) -- (ru) -- (r) -- (ro) -- (mo) -- (lo) -- (l)
		-- (lu) -- (ro) (lo) -- (ru) (mo) -- (mu) (l) -- (r);
		\foreach \m/\n in {ru/lo, mu/mo, lu/ro, l/r}
		\node (\m,\n) at ($(\m)!0.25!(\n)$) [mpunkt] {};
		\def\a{0mm} 
		\foreach \m/\n/\p/\d in {    ru/lo/above right/\scalebox{0.7}{\ped},
			mu/mo/below right/\scalebox{0.7}{\pevuzd},
			lu/ro/below right/\scalebox{0.7}{\pzv},
			l/r/below/\scalebox{0.7}{\pezudv}}
		\node[\p = \a] (\m,\n) at ($(\m)!0.25!(\n)$) {\d};
		\def\b{2mm} 
		\foreach \m/\p/\d in {    ro/above right/\scalebox{0.7}{\pezv},
			r/right/\scalebox{0.7}{\pez},
			ru/below right/\scalebox{0.7}{\pezd},
			mu/below/\scalebox{0.7}{\pzd},
			lu/below left/\scalebox{0.7}{\pzdv},
			l/left/\scalebox{0.7}{\pdv},
			lo/above left/\scalebox{0.7}{\pedv},
			mo/above/\scalebox{0.7}{\pev}}
		\node[\p = \b] at (\m) {\d};
		\end{tikzpicture}
		\caption{The complex $|\ncp_4|$ in a square-shape.}
		\label{fig:auto_NCP4}
	\end{center}
\end{figure}
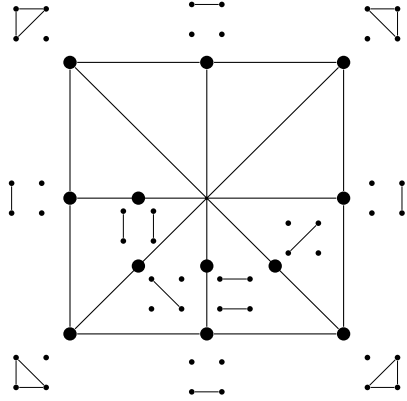

\begin{nota}
	We denote by $d$ the order of the automorphism $\autl\circ \autr\colon \nc \to \nc$, which is conjugation with the Coxeter element $\cox$.
\end{nota}

\begin{lem}
	Every automorphism $\auto\colon \nc\to\nc$ in $\D$ is of one of the following forms:
	\begin{enumerate}
		\item $\auto\colon w \mapsto \cox^k w \cox^{-k}$ for $0\leq k < d$ 
		\item $\auto\colon w \mapsto \cox^k\l w\inv \l\cox^{-k} = \cox^k\l w\inv \cox^k\l$ for $0\leq k < d$ 
	\end{enumerate}
\end{lem}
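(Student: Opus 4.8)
The plan is to exploit the dihedral structure of $\D$ established in \cref{lem:dihedral_auto_group}: $\D$ is a dihedral group of order $2d$ generated by the two involutions $\autl$ and $\autr$, and $\autl\circ\autr$ has order $d$. In an abstract dihedral group $\langle a,b\mid a^2=b^2=(ab)^d=\id\rangle$ every element is either a \enquote{rotation} $(ab)^k$ or a \enquote{reflection} $a\,(ab)^k$ with $0\le k<d$, giving $2d$ elements in total. So it suffices to compute the automorphisms in these two families and to check that they coincide with the lists (a) and (b).

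First I would identify the rotation $\autl\circ\autr$. From $\autl(x)=\l x\inv\l$, $\autr(x)=\r x\inv\r$, $\l^2=\r^2=\id$ and $\cox=\l\r$ one gets
\[
(\autl\circ\autr)(w)=\l\,(\r w\inv\r)\inv\,\l=\l\r\,w\,\r\l=\cox\,w\,\cox\inv ,
\]
so that $(\autl\circ\autr)^k(w)=\cox^k w\cox^{-k}$. As $k$ runs over $0,\dots,d-1$ these are exactly the automorphisms of form (a).

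For the reflections the crucial identity is $\l\cox\l=\cox\inv$, immediate from $\l(\l\r)\l=\r\l=\cox\inv$; iterating gives $\l\cox^m\l=\cox^{-m}$, i.e.\ $\l\cox^m=\cox^{-m}\l$ for every $m\in\Z$. Hence
\[
\bigl(\autl\circ(\autl\circ\autr)^k\bigr)(w)=\autl\!\left(\cox^k w\cox^{-k}\right)=\l\,\cox^k w\inv\cox^{-k}\,\l=\cox^{-k}\,\l\, w\inv\,\l\,\cox^{k}.
\]
Putting $j\equiv -k\pmod d$, so that $j\in\{0,\dots,d-1\}$, and using that conjugation by $\cox$ on $\nc$ has order $d$ (this is the definition of $d$), the right-hand side equals $\cox^{j}\l w\inv\l\cox^{-j}$; one final application of $\l\cox^{-j}=\cox^{j}\l$ rewrites this as $\cox^{j}\l w\inv\cox^{j}\l$, which is form (b). Since $k\mapsto -k\pmod d$ is a bijection of $\{0,\dots,d-1\}$, every automorphism of form (b) arises this way. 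As $\autl$ sends $\nc$ to $\nc$ and conjugation by powers of $\cox$ preserves $\nc$, all the listed maps are genuine automorphisms lying in $\D$, and a count shows they exhaust $\D$.

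I expect no real obstacle: the argument is essentially the classification of elements of a dihedral group together with two short identities. The only points requiring care are the bookkeeping of composition order and of the index shift $k\mapsto -k\bmod d$, and, as a degenerate case, $\rk(W)=1$: there $\nc$ has two elements, each equal to its own inverse, so $\autl=\autr=\id_{\nc}$ and $d=1$, and both (a) and (b) reduce to the identity automorphism, so the statement holds trivially.
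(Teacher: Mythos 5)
Your proposal is correct and follows essentially the same route as the paper: classify the elements of the dihedral group $\D$ as rotations $(\autl\circ\autr)^k$ and reflections, compute that the rotation is conjugation by $\cox$, and unwind the definitions for the reflections. The only cosmetic difference is that you write the reflections as $\autl\circ(\autl\circ\autr)^k$ rather than $(\autl\circ\autr)^k\circ\autl$, which forces the harmless reindexing $k\mapsto -k\bmod d$; the paper's choice avoids that shift but the computation is otherwise identical.
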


\begin{proof}
	Since $\ord(\autl\circ\autr)=d$, it holds that $(\autl\circ\autr)^i=(\autr\circ\autl)^j$ for $i+j=d$ and further $(\autl\circ\autr)^k\circ\autl=(\autr\circ\autl)^m\circ\autr$ for $k+m=d-1$. Hence every automorphism in the dihedral group $\D$ is of the form $(\autl\circ\autr)^i$ or $(\autl\circ\autr)^k\circ\autl$ for $0\leq i,k <d$. Inserting the definitions of $\autl$ and $\autr$ yields that the automorphisms in $\D$ are given by conjugation with $\cox^k$ or conjugation of the inverse by $\cox^k\l$ for $0\leq k<d$.
\end{proof}

If an automorphism $\auto\in\D$  is of type \emph{a)}, it is called \emph{rotating}, and if it is of type \emph{b)}, it is called \emph{reflecting}.

\subsection{Extending the automorphisms}\label{sec:induced_autos}

The aim of this section is to show that the automorphisms in $\D$ of $\nc=\nc(W)$ extend uniquely to automorphisms of a lattice of linear subspaces $\lam=\lam(V)$. In this section, the vector space $V$ defined over $\F$ can either be isomorphic to $\R^n$, or in the case that $W$ is equipped with a crystallographic root system $\Phi$, we can choose $V$ to be isomorphic to $\F_p^n$ for a prime compatible to $\Phi$ by \cref{thm:embedding_Vp_VZ}. The map $\emb \colon\nc \to \lam$, which maps an element to its moved space, or $p$-moved space, respectively, is a rank-preserving poset embedding by Theorems \ref{thm:brady_watt_embedding} and \ref{thm:embedding_Vp_VZ}.
From a given automorphism $\auto$ of $\nc$ we construct a vector space automorphism of $V$. This in turn induces an automorphism of the lattice $\lam$, which restricts to the automorphism $\auto$ from $\nc$ we started with.\\

Recall that in \cref{lem:join_and_em_interchange} and \cref{lem:pemb_join_interchange} we showed that the embedding of $\nc$ into $\lam$ is uniquely determined by the images of the reflections $T$ in $W$.
Since $\nc$ and $\lam$ are atomic lattices, the whole automorphism is determined by the images of the elements of rank $1$ by \cref{lem:lattice_auto_join_interchange}. For the non-crossing partition lattice, an even stronger statement is true.

\begin{lem}\label{lem:ncauto_red_decomp}
	Let $\auto\in \D$ be an automorphism of $\nc$. If $t_1\ldots t_k$ is a reduced decomposition of $w\in \nc$, then $\ncauto(t_1) \ldots  \ncauto(t_k)$ or $\ncauto(t_k) \ldots  \ncauto(t_1)$ is a reduced decomposition of $\auto(w)$. The first case occurs if $\ncauto$ is rotating and the second case if $\ncauto$ is reflecting.
\end{lem}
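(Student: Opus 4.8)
The plan is to exploit the explicit description of the automorphisms in $\D$ from the previous lemma, so that the statement reduces to how conjugation and inversion act on reduced decompositions. Recall that a rotating $\auto\in\D$ has the form $\auto\colon w\mapsto \cox^k w\cox^{-k}$, while a reflecting one has the form $\auto\colon w\mapsto \cox^k\l w\inv\l\cox^{-k}$. In particular $\auto$ maps reflections to reflections, since $T$ is closed under conjugation and $t\inv=t$ for $t\in T$, and $\auto$ preserves the absolute length $\ell$, since $\ell$ is conjugation-invariant and $\ell(w\inv)=\ell(w)$.

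First I would handle the rotating case. Set $g\coloneqq\cox^k$. Conjugation by $g$ is a length-preserving automorphism of $W$ sending $T$ to $T$, so if $t_1\ldots t_k$ is a reduced decomposition of $w$, then $g t_1 g\inv,\ldots,g t_k g\inv$ are reflections whose product equals $g w g\inv=\auto(w)$; as this is a decomposition of $\auto(w)$ into $k=\ell(w)=\ell(\auto(w))$ reflections, it is reduced, and $g t_i g\inv=\auto(t_i)$ by definition, which gives the claim. For the reflecting case, put $g\coloneqq\cox^k\l$, so that $g\inv=\l\cox^{-k}$ because $\l^2=\id$, and the formula for $\auto$ reads $\auto(w)=g w\inv g\inv$. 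Here I would first note that the reversal $t_k t_{k-1}\ldots t_1$ of a reduced decomposition of $w$ is a reduced decomposition of $w\inv$ --- either by a length count, or by \cref{lem:carter}, since the set of roots associated to the decomposition is unchanged under reversing --- and then conjugate by $g$ exactly as before to conclude that $\auto(t_k)\auto(t_{k-1})\ldots\auto(t_1)$ is a reduced decomposition of $\auto(w)$, using $g t_i g\inv=\cox^k\l t_i\inv\l\cox^{-k}=\auto(t_i)$.

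I expect no serious obstacle: the argument is essentially two substitutions into the explicit formulas for rotating and reflecting automorphisms, and the only step worth spelling out is that reversing a reduced decomposition keeps it reduced, which \cref{lem:carter} delivers immediately. The distinction between the two shapes of the output word --- order preserved for rotating, order reversed for reflecting --- is precisely the distinction between conjugating $w$ and conjugating $w\inv$.
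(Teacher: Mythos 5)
Your proof is correct and follows essentially the same route as the paper: for a rotating automorphism one conjugates the decomposition factor by factor, and for a reflecting one one first reverses the word (a reduced decomposition of $w\inv$) and then conjugates. The only difference is that you explicitly justify why the resulting word is still reduced (via the length count or \cref{lem:carter}), a point the paper's proof leaves implicit.
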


\begin{proof}
	If $\auto$ is a rotating automorphism then
	\[
	\auto(w)=\cox^k w \cox^{-k} =\cox^k t_1\ldots t_k \cox^{-k}=\cox^k t_1 \cox^{-k} \ldots \cox^k t_k \cox^{-k} = \auto(t_1) \ldots \auto(t_k)
	\]
	for some $0 \leq k < h$.		
	Let $\auto$ be a reflecting automorphism. Then it holds that
	\[
	\auto(w)= \cox^k\l w\inv \l\cox^{-k} = \cox^k\l t_k \ldots t_1 \l\cox^{-k} = \cox^{k}\l t_k\l\cox^{-k} \ldots \cox^{k}\l t_1\l\cox^{-k}=\auto(t_k) \ldots \auto(t_1)
	\] 
	for some $0 \leq k < h$, where $\l$ is the left part of the standard bipartition $\l\r$ of the Coxeter element $\cox$.

\end{proof}

In the following, let $\auto \in \D$ be a fixed automorphism of $\nc$.

\begin{lem}\label{lem:defi_lamauto}
	Let $r_1, \ldots, r_n\in T$ be reflections such that $r_1\ldots r_n$ is a reduced decomposition of the Coxeter element $\cox$. Then the assignment  $\alpha_{r_i} \mapsto \alpha_{\ncauto(r_i)}$ for $1 \leq i \leq n$ induces a vector space automorphism $\vecauto\colon V\to V$.
\end{lem}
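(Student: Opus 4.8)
The statement to prove is that the assignment $\alpha_{r_i} \mapsto \alpha_{\ncauto(r_i)}$ for $1\leq i \leq n$ induces a well-defined vector space automorphism $\vecauto\colon V \to V$. The key point is that $\{\alpha_{r_1}, \ldots, \alpha_{r_n}\}$ is a basis of $V$: by \cref{lem:ncauto_red_decomp} the word $r_1\ldots r_n$ is a reduced decomposition of $\cox$, and by \cref{carter} (Carter's Lemma) the roots $\alpha_{r_1}, \ldots, \alpha_{r_n}$ are linearly independent, hence a basis since $\dim V = n$. So the assignment on the basis vectors $\alpha_{r_i}$ extends uniquely to a linear endomorphism $\vecauto$ of $V$; there is no well-definedness issue at this stage because we are prescribing values on a basis. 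What remains is to check that $\vecauto$ is an automorphism, i.e. invertible.

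\textbf{Invertibility.} First I would observe that $\ncauto(r_1)\ldots \ncauto(r_n)$ (or its reverse, if $\ncauto$ is reflecting) is again a reduced decomposition of a Coxeter element by \cref{lem:ncauto_red_decomp} applied to $w = \cox$: indeed $\ncauto(\cox)$ is a Coxeter element of length $n$ (automorphisms preserve rank, and Coxeter elements are precisely the rank-$n$ elements below $\cox$... more carefully, $\ncauto(\cox) \leq \cox$ since $\ncauto$ maps $\nc$ to itself, and $\ell(\ncauto(\cox)) = \ell(\cox) = n$, so $\ncauto(\cox) = \cox$ as $\cox$ is the unique maximum of $\nc$; in either case the image word is a reduced decomposition of $\cox$ itself). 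Applying \cref{carter} once more, the roots $\alpha_{\ncauto(r_1)}, \ldots, \alpha_{\ncauto(r_n)}$ are linearly independent, hence also a basis of $V$. Therefore $\vecauto$ sends a basis to a basis, which makes it a linear isomorphism.

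\textbf{The main subtlety.} The step I expect to require the most care is the claim that $\ncauto(r_i)$ is again a reflection in $T$, so that $\alpha_{\ncauto(r_i)}$ is defined, and that $\ncauto(r_1)\ldots\ncauto(r_n)$ really is a decomposition of $\ncauto(\cox)$. For the first, one uses that $\ncauto$ is rank-preserving and that the reflections are exactly the rank-$1$ elements of $\nc(W)$ (as noted in the excerpt, all reflections lie in $\nc(W)$); thus $\ncauto$ permutes $T$. For the second, this is precisely the content of \cref{lem:ncauto_red_decomp}: a reduced decomposition of $w$ maps to a reduced decomposition of $\ncauto(w)$ up to order. Taking $w=\cox$ and using $\ncauto(\cox)=\cox$ gives what we need. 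Once these are in hand, invertibility of $\vecauto$ follows immediately from the two applications of Carter's Lemma, and the proof is complete.

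\textbf{Remark on independence of the chosen decomposition.} It is worth noting — though perhaps deferred to the surrounding text — that the automorphism $\vecauto$ a priori depends on the choice of the reduced decomposition $r_1\ldots r_n$ of $\cox$. That $\vecauto$ in fact induces the \emph{same} lattice automorphism of $\lam(V)$ regardless of this choice, and that this lattice automorphism restricts to $\ncauto$ on $\nc(W)$, is the substance of the results following this lemma and uses \cref{lem:lamauto_induces_vecautos} together with \cref{lem:join_and_em_interchange} (or \cref{lem:pemb_join_interchange} in the crystallographic case). For the present lemma, only the construction and invertibility of $\vecauto$ for one fixed reduced decomposition is claimed.
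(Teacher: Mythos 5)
Your proof is correct and follows essentially the same route as the paper's: both basis claims are established by two applications of Carter's Lemma, with \cref{lem:ncauto_red_decomp} supplying that $\ncauto(r_1)\ldots\ncauto(r_n)$ (or its reverse) is again a reduced decomposition of $\cox$. The extra details you supply — that $\ncauto$ permutes $T$ and that $\ncauto(\cox)=\cox$ — are left implicit in the paper but are correctly justified in your write-up.
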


\begin{proof}
	We have to prove that both sets $\Set{\alpha_{r_1}, \ldots, \alpha_{r_n}}$ and $\Set{\alpha_{\ncauto(r_1)}, \ldots, \alpha_{\ncauto(r_n)}}$ are bases of the vector space $V$.
	Since $r_1\ldots r_n$ is a reduced decomposition of the Coxeter element, it follows from \cref{lem:carter} that the first set is a basis of $V$.
	From the same lemma it follows that $\Set{\alpha_{\ncauto(r_1)}, \ldots, \alpha_{\ncauto(r_n)}}$ is a basis of $V$ since $\ncauto(r_1)\ldots \ncauto(r_n)$ or $\ncauto(r_n)\ldots \ncauto(r_1)$ is a reduced decomposition of the Coxeter element $\cox$ by \cref{lem:ncauto_red_decomp}.
\end{proof}

Let $\vecauto\colon V \to V$ be the vector space automorphism defined by $\alpha_{r_i} \mapsto \alpha_{\ncauto(r_i)}$ for $1 \leq i \leq n$
as in \cref{lem:defi_lamauto}.

\begin{lem}\label{lem:lamauto_alpha_t}
	For  all $t \in T$ it holds that $\vecauto( \alpha_t ) =  \alpha_{\ncauto(t)}$.
\end{lem}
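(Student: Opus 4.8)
The plan is to deduce the statement for an arbitrary reflection $t$ from the case $t\in\{r_1,\ldots,r_n\}$, where it holds by the very definition of $\vecauto$. The bridge between the two is the Prefix Property: since all reflections lie in $\nc(W)$, we have $t\le\cox$, so there is a reduced decomposition $\cox=t\,q_2\cdots q_n$. Hence it suffices to prove that the vector space automorphism built from a reduced decomposition of $\cox$ via $\alpha_{t_i}\mapsto\alpha_{\ncauto(t_i)}$ does not depend on which reduced decomposition $t_1\cdots t_n$ of $\cox$ is chosen; granting this, I would compute $\vecauto$ from the decomposition $t\,q_2\cdots q_n$ and read off $\vecauto(\alpha_t)=\alpha_{\ncauto(t)}$ directly.

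For the independence, I would invoke the Shifting Property together with the transitivity of the Hurwitz action on reduced reflection factorizations of a Coxeter element, which reduces the problem to checking invariance under a single elementary shift, say the $i$-th right-shift replacing the pair $(t_i,t_{i+1})$ by $(t_it_{i+1}t_i,\,t_i)$. The two induced maps manifestly agree on $\alpha_{t_j}$ for $j\notin\{i,i+1\}$ and on $\alpha_{t_i}$, so only agreement on $\alpha_{t_{i+1}}$ remains. Here I would use the reflection formula $\sigma(t_i)(\alpha_{t_{i+1}})=\alpha_{t_{i+1}}-2(\alpha_{t_i},\alpha_{t_{i+1}})\alpha_{t_i}$ to express $\alpha_{t_it_{i+1}t_i}$ through $\alpha_{t_i},\alpha_{t_{i+1}}$, and the analogous formula on the image side. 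The point is that, on $T$, the automorphism $\ncauto$ is conjugation by a fixed element $g\in W$ — namely $\cox^k$ in the rotating case and $\cox^k\l$ in the reflecting case, using $t^{-1}=t$ and the classification of the elements of $\D$ — so $\ncauto$ is multiplicative on $t_i,t_{i+1},t_it_{i+1}t_i$, and $\sigma(g)$ preserves the bilinear form, hence the structure constant $2(\alpha_{t_i},\alpha_{t_{i+1}})$. This should force the two maps to coincide on $\alpha_{t_{i+1}}$.

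I expect the main obstacle to be the bookkeeping of signs of the positive-root representatives: a priori one only knows $\alpha_{t_it_{i+1}t_i}=\pm\,\sigma(t_i)(\alpha_{t_{i+1}})$, and one must check that this sign is transported correctly to the analogous sign for $\alpha_{\ncauto(t_it_{i+1}t_i)}$. I would organize this by noting that inside a reduced decomposition of $\cox$ each elementary-shift conjugate $t_1\cdots t_{i-1}t_it_{i-1}\cdots t_1$ is again a reflection below $\cox$ whose positive root is the iterated image $\sigma(t_1)\cdots\sigma(t_{i-1})(\alpha_{t_i})$ under the geometric representation, so the signs are pinned down uniformly and commute with the orthogonal map $\sigma(g)$. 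As a fallback, one can instead first establish the weaker statement that the lattice automorphism $\lamauto$ of $\lam(V)$ induced by $\vecauto$ restricts to $\ncauto$ on the image of $\emb$ — equivalently $\vecauto(\langle\alpha_t\rangle)=\langle\alpha_{\ncauto(t)}\rangle$ — which follows from the compatibility of $\emb$ with joins in \cref{lem:join_and_em_interchange} and \cref{lem:pemb_join_interchange}, from \cref{lem:lattice_auto_join_interchange}, and from \cref{lem:ncauto_red_decomp}; and then to fix the remaining scalar by using that, in the crystallographic setting, $\vecauto$ restricts to a $\mathbb{Z}$-lattice automorphism of the root lattice and therefore sends the primitive vector $\alpha_t$ to a primitive vector, leaving only the correct normalization.
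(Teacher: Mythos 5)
Your route is genuinely different from the paper's, and considerably heavier. The paper's proof is three lines: every $\ncauto\in\D$ restricts on $T$ to conjugation by a fixed $g\in W$ (namely $\cox^k$ or $\cox^k\l$, exactly as you observe), and the geometric representation is taken to satisfy $g(\alpha_r)=\alpha_{grg\inv}$; hence $\vecauto$ agrees with $\sigma(g)$ on the basis $\Set{\alpha_{r_1},\ldots,\alpha_{r_n}}$, so $\vecauto=\sigma(g)$ as linear maps, and writing $\alpha_t=\sum_i a_i\alpha_{r_i}$ gives $\vecauto(\alpha_t)=g(\alpha_t)=\alpha_{gtg\inv}=\alpha_{\ncauto(t)}$. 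Once you have noticed that $\ncauto|_T$ is conjugation by $g$ and that $\sigma(g)$ is orthogonal, you are one linearity argument away from the conclusion; the detour through independence of the reduced decomposition, Hurwitz transitivity (which the paper never states and you would have to import from outside), and a case analysis of elementary shifts buys you nothing here.

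The concrete gap is in your sign argument, and it is not a removable bookkeeping issue. Your claim that for a reduced reflection factorization $t_1\cdots t_n$ of $\cox$ the vector $\sigma(t_1)\cdots\sigma(t_{i-1})(\alpha_{t_i})$ is the \emph{positive} root of the conjugate reflection is false: in type $A_2$ the factorization $(1\,3)(1\,2)$ of $\cox=(1\,2\,3)$ is reduced, yet $\sigma((1\,3))(\alpha_{(1\,2)})=-\alpha_{(2\,3)}$ is a negative root. More fundamentally, $g(\alpha_t)=\pm\alpha_{gtg\inv}$ with a sign that genuinely depends on $t$: already $\cox(\alpha_{(1\,2)})=+\alpha_{(2\,3)}$ while $\cox(\alpha_{(2\,3)})=-\alpha_{(1\,3)}$. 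Feeding this into your elementary-shift computation, the structure constant $(\alpha_{\ncauto(t_i)},\alpha_{\ncauto(t_{i+1})})$ equals $(\alpha_{t_i},\alpha_{t_{i+1}})$ only up to the product of those two signs, so the check fails whenever the signs disagree and the inner product is nonzero. The fallback does not rescue this: it at best controls the line $\braket{\alpha_t}$, and in the example above, taking $r_1r_2=(1\,2)(2\,3)$ and $\ncauto$ conjugation by $\cox$, one computes $\vecauto(\alpha_{(1\,3)})=\vecauto(\alpha_1+\alpha_2)=\alpha_2+(\alpha_1+\alpha_2)=\alpha_1+2\alpha_2$, which is not even proportional to $\alpha_{\ncauto((1\,3))}=\alpha_1$. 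You have in fact put your finger on exactly the point that the paper's own proof passes over silently (it uses $g(\alpha_t)=\alpha_{gtg\inv}$ without the $\pm$), so the difficulty you flag is real and worth raising — but your proposed resolution of it does not close it.
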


\begin{proof}
	Recall from \cref{sec:geom_rep} that the Coxeter group $W$ acts on $V$ as a reflection group by linear transformations. In particular, $W$ acts by linear transformations on the set of roots by $w(\alpha_t)=\alpha_{wtw\inv}$ for $t \in T$. Since $t\inv=t$, the reflecting automorphisms act, as well as the rotating ones, by conjugation on the set of reflections $T$. Let $w\in W$ be the element such that $\auto(t)=wtw\inv$ for $t\in T$. This means that $w=\cox^k$ or $w=\cox^k\l$ for some $0\leq k < h$.
	Since $\Set{\alpha_{r_1}, \ldots, \alpha_{r_n}}$ is a basis of $V$, for every $t\in T$ the corresponding root $\alpha_t$  can be expressed as $\alpha_t = \sum_{i=1}^{n} a_i \alpha_{r_i}$ with appropriate $a_i \in \F$. Hence we get
	\begin{align*}
	\vecauto(\alpha_t) 
	&= \vecauto\left(\textstyle\sum_{i=1}^{n} a_i \alpha_{r_i}\right)
	= \textstyle\sum_{i=1}^{n} a_i \vecauto(\alpha_{r_i})
	= \textstyle\sum_{i=1}^{n} a_i \alpha_{\ncauto(r_i)}\\
	&= \textstyle\sum_{i=1}^{n} a_i \alpha_{w r_iw\inv}
	= \textstyle\sum_{i=1}^{n} a_i w(\alpha_{r_i})
	= w\left(\textstyle\sum_{i=1}^{n} a_i \alpha_{r_i}\right)\\
	&= w(\alpha_t)
	= \alpha_{w t w\inv}
	= \alpha_{\ncauto(t)}.			
	\end{align*}
\end{proof}

With the above lemma, $\vecauto$ induces a unique rank-preserving lattice automorphism 
\begin{align*}
\lamauto\colon\lam(V) &\to \lam(V)\\
U &\mapsto \vecauto(U)
\end{align*}
where $\vecauto(U) = \Set{\vecauto(u)\str u\in U}$.

\begin{rem}\label{rem:lamauto_join}
	Let $U \subseteq V$ be a subspace with basis $\Set{u_1, \ldots, u_k}$. Since the join in $\lam(V)$ is given by the sum of subspaces, we get that $U= \langle u_1\rangle \vee \ldots \vee\langle u_k \rangle$ as well as $\lamauto(U)=\langle \vecauto(u_1), \ldots, \vecauto(u_k)\rangle=\langle \vecauto(u_1)\rangle \vee \ldots \vee \langle \vecauto(u_k)\rangle$.
\end{rem}

\begin{thm}\label{thm:autos_commute}
	Every lattice automorphism in the dihedral group of automorphisms $\D$ of $\nc$ extends uniquely to a lattice automorphism of $\lam$.
\end{thm}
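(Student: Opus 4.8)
The plan is to show that the lattice automorphism $\lamauto$ of $\lam$ constructed above restricts, under the embedding $\emb$, to the given automorphism $\ncauto \in \D$ of $\nc$, and then to deduce uniqueness from \cref{lem:lamauto_induces_vecautos}. The case $n\leq 1$ is trivial, so assume $n\geq 2$. First note that $\lamauto$ (equivalently $\vecauto$) does not depend on the reduced decomposition $r_1\ldots r_n$ of $\cox$ chosen in \cref{lem:defi_lamauto}: by \cref{lem:lamauto_alpha_t} the map $\vecauto$ satisfies $\vecauto(\alpha_t)=\alpha_{\ncauto(t)}$ for \emph{all} $t\in T$, and since the positive roots span $V$ this property determines $\vecauto$ uniquely.

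\emph{Existence.} Since $\ncauto$ is a lattice automorphism it preserves rank, so $\ncauto(t)\in T$ for every reflection $t\in T$. By \cref{thm:brady_watt_embedding} (or \cref{thm:embedding_Vp_VZ} in the modular case) we have $\emb(t)=\braket{\alpha_t}$, hence
\[
\lamauto(\emb(t)) = \vecauto(\braket{\alpha_t}) = \braket{\vecauto(\alpha_t)} = \braket{\alpha_{\ncauto(t)}} = \emb(\ncauto(t))
\]
by \cref{lem:lamauto_alpha_t}. Now let $w\in\nc$ be arbitrary with reduced decomposition $t_1\ldots t_k$. By \cref{lem:join_and_em_interchange} (resp.\ \cref{lem:pemb_join_interchange}) we have $\emb(w)=\emb(t_1)\vee\cdots\vee\emb(t_k)$, so applying \cref{lem:lattice_auto_join_interchange} to $\lamauto$ together with the previous identity gives
\[
\lamauto(\emb(w)) = \lamauto(\emb(t_1))\vee\cdots\vee\lamauto(\emb(t_k)) = \emb(\ncauto(t_1))\vee\cdots\vee\emb(\ncauto(t_k)).
\]
By \cref{lem:ncauto_red_decomp} the elements $\ncauto(t_1),\ldots,\ncauto(t_k)$, taken in one of the two orders, form a reduced decomposition of $\ncauto(w)$, so \cref{lem:join_and_em_interchange} yields $\emb(\ncauto(w))=\emb(\ncauto(t_1))\vee\cdots\vee\emb(\ncauto(t_k))$ as well, the join being commutative. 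Hence $\lamauto\circ\emb=\emb\circ\ncauto$; identifying $\nc$ with its image $\emb(\nc)\subseteq\lam$, this says $\lamauto$ extends $\ncauto$.

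\emph{Uniqueness.} Let $\lamauto'$ be any lattice automorphism of $\lam$ with $\lamauto'\circ\emb=\emb\circ\ncauto$, and set $\Theta\coloneqq\lamauto^{-1}\circ\lamauto'$. Then $\Theta$ is a lattice automorphism of $\lam$ fixing $\emb(\nc)$ pointwise; in particular it fixes $\emb(t)=\braket{\alpha_t}$ for every $t\in T$, and since $s_1,\ldots,s_n\in T$ it fixes each $\braket{\alpha_{s_i}}$. By \cref{lem:lamauto_induces_vecautos} there are nonzero scalars $\lambda_1,\ldots,\lambda_n\in\F$ such that $\Theta$ is induced by $\alpha_{s_i}\mapsto\lambda_i\alpha_{s_i}$, and it suffices to show that all $\lambda_i$ coincide, since then $\Theta$ is induced by a scalar, hence is the identity, forcing $\lamauto'=\lamauto$. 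Let $s,s'$ be adjacent in the Coxeter diagram, i.e.\ $m(s,s')\geq 3$; then $(\alpha_s,\alpha_{s'})=-\cos(\pi/m(s,s'))\neq 0$, so the positive root $\alpha_{ss's}=\alpha_{s'}-2(\alpha_s,\alpha_{s'})\alpha_s$ has nonzero coefficient on both $\alpha_s$ and $\alpha_{s'}$ (in the modular case, compatibility of $p$ ensures the corresponding $p$-root still does, by \cref{lem:basis_p_mov_space}). Since $\Theta$ fixes $\braket{\alpha_{ss's}}=\emb(ss's)$ and the diagonal map sends $\alpha_{ss's}$ to $-2(\alpha_s,\alpha_{s'})\lambda_s\,\alpha_s+\lambda_{s'}\,\alpha_{s'}$, proportionality with $\alpha_{ss's}$ forces $\lambda_s=\lambda_{s'}$. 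Walking along paths in the Coxeter diagram then yields $\lambda_1=\cdots=\lambda_n$.

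\emph{Main obstacle and caveats.} I expect the delicate part to be the uniqueness half: the reduction to a diagonal vector space automorphism via \cref{lem:lamauto_induces_vecautos}, and then the argument that the diagonal scalars agree. That last step uses connectedness of the Coxeter diagram, i.e.\ irreducibility of $W$; before finalizing I would confirm that $W$ is taken irreducible throughout this chapter, since for $W$ with several irreducible factors a diagonal map rescaling the factors by distinct nonzero scalars already fixes $\emb(\nc(W))$ pointwise, so uniqueness would need a separate treatment (or the statement should be read per irreducible component). The remaining steps are routine, once one records that $\lamauto$ is independent of the reduced decomposition chosen in \cref{lem:defi_lamauto}, which follows from \cref{lem:lamauto_alpha_t} as above.
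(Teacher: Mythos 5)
Your existence argument is essentially identical to the paper's: reduce to reflections via \cref{lem:join_and_em_interchange}, \cref{lem:pemb_join_interchange} and \cref{lem:lattice_auto_join_interchange}, then check $\lamauto(\emb(t))=\emb(\ncauto(t))$ from \cref{lem:lamauto_alpha_t}. The uniqueness half is where you diverge, and in a way worth recording. The paper also reduces to an automorphism $\rho$ fixing $\emb(\nc)$ pointwise and invokes \cref{lem:lamauto_induces_vecautos}, but it does so with the basis $\{\alpha_{r_1},\ldots,\alpha_{r_n}\}$ coming from a reduced decomposition of $\cox$, writes an arbitrary reflection as $\alpha_t=\sum a_i\alpha_{r_i}$, and concludes $\lambda=\lambda_1=\cdots=\lambda_n$ from $\lambda a_i=a_i\lambda_i$ --- a step that, as written, only yields $\lambda=\lambda_i$ for those $i$ with $a_i\neq 0$ and so implicitly needs a root of full support or a chaining argument. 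Your route --- the simple-root basis, the roots $\alpha_{ss's}$ for $s,s'$ adjacent in the Coxeter diagram, and connectivity of the diagram --- supplies exactly that chaining and is the more airtight version; it also makes explicit that irreducibility of $W$ is what uniqueness rests on, a hypothesis the paper uses silently (its diagram is a tree, not a forest). Your caveat about reducible $W$ is correct: rescaling the irreducible summands of $V$ by distinct scalars fixes every moved space, so uniqueness genuinely fails there. One small correction: in the modular case the fact that the coefficient of $\alpha_s$ in $\prho(\alpha_{ss's})$ stays nonzero is best cited to \cref{lem:p-projection_injective} (injectivity of $\prho$ on $\Phi^+$ forces $\prho(\alpha_{ss's})\neq\prho(\alpha_{s'})$), not to \cref{lem:basis_p_mov_space}.
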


\begin{proof}
	First we show that every automorphism in $\D$ of $\nc$ extends to an automorphism of the lattice $\lam=\lam(V)$ and then that this automorphism is unique.
	
	Let $\ncauto\colon \nc \to \nc$ be an automorphism of $\nc$ with $\ncauto\in\D$, let $\emb\colon \nc \to \lam$ be the embedding from \cref{thm:brady_watt_embedding} or \cref{thm:embedding_Vp_VZ}, respectively, defined by $\emb(t)= \langle \alpha_t \rangle$, and let $\lamauto\colon \lam \to \lam$ be the lattice automorphism induced from 
	the vector space automorphism $\vecauto\colon V\to V$ with $\vecauto(\alpha_t) = \alpha_{\ncauto(t)}$
	for a reflection $t\in T\subseteq \nc$. This is well-defined by \cref{lem:lamauto_alpha_t}. We show that
	\[
	\lamauto \circ f= f \circ \ncauto.
	\]	
	
	Let $w\in \nc$ be arbitrary with reduced decomposition $t_1\ldots t_k$. Using Lemmas \ref{lem:join_and_em_interchange}  and  \ref{lem:pemb_join_interchange}, we can write the image of $w$ under the embedding $\emb$ as $\emb(w)=f(t_1)\vee \ldots \vee f(t_k)$. Since also the lattice automorphism $\lamauto$ has the property that it interchanges with the join by \cref{lem:lattice_auto_join_interchange}, we have 
	\[
	\lamauto(\emb(w)) = \lamauto(\emb(t_1)) \vee \ldots \vee \lamauto(\emb(t_k)).
	\]
	On the other side, we also have that $\ncauto(w)=\ncauto(t_1)\vee \ldots \vee \ncauto(t_k)$ by \cref{lem:lattice_auto_join_interchange} since $\ncauto$ is an automorphism of the lattice $\nc$. Applying Lemmas \ref{lem:join_and_em_interchange} and \ref{lem:pemb_join_interchange} once again yields
	\[
	\emb(\ncauto(w))= \emb(\ncauto(t_1)) \vee \ldots \vee \emb(\ncauto(t_k)).
	\]
	
	So the equality $\lamauto(\emb(w))=\emb(\ncauto(w))$ holds for all $w\in \nc$ if it holds for the reflections $t\in T$. Using $\vecauto(\alpha_t) 
	= \alpha_{\ncauto(t)}$ for $t \in T$ and the definitions of the maps $\lamauto$ and $\emb$, we get that
	\[
	\lamauto(f(t)) 
	= \lamauto(\langle\alpha_t\rangle) 
	= \langle \vecauto(\alpha_t) \rangle
	= \langle\alpha_{\ncauto(t)}\rangle 
	= f(\ncauto(t))
	\]
	for all $t\in T$ and hence $\lamauto \circ f= f \circ \ncauto$.
	
	To show that the extension is unique, we show that the only automorphism that extends the identity on $\nc$ is the identity on $\lam$. Let $\rho\colon \lam \to \lam$ be an automorphism of $\lam$ that extends the identity, that is we have $\rho \circ \emb = \emb$. In particular, it holds that 
	\[
	\rho(\langle \alpha_t \rangle) = \langle \alpha_t \rangle
	\]
	for all $t\in T$. Let $r_1, \ldots, r_n \in T$ be such that $\cox=r_1\ldots r_n$. Then $\Set{\alpha_{r_1}, \ldots, \alpha_{r_n} }$ is a basis of $V$ such that the linear spans of the basis vectors are fixed by $\rho$. By \cref{lem:lamauto_induces_vecautos} there exist $\lambda_1, \ldots, \lambda_n\in \F\setminus\Set{0}$ such that the assignment $\alpha_i\mapsto \lambda_i\alpha_i$ induces a vector space automorphism $\vecauto$ that extends to the lattice automorphism $\rho$.
	By showing that $\lambda_1=\ldots=\lambda_n$, which means that $(\lambda_1, \ldots, \lambda_n)=\lambda_1\cdot(1,\ldots,1)$, we prove $\rho=\id_\lam$.
	Let $t\in T$ be arbitrary. Since $\Set{\alpha_{r_1}, \ldots, \alpha_{r_n} }$ is a basis of $V$, there exist $a_i\in \F$ for $1 \leq i \leq n$ such that $\alpha_t=\sum_{i=1}^{n}a_i\alpha_{r_i}$. Since $\rho(\langle \alpha_t \rangle)=\langle \alpha_t \rangle$ by assumption, there exists a $\lambda \in \F\setminus\Set{0}$ such that $\vecauto(\alpha_t)=\lambda\alpha_t$. Hence we get on the one hand that 
	\begin{align*}
	\vecauto(\alpha_t) 
	=\lambda\alpha_t 
	=\lambda\sum_{i=1}^{n}a_i\alpha_{r_i}
	=\sum_{i=1}^{n}\lambda a_i\alpha_{r_i},
	\end{align*}
	and on the other hand that
	\begin{align*}
	\vecauto(\alpha_t) 
	= \vecauto\left(\sum_{i=1}^{n}a_i\alpha_{r_i}\right)
	= \sum_{i=1}^{n}a_i\vecauto(\alpha_{r_i})
	= \sum_{i=1}^{n}a_i\lambda_i\alpha_{r_i},
	\end{align*}
	which implies that $\lambda=\lambda_1=\ldots=\lambda_n$.
\end{proof}

The translation of \cref{thm:autos_commute} into the simplicial setting is the following. Note that every lattice automorphism of a graded lattice $L$ gives rise to a type-preserving simplicial automorphism of its order complex $|L|$. Hence the lattice automorphism $\aut(L)$ group can be naturally identified with the type-preserving simplicial automorphism group $\aut(|L|)$. 

\begin{thm}
	Every type-preserving simplicial automorphism in $\D \leq \aut(\nc)$ extends uniquely to a type-preserving simplicial automorphism of the simplicial building $|\lam(V)|$.
\end{thm}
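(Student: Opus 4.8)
The plan is to obtain the statement as the order-complex translation of \cref{thm:autos_commute}, using throughout the identification, recalled just above, of the lattice automorphism group $\aut(L)$ of a graded lattice $L$ with the group of type-preserving simplicial automorphisms of its order complex $|L|$.

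First I would pin down how the two notions of ``extension'' correspond. The rank-preserving poset embedding $\emb\colon \nc(W)\to\lam(V)$ of \cref{thm:brady_watt_embedding} and \cref{thm:embedding_Vp_VZ} sends $\id=\mi$ to $\mov(\id)=\Set{0}$ and the Coxeter element $\cox=\ma$ to $\mov(\cox)=V$, so it restricts to an injective rank-preserving poset map on $\nc(W)\setminus\Set{\id,\cox}$ and therefore induces a type-preserving simplicial embedding $|\emb|\colon|\nc(W)|\hookrightarrow|\lam(V)|$; this is exactly the identification of $|\nc(W)|$ with a chamber subcomplex of the building $|\lam(V)|$ provided by \cref{thm:simplicial_embedding}, and since $\emb(\nc(W))$ is an induced subposet of $\lam(V)$, the image is even an induced subcomplex by \cref{rem:oc_ind_subcplx}. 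Applying the order-complex construction to the identity $\lamauto\circ\emb=\emb\circ\ncauto$ from \cref{thm:autos_commute} shows: a lattice automorphism $\lamauto$ of $\lam(V)$ extends $\ncauto$ in the poset sense if and only if the type-preserving simplicial automorphism of $|\lam(V)|$ induced by $\lamauto$ restricts, along $|\emb|$, to the one induced by $\ncauto$.

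Now I would run the argument. Given a type-preserving simplicial automorphism $\vartheta$ of $|\nc(W)|$ lying in the dihedral group $\D$, regard it via $\aut(|\nc(W)|)\cong\aut(\nc(W))$ as a lattice automorphism $\ncauto\in\D\leq\aut(\nc(W))$. By \cref{thm:autos_commute} there is a unique lattice automorphism $\lamauto$ of $\lam(V)$ with $\lamauto\circ\emb=\emb\circ\ncauto$; passing to order complexes it induces a type-preserving simplicial automorphism of $|\lam(V)|$ which, by the previous paragraph, restricts to $\vartheta$ on $|\nc(W)|$. For uniqueness, any two type-preserving simplicial automorphisms of $|\lam(V)|$ restricting to $\vartheta$ correspond under $\aut(|\lam(V)|)\cong\aut(\lam(V))$ to two lattice automorphisms of $\lam(V)$ both extending $\ncauto$, hence equal by the uniqueness clause of \cref{thm:autos_commute}.

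Since \cref{thm:autos_commute} already carries the genuine content --- the construction of the extension and its uniqueness --- the only thing to be careful about here is the bookkeeping of the two identifications $\aut(L)\cong\aut(|L|)$ and their compatibility with restriction to a subcomplex; concretely, one checks that a type-preserving simplicial automorphism of $|L|$ necessarily preserves the order on $L\setminus\Set{\mi,\ma}$ (an edge $\Set{u,v}$ with $\rk(u)<\rk(v)$ is a chain, its image is a chain with the same rank inequality, and this forces $\phi(u)<\phi(v)$), so it extends uniquely to a lattice automorphism of $L$ by fixing $\mi$ and $\ma$, and conversely. Beyond this routine translation I do not anticipate a real obstacle.
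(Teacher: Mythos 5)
Your proposal is correct and matches the paper's approach: the paper states this theorem without a separate proof, presenting it simply as the translation of \cref{thm:autos_commute} under the identification of $\aut(L)$ with the group of type-preserving simplicial automorphisms of $|L|$, which is exactly the bookkeeping you carry out. Your extra check that a type-preserving simplicial automorphism of the order complex recovers the partial order (via edges being chains and ranks being preserved) is the right way to make that identification rigorous.
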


\section{Automorphisms of type $B$}\label{cha:typeB_auto_group}

The aim of this section is to show that the automorphism group $\aut(\nc(B_n))$ of the non-crossing partition lattice of type $B_n$ is isomorphic to the dihedral group $\D$ of automorphisms. 
First we show in \cref{lem:typeB_dihedral_autos} that the dihedral group of automorphisms $\D=\langle \autl, \autr \rangle$ has order $2n=h$ and then we show in \cref{thm:typeB_full_aut} that this is already the full group of lattice automorphisms of $\nc(B_n)$.

Recall that the standard Coxeter element in type $B_n$ is given by the balanced cycle $[1\ldots n]$ and has order $h=2n$. 

\begin{lem}\label{lem:typeB_dihedral_autos}
	The dihedral group of lattice automorphisms $\D=\langle \autl, \autr \rangle$ of the lattice of non-crossing partitions of type $B_n$ has order $h=2n$.
\end{lem}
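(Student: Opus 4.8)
The statement to prove is that the dihedral group $\D = \langle \autl, \autr \rangle$ of automorphisms of $\nc(B_n)$ has order $h = 2n$. By \cref{lem:dihedral_auto_group} we already know that $\D$ is a dihedral group of order $2d$, where $d$ is the order of $\autl \circ \autr$, which is conjugation by the Coxeter element $\cox = [1\ldots n]$, and that $d$ divides $h = 2n$. So the task reduces to showing $d = 2n$, i.e.\ that no proper power $\cox^k$ with $0 < k < 2n$ acts trivially by conjugation on $\nc(B_n)$. Equivalently, I must show that the only power of $\cox$ that centralizes all of $\nc(B_n)$ is $\cox^{2n} = \id$.

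The plan is as follows. First, it suffices to test conjugation on the reflections $T(B_n) \subseteq \nc(B_n)$, since these generate $\nc(B_n)$ as a lattice (by the lemma of Armstrong quoted before \cref{cor:construction_boolean_sublatice}) and conjugation commutes with meet and join (\cref{lem:lattice_auto_join_interchange}); actually it suffices even to observe that if $\cox^k$ fixes every reflection by conjugation, then $\cox^k$ lies in the centralizer of $\langle T \rangle = W(B_n)$, hence $\cox^k$ is central in $W(B_n)$. So I will compute the center of $W(B_n)$: it is well known that $Z(W(B_n)) = \{\id, w_0\}$ where $w_0 = [1][2]\cdots[n]$ is the longest element (the element sending every $i$ to $-i$). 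Then I need to determine for which $k$ we have $\cox^k \in \{\id, w_0\}$. Since $\cox = [1\ldots n]$ acts on $\{\pm 1, \ldots, \pm n\}$ as a $2n$-cycle under the canonical identification — concretely $\cox$ sends $1 \mapsto 2 \mapsto \cdots \mapsto n \mapsto -1 \mapsto -2 \mapsto \cdots \mapsto -n \mapsto 1$ — its powers are easily described: $\cox^k$ has order $2n/\gcd(k,2n)$, and $\cox^k = w_0$ precisely when $k = n$ (the unique power of a $2n$-cycle that is the "antipodal" involution), while $\cox^k = \id$ precisely when $2n \mid k$. Hence the smallest positive $k$ with $\cox^k$ central is $k = n$, but $\cox^n = w_0 \neq \id$ acts on $\nc(B_n)$ by conjugation — wait, $w_0$ is central, so conjugation by $w_0$ \emph{is} trivial on $W$, hence trivial on $\nc(B_n)$. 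This forces a more careful statement: the order $d$ of $\autl \circ \autr$ equals the order of the image of $\cox$ in $W/Z(W)$, which is $2n/2 = n$. That would give $|\D| = 2n$ as desired, but via $d = n$, not $d = 2n$; I should double-check this against \cref{lem:dihedral_auto_group}'s claim that $d \mid h = 2n$ — indeed $n \mid 2n$, consistent.

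So the precise argument I would write: (1) conjugation by $\cox^k$ is the identity automorphism of $\nc(B_n)$ if and only if $\cox^k$ centralizes $W(B_n)$, if and only if $\cox^k \in Z(W(B_n)) = \{\id, w_0\}$; (2) compute, from the $2n$-cycle description of $\cox$, that $\{k \in \Z : \cox^k \in \{\id, w_0\}\} = n\Z$, so the order of $\autl\circ\autr$ is $d = n$; (3) conclude by \cref{lem:dihedral_auto_group} that $\D$ is dihedral of order $2d = 2n = h$. For completeness I would also note $n \geq 2$ so that $\autl, \autr$ are genuine order-two elements (\cref{lem:autl_autr_are_autos}) and the dihedral group is non-degenerate. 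The main obstacle is pinning down $Z(W(B_n))$ and the claim that conjugation by a \emph{non-central} $\cox^k$ is a non-trivial automorphism of $\nc(B_n)$ — the latter needs a witnessing reflection $t \in T$ with $\cox^k t \cox^{-k} \neq t$, which is immediate from the faithful action on $\{\pm 1,\ldots,\pm n\}$: if $\cox^k$ is not central it moves some reflection's root, e.g.\ it does not fix the set $\{i, \cox^k(i)\}$ for a suitable $i$, so $\cox^k \dka i\,\;j\dkz \cox^{-k} \neq \dka i\,\;j\dkz$ for appropriate $i,j$; this keeps the argument self-contained without appealing to the lattice-generation fact.
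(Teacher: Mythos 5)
Your proof is correct and follows essentially the same route as the paper: both arguments reduce to showing that conjugation by $\cox$ has order $n$ on $\nc(B_n)$, because $\cox^n=[1][2]\cdots[n]$ is the central negation $i\mapsto -i$ (so conjugation by it is trivial on signed permutations), while $\cox^k$ for $0<k<n$ sends $1$ to $k+1$ and hence fails to fix some reflection. The paper carries out this computation directly on an arbitrary $w\in W(B_n)$ rather than packaging it as the statement $Z(W(B_n))=\{\id,w_0\}$, but the mathematical content is identical.
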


\begin{proof}
	By \cref{lem:dihedral_auto_group} we have to show that the automorphism $\ncauto$ of $\nc(B_n)$ given by $\ncauto \coloneqq\autl \circ \autr\colon w \mapsto \cox w \cox\inv$ is of order $d=n$.
	First we show that for arbitrary $w\in W$ we have that that $\ncauto^n(w)=w$. 
	It holds that 
	\[
	\cox^n=[1\ldots n]^n= [1][2]\ldots[n]
	\]
	and that $\cox^{-n}=\cox^n$, as $\ord(\cox)=2n$. Let $i\in \Set{\pm 1,\ldots, \pm n}$ be arbitrary. Then $\cox^n(i)=-i$ and hence we get that
	\[
	\ncauto^n(w)(i)=(\cox^n w \cox^n)(i)=(\cox^n w)(-i) = \cox^n(w(-i))=\cox^n(-w(i))=w(i),
	\]
	where $w(-i)=-w(i)$, because $w$ is a signed permutation.
	
	Moreover, since $[1\ldots n]^k(1)=k+1$ for all $1\leq k < n$, we get with the same computation as above that $\ncauto^k(w)(1)\neq w(1)$ for all $1\leq k < n$ and hence $\ncauto^k\neq \id$.
\end{proof}

In the following example we construct the automorphisms $\autl$ and $\autr$ for $\nc(B_4)$ by first constructing a standard bipartition of the Coxeter element.

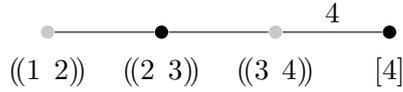
\begin{figure}
	\begin{center}
		\begin{tikzpicture}
		\node[mpunkt, Lightgray] (p1) at (1.5,0){};
		\node[mpunkt] (p2) at (3,0){};
		\node[mpunkt, Lightgray] (p3) at (4.5,0){};
		\node[mpunkt] (p4) at (6,0){};
		\node[above] (b) at (5.25,0){$4$};
		\node[below = 2mm] at (p1) {$\dka 1\,\; 2\dkz$};
		\node[below = 2mm] at (p2) {$\dka 2\,\; 3\dkz$};
		\node[below = 2mm] at (p3) {$\dka 3\,\; 4\dkz$};
		\node[below = 2mm] at (p4) {$[4]$};
		\draw (p1)--(p2)--(p3)--(p4);
		\end{tikzpicture}
		\caption{The Coxeter diagram of type $B_4$ with a bipartition of the vertices corresponding to the bipartition $\dka 1\,\;2\dkz\dka3\,\;4\dkz \cdot \dka 2\,\;3 \dkz [4]$ of the Coxeter element $[3\,\;1\,\;2\,\;4] \in W(B_4)$.}
		\label{fig:coxeter_graph_b4}
	\end{center}
\end{figure}

\begin{exa}\label{exa:typeB_autos_pict}
	In this example, we consider the non-crossing partitions of type $B_4$ with Coxeter element $\cox=[1\,\;2\,\;3\,\;4]$. The standard reduced decomposition of $\cox$ is given by $\dka 1\,\; 2\dkz \dka 2\,\;3\dkz \dka 3\,\;4\dkz [4]$, and the corresponding Coxeter diagram is the line shown in \cref{fig:coxeter_graph_b4}. Hence, the two preliminary left and right parts are $\l'=\dka 12\dkz \dka 34\dkz $ and $\r'=\dka 23\dkz [4]$. Since $\cox'=\l'\r'=[3\,\;1\,\;2\,\;4]$ and the element $\dka 1\,\;2\,\;3\dkz $ conjugates $\cox'$ to $\cox$, conjugation of $\l'$ and $\r'$ with $\dka 1\,\;2\,\;3\dkz $ yields the left part $\l=\dka 1\,\;4\dkz \dka 2\,\;3\dkz $ and the right part $\r=\dka 1\,\;3\dkz [4]$ of the standard Coxeter element $\cox=\l\r$.
	
	The next step is to construct and understand the automorphisms $\autl$ and $\autr$ associated to $\l$ and $\r$, respectively.
	The symmetries of the pictorial representations of $\l$ and $\r$, shown in \cref{fig:b4_left_and_right_part}, already suggest which reflections of the octagon are represented by $\autl$ and $\autr$. Since the edges of $\l$ must be fixed by the corresponding automorphism $\autl$, the axis of reflection of $\autl$ is parallel to the parallel edges of $\l$. The same is true for $\r$ and $\autr$. Note that the axes of reflection of $\autl$ and $\autr$ have angle $\frac{\pi}{8}$, hence they should generate the symmetry group of the octagon, which is the dihedral group $D_8$ of order $16$. At first glance, this conflicts the statement of \cref{lem:typeB_dihedral_autos}, where we showed that the dihedral group of automorphisms generated by $\autl$ and $\autr$ is a dihedral group $D_4$ of order $8$.
	
	Let us take a closer look at the pictorial representations of a type $B$ non-crossing partition. By definition they are invariant under 180 degree rotation. Hence the automorphism $(\autl \circ \autr)^4$, which is rotation by 180 degrees, is the identity on $\nc(B_4)$, although it is \emph{not} the identity in the symmetry group of the octagon. This implies that every automorphism in $\D=\langle \autl, \autr \rangle$ is represented by exactly \emph{two} elements of the symmetry group of the octagon. In the concrete case of $\autr$, the second axis of reflection is given by the line that goes through $2$ and $-2$, which is exactly the former axis rotated by $180$ degrees. So we only need half of the automorphisms of the symmetry group of the octagon to describe the automorphisms of $\nc(B_4)$ and the considerations of the dihedral group acting on the type $B$ non-crossing partitions fits to the statement of \cref{lem:typeB_dihedral_autos}. The observations of this example for the concrete case of $\nc(B_4)$ are also true in an analog way for all $\nc(B_n)$.
\end{exa}

\begin{figure}
	\begin{center}
		\begin{tikzpicture}
		\gachteck	\draw (p1) -- (p4) (p2) -- (p3) (p8) -- (p5) (p7) -- (p6);
		\node[below = 3mm] at ($(p3)!0.5!(p4)$) {$l$};
		
		\begin{scope}[xshift = 3cm]
		\gachteck \draw (p1) -- (p3) (p8) -- (p4) (p7) -- (p5);
		\node[below = 3mm] at ($(p3)!0.5!(p4)$) {$r$};
		\end{scope}
		
		\begin{scope}[xshift = 6cm]
		\gachteck \draw (p1) -- (p4) (p2) -- (p3) (p8) -- (p5) (p7) -- (p6) (p1) -- (p3) (p8) -- (p4) (p7) -- (p5);
		
		\coordinate (lo) at (45:13mm); \coordinate (lu) at (45:-13mm);
		\draw[Orange, thick, dotted] (lu) -- (lo);
		\coordinate (ro) at ($(p8) + (67.5:8mm)$); \coordinate (ru) at ($(p4) + (247.5:5mm)$);
		\draw[Orange, thick, dotted] (ro) -- (p8) -- (p4) -- (ru);
		
		\draw[Orange,<->] (ro) + (2.5mm,-1.4mm)  arc (-12.5:147.5:3mm) node[midway, above] {$\varphi_r$}; 
		\draw[Orange,<->] (lo) + (1.5mm,-3mm)  arc (-33.5:127.5:3mm) node[midway, right] {$\varphi_l$};

		\def\r{5mm}
		\draw[Orange, thick] (45:\r) -- (0,0) -- (67.5:\r); 
		\clip (0,0) -- (45:6mm) -- (67.5:6mm) -- cycle;
		\draw[Orange, thick] (0,0) circle (\r);
		
		\end{scope}
		\end{tikzpicture}
		\caption{The left part $\l=\dka 1\,\;4\dkz \dka 2\,\;3\dkz $ and the right part $\r=\dka 1\,\;3)[4]$ of $\cox=[1\,\;2\,\;3\,\;4]$ are shown here. The axes of reflection for the corresponding automorphisms $\autl$ and $\autr$ are depicted in orange on the right.}
		\label{fig:b4_left_and_right_part}
	\end{center}
\end{figure}
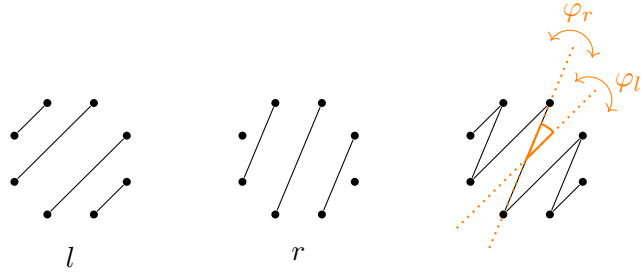

\begin{thm}\label{thm:typeB_full_aut}
	The automorphism group $\aut(\nc(B_n))$ of the non-crossing partition lattice $\nc(B_n)$ is the dihedral group $\D$ of lattice automorphisms of order $h=2n$. 
\end{thm}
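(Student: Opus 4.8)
The plan is to prove the reverse inclusion $\aut(\nc(B_n)) \subseteq \D$, which together with \cref{lem:autl_autr_are_autos} and the computation $\#\D = 2n$ of \cref{lem:typeB_dihedral_autos} gives the theorem; I assume $n \ge 3$ throughout (for $n \le 2$ the non-crossing partition lattice has rank at most two and must be treated separately, e.g.\ $\nc(B_2)$ is the rank-two lattice with four atoms). Fix $\auto \in \aut(\nc(B_n))$. Since $\id$ and $\cox$ are the unique elements of rank $0$ and $n$ they are $\auto$-invariant, and as a poset isomorphism of a graded lattice $\auto$ preserves rank; in particular it permutes the rank-$1$ elements, which are exactly the reflections $T(B_n)$.

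First I would show that $\auto$ preserves the splitting $T(B_n) = T_{\mathrm{bal}} \sqcup T_{\mathrm{pr}}$ into balanced reflections $T_{\mathrm{bal}} = \Set{[i] : 1 \le i \le n}$ and paired reflections $T_{\mathrm{pr}}$. The invariant to use is $\#[t, \cox]$, which is preserved since $\auto$ maps $[t,\cox]$ isomorphically onto $[\auto(t), \cox]$. By \cref{obs:abs_ord} the Kreweras complement $w \mapsto w\inv \cox$ is an order-reversing bijection of $\nc(B_n)$, so it restricts to an anti-isomorphism $[t,\cox] \cong [\id, t\inv\cox]$ and $\#[t,\cox] = \#[\id, t\inv\cox]$. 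A direct computation in the canonical image in $S_{2n}$ shows: if $t = [i]$ then $t\inv\cox$ is a paired $n$-cycle, so $[\id, t\inv\cox]$ is the classical non-crossing partition lattice of an $n$-element set, of size $\tfrac{1}{n+1}\binom{2n}{n}$; if $t \in T_{\mathrm{pr}}$ then $t\inv\cox$ has length $n-1$ and is a Coxeter element of a reflection subgroup isomorphic to $W(B_{n-1})$, so $[\id, t\inv\cox] \cong \nc(B_{n-1})$, of size $\binom{2n-2}{n-1}$. A short computation shows these two numbers agree only for $n \in \{1,2\}$ and differ for $n \ge 3$; hence $\auto(T_{\mathrm{bal}}) = T_{\mathrm{bal}}$ and $\auto(T_{\mathrm{pr}}) = T_{\mathrm{pr}}$.

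Next I would classify the rank-$2$ elements of $\nc(B_n)$ using \cref{lem:typeB_length} and \cref{lem:typeB_cycle_decomp_nc}: up to equivalence each is a paired $3$-cycle $\dka a\,\;b\,\;c\dkz$, a product $\dka a\,\;b\dkz \dka c\,\;d\dkz$ of two disjoint paired transpositions, a product $\dka a\,\;b\dkz[c]$ of a paired transposition with a disjoint balanced reflection, or a balanced $2$-cycle $[a\,\;b]$. By the \cref{subword_prop} one reads off the atoms lying below such a $w$: there are respectively $3$ of them (all in $T_{\mathrm{pr}}$), $2$ (both in $T_{\mathrm{pr}}$), $2$ (one in $T_{\mathrm{pr}}$, one in $T_{\mathrm{bal}}$), and $4$ ($2$ in each class). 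Since $\auto$ preserves rank, the number of atoms below a rank-$2$ element, and — by the previous step — the class of each atom, it preserves each of these four families, and in particular the symmetric relation ``$t \vee t'$ has rank $2$'' on unordered pairs of atoms. Via the pictorial representation (\cref{thm:typeB_nc_iso}) a paired atom corresponds to an antipodal pair of chords of the regular $2n$-gon on $\Set{\pm 1, \dots, \pm n}$ and a balanced atom to a diameter; one checks that ``$t \vee t'$ has rank $2$'' holds precisely when the corresponding chord/diameter configurations are non-crossing. Thus $\auto$ induces an automorphism of the ``non-crossing chord complex'' of the cyclically ordered, antipodally symmetric vertex set $\Set{\pm 1, \dots, \pm n}$.

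The final, and I expect hardest, step is a rigidity statement: every automorphism of this non-crossing chord complex is induced by a symmetry of the $2n$-gon commuting with the antipodal involution $i \mapsto -i$, and modulo that involution (which acts trivially on all antipodally symmetric configurations) the group of such symmetries is precisely $\D$, of order $2n$. This is the type-$B$ analogue of Biane's rigidity argument for the classical non-crossing partitions \cite{biane}, and the work lies in handling the two species of atoms — chords and diameters — together with the antipodal constraint. An alternative I would pursue in parallel is induction on $n$: since the rotation $w \mapsto \cox w \cox\inv$ in $\D$ cyclically permutes $T_{\mathrm{bal}}$, after composing $\auto$ with an element of $\D$ we may assume $\auto([n]) = [n]$; then $\auto$ restricts to an automorphism of $[[n], \cox] \cong \nc(A_{n-1})$, which lies in the dihedral group $D_n$ by Biane's theorem, and combining this with the restrictions of $\auto$ to the intervals $[t, \cox] \cong \nc(B_{n-1})$ for $t \in T_{\mathrm{pr}}$, controlled by the inductive hypothesis, should force $\auto \in \D$. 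In either approach the base case $n = 3$ is checked by hand. The conclusion is $\aut(\nc(B_n)) = \D$, a dihedral group of order $2n$.
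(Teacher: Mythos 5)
Your overall architecture (prove the reverse inclusion $\aut(\nc(B_n)) \subseteq \D$ and invoke \cref{lem:typeB_dihedral_autos}) is the right one, but the proposal has a genuine gap and two incorrect intermediate claims. The gap: the entire content of the theorem is the rigidity statement you defer to the last paragraph as ``the hardest step.'' Neither of your two sketched strategies is carried out, and the paper's proof spends essentially all of its effort exactly there. It does so by attaching a numerical \emph{type} to each rank-$1$ element (recording how ``diagonal'' the corresponding chord of the $2n$-gon is, with balanced reflections assigned type $-1$) and to each rank-$2$ element (its number of non-trivial blocks), proving via the cover-set counts of \cref{lem:typeB_number_rk2_elements_types} that every automorphism preserves the type of rank-$1$ elements (\cref{lem:typeB_nc_auto_preserves_type_rk_1}), and then running an explicit induction: after composing with a rotation one may assume a fixed type-$2$ atom $p=\dka 1\,\;2\dkz$ is fixed, after composing with a reflection the two distinguished type-$2$ covers of $p$ are fixed, a circular propagation then fixes all type-$2$ atoms, and an induction on type fixes all atoms. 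Without some version of this (or a genuinely worked-out induction on $n$), the proposal does not prove the theorem.

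Two of your supporting claims are also false as stated. First, for a paired reflection $t$ the interval $[\id, t\inv\cox]$ is \emph{not} always isomorphic to $\nc(B_{n-1})$: for $t=\dka 1\,\;3\dkz$ in $W(B_3)$ one computes $t\inv\cox = \dka 1\,\;2\dkz[3]$, whose lower interval is $\nc(A_1)\times\nc(B_1)$ of size $4$, not $\binom{4}{2}=6$; the isomorphism type depends on the type of $t$, so your cardinality argument needs all these product values to differ from $C_n$, which you have not checked. (The conclusion that balanced and paired atoms cannot be interchanged is true, but the paper gets it from the cover-set counts rather than from interval sizes.) Second, and more seriously for your strategy, the relation ``$t\vee t'$ has rank $2$'' does \emph{not} coincide with the chord configurations being non-crossing: \cref{exa:crossings_in_A_and_B} in the paper shows that $[1]$ and $[2]$ cross as cycles (any two distinct balanced cycles cross, cf.\ \cref{rem:typeB_one_balanced_cycle}), yet $[1]\vee[2]=[1\,\;2]$ has rank $2$. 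So the ``non-crossing chord complex'' you propose to rigidify is not the combinatorial object encoded by the rank-$2$ relation, and the reduction to a Biane-style rigidity theorem does not go through in the form you describe.
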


The rest of this section is devoted to prove this theorem. The proof uses ideas similar to the ones of proof of Theorem 2 in \cite{biane}.  Before we can prove it, we need several lemmas.
We start with defining the type of rank $1$ and $2$ elements of $\nc(B_n)$. In this section, the letters $p$ and $q$ are reserved for elements of rank $1$, and we use $x$, $y$ and $z$ to describe elements of rank $2$.

Let $p\in\nc(B_n)$ be of rank $1$. The \emph{type} of $p$ is defined as
\[
\typ(p)=
\begin{cases}
j-i+1& \text{ if }p\equiv\dka i\,\; j\dkz \text{ for }0<i<j,\\
n-i-j+1& \text{ if }p\equiv\dka i\,\; j\dkz \text{ for }0<-i<j,\\
-1 &\text{ if } p\equiv[i].
\end{cases}
\]
Note that $2 \leq \typ(p)\leq n$ if $p$ is a paired cycle and $\typ(p)=-1$ if $p$ is a balanced cycle. 

If we pass to the pictorial representation, the type of an element describes how \enquote{diagonal} the edge or the pair of edges lies inside the $2n$-gon, since it measures the distance of the endpoints of the edges. Hence having type $k > 0$ means \enquote{looking like} $\dka 1\,\; k \dkz$ and having negative type corresponds to being a zero block.

Now let $x\in \nc(B_n)$ be an element of rank $2$. Then either, it consists of one balanced cycle, one paired cycle, a balanced and a paired cycle, or two paired cycles. 
In the pictorial representation, this corresponds to having one, two, three or four blocks. The \emph{type} of $x$ records this number and is defined as 
\[
\Typ(x)=
\begin{cases}
1& \text{ if } x=[a\,\;b],\\
2& \text{ if } x=\dka a\,\; b\,\; c\dkz,\\
3& \text{ if } x=\dka a\,\;b\dkz [c],\\
4& \text{ if } x=\dka a\,\; b\dkz \dka c\,\; d\dkz
\end{cases}
\]
for appropriate $a,b,c,d \in \Set{\pm1, \ldots, \pm n }$. 

Recall that $\ab(p)$ is the cover set, that is the set of all elements that cover $p$, and $\bel(x)$ is the covered set, that is the set of all elements that are covered by $x$.

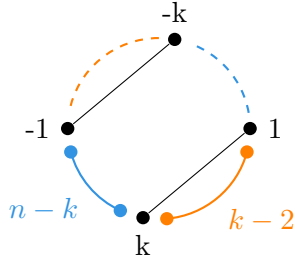
\begin{figure}%
	\begin{center}
		\begin{tikzpicture}
		\def\r{1.2} 
		\def\w{35} 
		
		\node[mpunkt] (1) at (0:\r) {};
		\node[mpunkt] (-1) at (180:\r) {};
		\node[mpunkt] (-k) at (80:\r) {};
		\node[mpunkt] (k) at (260:\r) {};
		
		\foreach \n/\pos in {1/right,-1/left,k/below,-k/above}
		\node[\pos=1mm] at (\n) {\n};
		
		\draw (-1) -- (-k) (k) -- (1);
		\begin{scope}[Blue,thick]
		\node[mpunkt] at (195:\r) {};
		\node[mpunkt] at (245:\r) {};
		\draw (195:\r) arc (195:245:\r) node[midway, below left] {$n-k$};
		\draw[dashed] (10:\r) arc (10:70:\r);
		\end{scope}
		
		\begin{scope}[Orange,thick]
		\node[mpunkt] at (275:\r) {};
		\node[mpunkt] at (345:\r) {};
		\draw (275:\r) arc (275:345:\r) node[midway, below right] {$k-2$};
		\draw[dashed] (90:\r) arc (90:170:\r);
		\end{scope}
		\end{tikzpicture}
		\caption{There are $k-2$ vertices below the edge $\Set{1,k}$, shown in orange, and $n-k$ vertices above it, which are shown in blue. The dashed lines indicate that every vertex has an opposite.}%
		\label{fig:typeB_above_below_vertices}%
	\end{center}
\end{figure}

\begin{lem}\label{lem:typeB_number_rk2_elements_types}
	Let $p\in \nc(B_n)$ be an element of rank 1. If $\typ(p)=k>0$, then $\ab(p)$ has \begin{itemize}
		\item 1 element of type $1$,
		\item $2n-k-2$ elements of type $2$,
		\item $n-k$ elements of type $3$, and
		\item $\frac{1}{2}(k-2)(k-3)+(n-k)(n-k-1)$ elements of type $4$.
	\end{itemize}
	If $\typ(p)=-1$, then $\ab(p)$ has
	\begin{itemize}
		\item $n-1$ elements of type $1$, and
		\item $\frac{1}{2}(n-1)(n-2)$ elements of type $3$.
	\end{itemize}
\end{lem}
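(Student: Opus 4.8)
The plan is to carry out the count on the pictorial side. By \cref{thm:typeB_nc_iso} it suffices to count non-crossing $B_n$-partitions of rank $2$ lying above the partition $\Set{p}$; since every element of $\nc(B_n)$ covering an element of rank $1$ has rank $2$, these are exactly the elements of $\ab(p)$, grouped by the type $\Typ$. First I would put $p$ into normal form. The powers of $\autl\circ\autr$ from \cref{lem:dihedral_auto_group} act on $\nc(B_n)$ as lattice automorphisms, namely as conjugation by the powers of the Coxeter element $\cox=[1\ldots n]$, so \cref{lem:cov_set_autos} applies; under the canonical identification these automorphisms are precisely the rotations of the $2n$-gon with vertices $1,\ldots,n,-1,\ldots,-n$. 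Hence every rank-$1$ paired cycle of type $k$ is conjugate to $\dka 1\,\; k\dkz$ and every balanced cycle to $[1]$, and since conjugation preserves cycle structure it preserves $\Typ$ of rank-$2$ elements; thus the numbers in the statement depend only on $\typ(p)$ and I may assume $p=\dka 1\,\; k\dkz$ or $p=[1]$.

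The second step is to enumerate the rank-$2$ covers $x$ of $p$: the relation $p\le x$ forces each block of $x$ to contain a block of $\Set{p}$, which pins down the shape of $x$. For $p=\dka 1\,\; k\dkz$ (block pair $\Set{1,k},\Set{-1,-k}$ plus singletons), either the two blocks $\Set{1,k}$ and $\Set{-1,-k}$ are merged into one zero block — the unique element of type $1$, automatically non-crossing — or the block pair of $x$ is still $\Set{1,k},\Set{-1,-k}$ and exactly one further feature is added disjointly: a third vertex $m$ adjoined to $\Set{1,k}$ (type $2$), a two-element zero block $\Set{c,-c}$ (type $3$), or a second paired cycle $\dka c\,\; d\dkz$ (type $4$). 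The chord $\Set{1,k}$ cuts off a cap of $k-2$ vertices and leaves a belt of $2n-2k$ vertices in the middle; a short case distinction with the canonical identification gives $1$ element of type $1$, then $2n-k-2$ of type $2$ (of the $2n-4$ admissible choices of $m$, the $k-2$ lying in the cap force a crossing), and $n-k$ of type $3$ (the diametral chord $\Set{c,-c}$ must avoid the cap). For $p=[1]$ the zero block of $x$ must contain $\Set{1,-1}$: if it grows to a four-element zero block $\Set{1,b,-1,-b}$ we obtain type $1$, non-crossing for each of the $n-1$ choices $b\in\Set{2,\ldots,n}$ since all other blocks are singletons; if it stays $\Set{1,-1}$ and a paired cycle is adjoined we obtain type $3$; types $2$ and $4$ cannot occur, because a non-zero block cannot contain the antipodal pair $\Set{1,-1}$.

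The main obstacle is the remaining counts — type $4$ for $\typ(p)=k$ and type $3$ for $\typ(p)=-1$ — where one must both resolve the non-crossing condition and quotient by the identification $\dka c\,\; d\dkz=\dka -c\,\; -d\dkz$ so as not to count a block pair twice. The key observation is that a chord and its antipodal chord never cross (they can only coincide, as a diameter), so the only constraint on a second paired cycle $\dka c\,\; d\dkz$ is that $\Set{c,d}$ be non-crossing with both $\Set{1,k}$ and $\Set{-1,-k}$; this forces $\Set{c,d}$ to lie wholly in the cap or wholly in the belt. The cap contributes $\binom{k-2}{2}$ block pairs, each arising once; the belt contributes $\binom{2n-2k}{2}$ two-element subsets, of which $n-k$ are diametral and must be discarded, while the rest pair off under the antipodal map, yielding $(n-k)(n-k-1)$ block pairs — a total of $\tfrac12(k-2)(k-3)+(n-k)(n-k-1)$. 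For $p=[1]$ the diameter $\Set{1,-1}$ splits the $2n$-gon into two half-discs of $n-1$ vertices, an adjoined paired cycle $\dka a\,\; b\dkz$ must have $\Set{a,b}$ inside one half-disc, and after the antipodal identification this gives $\binom{n-1}{2}=\tfrac12(n-1)(n-2)$ elements of type $3$. Finally I would invoke the rank formula $\rk(\pi)=n-\lfloor\#\pi/2\rfloor$ together with \cref{lem:typeB_length} only to confirm that every partition or element produced in the case analysis genuinely has rank $2$. Assembling the counts yields the lemma.
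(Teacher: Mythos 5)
Your proposal follows essentially the same route as the paper's proof: normalize $p$ to $\dka 1\,\;k\dkz$ or $[1]$ (the paper asserts this without comment; your justification via the rotation automorphisms, which preserve cover sets and the type of rank-$2$ elements, is a welcome addition), pass to the pictorial side, and enumerate the rank-$2$ covers by a cap/belt case analysis. All of your final counts agree with the paper's, and your handling of the antipodal identification in the type-$4$ and type-$3$ counts (discarding diameters, halving the belt contribution) is correct and equivalent to the paper's parametrization by ``vertices above''.

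There is, however, one slip in the justification of the type-$2$ count. You claim that of the $2n-4$ candidate vertices $m$, the $k-2$ lying in the cap of $\Set{1,k}$ force a crossing. In fact adjoining a cap vertex $m$ with $1<m<k$ produces the blocks $\Set{1,m,k}$ and $\Set{-1,-m,-k}$, whose images under the canonical identification occupy the position sets $\Set{1,m,k}$ and $\Set{n+1,n+m,n+k}$ and therefore do \emph{not} interleave. The forbidden choices are the $k-2$ vertices of the \emph{antipodal} cap, i.e.\ the negatives $-m$ with $1<m<k$: there the blocks sit at positions $\Set{1,k,n+m}$ and $\Set{m,n+1,n+k}$, and $1<m<k<n+1$ exhibits a crossing. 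Since the two forbidden sets have the same cardinality, your total $2n-k-2$ is unaffected, but the reason given for it is the wrong one and should be corrected.
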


\begin{figure}
	\begin{center}
		\begin{tikzpicture}
		
		\grauKreis
		\draw[Blue, thick] (-1) -- (k) (-k) -- (1);
		\node at (0,-1.4) {type 1};

		\begin{scope}[xshift = 4cm]
		\grauKreis
		\node(-a) [Orange, mpunkt] at (150:\r) {};
		\node(a) [Orange, mpunkt] at (150:-\r) {};
		\draw[Orange, thick] (-1) -- (-a) -- (-k) (1) -- (a) -- (k);
		
		\begin{scope}[xshift = 3 cm]
		\grauKreis
		\node(-a)[Blue, mpunkt] at (25:\r) {};
		\node(a) [Blue, mpunkt] at (25:-\r) {};
		\draw[Blue, thick] (k) -- (a) -- (1) (-1) -- (-a) -- (-k);
		\node at (0,-1.4) {type 2};
		\begin{scope}[xshift = 3 cm]
		\grauKreis
		\node(-a)[Blue, mpunkt] at (25:\r) {};
		\node(a) [Blue, mpunkt] at (25:-\r) {};
		\draw[Blue, thick] (-1) -- (a) -- (-k) (1) -- (-a) -- (k);
		\end{scope}
		\end{scope}	
		\end{scope}
		
		\begin{scope}[yshift = - 3.1 cm]
		\grauKreis
		\node(-a)[Blue, mpunkt] at (25:\r) {};
		\node(a) [Blue, mpunkt] at (25:-\r) {};
		\draw[Blue, thick] (a) -- (-a);
		\node at (0,-1.4) {type 3};
		
		\begin{scope}[xshift = 4cm]
		\grauKreis
		\node(-a) [Orange, mpunkt] at (150:\r) {};
		\node(a) [Orange, mpunkt] at (150:-\r) {};
		\node (-b) [Orange, mpunkt] at (80:\r) {};
		\node (b) [Orange, mpunkt] at (80:-\r) {};
		\draw[Orange, thick] (-a) -- (-b) (a) -- (b);
		
		\begin{scope}[xshift = 3 cm]
		\grauKreis
		\node(-a) [Blue, mpunkt] at (45:\r) {};
		\node(a) [Blue, mpunkt] at (45:-\r) {};
		\node(-b) [Blue, mpunkt] at (15:\r) {};
		\node(b) [Blue, mpunkt] at (15:-\r) {};
		\draw[Blue, thick] (a) -- (b) (-a) -- (-b);
		\node at (0,-1.4) {type 4};
		\begin{scope}[xshift = 3 cm]
		\grauKreis
		\node(-a) [Blue, mpunkt] at (45:\r) {};
		\node(a) [Blue, mpunkt] at (45:-\r) {};
		\node(-b) [Blue, mpunkt] at (15:\r) {};
		\node(b) [Blue, mpunkt] at (15:-\r) {};
		\draw[Blue, thick] (a) -- (-b) (-a) -- (b);
		\end{scope}
		\end{scope}	
		\end{scope}	
		\end{scope}
		\end{tikzpicture}
		\caption{The different ways to obtain a covering element of a rank $1$ element of type $k$ in $\ncb_n$ are shown here.}%
		\label{fig:typeB_cover_blocks}%
	\end{center}
\end{figure}
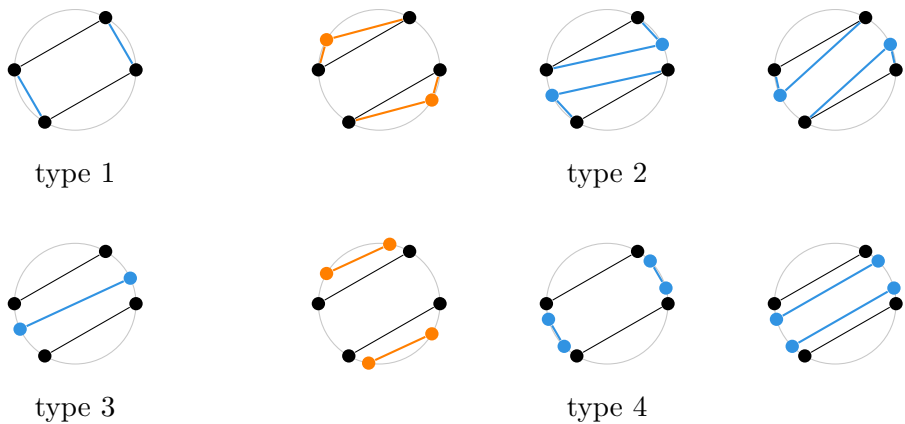

\begin{proof}
	Let $p\in \nc(B_n)$ be of rank $1$ and type $k$. We may assume without loss of generality that $p=\dka 1\,\; k\dkz$ if $\typ(p)>0$ and $p=[1]$ if $\typ(p)=-1$. By \cref{thm:typeB_nc_iso} we furthermore may pass to the pictorial representation. Hence, $p$ corresponds to the pair of edges $\Set{1, k}\Set{1,-k}$, or the edge $\Set{1,-1}$, respectively.
	Recall that the type of $p$ equals, up to sign, the number of non-trivial blocks in the pictorial representation of $p$. 
	In this proof, we use the same symbols for elements in $\nc(B_n)$ and their pictorial representations in $\ncb_n$.
	
	A vertex $a\in \Set{ 1, \ldots,  n}$ is called \emph{below} the edge $\Set{1,k}$ if $1 < a <k$, and it is called \emph{above} if $k<a\leq n$. In \cref{fig:typeB_above_below_vertices} the elements below are shown in orange and the elements above $\Set{1,k}$ in blue.	
	
	In the following, we count the possibilities to construct covers of $p$ with the desired number on non-trivial blocks. The different ways to do so are shown in \cref{fig:typeB_cover_blocks}.
	
	First we consider the case of the paired cycle $p=\dka 1\,\; k\dkz$.
	
	Let $x\in \ab(p)$ with $\Typ(x)=1$. Then $p$ has exactly one zero block with four elements, which has to contain $\Set{1,k,-1,-k}$, hence there is no choice.
	
	Let $x\in \ab(p)$ with $\Typ(x)=2$. Since the number of non-trivial blocks of $p$ and $x$ is the same, the only way to obtain $x$ from $p$ is to add one vertex to each of the edges of $p$. Since the elements in $\ncb_n$ are invariant under rotation of $\pi$, it is enough to consider the possibilities to add a vertex to the edge $\Set{1,k}$. There are $k-2$ ways to add a vertex below and $n-k$ ways to add a vertex above. But also the negatives of the vertices above can be added, so we get in total $k-2+2(n-k)=2n-k-2$ elements in $\ab(p)$ of type $2$.
	
	Let $x\in \ab(p)$ with $\Typ(x)=3$. The number of non-trivial blocks increases by one if we compare $p$ to $x$. Since the number of non-trivial blocks of $x$ is odd, there has to be a zero block present in $x$. In $p$ there is no zero block, so $x$ consists of the edges of $p$ with a zero block added above the edge $\Set{1,k}$. There are $n-k$ ways to do so.
	
	Let $x\in \ab(p)$ with $\Typ(x)=4$. Since $x$ has two blocks more than $p$, we have to add a pair of edges to $p$ that does not cross $p$ in order to construct $x$. The first possibility is to choose two vertices below and connect them. There are $\binom{k-2}{2}$ ways to do so. The second possibility is to choose an edge $\Set{a, b}$ among the vertices above, which yields $\binom{n-k}{2}$ different partitions. But instead of adding the pair $\Set{a,b}\Set{-a,-b}$, we can also add the pair $\Set{a,-b}\Set{-a,b}$. In total we get $\binom{k-2}{2}+2\binom{n-k}{2} =\frac{1}{2}(k-2)(k-3)+(n-k)(n-k-1)$ elements of type $4$ that cover $p$.
	
	Now consider the balanced cycle $p=[1]$. Since $p$ is a zero edge, there are no vertices above and the number of blocks of elements covering $p$ has to be $1$ or $3$. Hence we have to consider all cases with orange edges in \cref{fig:typeB_cover_blocks}. 
	
	Let $x\in \ab(p)$ with $\Typ(x)=1$. There are $n-1$ vertices below the edge $\Set{1,-1}$ and every vertex, as well as its negative, can be added to the block of $p$ to obtain an element with one block.
	
	Let $x\in \ab(p)$ with $\Typ(x)=3$. We increase the number of blocks by $2$ when constructing the partition $x$. Hence we have to add a pair of edges, which can be freely chosen from the $n-1$ elements below $\Set{1,-1}$. Hence there are $\binom{n-1}{2}$ different covering elements of type $3$.
\end{proof}

\begin{lem}\label{lem:typeB_rk2_type1_autos_preserve}
	If $x\in \nc(B_n)$ has rank $2$ and type $\Typ(x)=1$ or $\Typ(x)=2$, then for all automorphisms $\ncauto \in \aut(\nc(B_n))$ it holds that $\Typ(x)=\Typ(\ncauto(x))$.
\end{lem}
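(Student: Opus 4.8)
The plan is to characterise the types $1$ and $2$ of a rank-$2$ element of $\nc(B_n)$ intrinsically, namely by the cardinality of the covered set $\bel(x)$, and then to invoke \cref{lem:cov_set_autos}, which says that automorphisms map covered sets bijectively onto covered sets.

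First I would note that an automorphism $\ncauto$ of the graded lattice $\nc(B_n)$ preserves rank: it fixes the minimum $\id$, sends cover relations to cover relations by \cref{lem:cov_set_autos}, hence maps atoms to atoms and, inductively, rank-$k$ elements to rank-$k$ elements. In particular $\ncauto(x)$ is again a rank-$2$ element, so $\Typ(\ncauto(x))\in\Set{1,2,3,4}$ by definition of $\Typ$. Next I would compute $\#\bel(x)$ in each of the four cases. For a rank-$2$ element $x$ the covered set $\bel(x)$ is exactly the set of rank-$1$ elements of $\nc(B_n)$ below $x$, which by the \cref{subword_prop} is the set of reflections of $W(B_n)$ occurring in some reduced decomposition of $x$; so \cref{lem:typeB_length} already restricts the possible shapes of these reflections. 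A short computation (for instance via the canonical images in $S_{2n}$ together with the description of cover relations by breaking cycles, as in the proof sketch of \cref{prop:typeB_consistent_orientation-NC}) then gives: if $x=[a\,\;b]$ then $\bel(x)=\Set{[a],[b],\dka a\,\;b\dkz,\dka a\,\;-b\dkz}$, so $\#\bel(x)=4$; if $x=\dka a\,\;b\,\;c\dkz$ then $\bel(x)=\Set{\dka a\,\;b\dkz,\dka b\,\;c\dkz,\dka a\,\;c\dkz}$, so $\#\bel(x)=3$; if $x=\dka a\,\;b\dkz[c]$ then $\bel(x)=\Set{\dka a\,\;b\dkz,[c]}$, so $\#\bel(x)=2$; and if $x=\dka a\,\;b\dkz\dka c\,\;d\dkz$ then $\bel(x)=\Set{\dka a\,\;b\dkz,\dka c\,\;d\dkz}$, so $\#\bel(x)=2$. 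Consequently, among rank-$2$ elements of $\nc(B_n)$, the condition $\Typ(x)=1$ is equivalent to $\#\bel(x)=4$ and $\Typ(x)=2$ is equivalent to $\#\bel(x)=3$; in particular we do not even need to separate types $3$ and $4$ from one another.

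With this in hand the lemma is immediate: if $\Typ(x)\in\Set{1,2}$, then $\ncauto(x)$ is a rank-$2$ element with $\#\bel(\ncauto(x))=\#\bel(x)\in\Set{3,4}$ by \cref{lem:cov_set_autos}, and the characterisation above forces $\Typ(\ncauto(x))=\Typ(x)$. The one step that takes genuine care is the exact count $\#\bel([a\,\;b])=4$: a priori the canonical image of $[a\,\;b]$ in $S_{2n}$ is a $4$-cycle, which inside the symmetric group would have six reflections below it, so one has to check that the non-crossing (equivalently, signed) condition cuts this down to the four listed reflections — two balanced and two paired — leaving the value $4$ distinct from the values $3,2,2$ attained by the remaining types.
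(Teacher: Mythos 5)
Your proposal is correct and follows essentially the same route as the paper: the paper likewise lists the covered sets of the four types of rank-$2$ elements, observes that $\#\bel(x)$ equals $4,3,2,2$ for types $1,2,3,4$ respectively, and concludes via the invariance of covered-set cardinalities under automorphisms (\cref{lem:cov_set_autos}). Your extra remarks on rank preservation and on why $\#\bel([a\,\;b])=4$ rather than $6$ are sound but not needed beyond what the paper records.
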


\begin{proof}
	The cardinality of $\bel(x)$ is preserved under all automorphisms. The covered sets for the four different types of rank $2$ elements are given by
	\begin{align*}
	&\bel([a\,\;b])=\Set{[a],[b],\dka a\,\; b\dkz, \dka a\,\;-b\dkz },\\
	&\bel(\dka a\,\;b\,\;c\dkz)=\Set{\dka a\,\; b\dkz, \dka b\,\;c\dkz, \dka a\,\; c\dkz },\\
	&\bel(\dka a\,\;b\dkz[c])=\Set{\dka a\,\; b\dkz, [c] },\text{ and}\\
	&\bel(\dka a\,\; b\dkz \dka c\,\;d\dkz)=\Set{ \dka a\,\; b\dkz, \dka c\,\;d\dkz }
	\end{align*}
	for appropriate $a,b,c,d\in \Set{\pm1, \ldots, \pm n}$.
	Hence we have that $T(x)=1$ if and only if $\#\bel(x)=4$, and that $T(x)=2$ if and only if $\#\bel(x)=3$. Consequently, an element of type $1$ gets mapped to an element of type $1$, and an element of type $2$ gets mapped to one of type $2$.
\end{proof}

Note that the lemma only states that the type of rank $2$ elements is preserved whenever the type equals $1$ or $2$. If $x,y\in \nc(B_n)$ are both of rank $2$ and type $k$, then in general there does \emph{not} exist an automorphism $\ncauto\in \aut(\nc(B_n))$ such that $x=\ncauto(y)$. The reason is that the type of a rank $2$ element only reflects its coarse structure, namely the cycle structure, but it does not encode how these cycle are built, that is what the types of the covered rank $1$ elements are.

\begin{lem}\label{lem:typeB_nc_auto_preserves_type_rk_1}
	The type of rank $1$ elements of $\nc(B_n)$ is preserved under all lattice automorphisms of $\nc(B_n)$.
\end{lem}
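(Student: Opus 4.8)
The plan is to recover $\typ(p)$ for a rank-$1$ element $p\in\nc(B_n)$ purely from the isomorphism type of its cover set $\ab(p)$, so that any lattice automorphism of $\nc(B_n)$, which preserves cover sets by \cref{lem:cov_set_autos}, is forced to preserve $\typ(p)$. Concretely, I would fix a lattice automorphism $\ncauto$ of $\nc(B_n)$. Since $\ncauto$ is rank-preserving it maps rank-$1$ elements to rank-$1$ elements, and by \cref{lem:cov_set_autos} it restricts to a bijection $\ab(p)\to\ab(\ncauto(p))$ between sets of rank-$2$ elements. Applying \cref{lem:typeB_rk2_type1_autos_preserve} to both $\ncauto$ and $\ncauto\inv$, this bijection sends rank-$2$ elements of type $2$ exactly to rank-$2$ elements of type $2$, so that
\[
\#\Set{x\in\ab(p)\str \Typ(x)=2}=\#\Set{y\in\ab(\ncauto(p))\str \Typ(y)=2}.
\]

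The next step is to read off $\typ(p)$ from this number. By \cref{lem:typeB_number_rk2_elements_types}, the cover set $\ab(p)$ contains no element of type $2$ when $p$ is a balanced cycle (that is, $\typ(p)=-1$), and it contains exactly $2n-\typ(p)-2$ elements of type $2$ when $p$ is a paired cycle (that is, $2\leq\typ(p)\leq n$). For $n\geq 3$ these latter values are all strictly positive, ranging over $\Set{n-2,n-1,\ldots,2n-4}$, and the assignment $k\mapsto 2n-k-2$ is strictly decreasing on $\Set{2,\ldots,n}$; hence the map sending $\typ(p)$ to $\#\Set{x\in\ab(p)\str \Typ(x)=2}$ is injective. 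Combined with the displayed equality this gives $\typ(p)=\typ(\ncauto(p))$, which is exactly the assertion. If one prefers not to distinguish balanced cycles via "no type-$2$ cover", the identical strategy works with the count of type-$1$ covers: a balanced rank-$1$ element has $n-1$ covers of type $1$ while a paired one has exactly one, which separates the two families for $n\geq 3$, and the type-$2$ count then fixes the paired type.

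I do not expect a genuine obstacle here: the whole proof is a short combinatorial bookkeeping argument once \cref{lem:typeB_number_rk2_elements_types} and \cref{lem:typeB_rk2_type1_autos_preserve} are in hand. The only point requiring care is the degenerate case $n=2$, where $\nc(B_2)$ is the rank-$2$ lattice with four atoms, all the cover-set counts collapse to the same value, and the statement actually fails (the automorphism group is $S_4$, not $\D$); so the lemma, like the surrounding computation of $\aut(\nc(B_n))$, is to be read under the standing assumption $n\geq 3$.
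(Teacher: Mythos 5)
Your argument is correct and is essentially the paper's own proof: both recover $\typ(p)$ from the number of type-$2$ elements in $\ab(p)$, using \cref{lem:cov_set_autos}, \cref{lem:typeB_rk2_type1_autos_preserve} and the counts from \cref{lem:typeB_number_rk2_elements_types}. Your closing remark that the injectivity of $k\mapsto 2n-k-2$ (and hence the lemma itself) degenerates for $n=2$ is a valid observation that the paper leaves implicit.
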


\begin{proof}
	Let $\ncauto\in \aut(\nc(B_n))$ be an automorphism and $p\in \nc(B_n)$ be an element of rank $1$. 
	By \cref{lem:cov_set_autos} the cardinalities of cover sets and covered sets are preserved under lattice automorphisms. In particular, for all $x\in \ab(p)$ it holds that 
	\[
	\#\Set{x\in \ab(p)\str \Typ(x)=2 } = \#\Set{y\in \ab(\ncauto(p))\str \Typ(y)=2 }
	\]
	because type $2$ elements of rank $2$ are mapped to type $2$ elements by \cref{lem:typeB_rk2_type1_autos_preserve}.
	The number of elements that cover $p$ and are of type $2$ is counted in \cref{lem:typeB_number_rk2_elements_types} and depends on the type $\typ(p)$ of $p$ only. If $\typ(p)=k>0$, then $\#\Set{x\in \ab(p)\str \Typ(x)=2}=2n-k-2$, and if $\typ(p)=-1$, then $\ab(p)$ does not contain any element of type $2$.
	This implies that the type of rank $1$ elements has to be preserved under automorphisms.
\end{proof}

\begin{proof}[Proof of \cref{thm:typeB_full_aut}]
	The strategy of the proof is the following. We show that an arbitrary automorphism of $\nc(B_n)$ is, after passing to the isomorphic lattice $\ncb_n$, a composition of symmetries of the $2n$-gon up to rotation by 180 degrees. Since there are $2n$ such symmetries and the order of the subgroup $\D \leq \aut(\nc(B_n))$ equals $2n$ by \cref{lem:typeB_dihedral_autos}, it follows that $\D = \aut(\nc(B_n))$. Since a lattice automorphism of a graded lattice is completely determined by the images of elements of rank $1$ by \cref{lem:lattice_auto_join_interchange}, it is enough to consider the images of the rank $1$ elements. In this proof we are passing freely from $\nc(B_n)$ to $\ncb_n$ and vice versa by using \cref{thm:typeB_nc_iso}. 
	
	Let $\ncauto\in \aut(\nc(B_n))$ be an arbitrary automorphism of $\nc(B_n)$. First we show that, after possibly composing $\ncauto$ with automorphisms of $\D$, $\ncauto$ fixes all elements of rank $1$ and type $2$. With induction we show that all elements of rank $1$ are fixed, which shows that $\ncauto=\id_{\nc(B_n)}$ and proves the claim.
	
	Let $p\in \nc(B_n)$ be an element of rank $1$ and type $2$, for instance $p=\dka 1\,\; 2\dkz$.
	By \cref{lem:typeB_nc_auto_preserves_type_rk_1} the type of $\ncauto(p)$ is also two. Hence there exists a rotating automorphism such that its composition with $\ncauto$ fixes $p$. In the pictorial representation this is the rotation that moves $\ncauto(p)$ back to $p$. We assume without loss of generality that $\ncauto$ already fixes $p$.

	The next goal is to show that $\ncauto$ fixes all elements of rank $1$ and type $2$. For this, we take a closer look at $\ab(p)$ and show that two of its elements of type $2$ get fixed by $\ncauto$. 
	From the proof of \cref{lem:typeB_rk2_type1_autos_preserve} it follows that there exist exactly two elements $x,y\in \ab(p)$ of type $2$ with the property that $\bel(x)$ and $\bel(y)$ contain exactly two elements of type $2$.
	In the concrete case of $p=\dka 1\,\;2\dkz$, the elements are $x=\dka 1\,\; 2\,\; 3\dkz$ and $y=\dka 1\,\;2\,\;-n\dkz$. Then we get that
	\begin{align*}
	&\bel(x)=\Set{p=\dka 1\,\;2\dkz, \dka 2\,\;3\dkz, \dka 1\,\;3\dkz }\text{ and }\\
	&\bel(y)=\Set{p=\dka 1\,\;2\dkz, \dka 2\,\;-n\dkz, \dka 1\,\;-n\dkz }.
	\end{align*}
	
	Recall that the type of rank $1$ elements is invariant under all automorphisms of $\nc(B_n)$ by \cref{lem:typeB_nc_auto_preserves_type_rk_1}. Hence it holds for all $z\in \nc(B_n)$ with $\rk(z)=2$ that $\bel(z)$ contains two elements of type $2$ if and only if $\bel(\ncauto(z))$ contains two elements of type $2$. 
	
	The elements in $\ab(p)$ with the property that they have type $2$ and that their covered set contains exactly two elements of type $2$ are exactly $x$ and $y$. Since $\ncauto(p)=p$ we have $\ab(\ncauto(p))=\ab(p)$ and the elements of $\ab(\ncauto(p))$ with the above property are exactly $x$ and $y$. Hence $\ncauto(\Set{x,y})=\Set{x,y}$. 
	There are two cases: either $\ncauto$ fixes both $x$ and $y$, or it exchanges the two elements.
	In the second case, the reflecting automorphism in $\D$ that fixes $p$ exchanges $x$ and $y$. The composition of $\ncauto$ with this reflecting automorphism fixes both $x$ and $y$. So we may assume without loss of generality that the automorphism $\ncauto$ fixes both $x$ and $y$.
	
	Now we are ready to show that $\ncauto$ fixes all rank $1$ elements of type $2$. 
	Since $\ncauto$ fixes $p$ and $x$, the other type $2$ element in $\bel(x)$, call it $q$, also gets fixed. In the concrete example of $p=\dka 1\,\;2 \dkz$ we have $q=\dka 2\,\; 3\dkz$. 
	Now we are in the same situation as above, where $q$ now plays the role of $p$ from above: We know that $q$ gets fixed by $\ncauto$ and that there exist exactly two elements in $\ab(q)$ of type $2$ that cover exactly two elements of type $2$. One of these elements is, to be consistent with the example, $x$, from which we already know that it gets fixed by $\ncauto$. Hence, the type $2$ elements in $\ab(q)$ both get fixed. Successively applying the same argument to the element circularly next to $q$ in the pictorial representation yields that all rank $1$ elements of type $2$ in $\nc(B_n)$ get fixed by the automorphism $\ncauto$.
	
	The final step is to show that $\ncauto$ fixes all elements of rank $1$. We proceed by induction on the type, where we regard the type $-1$ elements as \enquote{type $n+1$} elements for the proof. This makes sense if one passes to the pictorial representation: in a rank $1$ element of type $k>0$, there are $k-2$ vertices between the endpoints of the edge, and if the type equals $-1$ there are $n-1$ vertices between the endpoints.
	We already showed that the statement is true for type $2$. Let us suppose that it is true for all types up to $k-1\leq n$.  
	Let $p$ and $q$ be elements of rank $1$ with $\typ(p)=2$ and $\typ(q)=k-1$ so that they do not commute. 
	To make the argument more transparent, we use without loss of generality the concrete elements $p=\dka 1\,\;2\dkz$ and $q=\dka 2\,\;k \dkz$. Then the join of $p$ and $q$ is  $x=\dka 1\,\;2\,\;k\dkz$ with $\Typ(x)=2$, and $\bel(x)=\Set{p,q,\dka 1\,\;k\dkz}$. We have that $\ncauto(x)=x$, because $x=p \vee q$ and $p$ and $q$ get fixed by $\ncauto$. Hence $\bel(x)$ is also invariant under $\ncauto$, which implies that $\ncauto$ fixes the element $\dka 1\,\;k\dkz$ of type $k$.
\end{proof}

\begin{cor}
	Every automorphism of $\nc(B_n)$ can be realized as a symmetry of the $2n$-gon of the pictorial representations $\ncb_n$.
\end{cor}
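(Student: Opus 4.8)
The plan is to build on \cref{thm:typeB_full_aut}, which already identifies $\aut(\nc(B_n))$ with the dihedral group $\D=\langle\autl,\autr\rangle$ of order $2n$. So the work is no longer to determine the group, but only to match each of its elements with a genuine symmetry of the regular $2n$-gon. By the classification of the elements of $\D$ proved just above the corollary, every such automorphism is either \emph{rotating}, of the form $w\mapsto\cox^kw\cox^{-k}$, or \emph{reflecting}, of the form $w\mapsto\cox^k\l w\inv\l\cox^{-k}$. I would treat these two families separately, working throughout under the isomorphism $\nc(B_n)\cong\ncb_n$ of \cref{thm:typeB_nc_iso}, with the vertices of the $2n$-gon labeled $1,\ldots,n,-1,\ldots,-n$ clockwise; under the canonical identification with $\Set{1,\ldots,2n}$ these labels occupy consecutive positions.

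The first key observation is that conjugation by a fixed signed permutation $g$ acts on pictorial representations purely by relabeling vertices according to $g$: the blocks of $\Set{gwg\inv}$ are exactly the $g$-images of the blocks (orbits) of $\Set{w}$. Now the canonical image of $\cox=[1\ldots n]$ in $S_{2n}$ is the $2n$-cycle carrying each label to the next one clockwise, i.e.\ the map $i\mapsto i+1 \pmod{2n}$; hence conjugation by $\cox^k$ is rotation of the $2n$-gon by $k$ steps. This realizes every rotating automorphism as a rotation.

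For the reflecting automorphisms the decisive point is that inversion is invisible to the picture: since $w$ and $w\inv$ have the same orbits, $\Set{w}=\Set{w\inv}$ as $B_n$-partitions. Writing $g=\cox^k\l$ and using $(\cox^k\l)\inv=\l\cox^{-k}$, the reflecting automorphism equals $w\mapsto g w\inv g\inv$, so pictorially it relabels vertices according to $g$. It then remains to show $g$ is a dihedral reflection of the labeled $2n$-gon. From $\cox=\l\r$ and $\l^2=\r^2=\id$ one gets $\l\cox\l=\r\l=\cox\inv$, whence $g\cox g\inv=\cox\inv$; and if we write $\cox$ additively as $i\mapsto i+1\pmod{2n}$, then $g\cox g\inv=\cox\inv$ forces $g(i+1)=g(i)-1$, so $g(i)=g(0)-i\pmod{2n}$. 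This is precisely a reflection $i\mapsto c-i$, so each reflecting automorphism is realized by a reflection of the $2n$-gon. The one step here that genuinely needs care—and which I expect to be the main obstacle—is this last algebraic fact that a permutation conjugating the standard $2n$-cycle to its inverse must have the affine form $i\mapsto c-i$; everything else is bookkeeping once \cref{thm:typeB_full_aut} is granted.

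Finally I would reconcile the apparent discrepancy between $|\D|=2n$ and the order $4n$ of the full symmetry group of the $2n$-gon. The $180^{\circ}$ rotation is the canonical image of the central element $\cox^n=[1][2]\cdots[n]$, which effects the simultaneous sign change $i\mapsto-i$; it acts trivially on $\ncb_n$ because every $B_n$-partition satisfies $B\in\pi\iff-B\in\pi$. Thus the homomorphism from the symmetry group of the $2n$-gon into $\aut(\ncb_n)$ has image containing all the rotations and reflections constructed above, namely all of $\D$, and kernel containing this central rotation; a count of orders shows the map is exactly two-to-one onto $\D$. In particular each automorphism of $\nc(B_n)$ is realized by a symmetry of the $2n$-gon (indeed by exactly two, differing by the $180^{\circ}$ rotation), which is the assertion of the corollary.
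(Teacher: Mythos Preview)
Your proposal is correct. The paper treats the corollary as an immediate consequence of the proof of \cref{thm:typeB_full_aut}: there one showed that an arbitrary automorphism, after composing with suitable rotating and reflecting elements of $\D$ (which were already identified in \cref{exa:typeB_autos_pict} as symmetries of the $2n$-gon), becomes the identity; hence every automorphism is itself a symmetry. Your route is more explicit and self-contained: rather than relying on that reduction, you take the classification of elements of $\D$ into rotating and reflecting types and verify directly, via the action on vertex labels, that conjugation by $\cox^k$ is a rotation and that each $\cox^k\l$ acts as a reflection (using the neat observation that $g\cox g^{-1}=\cox^{-1}$ forces $g$ to have the affine form $i\mapsto c-i$). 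This buys a concrete geometric description of every automorphism and makes the two-to-one correspondence with the full symmetry group of order $4n$ transparent, whereas the paper's argument is shorter but leaves that correspondence to be read off from the example.
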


\section{Automorphisms of type $D$}\label{cha:typeD_auto_group}

The aim of this section is to show that the automorphism group of $\nc(D_n)$ is isomorphic to the symmetry group of the $2(n-1)$-gon for $n\neq 4$. In particular we show that $\aut(\nc(D_n)) \cong \D$ when $n$ is odd and that $\D \leq \aut(\nc(D_n))$ is an index $2$ subgroup for even $n$. In the case that $n\neq 4$, we give a group-theoretic description of $\aut(\nc(D_n))$ and show that every automorphism of $\aut(\nc(D_n))$ extends uniquely to an automorphism of $\lam$, provided that $\nc(D_n) \to \lam$ is an embedding. In addition, we give an explicit example of an automorphism of $\nc(D_4)$ that cannot be realized as a symmetry of the hexagon.

Recall that the standard Coxeter element of $W(D_n)$ is given by $\cox = [1\,\; 2\ldots n-1][n]$ and that its order equals $h=2(n-1)$.

\begin{lem}\label{lem:typeD_order_dih_auto_group}
	The dihedral group of automorphisms $\D=\langle \autl, \autr \rangle \subseteq \aut(\nc(D_n))$ has order $2h=4(n-1)$ if $n$ is odd, and it has order $h=2(n-1)$ if $n$ is even.
\end{lem}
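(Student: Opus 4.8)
By \cref{lem:dihedral_auto_group} we already know that $\D = \langle \autl, \autr\rangle$ is dihedral of order $2d$, where $d$ is the order of $\autl\circ\autr$, which is conjugation by the Coxeter element $\cox = [1\ldots n-1][n]$. Hence the whole statement reduces to computing $d$, i.e.\ the smallest positive integer $d$ with $\cox^{d} w \cox^{-d} = w$ for all $w\in \nc(D_n)$. The plan is to analyze the action of the powers of $\cox$ on the underlying set $\Set{\pm 1, \ldots, \pm n}$, exploiting that $\nc(D_n)$ is a subposet of $W(D_n)$ and that an inner automorphism is trivial on $\nc(D_n)$ precisely when conjugation fixes enough elements to force the permutation action to be trivial on the blocks appearing in non-crossing partitions.

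First I would record the cycle structure of $\cox$ explicitly: since $\cox = [1\ldots n-1][n]$ is a product of a balanced cycle of length $n-1$ and the balanced cycle $[n]$, its canonical image in $S_{2n}$ is the product of the $2(n-1)$-cycle $(1\, 2\, \cdots\, n-1\, {-1}\, {-2}\, \cdots\, {-(n-1)})$ and the transposition $(n\, {-n})$. Therefore $\cox$ has order $h = 2(n-1)$ (consistent with \cref{tab:Coxeter_numbers}), the element $n$ satisfies $\cox(n) = -n$, $\cox^2(n) = n$, while on $\Set{\pm 1, \ldots, \pm(n-1)}$ the power $\cox^{k}$ acts as the $k$-th power of the $2(n-1)$-cycle. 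From this I would compute that $\cox^{n-1}$ acts on $\Set{\pm1, \ldots, \pm(n-1)}$ as $i \mapsto -i$ and on $\pm n$ as $n \mapsto (-1)^{n-1} n$; more precisely $\cox^{n-1} = [1][2]\cdots[n-1]$ if $n-1$ is even and $\cox^{n-1} = [1][2]\cdots[n-1][n]$ if $n-1$ is odd. Splitting into the two parities is where the two cases of the lemma come from.

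For $n$ even (so $n-1$ odd): here $\cox^{n-1} = [1]\cdots[n]$ sends every $i \in \Set{\pm1,\ldots,\pm n}$ to $-i$. Conjugation by an element sending $i \mapsto -i$ is trivial on every $w\in W(D_n)$, by the same computation as in the proof of \cref{lem:typeB_dihedral_autos}: for a signed permutation $w$ one has $(\cox^{n-1} w \cox^{n-1})(i) = \cox^{n-1}(w(\cox^{n-1}(i))) = \cox^{n-1}(w(-i)) = \cox^{n-1}(-w(i)) = w(i)$. So $d \mid n-1$. To see $d = n-1$ exactly, I would take a concrete rank $1$ element, say $w = \dka 1\,\;2\dkz$, and check using $\cox^{k}(1) = k+1$ for $1\le k < n-1$ that $\cox^{k} w \cox^{-k}$ moves $1$ to $k+1 \neq 1$, hence $\cox^{k} w \cox^{-k}\neq w$ for $1\le k<n-1$. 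This gives $d = n-1$, so $\lvert\D\rvert = 2(n-1) = h$.

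For $n$ odd (so $n-1$ even): now $\cox^{n-1} = [1]\cdots[n-1]$ fixes $n$ and $-n$ and sends $i\mapsto -i$ for $\lvert i\rvert \le n-1$; conjugation by this element is \emph{not} trivial on all of $\nc(D_n)$ --- for instance on the paired cycle $q = \dka 1\,\; n\dkz$ (which lies in $\nc(D_n)$ by the consistent-orientation analysis of \cref{sec:typeD_descr}) conjugation by $\cox^{n-1}$ gives $\dka {-1}\,\; n\dkz \neq \dka 1\,\; n\dkz$. Thus $d \nmid n-1$, and since $d \mid h = 2(n-1)$ and $d\nmid n-1$, the only possibility is $d = 2(n-1) = h$ (any proper divisor of $2(n-1)$ that does not divide $n-1$ must itself be $2(n-1)$, because $\gcd$ considerations force such a divisor to contain the full $2$-part together with an odd part equal to the odd part of $n-1$). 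Hence $\lvert\D\rvert = 2h = 4(n-1)$.

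\textbf{Main obstacle.} The routine part is the permutation-action bookkeeping; the one point that needs care is the odd-$n$ case, where I must argue not merely that $d \ne n-1$ but that $d$ equals the full $h = 2(n-1)$ and cannot be some intermediate divisor. The clean way is the parity/divisor argument above combined with the explicit witness $q=\dka 1\,\; n\dkz$: one checks that $\cox^{k}$ fails to fix $q$ for every $0 < k < 2(n-1)$ by tracking where $q$ sends $n$ (namely, $q(n) = -q^{-1}\text{-image}$ considerations) together with where it sends $1$, using $\cox^{k}(1) = k+1 \bmod$ the $2(n-1)$-cycle. I should also double-check the borderline small ranks (e.g.\ $n=3$, where $W(D_3)\cong W(A_3)$, so $h = 4$ and the formula predicts $\lvert\D\rvert = 8 = 2h$, matching Biane's result for $S_4$) to make sure the indexing is consistent, and note the hypothesis $n\ge 3$ is in force throughout so that all these cycle notations make sense.
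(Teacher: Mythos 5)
Your proposal follows essentially the same route as the paper: reduce to computing the order $d$ of $\autl\circ\autr$, i.e.\ of conjugation by $\cox=[1\ldots n-1][n]$ on $\nc(D_n)$, identify $\cox^{n-1}=[1]\ldots[n-1][n]^{n-1}$, and split according to the parity of $n$. The even case and the nontriviality of $\cox^{k}$-conjugation for $1\leq k<n-1$ are exactly as in the paper, and your witness $q=\dka 1\,\;n\dkz$ in the odd case plays the same role as the paper's observation that $\ncauto^{n-1}$ moves an element involving $n$ even though it fixes the value at $1$.

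One step as written is false, though: it is not true that a divisor of $2(n-1)$ which fails to divide $n-1$ must equal $2(n-1)$. Writing $n-1=2^{a}m$ with $m$ odd (and $a\geq 1$ since $n$ is odd), the divisors of $2(n-1)$ not dividing $n-1$ are exactly those of the form $2^{a+1}m'$ with $m'\mid m$, and $m'$ need not be all of $m$; for $n=7$ the number $4$ divides $12$ but not $6$. So knowing $d\mid 2(n-1)$ and $d\nmid n-1$ does not pin down $d$, and the parenthetical ``$\gcd$ considerations'' do not rescue this. The gap is closed by the direct check you also sketch, which should be promoted to the actual argument: for $0<k<2(n-1)$ one has $\cox^{k}(n)=(-1)^{k}n$ and $\cox^{k}(1)\in\Set{\pm 1,\ldots,\pm(n-1)}$, so $\cox^{k}q\cox^{-k}=\dka \cox^{k}(1)\,\;\cox^{k}(n)\dkz$ can equal $q$ only if either $\cox^{k}(1)=1$ and $k$ is even (impossible for $0<k<2(n-1)$) or $\cox^{k}(1)=-1$ and $k$ is odd, i.e.\ $k=n-1$ with $n$ even, which is excluded when $n$ is odd. (The paper instead notes that $\ncauto^{k}$ and $\ncauto^{k+n-1}$ both move the value at $1$ for $1\leq k<n-1$, covering every exponent other than $n-1$ in one stroke; either version works.)
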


\begin{proof}
	We show that the automorphism $\ncauto\coloneqq\autl \circ \autr$ of $\nc(D_n)$ has order $2(n-1)$ if $n$ is odd, and order $(n-1)$ if $n$ is even. Since $\D$ is a dihedral group, and $\autl$ and $\autr$ have order $2$, the statement follows.
	By the proof of \cref{lem:typeB_dihedral_autos} it holds that 
	\[
	\cox^{n-1}=[1\ldots n-1]^{n-1}[n]^{n-1}=[1][2]\ldots[n-1][n]^{n-1}
	\]
	for the Coxeter element $\cox=[1\ldots n-1][n]$ of $W(D_n)$. Hence for even $n$ we get that $\cox^{n-1}=[1][2]\ldots [n-1][n]$. In \cref{lem:typeB_dihedral_autos} we have shown that $\ncauto^{n-1}=\id_{\nc(D_n)}$ and moreover $\ncauto^k\neq\id_{\nc(D_n)}$ for $1\leq k <n-1$. Hence it holds that $\ord(\ncauto)=n-1$. 
	
	If $n$ is odd, then $\cox^{n-1}=[1]\ldots[n-1]$ and for all elements $w\in \nc(D_n)$ it holds that $\ncauto^{n-1}(w)(1)=w(1)$, but $\ncauto^{n-1}\neq \id_{\nc(D_n)}$, since $\ncauto^{n-1}([n])(n)=-n$. Further note that 
	$\ncauto^{k}(w)(1)\neq w(1)$ for all $1\leq k < n-1$, which we showed in \cref{lem:typeB_dihedral_autos}. Combining this we get for all $1\leq k < n-1$ that
	\[
	\ncauto^{k+n-1}(w)(1)=\ncauto^{k}(\ncauto^{n-1}(w)(1))=\ncauto^k(w)(1) \neq w(1)
	\]
	and therefore $\ncauto^k\neq \id_{\nc(D_n)}$ for all $1\leq k < 2(n-1)$. Since the order of the Coxeter element $\cox$ equals $2(n-1)$, it follows that $\ord(\ncauto)=2(n-1)$.
\end{proof}

If $n$ is even, then $\D$ is not the full automorphism group of the pictorial representations of non-crossing $D_n$-partitions. This can be seen in the following example for $\nc(D_4)$ and can be easily generalized for $\nc(D_n)$.

\begin{figure}
	\begin{center}
		\begin{tikzpicture}
		\node[mpunkt, Lightgray] (p1) at (1.5,0){};
		\node[mpunkt] (p2) at (3,0){};
		\node[mpunkt,Lightgray] (p3) at (4.5,0){};
		\node[mpunkt,Lightgray] (p4) at (3,1.5){};
		\node[below = 2mm] at (p1) {$\dka 1\,\; 2\dkz$};
		\node[below = 2mm] at (p2) {$\dka 2\,\; 3\dkz$};
		\node[below = 2mm] at (p3) {$\dka 3\,\; 4\dkz$};
		\node[right = 2mm] at (p4) {$\dka -3\,\; 4\dkz$};
		\draw (p1)--(p2)--(p3)(p2)--(p4);
		\end{tikzpicture}
		\caption{The Coxeter diagram of type $D_4$ with a bipartition of the vertices corresponding to the bipartition $\dka 1\,\;2\dkz\dka3\,\;4\dkz\dka-3\,\;4\dkz\cdot\dka2\,\;3\dkz$ of the Coxeter element $\cox' = [3\,\;1\,\;2][4] \in W(D_4)$.}
		\label{fig:coxeter_graph_d4}
	\end{center}
\end{figure}

\begin{exa}\label{exa:typeD_bipart_and_autos}
	In this example we first construct a bipartition $\l\r$ for the standard Coxeter element $[1\,\;2\,\;3][4]=\dka 1\,\;2\dkz\dka2\,\;3\dkz\dka3\,\;4\dkz\dka-3\,\;4\dkz$ of $W(D_4)$. Then we use this to compute the corresponding automorphisms $\autl$ and $\autr$ of $\nc(D_4)$ and interpret the action of the group of automorphisms $\D$ in the pictorial representations. 
	
	The Coxeter diagram of type $D_n$ is shown in \cref{fig:coxeter_graph_d4}. We partition the simple reflections into the two sets $\Set{\dka 1\,\;2\dkz, \dka3\,\;4\dkz, \dka-3\,\;4\dkz}$ and $\Set{\dka2\,\;3\dkz}$, which yield a reduced decomposition $\dka 1\,\;2\dkz\dka3\,\;4\dkz\dka-3\,\;4\dkz\cdot\dka2\,\;3\dkz$ of the Coxeter element $\cox' = [3\,\;1\,\;2][4]$. Conjugation of $\cox'$ with $w=(1\,\;2\,\;3)$ is $\cox=w\cox'w\inv$ and therefore, a bipartition of the standard Coxeter element is given by $\l\cdot\r=\dka2\,\;3\dkz [1][4]\cdot\dka1\,\;3\dkz$. Let this be the standard bipartition of $[1\,\;2\,\;3][4]$. The pictorial representations of $\l$ and $\r$ are shown in \cref{fig:typeD_left_right_part}. 
	
	The geometry of the pictorial representations of the left part $\l$ and the right part $\r$ of the standard bipartition of $\cox$ hints at how the automorphisms $\autl$ and $\autr$ act on the pictorial representations. If $\autl$ acts as an element of the symmetry group of the hexagon, then it has to fix the edges $\Set{1,4}$ and $\Set{-1,4}$, and the pair of edges corresponding to the paired cycle $\dka 2\,\;3\dkz$. The only element in the symmetry group of the hexagon that has these properties is the reflection with axis through $1$ and $-1$. An easy computation shows that the automorphism $\autl$ indeed acts as this reflection. Similarly, the automorphism $\autr$ acts as the reflection with axis going through $2$ and $-2$. 
	
	Recall that in \cref{exa:typeB_autos_pict} we saw that every automorphism of $\nc(B_n)$ can be represented by \emph{two} elements of the symmetry group of the $2n$-gon. In $\nc(D_n)$, this is not true, as the above analysis of $\autl$ showed. Every symmetry of the hexagon induces an automorphism of $\ncd_n$ by acting on the underlying $2(n-1)$-gon. But unfortunately, not every such automorphism can be described by an element of $\D$ if $n$ is even. In our example, the axes of reflection of $\autl$ and $\autr$ form an angle of $\frac{\pi}{3}$, which is depicted in \cref{fig:typeD_left_right_part}. This means that they generate a dihedral group of order $6$, which is the symmetry group of a triangle. Hence the automorphism $\autl\circ\autr$ corresponds to a rotation by $\frac{2\pi}{3}$, and the rotation by 180 degrees is \emph{not} given by any element of $\D$. Note that rotation by $180$ degrees corresponds to changing the sign of all vertices but $n$.
\end{exa}

\begin{figure}
	\begin{center}
		\begin{tikzpicture}
		\sgsechseckmp \draw (p1) -- (p0) -- (p4) (p5) -- (p6) (p2) -- (p3);
		\node[below = 3mm] at ($(p3)!0.5!(p2)$) {$l$};
		
		\begin{scope}[xshift = 3cm]
		\sgsechseckmp \draw (p1) -- (p3) (p4) -- (p6);
		\node[below = 3mm] at ($(p3)!0.5!(p2)$) {$r$};
		\end{scope}
		
		\begin{scope}[xshift = 6cm]
		\sgsechseckmp \draw (p1) -- (p3) (p4) -- (p6) (p1) -- (p0) -- (p4) (p5) -- (p6) (p2) -- (p3);
		
		\coordinate (r) at ($(p2) - (120:5mm)$);
		\coordinate (l) at ($(p1) + (5mm,0)$);
		
		\draw[Orange, dotted, thick] (l) -- (p1) -- (p0) -- (p4) -- ($(p4) - (5mm,0)$);
		\draw[Orange, dotted, thick] (r) -- (p2) -- (p0) -- (p5) -- ($(p5) + (120:5mm)$);
		
		\draw[Orange,<->] (r) + (-2mm,-2mm)  arc (260:340:5mm) node[midway, below right] {$\varphi_r$}; 
		\draw[Orange,<->] (l) + (1.5mm,-3.5mm)  arc (315:405:5mm) node[midway, right] {$\varphi_l$}; 
		
		\def\r{3mm} 
		\draw[Orange, thick] (\r,0) -- (p0) -- (120:-\r); 
		\clip (0,0) -- (120:-6mm) -- (1,0) -- cycle;
		\draw[Orange, thick] (p0) circle (\r);
		\end{scope}
		\end{tikzpicture}
		\caption{The left part $\l= \dka2\,\;3\dkz [1][4]$ and the right part $\r=\dka1\,\;3\dkz$ of the standard Coxeter element $\cox=[1\,\;2\,\;3][4]\in W(D_4)$ are shown here. The axes of reflection for the corresponding automorphisms $\autl$ and $\autr$ are depicted in orange on the right.}
		\label{fig:typeD_left_right_part}
	\end{center}
\end{figure}
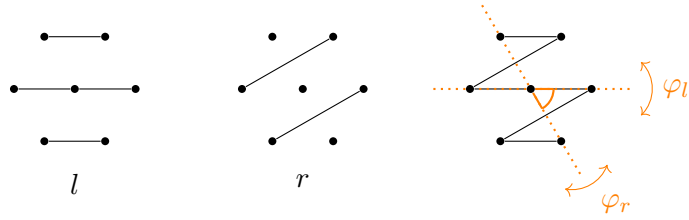

\begin{lem}\label{lem:typeD_autn}
	The assignment $w \mapsto [n]w[n]$ for $w\in W(D_n)$ induces a lattice automorphism $\autn\colon \nc(D_n) \to \nc(D_n)$ of order $2$ if $n>1$. Moreover, $\autn$ is an element of $\D$ if and only if $n$ is odd.
\end{lem}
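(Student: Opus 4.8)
The statement has three parts: (i) the assignment $w\mapsto [n]w[n]$ sends $\nc(D_n)$ to itself; (ii) the induced map is a lattice automorphism of order $2$ for $n>1$; and (iii) $\autn\in\D$ if and only if $n$ is odd. For part (i), first I would observe that conjugation by any element of $W(D_n)$ is a poset automorphism of the absolute order on $W(D_n)$, since $T(D_n)$ is closed under conjugation and $\ell$ is conjugation-invariant. But $[n]\notin W(D_n)$, so this does not immediately apply: the subtle point is that conjugation by $[n]$ does preserve $T(D_n)=\Set{\dka i\,\;j\dkz\str 1\leq i<|j|\leq n}$, because $[n]\dka i\,\;j\dkz[n]$ is again a cycle of the form $\dka i'\,\;j'\dkz$ (conjugating by $[n]$ just changes the sign of any entry equal to $\pm n$). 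Hence $w\mapsto [n]w[n]$ is a length-preserving automorphism of $(W(D_n),\leq)$. It remains to check that it fixes the Coxeter element $\cox=[1\ldots n-1][n]$: indeed $[n]\cox[n]=[1\ldots n-1]\,[n][n][n]=[1\ldots n-1][n]=\cox$, using $[n]^2=\id$ and that $[n]$ commutes with $[1\ldots n-1]$ (disjoint supports). Since $\autn$ preserves $\leq$ and fixes $\cox$, it maps $\nc(D_n,\cox)=\Set{w\leq\cox}$ bijectively to itself.

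\textbf{Part (ii).} Being a restriction of a poset automorphism, $\autn$ is order-preserving with order-preserving inverse, hence a lattice automorphism by definition. Its order: clearly $\autn^2(w)=[n][n]w[n][n]=w$, so $\ord(\autn)\mid 2$. To rule out $\autn=\id$ for $n>1$, I would exhibit one element it moves, e.g.\ the reflection $t=\dka -(n-1)\,\;n\dkz\in T(D_n)$, for which $[n]t[n]=\dka -(n-1)\,\;-n\dkz\equiv\dka n-1\,\;n\dkz\neq t$ (these are inequivalent for $n\geq 2$, since the balanced/paired-cycle equivalence relation in type $B$/$D$ does not identify $\dka n-1\,\;n\dkz$ with $\dka-(n-1)\,\;n\dkz$). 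Alternatively, pictorially via \cref{thm:typeD_nc_iso}: $\autn$ is the symmetry of the $2(n-1)$-gon given by sign change of all vertices except $n$, i.e.\ rotation by $180^\circ$, which is nontrivial.

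\textbf{Part (iii).} This is the main obstacle and where the real content lies. I would argue as follows. If $n$ is odd, then $\cox^{n-1}=[1][2]\cdots[n-1]$ (computed in the proof of \cref{lem:typeD_order_dih_auto_group}), and I claim conjugation by $\cox^{n-1}$ equals $\autn$. Indeed, for $i\in\Set{1,\ldots,n-1}$ the element $\cox^{n-1}$ negates $i$ and fixes $\pm n$, which is exactly the effect of conjugating by $[n]$ on the generators $\dka i\,\;j\dkz$ (changing signs of entries $\pm n$); more carefully, one checks $\cox^{n-1}\dka i\,\;j\dkz\cox^{-(n-1)}=[n]\dka i\,\;j\dkz[n]$ for all $t=\dka i\,\;j\dkz\in T(D_n)$, and since an automorphism of the graded lattice $\nc(D_n)$ is determined by its values on rank-$1$ elements (\cref{lem:lattice_auto_join_interchange}), the two automorphisms agree. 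Now $\ncauto^{(n-1)/?}$ — precisely, conjugation by $\cox^{n-1}$ is the element $(\autl\circ\autr)^{n-1}\in\D$, so $\autn\in\D$. Conversely, if $n$ is even, I would derive a contradiction from $\autn\in\D$: every element of $\D$ is rotating ($w\mapsto\cox^k w\cox^{-k}$) or reflecting ($w\mapsto\cox^k\l w\inv\cox^k\l$) by the classification lemma preceding \cref{sec:induced_autos}. The automorphism $\autn$ preserves inverses (it is conjugation, so $\autn(w\inv)=\autn(w)\inv$), so it cannot be reflecting unless $n=1$; and a rotating automorphism equals $\autn$ only if $\cox^k$ acts by conjugation exactly as $[n]$ does, i.e.\ $\cox^k\equiv[n]$ modulo the centralizer of all of $W(D_n)$ in the relevant sense — but for even $n$, $\cox^{n-1}=[1]\cdots[n-1][n]$ changes the sign of $n$ as well, so no power $\cox^k$ induces the same conjugation as $[n]$ alone; checking the action on, say, the reflection $\dka-(n-1)\,\;n\dkz$ together with a reflection $\dka i\,\;j\dkz$ with $i,j\in\Set{1,\ldots,n-1}$ will show no $\cox^k$ works. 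I expect the delicate bookkeeping to be in this last conversely-direction: one must carefully track how conjugation by $\cox^k$ permutes $T(D_n)$ and confirm that for even $n$ the orbit structure forces $\autn\notin\D$, perhaps most cleanly by a counting/order argument (for even $n$, $|\D|=2(n-1)$ by \cref{lem:typeD_order_dih_auto_group}, and $\D$ together with $\autn$ generates a group of order $4(n-1)$, so $\autn\notin\D$).
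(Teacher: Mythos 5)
Your proof follows essentially the same route as the paper's: well-definedness via conjugation-invariance of the absolute order together with $[n]\cox[n]=\cox$, an explicitly moved reflection to get $\ord(\autn)=2$, the identity $\cox^{n-1}=[1]\cdots[n-1]$ for the odd case, and comparison with the powers of $\autl\circ\autr$ for the even case. Your part (i) is in fact more careful than the paper's, which invokes "conjugation is an automorphism of the absolute order" without remarking that $[n]\notin W(D_n)$; your observation that conjugation by $[n]$ preserves $T(D_n)$ is the right justification.

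One sub-step is wrong: you claim $\autn$ cannot be a reflecting automorphism because it preserves inverses, but a reflecting automorphism $\psi(w)=\cox^k\l w^{-1}\l\cox^{-k}$ also satisfies $\psi(w^{-1})=\psi(w)^{-1}$, so this criterion distinguishes nothing. The relevant point is rather that every element of $\D$, rotating or reflecting, restricts on the set of reflections to conjugation by some $u\in\{\cox^k,\cox^k\l\}$ (since $t^{-1}=t$ for $t\in T$), and an automorphism of the graded atomic lattice $\nc(D_n)$ is determined by its values on reflections by \cref{lem:lattice_auto_join_interchange}; so for even $n$ one must rule out that conjugation by $[n]$ agrees on $T$ with conjugation by any such $u$ — your explicit check only addresses the family $\cox^k$. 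The paper is equally terse here (it asserts that $\autn$, being a conjugation, must be of the form $(\autl\circ\autr)^k$ and then appeals to the powers computed in \cref{lem:typeD_order_dih_auto_group}), so this is a shared soft spot rather than a defect unique to your argument. Your fallback counting argument via the order of the group generated by $\D$ and $\autn$ does close the gap cleanly, but note that it anticipates \cref{prop:typeD_de_dihedral_group}, which the paper proves only after this lemma.
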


\begin{proof}
	The Coxeter element $\cox=[1\ldots n-1][n]$ is mapped to itself by $\autn$ and conjugation is an automorphism of the absolute order on $W(D_n)$. This means for $v,w\in \nc(D_n)$ that $v \leq w \leq \cox$ if and only if $[n]v[n] \leq [n]w[n] \leq \cox$, which shows that $\autn\colon \nc(D_n) \to \nc(D_n)$ is an automorphism. Since $\autn(\dka 1 \,\; n \dkz)=\dka -1\,\;n\dkz$ and $[n]^2=\id$, it holds that $\autn \neq \id$ and $\autn^2 = \id$ and thus $\ord(\autn)=2$.
	
	If $n$ is odd, then $\cox^{n-1}=[1]\ldots[n-1]$, and the automorphism $\auto$ that is given by conjugation with $\cox^{n-1}$ equals $\autn$, hence $\autn \in \D$.
	This is true since in a cycle decomposition, $\auto$ changes the sign of all entries \emph{except} of $\pm n$, and $\autn$ \emph{only} changes the sign of $\pm n$. But since every element of $\nc(D_n)$ is a signed permutation, these two kinds of sign changes yield equivalent cycles.
	
	Now suppose that $n$ is even and $\autn\in \D$. Since $\autn$ is given by conjugation, it has to be of the form $(\autl\circ\autr)^k\in\D$ for some $k$. In the proof of \cref{lem:typeD_order_dih_auto_group} we computed the powers of $(\autl\circ\autr)$, which are all different from $\autn$. Hence $\autn \notin \D$.
\end{proof}

\begin{defi}
	We define $\autn\colon \nc(D_n) \to \nc(D_n)$ to be the lattice automorphism of $\nc(D_n)$ given by $w \mapsto [n]w[n]$.
\end{defi}

\begin{cor}
	The automorphism $\autn$ of $\nc(D_n)$ corresponds to rotation by $180$ degrees in the pictorial representation $\ncd_n$.
\end{cor}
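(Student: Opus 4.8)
The plan is to trace through how the algebraic automorphism $\autn\colon w\mapsto[n]w[n]$ of $\nc(D_n)$ translates into a transformation of the pictorial representations $\ncd_n$ under the lattice isomorphism $w\mapsto\Set{\!\Set{w}\!}$ from \cref{thm:typeD_nc_iso}. First I would observe that, since a lattice automorphism of a graded lattice is determined by its values on rank $1$ elements by \cref{lem:lattice_auto_join_interchange}, and since applying the iso of \cref{thm:typeD_nc_iso} is compatible with the lattice operations, it suffices to check the claim on the rank $1$ elements of $\nc(D_n)$, that is on the reflections $\dka i\,\;j\dkz$ for $1\leq i<|j|\leq n$ and their products with $[n]$ that give rank $1$ elements. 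Concretely, one computes $\autn(\dka i\,\;j\dkz)=[n]\dka i\,\;j\dkz[n]$: this swaps $n$ and $-n$ wherever they occur and fixes all other entries, so $\dka i\,\;j\dkz$ with $n\notin\Set{\pm i,\pm j}$ is fixed, while $\dka i\,\;n\dkz\mapsto\dka i\,\;-n\dkz\equiv\dka -i\,\;n\dkz$ (using our convention to write $n$ rather than $-n$). On the set-partition side, $\dka i\,\;j\dkz$ corresponds to the block pair $\Set{\Set{i,j},\Set{-i,-j}}$ and $[i][n]$-type elements to the zero block $\Set{\pm i,\pm n}$; one checks that the rule ``negate every label except $n$, then re-normalise'' is exactly the effect of $\autn$ on these blocks.

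Next I would identify this label transformation with a rotation of the pictorial representation. Recall that the vertices of the pictorial representation of a pure $D_n$-partition are the $2(n-1)$-gon labelled $1,2,\dots,n-1,-1,-2,\dots,-(n-1)$ clockwise, together with the midpoint labelled $n$. The map $k\mapsto -k$ on $\Set{\pm1,\dots,\pm(n-1)}$ sends the vertex labelled $k$ to the vertex labelled $-k$, which is precisely the antipodal vertex of the $2(n-1)$-gon, i.e.\ the image of rotation by $180$ degrees about the midpoint; and the midpoint labelled $n$ is fixed by this rotation, matching the fact that $\autn$ fixes $n$. Since an edge of the pictorial representation joins vertices $a$ and $b$ exactly when the corresponding edge joins $-a$ and $-b$ after relabelling, the relabelling-by-$\autn$ of the graph coincides with the geometric $180$-degree rotation of the drawn graph. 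One should be slightly careful with the three cases in the definition of the pictorial representation: a zero block $B$ (drawn as the $(\#B-2)$-gon on $B\setminus\Set{-n,n}$), a block pair not containing $\pm n$ (drawn as two polygons), and a block pair containing $n$ (drawn as one polygon on $B$); in each case one verifies that applying $\autn$ and then drawing gives the same picture as drawing and then rotating by $180^\circ$, which for the last case uses that $\autn$ swaps the visible block $B$ with its invisible partner $-B$, and rotation by $180^\circ$ likewise interchanges them.

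I would then conclude by noting that both sides — the automorphism $\autn$ acting via the iso of \cref{thm:typeD_nc_iso}, and the geometric $180^\circ$ rotation acting on pictures — are lattice automorphisms of $\ncd_n$ agreeing on rank $1$ elements, hence equal. The main obstacle I anticipate is purely bookkeeping rather than conceptual: keeping the normalisation conventions straight (our insistence on writing $n$ instead of $-n$, and on $n\in B$ when $n\in\pm B$), and handling the zero-block case where the pair $\Set{-n,n}$ is present in the block but not drawn, so that one must argue the rotation acts correctly on the drawn $(\#B-2)$-gon while the invisible pair $\Set{-n,n}$ is manifestly fixed by $\autn$. Once these conventions are pinned down the verification is a short case check, and the result also follows formally once one records the companion fact (used implicitly in \cref{sec:induced_autos}) that $\autn$ is a rotating element of $\D$ when $n$ is odd — indeed for odd $n$ we already identified $\autn$ with conjugation by $\cox^{n-1}$ in \cref{lem:typeD_autn}, and conjugation by $\cox^{n-1}$ visibly rotates the $2(n-1)$-gon by $(n-1)\cdot\frac{2\pi}{2(n-1)}=\pi$.
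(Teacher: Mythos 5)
Your proposal is correct and follows essentially the same route as the paper: identify conjugation by $[n]$ with the sign change of $\pm n$ — equivalently, since elements are signed permutations, with negating every label except $n$ — and observe that this relabelling is exactly the antipodal map on the $2(n-1)$-gon fixing the midpoint, while cycles pass to blocks on the same element sets under \cref{thm:typeD_nc_iso}. The paper compresses this into a few sentences without the reduction to rank $1$ elements or the explicit case check of the three drawing rules, but the underlying argument is identical.
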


\begin{proof}
	In the underlying labeled $2(n-1)$-gon of the pictorial representation of non-crossing $D_n$-partitions, a sign change corresponds to the rotation by 180 degrees. In particular, the midpoint gets fixed. 
	The image of a cycle under $\autn$ is the cycle with the sign of $\pm n$, or equivalently, all signs except that of $\pm n$, changed. Cycles in a disjoint cycle decomposition get mapped to blocks on the same set of elements when passing to the pictorial representation using \cref{thm:typeD_nc_iso}. Hence $\autn$ acts as rotation by $180$ degrees. 
\end{proof}

\begin{prop}\label{prop:typeD_de_dihedral_group}
	The group $\De \leq \aut(\nc(D_n))$ generated by the automorphisms $\autl$ and $\autr \circ \autn$ is a dihedral group of order $2h=4(n-1)$. If $n$ is even, then $\D \leq \De$ is a subgroup of index 2 and if $n$ is odd, then $\De=\D$.
\end{prop}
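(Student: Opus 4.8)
The plan is to pass to the pictorial model $\ncd_n$ via \cref{thm:typeD_nc_iso} and realise $\De$ as the full symmetry group of the regular $2(n-1)$-gon. First I would set up a faithful action of the dihedral group $\Sigma\cong D_{2(n-1)}$ of symmetries of the regular $2(n-1)$-gon, with its midpoint fixed, on the set $\ncd_n$: a symmetry permutes the $2(n-1)$ boundary vertices (fixing the midpoint labelled $n$), hence permutes the drawn polygons, hence the blocks. Since $2(n-1)$ is even, the central rotation $z$ of $\Sigma$ (rotation by $\pi$, i.e. negation of the boundary labels) lies in the centre of $\Sigma$, so every $g\in\Sigma$ commutes with negation; therefore $g$ sends block pairs to block pairs and the zero block to the zero block, and it obviously preserves non-crossingness and purity. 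This gives a group homomorphism $\phi\colon\Sigma\to\aut(\nc(D_n))$, injective for $n\ge 3$ because a nontrivial symmetry moves at least one spoke from a vertex to the midpoint, hence moves some rank $1$ element of $\ncd_n$. By the corollary following \cref{lem:typeD_autn}, $\autn$ acts as rotation by $180^\circ$, so $\phi(z)=\autn$.

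Next I would locate $\autl$ and $\autr$ inside the image of $\phi$. A lattice automorphism of a graded lattice is determined by its effect on rank $1$ elements (\cref{lem:lattice_auto_join_interchange}), and for a reflection $t\in T(D_n)$ one has $\autl(t)=\l t\l$ and $\autr(t)=\r t\r$, so it suffices to compute the permutations of $T(D_n)$ induced by conjugation with $\l$ and with $\r$, where $\l\r$ is the standard bipartition of $\cox=[1\ldots n-1][n]$. Carrying out the bipartition of the type $D_n$ Coxeter diagram (the two leaves $s_{n-1},s_n$ lie on the side opposite $s_{n-2}$) and translating through \cref{thm:typeD_nc_iso} exactly as in \cref{exa:typeD_bipart_and_autos}, one finds $\autl=\phi(\rho_l)$ and $\autr=\phi(\rho_r)$ for two reflections $\rho_l,\rho_r\in\Sigma$. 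By \cref{lem:typeD_order_dih_auto_group} the rotation $\rho_l\rho_r=\phi^{-1}(\autl\circ\autr)$ has order $d$, where $d=2(n-1)$ for $n$ odd and $d=n-1$ for $n$ even; writing $r$ for the rotation of $\Sigma$ by $\pi/(n-1)$ (a generator of the cyclic rotation subgroup, of order $2(n-1)$), this says $\rho_l\rho_r=r^{k}$ with $\gcd(2(n-1),k)=2(n-1)/d$.

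Now the group-theoretic computation. Since $\De=\langle\autl,\autr\circ\autn\rangle=\phi(\langle\rho_l,\rho_r z\rangle)$ and $z=r^{\,n-1}$, the element $\rho_r z$ is again a reflection and $\rho_l\cdot\rho_r z=(\rho_l\rho_r)z=r^{\,k+n-1}$ is a rotation. A short gcd computation shows $\gcd(2(n-1),k+n-1)=1$ in both parity cases: for $n$ even, $n-1$ is odd and $k=2k_0$ with $\gcd(n-1,k_0)=1$, so $k+n-1$ is odd and $\gcd(2(n-1),k+n-1)=\gcd(n-1,2k_0)=\gcd(n-1,k_0)=1$; for $n$ odd, $\gcd(2(n-1),k)=1$ forces $k$ odd, so $k+n-1$ is odd and $\gcd(2(n-1),k+n-1)=\gcd(n-1,k)\mid\gcd(2(n-1),k)=1$. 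Hence $\rho_l\cdot\rho_r z$ has order exactly $2(n-1)$, so $\langle\rho_l,\rho_r z\rangle$ is a dihedral group generated by a reflection together with a rotation of full order $2(n-1)$, i.e. $\langle\rho_l,\rho_r z\rangle=\Sigma$. Therefore $\De=\phi(\Sigma)$ is dihedral of order $4(n-1)=2h$. Finally, for $n$ even \cref{lem:typeD_order_dih_auto_group} gives $|\D|=2(n-1)$, so $[\De:\D]=[\phi(\Sigma):\D]=2$; for $n$ odd, $\autn\in\D$ by \cref{lem:typeD_autn}, hence $\autr\circ\autn\in\D$ and $\De\subseteq\D$, while $|\D|=4(n-1)=|\De|$, so $\D=\De$.

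The main obstacle is the middle paragraph: proving in general (not merely for $n=4$) that $\autl$ and $\autr$ are realised by reflections of the $2(n-1)$-gon, and that the angle between their axes is governed by $\ord(\autl\circ\autr)$. This requires an explicit description of the standard bipartition $\l\r$ of $\cox$ and of the conjugation action of $\l,\r$ on $T(D_n)$, together with the combinatorial dictionary of \cref{sec:typeD_pict}; one must also check that $\phi$ is faithful for every $n\ge 3$, including $n=4$, since the claimed order $4(n-1)$ has to hold there as well.
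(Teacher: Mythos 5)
Your route is genuinely different from the paper's: you try to realise $\De$ geometrically as the image of the full symmetry group $\Sigma$ of the $2(n-1)$-gon under a homomorphism $\phi\colon\Sigma\to\aut(\nc(D_n))$, whereas the paper never invokes the pictorial model at all in this proof. Unfortunately your argument has a genuine gap, and it is exactly the one you flag yourself: the claim that $\autl=\phi(\rho_l)$ and $\autr=\phi(\rho_r)$ for reflections $\rho_l,\rho_r\in\Sigma$ is never proved. The paper only verifies this for $n=4$, in \cref{exa:typeD_bipart_and_autos}, by an explicit computation with the concrete bipartition $\l=\dka 2\,\;3\dkz[1][4]$, $\r=\dka 1\,\;3\dkz$; for general $n$ the standard bipartition $\l\r$ is only known to exist via \cref{lem:cox_element_has_bipartition}, and no explicit form of $\l$ is available. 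Since $\autl(w)=\l w\inv\l$ acts on rank~$1$ elements as conjugation by $\l$, realising $\autl$ as a polygon reflection amounts to showing that $\l$ permutes $\Set{\pm 1,\ldots,\pm(n-1)}$ as a dihedral symmetry of the circular order and stabilises $\Set{\pm n}$ --- a nontrivial structural statement about bipartitions of the type $D$ Coxeter element that your proposal assumes rather than establishes. Everything downstream (the identification $\rho_l\rho_r=r^k$, the gcd computation, the conclusion $\langle\rho_l,\rho_r z\rangle=\Sigma$) is correct but rests entirely on this unproven paragraph. Note also that the paper only obtains ``$\De$ is the symmetry group of the $2(n-1)$-gon'' as a \emph{consequence} of \cref{thm:typeD_full_aut}, whose proof uses the present proposition, so importing that fact here would be circular.

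The paper sidesteps the geometry completely. Its key step is the purely algebraic observation that $[n]$ commutes with both $\l$ and $\r$: from $[n]\l[n]\cdot[n]\r[n]=\cox$ either both or neither commute, and assuming neither leads, via a parity count of sign changes and the fact that all paired cycles of an involution commute, to the contradiction that $\cox$ would fix $n$. This immediately gives $(\autr\circ\autn)^2=\id$ and lets the order of $\autl\circ\autr\circ\autn$ be computed as $(\autl\circ\autr)^m\circ\autn^m$ using the orders already determined in \cref{lem:typeD_order_dih_auto_group} and \cref{lem:typeD_autn}; the identity $(\autl\circ\autr\circ\autn)^{n-1}=\autn$ then yields $\autr\in\De$ and hence the index statement. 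If you want to keep your geometric approach, you would need to supply the missing computation of the standard bipartition for general $n$ and verify that conjugation by $\l$ and by $\r$ induce polygon reflections; otherwise the paper's commutation argument is the shorter and complete path.
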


\begin{proof}
	We show that $\ord(\autr \circ \autn)= 2$ and $\ord(\autl\circ \autr \circ \autn)=h$, which implies that $\De$ has the group presentation
	\[
	\left\langle \autl, \autr \circ \autn \mid \autl^2, (\autr \circ \autn)^2, (\autl\circ \autr \circ \autn)^h \right\rangle
	\]
	and is a dihedral group of order $2h$.
	
	First we show that $\autn$ fixes $\l$ and $\r$. Since
	\[
	[n]\l[n]\cdot [n]r[n] = [1\ldots n-1 ][n]=\cox,
	\]
	we see that either both or none of $\l$ and $r$ commute with $[n]$. Suppose that they do not commute with $[n]$. Then in particular, they do not contain balanced cycles in their disjoint cycle decompositions. This means that both $\l$ and $\r$ have a reflection involving $\pm n$, say $\dka i\,\; n\dkz$ and $\dka j\,\; n\dkz$, respectively. Then $i\neq j$, since $\cox$ has length $n$, and also $i\neq -j$, because otherwise the disjoint cycle decomposition of $\cox$ would contain $[j][n]$. Since $\l$ and $\r$ have order $2$, all paired cycles in their disjoint cycle decomposition commute. In particular, no cycle different from $\dka i\,\;n\dkz$ and $\dka j\,\;n\dkz$ may involve $\pm n$. But then $\dka i\,\; n\dkz$ and $\dka j\,\; n\dkz$ are the only cycles in $\l\r$ involving $n$, which means that $n$ does not get mapped to $-n$ by $\l\r=\cox$, which is a contradiction. Hence both $\l$ and $\r$ commute with $[n]$.
	
	It holds that $\autr \circ \autn \neq \id$ and since $\autn(\r)=[n]\r[n]=\r$, it follows that
	\[
	(\autr \circ \autn)^2(w)=\r[n]\r[n]w[n]\r[n]\r =\r^2 w\r^2 = w
	\]
	and hence $\ord(\autr \circ \autn)=2$. Recall that $h=2(n-1)$. Since $\autl$ and $\autr$ commute with $\autn$, we have that 
	\[
	(\autl\circ \autr \circ \autn)^h=(\autl\circ \autr)^{2(n-1)} \circ \autn^{2(n-1)} = \id,
	\]
	since $\ord(\autl\circ \autr)$ equals $n-1$ if $n$ is even and $2(n-1)$ if $n$ is odd, and $\ord(\autn)=2$.
	
	If $n$ is odd, then $(\autl\circ \autr)^{k}=\autn$ for $1\leq k < h$ if and only if $k=n-1$, what we showed in the proof of \cref{lem:typeD_order_dih_auto_group}. But $\autn^{n-1}=\id$, so the order of $\autl\circ \autr\circ \autn$ is $h$ for odd $n$. 
	
	If $n$ is even, then the order of $\autl\circ \autr$ equals $n-1$ and since $\autn^{n-1}=\autn$, it follows that the order of $\autl\circ \autr\circ\autn$ equals $h$ for even $n$. 
	
	In particular, we showed that $(\autl\circ \autr \circ \autn)^{n-1}=\autn$, which implies that
	\[
	\autr = \autr \circ \autn \circ \autn =  \autr \circ \autn \circ (\autl\circ \autr \circ \autn)^{n-1}
	\]
	is an element of $\D$. Hence the group $\D$ generated by $\autl$ and $\autr$ is a subgroup of $\De$. If $n$ is even, then $\#\D=2(n-1)=h$, and if $n$ is odd, then $\#\D=2h$ by \cref{lem:typeD_order_dih_auto_group} and therefore $\D\leq\De$ has index 2 for even $n$, and $\D=\De$ for odd $n$.
\end{proof}

The rest of the section is devoted to prove that the automorphism group of $\nc(D_n)$ equals the dihedral group $\De$ if $n \neq 4$.

\begin{thm}\label{thm:typeD_full_aut}
	The automorphism group $\aut(\nc(D_n))$ of the non-crossing partition lattice $\nc(D_n)$ is the dihedral group $\De$ of lattice automorphisms of order $2h=4(n-1)$ if $n \neq 4$. If $n=4$, then $\De$  is a proper subgroup of  $\aut(\nc(D_4))$.
\end{thm}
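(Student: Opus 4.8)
The plan is to mimic the strategy of the type $B$ proof (\cref{thm:typeB_full_aut}), i.e.\ show that an arbitrary automorphism of $\nc(D_n)$ is, after composition with elements of $\De$, the identity on rank $1$ elements, and hence the identity. Since a lattice automorphism is determined by its action on rank $1$ elements by \cref{lem:lattice_auto_join_interchange}, and since $\#\De = 4(n-1)$ equals the order of the symmetry group of the $2(n-1)$-gon acting on $\ncd_n$ by \cref{prop:typeD_de_dihedral_group}, it suffices to show that $\aut(\nc(D_n))$ has order at most $4(n-1)$ for $n \neq 4$. The first step is to set up a ``type'' invariant for rank $1$ elements: a paired cycle $\dka i\,\;j\dkz$ with $0 < i < j < n$ gets type $j-i+1 \in \{2,\dots,n-1\}$ (``looking like $\dka 1\,\;k\dkz$''), a paired cycle involving $\pm n$ gets a separate type, and the pair of balanced cycles $b[n]$ a zero-block type. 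Then, exactly as in Lemmas~\ref{lem:typeB_number_rk2_elements_types}--\ref{lem:typeB_nc_auto_preserves_type_rk_1}, I would count, for each type of rank $1$ element $p$, how many elements of each rank $2$ type lie in $\ab(p)$, deduce that the rank $2$ type is preserved by automorphisms (via covered-set cardinalities, which are automorphism-invariant by \cref{lem:cov_set_autos}), and conclude that the rank $1$ type is preserved as well. The pictorial representation of \cref{sec:typeD_pict}, in which distinct partitions have distinct pictures, is the right bookkeeping device here, with \cref{fig:typeD_crossing_blocks} classifying how covers are formed.

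Once type-preservation is established, I would run the rigidity argument: pick a rank $1$ element $p$ of the ``most generic'' small type (an honest paired cycle $\dka 1\,\;2\dkz$ not involving $n$), use a rotating automorphism in $\De$ to assume $\ncauto$ fixes $p$, then identify inside $\ab(p)$ the two type-$2$ elements whose covered sets contain exactly two type-$2$ elements, show $\ncauto$ permutes this pair, and use the reflecting automorphism fixing $p$ to assume $\ncauto$ fixes both. Propagating around the $2(n-1)$-gon as in the type $B$ proof then forces $\ncauto$ to fix every rank $1$ paired cycle not involving $n$; a final induction on type handles the cycles involving $\pm n$ and the zero block, using joins $p \vee q$ of already-fixed elements. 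This yields $\ncauto = \id$ and hence $\aut(\nc(D_n)) = \De$.

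The main obstacle is precisely the case $n = 4$, and explaining why the argument breaks there. In $\ncd_4$ the $2(n-1)$-gon is a hexagon with a centre, and the three ``block shapes'' — zero block (a diameter through the centre), non-zero block pair not involving $n$, and block pair involving $n$ — become indistinguishable from the point of view of the covering combinatorics: a rank $1$ element in $\nc(D_4)$ is essentially a single edge of a small graph, and the local poset structure around it no longer records which of these three kinds it is. Concretely the counts in the analogue of \cref{lem:typeB_number_rk2_elements_types} coincide for several types when $n=4$, so the type-preservation lemma fails, and there genuinely is an extra automorphism. The cleanest way to deal with $n=4$ is therefore \emph{not} to repair the argument but to exhibit the failure: I would produce an explicit automorphism of $\nc(D_4)$ that is not a symmetry of the hexagon (a ``triality''-type automorphism permuting the three classes of rank $1$ elements — e.g.\ one sending $\dka 1\,\;2\dkz \mapsto \dka 1\,\;4\dkz$, $\dka 1\,\;3\dkz \mapsto $ a zero block, etc.), check directly that it preserves all covering relations by going through the Hasse diagram of \cref{fig:typeD_hasse_diagram_D4}, and observe that since it moves a rank $1$ element to one of a different pictorial type it cannot lie in $\De$; hence $\De \subsetneq \aut(\nc(D_4))$. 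For the $n \neq 4$ part I should also separate the small base cases $n = 3$ (where $W(D_3) \cong W(A_3)$, so the result follows from Biane's type $A$ computation and $\De \cong D_6 \cong \aut(\nc(A_3))$, after checking the orders match via \cref{lem:typeD_order_dih_auto_group}) and treat $n \geq 5$ by the general counting argument, where there are enough distinct types for the invariants to separate the block kinds.
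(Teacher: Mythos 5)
Your overall strategy is the same as the paper's: define type invariants on rank $1$ and rank $2$ elements, show via cover-set counting that automorphisms preserve them for $n\neq 4$, run the type-$B$-style rigidity argument to pin down all rank $1$ elements after composing with elements of $\De$, and for $n=4$ exhibit an explicit exotic automorphism. That said, two details in your sketch are wrong and worth fixing. First, the rank $1$ elements of $\nc(D_n)$ are exactly the reflections $\dka i\,\;j\dkz$; there is no \enquote{zero-block type} at rank $1$, since the pair of balanced cycles $b[n]$ has absolute length at least $2$ (by \cref{lem:typeD_length}, $[a][n]$ already has length $2$). The zero block only enters the type bookkeeping at rank $2$, where the paper assigns $[a][n]$ the type $-1$. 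Second, and for the same reason, your proposed exotic automorphism cannot send $\dka 1\,\;3\dkz$ to a zero block: lattice automorphisms of a graded lattice are rank-preserving, so reflections must go to reflections. The paper's $\exo$ in fact fixes $\dka 1\,\;3\dkz$ and exchanges reflections with reflections of different pictorial type (e.g.\ $\dka 1\,\;2\dkz\leftrightarrow\dka 1\,\;4\dkz$, which you do give correctly); zero blocks are exchanged with non-zero block pairs only at rank $3$. Finally, your closing \enquote{induction on type} glosses over one point where the type $D$ argument genuinely differs from type $B$: the join argument only determines the image of a type-$n$ reflection $\dka i\,\;n\dkz$ up to the sign ambiguity $\dka \pm i\,\;n\dkz$, and resolving it requires composing with the $180$-degree rotation $\autn$ --- precisely the extra generator that enlarges $\D$ to $\De$ for even $n$. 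Your separate treatment of $n=3$ via $W(D_3)\cong W(A_3)$ is harmless but unnecessary, as the general counting argument covers it.
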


The idea of the proof is the same as in type $A$ and $B$, but the technical details are more involved. We start with defining types for elements of rank $1$ and $2$. As before, we use $p$ and $q$ for rank $1$ elements and $x$, $y$ and $z$ for elements of rank $2$.

Let $p\in\nc(D_n)$ be of rank $1$. The \emph{type} of $p$ is defined as
\[
\typ(p)=
\begin{cases}
j-i+1& \text{ if }p\equiv\dka i\,\; j\dkz \text{ for }0<i<j<n,\\
n-i-j& \text{ if }p\equiv\dka i\,\; j\dkz \text{ for }0<-i<j<n,\\
n &\text{ if } p\equiv\dka i \,\;n \dkz \text{ for } |i|<n.
\end{cases}
\]
Note that $2 \leq \typ(p)\leq n$ holds for all reflections $p$, and if $p$ does not involve $n$, then the type of $p\in\nc(D_n)$ equals the type of $p\in \nc(B_{n-1})$. As before, the type measures how \enquote{diagonal} the edge corresponding to the reflection lies in the $2(n-1)$-gon. Type $n$ corresponds to an edge adjacent to the midpoint.

If $x\in \nc(D_n)$ is an element of rank $2$, then the \emph{type} of $x$ is defined as
\[
\Typ(x)=
\begin{cases}
-1 &\text{ if }x= [a][n],\\
1 &\text{ if }x= \dka a\,\;b\,\;n\dkz,\\
2&\text{ if }x= \dka a\,\;b\,\;c\dkz, \\
3&\text{ if }x= \dka a\,\;b\dkz\dka c\,\;d\dkz,\\
4&\text{ if }x=  \dka a\,\;b\dkz\dka c\,\;n\dkz
\end{cases}
\]
for appropriate $a,b,c,d\in\Set{ \pm 1,\ldots, \pm (n-1) }$. The type of a rank $2$ element records, up to sign, the number of non-trivial blocks in the pictorial representation. \cref{fig:typeD_types_rank_2} shows the different types of rank $2$ elements in $\ncd_5$.

\begin{obs}\label{obs:typeD_card_bel_rk_2}
	Let $x\in\nc(D_n)$ be of rank $2$. Then the cardinality of the covered set of $x$ depends on the type of $x$ and is
	\[
	\#\bel(x)=
	\begin{cases}
	2&\text{ if }T(x)=-1,3,4,\\
	3&\text{ if }T(x)=1,2,
	\end{cases}
	\] 
	which can be checked by looking at \cref{fig:typeD_types_rank_2}.
\end{obs}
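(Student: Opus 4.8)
The plan is to identify $\bel(x)$ with the set of reflections lying below $x$ and then to enumerate these reflections type by type. Since $\nc(D_n)$ is graded by the absolute length $\ell$, an element $p$ with $p\lessdot x$ must have $\ell(p)=\ell(x)-1=1$, so $p$ is a reflection, and conversely every reflection $t\le x$ is covered by $x$, as no rank lies strictly between $1$ and $2$. Hence $\bel(x)=\Set{t\in T(D_n)\str t\le x}$, and directly from the definition of the absolute order (with $\ell(t)=1$ and $\ell(x)=2$) this equals the set of $t\in T(D_n)$ with $\ell(t\inv x)=1$, i.e. with $t\inv x\in T(D_n)$.

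First I would cut the problem down to a finite list of candidates. By \cref{lem:basis_of_moved_space} every $t\le x$ satisfies $\alpha_t\in\mov(x)$, and $\dim\mov(x)=\ell(x)=2$. Working with the explicit type-$D$ roots $\tphi=\Set{\pm\eps_i\pm\eps_j\str 1\le i<j\le n}$ from \cref{sec:typeD_descr}, where $\dka i\,\;j\dkz$ corresponds to $\eps_i-\eps_j$ (for $j>0$) or $\eps_i+\eps_{-j}$ (for $j<0$), the candidates for $\bel(x)$ are precisely the reflections whose root lies in the two-dimensional space $\mov(x)$. This bounds $\#\bel(x)$ and, crucially, rules out balanced cycles $[i]$, which are not reflections of $W(D_n)$ at all.

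Then I would run through the five cases of \cref{fig:typeD_types_rank_2}. For a paired $3$-cycle (types $\Typ(x)=1,2$, with $c$ replaced by $n$ in type $1$) the moved space is spanned by two roots sharing an index, e.g. $\mov(\dka a\,\;b\,\;c\dkz)=\spa\Set{\eps_a-\eps_b,\eps_b-\eps_c}$ (in the generic positive case; signs are handled identically), which meets $\tphi$ in the three roots $\eps_a-\eps_b,\ \eps_b-\eps_c,\ \eps_a-\eps_c$; the corresponding reflections $\dka a\,\;b\dkz,\dka b\,\;c\dkz,\dka a\,\;c\dkz$ all satisfy the criterion $t\inv x\in T(D_n)$, giving $\#\bel(x)=3$. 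For two disjoint paired $2$-cycles (types $3,4$) the moved space $\spa\Set{\eps_a-\eps_b,\eps_c-\eps_d}$ (resp.\ $\spa\Set{\eps_a-\eps_b,\eps_c-\eps_n}$) meets $\tphi$ in only the two displayed factors, so $\#\bel(x)=2$. For the zero block $x=[a][n]$ one writes $[a][n]=\dka a\,\;n\dkz\dka -a\,\;n\dkz$ via \cref{lem:typeD_length} (using $\dka -a\,\;n\dkz\equiv\dka a\,\;-n\dkz$), whence $\mov(x)=\spa\Set{\eps_a-\eps_n,\eps_a+\eps_n}=\spa\Set{\eps_a,\eps_n}$ contains exactly the two reflections $\dka a\,\;n\dkz$ and $\dka a\,\;-n\dkz$, again giving $\#\bel(x)=2$. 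In each case the length criterion $\ell(t\inv x)=1$, checked through \cref{lem:typeD_length}, confirms that all listed candidates genuinely lie below $x$.

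The computations are routine once the setup is fixed; the step I would handle most carefully is the zero-block case $\Typ(x)=-1$. The tempting error is to count the balanced cycle $[a]$ as a factor, but $[a]\notin T(D_n)$; the honest factors are the paired cycles $\dka a\,\;n\dkz$ and $\dka -a\,\;n\dkz$ obtained by splitting the four-element zero block $\Set{a,-a,n,-n}$ at the midpoint, matching the picture of a diameter through the centre being broken into its two radial edges. The moved-space bound is exactly what guarantees no further reflections hide below $x$, and as a sanity check the resulting count equals the order of $x$ as a group element ($3$ for the $3$-cycles, $2$ otherwise).
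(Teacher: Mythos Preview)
Your argument is correct. The paper does not actually give a proof of this statement: it is recorded as an Observation and simply refers the reader to \cref{fig:typeD_types_rank_2}, leaving the reader to read off from the five sample pictures how many rank~$1$ partitions refine each rank~$2$ shape. Your approach is genuinely more rigorous: you first identify $\bel(x)$ with the reflections in the two-dimensional moved space $\mov(x)$ via \cref{lem:basis_of_moved_space}, then enumerate the roots of $\tphi$ lying in that plane and verify the length condition case by case. This buys you certainty that no reflections have been overlooked, which the pictorial check does not; in particular your treatment of the zero-block case $\Typ(x)=-1$ correctly isolates $\dka a\,\;n\dkz$ and $\dka -a\,\;n\dkz$ as the only reflections below $[a][n]$ and explicitly excludes the trap of counting $[a]$, which is not a reflection of $W(D_n)$. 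The final remark equating $\#\bel(x)$ with the group-theoretic order of $x$ is a pleasant coincidence here but is not a general principle, so it should stay a sanity check rather than part of the argument.
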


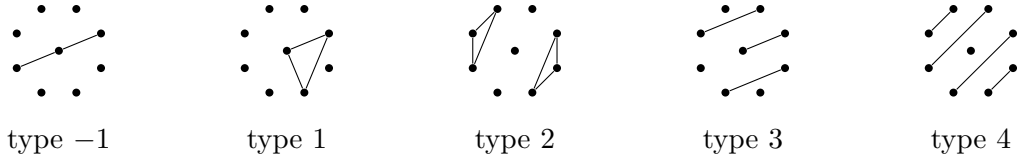
\begin{figure}%
	\begin{center}
		\begin{tikzpicture}
		\begin{scope}[xshift = 0cm]
		\achteckmp
		\draw(p1)--(p0)(p5)--(p0);
		\node at (0,-1.2) {type $-1$};
		\end{scope}
		
		\begin{scope}[xshift = 3cm]
		\achteckmp
		\draw(p1)--(p3)(p3)--(p0)(p1)--(p0);
		\node at (0,-1.2) {type $1$};
		\end{scope}
		
		\begin{scope}[xshift = 6cm]
		\achteckmp
		\draw(p1)--(p3)(p3)--(p2)(p1)--(p2)(p5)--(p6)(p6)--(p7)(p7)--(p5);
		\node at (0,-1.2) {type $2$};
		\end{scope}
		
		\begin{scope}[xshift = 9cm]
		\achteckmp
		\draw(p1)--(p0)(p2)--(p4)(p8)--(p6);
		\node at (0,-1.2) {type $3$};
		\end{scope}
		
		\begin{scope}[xshift = 12cm]
		\achteckmp
		\draw(p1)--(p4)(p2)--(p3)(p5)--(p8)(p6)--(p7);
		\node at (0,-1.2) {type $4$};
		\end{scope}
		\end{tikzpicture}
		\caption{Five non-crossing $D_5$-partitions of rank $2$ with increasing type from left to right.}%
		\label{fig:typeD_types_rank_2}%
	\end{center}
\end{figure}

\begin{lem}\label{lem:typeD_number_rk2_elements_types}
	Let $p\in\nc(D_n)$ be of rank $1$. If $p$ has type $\typ(p)=k<n$, then $\ab(p)$ has
	\begin{itemize}
		\item no elements of type $-1$,
		\item $2$ elements of type $1$,
		\item $2n-k-4$ elements of type $2$,
		\item $2(n-k-1)$ elements of type $3$, and
		\item $\binom{k-2}{2}+2\binom{n-k-1}{2}$ elements of type $4$.
	\end{itemize}
	In total, the cover set has cardinality $\#\ab(p)=\frac{1}{2}(2n^2+3k^3-4nk+2n-5k+1)$.
	
	If $p$ has type $\typ(p)=n$, then $\ab(p)$ has
	\begin{itemize}
		\item $1$ element of type $-1$,
		\item $2(n-2)$ elements of type $1$, 
		\item $\binom{n-2}{2}$ elements of type $3$, and
		\item no elements of type $2$ and $4$.
	\end{itemize}
	In total, the cover set has cardinality $\#\ab(p)=\frac{1}{2}n(n-1)$.
\end{lem}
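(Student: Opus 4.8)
The statement is a counting lemma, so the proof will be a careful case analysis following the pictorial representation of \cref{sec:typeD_pict}. By \cref{thm:typeD_nc_iso} I will pass freely between $\nc(D_n)$ and $\ncd_n$, and I will work entirely with the pictorial representation, where covering a rank $1$ element $p$ means adding edges (a single block, a zero block through the midpoint, or a pair of blocks) so as not to create a crossing and so as to increase the rank by exactly one. Recall that by \cref{obs:typeD_card_bel_rk_2} the type $\Typ(x)$ of a rank $2$ element $x\in\ab(p)$ is precisely its (signed) number of non-trivial blocks, so the five types correspond to the geometric shapes in \cref{fig:typeD_types_rank_2}, and distinguishing them amounts to counting how the new edges sit relative to $p$ and to the midpoint. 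As in the type $B$ proof of \cref{lem:typeB_number_rk2_elements_types}, I will normalise $p$: if $\typ(p)=k<n$ I take $p=\dka 1\,\;k\dkz$ (a paired cycle not involving $\pm n$, hence interpretable in $\ncb_{n-1}$), and if $\typ(p)=n$ I take $p=\dka 1\,\;n\dkz$, so that $p$ is a single edge from vertex $1$ to the midpoint.

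\textbf{Case $\typ(p)=k<n$.} Here $p$ is the pair of edges $\{1,k\},\{-1,-k\}$, living entirely in the $2(n-1)$-gon away from the midpoint. Call the $k-2$ vertices strictly between $1$ and $k$ the vertices \emph{below} $p$, and the $n-1-k$ vertices strictly between $k$ and $n-1$ (plus note their negatives) the vertices \emph{above}; the midpoint $n$ is also "above''. There can be no element of type $-1$ above $p$ since a zero block of shape $[a][n]$ crossing nothing with $p$ present would have to be disjoint from $p$, but then the rank would be wrong — I will check this by the length formula \cref{lem:typeD_length}, or simply by noting a type $-1$ element has only two blocks while $p$ already has two non-trivial blocks that must survive. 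For type $1$ (shape $\dka a\,\;b\,\;n\dkz$, one block through the midpoint): the block must contain $p$'s vertices $1$ and $k$ together with $n$, and going clockwise around such a block the only freedom is whether we traverse $1,k,n$ or $1,k$ with the return passing the other side, giving exactly the $2$ consistently oriented paired cycles $\dka 1\,\;k\,\;n\dkz$ and (after the sign/equivalence bookkeeping of \cref{defi_typeD_consistent_orientation}) one more; I expect exactly $2$. For type $2$ (one paired cycle $\dka a\,\;b\,\;c\dkz$ not involving $n$): we enlarge $p$ by a single vertex on one of its two edges, so this reduces to the type $B_{n-1}$ count, and by \cref{lem:typeB_number_rk2_elements_types} with parameter $k$ in the $2(n-1)$-gon this gives $(k-2)+2(n-1-k)=2n-k-4$. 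For type $3$ (two disjoint paired cycles, neither through $n$): we add a pair of edges among the vertices, which must be non-crossing with $p$; either both new vertices are below ($\binom{k-2}{2}$ ways) or we pick an edge among the $n-1-k$ vertices above in either of the two paired configurations $\{a,b\}\{-a,-b\}$ or $\{a,-b\}\{-a,b\}$, giving $2\binom{n-1-k}{2}$ — wait, I must be careful: the published numbers in the lemma read $2(n-k-1)$ for type $3$ and $\binom{k-2}{2}+2\binom{n-k-1}{2}$ for type $4$, so in fact it is type $4$ that is the "add a non-crossing pair of edges'' case and type $3$ that is the "add a zero block'' analogue (shape $\dka a\,\;b\dkz\dka c\,\;n\dkz$ for type $4$, and for type $3$ we add a second paired cycle where one component hugs the midpoint). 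I will redo this bookkeeping precisely against \cref{fig:typeD_types_rank_2}; the upshot is $2(n-k-1)$ elements of type $3$ and $\binom{k-2}{2}+2\binom{n-k-1}{2}$ of type $4$. Summing $0+2+(2n-k-4)+2(n-k-1)+\binom{k-2}{2}+2\binom{n-k-1}{2}$ and simplifying should yield $\tfrac12(2n^2+3k^2-4nk+2n-5k+1)$; I will verify this algebraic identity at the end (there appears to be a typo "$3k^3$'' in the statement which should read $3k^2$, and I will note this).

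\textbf{Case $\typ(p)=n$.} Now $p=\dka 1\,\;n\dkz$ is a single edge from vertex $1$ to the midpoint, and there are $n-2$ vertices strictly "below'' it on each side (i.e.\ the vertices $2,\dots,n-1$, together with their negatives, but sign-symmetry identifies the two halves). Covering $p$ by a type $-1$ element forces the zero block $\{1,-1,n,-n\}$ — no choice, so $1$ element. Covering by type $1$ (enlarging $p$ by one more vertex on the block through the midpoint): we insert one of the $n-2$ vertices $2,\dots,n-1$, and on each we get two consistent orientations by \cref{defi_typeD_consistent_orientation} (the vertex can be placed "before'' or "after'' in the oriented cycle around the midpoint), giving $2(n-2)$. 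Covering by type $3$ (add a second paired cycle, neither touching $n$, hence one pair of edges among the $n-2$ free vertices not crossing $p$): this is $\binom{n-2}{2}$. No type $2$ or type $4$ elements can occur because any rank $2$ cover of $p$ must retain a block through the midpoint, forcing the "$n$-type'' shapes $-1$, $1$, or $3$. Summing $1+2(n-2)+\binom{n-2}{2}=\tfrac12 n(n-1)$, which I will check directly.

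\textbf{Main obstacle.} The delicate point is the consistent-orientation bookkeeping for cycles that involve $n$: as \cref{exa:typeD_not_consistent_orientation} and \cref{defi_typeD_consistent_orientation} show, "consistently oriented'' for paired cycles through the midpoint is \emph{not} the naive cyclic-order condition, and each unoriented block hugging the midpoint splits into a controlled number (typically two) of genuinely distinct elements of $\nc(D_n)$. Getting these multiplicities exactly right — and confirming that the crossing rules of \cref{defi:typeD_ncp} do not silently forbid some of the configurations I am counting — is where the argument needs real care; I will handle it by invoking \cref{rem:typeD_defi_consistent_orientation} to translate each "$n$-involving'' count into a clean type $B_{n-1}$ count wherever possible, and checking the residual midpoint cases by hand against \cref{fig:typeD_cover_blocks}-style pictures. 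The final arithmetic simplifications are routine and I will state them without grinding through every step.
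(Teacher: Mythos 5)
Your proposal is correct and follows essentially the same route as the paper's proof: normalise $p$ to $\dka 1\,\;k\dkz$ (resp.\ $\dka 1\,\;n\dkz$), split the remaining vertices into those below and above the edge, and count covers case by case according to the shape of the added blocks, reducing to the type $B$ count of \cref{lem:typeB_number_rk2_elements_types} where the midpoint is not involved. On the two points you flagged: the type-$3$/type-$4$ assignment you settle on (type $3$ $=$ join an edge to the midpoint, giving $2(n-k-1)$; type $4$ $=$ add a block pair away from the midpoint, giving $\binom{k-2}{2}+2\binom{n-k-1}{2}$) is the one used by the paper's figure and proof — the displayed definition of $\Typ$ has these two shapes transposed, which is the source of your confusion — and the closed-form total should read $\tfrac12(2n^2+3k^2-4nk+2n-5k+2)$, since both the exponent $k^3$ and the constant $+1$ in the statement are typos (the corrected constant is the one consistent with the $i=\tfrac{n+2}{3}$ computation in the proof of \cref{lem:typeD_type_preserving_rank_2}).
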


\begin{proof}
	We proceed in the same way as in the proof of \cref{lem:typeB_number_rk2_elements_types} and compute the possibilities to construct a cover of the particular type. In this proof, we are using the same symbols for elements in $\nc(D_n)$ and their pictorial representation in $\ncd_n$. 
	
	Let $p\in \nc(D_n)$ be of rank $1$ and type $k<n$. We may assume that $p=\dka 1\,\;k\dkz$. There are $k-2$ vertices below the edge $e\coloneqq\Set{1,k}$ and $n-1-k$ vertices above $e$, which is depicted in \cref{fig:typeD_above_below_vertices}.
	
	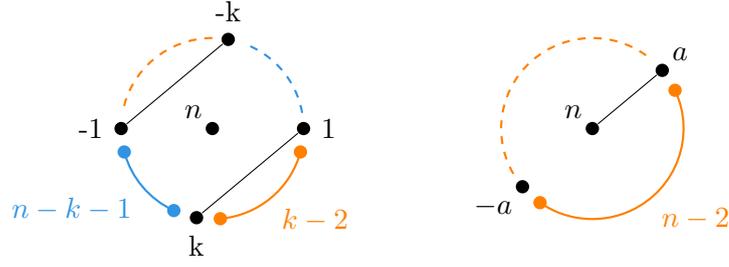
\begin{figure}%
		\begin{center}
			\begin{tikzpicture}
			\def\r{1.2} 
			\def\w{35} 
			
			\node[mpunkt] (O) at (0,0) {};		
			
			\node[mpunkt] (1) at (0:\r) {};
			\node[mpunkt] (-1) at (180:\r) {};
			\node[mpunkt] (-k) at (80:\r) {};
			\node[mpunkt] (k) at (260:\r) {};
			
			\foreach \n/\pos in {1/right,-1/left,k/below,-k/above}
			\node[\pos=1mm] at (\n) {\n};
			\node[above left] at (O) {$n$};
			
			\draw (-1) -- (-k) (k) -- (1);
			\begin{scope}[Blue,thick]
			\node[mpunkt] at (195:\r) {};
			\node[mpunkt] at (245:\r) {};
			\draw (195:\r) arc (195:245:\r) node[midway, below left] {$n-k-1$};
			\draw[dashed] (10:\r) arc (10:70:\r);
			\end{scope}
			
			\begin{scope}[Orange,thick]
			\node[mpunkt] at (275:\r) {};
			\node[mpunkt] at (345:\r) {};
			\draw (275:\r) arc (275:345:\r) node[midway, below right] {$k-2$};
			\draw[dashed] (90:\r) arc (90:170:\r);
			\end{scope}
			
			\begin{scope}[xshift = 5cm]		
			\node[mpunkt] (O) at (0,0) {};
			
			\node[mpunkt] (P) at (40:\r) {};
			\draw (O) -- (P);
			\node[mpunkt] (Q) at (220:\r) {};
			
			\node[above left] at (O) {$n$};
			\node[above right] at (P) {$a$};
			\node[below left] at (Q) {$-a$};
			
			
			\begin{scope}[Orange,thick]
			\node[mpunkt] at (385:\r) {};
			\node[mpunkt] at (235:\r) {};
			\draw (385:\r) arc (385:235:\r) node[midway, below right] {$n-2$};
			\draw[dashed] (210:\r) arc (210:50:\r);
			\end{scope}
			\end{scope}
			\end{tikzpicture}
			\caption{Available vertices for constructing covers for type $k<n$ are shown on the left, and for type $n$ on the right. Vertices above are shown in blue and vertices below in orange. The dashed lines indicate that every vertex has an opposite.}%
			\label{fig:typeD_above_below_vertices}%
		\end{center}
	\end{figure}
	
	It is clear that there cannot be an element of type $-1$ covering $p$.
	
	Let $x\in \ab(p)$ be of type $1$. Then $x$ has to be a cycle involving $n$, for which we have exactly two choices.
	
	To construct a cover of $p$ with two non-trivial blocks, we have to join a vertex different from the midpoint to the edge $e$. Either we can join a vertex below, or we can join a vertex above to $e$ or $-e$. In total we have $k-2+2(n-k-1)=2n-k-4$ possibilities.
	
	If $x\in \ab(p)$ has type $3$, then it is the join of $p$ with an edge adjacent to the midpoint not crossing $p$. There are $2(n-k-1)$ possibilities, since a  vertex above or its negative can be connected to the midpoint.
	
	To construct a cover of $p$ of type $4$, we have to add a block pair that does not cross $p$. Either we connect two vertices below, for which we have $\binom{k-2}{2}$ ways, or we choose two vertices among the vertices above $e$, for which we have $\binom{n-k-1}{2}$ ways. If we have chosen $a$ and $b$ from above, then we can form the block pair $\pm \Set{a,b}$ or the block pair $\pm \Set{-a, b}$. Hence, there are $\binom{k-2}{2}+2\binom{n-k-1}{2}$ elements of type $4$ in $\ab(p)$.
	
	Now let $p$ be of type $n$, see also \cref{fig:typeD_above_below_vertices}. We may assume that $p=\dka 1\,\;n\dkz$. 
	
	Since rank $2$ elements of type $2$ and $4$ do not involve the midpoint $n$, such elements cannot cover $p$.
	
	There is exactly one element that covers $p$ and has type $-1$, namely $[1][n]$. 
	
	To construct a covering element with exactly one non-trivial block, we can choose any vertex except $1$,$-1$ and $n$, and join it to $p$. In total, there are $2(n-2)$ possibilities to construct a cover of type $1$.
	
	An element $x\in \ab(p)$ of type $3$ arises from $p$ by adding a pair of edges $\pm f$ that does not cross $p$. In order to be non-crossing, the vertices for the edge $f$ have to be chosen among $2, 3, \ldots, n-1$, for which there are $\binom{n-2}{2}$ choices.
\end{proof}

\begin{lem}\label{lem:typeD_type_preserving_rank_2}
	Let $x\in\nc(D_n)$ be of rank $2$ and type $\Typ(x)=1$ or $\Typ(x)=2$. If $n\neq 4$, then for all automorphisms $\auto \in \aut(\nc(D_n))$ it holds that $\Typ(\auto(x))=\Typ(x)$.
\end{lem}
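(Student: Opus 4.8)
The plan is to argue via automorphism‑invariant counting, in the spirit of the proof of \cref{lem:typeB_rk2_type1_autos_preserve}, but here the coarse invariant $\#\bel$ only separates the class $\Set{x\str\Typ(x)\in\Set{1,2}}$ from the rest, so a finer invariant is needed to tell $\Typ(x)=1$ from $\Typ(x)=2$. Throughout, recall that a lattice automorphism $\ncauto$ of $\nc(D_n)$ fixes the minimum and preserves covers, hence preserves rank, and that by \cref{lem:cov_set_autos} it satisfies $\ncauto(\ab(w))=\ab(\ncauto(w))$ and $\ncauto(\bel(w))=\bel(\ncauto(w))$, in particular it preserves $\#\ab$ and $\#\bel$. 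By \cref{obs:typeD_card_bel_rk_2}, a rank $2$ element $x$ satisfies $\#\bel(x)=3$ if and only if $\Typ(x)\in\Set{1,2}$; thus $\ncauto$ maps the set of rank $2$ elements of type $1$ or $2$ onto itself, and it remains to show that $\ncauto$ cannot interchange the two types. This is where the hypothesis $n\neq 4$ will enter.

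First I would establish the auxiliary claim that, for $n\neq 4$, every automorphism $\ncauto$ of $\nc(D_n)$ preserves the property ``$\typ(p)=n$'' of rank $1$ elements $p$. To this end, associate to a rank $1$ element $p$ the pair $\bigl(N(p),\#\ab(p)\bigr)$, where $N(p)\coloneqq\#\Set{x\in\ab(p)\str \#\bel(x)=3}$. Both entries are automorphism invariants of $p$ by the previous paragraph. Using the itemized counts of \cref{lem:typeD_number_rk2_elements_types} together with \cref{obs:typeD_card_bel_rk_2}, one computes $N(p)=2+(2n-\typ(p)-4)=2n-\typ(p)-2$ when $\typ(p)<n$, and $N(p)=2(n-2)=2n-4$ when $\typ(p)=n$; moreover $\#\ab(p)=(n-1)(n-2)$ when $\typ(p)=2$ and $\#\ab(p)=\tfrac12 n(n-1)$ when $\typ(p)=n$. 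Hence, if $\typ(p)=n$ then $\bigl(N(p),\#\ab(p)\bigr)=\bigl(2n-4,\tfrac12 n(n-1)\bigr)$, whereas if $\typ(p)<n$ then either $N(p)=2n-\typ(p)-2<2n-4$ (the case $\typ(p)\geq 3$) or $\bigl(N(p),\#\ab(p)\bigr)=\bigl(2n-4,(n-1)(n-2)\bigr)$ (the case $\typ(p)=2$). Since $(n-1)(n-2)=\tfrac12 n(n-1)$ forces $n=4$, the pair $\bigl(N(p),\#\ab(p)\bigr)$ takes a value on the type $n$ rank $1$ elements that it takes on no other rank $1$ element as soon as $n\neq 4$, which proves the claim.

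To finish, I would pin down the covered sets of the rank $2$ elements of types $1$ and $2$. If $x=\dka a\,b\,n\dkz$ has type $1$, then by \cref{subword_prop} (equivalently, by breaking the cycle as in \cref{cor:length_subcycle} and its type $D$ analogue) the rank $1$ elements below $x$ are among $\dka a\,b\dkz$, $\dka a\,n\dkz$ and $\dka b\,n\dkz$; by \cref{obs:typeD_card_bel_rk_2} there are exactly three of them, so $\bel(x)=\Set{\dka a\,b\dkz,\dka a\,n\dkz,\dka b\,n\dkz}$, and by \cref{rem:typeD_defi_consistent_orientation} all three indeed lie in $\nc(D_n)$, of which exactly the two ``half‑diagonal'' reflections $\dka a\,n\dkz,\dka b\,n\dkz$ have type $n$. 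If $x=\dka a\,b\,c\dkz$ has type $2$, then $a,b,c\in\Set{\pm1,\ldots,\pm(n-1)}$ and $\bel(x)=\Set{\dka a\,b\dkz,\dka b\,c\dkz,\dka a\,c\dkz}$ (these are elements of $\nc(D_n)$ by \cref{cor:typeBD_elements_ncd_as_ncb} and the Subword Property), none of which has type $n$. Now let $x$ be of rank $2$ with $\Typ(x)\in\Set{1,2}$ and let $\ncauto$ be an automorphism; by the first paragraph $\Typ(\ncauto(x))\in\Set{1,2}$. Since $\bel(\ncauto(x))=\ncauto(\bel(x))$ and, by the auxiliary claim (here $n\neq 4$ is used), $\ncauto$ preserves the property of a rank $1$ element of having type $n$, the covered set of $\ncauto(x)$ contains exactly as many type $n$ elements as $\bel(x)$ does — two if $\Typ(x)=1$ and none if $\Typ(x)=2$. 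By the description just given this forces $\Typ(\ncauto(x))=\Typ(x)$.

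The main obstacle I anticipate is the choice of the right invariants: no single one of $\#\bel$, $N$, or $\#\ab$ separates the relevant cases — $\#\bel$ collapses types $1$ and $2$ of rank $2$ elements, $N$ collapses rank $1$ types $2$ and $n$, and $\#\ab$ can coincide on other pairs of rank $1$ types — so one must combine $N$ and $\#\ab$ and then check that the only surviving coincidence sits exactly at $n=4$, which rests on the elementary identity $(n-1)(n-2)=\tfrac12 n(n-1)\iff n=4$. A secondary technical point is to make the covered‑set descriptions watertight, in particular that the reflections $\dka a\,n\dkz,\dka b\,n\dkz$ are genuinely consistently oriented in $W(D_n)$ (so that they really are the type $n$ elements of $\bel(x)$), which is precisely what \cref{rem:typeD_defi_consistent_orientation} supplies.
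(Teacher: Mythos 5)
Your proof is correct, and while it draws on exactly the same counting data as the paper (the cover-set counts of \cref{lem:typeD_number_rk2_elements_types} and the covered-set cardinalities of \cref{obs:typeD_card_bel_rk_2}), it organizes the argument genuinely differently. The paper argues by contradiction directly on a hypothetical $\auto(x)=y$ with $\Typ(x)=1$ and $\Typ(y)=2$: matching $\#\ab$ across the covered sets forces the images of the two type-$n$ reflections below $x$ to have type $\frac{n+2}{3}$, and a second count (of covers with three covered elements) then yields $6(n-2)=5n-8$, i.e.\ $n=4$. You instead first isolate the auxiliary statement that for $n\neq 4$ every automorphism preserves the property $\typ(p)=n$ of reflections, using the invariant pair $\bigl(N(p),\#\ab(p)\bigr)$: the only collision of $N$ between type $n$ and a smaller type occurs at type $2$, and that collision is resolved by $(n-1)(n-2)=\tfrac12 n(n-1)\iff n=4$. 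You then read off the rank-$2$ type from the number of type-$n$ reflections in the covered set ($2$ for type $1$, $0$ for type $2$). Your route has the advantage that the colliding rank-$1$ type is the fixed value $2$ rather than $\frac{n+2}{3}$ (so no integrality discussion is needed), and it produces the reusable intermediate fact that the type-$n$ reflections form an automorphism-invariant set; the paper's route avoids computing the total $\#\ab(p)$ for type-$2$ reflections. One caveat: the closed-form total $\#\ab(p)$ printed in \cref{lem:typeD_number_rk2_elements_types} contains typos (the $3k^3$ term and the constant), so your value $(n-1)(n-2)$ for $k=2$ should be justified by summing the itemized counts, which is what your argument actually uses and which does give $(n-1)(n-2)$.
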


\begin{proof}
	Let $x,y\in \nc(D_n)$ be of rank $2$ with $\Typ(x)=1$ and $\Typ(y)=2$ such that $\auto(x)=y$ for an automorphism $\auto\in\aut(\nc(D_n))$. We show that then $n=4$, which implies that the types of rank $2$ elements are preserved for $n \neq 4$. 
	
	Let the elements covered by $x$ be 
	$
	\bel(x)=\Set{a,b,c}
	$
	such that $\typ(a)=\typ(b)=n$ and $\typ(c)=k<n$, and the elements covered by $y$ be
	$
	\bel(y)=\Set{d,e,f}
	$
	with $\typ(d)=m<n$, $\typ(e)=l<n$ and $\typ(f)=m+l-1\coloneqq r<n$. This is the generic situation and the pictorial representations are shown in \cref{fig:typeD_proof_type_preserving_rank_2}.
	
	\begin{figure}
		\begin{center}
			\begin{tikzpicture}
			\def\r{0.7cm} 
			
			\foreach \x/\y/\n in 	{1/0/2,
				2/0/3,
				2/1.5/1,
				3/0/4,
				4.5/0/6,
				5.5/0/7,
				5.5/1.5/5,
				6.5/0/8}
			\coordinate (n\n) at (2.2*\x,2*\y);
			
			\foreach \n in {1,...,8}{
				\draw[Lightgray] (n\n) circle (\r);
			}
			\foreach \n in {2,3,4} \draw[shorten <=\r*1.7, shorten >= \r*1.6,Lightgray] (n1) -- (n\n);
			
			\foreach \n in {6,7,8} \draw[shorten <=\r*1.7, shorten >= \r*1.6,Lightgray] (n5) -- (n\n);
			
			\foreach \n in {1,...,8} \coordinate (1\n) at ($(n\n)+(0:\r)$) {}; 
			\foreach \n in {1,...,8} \coordinate (k\n) at ($(n\n) + (270:\r)$) {}; 
			\foreach \n in {1,...,8} \coordinate (m\n) at ($(n\n) + (300:\r)$) {}; 
			\foreach \n in {1,...,8} \coordinate (r\n) at ($(n\n) + (240:\r)$) {}; 
			\foreach \n in {1,...,8} \coordinate (l\n) at ($(n\n) + (180:\r)$) {}; 
			\foreach \n in {1,...,8} \coordinate (ol\n) at ($(n\n) + (120:\r)$) {}; 
			\foreach \n in {1,...,8} \coordinate (or\n) at ($(n\n) + (60:\r)$) {}; 
			
			\coordinate (oben4) at ($(n4)+(90:\r)$) {};
			\coordinate (links4) at ($(n4)+(180:\r)$) {};
			\node[kpunkt] at (oben4){};
			\node[kpunkt] at (links4){};
			\draw (oben4) -- (links4);
			
			\foreach \l in {1,n,k} \node[kpunkt] at (\l1) {};
			\draw (n1)--(11)--(k1)--(n1);
			\foreach \l in {n,1} \node[kpunkt] at (\l2) {}; \draw (n2)--(12);
			\foreach \l in {n,k} \node[kpunkt] at (\l3) {}; \draw (n3)--(k3);
			\foreach \l in {k,1} \node[kpunkt] at (\l4) {}; \draw(k4)--(14);
			\foreach \l in {1,m,r,l,ol,or} \node[kpunkt] at (\l5) {};
			\draw (r5)--(m5)--(15)--cycle; \draw (ol5)--(or5)--(l5)--cycle;
			\foreach \l in {1,m,l,ol} \node[kpunkt] at (\l6) {}; \draw (m6)--(16); \draw (l6)--(ol6);
			\foreach \l in {r,m,ol,or} \node[kpunkt] at (\l7) {}; \draw (m7)--(r7); \draw (ol7)--(or7);
			\foreach \l in {1,r,l,or} \node[kpunkt] at (\l8) {}; \draw (r8)--(18); \draw (l8)-- (or8);
			
			\foreach \n in {1,2,3} \node[above] at (n\n) {$n$};
			\foreach \n in {1,2,4,5,6,8} \node[right] at (1\n) {$1$};
			\foreach \n in {1,3,4} \node[below] at (k\n) {$k$};
			\foreach \n in {5,6,7} \node[below right] at (m\n) {$m$};
			\foreach \n in {5,7,8} \node[below left] at (r\n) {$r$};
			
			\foreach \n/\lab in {2/a,3/b,4/c,6/d,7/e,8/f}
			\node at ($(n\n) + (0,-\r*2.5)$) {$\lab$};
			\foreach \n/\lab in {1/x,5/y} 
			\node at ($(n\n) + (-\r*2,0)$) {$\lab$};
			\end{tikzpicture}
			\caption{Generic rank $2$ elements $x$ and $y$ of type $1$ and $2$, respectively, and the corresponding covered elements $a$, $b$ and $c$, and $d$, $e$ and $f$. }
			\label{fig:typeD_proof_type_preserving_rank_2}
		\end{center}
	\end{figure}
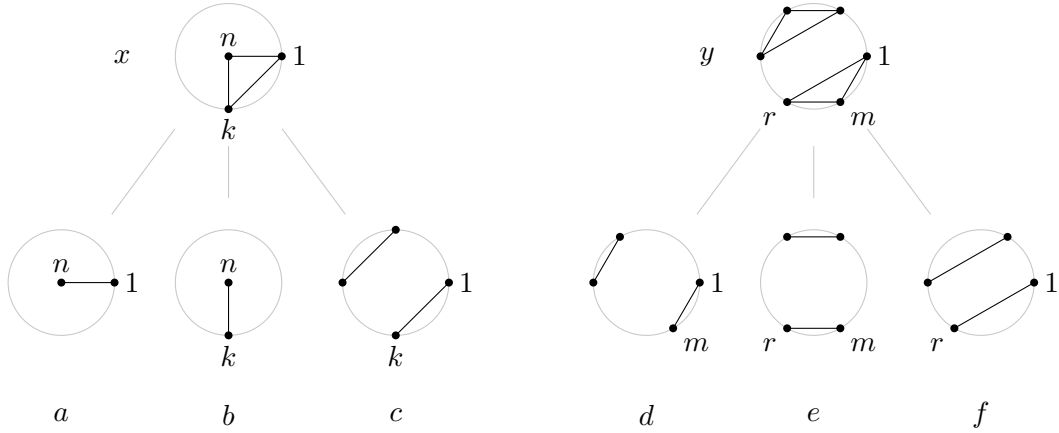
	
	Since $\auto(x)=y$ and therefore $\bel(\auto(x))=\bel(y)$, it holds that 
	\[
	\Set{\auto(a),\auto(b),\auto(c)}=\Set{d,e,f}.
	\]
	To find out how $\auto$ maps $\bel(x)$ to $\bel(y)$, note that a necessary condition is that the cover set of the image has the same cardinality as the cover set of the preimage. By \cref{lem:typeD_number_rk2_elements_types} we have that a rank $1$ element of type $n$ is covered by $\frac{1}{2}n(n-1)$ elements, whereas  an element of type $i<n$ is covered by $\frac{1}{2}(2n^2+3i^3-4ni+2n-5i+1)$ elements. Since 
	\[
	\frac{1}{2}(2n^2+3i^3-4ni+2n-5i+1) = \frac{1}{2}n(n-1)
	\]
	if and only if $i=\frac{n+2}{3}\in\Z$, we know that two elements of $\bel(y)$ have type $\frac{n+2}{3}$. Since the types of $d$ and $e$ are strictly less than the type of $f$, we have that $\typ(d)=\typ(e)=\frac{n+2}{3}$ and $\typ(f)=\frac{2n+1}{3}$. Further it holds that $\auto(c)=f$ and, without loss of generality, that $\auto(a)=d$ and $\auto(b)=e$.
	
	After having identified that $a$ gets mapped to $d$, we now take a closer look at the cover sets of $a$ and $d$. Every rank $2$ element covers either $2$ or $3$ reflections. Since cardinalities of covered sets are preserved under automorphisms, it holds that
	\[
	\#\Set{z\in\ab(a)\str \#\bel(z)=3} = \#\Set{z\in\ab(d)\str \#\bel(z)=3}.
	\]
	
	By \cref{obs:typeD_card_bel_rk_2} the elements of rank $2$ that cover $3$ elements have type $1$ and $2$. Hence we have to count the elements of type $1$ and $2$ in $\ab(a)$ and $\ab(d)$, respectively. Using \cref{lem:typeD_number_rk2_elements_types} yields that
	$\#\Set{z\in\ab(a)\str \#\bel(z)=3} = 2(n-2)$
	and that 
	$\#\Set{z\in\ab(d)\str \#\bel(z)=3} =\frac{5n-8}{3}$. Hence we have $6(n-2)=5n-8$, which is equivalent to $n=4$.  
\end{proof}

\begin{lem}\label{lem:typeD_nc_auto_preserves_type_rk_1}
	The type of rank $1$ elements of $\nc(D_n)$ is preserved under all lattice automorphisms of $\nc(D_n)$ for $n\neq 4$.
\end{lem}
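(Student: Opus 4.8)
The plan is to argue exactly as in the proof of \cref{lem:typeB_nc_auto_preserves_type_rk_1}, adapting the counting argument to type $D$. Let $\auto \in \aut(\nc(D_n))$ and let $p \in \nc(D_n)$ be a reflection. By \cref{lem:cov_set_autos} the cardinality of the cover set $\ab(p)$ is preserved under $\auto$, and moreover $\auto$ restricts to a bijection $\ab(p) \to \ab(\auto(p))$ that preserves the cardinality of the covered sets of its elements. Since $n \neq 4$, \cref{lem:typeD_type_preserving_rank_2} tells us that $\auto$ sends rank $2$ elements of type $1$ to rank $2$ elements of type $1$, and rank $2$ elements of type $2$ to rank $2$ elements of type $2$; equivalently, by \cref{obs:typeD_card_bel_rk_2}, $\auto$ preserves the property of a rank $2$ element to cover exactly three reflections. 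Hence
\[
\#\Set{x \in \ab(p) \str \#\bel(x) = 3} = \#\Set{y \in \ab(\auto(p)) \str \#\bel(y) = 3}.
\]

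The next step is to read off this number from \cref{lem:typeD_number_rk2_elements_types}. A rank $2$ element covers three reflections precisely when it has type $1$ or $2$. If $\typ(p) = k < n$, then $\ab(p)$ has $2$ elements of type $1$ and $2n-k-4$ elements of type $2$, so the quantity above equals $2n-k-2$. If $\typ(p) = n$, then $\ab(p)$ has $1$ element of type $-1$, $2(n-2)$ elements of type $1$, and no elements of type $2$, so the quantity equals $2(n-2) = 2n-4$. I then need to check that the map $k \mapsto 2n-k-2$ on $\{2,\dots,n-1\}$ together with the value $2n-4$ at $k=n$ is injective, i.e. that the function $\typ(p)$ is recovered from the number of three-covered rank $2$ elements above $p$. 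For $k$ ranging over $\{2,\dots,n-1\}$ the values $2n-k-2$ are pairwise distinct, ranging over $\{n-1,\dots,2n-4\}$; the value $2n-4$ attained at $k=n$ coincides with the value attained at $k=2$. So there is a genuine ambiguity: $\typ(p)=2$ and $\typ(p)=n$ both give the count $2n-4$.

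This collision means the cardinality argument alone does not separate type $2$ from type $n$, and resolving it is the main obstacle. To distinguish these two cases I would use a second invariant: among the three-covered elements in $\ab(p)$, count those of type $1$ versus those of type $2$. If $\typ(p) = 2$, then $\ab(p)$ contains $2n - 2 - 2 = 2n-4$ elements of type $2$ and $2$ of type $1$, but an element of type $1$ covers a reflection of type $n$ (an edge adjacent to the midpoint), whereas for $\typ(p)=2$ the element $p$ itself has type $2 \neq n$; conversely if $\typ(p) = n$, then $p$ has type $n$ and lies below every type $1$ element of $\ab(p)$. More cleanly: a reflection $p$ has type $n$ if and only if some rank $2$ element of type $-1$ lies above it, and \cref{lem:typeD_number_rk2_elements_types} shows that $\ab(p)$ contains an element of type $-1$ exactly when $\typ(p)=n$; so it suffices to characterize the type $-1$ elements intrinsically, e.g. as those rank $2$ elements $x$ with $\#\bel(x) = 2$ that are \emph{not} joins of two incomparable three-covered elements below a common cover — or, more simply, as the unique rank $2$ elements covering two reflections both of which are covered by exactly $\tfrac12 n(n-1)$ rank $2$ elements (again by \cref{lem:typeD_number_rk2_elements_types}). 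This latter characterization is preserved by $\auto$ once we know, inductively, that the count $\tfrac12 n(n-1)$ detects type $n$. I would therefore organize the argument so that the count of three-covered elements first pins down $\typ(p) \in \{2, n\}$ versus $\typ(p) \in \{3,\dots,n-1\}$ with its exact value, and then the presence or absence of a type $-1$ element in $\ab(p)$ — itself an automorphism-invariant condition by the same lemma — separates $\typ(p)=2$ from $\typ(p)=n$. This completes the proof that $\typ$ is preserved for $n \neq 4$.
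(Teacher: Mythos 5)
Your overall strategy --- using \cref{lem:cov_set_autos}, \cref{lem:typeD_type_preserving_rank_2} and the counts of \cref{lem:typeD_number_rk2_elements_types} to read off $\typ(p)$ from automorphism-invariant data in $\ab(p)$ --- is the same as the paper's, but you make the counting harder than it needs to be, and the patch you propose for the resulting ambiguity does not work. The paper simply counts the elements of $\ab(p)$ of type $2$: by \cref{lem:typeD_type_preserving_rank_2} this count is invariant under $\auto$ for $n\neq 4$, and by \cref{lem:typeD_number_rk2_elements_types} it equals $2n-k-4$ when $\typ(p)=k<n$ and $0$ when $\typ(p)=n$; since $2n-k-4\geq n-3>0$ for $2\leq k\leq n-1$ and $n\geq 5$, these values are pairwise distinct and the type is recovered. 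By instead counting all three-covered elements (type $1$ plus type $2$) you merge two separately invariant quantities into one and thereby create the collision between $\typ(p)=2$ and $\typ(p)=n$ --- a collision that simply is not there if you keep the two counts apart. (Also note the arithmetic slip: for $\typ(p)=2$ there are $2n-2-4=2n-6$ elements of type $2$ in $\ab(p)$, not $2n-4$.)

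The proposed repair is where the argument genuinely breaks. You want to detect $\typ(p)=n$ via the presence of a type $-1$ element in $\ab(p)$, and to recognize type $-1$ elements as those rank $2$ elements covering exactly two reflections each of which lies below exactly $\tfrac12 n(n-1)$ rank $2$ elements. But $\#\ab(q)=\tfrac12 n(n-1)$ does \emph{not} characterize reflections of type $n$: as the paper itself computes in the proof of \cref{lem:typeD_type_preserving_rank_2}, a reflection of type $i<n$ has the same cover-set cardinality whenever $i=\frac{n+2}{3}$ is an integer (e.g.\ $i=3$ for $n=7$), so a type $3$ element $\dka a\,b\dkz\dka c\,d\dkz$ with both paired cycles of type $\frac{n+2}{3}$ passes your numerical test without having type $-1$. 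Your caveat ``once we know, inductively, that the count $\tfrac12 n(n-1)$ detects type $n$'' is moreover circular, since whether type can be detected by such counts is exactly what the lemma asserts. All of this is avoided by dropping the combined count and using the type-$2$ count alone.
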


\begin{proof}
	Let $p\in\nc(D_n)$ be a reflection and let $\auto\in\aut(\nc(D_n))$ be an automorphism. We show that the rank of $\auto(p)$ equals the rank of $p$. By \cref{lem:typeD_type_preserving_rank_2} it holds for $n\neq 4$ that $x\in \ab(p)$ has type $2$ if and only if $\auto(x)\in\ab(\auto(p))$ has type $2$. Hence we get the equality
	\[
	\#\Set{x\in\ab(p)\str \Typ(x)=2 } = \#\Set{y\in\ab(\auto(p))\str \Typ(y)=2 },
	\]
	which is equivalent to $2n-4-\rk(p)=2n-4-\rk(\auto(p))$ for $\rk(p)<n$ by \cref{lem:typeD_number_rk2_elements_types}. From the same lemma it follows for $\rk(p)=n$ that $\#\Set{x\in\ab(p)\str \Typ(x)=2 } = 0$. This implies that $\typ(\auto(p))=\typ(p)$.
\end{proof}

\begin{lem}\label{lem:typeD_exo_auto}
	If $n=4$ then there exists an involutive automorphism $\exo \in \aut(\nc(D_4))$ that is not an element of $\De$ and does not preserve types.
\end{lem}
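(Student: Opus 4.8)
### Proof proposal

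The plan is to produce an explicit automorphism of $\nc(D_4)$ that is exotic in the precise sense of the statement. The natural candidate comes from the symmetry of the Coxeter diagram $D_4$: the diagram has a threefold symmetry permuting the three outer nodes $s_1,s_3,s_4$ (in the notation of \cref{fig:coxeter_graph_d4}) and fixing the central node $s_2$. Such a diagram automorphism induces a group automorphism of $W(D_4)$, but it does \emph{not} fix the standard Coxeter element $\cox=[1\,\;2\,\;3][4]$; rather it sends $\cox$ to another Coxeter element $\cox'$. Since all Coxeter elements are conjugate, composing the diagram automorphism with a suitable conjugation gives a group automorphism $\exo$ of $W(D_4)$ with $\exo(\cox)=\cox$. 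Because $\exo$ preserves the reflection set $T(D_4)$ and the absolute length, it restricts to a lattice automorphism of $\nc(D_4)=\Set{w\str w\leq\cox}$. First I would fix a concrete such $\exo$, chosen to be an involution, and then verify the two required properties.

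The bulk of the argument is the verification that $\exo$ does not preserve types and is not in $\De$. For the type-nonpreservation: the diagram-type triality must send a rank $1$ element of some type to a rank $1$ element of a different type, because the three outer simple reflections are cycled, and these correspond (in the pictorial representation of \cref{sec:typeD_pict}) to edges of genuinely different "diagonality" — one of them involves $n=4$ (type $n$) while the others do not. So one exhibits a single reflection $p$ with $\typ(p)\neq\typ(\exo(p))$; this is a direct computation once $\exo$ is written down explicitly as a permutation of $\Set{\pm1,\pm2,\pm3,\pm4}$. That $\exo$ is an involution is checked by squaring the explicit formula. That $\exo\notin\De$ follows from the fact, established in the proof of \cref{lem:typeD_order_dih_auto_group} and \cref{prop:typeD_de_dihedral_group}, that every element of $\De$ is either a power of conjugation by $\cox$ composed possibly with $\autn$, and all such maps preserve the type of rank $1$ elements — indeed \cref{lem:typeD_nc_auto_preserves_type_rk_1}'s proof shows rank $1$ types are preserved by all of $\De$ even in the $n=4$ case, since $\D\subseteq\De$ acts by conjugation. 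Hence any automorphism not preserving types cannot lie in $\De$, and this simultaneously discharges both conditions once type-nonpreservation is known.

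A cleaner packaging I would prefer: observe that the outer automorphism group of $W(D_4)$ (modulo inner automorphisms) contains a copy of $S_3$ coming from triality, of which the dihedral-diagram symmetry relevant here is only a subgroup; the exotic $\exo$ is obtained from a $3$-cycle in triality, conjugated to stabilize $\cox$. Concretely one can take $\exo$ to send the three consistently oriented rank $1$ blocks that are "neighbours of the midpoint-type block" into each other, which visibly cannot come from a symmetry of the hexagon (a hexagon symmetry fixes the midpoint and moves the three midpoint-adjacent edges only among themselves as a $\Z/3$ or via a reflection, but always respecting the bipartite two-coloring of edge-types by parity — triality breaks this). I would phrase the final line as: since $\exo$ does not preserve the type of rank $1$ elements while every element of $\De$ does, we conclude $\exo\in\aut(\nc(D_4))\setminus\De$, and $\exo$ cannot be realized as a symmetry of the hexagon.

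The main obstacle I anticipate is purely bookkeeping: writing down $\exo$ explicitly as a signed permutation that both stabilizes $\cox=[1\,\;2\,\;3][4]$ and is an involution requires choosing the conjugating element carefully, and then the type computation must be done against the somewhat delicate \cref{defi_typeD_consistent_orientation}. There is also a subtlety worth double-checking: one must make sure $\exo$ genuinely lies outside $\De$ and is not secretly equal to $\autn$ or one of the $\autl\circ\autr$-powers under a different guise — this is exactly what the type argument rules out, so the type computation is load-bearing and must be done correctly rather than waved at. Everything else (involutivity, well-definedness on $\nc(D_4)$, compatibility with the lattice order via invariance of absolute length under group automorphisms preserving $T$) is routine.
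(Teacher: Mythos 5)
Your route is genuinely different from the paper's. The paper proves the lemma by brute force: it writes down $\exo$ as an explicit involution on the twelve reflections (the table of exchanged pairs), verifies by inspecting \cref{fig:typeD_exotic_auto} that this assignment extends to a lattice automorphism, reads off from the same figure that types of rank $1$ and rank $2$ elements are not preserved, and then notes that $\De$ is exactly the symmetry group of the hexagon, so every element of $\De$ preserves types. Your triality construction is more conceptual, and notably the paper itself remarks after the lemma that it knows of \emph{no} group-theoretic description of $\exo$ --- so carrying your argument through would be a real gain, not just a restyling. The skeleton is sound: a diagram automorphism $\sigma$ of $D_4$ preserves $T$ and the absolute length, a suitable conjugation makes it fix $\cox$, and the composite restricts to a lattice automorphism of $\nc(D_4)$. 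In fact $\exo\notin\De$ can then be seen without any type computation: on $T$ every element of $\De$ acts by conjugation by an element of $W(B_4)$ (the reflecting generators invert, but $t\inv=t$ for reflections), and $T$ generates $W(D_4)$, so if $\mathrm{conj}_w\circ\sigma$ agreed with an element of $\De$ on $T$ then $\sigma$ would be inner or the $B$-type graph automorphism, contradicting that the triality transpositions moving $s_1$ represent a different class in $\mathrm{Out}(W(D_4))\cong S_3$.

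Three points need repair before this is a proof. First, your ``cleaner packaging'' via a $3$-cycle of the triality cannot yield an \emph{involution}: $(\mathrm{conj}_w\circ\sigma)^2=\mathrm{conj}_{w\sigma(w)}\circ\sigma^2$, and for a $3$-cycle $\sigma^2$ is again a nontrivial outer class, so the square cannot act trivially on $\nc(D_4)$. You must use a transposition, and specifically one of the two moving $s_1$; the transposition $s_3\leftrightarrow s_4$ is realized by conjugation by $[4]$ and only reproduces $\autn\in\De$. Second, involutivity is not automatic even then: $w\sigma(w)$ centralizes $\cox$, and you must adjust $w$ within the coset $\langle\cox\rangle w_0$ so that the square acts trivially on $\nc(D_4)$ (replacing $w$ by $\cox^jw$ changes $w\sigma(w)$ by $\cox^{2j}$, and conjugation by $\cox$ has order $3$ on $\nc(D_4)$ by \cref{lem:typeD_order_dih_auto_group}, so this is solvable because $2$ is invertible mod $3$ --- but it has to be said). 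Third, type non-preservation cannot be read off the diagram symmetry alone, since the conjugating element $w$ scrambles types; it must be computed on the explicit signed permutation, and it carries half the statement. None of this is fatal, but as written the proposal stops short of exhibiting the object whose existence is asserted, which is precisely what the paper's table accomplishes.
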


\begin{proof}
	In \cref{exa:typeD_bipart_and_autos} we showed that for $n=4$, the generators of $\D$ act as reflections on the hexagon. Together with $\autn$, which is rotation by 180 degrees by \cref{lem:typeD_autn}, they generate the group $\De$. Hence $\De$ is the symmetry group of the hexagon. In particular, every element in $\De$ preserves the type.
	
	The exotic automorphism $\exo$ is explicitly given by the following pairs of reflections that are exchanged by $\exo$:
	\begin{align*}
	&\dka 1\,\;2\dkz \leftrightarrow \dka 1\,\;4\dkz, \dka 2\,\;3\dkz \leftrightarrow \dka 3\,\;4\dkz, \dka 1\,\;-3\dkz \leftrightarrow \dka-2\,\;4\dkz,
	\dka -1\,\;4\dkz \leftrightarrow \dka-3\,\;4\dkz,\\
	&\dka 2\,\;4\dkz \leftrightarrow \dka 2\,\;4\dkz, \dka 2\,\;-3\dkz \leftrightarrow \dka 1\,\;-2\dkz, \dka 1\,\;3\dkz \leftrightarrow \dka 1\,\;3\dkz.
	\end{align*}
	By \cref{lem:lattice_auto_join_interchange} the automorphism $\exo$ of the graded lattice $\nc(D_4)$ is completely determined by the images of rank $1$. That this assignment is indeed an automorphism can be easily checked by looking at \cref{fig:typeD_exotic_auto}, whose columns consist of those pairs of elements of $\nc(D_4)$ that are exchanged by $\exo$. Note that the order of reflections in the first column of \cref{fig:typeD_exotic_auto} corresponds to the order of reflections in the above list. \cref{fig:typeD_exotic_auto} also shows that the types of neither rank 1 nor rank 2 elements are preserved. Also in rank $3$ the \enquote{shapes} are not preserved under $\exo$, as zero blocks are mapped to non-zero blocks.
\end{proof}

\begin{figure}
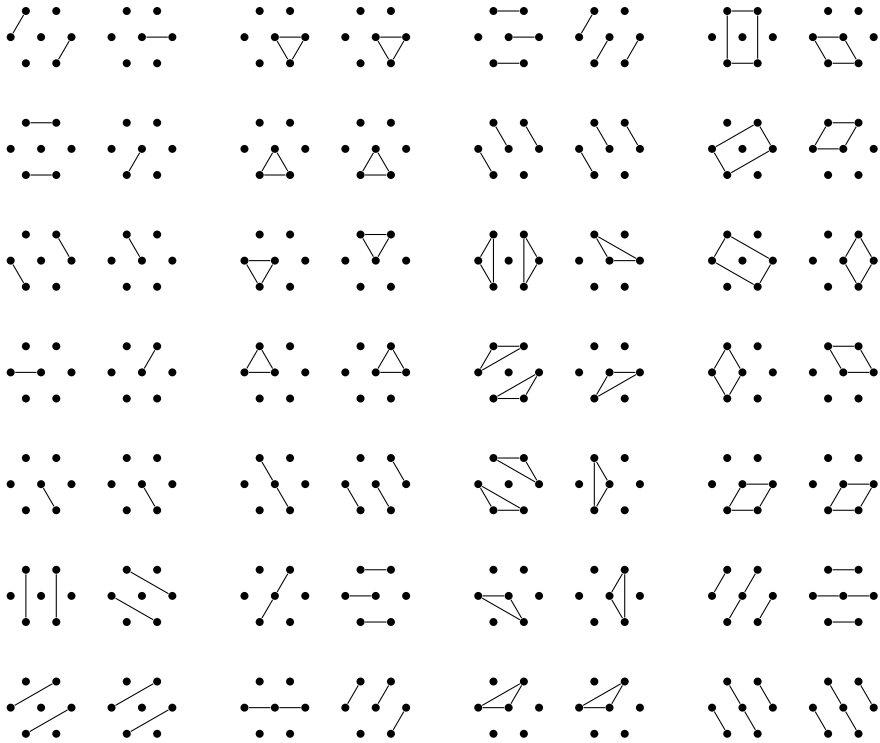
%
	\begin{center}
		\renewcommand{\arraystretch}{3.1}
		\begin{tabular}{ccccccccccc}
			\pDez & \pDev && \pDezv & \pDezv && \pDevuzd & \pDdvuez && \pDZzd & \pDzdmev\\
			
			\pDzd & \pDdv && \pDzdv & \pDzdv && \pDmzvuemd & \pDmzvuemd && \pDZed & \pDmemzmdv\\
			
			\pDemd & \pDmzv && \pDdmev & \pDmzmdv && \pDezmd & \pDmzev && \pDZez & \pDmdezv\\
			
			\pDmev & \pDmdv && \pDmemzv & \pDmdev && \pDezd & \pDedv && \pDdmemzv & \pDmzmdev\\
			
			\pDzv & \pDzv && \pDzmz & \pDzvuemd && \pDemzmd & \pDdmzv && \pDezdv & \pDezdv\\
			
			\pDzmd & \pDemz && \pDdmd & \pDmevuzd && \pDzmev & \pDmdzv && \pDdmduez & \pDemeuzd\\
			
			\pDed & \pDed && \pDeme & \pDmdvuez && \pDmemdv & \pDmemdv && \pDzmzuemd &\pDzmzuemd
			
		\end{tabular}
	\end{center}
	\caption{These pairs of non-crossing $D_4$-partitions are exchanged by the exotic involutive automorphism $\exo$.}%
	\label{fig:typeD_exotic_auto}%
\end{figure}

\begin{oq}
	What is $\aut(\nc(D_4))$? 
\end{oq}

\begin{rem}
	Although the exotic automorphism $\exo$ is not given by a reflection in the hexagon, it seems that is has something to do with the reflection with axis through $-2$ and $2$, call it $\auto$. This becomes in particular clear when considering the fixed points of both $\exo$ and $\auto$. All fixed points of $\exo$ are among the fixed points of $\auto$. Moreover, every fixed point of $\auto$ gets mapped  by $\exo$ to another of fixed point of $\auto$.
	We are not aware of any group-theoretic description of this exotic automorphism.
\end{rem}

\begin{proof}[Proof of \cref{thm:typeD_full_aut}]
	The strategy for the proof in the case $n\neq 4$ is the same as for type $B$ in \cref{thm:typeB_full_aut}. We show that every automorphism in $\auto\in\aut(\nc(D_n))$ can be expressed as a composition of symmetries of the $2(n-1)$-gon. This implies that $\aut(\nc(D_n))$ equals the symmetry group of the $2(n-1)$-gon, which is a group of order $4(n-1)$. Since the subgroup $\De \leq \aut(\nc(D_n))$ also has order $4(n-1)$ by \cref{lem:typeD_order_dih_auto_group}, it follows that $\aut(\nc(D_n))=\De$. We regard elements of $\nc(D_n)$ as non-crossing $D_n$-partitions in $\ncd_n$ and vice versa by using \cref{thm:typeD_nc_iso}.
	
	Let $\auto\in\aut(\nc(D_n))$ be an arbitrary automorphism. We show that, after composing $\auto$ with symmetries of the $2(n-1)$-gon, this automorphism fixes all rank $1$ elements. By \cref{lem:lattice_auto_join_interchange} this shows that $\auto$ is the identity on $\nc(D_n)$. 
	
	Since $\auto$ preserves the rank of reflections by \cref{lem:typeD_nc_auto_preserves_type_rk_1}, we can use the same argumentation as in the proof of \cref{thm:typeB_full_aut} to show that $\auto$, or possibly a composition of $\auto$ with symmetries, fixes all rank $1$ elements of type $2$. 
	Further, $\auto$ fixes all reflections with type different from $n$ by using the same induction argument as for type $B_n$ in \cref{thm:typeB_full_aut}. Hence we can suppose that all reflections of type less than $n$ are fixed by the automorphism $\auto$.
	
	What is left to show is that $\auto$ fixes the reflections of type $n$. For this, consider the two reflections $a=\dka 1\,\;n\dkz$ and $b=\dka 2\,\;n\dkz$ of type $n$. Their join is the rank $2$ element $x=\dka 1\,\;2\,\;n \dkz$ of type $1$. Note that $p=\dka 1\,\;2\dkz$ is also covered by $x$. Since $p$ is fixed by $\auto$ by assumption and $\Typ(\auto(x))=1$ by \cref{lem:typeD_type_preserving_rank_2}, it follows that $\auto(x)=x$ or $\auto(x)=-x$, where $ -x \coloneqq \dka -1\,\;-2\,\;n \dkz$. In general, if $z$ is a paired cycle involving $n$, we denote by $-z$ the cycle that arises from $z$ by changing all signs of entries different from $n$. 
	We can write
	\[
	\auto(x) = \auto(p) \vee \auto(b) = p \vee \auto(b)
	\]
	and it follows that $\auto(b)\in \Set{-a,a,-b,b}$. The same considerations are true for the cycle $y=\dka 2\,\;3\,\;n \dkz$ with $c=\dka 3\,\;n\dkz$ and this implies that $\auto(b) \in \Set{-b,b,-c,c}$. Hence it holds that $\auto(\dka i\,\;n\dkz)\in \Set{\dka- i\,\;n\dkz, \dka i\,\;n\dkz}$ for all $i\in \Set{\pm1, \ldots, \pm (n-1)}$ by circularly applying the same argument to all type $n$ reflections. 
	
	Suppose that $\auto(b)=b$. Then $x=\auto(x)=p\vee \auto(a)$ and this means that $\auto(a)=a$, since $-a$ is not covered by $x$ in $\nc(D_n)$. Applying repeatedly the argument to the circular next rank $2$ block and its covered reflections of type $n$, for instance $y$ with reflections $b$ and $c$, yields that all reflections of type $n$ are fixed. 
	
	Now suppose that $\auto(b)=-b$. Analogous to the former case, we get that $-x=p\vee \auto(a)$ and hence $\auto(a)=-a$. Again by repeatedly applying the procedure to the circularly next block yields that $\auto(\dka i\,\; n\dkz)=\dka -i\,\;n\dkz$ for all $i\in \Set{\pm 1,\ldots, \pm (n-1)}$. The composition of $\auto$ with the rotation by 180 degrees then fixes all type $n$ reflections and acts trivially on all cycles not involving $n$.
	
	If $n=4$, then exists an exotic automorphism $\exo\in\aut(\nc(D_n))$, which is not an element of $\De$ by \cref{lem:typeD_exo_auto}.
\end{proof}

\begin{cor}
	Every automorphism of $\nc(D_n)$ can be realized as a symmetry of the $2(n-1)$-gon of the pictorial representations $\ncd_n$ if and only if $n\neq 4$.
\end{cor}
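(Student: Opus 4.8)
The plan is to package the statement as a consequence of \cref{thm:typeD_full_aut}, \cref{prop:typeD_de_dihedral_group} and \cref{lem:typeD_exo_auto}, using a single new ingredient: the realization homomorphism is injective. Concretely, I would first set up the map $\Phi$ sending a symmetry of the labeled $2(n-1)$-gon-with-midpoint to an automorphism of $\nc(D_n)$. A symmetry of the regular $2(n-1)$-gon fixes its centre, so it fixes the midpoint labelled $n$; moreover, since the antipodal vertex of $i$ is $-i$, every symmetry $\phi$ satisfies $\phi(-i)=-\phi(i)$. Hence $\phi$ carries block pairs to block pairs and zero blocks to zero blocks, it preserves purity (it fixes the midpoint) and non-crossingness (a planar notion invariant under isometries), and it preserves refinement; so it induces, via \cref{thm:typeD_nc_iso}, a lattice automorphism $\Phi(\phi)\in\aut(\nc(D_n))$, and $\Phi$ is clearly a group homomorphism from the dihedral group $\mathrm{Sym}$ of symmetries, which has order $4(n-1)$.

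Next I would show $\Phi$ is injective. If $\Phi(\phi)=\id$, then $\phi$ fixes every non-crossing $D_n$-partition whose pictorial representation is a single edge joining a vertex $i\in\Set{\pm1,\dots,\pm(n-1)}$ to the midpoint, i.e.\ the rank $1$ elements of type $n$; fixing each such edge forces $\phi(i)=i$ for all $i$, so $\phi=\id$. Therefore $\Phi$ embeds $\mathrm{Sym}$ into $\aut(\nc(D_n))$ with image of order $4(n-1)$.

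For $n\neq4$ the corollary is then immediate by a counting argument: by \cref{thm:typeD_full_aut} we have $\aut(\nc(D_n))=\De$, and by \cref{prop:typeD_de_dihedral_group} the group $\De$ has order $2h=4(n-1)$; since $\Phi$ is an injection between two groups of the same finite order it is a bijection onto $\aut(\nc(D_n))$, so every automorphism is realized as a symmetry of the $2(n-1)$-gon. (One may, if desired, identify the reflections realizing $\autl,\autr$ and the half-turn realizing $\autn$ as in \cref{exa:typeD_bipart_and_autos}, but the cardinality argument makes this unnecessary.) For $n=4$ I would argue the converse: every geometric symmetry of the hexagon preserves distances between vertices, hence preserves the "diagonality" of edges, hence the type of rank $1$ and rank $2$ elements; but the exotic automorphism $\exo$ of \cref{lem:typeD_exo_auto} does not preserve types, so $\exo\notin\operatorname{im}\Phi$, exhibiting an automorphism of $\nc(D_4)$ that is not realized as a symmetry. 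Combining the two cases yields the "if and only if".

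The only genuinely new work is the well-definedness and injectivity of $\Phi$, both of which are short; the expected main (minor) obstacle is writing out well-definedness carefully, in particular checking that the three shapes of blocks of a non-crossing $D_n$-partition (non-zero block pairs with and without $n$, and zero blocks) behave correctly under a symmetry — everything substantial has already been done in \cref{thm:typeD_full_aut} and \cref{lem:typeD_exo_auto}.
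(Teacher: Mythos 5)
Your proposal is correct and follows essentially the same route as the paper: the corollary is an immediate consequence of \cref{thm:typeD_full_aut} together with \cref{prop:typeD_de_dihedral_group} and \cref{lem:typeD_exo_auto}, exactly the ingredients you invoke. Your only addition — verifying that the realization map $\Phi$ from polygon symmetries to $\aut(\nc(D_n))$ is a well-defined injective homomorphism and then concluding surjectivity by comparing the order $4(n-1)$ on both sides — is a clean formal substitute for the paper's direct identification of $\autl$, $\autr\circ\autn$ and $\autn$ with specific reflections and the half-turn of the $2(n-1)$-gon, and it is sound (in particular, injectivity does hold in type $D$ because, unlike type $B$, the pictorial representations are not invariant under the $180$-degree rotation).
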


\subsection{Extending the automorphisms}
In \cref{sec:induced_autos} we showed that all automorphisms of the group $\D \leq \aut(\nc)$ extend to lattice automorphisms of $\lam$ whenever $\emb\colon\nc \to \lam$ is an embedding. In type $D$ we get the following.

\begin{lem}
	The automorphism $\autn \in \aut(\nc(D_n))$ extends uniquely to a lattice automorphism of $\lam$ if $\emb\colon\nc(D_n) \to \lam$ is a poset embedding.
\end{lem}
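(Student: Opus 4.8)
The plan is to mimic the proof of \cref{thm:autos_commute}, where the key ingredient was that an automorphism lying in $\D$ acts on reflections by conjugation and hence, via the $W$-action on the root system, is induced by a linear map. The automorphism $\autn$ is given by $w \mapsto [n]w[n]$, which is again conjugation by a group element, namely the reflection $[n] \in W(B_n)$. So the same machinery should apply verbatim, provided we check that $\autn$ really is a lattice automorphism of $\nc(D_n)$ (already done in \cref{lem:typeD_autn}) and that conjugation by $[n]$ preserves the set of reflections $T(D_n)$ and sends positive roots to positive roots up to sign, so that the argument of \cref{lem:lamauto_alpha_t} goes through.

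Concretely, first I would fix a reduced decomposition $r_1\ldots r_n$ of the Coxeter element $\cox=[1\ldots n-1][n]$. As in \cref{lem:ncauto_red_decomp}, I need to verify that $\autn(r_1)\ldots\autn(r_n)$ (or its reverse) is a reduced decomposition of $\autn(\cox)=[n]\cox[n]=\cox$, which holds since $\autn$ is a lattice automorphism of $\nc(D_n)$ fixing $\cox$ and therefore by \cref{lem:construction_chain} maps the associated maximal chain to another maximal chain, hence the associated reduced decomposition to a reduced decomposition. Then, following \cref{lem:defi_lamauto}, the assignment $\alpha_{r_i}\mapsto\alpha_{\autn(r_i)}$ defines a vector space automorphism $\vecauto\colon V\to V$, because both $\{\alpha_{r_i}\}$ and $\{\alpha_{\autn(r_i)}\}$ are bases of $V$ by \cref{lem:carter}. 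Next I would reprove the analogue of \cref{lem:lamauto_alpha_t}: since $\autn(t)=[n]t[n]$ for all $t\in T(D_n)$ and $W$ acts on roots by $w(\alpha_t)=\alpha_{wtw\inv}$, expanding $\alpha_t=\sum a_i\alpha_{r_i}$ and applying $\vecauto$ gives $\vecauto(\alpha_t)=[n](\alpha_t)=\alpha_{[n]t[n]}=\alpha_{\autn(t)}$, using that the geometric representation of $W(D_n)$ extends to $W(B_n)$ so that $[n]$ acts linearly on $V$ — or, in the crystallographic $p$-setting, that $[n]$ acts on $\Vp$ compatibly with the projection. This yields a lattice automorphism $\lamauto\colon\lam\to\lam$, and exactly as in the proof of \cref{thm:autos_commute}, combining \cref{lem:join_and_em_interchange} (or \cref{lem:pemb_join_interchange}), \cref{lem:lattice_auto_join_interchange}, and the identity $\lamauto(\emb(t))=\emb(\autn(t))$ for reflections $t$, one gets $\lamauto\circ\emb=\emb\circ\autn$. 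Uniqueness follows from the identical argument using \cref{lem:lamauto_induces_vecautos}: the only lattice automorphism of $\lam$ extending the identity on $\nc$ is $\id_\lam$.

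The one subtlety I expect to be the main obstacle is justifying that $[n]$ genuinely acts as a linear transformation on the relevant vector space $V$. For the $\R^n$ embedding this is clear, since $[n]\in W(B_n)\subseteq\gl(V)$ and $W(D_n)$-roots are also $W(B_n)$-roots after rescaling, so conjugation by $[n]$ permutes the lines $\langle\alpha_t\rangle$ for $t\in T(D_n)$. In the crystallographic case with $V=\F_p^n$ and a compatible prime $p\neq 2$, I would need to observe that $[n]$ is also represented by a linear map on $\Vp$ — this follows because the crystallographic root system $\tphi$ of type $D_n$ sits inside that of type $B_n$, the reflection $[n]\colon\eps_n\mapsto-\eps_n$ has integer matrix, and since $p\neq 2$ it reduces to an invertible linear map mod $p$; then conjugation by this map sends the lines spanned by type $D_n$ $p$-roots to lines spanned by type $D_n$ $p$-roots. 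Once this is in place the rest is a routine transcription of \cref{sec:induced_autos}, so I would keep it brief and simply point to the proof of \cref{thm:autos_commute} for the steps that are literally identical.
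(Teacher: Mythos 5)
Your proposal follows essentially the same route as the paper: the paper's proof simply checks that the analogues of \cref{lem:ncauto_red_decomp}, \cref{lem:defi_lamauto} and \cref{lem:lamauto_alpha_t} hold for $\autn$ (since it is conjugation by $[n]$) and then declares the proofs of \cref{sec:induced_autos} to go through verbatim, which is exactly your plan. If anything you are more careful than the paper, which does not explicitly address the point that $[n]\in W(B_n)\setminus W(D_n)$ still acts linearly on $V$ (and on $\Vp$ for $p\neq 2$); your discussion of that subtlety is correct and fills a gap the paper leaves implicit.
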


\begin{proof}
	We have to check that the statements of \cref{sec:induced_autos} are also true for the automorphism $\autn$. 
	The analog of \cref{lem:ncauto_red_decomp} is true, since for every reduced decomposition $t_1\ldots t_k$ for $w\in\nc(D_n)$ we have that $\autn(t_1)\ldots \autn(t_k)$ is a reduced decomposition of $\autn(w)$. Consequently, also \cref{lem:defi_lamauto} holds for $\autn$. Using these statements, the proofs of \cref{lem:lamauto_alpha_t} and \cref{thm:autos_commute} work analogously for $\autn$ and hence $\autn$ extends uniquely to a lattice automorphism of $\lam$.
\end{proof}

\begin{cor}
	Every automorphisms of $\aut(\nc(D_n))$ extends uniquely to a lattice automorphism of $\lam$ for $n\neq 4$. If $n=4$, then every automorphisms of $\De\leq\aut(\nc(D_n))$ extends uniquely to a lattice automorphism of $\lam$.
\end{cor}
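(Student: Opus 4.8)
The plan is to reduce everything to results already established for the dihedral group $\De$ of automorphisms, using that extensions to $\lam$ behave well under composition.

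First I would record a general principle: the set $H \subseteq \aut(\nc(D_n))$ of those lattice automorphisms of $\nc(D_n)$ that extend to a lattice automorphism of $\lam$ is a subgroup, and such an extension is always unique. Uniqueness is exactly the content of the second half of the proof of \cref{thm:autos_commute}: if $\rho_1,\rho_2\colon\lam\to\lam$ both satisfy $\rho_i\circ\emb=\emb\circ\auto$, then $\rho_1\circ\rho_2\inv$ restricts to the identity of $\nc(D_n)$ along $\emb$, hence equals $\id_\lam$. Writing $\tilde\auto$ for the unique extension of $\auto\in H$, the identity $(\tilde\auto_1\circ\tilde\auto_2)\circ\emb=\tilde\auto_1\circ\emb\circ\auto_2=\emb\circ\auto_1\circ\auto_2$ shows $\auto_1\circ\auto_2\in H$ with extension $\tilde\auto_1\circ\tilde\auto_2$, and similarly $\auto\inv\in H$ with extension $\tilde\auto\inv$; thus $H$ is a subgroup. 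Here $\emb$ denotes either the embedding of \cref{thm:brady_watt_embedding} into $\lam(\R^n)$ or, since $W(D_n)$ is crystallographic, the embedding of \cref{thm:embedding_Vp_VZ} into $\lam(\F_p^n)$ for a compatible prime $p$, which exists by \cref{lem:comp_primes}; the argument is identical in both cases.

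Next I would show $\De\subseteq H$. By \cref{thm:autos_commute} the dihedral group $\D=\langle\autl,\autr\rangle$ lies in $H$, and by the lemma immediately preceding this corollary the automorphism $\autn$ lies in $H$ (the hypothesis that $\emb\colon\nc(D_n)\to\lam$ is a poset embedding being supplied by the Theorems \ref{thm:brady_watt_embedding} and \ref{thm:embedding_Vp_VZ}). By \cref{prop:typeD_de_dihedral_group} the group $\De$ is generated by $\autl$ and $\autr\circ\autn$; since $\autl,\autr\in\D\subseteq H$ and $\autn\in H$, both generators lie in the subgroup $H$, and therefore $\De\subseteq H$. In particular every element of $\De$ extends uniquely to a lattice automorphism of $\lam$.

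Finally I would invoke \cref{thm:typeD_full_aut}: for $n\neq 4$ we have $\aut(\nc(D_n))=\De$, so the previous paragraph already yields the claim for every automorphism; for $n=4$ the assertion concerns only $\De\leq\aut(\nc(D_4))$, which is again covered. I do not expect a genuine obstacle here, since all steps are formal; the only points requiring minor care are that the extension of a product equals the product of the extensions — the short calculation in the first paragraph — and the verification that the running hypothesis of \cref{thm:autos_commute} and of the preceding lemma, namely that $\emb$ is a poset embedding, indeed holds in type $D$.
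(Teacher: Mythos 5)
Your proposal is correct and follows essentially the same route as the paper: the paper likewise reduces to the generators $\autl$ and $\autr\circ\autn$ of $\De$, citing \cref{thm:autos_commute} for $\D$ and the preceding lemma for $\autn$, and then uses $\aut(\nc(D_n))=\De$ for $n\neq 4$. The only difference is that you make explicit the (correct) closure argument — that the extendable automorphisms form a subgroup and that the extension of a composite is the composite of the extensions — which the paper leaves implicit in the phrase ``it follows by the above lemma.''
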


\begin{proof}
	The group $\De$ is generated by $\autl$ and $\autr \circ \autn$. Since $\autl, \autr\in\D$ extend uniquely to $\lam$ by \cref{thm:autos_commute}, it follows by the above lemma that all automorphisms of $\De$ extend uniquely to $\lam$.
\end{proof}

\begin{oq}
	Does the exotic automorphism $\zeta$ of $\nc(D_4)$ from \cref{lem:typeD_exo_auto} extend to the ambient lattice $\lam$?
\end{oq}

\section{Anti-automorphisms}

The aim of this section is to investigate the anti-automorphisms of the non-crossing partition lattices. Recall that a lattice anti-automorphism of a lattice $L$ is a order-reversing bijection $L \to L$ whose inverse is order-reversing as well, and a skew-automorphism is either an automorphism or an anti-automorphism. Moreover, we construct a dihedral group of skew-automorphisms. This group extends to a group of automorphisms of the ambient lattice $\lam$.

Biane shows that the group of skew-automorphisms of $\ncp_n$ is isomorphic to the symmetry group of the $2n$-gon \cite{biane}. For $n=4$, this can easily be veryfied by looking at \cref{fig:anti_auto_NCP4}, which shows the order complex of $\ncp_4$ in an octagonal version.

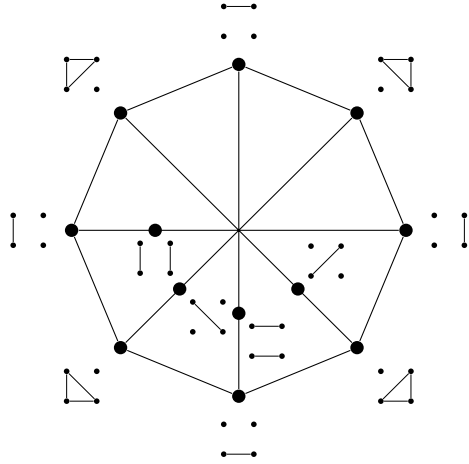
\begin{figure}
	\begin{center}
		\begin{tikzpicture}
		\def\r{2.2}
		\foreach \w in {1,...,8}
		\node (q\w) at (-\w  * 360/8 + 45   : \r) [mpunkt] {};
		\draw (q1) -- (q2) -- (q3) -- (q4) -- (q5) -- (q6) -- (q7) -- (q8)
		-- (q1) (q1) --(q5) (q2) -- (q6) (q3) -- (q7) (q4) -- (q8);
		for-Schleife:
		\foreach \m/\n in {2/6,3/7,4/8,5/1}
		\node (\m\n) at ($(q\m)!0.25!(q\n)$) [mpunkt] {};
		\def\a{0mm} 
		\foreach \m/\n/\p/\d in {    2/6/above right/\scalebox{0.7}{\ped},
			3/7/below right/\scalebox{0.7}{\pevuzd},
			4/8/below right/\scalebox{0.7}{\pzv},
			5/1/below/\scalebox{0.7}{\pezudv}}
		\node[\p] (\m\n) at ($(q\m)!0.25!(q\n)$) {\d};
		\def\b{2mm} 
		\foreach \m/\p/\d in {    1/right/\scalebox{0.7}{\pez},
			2/below right/\scalebox{0.7}{\pezd},
			3/below/\scalebox{0.7}{\pzd},
			4/below left/\scalebox{0.7}{\pzdv},
			5/left/\scalebox{0.7}{\pdv},
			6/above left/\scalebox{0.7}{\pedv},
			7/above/\scalebox{0.7}{\pev},
			8/above right/\scalebox{0.7}{\pezv}}
		\node[\p=\b] at (q\m) {\d};
		\end{tikzpicture}
		\caption{The complex $|\ncp_4|$ in an octagon-shape.}
		\label{fig:anti_auto_NCP4}
	\end{center}
\end{figure}

\subsection{A dihedral group of skew-automorphisms}
We construct a dihedral group of skew-automorphisms of $\nc=\nc(W)$ for all finite Coxeter groups $W$ here.

\begin{defi}
	For $w\in \nc$ we set $\ncaa(w) \coloneqq w\inv\cox$ for the Coxeter element $\cox\in W$.
\end{defi}

The following is proven in Lemma 2.5.4 of \cite{armstr}, but for the sake of completeness, and since $\ncaa$ plays an important role in the sequel, we include a proof here.

\begin{lem}\label{lem:anti-auto}
	The map $\ncaa\colon \nc\to\nc$ defined by $w\mapsto\ncaa(w)= w\inv\cox$ is a lattice anti-automorphism of $\nc$.
\end{lem}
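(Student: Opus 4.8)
The statement to prove is that $\ncaa\colon \nc(W) \to \nc(W)$, $w \mapsto w\inv \cox$, is a lattice anti-automorphism, i.e.\ an order-reversing bijection whose inverse is also order-reversing. My plan is to verify these three properties in turn, leaning heavily on the \cref{subword_prop}, the \cref{prefix}, and the basic relations in \cref{obs:abs_ord}.

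First I would check that $\ncaa$ is well-defined, i.e.\ that $w\inv\cox \in \nc(W)$ whenever $w \in \nc(W)$. This is exactly part (a) of \cref{obs:abs_ord} applied with $v = w$ and the chain $\id \leq w \leq \cox$: it gives $w\inv\cox \leq \cox$, hence $w\inv\cox \in \nc(W)$. Next, $\ncaa$ is a bijection with $\ncaa\inv = \ncaa$ up to a twist: compute $\ncaa(\ncaa(w)) = (w\inv\cox)\inv \cox = \cox\inv w \cox$, which is conjugation of $w$ by $\cox$. Since conjugation by $\cox$ is a poset automorphism of $(W,\leq)$ (absolute length is conjugation-invariant) restricting to an automorphism of $\nc(W,\cox)$ — because $\cox\inv \cox \cox = \cox$ — the map $\ncaa$ composed with itself is a bijection, hence $\ncaa$ itself is a bijection. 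Alternatively one can exhibit the two-sided inverse directly as $w \mapsto \cox w\inv$ and check both compositions give the identity.

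The core is order-reversal. Suppose $v \leq w$ in $\nc(W)$. I want $\ncaa(w) = w\inv\cox \leq v\inv\cox = \ncaa(v)$. By the \cref{prefix} applied to $v \leq w \leq \cox$, there is a reduced decomposition $t_1 \ldots t_n$ of $\cox$ such that $t_1 \ldots t_i$ is a reduced decomposition of $v$ and $t_1 \ldots t_j$ is a reduced decomposition of $w$ for some $i \leq j \leq n$ — here I use that any reduced decomposition of $v$ extends to one of $w$ and that one in turn extends to one of $\cox$. Then $v\inv \cox = t_{i+1} \ldots t_n$ and $w\inv\cox = t_{j+1}\ldots t_n$, and the latter is a suffix of the former, hence a subword of a reduced decomposition of $v\inv\cox$; both words have the correct length ($n-i$ and $n-j$ respectively, which are the absolute lengths by \cref{lem:carter} and \cref{lem:basis_of_moved_space} since the relevant roots stay linearly independent). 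By the \cref{subword_prop}, $w\inv\cox \leq v\inv\cox$, as desired. For the inverse being order-reversing, one applies the same argument to $\ncaa\inv$ (which is again of the same shape $w \mapsto \cox w\inv$, or use that $\ncaa \circ \ncaa$ is an order-preserving automorphism so $\ncaa\inv = (\ncaa\circ\ncaa)\inv \circ \ncaa$ is a composition of an automorphism and an order-reversing map, hence order-reversing).

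The main obstacle is the bookkeeping in the order-reversal step: one must be careful that ``reduced decomposition of $v$ extends to reduced decomposition of $w$ extends to reduced decomposition of $\cox$'' can be arranged simultaneously — this follows by applying the \cref{prefix} first to $w \leq \cox$ to get a reduced decomposition $t_1\ldots t_n$ of $\cox$ with prefix a reduced decomposition of $w$, then using that within $\nc(W)$ with $v \leq w$ the \cref{prefix} lets us further arrange that a prefix of the decomposition of $w$ is a reduced decomposition of $v$; a small shifting argument via the \cref{shifting_prop} may be needed to reconcile the two. Once the common refined reduced word is in hand, everything reduces to the elementary fact that suffixes of reduced words give elements comparable in the opposite direction.
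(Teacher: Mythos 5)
Your proof is correct, and its overall skeleton (well-definedness, explicit inverse $w \mapsto \cox w\inv$, order-reversal in both directions) matches the paper's. The one place where you genuinely diverge is the order-reversal step: the paper disposes of it in one line by citing \cref{obs:abs_ord} — part (d) applied to $v \leq w \leq \cox$ gives $w\inv\cox \leq v\inv\cox$ immediately, and the converse follows by applying the same observation to $\ncaa(w)\leq\ncaa(v)$ together with conjugation-invariance of $\ell$ — whereas you re-derive this inequality from scratch by chaining the \cref{prefix} to produce a single reduced decomposition $t_1\ldots t_n$ of $\cox$ with nested prefixes for $v$ and $w$, and then invoking the \cref{subword_prop} on the suffixes. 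Your argument is sound (the chaining of prefixes is legitimate, as the paper itself notes after the \cref{prefix} that any reduced decomposition of the smaller element extends to one of the larger), and it has the virtue of making the mechanism completely explicit; the cost is that you are essentially reproving \cref{obs:abs_ord}(d), which the paper has already packaged for exactly this use. Your handling of the inverse also differs mildly: the paper proves the biconditional $v\leq w \Leftrightarrow \ncaa(w)\leq\ncaa(v)$ directly, while you factor $\ncaa\inv$ as the conjugation automorphism composed with $\ncaa$ — both are fine, and your remark that an order-reversing bijection need not automatically have order-reversing inverse is a point worth making explicit.
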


\begin{proof}
	We show that $\ncaa$ maps $\nc$ bijectively to itself and that it reverses the order.
	Let $w\in \nc$ be arbitrary. Then $\ncaa(w)=w\inv\cox \leq \cox$ by \cref{obs:abs_ord}, which implies that $\ncaa$ is well-defined. We define a map $\ncaa\inv\colon \nc\to\nc$ by $w\mapsto \cox w\inv$. This map is well-defined since $w\leq \cox$ implies $\cox w\inv \leq \cox$ by \cref{obs:abs_ord} and hence $\cox w\inv \in \nc$. The maps $\ncaa$ and $\ncaa\inv$ are inverses
	and hence $\ncaa$ is a bijection. 
	For the last step, let $v,w\in \nc$. We show that $v\leq w$ if and only if $\ncaa(w)\leq \ncaa(v)$.  
	First suppose that $v\leq w$. Then $w\inv\cox \leq v\inv \cox$ holds by \cref{obs:abs_ord}, which means $\ncaa(w)\leq \ncaa(v)$. 
	For the other direction let $v$ and $w$ be such that $\ncaa(w)\leq \ncaa(v)$. Then, again by \cref{obs:abs_ord}, we have that $\ncaa(v)\inv\cox \leq \ncaa(w)\inv\cox$.
	Inserting the definition of $\ncaa$ yields that $\cox\inv v \cox\leq\cox\inv w \cox$.
	By definition of the absolute order, this means that
	\[
	\ell(\cox\inv w \cox) = \ell(\cox\inv v \cox) + \ell((\cox\inv v\cox)\inv \cox\inv w \cox) = \ell(\cox\inv v \cox) + \ell(\cox\inv v\inv  w \cox).
	\] 
	But the absolute length is invariant under conjugation so we get $\ell(w)=\ell(v)+\ell(v\inv w)$, which means that $v \leq w$.
\end{proof}

Recall that the dihedral group $\D = \langle \autl, \autr\rangle \leq \aut(\nc)$ has order $2d$ by \cref{lem:dihedral_auto_group}. 

\begin{lem}\label{lem:skew-autos_nc}
	The maps $\ncaa$ and $\autl$ generate a group of skew-automorphisms of $\nc$ that is isomorphic to a dihedral group of order $4d$.
\end{lem}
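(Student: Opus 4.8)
The plan is to show that $\ncaa$ and $\autl$ satisfy the defining relations of a dihedral group of order $4d$, namely that both are involutions and that their composition has order $2d$. That $\autl^2=\id$ is \cref{lem:autl_autr_are_autos}, and $\ncaa$ is an order-reversing bijection by \cref{lem:anti-auto}. The first computation to carry out is $\ncaa^2$: for $w\in\nc$ we have $\ncaa(\ncaa(w))=\ncaa(w\inv\cox)=(w\inv\cox)\inv\cox=\cox\inv w \cox$, so $\ncaa^2$ is conjugation by $\cox$, which is precisely the automorphism $\autl\circ\autr$ of order $d$. Thus $\ncaa$ itself is \emph{not} an involution in general, but it has order $2d$. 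This already tells us the cyclic subgroup $\langle\ncaa\rangle$ has order $2d$ and consists alternately of automorphisms (even powers, the powers of $\autl\circ\autr$) and anti-automorphisms (odd powers).

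Next I would analyse the composition $\autl\circ\ncaa$. Inserting the definitions, $(\autl\circ\ncaa)(w)=\autl(w\inv\cox)=\l(w\inv\cox)\inv\l=\l\cox\inv w\l$. Using $\cox=\l\r$ we get $\l\cox\inv=\l(\l\r)\inv=\l\r\inv\l\inv=\r\inv\l\inv\cdot\text{(check)}$; more carefully, $\l\cox\inv = \l\,\r\inv\l\inv$, and since $\l^2=\id$ this is $\l\r\inv\l$. Conjugation being an automorphism of the absolute order, $\autl\circ\ncaa$ is conjugation by $\l\cox\inv$ (up to the involution), hence an automorphism of $\nc$. To pin down its order, I would show $(\autl\circ\ncaa)^2=\id$: compute $(\autl\circ\ncaa)^2(w)=\autl(\ncaa(\autl(\ncaa(w))))$ and simplify using $\autl^2=\id$ together with $\ncaa^2=\text{conj}_\cox$. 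The cleanest route is to note $\autl\circ\ncaa\circ\autl\circ\ncaa = \autl\circ(\ncaa\circ\autl\circ\ncaa\inv)\circ\ncaa^2$ is hard to read, so instead I would directly verify: $(\autl\circ\ncaa)(w)=\l\cox\inv w\l$, applying again gives $\l\cox\inv(\l\cox\inv w\l)\l=\l\cox\inv\l\cox\inv w\l\l=(\l\cox\inv)^2 w$; and $(\l\cox\inv)^2=\l\cox\inv\l\cox\inv=\l\l\cox\cox\inv=\id$ using $\cox\inv\l=\l\cox$ (equivalently $\l\cox\inv = \cox\l$ since $\l\cox\l=\r\inv=\r$ forces... here I should use the relation from \cref{lem:further_bipartitions}: $\cox^k\l = \l\cox^{-k}$, with $k=-1$ giving $\cox\inv\l=\l\cox$). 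So $(\autl\circ\ncaa)^2=\id$.

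Having $\ncaa$ of order $2d$ and $\autl\circ\ncaa$ an involution, the group $G=\langle\ncaa,\autl\rangle=\langle\ncaa,\autl\circ\ncaa\rangle$ is generated by an element of order $2d$ and an involution that inverts it — because $(\autl\circ\ncaa)\circ\ncaa\circ(\autl\circ\ncaa)\inv = \autl\circ\ncaa\circ\ncaa\circ\ncaa\inv\circ\autl\inv = \autl\circ\ncaa\circ\autl$, and I would check this equals $\ncaa\inv$ using the conjugation identities above. That exhibits $G$ as a quotient of the dihedral group $D_{2d}$ of order $4d$. Finally I would argue $G$ has exactly order $4d$, not less: the even powers of $\ncaa$ already give the subgroup $\D\cong\langle\autl\circ\autr\rangle\rtimes\langle\autl\rangle$ of order $2d$ (which is a genuine subgroup of $\aut(\nc)$ of that order by \cref{lem:dihedral_auto_group}), and $\ncaa$ being order-reversing is not in $\D$, so $[G:\D]=2$, giving $|G|=4d$. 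The main obstacle is bookkeeping with the bipartition identities $\cox^k\l=\l\cox^{-k}$ to make every conjugation-order computation clean; none of it is deep, but sign/order errors are easy, so I would lean on \cref{lem:further_bipartitions} systematically rather than manipulating $\l,\r,\cox$ ad hoc.
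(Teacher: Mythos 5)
Your proof follows essentially the same route as the paper's: compute $\ncaa^2(w)=\cox\inv w\cox$ to see that $\ncaa$ has order $2d$, and verify $(\autl\circ\ncaa)^2=\id$; your additional observation that $\ncaa$, being order-reversing, does not lie in $\D$, so that the group has order exactly $4d$ rather than a proper divisor, is a point the paper leaves implicit. Two small slips, neither of which damages the argument: $\ncaa^2$ is conjugation by $\cox\inv$, i.e.\ $(\autl\circ\autr)\inv$ rather than $\autl\circ\autr$ itself (same order $d$, so the conclusion stands); and $\autl\circ\ncaa$ is an \emph{anti}-automorphism, not an automorphism, since it composes an order-preserving with an order-reversing map (the paper later names it $\autd$). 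Also $\l\cox\l=\cox\inv$, not $\r$, but you abandon that aside in favour of the correct identity $\cox\inv\l=\l\cox$, so the computation of $(\l\cox\inv)^2=\id$ goes through.
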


\begin{proof}
	We show that $\ncaa$ has order $2d$ and that $(\autl\circ\ncaa)^2=\id$. Let $w\in W$ be arbitrary. Then $\ncaa(\ncaa(w))=\ncaa(w\inv \cox)=(w\inv\cox)\inv\cox=\cox\inv w \cox$ and hence $\ncaa^2$ is conjugation with $\cox\inv$, which has order $d$ by \cref{lem:dihedral_auto_group}. Therefore the order of $\ncaa$ is $2d$. Moreover, it holds that
	\begin{align*}
	\autl(\ncaa(\autl(\ncaa(w))))
	&=\autl(\ncaa(\autl(w\inv\cox)))
	=\autl(\ncaa(\l\cox\inv w\l))
	=\autl(\l w\inv \cox \l \cox)\\
	&=\autl(\l w\inv \l\r\cdot \l \cdot\l\r)
	=\autl(\l w\inv \l)
	=\l^2 w \l^2
	=w
	\end{align*}
	and hence $(\autl\circ\ncaa)^2=\id$.
\end{proof}

\begin{nota}
	We denote by $\DD$ the dihedral group of skew-automorphisms of $\nc$ generated by $\autl$ and $\ncaa$. By $\autd$ we denote the anti-automorphism $\autl\circ\ncaa$. 
\end{nota}

\begin{cor}\label{cor:nc_autos_extend}
	The group $\DD$ is generated by the two elements $\autl$ and $\autd$.
\end{cor}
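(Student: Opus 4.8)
The statement to prove is \cref{cor:nc_autos_extend}: the group $\DD$ is generated by $\autl$ and $\autd$. Recall $\DD$ is defined as $\langle \autl, \ncaa\rangle$ and $\autd \coloneqq \autl\circ\ncaa$. The plan is to show that $\langle \autl,\autd\rangle = \langle \autl,\ncaa\rangle$ by exhibiting each generating set inside the subgroup generated by the other. This is a short purely formal argument; no structural facts about $\nc$ are needed beyond what is already recorded.

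\textbf{Key steps.} First I would observe that $\autd = \autl\circ\ncaa$ is by definition an element of $\langle \autl,\ncaa\rangle$, hence $\langle \autl,\autd\rangle \subseteq \langle\autl,\ncaa\rangle$. For the reverse inclusion, I recover $\ncaa$ from $\autl$ and $\autd$: since $\autl$ is an involution by \cref{lem:autl_autr_are_autos} (using that $W$ has rank at least $2$; in rank $1$ the group $\nc$ is a $2$-chain and the statement is trivial or vacuous, which I would dispatch in a sentence), we have $\autl\circ\autl = \id$, so composing $\autd$ on the left with $\autl$ gives
\[
\autl\circ\autd = \autl\circ\autl\circ\ncaa = \ncaa.
\]
Thus $\ncaa \in \langle\autl,\autd\rangle$, and therefore $\langle\autl,\ncaa\rangle \subseteq \langle\autl,\autd\rangle$. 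Combining the two inclusions yields $\DD = \langle\autl,\ncaa\rangle = \langle\autl,\autd\rangle$, which is the claim.

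\textbf{Main obstacle.} There is essentially no obstacle here — the only thing to be careful about is the rank hypothesis: $\autl$ being an involution (rather than the identity) uses $\rk(W)\geq 2$ per \cref{lem:autl_autr_are_autos}, so if one wants the statement for all finite Coxeter groups one should either assume $\rk(W)\geq 2$ (consistent with the rest of \cref{chap:autos}, where $\D$ and $\DD$ are only interesting in that case) or note that for $\rk(W)=1$ both sides of the asserted equality coincide with the trivial (or order-$2$) group anyway. I would include one line addressing this. Everything else is a two-line computation.
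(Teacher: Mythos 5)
Your proof is correct and follows essentially the same route as the paper: both arguments hinge on the single identity $\autl\circ\autd=\autl\circ\autl\circ\ncaa=\ncaa$, which recovers $\ncaa$ inside $\langle\autl,\autd\rangle$ and gives the two inclusions. Your aside about the rank hypothesis is harmless but unnecessary — the generation argument only needs $\autl\circ\autl=\id$, which holds from the definition $\autl(w)=\l w\inv\l$ regardless of rank; the hypothesis $\rk(W)\geq 2$ in \cref{lem:autl_autr_are_autos} is only used to conclude that $\autl$ has order exactly two rather than being the identity.
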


\begin{proof}
	We already know that $\DD$ is a dihedral group of order $4d$ and that $\autl$ has order $2$. Note that 
	$\autd^2=\autl\circ\ncaa\circ\autl\circ\ncaa=\id$ by the proof of  \cref{lem:skew-autos_nc} and that $\autl \circ \autd = \ncaa$. Since $\ncaa$ and $\autl$ generate $\DD$, also $\autl$ and $\autd$ generate $\DD$.
\end{proof}

Recall that $\ncaa^2(w)=\ncaa(w\inv\cox)=(w\inv\cox)\inv\cox=\cox\inv w\cox$ for $w\in\nc$, that is $\ncaa^2$ is conjugation with $\cox\inv$ and hence an automorphism of $\nc$ in $\D$ of order $h$.

\begin{lem}
	The dihedral group of automorphisms $\D$ is a subgroup of index $2$ in $\DD$.
\end{lem}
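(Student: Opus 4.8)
The plan is to show that $\D$ is a subgroup of $\DD$ of index exactly $2$, which amounts to computing $|\DD|/|\D|$ and verifying that $\D$ is indeed a subgroup. First I would recall the cardinalities that are already established: by \cref{lem:dihedral_auto_group}, the group $\D = \langle \autl, \autr \rangle$ is dihedral of order $2d$, where $d$ is the order of $\autl \circ \autr$ (conjugation by $\cox$), and $d$ divides $h$. By \cref{lem:skew-autos_nc}, the group $\DD = \langle \autl, \ncaa \rangle$ is dihedral of order $4d$. So the index $|\DD : \D|$ will be $2$ as soon as I know that $\D \leq \DD$.

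The containment $\D \subseteq \DD$ is the substantive point, though it is short. It suffices to show that both generators $\autl$ and $\autr$ of $\D$ lie in $\DD$. Clearly $\autl \in \DD$ by definition of $\DD$. For $\autr$, I would use the remark preceding this lemma: $\ncaa^2(w) = \cox\inv w \cox$, so $\ncaa^2 = \autr \circ \autl$ (conjugation by $\cox\inv$ is the inverse of conjugation by $\cox = \l\r$, which is $\autl \circ \autr$; and $(\autl\circ\autr)\inv = \autr\inv\circ\autl\inv = \autr\circ\autl$ since both have order $2$). Hence $\autr = \ncaa^2 \circ \autl\inv = \ncaa^2 \circ \autl \in \DD$. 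Therefore $\D = \langle \autl, \autr\rangle \subseteq \DD$, and since $\D$ is a subgroup of order $2d$ sitting inside the group $\DD$ of order $4d$, Lagrange's theorem gives $|\DD : \D| = 2$.

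Here is the proof:

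\begin{proof}
	By \cref{lem:skew-autos_nc} the group $\DD$ is dihedral of order $4d$, and by \cref{lem:dihedral_auto_group} the group $\D = \langle \autl, \autr \rangle$ is dihedral of order $2d$. It remains to check that $\D$ is a subgroup of $\DD$. The generator $\autl$ lies in $\DD$ by definition. For the generator $\autr$, recall that $\ncaa^2(w) = \cox\inv w \cox$ for all $w\in \nc$, so $\ncaa^2$ is conjugation by $\cox\inv$. Since $\autl \circ \autr$ is conjugation by $\cox = \l\r$ and $\autl$, $\autr$ have order $2$, we have $\ncaa^2 = (\autl\circ\autr)\inv = \autr\circ\autl$, whence $\autr = \ncaa^2 \circ \autl \in \DD$. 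Therefore $\D = \langle \autl, \autr\rangle \subseteq \DD$, and as $\#\D = 2d$ divides $\#\DD = 4d$, the index of $\D$ in $\DD$ equals $2$.
\end{proof}

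I do not expect any genuine obstacle here; the only thing to be careful about is the bookkeeping with inverses of conjugation maps, namely that $(\autl\circ\autr)\inv$ equals $\autr\circ\autl$ rather than $\autl\circ\autr$, which holds precisely because $\autl^2 = \autr^2 = \id$ (rank at least $2$ is implicit, but this lemma only makes sense in that context since $\D$ would otherwise be trivial). Everything else is a direct invocation of the two preceding structural lemmas together with Lagrange's theorem.
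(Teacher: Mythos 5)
Your proof is correct and follows essentially the same route as the paper: both establish $\autr = \ncaa^2\circ\autl \in \DD$ and then conclude by comparing orders. The only cosmetic difference is that the paper verifies this identity by a direct elementwise computation $\ncaa^2(\autl(w)) = \r w\inv \r$, whereas you derive it from the observation that $\ncaa^2$ and $\autl\circ\autr$ are conjugation by $\cox\inv$ and $\cox$ respectively.
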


\begin{proof}
	Since $\D$ is generated by $\autl$ and $\autr$, we show that $\autr=\ncaa^2\circ \autl \in \DD$. Let $w\in W$ be arbitrary. Then 
	$\ncaa^2(\autl(w))
	=\ncaa^2(\l w\inv \l)
	=\cox\inv \l w\inv \l \cox
	=\r\l(\l w\inv \l)\l\r
	=\r w\inv\r
	=\autr(w)$. Hence $\D \leq \DD$ is a subgroup of index $\nicefrac{\#\DD}{\#\D}=\nicefrac{4d}{2d}=2$ of $\DD$. 
\end{proof}

\begin{lem}
	Every anti-automorphism $\antiauto\colon \nc\to\nc$ in $\DD$ is of one of the following forms:
	\begin{enumerate}
		\item $\antiauto\colon w \mapsto \cox^{-k} w\inv \cox^{k+1}$ for $0\leq k < d$,
		\item $\antiauto\colon w \mapsto \cox^{k+1}\l w \l\cox^k$ for $0\leq k < d$.
	\end{enumerate}
\end{lem}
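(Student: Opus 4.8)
The plan is to mimic the proof of the analogous classification of automorphisms in $\D$, which was already carried out earlier in the excerpt. Recall that $\DD$ is a dihedral group of order $4d$ generated by $\autl$ and $\ncaa$, with $\autl^2 = \id$, $\ord(\ncaa) = 2d$, and $(\autl\circ\ncaa)^2 = \id$. In a dihedral group $\langle r, s \mid r^{2d}, s^2, (sr)^2\rangle$ with rotation $r = \ncaa^2$ (of order $d$, since $\ncaa^2$ is conjugation with $\cox\inv$, which has order $d$) and reflection $\ncaa$, every element is either a power of the rotation or a rotation times a reflection. The two families in the statement should correspond exactly to these two cases: rotating skew-automorphisms (which turn out to be anti-automorphisms, since any odd power of the order-$2d$ element $\ncaa$ reverses the order) and "reflecting" anti-automorphisms obtained by composing with $\autl$.

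First I would compute explicit formulas for the powers of $\ncaa$. A direct induction gives $\ncaa^{2i}(w) = \cox^{-i} w \cox^{i}$ and $\ncaa^{2i+1}(w) = \cox^{-i} w\inv \cox^{i+1}$; the second follows from the first by applying $\ncaa$ once more, using $\ncaa(v) = v\inv\cox$. Since $\ncaa$ has order $2d$, the element $\ncaa^{2i+1}$ depends only on $i \bmod d$, so writing $k$ for $i \bmod d$ yields exactly form \emph{a)}: $w \mapsto \cox^{-k} w\inv \cox^{k+1}$ for $0 \leq k < d$. The even powers $\ncaa^{2i}$ are precisely the rotating automorphisms already described in the excerpt (conjugation by $\cox^k$), so they are automorphisms, not anti-automorphisms, and need not appear in the list.

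Next I would handle the "reflecting" anti-automorphisms, i.e. the elements of $\DD \setminus \D$ that are not powers of $\ncaa$. Every anti-automorphism in $\DD$ is of the form $\ncaa^{2i+1}$ or $\autl \circ \ncaa^{2i}$ (the remaining coset representatives, since the subgroup $\D$ has index $2$ and the automorphisms in $\DD$ are the elements of $\D$). For the second family, I compute $\autl(\ncaa^{2i}(w)) = \autl(\cox^{-i} w \cox^{i}) = \l \cox^{i} w\inv \cox^{-i} \l$; using $\l\cox^i = \cox^{-i}\l$ (which follows from $\cox = \l\r$, $\l^2 = \r^2 = \id$, exactly as in the proof of \cref{lem:further_bipartitions}), this rewrites as $\cox^{i}\l\cdot w\inv\cdot\l\cox^{-i}$ — but this is order-\emph{reversing} on elements, wait: I must be careful. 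The map $w\mapsto \l\cox^i w\inv\cox^{-i}\l$ sends $\leq$ to $\geq$, so it is an anti-automorphism; after re-indexing with $k = i \bmod d$ and absorbing one factor of $\cox$, I should arrive at form \emph{b)}: $w \mapsto \cox^{k+1}\l w \l \cox^{k}$. I would verify the bookkeeping by checking that $\autd = \autl\circ\ncaa$ corresponds to one specific value of $k$ and that the $d$ distinct values of $k$ give the $d$ distinct reflecting anti-automorphisms, using that $\cox^0, \ldots, \cox^{d-1}$ are distinct in the relevant quotient (as argued for the bipartitions).

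The main obstacle I anticipate is purely the index-chasing: keeping track of which power of $\cox$ appears, whether the exponent is $k$ or $k+1$ or $-k$, and confirming that the reduction modulo $d$ (not modulo $2d$ or $h$) is the correct one, given that $\ncaa^2$ has order $d$ while $\ncaa$ has order $2d$. There is a genuine subtlety in that $\ncaa^{2i+1}$ as a \emph{function} has period $d$ in $i$ even though $\ncaa$ itself has period $2d$ as a group element — this is because $\ncaa^{2d} = \id$ forces $\ncaa^{2i+1+2d} = \ncaa^{2i+1}$, but already $\ncaa^{2i+1+2\cdot\frac{?}{}}$... I would resolve this cleanly by working with the two conjugation-type formulas above and noting $\cox^{-(k+d)} w\inv \cox^{(k+d)+1} = \cox^{-k}(\cox^{-d} w\inv \cox^{d})\cox^{k+1} = \cox^{-k} w\inv \cox^{k+1}$ precisely when $\cox^d$ is central modulo the kernel of the action, which holds since $\ncaa^{2d} = \id$ in $\aut(\nc)$. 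Writing this out carefully is the one step where a sign or exponent error is easy to make, but no deep idea is needed beyond the dihedral group structure already established in \cref{lem:skew-autos_nc} and \cref{cor:nc_autos_extend}.
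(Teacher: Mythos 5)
Your treatment of the first family is fine and matches the paper's argument: $\ncaa^{2k+1}(w)=\cox^{-k}w\inv\cox^{k+1}$ gives form \emph{a)}, and the even powers of $\ncaa$ are the rotating automorphisms, which correctly drop out of the list. The error is in your identification of the second family. You claim the remaining anti-automorphisms are the maps $\autl\circ\ncaa^{2i}$, justified by asserting that $w\mapsto \l\cox^{i}w\inv\cox^{-i}\l$ sends $\leq$ to $\geq$. That assertion is false: in the absolute order, $v\leq w$ if and only if $v\inv\leq w\inv$ (reverse the reduced decompositions in the Subword Property, or note that $\ell(v\inv w)=\ell(w\inv v)=\ell(vw\inv)$ by inversion- and conjugation-invariance of $\ell$), and this is precisely why $\autl$ is an \emph{automorphism} in \cref{lem:autl_autr_are_autos}. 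Hence $\autl\circ\ncaa^{2i}$ is order-preserving; rewriting $\l\cox^{i}=\cox^{-i}\l$ turns it into $w\mapsto\cox^{-i}\l w\inv\l\cox^{i}$, which is one of the reflecting \emph{automorphisms} of $\D$ already classified earlier, not an anti-automorphism. A quick sanity check exposes the problem: your list contains $\autl\circ\ncaa^{0}=\autl$ itself, which is an automorphism by the paper's own lemma. Moreover your computed map contains $w\inv$, while form \emph{b)} contains $w$; one is order-preserving and the other order-reversing, so no amount of re-indexing or absorbing powers of $\cox$ will convert one into the other.

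The correct second family is $\autl\circ\ncaa^{2k+1}$: since $\autl$ preserves the order and $\ncaa^{j}$ reverses it exactly when $j$ is odd, the anti-automorphisms of $\DD$ are the elements $\autl^{\varepsilon}\circ\ncaa^{j}$ with $j$ odd, for $\varepsilon\in\{0,1\}$. This is what the paper does: it writes every element of $\DD$ as $\autl^{\varepsilon}\circ\ncaa^{j}$, observes that the parity of $j$ decides automorphism versus anti-automorphism, and then computes $\autl(\ncaa^{2k+1}(w))=\autl(\cox^{-k}w\inv\cox^{k+1})=\l\cox^{-k-1}w\cox^{k}\l=\cox^{k+1}\l w\cox^{k}\l$, which is form \emph{b)}. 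With the second family corrected in this way, the rest of your argument (the explicit formulas for the powers of $\ncaa$, the dihedral bookkeeping, and the periodicity in $k$ modulo $d$) goes through and coincides with the paper's proof.
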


\begin{proof}
	From the group presentation
	\[
	\DD=\left\langle \autl,\ncaa \mid \autl^2, \ncaa^{2d}, (\autl\circ \ncaa)^2 \right\rangle
	\]
	of $\DD$ as dihedral group we know that every skew-automorphism $\antiauto\in \DD$ is of the form $\autl^i\circ\ncaa^j$ for some $i\in\Set{0,1}$ and $0\leq j <d$. The number $j$ determines if $\antiauto$ is an automorphism or an anti-automorphism. If $j$ is odd, then $\antiauto$ is order-reversing and hence an anti-automorphism. If $j$ is even, then $\antiauto$ is order-preserving and hence an automorphism. This means that for an anti-automorphism we either have $\antiauto(w)=\ncaa^{2k+1}(w)=\cox^{-k}w\inv \cox^{k+1}$ or $\antiauto(w)=\autl\circ\ncaa^{2k+1}(w)=\autl(\cox^{-k}w\inv \cox^{k+1})=\l\cox^{-k-1}w \cox^k\l=\cox^{k+1}\l w \cox^k\l$ for $w\in\nc$ and $0\leq k <d$. 
\end{proof}

\begin{rem}
	The anti-automorphisms of the form $\ncaa^k$, which are exactly the ones of item \emph{a)} in the previous lemma, appear in \cite{armstr} under the name \emph{Kreweras-type} anti-automorphisms of $\nc$.
\end{rem}

\begin{oq}
	What is the group of skew-automorphisms of $\nc(W)$? Is it isomorphic to the dihedral group $\DD$?
\end{oq}

\subsection{Anti-automorphisms of the lattice of linear subspaces}

In order to show that the skew-automorphisms of $\DD$ extend to lattice skew-automorphisms of $\lam$ in the classical types, we have to study the anti-automorphisms of the lattice of linear subspaces first. For this we need some theory of bilinear forms and follow \cite{lorenz}.

Let $V$ be a finite dimensional vector space defined over the field $\F$ and $\bil\colon V\to V$ a bilinear form. The two maps
\begin{align*}
&\bo\colon V\to V\st,\quad x\mapsto (\bo(x)\colon v\mapsto \bil(x,v)),\\
&\bt\colon V\to V\st, \quad x\mapsto (\bt(x)\colon v\mapsto \bil(v,x))
\end{align*}
are linear maps from $V$ to its dual space $V\st$, and $\bo$ is an isomorphism if and only if $\bt$ is one. In this case, the bilinear form $\bil$ is called \emph{non-degenerate}.

For a subspace $U\subseteq V$ let $r_U\colon V\st \to U\st$ be the map on the dual spaces induced from the restriction to $U$. We define the maps $\bou\coloneqq r_U\circ \bo$ and $\btu\coloneqq r_U\circ \bt$, which are linear maps $V \to U\st$.

The \emph{right complement} of $U$ in $V$ with respect to $\bil$ is defined as
\[
U\com \coloneqq \ker(\btu)=\Set{v\in V\str \bil(u,v)=0 \text{ for all }u\in U }
\]
and the \emph{left complement} of $U$ in $V$ with respect to $\bil$ as
\[
\com U\coloneqq \ker(\bou) =\Set{v\in V\str \bil(v,u)=0 \text{ for all }u\in U }.
\]
Both $U\com$ and $\com U$ are subspaces of $V$ with dimension $\dim(V)-\dim(U)$ \cite[Thm. VI.F11]{lorenz}.
Note that in general we have $(U\com)\com \neq U$, but $(\com U)\com = {\com(U\com)}= U$ holds true \cite[Thm. VI.F11]{lorenz}.

We are mainly interested in the right complement of a subspace and therefore use the term \enquote{complement} as synonym for \enquote{right complement}.

\begin{prop}
	For any non-degenerate bilinear form $\bil$ on $V$ the map 
	\[
	\aab\colon \lam\to\lam, \quad U\mapsto U\com,
	\]
	which maps a subspace to its complement, is a lattice anti-automorphism of $\lam$.
\end{prop}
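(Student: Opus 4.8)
The statement to prove is that $\aab\colon U \mapsto U\com$ is a lattice anti-automorphism of $\lam$, so I must check three things: (i) $\aab$ is a bijection, (ii) $\aab$ is order-reversing, and (iii) $\aab\inv$ is order-reversing. The crucial structural facts already quoted from \cite{lorenz} are that $\dim(U\com) = \dim(V) - \dim(U)$ and that $(\com U)\com = {\com(U\com)} = U$ for all subspaces $U \subseteq V$. The plan is to exploit these, together with the fact that, for a \emph{non-degenerate} form, the left- and right-complement operations are two-sided inverses of each other on $\lam$.

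First I would establish the bijectivity. Define the auxiliary map $\aab'\colon \lam \to \lam$ sending $U$ to its left complement $\com U$. I would verify that $\aab' \circ \aab = \id_\lam$ and $\aab \circ \aab' = \id_\lam$: indeed $\aab'(\aab(U)) = {\com(U\com)} = U$ and $\aab(\aab'(U)) = (\com U)\com = U$ by the cited theorem \cite[Thm. VI.F11]{lorenz}. Hence $\aab$ is a bijection with inverse $\aab' = \aab\inv$. (A quick sanity check on dimensions: $\dim\aab(U) = \dim V - \dim U$, which is consistent with $\aab$ being an involution-like bijection reversing the rank function.)

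Next, order-reversal. Suppose $U' \subseteq U$ in $\lam$, i.e. $U' \leq U$. I want $U\com \subseteq (U')\com$. This is immediate from the defining description
\[
U\com = \Set{v \in V \str \bil(u,v) = 0 \text{ for all } u \in U}:
\]
if $v$ annihilates every vector of $U$ under $\bil(\cdot,\cdot)$, then a fortiori it annihilates every vector of the smaller space $U'$, so $v \in (U')\com$. Thus $\aab(U) \leq \aab(U')$, which is exactly order-reversal. The same monotonicity argument applied to the left complement shows that $\aab\inv = \aab'$ is order-reversing as well: if $U' \subseteq U$ then ${\com U} \subseteq {\com U'}$ by the symmetric description $\com U = \Set{v \in V \str \bil(v,u) = 0 \text{ for all } u \in U}$.

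Combining these, $\aab$ is an order-reversing bijection whose inverse is order-reversing, which is precisely the definition of a lattice anti-automorphism given earlier in the excerpt; moreover by \cref{lem:lattice_antiauto_join_interchange} it automatically interchanges the join (sum of subspaces) and meet (intersection) operations of $\lam$, so it genuinely respects the lattice structure. I do not anticipate a serious obstacle here — the only point requiring mild care is to be explicit that non-degeneracy of $\bil$ is exactly what guarantees $\aab' = \aab\inv$ (without it, $\aab'$ is only a one-sided inverse and $\aab$ need not be bijective); everything else is a short direct verification from the set-theoretic descriptions of the complements plus the quoted results of \cite{lorenz}.
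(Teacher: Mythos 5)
Your proof is correct and follows essentially the same route as the paper's: both define the inverse as the left-complement map, invoke $(\com U)\com = {\com(U\com)} = U$ for bijectivity, and verify order-reversal of both maps directly from the set-theoretic descriptions of the complements. The extra remarks on dimensions and on meet/join interchange are fine but not needed.
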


\begin{proof}
	It is clear that $\aab$ is well-defined. To see that it is a bijection, define the map $\aab\inv\colon \lam\to\lam$ by $U\mapsto {\com U}$. Since $(\com U)\com = {\com(U\com)}= U$, the map $\aab\inv$ is indeed the inverse of $\aab$. To see that $\aab$ and $\aab\inv$ are order-reversing let $U,W\subseteq V$ be subspaces with $U \subseteq W$. If $\bil(w,v)=0$ for $v\in V$ and all $w\in W$, then also $\bil(u,v)=0$ for all $u\in U\subseteq W$ and hence $W\com \subseteq U\com$. Analogously we have $\com W \subseteq {\com U}$.
\end{proof}

The bilinear form $\bil$ is non-degenerate if its matrix representations are invertible. For a fixed basis of $V$, every matrix in $\gl_n(\F)$ induces an anti-automorphism of $V$.

\subsection{Extending the anti-automorphisms}

In order to extend the anti-automorphisms $\antiauto \in \DD$ of $\nc$ to $\lam$, we need to find a bilinear form $\bil$ on $V$ such that $\emb\circ\antiauto=\aab\circ\emb$. Recall that every anti-automorphism in $\DD$ is a combination of the anti-automorphism $\ncaa\colon \nc \to \nc$,  $w\mapsto w\inv\cox$, and the automorphism $\autl\colon \nc\to\nc$, $w\mapsto \l w\inv \l$. We already know that the automorphisms of $\nc$ that are contained in the subgroup $\D$  extend to $\lam$ by \cref{thm:autos_commute}. Hence it is enough to show that the anti-automorphism $\ncaa$ also extends to $\lam$ in order to show that we can extend all anti-automorphisms in $\DD$ of $\nc$ to $\lam$, that is that the diagram
\[
\begin{tikzcd}
\nc \rar["\ncaa"] \dar["\emb"'] & \nc \dar["\emb"]\\
\lam \ar[r,dashrightarrow,"\aab"] & \lam
\end{tikzcd}
\]
commutes. In order to do so, we construct appropriate bilinear forms for the classical types.

\begin{defi}
	Let $s,t\in T$ be different reflections. We say that $s$ \emph{subordinates} $t$ if $st \in \nc$. By $\subo(t)$ we denote the set of reflections that subordinate $t$, that is
	\[
	\subo(t)\coloneqq \Set{s \in T\str st \in \nc }.
	\]	
\end{defi}

In general, there are reflections $s,t\in T$ such that neither $st \in \nc$ nor $ts\in \nc$. An example is given by the reflections $[1]$ and $[2]$ in the Coxeter group of type $B_n$. In the symmetric group $S_n$ we have for all reflections $s,t \in T$ that $st \in \nc(S_n)$ or $ts \in \nc(S_n)$.

\begin{lem}
	Let $s,t\in T$ be reflections. Then $s$ subordinates $t$ if and only if there exists a reduced decomposition $t_1\ldots t_n$ of the Coxeter element $\cox$ such that $s=t_i$ and $t=t_j$ for some $i<j$.
\end{lem}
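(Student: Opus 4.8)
The statement to prove is: for reflections $s,t\in T$, we have $s\in\subo(t)$ (that is, $st\in\nc$) if and only if there is a reduced decomposition $t_1\ldots t_n$ of the Coxeter element $\cox$ with $s=t_i$ and $t=t_j$ for some $i<j$. The plan is to reduce both directions to the \cref{prefix} together with the basic relations of \cref{obs:abs_ord}. The key observation is that $s\in\subo(t)$ means precisely $st\leq\cox$, so the product $st$ is a rank $2$ element of $\nc$ admitting the reduced decomposition $s\,t$ (the two reflections are distinct, hence $\ell(st)=2$ by \cref{lem:carter}, since $\alpha_s,\alpha_t$ are linearly independent).

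For the ``if'' direction, suppose $t_1\ldots t_n$ is a reduced decomposition of $\cox$ with $s=t_i$, $t=t_j$, $i<j$. I would first use the \cref{shifting_prop}: repeatedly applying right-shifts, I can move $t_i$ rightward past $t_{i+1},\ldots,t_{j-1}$ to obtain a new reduced decomposition $t_1'\ldots t_n'$ of $\cox$ in which the reflections originally at positions $i$ and $j$ now sit in consecutive positions, say positions $k$ and $k+1$, with $t_k'=s$ and $t_{k+1}'=t$. (Here one must check the shifting property preserves the reducedness, which it does as it preserves length, and that the reflection $s$ is carried along correctly; the shift operation replaces a pair $(t_a,t_{a+1})$ by $(t_at_{a+1}t_a,\,t_a)$, so one tracks which entry equals $s$ through each step.) Then $t_1'\ldots t_k'$ is a prefix of a reduced decomposition of $\cox$, hence $t_1'\cdots t_k'\in\nc$ by the \cref{prefix}; in particular its suffix $st=t_k't_{k+1}'$ satisfies $st\leq t_1'\cdots t_{k+1}'\leq\cox$, so $st\in\nc$, i.e.\ $s\in\subo(t)$. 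Actually it is cleaner to argue: the prefix $t_1'\ldots t_{k-1}'$ is a reduced decomposition of some $v\in\nc$, and $v\, s\, t$ is a prefix of the reduced decomposition $t_1'\ldots t_n'$ of $\cox$, so $vst\leq\cox$; then by \cref{obs:abs_ord}(a) applied to $v\leq vs\leq vst$ we get $(vs)\inv(vst)=st\leq vst\leq\cox$.

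For the ``only if'' direction, assume $st\leq\cox$. Since $s\,t$ is a reduced decomposition of $st$ (length $2$), the \cref{prefix} gives a reduced decomposition of $\cox$ having $s\,t$ as a prefix, say $s\,t\,t_3\ldots t_n=\cox$. Setting $t_1=s$, $t_2=t$ we have the required reduced decomposition with $i=1<2=j$. This direction is essentially immediate from the \cref{prefix}.

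\textbf{Main obstacle.} The only nontrivial point is the bookkeeping in the ``if'' direction: verifying that iterated right-shifts can bring the two chosen positions $i$ and $j$ into adjacency while keeping track of which entry is the reflection $s$. Each right-shift replaces $(t_a,t_{a+1})$ by $(t_at_{a+1}t_a,t_a)$, so after shifting $t_i=s$ one step right the entry $s$ reappears one position to the right, while the entry that was $t_{i+1}$ is replaced by the conjugate $t_{i+1}^{t_{i+1}^{-1}}$... more precisely by $s\,t_{i+1}\,s$; none of this affects $t_j=t$ since $j>i+1$ until the very last step, where $s$ lands immediately left of $t$. A short induction on $j-i$ makes this precise, and I expect it to be routine but the place where care is genuinely needed. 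Everything else follows formally from the \cref{prefix}, the \cref{shifting_prop}, and \cref{obs:abs_ord}.
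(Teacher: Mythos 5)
Your proof is correct, and your ``only if'' direction is essentially the paper's: extend the reduced decomposition $s\,t$ of $st$ (reduced since $s\neq t$ forces $\ell(st)=2$ by \cref{lem:carter}) to a reduced decomposition of $\cox$. The ``if'' direction, however, takes a genuinely different and noticeably heavier route. The paper disposes of it in one line via the \cref{subword_prop}: if $s=t_i$ and $t=t_j$ with $i<j$, then $s\,t$ is a subword of the reduced decomposition $t_1\ldots t_n$ of $\cox$, and since it is itself a reduced decomposition of $st$, the Subword Property gives $st\leq\cox$ directly. You instead use the \cref{shifting_prop} to drag $s$ rightward into the position adjacent to $t$ and then extract $st$ from the resulting prefix via \cref{prefix} and \cref{obs:abs_ord}. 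Your bookkeeping is in fact fine --- each right-shift at position $a$ places the old $t_a$ at position $a+1$ and leaves all positions beyond $a+1$ untouched, so $s$ arrives at position $j-1$ with $t$ still at position $j$ --- but the entire shifting argument, which you correctly identify as the delicate part of your write-up, is avoidable: the Subword Property was designed for exactly this situation, since a subword need not consist of consecutive letters. What your approach buys is nothing beyond what the Subword Property already encodes (indeed, the paper derives the \cref{prefix} \emph{from} the combination of shifting and subword properties, so you are in effect re-deriving a special case); what it costs is the extra induction you flag as the main obstacle.
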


\begin{proof}
	Suppose that $t_1\ldots s \ldots t \ldots t_n$ is a reduced decomposition of $\cox$. Since $st$ is a subword of the reduced decomposition of $\cox$ we have by the \cref{subword_prop} that $st \leq c$, which means $st\in \nc$.
	
	Now suppose that $w\coloneqq st\in \nc$ with $s\neq t$. Then by definition $w\leq\cox$ and hence $\ell(w)+\ell(w\inv \cox)=\ell(\cox)$. Choose a reduced decomposition $t_3\ldots t_n$ of $w\inv\cox$. Then we can write $\cox$ as a product of $n$ reflections, namely $\cox=stt_3\ldots t_n$, and so $\cox$ has a reduced decomposition where $s$ appears left of $t$.
\end{proof}

Recall that we denote by $\alpha_t$ the positive root corresponding to a reflection $t$ and that $\emb(t)=\mov(t)=\Braket{\alpha_t} \subseteq V$. The following criterion allows us to verify that a given bilinear form gives rise to an extension of $\ncaa$.

\begin{thm}\label{thm:criterion_extension_anti-auto}
	The anti-automorphism $\ncaa$ of $\nc$ extends to an anti-automorphism of $\lam$ if there exists a bilinear form $\bil$ on $V$ such that 
	\[
	\bil(\alpha_s,\alpha_t)=0
	\]
	for all $t\in T$ and $s\in \subo(t)$. In this case, it holds that $\aab$ is an anti-automorphism extending $\ncaa$, that is $\aab \circ \emb = \emb \circ \ncaa$.
\end{thm}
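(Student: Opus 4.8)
The plan is to show that under the stated hypothesis, the lattice anti-automorphism $\aab\colon\lam\to\lam$, $U\mapsto U\com$, satisfies $\aab\circ\emb=\emb\circ\ncaa$. Since both $\emb$ and $\aab\circ\emb$ are determined by their values on the reflections $t\in T$ — for $\emb$ this is \cref{lem:join_and_em_interchange}, and for the composition with the anti-automorphism one uses \cref{lem:lattice_antiauto_join_interchange} together with \cref{lem:anti-auto} which gives $\ncaa(w)=\ncaa(t_1)\wedge\ldots\wedge\ncaa(t_k)$ for a reduced decomposition $w=t_1\ldots t_n$ (the meet of the images of the reflections), whence $\emb(\ncaa(w))=\emb(\ncaa(t_1))\wedge\ldots=\bigcap_i\emb(\ncaa(t_i))$ — it suffices to prove $\aab(\emb(t))=\emb(\ncaa(t))$ for every reflection $t$. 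So first I would reduce the whole statement to this equality on rank-one elements.

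Next I would identify $\emb(\ncaa(t))=\mov(t\inv\cox)$. By \cref{lem:carter} and \cref{lem:basis_of_moved_space}, a basis of $\mov(t\inv\cox)$ consists of the roots $\alpha_{t_2},\ldots,\alpha_{t_n}$ coming from a reduced decomposition $t\,t_2\ldots t_n$ of $\cox$, i.e. the roots of reflections $t_2,\ldots,t_n$ which, by the \cref{prefix}, can be taken to be exactly the reflections that can appear strictly to the right of $t$ in a reduced decomposition of $\cox$; these are precisely the $t'$ with $t\in\subo(t')$. Thus $\emb(\ncaa(t))=\Braket{\alpha_{t'}\str t\in\subo(t')}$, which has dimension $n-1$. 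On the other hand, $\aab(\emb(t))=\Braket{\alpha_t}\com=\Set{v\in V\str \bil(\alpha_t,v)=0}$, also of dimension $n-1$ since $\bil$ is non-degenerate. So the equality of these two hyperplanes will follow from one containment. The hypothesis $\bil(\alpha_s,\alpha_t)=0$ for all $t\in T$, $s\in\subo(t)$ gives exactly $\bil(\alpha_t,\alpha_{t'})=0$ whenever $t\in\subo(t')$, i.e. every spanning vector $\alpha_{t'}$ of $\emb(\ncaa(t))$ lies in $\Braket{\alpha_t}\com$. Hence $\emb(\ncaa(t))\subseteq\aab(\emb(t))$, and equality follows by dimension count.

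The remaining point is to assemble these into the commuting square cleanly: write an arbitrary $w\in\nc$ with reduced decomposition $t_1\ldots t_k$, apply \cref{lem:join_and_em_interchange} to get $\emb(w)=\emb(t_1)\vee\ldots\vee\emb(t_k)$, apply the anti-automorphism property of $\aab$ (\cref{lem:lattice_antiauto_join_interchange}) to get $\aab(\emb(w))=\aab(\emb(t_1))\wedge\ldots\wedge\aab(\emb(t_k))$, then use the rank-one identity and \cref{lem:lattice_antiauto_join_interchange} applied to $\ncaa$ on the $\nc$ side together with \cref{lem:join_and_em_interchange} again to conclude $\aab(\emb(w))=\emb(\ncaa(w))$. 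I expect the main obstacle to be the bookkeeping in the previous paragraph: verifying that the roots indexing a basis of $\mov(t\inv\cox)$ are exactly $\Set{\alpha_{t'}\str t\in\subo(t')}$, i.e. reconciling the \enquote{subordinates} relation with the set of reflections appearing to the right of $t$ in reduced decompositions of $\cox$ — this needs the \cref{prefix} and \cref{subword_prop} applied carefully, and the observation that different reduced decompositions of $\cox$ with first letter $t$ may use different reflections to the right, but their roots always span the same hyperplane $\mov(t\inv\cox)$. Everything else is linear algebra and the already-established interchange lemmas.
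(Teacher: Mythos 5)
Your overall strategy is essentially the paper's: reduce to the identity $\aab(\emb(t))=\emb(\ncaa(t))$ for reflections, obtain one containment from the hypothesis, and close by counting dimensions. Your rank-one argument is correct; describing $\emb(\ncaa(t))$ as $\Braket{\alpha_{t'}\str t\in\subo(t')}$ rather than via a single reduced decomposition $t\,t_2\ldots t_n$ of $\cox$ is a harmless repackaging, since both sets of roots span $\mov(t\inv\cox)$. One small caveat: you invoke non-degeneracy of $\bil$ to get $\dim\Braket{\alpha_t}\com=n-1$; this is not among the stated hypotheses (the zero form satisfies them vacuously), but the paper's proof makes the same tacit assumption and the forms constructed in the type-by-type propositions are non-degenerate, so this is an issue with the statement rather than with your argument.

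The one genuine gap is in the assembly step. From $\ncaa(w)=\ncaa(t_1)\wedge\ldots\wedge\ncaa(t_k)$ in $\nc$ you infer $\emb(\ncaa(w))=\emb(\ncaa(t_1))\wedge\ldots\wedge\emb(\ncaa(t_k))$ in $\lam$, i.e.\ you push the meet through $\emb$. A rank-preserving poset embedding need not preserve meets, and $\emb$ does not: the paper records that $\emb(t\vee t')\neq\emb(t)\vee\emb(t')$ in general, and on the meet side, for $v=(1\,\;2)(3\,\;4)$ and $u=(1\,\;4)(2\,\;3)$ in $\nc(S_4)$ one has $v\wedge u=\id$ while $\emb(v)\cap\emb(u)=\Braket{e_1+e_2+e_3}\neq\Set{0}$. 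The equality you want is nonetheless true, and your own technique closes the gap: order-preservation gives $\emb(\ncaa(w))\subseteq\bigcap_i\emb(\ncaa(t_i))=\bigcap_i\aab(\emb(t_i))=\aab(\emb(w))$, and both ends have dimension $n-\ell(w)$, since $\emb$ is rank-preserving, $\ncaa$ is rank-reversing, and $\aab$ sends a $k$-dimensional subspace to an $(n-k)$-dimensional one. The paper reaches the same conclusion by a more hands-on route, using left-shifts of a reduced decomposition $t_1\ldots t_n$ of $\cox$ to exhibit, for each $i\leq k$, a basis of $\aab(\emb(t_i))$ containing $\alpha_{t_{k+1}},\ldots,\alpha_{t_n}$, and then counting dimensions. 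So: right approach and correct rank-one case, but replace the meet-commutation claim by the explicit containment-plus-dimension argument.
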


\begin{proof}
	Suppose that there is a bilinear form $\bil$ on $V$ such that $\bil(\alpha_s,\alpha_t)=0$ for all $t\in T$ and $s\in \subo(t)$. We show that $\aab(\emb(w))=\emb(\ncaa(w))$ for all $w\in \nc$.  
	First we consider the case $t\in T$ and then extend it to an arbitrary $w\in \nc$.
	
	Let  $t_1\ldots t_n$ be a reduced decomposition of the Coxeter element $\cox$.
	By assumption we have that $\bil(\alpha_{t_1},\alpha_{t_i})=0$ for all $2\leq i \leq n$. Let $U \coloneqq \emb(t_1)$. Note that $\dim(U)=1$ and hence $\dim(U\com)=n-1$. Since $\alpha_{t_i} \in U\com$ for all $2\leq i\leq n$ and since the $\alpha_{t_i}$ are linearly independent by \cref{lem:carter}, the set $\Set{\alpha_{t_2}, \ldots, \alpha_{t_n}}$ is a basis of $U\com$, that is
	\[
	\aab(\emb(t_1))=U\com=\langle \alpha_{t_2}, \ldots, \alpha_{t_n}\rangle = \emb(t_2) \vee \ldots \vee \emb(t_n).
	\]
	
	Let us consider $\emb(\ncaa(t_1))$. Since $\ncaa(t_1)=t_1\cox=t_1\cdot t_1 t_2\ldots t_n$, the decomposition $t_2\ldots t_n$ of $\ncaa(t_1)$ is reduced. Hence we get
	\[
	\emb(\ncaa(t_1))=\emb(t_2\ldots t_n)=\emb(t_2) \vee \ldots \vee \emb(t_n)
	\]
	by \cref{lem:join_and_em_interchange}. For every reflection $t\in T$ there is a reduced decomposition of the Coxeter element $\cox$ having $t$ as the first letter by the \cref{subword_prop}. So we have $\aab(\emb(t))=\emb(\ncaa(t))$ for all reflections $t\in T$.
	
	Now let $w\in \nc$ be arbitrary and let $t_1\ldots t_k$ be a reduced decomposition of it. Moreover, let $t_{k+1}, \ldots, t_n$ be reflections such that $t_1\ldots t_n=\cox$. 
	Note that $t_{k+1}\ldots t_n$ is a reduced decomposition of $w\inv\cox=\ncaa(w)$. Hence we have the following two equations
	\begin{align*}
	\emb(\ncaa(w))&=\emb(t_{k+1}\ldots t_n)=\emb(t_{k+1}) \vee \ldots \emb(t_n) \text{ and}\\
	\aab(\emb(w))&=\aab(\emb(t_1)\vee \ldots \vee \emb(t_k)) =\aab(\emb(t_1)) \wedge \ldots \wedge \aab(\emb(t_k)),
	\end{align*}
	where the second line holds by \cref{lem:lattice_antiauto_join_interchange}. Note that both $\emb(\ncaa(w))$ and $\aab(\emb(w))$ have dimension $n-k$, since $\emb$ is rank-preserving, and $\ncaa$ and $\aab$ are rank-reversing.
	We show that 
	\[
	\aab(\emb(t_1))\wedge\ldots\wedge\aab(\emb(t_k)) = \emb(t_{k+1})\vee \ldots \vee \emb(t_n)
	\]
	holds by showing that for every $1\leq i \leq n$ there is a basis of $\aab(\emb(t_i))$ that contains the vectors $\alpha_{t_{i+1}}, \ldots, \alpha_{t_n}$. Then for $\aab(\emb(t_1)), \ldots, \aab(\emb(t_k))$ there exist bases that all contain the vectors $\alpha_{t_{k+1}}, \ldots, \alpha_{t_n}$. Since the meet in $\lam$ is intersection of subspaces, and $\aab(\emb(w))$ has dimension $n-k$, it follows that $\Set{\alpha_{t_{k+1}}, \ldots, \alpha_{t_n}}$ is a basis of $\aab(\emb(w))$. Hence $\aab(\emb(w))=\emb(t_{k+1})\vee \ldots \vee \emb(t_n) = \emb(\ncaa(w))$.
	
	It remains to show that there exists a basis of $\aab(\emb(t_i))$ that contains the vectors $\alpha_{t_{i+1}}, \ldots, \alpha_{t_n}$. We successively perform left-shifts in the reduced decomposition of $\cox$ until $t_i$ is the first letter as follows. First shift $t_2$ to the left, then $t_3$ and so on, until you shift $t_i$ to the foremost left. In more detail, the following steps are performed:
	\begin{align*}
	&t_1\, t_2 \,t_3 \,t_4\ldots t_i\, t_{i+1} \ldots t_n,\\
	&t_2\, t_1^{t_2}\, t_3 \,t_4\ldots t_i\, t_{i+1} \ldots t_n,\\
	& t_2 \,t_3\, t_1^{t_3t_2}\, t_4\ldots t_i \, t_{i+1}\ldots t_n,\\
	&t_3\, t_2^{t_3}\,  t_1^{t_3t_2}\, t_4\ldots t_i\, t_{i+1} \ldots t_n,\\
	&\qquad\qquad \vdots\\
	&t_i \, t_{i-1}^{t_i} \, t_{i-2}^{t_{i-1} t_i}\ldots t_1^{t_i\ldots t_2}\, t_{i+1} \ldots t_n,
	\end{align*}
	where by $a^b$ we denote conjugation with $b$, that is $a^b=bab\inv$. By the \cref{shifting_prop}, applying shifts does not change the element itself, only the reduced decomposition. 
	In the reduced decomposition, 
	shifting an element from position $j$ to the left only affects the reflections on the positions less than $j$.
	
	Hence $t_i \, t_{i-1}^{t_i} \, t_{i-2}^{t_{i-1} t_i}\ldots t_1^{t_i\ldots t_2}\, t_{i+1} \ldots t_n$ is a reduced decomposition of $\cox$ involving the reflections $t_{i+1},\ldots, t_n$ with $t_i$ on the first position. We showed above that in this case, $\aab(f(t_i))$ has the basis 
	\[
	\Set{\alpha_{ t_{i-1}^{t_i}}, \ldots, \alpha_{ t_1^{t_i\ldots t_2}},  \alpha_{t_{i+1}}, \ldots, \alpha_{t_n}},
	\]
	that is we found a basis of $\aab(\emb(t_i))$ involving the vectors $\alpha_{t_{i+1}}, \ldots, \alpha_{t_n}$.
\end{proof}

\begin{thm}\label{thm:anti-autos_extend}
	If $W$ is a Coxeter group of  classical type $A$, $B$, or $D$, then all lattice skew-automorphisms in $\DD$ of $\nc(W)$ extend to lattice skew-automorphisms of $\lam$.
\end{thm}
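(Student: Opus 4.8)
The strategy is to reduce the statement to a single extension problem via \cref{thm:criterion_extension_anti-auto}. By \cref{thm:autos_commute}, all automorphisms in the subgroup $\D \leq \DD$ already extend uniquely to lattice automorphisms of $\lam$. Since $\DD$ is generated by $\autl \in \D$ and the anti-automorphism $\ncaa$, and since a composition of an extendable automorphism with an extendable anti-automorphism is again extendable (the extensions compose), it suffices to show that $\ncaa\colon w \mapsto w\inv \cox$ extends to $\lam$ for each of the classical types. By \cref{thm:criterion_extension_anti-auto} this in turn reduces to exhibiting, for each classical type, a bilinear form $\bil$ on $V$ with $\bil(\alpha_s, \alpha_t) = 0$ whenever $s \in \subo(t)$, i.e.\ whenever $st \in \nc(W)$.

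\textbf{Constructing the bilinear forms.} For each classical type I would work with the concrete root realizations from \cref{sec:emb_classical}, i.e.\ $\alpha_t \in \R^n$ with the explicit coordinates recorded there. The key combinatorial input is a description of when $st \in \nc(W)$ in terms of the cycle structure of $s$ and $t$; this follows from \cref{prop:typeA_consistent_orientation_nc}, \cref{prop:typeB_consistent_orientation-NC} and the type $D$ analog together with \cref{cor:length_subcycle}. In type $A_{n-1}$, one expects a (suitably oriented, skew-symmetric) form such as $\bil(\eps_i, \eps_j) = 1$ for $i<j$ and $0$ otherwise (equivalently the form whose matrix is strictly upper triangular with ones): one checks that for the transpositions $s = (i\,j)$, $t=(k\,l)$ with $i<j$, $k<l$, the product $st$ (in some order) is non-crossing precisely when the corresponding roots are $\bil$-orthogonal. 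For types $B_n$ and $D_n$ I would similarly pin down a bilinear form on $\R^n$ by specifying $\bil(\eps_i, \eps_j)$ and verifying the orthogonality relation case-by-case over the types of reflections ($\dka i\,j\dkz$, $[i]$, and the cycles involving $n$ in type $D$). Concretely the verification splits along the cycle-type case distinctions already set up in \cref{lem:typeB_length}, \cref{lem:typeD_length}, and \cref{defi_typeD_consistent_orientation}. One must also check $\bil$ is non-degenerate, which for an upper-triangular-with-ones matrix is immediate since its determinant is $\pm 1$.

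\textbf{Assembling the argument.} Once the forms are in hand, \cref{thm:criterion_extension_anti-auto} yields that $\aab \circ \emb = \emb \circ \ncaa$, so $\ncaa$ extends to the lattice anti-automorphism $\aab$ of $\lam$. Combined with \cref{thm:autos_commute} and the generation statement for $\DD$ (namely $\DD = \langle \autl, \ncaa\rangle$, see \cref{lem:skew-autos_nc} and \cref{cor:nc_autos_extend}), every element of $\DD$ is a word in $\autl$ and $\ncaa$, hence extends to a lattice skew-automorphism of $\lam$ (an even-length word gives an automorphism, odd-length an anti-automorphism). This is compatible with the embeddings from \cref{thm:brady_watt_embedding} and, in the crystallographic case, \cref{thm:embedding_Vp_VZ}, so the result holds for all the relevant ambient lattices $\lam(V)$.

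\textbf{Main obstacle.} The genuinely nontrivial part is the case analysis verifying $\bil(\alpha_s, \alpha_t) = 0$ for all $s \in \subo(t)$ in types $B$ and $D$, where the notion of ``non-crossing'' is subtler: in type $B$ one must handle balanced versus paired cycles (recalling from \cref{exa:crossings_in_A_and_B} that crossing behaves differently than in type $A$), and in type $D$ the consistent-orientation condition for cycles involving $n$ (\cref{defi_typeD_consistent_orientation}, illustrated by \cref{exa:typeD_not_consistent_orientation}) makes the bookkeeping delicate. A secondary subtlety is orientation: $\subo(t)$ is \emph{not} symmetric in general, so the required form $\bil$ will be genuinely non-symmetric, and one has to be careful that the asymmetry of $\bil$ matches the asymmetry of ``$st \in \nc$ versus $ts \in \nc$'' — getting the direction of the inequalities $i<j$ right in the definition of $\bil$ is where an error would most likely creep in.
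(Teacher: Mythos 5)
Your overall skeleton is exactly the paper's: reduce to the single anti-automorphism $\ncaa$ via the generation of $\DD$ by $\autl$ and $\ncaa$ together with \cref{thm:autos_commute}, then invoke \cref{thm:criterion_extension_anti-auto} and exhibit, type by type, a bilinear form $\bil$ with $\bil(\alpha_s,\alpha_t)=0$ whenever $st\in\nc(W)$. The paper does precisely this, delegating the forms to \cref{prop:typeA_anti-auto_extends}, \cref{prop:typeB_anti-auto_extends} and \cref{prop:typeD_anti-auto_extends}.

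However, the one concrete ingredient you commit to is wrong. The form $\bil(e_i,e_j)=1$ for $i<j$ and $0$ otherwise fails already on the simplest non-crossing product: for $s=(1\,\;2)$, $t=(2\,\;3)$ we have $st=(1\,\;2\,\;3)\in\nc(S_n)$, yet with $\alpha_s=e_1-e_2$, $\alpha_t=e_2-e_3$ your form gives $\bil(\alpha_s,\alpha_t)=1-1-0+1=1\neq 0$; the diagonal term $\bil(e_2,e_2)$ is needed to cancel this, so the correct condition is $i\leq j$ (as in \cref{prop:typeA_anti-auto_extends}), not $i<j$. Your non-degeneracy remark is also internally inconsistent with your definition: a strictly upper triangular matrix has determinant $0$, not $\pm 1$ (and the resulting form is not skew-symmetric either; it is genuinely non-symmetric with nonzero diagonal). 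Finally, for types $B$ and $D$ — where the paper's forms take the values $\pm 1$ off the diagonal and require a separate treatment of the $n$th coordinate in type $D$ — you give no candidate form at all, and the case analysis over the reduced decompositions of rank-$2$ elements (the paper's Appendices \ref{chap:typeB_details} and \ref{chap:typeD_details}, built on exhaustive tables of reduced decompositions) is the actual content of the proof. As written, the argument therefore has a correct frame but neither a working type-$A$ form nor any of the type-$B$/$D$ verification.
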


The proof is case-by-case for the different types, but the strategy to show that the anti-automorphism $\ncaa$ extends is the same. First we analyze for which pairs of transpositions $s,t\in T$ the product $st$ is in $\nc$. Then we show that for the bilinear form $\bil$, which is different for the different types, we get $\bil(\alpha_s,\alpha_t)=0$. Using \cref{thm:criterion_extension_anti-auto} implies the result. To ease notation we write $\bil(s,t)$ for $\bil(\alpha_s,\alpha_t)$. 

Recall that $V$ denotes a vector space such that $\emb\colon \nc\to\lam(V)$ is the poset embedding from \cref{thm:brady_watt_embedding} or \cref{thm:embedding_Vp_VZ}, respectively.
Let $e_i$ be the standard basis vectors of $V$ for $1\leq i \leq \dim(V)$. By $u\tr$ we denote the transpose of a vector $u\in V$ and $u\tr \cdot v$ is the usual matrix multiplication.

\begin{prop}\label{prop:typeA_anti-auto_extends}
	The bilinear form  $\bila$ on the $(n-1)$-dimensional vector space $V$ given by 
	\[
	\bila(e_i,e_j)=
	\begin{cases}
	1 &\ i\leq j,\\
	0 &\ i > j
	\end{cases}
	\]
	for $1\leq i,j < n$ satisfies $\bila(s,t)=0$ for all $s,t\in T$ with $st\in\nc(S_n)$.
\end{prop}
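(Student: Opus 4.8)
The strategy is dictated by \cref{thm:criterion_extension_anti-auto}: it suffices to show that the bilinear form $\bila$ vanishes on pairs $(\alpha_s,\alpha_t)$ whenever $s$ subordinates $t$, i.e. whenever $st\in\nc(S_n)$. The first step is to identify the positive roots $\alpha_t$ of type $A_{n-1}$ explicitly in the coordinates of $V$. Following \cref{sec:typeA_embedding}, the reflection $t=(i\;j)$ with $1\le i<j\le n$ corresponds (after the identification $\iota$) to the root $\tal_t=\eps_i-\eps_j$, which in the $(n-1)$-dimensional coordinate vector space $V$ becomes $e_i-e_j$ if $j<n$ and $e_i$ if $j=n$. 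So the task reduces to a purely combinatorial statement: for transpositions $s=(a\;b)$ and $t=(c\;d)$ with $a<b$, $c<d$, if $st\in\nc(S_n)$ then $\bila(\alpha_s,\alpha_t)=0$.

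\textbf{Key steps.} First I would record when $st\in\nc(S_n)$ for two transpositions. Since $\ell(st)\le 2$, either $st$ is a transposition (when $s$ and $t$ share a point) or $st$ has cycle type a $3$-cycle (when they are disjoint). By \cref{prop:typeA_consistent_orientation_nc}, $st\in\nc(S_n)$ iff the cycles of $st$ are consistently oriented and non-crossing. If $\{a,b\}\cap\{c,d\}=\emptyset$, then $st=(a\;b)(c\;d)$ is already a product of disjoint cycles, each a consistently oriented $2$-cycle, so $st\in\nc(S_n)$ iff $(a\;b)$ and $(c\;d)$ are non-crossing, i.e. the intervals do not interleave: either $b<c$, or $d<a$, or one interval nests inside the other. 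If $s$ and $t$ share exactly one point, then $st$ is a $3$-cycle; working out the six sub-cases (which point is shared, and in which slot) shows $st\in\nc(S_n)$ iff the three points, written in increasing order as $p<q<r$, satisfy that $(s,t)$ realizes the $3$-cycle $(p\;q\;r)$ in the specific ``left-before-right'' sense of the \cref{subword_prop} applied to a reduced decomposition of $(p\;q\;r)$; concretely this forces $s=(p\;q), t=(q\;r)$ or $s=(p\;q),t=(p\;r)$ or $s=(q\;r),t=(p\;r)$, up to the equivalences noted in \cref{sec:typeA_descr}. Next, for each surviving configuration I would just compute $\bila(e_a-e_b,\,e_c-e_d)=\bila(e_a,e_c)-\bila(e_a,e_d)-\bila(e_b,e_c)+\bila(e_b,e_d)$ using $\bila(e_i,e_j)=[i\le j]$ (with the convention that a missing index, arising from $j=n$, simply drops the corresponding term), and verify it is $0$. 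For instance in the non-crossing disjoint case $b<c$ one gets $\bila=1-1-1+1=0$; in the nested case $a<c<d<b$ one gets $\bila=1-1-0+0=0$; and each of the three shared-point $3$-cycle cases gives a similar telescoping cancellation. Assembling these computations, the hypothesis of \cref{thm:criterion_extension_anti-auto} is met, hence $\aab$ extends $\ncaa$; and since by \cref{thm:autos_commute} the automorphism $\autl$ (and the whole group $\D$) already extends to $\lam$, \cref{cor:nc_autos_extend} shows $\DD$ is generated by $\autl$ and $\autd=\autl\circ\ncaa$, so every skew-automorphism in $\DD$ extends, giving \cref{thm:anti-autos_extend} in type $A$.

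\textbf{Main obstacle.} The genuinely delicate point is the bookkeeping of the shared-point case: one must be careful which of the two transpositions is $s$ and which is $t$, because subordination ($st\in\nc$) is not symmetric, and the roots $\alpha_s,\alpha_t$ are unordered as far as $\bila$ goes only once one checks that the ordered condition $st\in\nc$ pins down enough structure. I expect that, as in \cite{biane}, one can streamline this by choosing representatives of the three points as $1<2<3$ and checking the $3$-cycle $(1\;2\;3)$ directly — by homogeneity of the condition under order-preserving relabellings of $\{1,\dots,n\}$, and since both $\bila$ and the non-crossing condition are invariant under such relabellings, the general case follows. I would also double-check the edge cases where $j=n$ (so $\alpha_t=e_i$ rather than $e_i-e_j$); these are not special — they arise from dropping a term — but they should be mentioned explicitly. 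Everything else is routine.
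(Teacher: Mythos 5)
Your strategy coincides with the paper's own proof: identify the roots $\alpha_{(x\;y)}=e_x-e_y$ (with $e_n=0$), split the rank-two elements of $\nc(S_n)$ into products of two disjoint non-crossing transpositions and consistently oriented $3$-cycles, and check $\bila(\alpha_s,\alpha_t)=0$ in each ordered case. The disjoint-support analysis and the $j=n$ convention are fine. But your classification of the shared-point case is wrong, and it is wrong in exactly the spot you yourself flagged as delicate. With $p<q<r$ and the paper's right-to-left composition convention, the ordered pairs $(s,t)$ with $st=(p\;q\;r)$ are
\[
\bigl((p\;q),(q\;r)\bigr),\qquad \bigl((p\;r),(p\;q)\bigr),\qquad \bigl((q\;r),(p\;r)\bigr),
\]
i.e.\ $st=(a\;b)(b\;c)$ for $(a\;b\;c)$ any of the three cyclic representatives of $(p\;q\;r)$. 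Your second pair $\bigl((p\;q),(p\;r)\bigr)$ is not one of these: $(p\;q)(p\;r)=(p\;r\;q)$, which is not consistently oriented, so it is not a subordination pair at all; and your claim that it "telescopes" to zero is false, since
\[
\bila(e_p-e_q,\;e_p-e_r)=1-1-0+1=1\neq 0.
\]
Had you carried out that computation you would have detected the misclassification.

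Meanwhile you omit the genuine third pair $\bigl((p\;r),(p\;q)\bigr)$, which does need to be checked (and does vanish: $\bila(e_p-e_r,\,e_p-e_q)=1-1-0+0=0$, matching the paper's case $b<c<a$). So the proposition is still true and your proof is repairable by correcting the list of $3$-cycle factorizations, but as written it asserts a false intermediate identity and skips a required case. The reflex that saves you here is the one the paper uses: parametrize every length-two factorization of a consistently oriented $3$-cycle as $(a\;b)(b\;c)$ with shared middle letter $b$, and let the three possible orderings of $a,b,c$ enumerate the cases, rather than trying to guess the ordered pairs directly.
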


\begin{proof}
	
	Recall that for a transposition $s=(x\,\;y)\in T(S_n)$ with $x<y$ the corresponding root is $\alpha_s=e_x-e_y$ with the convention that $e_n=0$ in the $(n-1)$-dimensional vector space $V$. The evaluation of $\bila$ on $s$ and $v\in V$ is
	\[
	\bila(s,v)=\left(\sum_{i=x}^{y-1} e_i\tr\right)\cdot v.
	\]
	
	We show that for every reduced decomposition $st$ of an element in $\nc(S_n)$ we have $\bila(s,t)=0$. An element of rank two in $\nc(S_n)$ can be of two types. Either, it is a $3$-cycle, or it is a product of two commuting transpositions.
	
	Let $st$ be a $3$-cycle. The possibilities to decompose $st$ are $st=(ab)(bc)$ for $a<b<c$, $b<c<a$ or $c<a<b$. 
	If $a<b<c$, we have 
	\[
	\bila(s,t)=\left(\sum_{i=a}^{b-1} e_i\tr\right)\cdot (e_b - e_c)
	=0,
	\]
	since $b,c > b-1$. If $b<c<a$ the evaluation is 
	\[
	\bila(s,t)=\left(\sum_{i=b}^{a-1} e_i\tr\right)\cdot (e_b - e_c)
	= \sum_{i=b}^{a-1} e_i\tr\cdot e_b - \sum_{i=b}^{a-1} e_i\tr\cdot e_c
	= e_b\tr\cdot e_b - e_c\tr\cdot e_c
	= 0.
	\]
	For the last case $c<a<b$ we get
	\[
	\bila(s,t)=\left(\sum_{i=a}^{b-1} e_i\tr\right)\cdot (e_c - e_b)
	= 0,
	\]
	because $c<a$ and $b>b-1$. 
	
	Now let $st$ be a product of two commuting transpositions $(a\,\;b)(c\,\;d)$. The possibilities for $a,b,c,d$ such that $st\in\nc(S_n)$ are $a<b<c<d$ and $b<c<d<a$. 
	Suppose that $a<b<c<d$. Then 
	\[
	\bila(s,t)=\left(\sum_{i=a}^{b-1} e_i\tr\right)\cdot (e_c - e_d)
	=0,
	\]
	since $c,d > b-1$. In the case of $b<c<d<a$ we get
	\[
	\bila(s,t)=\left(\sum_{i=b}^{a-1} e_i\tr\right)\cdot (e_c - e_d)
	= \sum_{i=b}^{a-1} e_i\tr\cdot e_c - \sum_{i=b}^{a-1} e_i\tr\cdot e_d
	= e_c\tr\cdot e_c - e_d\tr\cdot e_d
	= 0.
	\]
\end{proof}

The proofs of the following two propositions are the analogous computations to the one in type $A$ and are relocated in \cref{chap:typeB_details} and \cref{chap:typeD_details}, respectively.

\begin{prop}\label{prop:typeB_anti-auto_extends}
	The bilinear form $\bilb$ on the $n$-dimensional vector space $V$ given by 
	\[
	\bilb(e_i,e_j)=
	\begin{cases}
	1 &\ i\leq j,\\
	-1  &\ i > j
	\end{cases}
	\]
	for $1\leq i,j \leq n$ satisfies $\bilb(s,t)=0$ for all $s,t\in T$ with $st\in\nc(B_n)$.
\end{prop}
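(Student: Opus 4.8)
\textbf{Plan of proof for Proposition~\ref{prop:typeB_anti-auto_extends}.}
The plan is to carry out exactly the same strategy as in the type~$A$ case treated in Proposition~\ref{prop:typeA_anti-auto_extends}, namely to classify, up to the symmetries inherent in the cycle structure, all pairs of reflections $s,t\in T(B_n)$ such that $st\in\nc(B_n)$, and then to check the vanishing $\bilb(\alpha_s,\alpha_t)=0$ for each such pair by a direct computation with the explicit roots. By Theorem~\ref{thm:criterion_extension_anti-auto} this suffices to conclude that $\ncaa$ extends to a lattice anti-automorphism of $\lam$, and together with Theorem~\ref{thm:autos_commute} (which handles the automorphisms $\autl,\autr$ generating the index-$2$ subgroup $\D\leq\DD$) this gives the type~$B$ case of Theorem~\ref{thm:anti-autos_extend}.

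First I would fix the concrete root data for type $B_n$. Using the standard Coxeter generators $S(B_n)$ and the embedding of Section~\ref{sec:emb_classical}, the positive root attached to $\dka i\,\;j\dkz$ with $0<i<|j|\le n$ is $\eps_i-\eps_{|j|}$ if $j>0$ and $\eps_i+\eps_{|j|}$ if $j<0$ (so in the basis $e_1,\dots,e_n$ of $V$ this is $e_i-e_j$ or $e_i+e_{|j|}$), and the root attached to the balanced cycle $[i]$ is $\eps_i=e_i$. The bilinear form $\bilb$ has matrix $B=(b_{ij})$ with $b_{ij}=1$ for $i\le j$ and $b_{ij}=-1$ for $i>j$, so for a reflection $s$ with root $\alpha_s$ one has $\bilb(\alpha_s,v)=\alpha_s^\intercal B\, v$. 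It is convenient to record the row vector $\alpha_s^\intercal B$: for a paired reflection $e_i\pm e_k$ (with $i<k$) this is $\sum_{l=i}^{k-1}e_l^\intercal$ in the $-$ case and $\sum_{l=i}^{k-1}e_l^\intercal \pm(\dots)$ — more precisely a telescoping sum one computes once and for all — and for $[i]$, with root $e_i$, the row is $(-1,\dots,-1,1,\dots,1)$ with the sign change after position $i-1$.

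Next I would invoke Proposition~\ref{prop:typeB_consistent_orientation-NC}: a rank-two element of $\nc(B_n)$ is obtained by ``breaking'' the Coxeter element, hence its disjoint cycle decomposition, written with consistently oriented and non-crossing cycles, is one of the five shapes recorded before \cref{obs:typeD_card_bel_rk_2} in the type-$B$ discussion, namely $[a\,\;b]$, $\dka a\,\;b\,\;c\dkz$, $\dka a\,\;b\dkz[c]$, $\dka a\,\;b\dkz\dka c\,\;d\dkz$, and additionally the length-two elements coming from a single balanced cycle are covered by these. For each such $w$ of rank two I would list every way $w=st$ of writing it as a product of two reflections with $st\le c$ — these are exactly the reduced decompositions, and by the Subword and Shifting Properties it is enough to run through the finitely many combinatorial configurations of indices (the consistent-orientation constraint from Propositions~\ref{prop:typeB_consistent_orientation-NC} and \ref{prop:typeA_consistent_orientation_nc} via the canonical identification cuts the list down drastically, exactly as the inequalities $a<b<c$ etc.\ did in type~$A$). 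For each configuration I would plug the corresponding roots into $\alpha_s^\intercal B\,\alpha_t$ and verify it telescopes to $0$; the cases split naturally according to whether $s$ and $t$ are both paired, one paired and one balanced, or both balanced, and within each the sign bookkeeping of $B$ mirrors the type-$A$ computation with the added $+$-sign subtlety coming from negative entries in cycles.

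\textbf{Expected main obstacle.} The substantive work is purely bookkeeping: organizing the case analysis of index patterns for the five rank-two shapes so that no reduced decomposition $st$ is missed and no non-admissible one is spuriously included. In particular the balanced-cycle cases (shapes $[a\,\;b]$ and $\dka a\,\;b\dkz[c]$) require care, because there reflections of the form $[i]$ enter, the relevant root $e_i$ has full ``spread'' under $B$, and the notion of consistent orientation in type~$B$ for cycles of the form $[a\,\;b]$ distinguishes $[1\,\;2]$ from $[2\,\;1]$; one must be sure to pair each admissible ordered factorization with the right pair of signed roots. This is why the detailed computation is relegated to the appendix (\cref{chap:typeB_details}): it is routine but lengthy, exactly parallel to the proof of Proposition~\ref{prop:typeA_anti-auto_extends} with an extra layer of sign cases.
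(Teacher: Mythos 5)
Your proposal follows essentially the same route as the paper's proof in \cref{chap:typeB_details}: enumerate the rank-two elements of $\nc(B_n)$ via the consistent-orientation/non-crossing classification, list all ordered factorizations $w=st$ into two reflections (the reduced decompositions, tabulated in the paper's \cref{tab:typeB_rk2_red_decomp}), and verify $\bilb(\alpha_s,\alpha_t)=0$ by writing $\bilb(\alpha_s,\cdot)$ as a telescoping row vector and checking each index configuration. The only difference is organizational — the paper groups the final check by the shape of the first reflection $s$ (three classes in \cref{lem:typeB_classes_red_decomp}) rather than by the cycle type of $w$ — which does not change the substance of the argument.
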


\begin{prop}\label{prop:typeD_anti-auto_extends}
	The bilinear form $\bild$  on the $n$-dimensional vector space $V$ given by 
	\[
	\bild(e_i,e_j)=
	\begin{cases}
	1 &\ i\leq j<n,\ i=j=n,\\
	-1 &\ j < i < n,\\
	0 &\ \text{otherwise}
	\end{cases}
	\]
	for $1\leq i,j \leq n$ satisfies $\bild(s,t)=0$ for all $s,t\in T$ with $st\in\nc(D_n)$.
\end{prop}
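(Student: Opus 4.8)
The plan is to follow the template of the type $A$ proof (\cref{prop:typeA_anti-auto_extends}): by \cref{thm:criterion_extension_anti-auto} it suffices to show $\bild(\alpha_s,\alpha_t)=0$ whenever $s\neq t$ are reflections with $st\in\nc(D_n)$, and such a product is necessarily a rank $2$ element. In any reduced decomposition $st$ of a rank $2$ element $x$, both $s$ and $t$ are subwords of $x$, hence $s,t\in\bel(x)$ by the \cref{subword_prop}; so for each $x$ I only have to run over the finitely many ordered pairs of distinct elements of $\bel(x)$ whose product is $x$. First I would record the roots: a reflection $\dka p\,q\dkz$ with $|p|\neq|q|$ has positive root proportional to $e_{|p|}-\operatorname{sign}(pq)\,e_{|q|}$, so a reflection not involving $\pm n$ has root in $\spa(e_1,\dots,e_{n-1})$, while $\dka c\,n\dkz$ has root $e_{|c|}-\operatorname{sign}(c)\,e_n$. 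Then I would split the rank $2$ non-crossing elements into the five shapes $[a][n]$, $\dka a\,b\,n\dkz$, $\dka a\,b\,c\dkz$, $\dka a\,b\dkz\dka c\,d\dkz$ and $\dka a\,b\dkz\dka c\,n\dkz$ (with $a,b,c,d\in\Set{\pm1,\dots,\pm(n-1)}$), that is, the five types of rank $2$ elements listed before \cref{obs:typeD_card_bel_rk_2}, whose covered sets have the cardinalities recorded there.

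The structural key is that $e_n$ decouples for $\bild$: since $\bild(e_i,e_n)=\bild(e_n,e_j)=0$ for $i,j<n$ and $\bild(e_n,e_n)=1$, every pair of vectors $u,v$ satisfies
\[
\bild(u,v)=\bild(\pi u,\pi v)+u_nv_n ,
\]
where $\pi$ is the projection onto $\spa(e_1,\dots,e_{n-1})$ and $u_n,v_n$ are the $e_n$-coordinates, and $\bild$ restricted to $\spa(e_1,\dots,e_{n-1})$ is exactly the type $B_{n-1}$ form $\bilb$. For the two shapes $\dka a\,b\,c\dkz$ and $\dka a\,b\dkz\dka c\,d\dkz$ that avoid $\pm n$, all reflections in $\bel(x)$ avoid $\pm n$, so their roots lie in $\spa(e_1,\dots,e_{n-1})$ and $x$ also lies in $\nc(B_{n-1})$ by \cref{cor:typeBD_elements_ncd_as_ncb} (respectively by the block-crossing condition of \cref{defi:typeD_ncp}). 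Hence $\bild(\alpha_s,\alpha_t)=\bilb(\alpha_s,\alpha_t)=0$ by \cref{prop:typeB_anti-auto_extends}, and these two cases require no new computation.

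What remains are the three shapes involving $n$, handled by the displayed formula together with the explicit roots. For $x=[a][n]$ one has $\bel(x)=\Set{\dka a\,n\dkz,\dka -a\,n\dkz}$ with roots $e_{|a|}-e_n$ and $e_{|a|}+e_n$ (taking $a>0$), and bilinearity gives directly
\[
\bild(e_{|a|}-e_n,e_{|a|}+e_n)=\bild(e_{|a|},e_{|a|})-\bild(e_n,e_n)=1-1=0 ,
\]
with the reversed order identical; note that it is precisely the entry $\bild(e_n,e_n)=1$ that cancels the diagonal contribution, which is why the last row and column of $\bild$ are shaped as they are. For $x=\dka a\,b\dkz\dka c\,n\dkz$ the two covered reflections commute, and since one of them avoids $n$ the cross term $u_nv_n$ vanishes, so the formula reduces the identity to $\bilb(\alpha_{\dka a\,b\dkz},e_{|c|})=0$; this is exactly the type $B_{n-1}$ vanishing for the pair $\dka a\,b\dkz,[c]$, valid because the third case of \cref{defi:typeD_ncp} makes the non-crossing condition for $\pm\{a,b\}$ and $\pm\{c,n\}$ literally the non-crossing condition for $\dka a\,b\dkz[c]$ in $B_{n-1}$. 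Finally, for $x=\dka a\,b\,n\dkz$ the set $\bel(x)=\Set{\dka a\,b\dkz,\dka b\,n\dkz,\dka a\,n\dkz}$ yields three reduced decompositions; inserting the roots into the formula turns every required vanishing into the single scalar identity
\[
\bilb(e_{|a|},e_{|b|})=\operatorname{sign}(ab),\qquad\text{i.e. }\operatorname{sign}(ab)=1\iff|a|\le|b| .
\]
This identity is supplied by the consistent-orientation constraint of \cref{defi_typeD_consistent_orientation} for a paired cycle involving $n$: checking its four sub-cases ($a\gtrless 0$ combined with $l=1$ or $l=2$) shows that the ordering of $|a|,|b|$ and the relative sign $\operatorname{sign}(ab)$ always agree.

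The main obstacle is bookkeeping rather than conceptual: I must enumerate all reduced decompositions of each shape correctly — confirming $\bel(x)$ via \cref{obs:typeD_card_bel_rk_2} and the cycle-breaking of a $3$-cycle — and track the positive root directions and their signs through the canonical identification, since a paired cycle involving $n$ has several equivalent representatives. The one genuinely type-$D$ phenomenon is that the cancellation for the $n$-cycle pairs forces the precise shape of $\bild$ (the isolated $+1$ at position $(n,n)$ with zeros elsewhere in the last row and column); verifying that \cref{defi_typeD_consistent_orientation} delivers exactly the sign identity $\operatorname{sign}(ab)=\bilb(e_{|a|},e_{|b|})$ needed for $x=\dka a\,b\,n\dkz$ is the crux, while everything else is the routine computation already carried out for types $A$ and $B$.
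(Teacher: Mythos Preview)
Your proof is correct and takes a genuinely different organizational route from the paper. The paper (Appendix~B) classifies the $47$ reduced decompositions by the structure of the \emph{first} reflection $s$ --- five classes according to whether $s=\dka k\,l\dkz$, $\dka k\,-l\dkz$, $\dka k\,n\dkz$, $\dka -k\,n\dkz$, or avoids $n$ entirely --- and for each class expands $\bild(s,\cdot)$ as an explicit signed sum of coordinate functionals, then checks the index constraints case by case. You instead classify by the shape of the rank~$2$ element $x=st$ and exploit the decoupling $\bild(u,v)=\bilb(\pi u,\pi v)+u_nv_n$, which immediately dispatches the two shapes avoiding $\pm n$ to \cref{prop:typeB_anti-auto_extends}, handles $\dka a\,b\dkz\dka c\,n\dkz$ by reducing to the type~$B_{n-1}$ pair $\dka a\,b\dkz[c]$, and collapses all twelve reduced decompositions of the four $\dka a\,b\,n\dkz$ subtypes into the single scalar identity $\bilb(e_{|a|},e_{|b|})=\operatorname{sign}(ab)$, which you then read off \cref{defi_typeD_consistent_orientation}. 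Your approach is more conceptual: it explains \emph{why} $\bild$ has its peculiar last row and column (the isolated $+1$ at $(n,n)$ is forced by $[a][n]$, the zeros elsewhere by the $\dka a\,b\dkz\dka c\,n\dkz$ reduction), and it isolates the one genuinely new type-$D$ ingredient --- the sign identity --- rather than burying it in coordinate arithmetic. The paper's approach has the virtue of being entirely self-contained and parallel to the type~$A$ and~$B$ appendices, at the cost of more bookkeeping. One small remark: your citation of \cref{cor:typeBD_elements_ncd_as_ncb} for $\dka a\,b\dkz\dka c\,d\dkz$ is slightly loose (that corollary is stated for a single paired cycle), but your parenthetical appeal to \cref{defi:typeD_ncp} supplies the needed fact, and the paper's own proof has the same imprecision at the same spot.
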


\begin{proof}[Proof of \cref{thm:anti-autos_extend}]
	In order to show that an anti-automorphism of $\nc(W)$ extends to an anti-automorphism of $\lam(V)$ we use the criterion from \cref{thm:criterion_extension_anti-auto}. Hence we have to show that there exists a bilinear form $\bil$ on $V$ such that $\bil(s,t)=0$ for all $t \in T$ and $s\in \subo(t)$. For type $A$, we proved the existence of such a bilinear form in \cref{prop:typeA_anti-auto_extends}, for type $B$ in \cref{prop:typeB_anti-auto_extends} and for type $D$ in \cref{prop:typeD_anti-auto_extends}.
\end{proof}

The translation on \cref{thm:anti-autos_extend} into the simplicial setting is the following. Recall that we identified the lattice skew-automorphisms with the corresponding simplicial automorphisms of the order complex.

\begin{thm}
	Let $W$ be a finite Coxeter group of type $A$, $B$ or $D$. If $W$ is of type $D$, further assume that $\rk(W)\neq 4$. Then every type-reversing simplicial automorphism in $\DD \leq \aut(\nc(W))$ extends to a type-reversing simplicial automorphism of the simplicial building $|\lam(V)|$.
\end{thm}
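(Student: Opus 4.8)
The plan is to transport the lattice-theoretic statement of \cref{thm:anti-autos_extend} through the order-complex construction, in the same way the simplicial version of \cref{thm:autos_commute} was obtained from its poset version. Recall that we have identified the lattice skew-automorphisms of a graded lattice with the simplicial skew-automorphisms of its order complex, and that under this identification the lattice anti-automorphisms correspond precisely to the type-reversing simplicial automorphisms (a rank-$k$ vertex is sent to a rank-$(n-k)$ vertex in a rank-$n$ lattice). Hence a type-reversing simplicial automorphism $\antiauto \in \DD$ of $|\nc(W)|$ is the same datum as a lattice anti-automorphism $\antiauto$ of $\nc(W)$ lying in $\DD$, and I would work with the latter.

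The first step is to invoke \cref{thm:anti-autos_extend}: since $W$ is of classical type, $\antiauto$ extends to a lattice skew-automorphism $\bar\antiauto\colon \lam(V)\to\lam(V)$, i.e.\ $\bar\antiauto\circ\emb = \emb\circ\antiauto$ with $\emb$ the rank-preserving embedding of \cref{thm:brady_watt_embedding} (or \cref{thm:embedding_Vp_VZ}). Next I would check that $\bar\antiauto$ is genuinely a lattice \emph{anti}-automorphism of $\lam(V)$, not an automorphism: were it order-preserving it would be order-preserving on $\im(\emb)$, contradicting that $\antiauto$ reverses the order between comparable elements of distinct ranks in $\nc(W)$ (such pairs exist for $\rk(W)\geq 2$; the rank-one case is vacuous since both order complexes are empty). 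Since a lattice anti-automorphism of the rank-$n$ lattice $\lam(V)$ carries a $k$-dimensional subspace to an $(n-k)$-dimensional one, passing to order complexes shows that the induced simplicial automorphism $|\bar\antiauto|$ of the spherical building $|\lam(V)|$ of type $A_{n-1}$ (\cref{prop:LV_building}) sends a vertex of type $s_k$ to one of type $s_{n-k}$; this is exactly the assertion that $|\bar\antiauto|$ is type-reversing, realizing the unique non-trivial diagram automorphism of $A_{n-1}$.

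Finally I would apply the order-complex functor to the commuting square $\bar\antiauto\circ\emb = \emb\circ\antiauto$ to obtain $|\bar\antiauto|\circ|\emb| = |\emb|\circ\antiauto$, and use \cref{thm:simplicial_embedding} to identify $|\nc(W)|$ with a chamber subcomplex of $|\lam(V)|$ through $|\emb|$. Under this identification $|\bar\antiauto|$ restricts to $\antiauto$ on $|\nc(W)|$, so $|\bar\antiauto|$ is the desired type-reversing simplicial automorphism of the building extending $\antiauto$. The argument is uniform in the rank, so for type $D$ the hypothesis $\rk(W)\neq 4$ is not actually used; it is carried along only for consistency with the companion statements on automorphisms.

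I expect the only delicate point to be the second step: ensuring that the extension handed over by \cref{thm:anti-autos_extend} is type-reversing on the building rather than a type-preserving automorphism of $\lam(V)$ — this is where the order-reversing behaviour of $\antiauto$ on the image of the rank-preserving embedding $\emb$ is essential. Everything substantive, namely the construction of the bilinear forms whose right complements realize $\ncaa$, was already carried out in \cref{prop:typeA_anti-auto_extends}, \cref{prop:typeB_anti-auto_extends} and \cref{prop:typeD_anti-auto_extends}, and the functoriality of the order-complex construction together with \cref{thm:simplicial_embedding} is routine.
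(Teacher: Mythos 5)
Your argument is correct and takes essentially the same route as the paper, which states this theorem as the direct translation of \cref{thm:anti-autos_extend} under the identification of lattice skew-automorphisms of a graded lattice with simplicial skew-automorphisms of its order complex; your verification that the extension is an anti-automorphism of $\lam(V)$, hence type-reversing on the building of type $A_{n-1}$, is exactly the content of that identification. Your remark that the hypothesis $\rk(W)\neq 4$ in type $D$ is not actually used is also consistent with the paper, since \cref{thm:anti-autos_extend} imposes no such restriction on $\DD$.
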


\cleardoublepage
\part{Geometry of the classical non-crossing partitions}\label{part3}

\cleardoublepage

\chapter{Motivation from geometric group theory}\label{chap:motivation}

The motivation for studying the geometric properties of the non-crossing partition complex originates from geometric group theory, which is concerned with the interaction of algebraic properties of groups and geometric properties of spaces. The aim of this chapter is to explain the role of the non-crossing partitions in the context of geometric group theory and to provide the necessary background from geometry. Finally, we outline a proof strategy that could show that the complex of classical non-crossing partitions is $\cato$, and hence that the braid groups are $\catz$ groups.\medskip

\emph{Given an algebraically defined class of groups, does every group in this class act nicely on a space of non-positive curvature?}

This is a typical question in geometric group theory. The class of groups that is linked with the classical non-crossing partitions is the class of braid groups. A \emph{braid group on $n$ strands} is a group $B_n$ that has a presentation of the form
\[
\Braket{s_1, \ldots, s_{n-1} | s_is_{i+1}s_i=s_{i+1}s_is_{i+1},\ s_is_j=s_js_i \text{ if }|i-j|>1}.
\]
The presentation of the braid group resembles the Coxeter presentation of the symmetric group. The only difference is that generators in $S_n$ have order $2$, whereas in $B_n$ they are of infinite order. 
Brady showed in \cite{bra_kpi} that the braid groups admit a presentation with the non-crossing partitions as formal generators. This initiated the study of braid groups with non-crossing partitions as a tool. 

The right notion of non-positive curvature in the setting of geodesic metric spaces is the concept of so-called $\catz$ spaces. Closely related to them are $\cato$ spaces, which have curvature bounded from above by one. The precise definitions are given below in \cref{def:catoz}.
We say that a group is a \emph{$\catz$ group} if there exists a $\catz$ space on which it acts properly, cocompactly and by isometries. The question we are concerned with is:

\emph{Are braid groups $\catz$ groups for all $n$?}

There is a positive answer to this question for small $n$. Besides the trivial cases for $n \leq 4$, it is proven for $n=5$ in \cite{bra-mcc}, and for $n=6$ in \cite{hks}. We do not give an answer for any new ranks in this thesis, but we provide a uniform strategy for all $n$ with some intermediate results for the following conjecture \cite[Conj. 8.4]{bra-mcc}.

\begin{curvcon}\label{conj:ncpn_cat1}
	The spherical complex $\on$ is $\cato$ for every $n$.
\end{curvcon}

The following result from Brady and McCammond demonstrates that a positive answer to the \cref{conj:ncpn_cat1} gives a positive answer to the question whether braid groups are $\catz$ \cite[Prop. 8.3]{bra-mcc}. 

\begin{prop}
	If the spherical complex $\on$ is $\cato$, then the braid group on $n$ strands is a $\catz$ group. 
\end{prop}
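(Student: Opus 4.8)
The statement to prove is the following: \emph{If the spherical complex $\on$ is $\cato$, then the braid group on $n$ strands is a $\catz$ group.} The plan is to use the classifying space for the braid group constructed by Brady from the non-crossing partitions, and to analyze its universal cover with a piecewise-spherical metric.

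First I would recall Brady's construction: the braid group $B_n$ acts on a simplicial complex built as follows. Starting from the poset $\nc(S_n)$ with the absolute order and Coxeter element $\cox=(1\,2\ldots n)$, one forms a simplicial complex whose top-dimensional cells correspond to maximal chains in $\nc(S_n)$, and then takes an appropriate quotient/development so that $B_n$ acts properly, cocompactly, and by isometries, with quotient a finite complex that is a $K(B_n,1)$. The universal cover $X$ of this classifying space is an infinite simplicial complex built out of copies of the cone on $\on$: concretely, each simplex of $X$ corresponds to an interval $[x,y]$ in a copy of $\nc(S_n)$, and the link of the cone point (the vertex corresponding to $\id$ or to $\cox$) inside a maximal cell is precisely a copy of the order complex $\on$. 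I would endow each top cell with the metric of a spherical cone of radius $\pi/2$ (a ``spherical simplex'' metric) so that $X$ becomes a piecewise-spherical, in fact piecewise-Euclidean after coning, complex on which $B_n$ acts cocompactly by isometries.

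The heart of the argument is Gromov's link criterion together with the fact that $\catz$ can be checked locally for complexes built from non-positively curved pieces: a piecewise-Euclidean (or appropriately metrized) complex with finitely many isometry types of cells is $\catz$ if and only if it is simply connected and the link of every vertex is $\cato$. Since $X$ is the universal cover of a $K(B_n,1)$, it is contractible, hence simply connected. So it remains to verify the local condition. The links in $X$ split into two kinds. At the ``cone points'' the link is, up to the spherical join with a standard simplex (which preserves $\cato$), a copy of the spherical complex $\on$; by hypothesis $\on$ is $\cato$, so these links are $\cato$. At all other vertices the link decomposes as a spherical join of smaller non-crossing-partition complexes $|\ncp_k|$ for $k<n$ together with spheres, and one uses induction on $n$ (the base cases $n\le 5$ being known, or simply starting the induction wherever $\on$ is established) together with the fact that a spherical join of $\cato$ spaces is again $\cato$ \cite{bh}. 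Assembling these, every vertex link in $X$ is $\cato$, so by Gromov's criterion $X$ is $\catz$, and since $B_n$ acts on $X$ properly, cocompactly, and by isometries, $B_n$ is a $\catz$ group.

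The main obstacle I anticipate is \emph{not} the global assembly — that is a fairly mechanical application of Gromov's criterion once the pieces are in place — but rather the careful bookkeeping of the metric and the link structure: one must check that the natural piecewise-spherical metric on the cells of Brady's complex is consistent across faces, that the links at non-cone vertices really are spherical joins of lower-rank non-crossing partition complexes with round spheres (this uses the product structure of intervals $[x,y]\cong \nc(W_1)\times\nc(W_2)$ in $\nc(S_n)$, which in turn reflects that a parabolic-type subinterval of $\ncp_n$ is a product of smaller $\ncp_{k_i}$), and that coning and spherical joins interact correctly with the $\cato$ property — for which one invokes the standard results on spherical joins in \cite{bh}. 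Since all of this is precisely the content of \cite{bra-mcc}, in the write-up I would cite their construction for the combinatorics of $X$ and the metric, and present the proof as: (1) $X$ contractible hence simply connected; (2) link analysis reducing to $\cato$ of $\on$ and of lower-rank complexes; (3) induction plus spherical-join closure; (4) conclude via Gromov's link criterion and the cocompact isometric action.
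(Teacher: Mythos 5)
The paper offers no proof of this proposition at all --- it is quoted verbatim from Brady and McCammond \cite[Prop. 8.3]{bra-mcc} --- and your sketch is a faithful reconstruction of their argument: metrize Brady's classifying space, pass to the simply connected universal cover, and reduce via Gromov's link criterion to the $\cato$ condition on vertex links, which are identified as spherical joins built from the complexes $|\ncp_k|$, $k\le n$. Two small corrections to the write-up: the cells of the universal cover carry \emph{Euclidean} orthoscheme metrics (the complex is piecewise Euclidean, and it is the vertex links that inherit the spherical orthoscheme metric on $\on$), and no external base case is needed for your induction, since $\on$ being $\cato$ already forces $|\ncp_k|$ to be $\cato$ for all $k<n$ --- these complexes arise as join factors of vertex links in $\on$, and both Gromov's criterion and the spherical-join statement of \cite[Cor. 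II.3.15]{bh} are equivalences.
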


\section{Geodesic metric spaces}

In this section, we shortly review the basic concepts of geodesic metric spaces that are needed in the sequel. In particular, we define a spherical metric on simplicial complexes and introduce $\catz$ and $\cato$ spaces. We follow Bridson and Haefliger \cite{bh}. 

Let $(X,\di)$ be a metric space. A \emph{geodesic} in $X$ is an isometry $\gamma\colon I \to X$, where $I$ is a closed interval $I=[0,l]\subseteq \R$ and $\R$ is equipped with the Euclidean standard metric. The geodesic $\gamma$ \emph{joins} $x$ and $y$, or \emph{is from $x$ to $y$}, if $\gamma(0)=x$ and $\gamma(l)=y$. If we want to emphasize the endpoints, we also write $\gamma_{xy}$. In particular it holds that $\di(x,y)=l$. By abuse of notation, we refer to the image $\gamma(I)\subseteq X$ as \emph{geodesic} $\gamma$ as well. 
We say that $X$ is a \emph{geodesic space} if every two points in $X$ are joined by a geodesic. It is called \emph{uniquely geodesic} if there is exactly one geodesic joining $x$ and $y$ for all $x,y\in X$. For an $r>0$, the metric space $X$ is called \emph{$r$-(uniquely) geodesic} if there exists a (unique) geodesic joining $x$ and $y$ for all $x,y\in X$ with $\di(x,y)<r$.
Note that in a geodesic space, the distance of any two points is realized by a geodesic joining them, that is the metric is a length metric \cite[Chap. I.1]{bh}.

\begin{exa}
	The Euclidean space $\R^n$ is a uniquely geodesic space for every dimension. If we endow the sphere $S^n$ with the standard round metric, then $S^n$ is a geodesic space for all $n\geq 1$. Moreover, $S^n$ is $\pi$-uniquely geodesic, but not $r$-uniquely geodesic for $r>\pi$.
\end{exa}

In the sequel, the space $\R^n$ is always equipped with the standard Euclidean metric and the spheres $S^n$ with the standard round metric.

\subsection{Metrizing simplicial complexes}\label{sec:metric}

In \cref{sec:simpl_compl} we already saw how to endow an abstract simplicial complex with a Euclidean metric on every simplex to obtain its affine realization. Now we define a spherical version of it by endowing every simplex with a spherical metric \cite[Chap. I.7, I.7A]{bh}. This enables us to turn Coxeter complexes, and hence also non-crossing partition complex $\on$ and spherical buildings, into complete geodesic spaces.

\begin{defi}
	A \emph{spherical $n$-simplex} is the convex hull of $n+1$ points $v_1, \ldots, v_{n+1}$ in general position that are contained in a open hemisphere of $S^m$ for $m \geq n$. In particular, it is a complete geodesic space with the metric induced from $S^m$. A \emph{face} of a spherical $n$-simplex is the convex hull of a subset of its vertices. 
\end{defi}

Note that every spherical simplex has an underlying abstract simplex. Moreover, there is a unique map from a spherical simplex $\bar{A}$ to the affine realization of the underlying abstract simplex $A$ that respects the abstract structure. 
This map is denoted by $f_A\colon \bar{A} \to A$.

\begin{defi}\label{defi:metric}
	Let $\si$ be a finite simplicial complex and $\S$ a set of spherical simplices. An assignment $f\colon \si \to \S$ of $k$-simplices to spherical $k$-simplices induces a \emph{spherical realization} of $\si$ if 
	for every $A\in \si$ and every face $A'\subseteq A$, the map $f_A\inv \circ f_{A'}$ is an isometry of $\bar{A}'$ onto a face of $\bar{A}$. A \emph{spherical complex} is a simplicial complex in its spherical realization.
\end{defi}

The metric on the spherical simplices naturally induces a length metric on the spherical complex. With this metric, finite spherical complexes are complete geodesic spaces \cite[Thm. I.7.19]{bh}. Note that the affine realization and the spherical realization are homeomorphic as topological spaces. This means that the topological properties such as homotopy type are the same for affine and spherical realizations. 

One class of simplicial complexes of our particular interest are Coxeter complexes. We turn them into spherical complexes as follows.
The Coxeter complex $\Sigma$ of a finite Coxeter group $W$ is isomorphic to the triangulated unit sphere, seen as simplicial complex, that arises from the geometric representation of $W$.  Hence every simplex of $\Sigma$ naturally is endowed with a spherical metric and the spherical realization of $\Sigma$ is isometric to the unit sphere. See \cref{rem:simplicial_structure_sphere} and \cref{rem:cox_cplx_homeo_sphere} for the details.

\begin{rem}
	The action of $W$ on $\R^n$, and hence on the unit sphere, via the geometric representation is by isometries and is compatible with the combinatorial action of $W$ on its Coxeter complex by left-multiplication. Because this action is simply transitive on chambers, all chambers of the spherical complex $\Sigma$ are isometric. 
\end{rem}

\begin{rem}\label{rem:metric}
	The construction of the natural metric on a Coxeter complex induces metrics on simplicial complexes that are unions of Coxeter complexes as well. Both non-crossing partition complexes and spherical buildings are unions of Coxeter complexes, hence they are naturally endowed with a metric. Further, they are complete geodesic spaces. 
\end{rem}

\begin{nota}
	By abuse of notation, we identify the abstract simplicial complexes with their spherical realizations in the sequel. Whenever we are speaking about a metric on one of these complexes, we mean the natural spherical metric induced from the Coxeter complex. If we want to emphasize the metric, we speak of \emph{spherical} complexes.
\end{nota}

Brady and McCammond use a different approach to define a metric on the non-crossing partition complex $\on$ in \cite{bra-mcc}. Their metric naturally arises from so-called orthoschemes. 
On the apartments it coincides  with the metric on the Coxeter complex we defined above \cite[Def. 4.6, Ex. 5.2]{bra-mcc}. Hence their metrics on $\on$ and the spherical building associated to vector spaces coincide with the metric we defined here.

\subsection{Curvature}

Intuitively speaking, a geodesic space $X$ is $\catz$ if every triangle is not thicker than a comparison triangle in the Euclidean space $\R^2$, which has constant sectional curvature $0$. Analogously, triangles in $\cato$ spaces are not allowed to be thicker as their comparison triangles in the sphere $S^2$, a space of constant sectional curvature $1$. 

Let $(X,\di)$ be a metric space and $(Y,\bar{\di})$ either $\R^2$ or $S^2$, equipped with the respective standard metrics. A \emph{triangle} in $X$ with vertices $x,y,z\in X$ is the union $\si$ of three geodesics $\gamma_{xy}$, $\gamma_{yz}$ and $\gamma_{xy}$. Note that in general, a triangle is not uniquely determined by its vertices, as there may be different geodesics joining two of its vertices.  A \emph{comparison triangle} for a triangle $\si$ as above is a triangle $\bar{\si}$ in $Y$ with vertices $\bar{x}$, $\bar{y}$ and $\bar{z}$ such that $\di(a,b)=\bar{\di}(\bar{a},\bar{b})$ for all $a,b\in \Set{x,y,z}$. If $Y=\R^2$, then such a comparison triangle always exists. In the case of $Y=S^2$, a comparison triangle exists if the perimeter of $\si$ satisfies $\di(x,y)+\di(y,z)+\di(x,z)<2\pi$.
A point $\bar{p} \in \bar{\si}$ is a \emph{comparison point} for $p\in \gamma_{ab}$, with $a,b\in \Set{x,y,z}$ if $\di(a,p)=\bar{\di}(\bar{a}, \bar{p})$ holds. A thin triangle and its comparison triangle in $\R^2$ with comparison points are shown in \cref{fig:thin_triangle}.

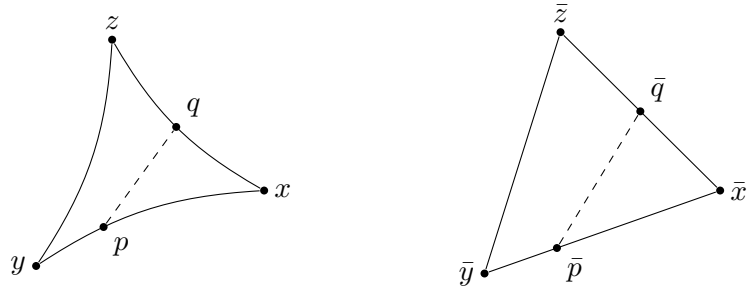
\begin{figure}
	\begin{center}
		\begin{tikzpicture}
		\node[kpunkt] (x) at (0,0){};
		\node[kpunkt] (y) at (-3,-1){};
		\node[kpunkt] (z) at (-2,2){};
		\draw  (x)  to [bend right=15] node(p)[pos = 0.7 ,kpunkt] {}(y) ;
		\draw  (y)  to [bend right=15](z) ;
		\draw  (z)  to [bend right=15] node(q)[pos = 0.5 ,kpunkt] {} (x) ;
		\node[right] at (x){$x$};
		\node[left] at (y){$y$};
		\node[above] at (z){$z$};
		\node[below right] at (p){$p$};
		\node[above right] at (q){$q$};
		\draw[dashed] (p) -- (q);
		
		\node[kpunkt] (a) at (6,0){};
		\node[kpunkt] (b) at (2.9,-1.1){};
		\node[kpunkt] (c) at (3.9,2.1){};
		\draw  (a)  to  node(d)[pos = 0.7 ,kpunkt] {}(b) ;
		\draw  (b)  to  (c) ;
		\draw  (a)  to  node(e)[pos = 0.5 ,kpunkt] {}(c) ;
		
		\node[right] at (a){$\bar{x}$};
		\node[left] at (b){$\bar{y}$};
		\node[above] at (c){$\bar{z}$};
		\node[below right] at (d){$\bar{p}$};
		\node[above right] at (e){$\bar{q}$};
		\draw[dashed] (d) -- (e);
		\end{tikzpicture}
	\end{center}
	\caption{A thin triangle is depicted on the left and its comparison triangle in $\R^2$ on the right. The dashed line indicates that the distance of $p$ and $q$ on the left is less than the distance of their comparison points $\bar{p}$ and $\bar{q}$ on the right.} 
	\label{fig:thin_triangle}
\end{figure}

\begin{defi}\label{def:catoz}
	Let $(X,\di)$ be a geodesic space and $\si \subseteq X$ a triangle. Let $\bar{\si}_0$ be a comparison triangle for $\si$ in $(\R^2,\bar{\di})$, and $\bar{\si}_1$ a comparison triangle for $\si$ in $(S^2,\bar{\di})$, provided the diameter of $\si$ is less than $2\pi$. For $\kappa \in \Set{0,1}$ we say that $\si$ \emph{satisfies the $\mathrm{CAT}(\kappa)$ inequality} if for all $p,q\in \si$ and all comparison points $\bar{p}, \bar{q} \in \bar{\si}_\kappa$ it holds that
	\[
	\di(p,q) \leq \bar{\di}(\bar{p},\bar{q}).
	\]
	The space $X$ is called a \emph{$\catz$ space} if all its triangles satisfy the $\catz$ inequality, and $X$ is called a \emph{$\cato$ space} if all its triangles of perimeter less than $2\pi$ satisfy the $\cato$ inequality. A space $X$ is called \emph{locally $\mathrm{CAT}(\kappa)$} for $\kappa\in\Set{0,1}$ if every point in $X$ has a neighborhood that is $\mathrm{CAT}(\kappa)$ with the induced metric.
\end{defi}

\begin{rem}
	The notion of $\mathrm{CAT}(\kappa)$ space exists for all $\kappa \in \R$, where the comparison space $\R^2$ or $S^2$ is replaced by a two-dimensional Riemannian manifold of constant sectional curvature $\kappa$. 	
\end{rem}

Intuitively, if a triangle satisfies the $\mathrm{CAT}(\kappa)$ inequality, then the \enquote{interior} of it is \enquote{filled} and does not contain any holes. If there were a hole, the geodesic connecting two comparison points would have to make a detour around that hole and would become longer than the one in the comparison point. Vaguely formulated, finding out whether a space is $\mathrm{CAT}(\kappa)$ means finding out that every hole is filled.

\subsubsection{Coxeter complexes and buildings}

Coxeter complexes of finite Coxeter groups are, endowed with the spherical metric, homeomorphic to unit spheres, hence they are $\cato$ spaces. Spherical buildings are $\cato$ spaces as well \cite[Thm. II.10A.4]{bh}. Coxeter complexes of affine Coxeter groups, which are certain  infinite Coxeter groups, are $\catz$ spaces, as they are homeomorphic to the Euclidean space. Analogously, affine buildings, whose apartments are affine Coxeter complexes, are $\catz$ spaces \cite[Thm. II.10A.4]{bh}.
The crucial step in the proof that these buildings are $\cato$ or $\catz$, respectively, is that any two points are contained in a common apartment. This allows to directly prove the $\cato$ or $\catz$ property.

The property that any two points are contained in a common apartment is not longer true if we consider a complex that consists only of a part of the apartments of the building, such as the non-crossing partition complex, and curvature criteria are needed.

\subsubsection{Curvature criteria}

In general it is hard to find out whether a space is $\catz$ or $\cato$, also in the local versions, and general criteria are very rare. The situation is a little bit better if the metric space is a simplicial complex. We focus on criteria for $\cato$ here, but they have analogs for $\catz$ spaces as well.  

Our strategy that might prove the \cref{conj:ncpn_cat1} is based on the following criterion for spherical complexes \cite[Thm II.5.4]{bh}.

\begin{unique}\label{unique}
	Let $X$ be a finite spherical complex. Then $X$ is a $\cato$ space if and only if $X$ is $\pi$-uniquely geodesic.
\end{unique}

The next criterion is from Bowditch \cite[Thm. 3.1.2]{bow}, see also \cite[Le. 3.6]{bra-mcc}. Before we can state it, we need the notion of a shrinkable loop.
Let $X$ be a complete metric space and $\gamma\colon S^1 \to X$ be a rectifiable loop. Then $\gamma$ is called \emph{shrinkable} if there exists a null-homotopy $h\colon I\times S^1 \to X$ of $\gamma$ such that $h(t)\colon S^1 \to X$ is a rectifiable loop for all $t\in I$ and additionally, $\ell(h(t))\leq \ell(h(t_0))$ for all $t_0,t\in I$ with $t\geq t_0$. The space $X$ is called \emph{shrinkable} if there exists a contraction that shrinks every loop in $X$.

\begin{bowditch}\label{bow}
	Let $X$ be a finite, locally $\cato$ spherical complex. Then $X$ is a $\cato$ space if and only if every loop in $X$ of length less than $2\pi$ is shrinkable.
\end{bowditch}

Bowditch's criterion requires the spherical complex to be locally $\cato$. The following criterion is due to Gromov and for instance stated in \cite[Thm. II.5.2]{bh}.

\begin{gromov}\label{gromov}
	Let $X$ be a finite spherical complex. Then $X$ is locally $\cato$ if and only if for all vertices $v\in X$ the spherical link complex $\lk(v,X)$ is $\cato$.
\end{gromov}

In the link criterion, the link complex is endowed with a metric, which arises as follows. Let $X$ be a spherical simplicial complex of dimension $n$ and $v\in X$ be a vertex. Then the link $\lk(v,X)$ is isomorphic to the triangulated sphere, seen as simplicial complex, whose simplicial structure arises from intersecting $X$ with a small $(n-1)$-dimensional sphere in $X$ around $v$. Hence the link naturally is a spherical complex. See \cite[Chap. I.7]{bh} for the details.

\section{An approach towards the Curvature Conjecture}\label{sec:strategy}

The aim of this section is to outline a strategy that might prove the \cref{conj:ncpn_cat1}. 
Recall that $\on$ is isomorphic to a chamber subcomplex of a finite spherical building $\si$ of type $A_{n-1}$ by \cref{thm:embedding_Vp_VZ}. The interaction of the simplicial structures of $\on$ and $\si$ is outlined in \cref{sec:simpl_structure}.

By the \cref{unique}, the spherical complex $\on$ is $\cato$ if for every pair of points $x,y\in \on$ that have distance less than $\pi$, there exists a unique geodesic connecting $x$ and $y$. If we want to use this criterion, we have to find out whether geodesics of \enquote{close} points are unique. For this it is helpful to know that the geodesics joining $x$ and $y$ are contained in a smaller subcomplex, which is established by the following \cite[Le. 3.1]{marq}.

\begin{lem}\label{lem:geod_contained_in_conv}
	Let $\Sigma$ be the spherical realization of a Coxeter complex, $x,y\in \Sigma$ two points and $C,D\in\C(\Sigma)$ chambers such that $x\in C$ and $y\in D$. Then every geodesic joining $x$ and $y$ is contained in a minimal gallery connecting $C$ and $D$.
\end{lem}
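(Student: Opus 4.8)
The plan is to work in the spherical realization, where by \cref{rem:cox_cplx_homeo_sphere} and the construction in \cref{sec:metric} the (finite) Coxeter complex $\Sigma$ of rank $n$ is identified with the round sphere $S^{n-1}$ triangulated by the \emph{walls} $M_t=\alpha_t\com\cap S^{n-1}$ for $t\in T$, each a great subsphere of codimension one that cuts $\Sigma$ into two open half-spaces, with each chamber lying in the closure of exactly one half-space of each wall. First I would record the two standard combinatorial facts about this arrangement (see \cite{ab,bro}): $\dig(C',C'')$ equals the number of walls separating $C'$ from $C''$, and a gallery from $C'$ to $C''$ is minimal if and only if it crosses each separating wall exactly once and crosses no other wall. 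Combining these, a chamber $E$ lies on a minimal gallery from $C$ to $D$ — that is, $E\in\C(\convg(C,D))$ — precisely when $\dig(C,E)+\dig(E,D)=\dig(C,D)$, and this in turn holds exactly when $E$ lies on the common $C,D$-side of every wall that does \emph{not} separate $C$ from $D$. Writing $\overline{H}_M$ for the closed half-space bounded by such a non-separating wall $M$ and containing both $C$ and $D$, this yields
\[
\bigcup_{E\in\C(\convg(C,D))}\overline{E}\;=\;\bigcap_{M\ \mathrm{non\text{-}separating}}\overline{H}_M .
\]

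\textbf{Step 1: the geodesic lies in the convex hull.} Fix a non-separating wall $M$; it bounds a closed hemisphere $\overline{H}_M=\{z\in S^{n-1}:\langle z,u_M\rangle\ge 0\}$ for a unit normal $u_M$, and $x\in\overline{C}\subseteq\overline{H}_M$, $y\in\overline{D}\subseteq\overline{H}_M$. A minimizing geodesic between two points of a closed hemisphere remains inside it: writing $\theta=\di(x,y)\in(0,\pi)$ and parametrising $\gamma$ by $\gamma(s)=\bigl(\sin((1-s)\theta)\,x+\sin(s\theta)\,y\bigr)/\sin\theta$ for $s\in[0,1]$, the quantity $\langle\gamma(s),u_M\rangle$ is a non-negative linear combination of $\langle x,u_M\rangle\ge 0$ and $\langle y,u_M\rangle\ge 0$, hence non-negative. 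Thus $\gamma\subseteq\overline{H}_M$ for every non-separating wall, and by the displayed identity $\gamma$ is contained in the union of the chambers of minimal galleries from $C$ to $D$.

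\textbf{Step 2: from the convex hull to a single minimal gallery.} It remains to upgrade "$\gamma$ is covered by the minimal-gallery chambers" to "$\gamma$ is covered by one minimal gallery". I would list the closed chambers met by $\gamma$ in the order of the parameter $s$; consecutive ones share a panel, so this produces a gallery $\Gamma$ from $C$ to $D$. Since $\gamma$ is a minimizing arc of a great circle of length $\di(x,y)<\pi$, it meets each wall in at most one point, so $\Gamma$ crosses each wall separating $C$ from $D$ exactly once and crosses no other wall; hence $\Gamma$ is minimal and its union contains $\gamma$, which proves the lemma.

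\textbf{Main obstacle.} The delicate part is the bookkeeping in Step 2 when $\gamma$ fails to meet the wall arrangement transversally — when it runs along a wall, or passes through faces of codimension $\ge 2$ — so that the sequence of chambers traversed does not obviously assemble into a genuine gallery. The cleanest remedies are either a perturbation/density argument inside the finite wall arrangement, or an induction on $\dig(C,D)$ that peels off one separating wall at a time, in each step using Step 1 to confine $\gamma$ to the relevant closed half-space and then crossing the chosen wall once. (One should also note that the estimates above degenerate when $\di(x,y)=\pi$, i.e.\ when $x$ and $y$ are antipodal and geodesics between them need not be unique; in the intended application via the \cref{unique} one always has $\di(x,y)<\pi$, so this case does not arise.)
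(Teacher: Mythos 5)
The paper does not prove this lemma itself: it is imported from Lemma~3.1 of \cite{marq}, whose proof in turn rests on \cite[Prop.~12.25]{ab}. Your argument is a self-contained reconstruction of essentially that standard proof (half-space description of the combinatorial convex hull plus spherical convexity of closed hemispheres), so the question is completeness rather than method. Step~1 is sound and already delivers everything the paper actually uses downstream, namely that every geodesic from $x$ to $y$ lies in $\conv_\Sigma(C,D)$. Two caveats, though. First, the inclusion $\bigcap_M \overline{H}_M \subseteq \bigcup_E \overline{E}$ in your displayed identity is the nontrivial direction and is asserted rather than proved: a point $z$ of the intersection lying on several non-separating walls must be shown to lie in the closure of a chamber that is on the $C,D$-side of \emph{all} non-separating walls. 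This is true (walls not containing the carrier face of $z$ cause no trouble, and the $C,D$-sides of the walls through the carrier have nonempty open intersection containing $C$, which accumulates at $z$), but it is exactly the non-transversality issue you defer to Step~2, so it needs to be addressed, not cited. Second, your closing remark understates the problem: for $\di(x,y)=\pi$ the lemma is actually \emph{false}, not merely harder. In the hexagonal $A_2$-complex take $x$ a vertex, $y$ its antipode, $C=[0,\pi/3]$ and $D=[2\pi/3,\pi]$; then $\conv_\Sigma(C,D)$ is the upper semicircle while the lower semicircle is also a geodesic from $x$ to $y$. So the hypothesis $\di(x,y)<\pi$ must be added to the statement; it is available in the intended application via the \cref{unique}.

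The gap you flag in Step~2 is genuine, since the statement asks for containment in a \emph{single} minimal gallery and Step~1 only gives containment in their union, but it closes along the lines you suggest. If $\gamma$ crosses a face $F$ of codimension at least two, the chambers $E_1,E_2$ it occupies immediately before and after both contain $F$ and both lie in $\conv_\Sigma(C,D)$; any minimal gallery from $E_1$ to $E_2$ lies in $\conv_\Sigma(E_1,E_2)\subseteq\conv_\Sigma(C,D)$ and, because residues are convex, in $\sta(F,\Sigma)$, so every one of its chambers contains the crossing point and it may be spliced into your chamber sequence. A subarc of $\gamma$ running inside a wall is handled the same way using the smallest face containing it. The spliced gallery is minimal because a minimizing arc of length less than $\pi$ meets each wall either in at most one point or along a connected subarc, so no separating wall is crossed twice and no non-separating wall is crossed at all. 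With these two repairs (and the added hypothesis $\di(x,y)<\pi$) your proof is complete.
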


Since every minimal gallery connecting $C$ and $D$ in $\Sigma$ is contained in the convex hull $\conv_\Sigma(C,D)$, every geodesic joining $x\in C$ and $y\in D$ is contained in the convex hull of $C$ and $D$. Note that the above lemma is only stated for Coxeter complexes, but the proof, which is based on \cite[Prop. 12.25]{ab}, also works for $\on$ instead of $\Sigma$. 

Now let $C,D\in \Cn$ be two chambers and let $x,y\in \on$ be two points 
such that $x\in C$ and $y\in D$. 
Since all geodesics joining $x$ and $y$ are contained in $\convn(C,D)$, it is enough to find out if $\convn(C,D)$ is $\pi$-uniquely geodesic. Since $\convn(C,D)$ is, as a chamber subcomplex of $\on$, a finite spherical complex, the \cref{unique} states that $\convn(C,D)$ is $\pi$-uniquely geodesic if and only if $\convn(C,D)$ is $\cato$. So we have to find out whether $\convn(C,D)$ is $\cato$.

\subsubsection{The first cases}

There are two cases for which we know that $\convn(C,D)$ is $\cato$.
The first is if $C$ and $D$ are contained in a common apartment $A$ of $\on$, which is $\cato$. 
The second case is that $C$ and $D$ have non-empty intersection. Then $\convn(C,D)$ is a subcomplex of $\sta(C\cap D, \on)$ and therefore contractible, hence $\cato$.\medskip 

So we may assume that $C$ and $D$ have empty intersection and that they are not contained in a common apartment in $\on$.
Now the strategy splits into two different cases. 
Either, the gallery distance of $C$ and $D$ in $\on$ \emph{equals} the distance measured in the building $\si$, or \emph{not}. 

\subsubsection{The equal-distance case}

First, let us consider the case $\din(C,D)=\dig(C,D)$, that is the minimal galleries in $\on$ have the same lengths as the ones in $\si$. Hence $\convn(C,D)$ is contained in $\convg(C,D)$ and in particular in an apartment $A$ of the building. Note that $A$ is not an apartment of $\on$, hence $\convn(C,D) \neq A$. 
At this point we want to use the \cref{bow} and show that $\convn(C,D)$ is shrinkable.

\subsubsection{The different-distance case}

Now let us consider the case $\din(C,D)>\dig(C,D)$, which means that the minimal galleries in $\on$ that connect $C$ and $D$ are longer than the minimal galleries in $\si$ connecting the two chambers. Consequently, $\convn(C,D) \not\subseteq \convg(C,D)$ and hence $\convn(C,D)$ is not contained in any apartment of the building $\si$. 
We distinguish the cases whether $C$ and $D$ are opposite in the building or not, that is whether $\dig(C,D)$ equals $\binom{n-1}{2}$ or is less. In both cases, we consider concrete examples in \cref{sec:dif_dis}, which may help to prove the general case. 

If $C$ and $D$ are opposite in the building, then the convex hull may not be simply connected. In the example, we use this to show that any loop that passes through different \enquote{strands} of the convex hull has length $\geq \pi$.

If $C$ and $D$ are not opposite in the building, we believe the convex hull is shrinkable, which is supported by a detailed discussion of an example.

\cleardoublepage

\chapter{Simplicial structure}\label{sec:simpl_structure}

In this chapter we describe the simplicial structure of the non-crossing partition complex $\on$ and the partition complex $\op$ as a chamber subcomplex of the spherical building associated to $\vs$. We interpret apartments and chambers graphically and show some basic structural properties of the so-called universal and base chambers in $\on$ and $\op$, respectively. Finally, we further investigate the Coxeter complex of type $A$. All in all, this chapter provides the needed background for \cref{chap:dist}.

For the rest of this part, we make the following conventions. We suppose that $n$ is an arbitrary positive integer. An element $\pi \in \ncpn$ is in its pictorial representation if not stated otherwise. The group-theoretic non-crossing partitions $\nc(S_n)$ are with respect to the Coxeter element $\cox=(1\,\;2\ldots n)$ of $S_n$. Moreover, we choose the isomorphism of the two lattices $\iota \colon \ncpn \to \nc(S_n)$ that is the inverse of the isomorphism from \cref{thm:typeA_nc_iso}. Finally, we set $\si \coloneqq \lam(\vs)$.

\section{Embeddings}

In this section we illustrate the pictorial version of the embedding of $\ncpn$ into the lattice $\lam(\vs)$. From this we derive a compatible embedding of the partition lattice $\pn \to \lam(\vs)$.

\subsection{Pictorial non-crossing partitions}\label{sec:emb_typeA}
Let us quickly recall the group-theoretic embedding $\emb\colon\nc(S_n) \to \lam(\vs)$ from \cref{sec:typeA_embedding}. By $e_i$ we denote the $\ith$ standard basis vector of $\vs$ for $1\leq i <n$. The assignment of reflections to one-dimensional subspaces $\emb\colon T \to \lam(\vs)$ given by
\[
(i\,\;j)\mapsto
\begin{cases}
\Braket{e_i + e_j}  & \text{ if }j<n,\\
\Braket{e_i}&\text{ if }j=n
\end{cases}
\]
induces the embedding $\emb\colon \ncpn \to \lam(\vs)$ via
\[
t_1\ldots t_k \mapsto \emb(t_1) \vee \ldots \vee \emb(t_k)
\]
by \cref{lem:pemb_join_interchange}, provided that $t_1\ldots t_k$ is a reduced decomposition. Pre-composition of $f$ with the isomorphism $\iota \colon \ncpn \to \nc(S_n)$ gives rise to an embedding $\ncpn \to \lam(\vs)$. The following is devoted to establish an explicit description of this embedding in pictorial terms. 

Recall that the rank $1$ elements of $\ncpn$ are in bijection with the edges of the regular $n$-gon in the plane.

\begin{con}\label{conv:edges}
	We identify the edge set on the vertices of the $n$-gon with the set of rank $1$ elements of $\ncpn$ and denote it by $\E$. An edge is either denoted by $\Set{i,j}$ or $(i,j)$. If we use the notation $(i,j)$, then we assume that $i<j$.
\end{con}

The restriction of $\iota\colon \ncpn \to \nc(S_n)$ to $\E$ gives rise to the bijection
\[
\iota \colon\E \to T,\quad (i,j) \mapsto (i\,\;j),
\]
which leads to an injective map 
\[
\emb_\E\coloneqq \emb\circ\iota\colon\E \to \lam(\vs), \quad (i,j) \mapsto 
\begin{cases}
\Braket{e_i + e_j}  & \text{ if }j<n,\\
\Braket{e_i}&\text{ if }j=n.
\end{cases}
\]

In order to get an embedding of $\ncpn$ that is completely described in pictorial terms, we have to characterize the set of edges that correspond to reduced decompositions. In order to do so, we need some terminology from graph theory.

\begin{defi}
	A \emph{tree} is a connected graph without cycles and a \emph{forest} is a graph whose connected components are trees. 	
	By a \emph{non-crossing forest} we mean an embedded forest on a subset of the labeled vertices of the $n$-gon without crossing edges. A tree is called \emph{spanning} if it connects all vertices of the $n$-gon. A \emph{labeling} of a graph with $k$ edges and edge set $A$ is a bijective map $\lambda\colon \Set{1, \ldots, k} \to A$. A tree together with a labeling is called \emph{labeled tree}.
\end{defi}

The following lemma classifies reduced decompositions in terms of non-crossing trees \cite[Thm. 2.2, Le. 2.5]{gy}.

\begin{lem}\label{lem:nc_spanning_tree_red_decomp}
	Let $\Set{a_1, \ldots, a_{n-1}}\subseteq \E$ be a set of edges on the vertices of the $n$-gon. Then there exists a permutation $\sigma \in S_n$ such that $\iota(a_{\sigma(1)})\ldots \iota(a_{\sigma({n-1})})$ is a reduced decomposition of the Coxeter element if and only if $a_1, \ldots, a_{n-1}$ are the edges of a non-crossing tree.
\end{lem}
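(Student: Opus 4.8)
The plan is to prove \cref{lem:nc_spanning_tree_red_decomp} by combining \cref{prop:typeA_consistent_orientation_nc}, \cref{cor:length_subcycle} and \cref{lem:carter} with a careful bookkeeping of what "breaking cycles" does on the pictorial side. Throughout, write $a_i = (x_i, y_i)$ with $x_i < y_i$, and recall that $\iota(a_i) = (x_i\,\;y_i) \in T$.

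\medskip
\noindent\textbf{The forward direction.} Suppose some ordering $\iota(a_{\sigma(1)})\ldots\iota(a_{\sigma(n-1)})$ is a reduced decomposition of $\cox=(1\,\;2\ldots n)$. By \cref{lem:carter}, the associated roots are linearly independent in $V$; equivalently, after applying $\iota$ and the isomorphism $\ncpn \to \nc(S_n)$, the corresponding partition $\{\cox\}$ has $n-1$ transpositions joining up into a single non-crossing cycle, so by \cref{prop:typeA_consistent_orientation_nc} the $a_i$ form a non-crossing graph. It remains to see this graph is a tree: it has $n-1$ edges on $n$ vertices, so it suffices to rule out cycles, and a cycle would force a linear dependence among the roots $\{e_{x_i}+e_{y_i}\}$ (reading the boundary of the cycle, with the convention $e_n = 0$), contradicting \cref{lem:carter}. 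Alternatively, one can argue directly that if the $a_i$ contained a graph cycle then at some step of building up the product $\iota(a_{\sigma(1)})\cdots\iota(a_{\sigma(k)})$ the absolute length would fail to increase by one, contradicting the reducedness via the \cref{subword_prop}.

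\medskip
\noindent\textbf{The reverse direction.} Now suppose $a_1,\ldots,a_{n-1}$ form a non-crossing spanning tree $\Gamma$. I would induct on $n$, producing an ordering $\sigma$ such that $\iota(a_{\sigma(1)})\cdots\iota(a_{\sigma(n-1)}) = \cox$ with the product being a reduced decomposition. For the inductive step, pick a leaf $v$ of $\Gamma$; since $\Gamma$ is non-crossing, we may choose the leaf to be a vertex $v$ such that the unique edge $a = (v, w)$ at $v$ separates no other vertices, i.e. $v$ and $w$ are cyclically adjacent when one collapses the subtree appropriately. Deleting $v$ and $a$ gives a non-crossing spanning tree $\Gamma'$ on the remaining $n-1$ vertices (relabel them $1,\ldots,n-1$ preserving cyclic order); by induction $\Gamma'$ admits an ordering whose $\iota$-product is the Coxeter element $(1\,\;2\ldots n-1)$ of $S_{n-1}$ via a reduced decomposition. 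Reinserting $v$ corresponds, on the level of cycles, to an inverse "cycle break" as in the sketch of proof of \cref{prop:typeA_consistent_orientation_nc}: inserting the transposition $(v\,\;w)$ at the appropriate position merges $v$ into the big cycle and yields $\cox = (1\,\;2\ldots n)$, and by \cref{cor:length_subcycle} the length goes up by exactly one, so the enlarged word is still reduced. Tracking the positions through the cycle-break/merge operations gives the required permutation $\sigma$.

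\medskip
\noindent\textbf{Main obstacle.} The delicate point is the bookkeeping of \emph{positions}: when reinserting the leaf $v$ via a transposition $(v\,\;w)$, one must insert it at the correct spot in the word so that the running products stay consistently oriented cycles (hence stay in $\nc(S_n)$ by \cref{prop:typeA_consistent_orientation_nc}) and so that the total product is exactly $\cox$ rather than some other Coxeter element. This amounts to showing that a non-crossing tree edge can always be ``broken off'' in a way compatible with the cyclic order $1,2,\ldots,n$; the non-crossing hypothesis is exactly what makes such a compatible leaf-and-position choice possible, and making this precise — ideally by choosing $v$ so that $w$ is the cyclically-next vertex of $\Gamma$ after $v$ in the relevant sense — is where the real work lies. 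An alternative that sidesteps some of this is to cite \cite{gy} directly, as indicated, but I would prefer to give the self-contained inductive argument since all the needed ingredients (\cref{prop:typeA_consistent_orientation_nc}, \cref{cor:length_subcycle}, \cref{lem:carter}, \cref{subword_prop}) are already available.
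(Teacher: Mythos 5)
First, a point of comparison: the paper does not prove this lemma at all — it is quoted from Goulden and Yong \cite[Thm.~2.2, Le.~2.5]{gy} — so your self-contained argument is additional work rather than a reconstruction of the paper's proof. Your forward direction has the right skeleton: Carter's Lemma rules out graph cycles among the $n-1$ edges, so they form a spanning tree. But the justification you give for pairwise non-crossingness is a non sequitur: linear independence of the roots, and the fact that $\Set{\cox}$ is itself a non-crossing partition, say nothing about whether two letters of a reduced decomposition cross each other. The correct tool is the one you invoke only for acyclicity: by the \cref{subword_prop}, any two letters $\iota(a_i)\iota(a_j)$ form a length-two subword lying in $\nc(S_n)$, and if the edges $a_i,a_j$ were disjoint and interleaved, this rank-two element would be a crossing partition, contradicting \cref{prop:typeA_consistent_orientation_nc}. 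With that substitution the forward direction is complete.

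The reverse direction still has a genuine hole, which you have located but not filled. The induction closes easily if the deleted leaf $v$ is joined in the tree to a cyclic neighbour: writing $\cox'$ for the consistently oriented $(n-1)$-cycle on the remaining vertices, one checks $\cox=(v\,\;v{+}1)\,\cox'$ and $\cox=\cox'\,(v{-}1\,\;v)$ (indices mod $n$), so prepending or appending the new transposition to the inductively obtained word yields a word of $n-1$ letters for an element of absolute length $n-1$, which is automatically reduced. But an arbitrary leaf of a non-crossing tree need not be joined to a cyclic neighbour (the leaf $1$ of the star at $3$ in the square is not), so you must prove that \emph{some} leaf with this property always exists; your phrase ``separates no other vertices \dots\ when one collapses the subtree appropriately'' is not a checkable statement. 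The missing claim is true and admits an extremal proof: among all leaves $v$ with neighbour $w$, choose one minimizing the size of the smaller open arc between $v$ and $w$; if that arc is nonempty, non-crossingness forces its vertices to attach to the tree only within the closed arc, the induced subtree on the closed arc has a second leaf lying inside the open arc, and that leaf realizes a strictly smaller arc — a contradiction. Until you supply this lemma (or an insertion rule for arbitrary leaves, which is essentially the content of \cite[Le.~2.5]{gy}), the reverse direction remains a plan rather than a proof.
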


\begin{cor}\label{cor:nc_forest}
	A set of edges $A\subseteq \E$ corresponds to a reduced decomposition of an element $w\in S_n$ if and only if $A$ is the edge set of a non-crossing forest.
\end{cor}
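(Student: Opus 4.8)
The statement to prove is Corollary \ref{cor:nc_forest}, which characterizes when a set of edges $A\subseteq\E$ corresponds to a reduced decomposition of some element $w\in S_n$ via the map $\iota$, namely that this happens precisely when $A$ is the edge set of a non-crossing forest. The plan is to deduce this from Lemma \ref{lem:nc_spanning_tree_red_decomp}, which handles the spanning-tree case (i.e.\ the full-rank case, $\#A=n-1$, producing a reduced decomposition of the Coxeter element).

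First I would reduce the ``only if'' direction to Carter's Lemma (\cref{lem:carter}). Suppose $A=\set{a_1,\ldots,a_k}$ and some ordering $\iota(a_{\sigma(1)})\ldots\iota(a_{\sigma(k)})$ is a reduced decomposition of $w\in S_n$. By Carter's Lemma the roots $\alpha_{\iota(a_{\sigma(1)})},\ldots,\alpha_{\iota(a_{\sigma(k)})}$ are linearly independent in $V$, so in particular the corresponding one-dimensional subspaces $\emb_\E(a_i)$ are distinct and the edges $a_i$ are distinct; moreover $\#A=k=\ell(w)\le n-1$. To see that $A$ is non-crossing and acyclic, I would argue that any cycle or any crossing among the $a_i$ produces a linear dependence among the associated roots, contradicting Carter's Lemma: a cycle $(i_1,i_2),(i_2,i_3),\ldots,(i_m,i_1)$ in the edge set gives a dependence among the vectors $e_{i_\ell}+e_{i_{\ell+1}}$ (or $e_{i_\ell}$ when an endpoint is $n$) by an alternating-sum argument; and if $A$ contains two crossing edges, then by \cref{prop:typeA_consistent_orientation_nc} the product $\iota(a_i)\iota(a_j)$ of those two transpositions, together with a re-expression via \cref{cor:length_subcycle}, cannot have absolute length $2$ while being a subword of a reduced decomposition of $w$ — contradicting the \cref{subword_prop}. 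Actually the cleanest route is: a reduced word has linearly independent roots (Carter), and the root set $\set{\alpha_{\iota(a)}\str a\in A}$ is exactly determined by the edge set, so $A$ is non-crossing because any crossing pair of edges already yields a length-$2$ element which is \emph{not} non-crossing by \cref{prop:typeA_consistent_orientation_nc}, hence by the \cref{subword_prop} cannot sit inside a reduced decomposition; and $A$ is acyclic by the linear-independence argument. Thus $A$ is a non-crossing forest.

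For the ``if'' direction, suppose $A$ is the edge set of a non-crossing forest with $k$ edges. I would extend the forest to a non-crossing spanning tree $\tilde A\supseteq A$ on the $n$-gon (possible since one can always add non-crossing edges to connect the components without creating crossings — e.g.\ triangulate or simply connect consecutive boundary vertices of distinct components along the polygon). By \cref{lem:nc_spanning_tree_red_decomp} applied to $\tilde A$, there is a permutation $\sigma$ such that $\iota(\tilde a_{\sigma(1)})\ldots\iota(\tilde a_{\sigma(n-1)})$ is a reduced decomposition of the Coxeter element $\cox$. Since every subword of a reduced decomposition of $\cox$ is again a reduced decomposition of an element $w\le\cox$ (by the \cref{subword_prop} together with Carter's Lemma, which guarantees the subword stays reduced because its roots remain linearly independent), in particular the subword consisting of exactly the letters $\iota(a)$ for $a\in A$, taken in the order inherited from $\sigma$, is a reduced decomposition of some $w\in S_n$. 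Therefore $A$ corresponds to a reduced decomposition, as claimed.

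The main obstacle I expect is making the ``only if'' direction airtight without redoing the work of \cref{lem:nc_spanning_tree_red_decomp}: one must be careful that the hypothesis gives a reduced decomposition of \emph{some} $w$, not necessarily of the Coxeter element, so I cannot invoke \cref{lem:nc_spanning_tree_red_decomp} directly but must instead lean on Carter's Lemma and the non-crossing characterization \cref{prop:typeA_consistent_orientation_nc} to exclude both cycles and crossings. A minor technical point in the ``if'' direction is verifying that a non-crossing forest can always be completed to a non-crossing spanning tree; this is an elementary planarity fact (add edges along the boundary of the polygon between consecutive vertices lying in different components) that I would state with a one-line justification rather than a detailed argument.
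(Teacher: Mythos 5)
Your ``if'' direction is essentially the paper's argument: extend the non-crossing forest to a non-crossing spanning tree, apply \cref{lem:nc_spanning_tree_red_decomp} to get a reduced decomposition of the Coxeter element, and pass to the subword indexed by $A$ (the paper, like you, leaves the extension-to-a-spanning-tree step as an elementary planarity fact, and your added remark that the subword stays reduced because of \cref{lem:carter} is actually more careful than the published text). Your ``only if'' direction, however, takes a genuinely different route. The paper simply invokes the \cref{prefix} to extend the given reduced decomposition of $w$ to a reduced decomposition of the Coxeter element, feeds that into \cref{lem:nc_spanning_tree_red_decomp} to obtain a non-crossing spanning tree, and observes that $A$ is a subset of its edges --- three lines, no case analysis. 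You instead rule out cycles via Carter's Lemma and rule out crossings via \cref{prop:typeA_consistent_orientation_nc} plus the \cref{subword_prop}. Both work, but the paper's version is shorter and avoids the separate crossing argument entirely.

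One point in your ``only if'' direction needs repair. You claim a crossing pair of edges gives a length-two element that ``cannot sit inside a reduced decomposition.'' The \cref{subword_prop} only yields $st\leq w$; the fact that $st\notin\nc(S_n)$ means $st\not\leq\cox$, which contradicts nothing unless you also know $w\leq\cox$ and invoke transitivity. Indeed, for arbitrary $w\in S_n$ the ``only if'' direction is false: $A=\Set{(1,3),(2,4)}$ corresponds to the reduced decomposition $(1\,\;3)(2\,\;4)$ of the element $w=(1\,\;3)(2\,\;4)\in S_4$, yet $A$ is a crossing forest. The corollary must be read with $w\in\nc(S_n)$ (this is how it is used, and the paper's own proof silently makes the same assumption when it applies the \cref{prefix}). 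So your argument closes once you add the one line ``$st\leq w\leq\cox$, hence $st\in\nc(S_n)$, contradiction,'' but as written the contradiction is not there, and your stated worry that $w$ ``is not necessarily the Coxeter element'' is resolved by restricting $w$, not by the crossing argument alone.
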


\begin{proof}
	Let $A$ be a edge set of a non-crossing forest $F$ on the vertices of a regular $n$-gon. Then $F$ can be extended to a non-crossing spanning tree, which corresponds to a reduced decomposition $\tau$ of the Coxeter element by the above lemma. The reflections of $\tau$ that correspond to edges in $A$ form, as a subword of a reduced decomposition, a reduced decomposition by the \cref{subword_prop}. 
	
	Now let $\tau=t_1\ldots t_k$ be a reduced decomposition. By the \cref{prefix}, $\tau$ can be extended to a reduced decomposition $\tau'$ of the Coxeter element. This reduced decomposition $\tau'$ gives rise to a non-crossing spanning tree by the above lemma. The reflections of $\tau$ hence correspond to a subset $A$ of edges of a non-crossing spanning tree, which form a non-crossing forest.
\end{proof}

\begin{nota}
	Let $A\subseteq \E$ be the edge set of an embedded forest on the vertices of the regular $n$-gon. By abuse of notation we refer to $A$ as forest and omit both the term \enquote{edge set of} and its vertex set, which we always assume to be the vertices of the regular $n$-gon in the plane. 
\end{nota}

\begin{defi}
	Let $\pi \in \ncpn$ be a non-crossing partition. A \emph{non-crossing spanning forest for $\pi$} is a forest $A(\pi)\subseteq \E$ such that $\bigvee A(\pi) = \pi$ in $\ncpn$. 
\end{defi}

By \cref{cor:nc_forest} the forest $A$ is a non-crossing forest of $\pi \in \ncpn$ if and only if it corresponds to a reduced decomposition of $\iota(\pi)$ under the isomorphism $\iota$. Just as reduced decompositions of an element $w\in \nc(S_n)$ are not uniquely determined in general, non-crossing forest are not unique as well. \cref{fig:spanning_forests} shows two non-crossing forests for the same non-crossing partition and two corresponding reduced decompositions. Note that to a spanning non-crossing forest there may correspond different reduced decompositions. The reason is that the forest only determines the \emph{set} of reflections of the reduced decomposition, not their order. \cref{lem:good_labeling} shows which orders of edges correspond to reduced decompositions.

\begin{figure}
	\begin{center}
		\begin{tikzpicture}
		\foreach \w in {1,...,6}
		\coordinate(e\w) at (\w*60 : 8mm);
		\fill[Superlightgray] (e1)--(e2)--(e3)--(e4)--(e6)--(e1);
		\foreach \w in {1,...,6}
		\node[kpunkt] at (e\w){};
		\draw[thick] (e1)--(e6)(e1)--(e4)(e2)--(e4)(e2)--(e3);
		
		\begin{scope}[xshift = 4cm]
		\foreach \w in {1,...,6}
		\coordinate(e\w) at (\w*60 : 8mm);
		\fill[Superlightgray] (e1)--(e2)--(e3)--(e4)--(e6)--(e1);
		\foreach \w in {1,...,6}
		\node[kpunkt] at (e\w){};
		\draw[thick] (e1)--(e2)(e6)--(e4)(e2)--(e4)(e2)--(e3);
		\end{scope}
		\end{tikzpicture}
	\end{center}
	\caption{Two non-crossing spanning forests of the same non-crossing partition are shown here. A reduced decomposition corresponding to the left forest is $(3\,\;6)(1\,\;6)(4\,\;5)(3\,\;5)$, and $(1\,\;3)(4\,\;5)(3\,\;5)(5\,\;6)$ corresponds to the right forest.} 
	\label{fig:spanning_forests}
\end{figure}
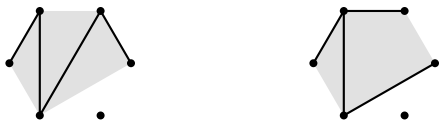

\begin{thm}\label{thm:ncpn_pict_embedding}
	The map $\emb\colon \ncpn \to \lam(\vs)$ given by
	\[
	\pi = \bigvee A(\pi) \mapsto \bigvee_{a\in A(\pi)} \emb_\E(a)
	\]
	is a rank-preserving poset embedding.
\end{thm}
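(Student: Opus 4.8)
The plan is to derive Theorem~\ref{thm:ncpn_pict_embedding} as a direct corollary of the already established group-theoretic embedding, by transporting it along the pictorial dictionary. First I would observe that the composite $\emb \circ \iota \colon \ncpn \to \lam(\vs)$ is a rank-preserving poset embedding: $\iota$ is a lattice isomorphism by \cref{thm:typeA_nc_iso} (more precisely, we chose $\iota$ to be its inverse), and $\emb \colon \nc(S_n) \to \lam(\vs)$ is a rank-preserving poset embedding by \cref{prop:typeA_emb} together with \cref{lem:typeA_comp_primes} (any prime, in particular $p=2$, is compatible in type $A$). A composition of a lattice isomorphism with a rank-preserving poset embedding is again a rank-preserving poset embedding, so it only remains to check that the map described in the statement \emph{is} $\emb \circ \iota$, i.e.\ that it is well-defined and agrees with $\emb \circ \iota$ on every $\pi \in \ncpn$.

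The second step is well-definedness. Given $\pi \in \ncpn$, a non-crossing spanning forest $A(\pi)$ exists: pick any reduced decomposition $t_1\ldots t_k$ of $\iota(\pi) \in \nc(S_n)$ and let $A(\pi) = \set{\iota^{-1}(t_1), \ldots, \iota^{-1}(t_k)}$, which is a non-crossing forest by \cref{cor:nc_forest} and satisfies $\bigvee A(\pi) = \pi$ because $\iota$ interchanges the joins (so $\iota(\bigvee A(\pi)) = t_1 \vee \ldots \vee t_k = \iota(\pi)$ by the lemma of Armstrong quoted right before \cref{cor:construction_boolean_sublatice}). The substantive point is independence of the choice of $A(\pi)$: if $A$ and $A'$ are two non-crossing spanning forests for $\pi$, then by \cref{cor:nc_forest} the edge sets $\iota(A)$ and $\iota(A')$ are the reflection-sets of reduced decompositions $\tau$, $\tau'$ of $\iota(\pi)$, and by \cref{lem:join_and_em_interchange} (equivalently \cref{lem:pemb_join_interchange}) applied to either decomposition, $\bigvee_{a \in A}\emb_\E(a) = \emb(\iota(\pi)) = \bigvee_{a \in A'}\emb_\E(a)$. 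Hence the formula $\pi \mapsto \bigvee_{a\in A(\pi)}\emb_\E(a)$ does not depend on the forest chosen, and by construction it equals $\emb(\iota(\pi))$ for every $\pi$.

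Finally, since $\emb_\E = \emb \circ \iota|_\E$ by definition, the displayed map is literally $\emb \circ \iota$, so it inherits from that composite the property of being a rank-preserving poset embedding, completing the proof. I do not anticipate a genuine obstacle here; the only delicate bookkeeping is making sure the ``join of one-dimensional subspaces'' on the right-hand side is computed in $\lam(\vs)$ (where the join is the subspace sum) and matches the moved-space description $\emb(w) = \bigoplus_i \F_p\,\tpal_{t_i}$ from the paragraph preceding \cref{thm:embedding_Vp_VZ}, which is exactly what Lemmas~\ref{lem:join_and_em_interchange}, \ref{lem:pemb_join_interchange} and \ref{lem:basis_p_mov_space} guarantee. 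One should also note in passing that $\emb_\E$ is indeed injective, which is immediate from the explicit formula $(i,j)\mapsto \Braket{e_i+e_j}$ or $\Braket{e_i}$ since distinct edges give distinct one-dimensional subspaces, although injectivity of the whole map $\emb$ is what really matters and that is already contained in \cref{prop:typeA_emb}.
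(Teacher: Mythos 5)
Your proposal is correct and follows essentially the same route as the paper: identify the displayed map with the composite $\emb\circ\iota$ via \cref{cor:nc_forest} and \cref{lem:pemb_join_interchange}, and then transport the rank-preserving poset-embedding property along the lattice isomorphism $\iota$. You spell out the well-definedness (independence of the chosen non-crossing spanning forest) a bit more explicitly than the paper does, but the underlying argument is identical.
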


\begin{proof}
	By \cref{cor:nc_forest} every non-crossing forest gives rise to a reduced decomposition of an element of $\nc(S_n)$ and vice versa. 
	Suppose that $\Set{a_1, \ldots, a_k}$ is a non-crossing spanning forest of $\pi\in\ncpn$, then, without loss of generality, $\iota(a_1)\ldots \iota(a_k)$ is a reduced decomposition of $\iota(\pi)$. By \cref{lem:pemb_join_interchange}, $\emb(\iota(\pi))$ is given by $\emb(\iota(a_1))\vee\ldots \vee\emb(\iota(a_k))$, which equals $\emb_\E(a_1)\vee\ldots\vee \emb_\E(a_k)$ by definition of $\emb_\E$. Hence the map is well-defined and the remaining statement follows, since pre-composition of the rank-preserving poset embedding from \cref{thm:embedding_Vp_VZ} with the lattice isomorphism $\iota$ is again a rank-preserving poset embedding.
\end{proof}

The above considerations lead to some observations.
First, we can use edges and forests to describe apartments and chambers in $\on$.
Second, the embedding of $\ncpn$ based on the embedding of edges gives rise to an embedding of the partition lattice $\pn$. The embedding of the partition lattice is described in the next section and the pictorial description of apartments and chambers is the subject of \cref{sec:aptms_cham_nc_trees}. 

\subsection{Pictorial partitions}

In \cite[Le. 2.24]{hks} it is shown that the partition lattice $\pn$ embeds into $\lam(V)$ for an $(n-1)$-dimensional subspace of $\F^n$ for an arbitrary field $\F$. 
The aim of this section is to explicitly define an embedding of $\pn$ in pictorial terms such that its restriction to $\ncpn$ is the embedding from \cref{thm:ncpn_pict_embedding}.

\begin{defi}
	Let $\pi\in \pn$ be a partition and $a=(i,j)$ an edge in $\E$. We say that $a$ is an \emph{edge of the partition} $\pi$ if there exists a block $B\in \pi$ such that $i,j\in B$.
	A \emph{spanning forest} of $\pi$ is a forest $A(\pi)\subseteq \E$ such that $\bigvee A(\pi) = \pi$ in $\pn$. 
\end{defi}

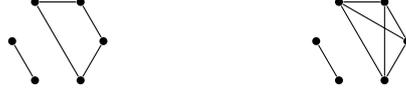
\begin{figure}
	\begin{center}
		\begin{tikzpicture}
		\begin{scope}
		\gsechseck
		\draw(p1)--(p2)(p5)--(p2)(p5)--(p6)(p1)--(p6)(p3)--(p4);
		\end{scope}
		\begin{scope}[xshift=4cm]
		\gsechseck
		\draw(p1)--(p2)(p5)--(p2)(p5)--(p6)(p1)--(p6)(p3)--(p4)(p1)--(p5)(p6)--(p2);
		\end{scope}
		\end{tikzpicture}
		\caption{A pictorial representation of a partition is shown on the left and its corresponding set of edges on the right.}
		\label{fig:edges_of_a_partition}
	\end{center}
\end{figure}

The edges of a partition are not the edges of the pictorial representation graph. But we get all edges of a partition from its pictorial representation by adding a complete graph on the vertex set of each of its blocks. \cref{fig:edges_of_a_partition} shows a partition and its edges. Note that a spanning forest of a partition $\pi$ necessarily consists of edges of $\pi$. The same is true for non-crossing spanning forests of non-crossing partitions. 

\begin{lem}\label{lem:rank_forest}
	Let $\pi \in \pn$ be a partition and $A(\pi) \subseteq \E$ a spanning forest of it. Then the rank of $\pi$ equals the cardinality of $A$, that is $\rk(\pi)=\#A(\pi)$.
\end{lem}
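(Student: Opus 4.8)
The statement to prove is: if $\pi \in \pn$ is a partition with spanning forest $A(\pi) \subseteq \E$, then $\rk(\pi) = \#A(\pi)$.

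\textbf{Approach.} Recall that the rank of a partition $\pi$ was defined as $\rk(\pi) = n - \#\pi$, where $\#\pi$ is the number of blocks of $\pi$. The plan is to relate the number of edges in a spanning forest to the number of blocks using the standard fact from graph theory that a forest $F$ on a vertex set with $m$ vertices and $c$ connected components has exactly $m - c$ edges. So I would first argue that the connected components of the graph $A(\pi)$ (viewed as a graph whose vertex set is the union of the blocks of $\pi$ that are non-trivial, together with isolated vertices for trivial blocks) are precisely the blocks of $\pi$; this is exactly what it means for $A(\pi)$ to be a \emph{spanning forest} of $\pi$, namely $\bigvee A(\pi) = \pi$ in $\pn$. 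Then I would apply the edge count.

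\textbf{Key steps.} First I would make precise that $A(\pi)$, as an embedded forest on the vertices of the $n$-gon, has the property that two vertices $i,j \in \Set{1, \ldots, n}$ lie in the same connected component of $A(\pi)$ if and only if they lie in the same block of $\pi$. The join $\bigvee A(\pi)$ in $\pn$ of a collection of rank $1$ elements (edges) is the partition whose blocks are the connected components of the graph with those edges, once isolated vertices are added back as trivial blocks; since $\bigvee A(\pi) = \pi$ by hypothesis, the components of $A(\pi)$ are exactly the blocks of $\pi$. Second, writing $\#\pi = k$ for the number of blocks, I would observe that $A(\pi)$ is a forest on $n$ vertices with exactly $k$ connected components, hence $\#A(\pi) = n - k$. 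Third, combining with $\rk(\pi) = n - \#\pi = n - k$ gives $\rk(\pi) = \#A(\pi)$, as desired.

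\textbf{Main obstacle.} The proof is essentially routine; the only point requiring a little care is making the identification ``connected components of $A(\pi)$ $=$ blocks of $\pi$'' rigorous, i.e.\ unwinding the definition of the join in the partition lattice $\pn$ in terms of the graph $A(\pi)$ and checking that a forest has no cycles so that the component count behaves as expected. I would also note for completeness that such a spanning forest always exists (pick, within the complete graph on each block, a spanning tree; these are disjoint and their union is a forest), although existence is not strictly needed for the stated implication. No hard computation is involved.
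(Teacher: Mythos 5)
Your proof is correct and takes essentially the same route as the paper: the paper also reduces the claim to counting edges of the forest block by block, noting that each block $B$ contributes $\#B-1$ edges, which is just the per-block version of your global count $\#A(\pi)=n-c$ for a forest with $c$ components. The only cosmetic difference is that you invoke the component-count formula once globally while the paper sums $\sum_{B\in\pi}(\#B-1)=n-\#\pi$ directly.
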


\begin{proof}
	The rank of a partition $\pi\in \pn$ is defined as $\rk(\pi)=n-\#\pi$. We show that $\#A(\pi)+\#\pi=n$. For this note that for every block $B\in \pi$ there are $\#B-1$ edges contained in $A(\pi)$. Using this we get that
	\begin{align*}
	\#A(\pi)+\#\pi=\sum_{B\in \pi}(\#A(B)+1)=\sum_{B\in \pi}(\#B-1+1)=\sum_{B\in \pi}\#B = n.
	\end{align*}
\end{proof}

\begin{lem}\label{lem:cycles_dependent}
	Let $A=\Set{a_1, \ldots, a_k} \subseteq \E$ be a set of edges and $v_1, \ldots, v_k \in \vs$ such that $\Braket{v_i} = \emb_\E(a_i)$ for $1\leq i \leq k$. Then $A$ contains a cycle if and only if $\Set{v_1, \ldots, v_k}$ is linearly dependent. 
\end{lem}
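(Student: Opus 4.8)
The statement relates a combinatorial property of a set of edges on the $n$-gon (containing a cycle in the graph-theoretic sense) to a linear-algebraic property of the associated $p$-roots in $\vs = \F_2^{n-1}$. The natural strategy is to prove both implications via the contrapositive formulation: $A$ is acyclic (a forest) if and only if $\Set{v_1,\dots,v_k}$ is linearly independent. This is cleaner because forests are exactly the objects for which we already have a dictionary to reduced decompositions, via \cref{cor:nc_forest} and \cref{lem:nc_spanning_tree_red_decomp}.

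First I would handle the ``forest $\Rightarrow$ independent'' direction. Suppose $A = \Set{a_1,\dots,a_k}$ is the edge set of a forest. Note that a forest need not be non-crossing, so \cref{cor:nc_forest} does not apply directly; however, a forest has at most $n-1$ edges and, crucially, each of its connected components is a tree. I would argue directly with the explicit description $\emb_\E((i,j)) = \Braket{e_i+e_j}$ for $j<n$ and $\Braket{e_i}$ for $j=n$, treating $e_n \coloneqq 0$ as a bookkeeping device so that in all cases $\emb_\E((i,j)) = \Braket{e_i+e_j}$. Then the vectors $v_1,\dots,v_k$ (choosing $v_m = e_{i_m}+e_{j_m}$ for $a_m=(i_m,j_m)$) are incidence-type vectors. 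A linear dependence $\sum_{m\in S} v_m = 0$ over $\F_2$ for some nonempty $S$ means that every vertex of the $n$-gon (including the ``phantom'' vertex $n$) is incident to an even number of edges in $\Set{a_m : m\in S}$. A standard graph-theory fact is that a nonempty edge set in which every vertex has even degree contains a cycle; hence $A$ would contain a cycle, contradicting that $A$ is a forest. One must be slightly careful about the vertex $n$: since $e_n = 0$, an edge $(i,n)$ contributes only $e_i$, so in the $\F_2$-sum vertex $n$ imposes no parity constraint. This is fine, because such edges $(i,n)$ behave exactly like ``half-edges''/loops at a phantom vertex, and the even-degree argument still forces a cycle among the genuine vertices $\Set{1,\dots,n}$ (a closed walk, which necessarily contains a circuit in $A$). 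I would make this precise by passing to the graph $A^+$ on vertex set $\Set{1,\dots,n}$ obtained by adding an auxiliary vertex $n$ and including all edges of $A$; then $\F_2$-dependence of the $v_m$ is exactly a nonzero element of the cycle space of $A^+$, and $A^+$ has a cycle iff $A$ does (adding the single vertex $n$ with its incident edges creates a cycle in $A^+$ only if $A$ already had one or two edges of $A$ join the same pair — but edges of $\E$ are distinct, so no multi-edges arise).

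For the converse, ``$A$ contains a cycle $\Rightarrow$ dependent'', I would take a cycle $a_{m_1},\dots,a_{m_\ell}$ in $A$ traversing vertices $w_1,w_2,\dots,w_\ell,w_1$. Summing the corresponding vectors $\sum_{r=1}^\ell (e_{w_r}+e_{w_{r+1}})$ over $\F_2$ telescopes to $0$ (each $e_{w_r}$ appears exactly twice), again with $e_n=0$ handled uniformly. Hence $\Set{v_{m_1},\dots,v_{m_\ell}}$ is linearly dependent, and therefore so is $\Set{v_1,\dots,v_k}$. This direction is essentially immediate.

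\textbf{Main obstacle.} The only genuinely delicate point is the role of the vertex labelled $n$, where $\emb_\E$ sends an edge $(i,n)$ to $\Braket{e_i}$ rather than $\Braket{e_i+e_j}$. I expect the bulk of the care in writing this up to go into setting up the ``phantom vertex'' formalism (or, equivalently, invoking \cref{cor:nc_forest} together with \cref{lem:rank_forest} to reduce to the non-crossing case by a relabeling argument) so that the even-degree/cycle-space correspondence over $\F_2$ applies cleanly and the two notions of ``cycle'' — a cycle in the abstract graph $A$ versus a nonzero cycle-space element — are matched up without double-edge or loop pathologies. Everything else is a short parity computation over $\F_2$.
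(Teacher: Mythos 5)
Your proposal is correct and follows essentially the same route as the paper: both directions come down to the observation that over $\F_2$ a zero sum of the vectors $e_i+e_j$ (with $e_n$ read as $0$) is equivalent to every vertex having even degree, which is the cycle condition. The paper packages the ``dependent $\Rightarrow$ cycle'' direction by passing to a minimal dependent subset and concluding it is itself a cycle, while you argue the contrapositive via the standard even-degree fact; these are the same parity argument, and your extra care about the phantom vertex $n$ (whose degree is forced even anyway by the handshake lemma) is consistent with, if slightly more explicit than, the paper's treatment.
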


\begin{proof}
	Let $A=\Set{a_1, \ldots, a_k} \subseteq \E$ be a set of edges and $v_1, \ldots, v_k \in \vs$ be such that $\Braket{v_i} = \emb_\E(a_i)$. 
	
	First suppose that $A$ contains cycle. Without loss of generality we assume that all edges of $A$ are contained in the cycle. Then the edges are of the form $a_j=\Set{i_j,i_{j+1}}$ for $1\leq j <k$ and $a_k=\Set{i_k,i_1}$ for a subset $I\coloneqq \Set{i_1, \ldots, i_k} \subseteq \Set{1, \ldots, n}$. Then $\emb_\E(a_j)=e_{i_j}+e_{i_{j+1}}$, where we interpret $e_{n+1}$ as zero if present.  Note that every vertex $i_j\in I$ is contained in exactly two edges. Hence  $\sum_{j=1}^k v_j = 0$, since every vector $e_{i_j}$ appears twice in the sum. 
	Consequently, $\Set{v_1, \ldots, v_k}$ is a linearly dependent set.
	
	For the other direction suppose that $\Set{v_1, \ldots, v_m}$ is a linearly dependent set. Without loss of generality we assume that it is minimal, that is, each of its subsets is linear independent. We show that $A$ is a cycle.
	It holds that $\sum_{j=1}^k v_j = 0$, because $\Set{v_1, \ldots, v_m}$ is linearly dependent and minimal with this property.
	Since the vectors $v_m$ are the sum of exactly one or two standard basis vectors of $\vs$ by the definition of $\emb$, every standard basis vector that appears once in the sum already has to appear twice.
	Hence every endpoint of an edge $a_j$ has to be contained in exactly one other edge, which means that $A$ is a cycle.
\end{proof}

\begin{cor}\label{cor:basis_edges}
	Let $\pi\in \pn$ be a partition and $A=\Set{a_1, \ldots, a_k}$ a spanning forest of it. Then the set of non-zero vectors of $\emb(a_1), \ldots, \emb(a_k)$ is a basis of $\emb(\pi) \in \lam(\vs)$.
\end{cor}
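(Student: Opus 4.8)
The plan is to combine \cref{lem:rank_forest}, \cref{lem:cycles_dependent}, and \cref{cor:nc_forest} into a short argument. First I would recall that by \cref{thm:ncpn_pict_embedding} (which extends to all of $\pn$ via the analogous construction), the image $\emb(\pi)$ is the subspace $\bigvee_{a\in A}\emb_\E(a)$, that is, the span of the vectors $\emb_\E(a_1),\ldots,\emb_\E(a_k)$, where by $\emb_\E(a)$ I mean the one-dimensional subspace and I pick a representative vector $v_i$ spanning it (with the convention $e_n=0$, so some $v_i$ may involve only a single standard basis vector, but it is still non-zero since $a_i=(i,n)$ gives $v_i=e_i\neq 0$). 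So the set of non-zero vectors among $v_1,\ldots,v_k$ — which is in fact all of them — spans $\emb(\pi)$; it remains only to check linear independence.

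The key step is to observe that a spanning forest $A=\{a_1,\ldots,a_k\}$ of $\pi$ contains no cycle, by definition of a forest. Then \cref{lem:cycles_dependent} applies directly: since $A$ is acyclic, the set $\{v_1,\ldots,v_k\}$ is linearly independent. Hence $\{v_1,\ldots,v_k\}$ is a spanning, linearly independent set in $\emb(\pi)$, i.e.\ a basis. As a sanity check on dimensions, \cref{lem:rank_forest} gives $k=\#A=\rk(\pi)$, and since $\emb$ is rank-preserving we have $\dim\emb(\pi)=\rk_\lam(\emb(\pi))=\rk(\pi)=k$, confirming that $k$ vectors indeed suffice and that no smaller set could span.

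I would write this as roughly three sentences: invoke the explicit form of $\emb$ from the preceding theorem to get that $\{v_1,\ldots,v_k\}$ spans $\emb(\pi)$; note that a spanning forest is acyclic and apply \cref{lem:cycles_dependent} to get linear independence; conclude that the set is a basis, optionally remarking that its cardinality matches $\dim\emb(\pi)$ by \cref{lem:rank_forest} and rank-preservation. There is no real obstacle here — the statement is essentially a bookkeeping corollary assembling three already-proved lemmas; the only mild subtlety is being careful about the "non-zero vectors" phrasing, which is only needed because the embedding of edges incident to vertex $n$ produces $e_i+e_n=e_i$ in the quotient-style convention, but these are still non-zero, so in fact all $k$ vectors are non-zero and the qualifier is just defensive.
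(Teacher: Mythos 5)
Your proof is correct and matches the paper's (implicit) argument: the paper states this as an unproved corollary of \cref{lem:cycles_dependent}, the intended reasoning being exactly what you wrote — the vectors span $\emb(\pi)$ by the definition of $\emb$ as the join of the $\emb_\E(a_i)$, and they are linearly independent because a forest contains no cycle. Your observation that all $k$ vectors are in fact non-zero (so the qualifier in the statement is vacuous) and the dimension cross-check via \cref{lem:rank_forest} are both accurate.
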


\begin{thm}\label{thm:pn_pict_embedding}
	The map $\emb\colon \pn \to \lam(\vs)$ given by
	\[
	\pi = \bigvee A(\pi) \mapsto \bigvee_{a\in A(\pi)} \emb_\E(a)
	\]
	is a rank-preserving poset embedding.
\end{thm}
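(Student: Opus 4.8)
The plan is to show that the map $\emb\colon \pn \to \lam(\vs)$ is well-defined, rank-preserving, order-preserving, and injective, mirroring the structure of the proof of \cref{thm:ncpn_pict_embedding} but using the tools we have assembled for general (not necessarily non-crossing) partitions. The key combinatorial inputs are \cref{lem:rank_forest}, \cref{lem:cycles_dependent}, and \cref{cor:basis_edges}, which collectively translate the forest description of a partition into the linear-algebraic description of its image.

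First I would verify well-definedness, i.e.\ that $\emb(\pi)$ does not depend on the choice of spanning forest $A(\pi)$. Given a spanning forest $A(\pi)=\Set{a_1,\ldots,a_k}$, by \cref{cor:basis_edges} the non-zero vectors among $\emb_\E(a_1),\ldots,\emb_\E(a_k)$ form a basis of the subspace $\bigvee_{a\in A(\pi)}\emb_\E(a)$, and in particular this subspace has dimension $k=\rk(\pi)$ by \cref{lem:rank_forest}. It remains to see that this subspace is the same for any two spanning forests $A(\pi)$ and $A'(\pi)$. For this, note that every edge $a$ of the partition $\pi$ lies in some block $B\in\pi$, and the edges of $A(\pi)$ restricted to $B$ form a spanning tree of the complete graph on $B$; a standard cycle-space argument (or: $\emb_\E(a)$ is a sum of at most two of the vectors $e_i$ indexed by $B$, hence lies in the span of $\{\emb_\E(a')\colon a'\in A(\pi),\ a'\subseteq B\}$, using \cref{lem:cycles_dependent} to express the dependency coming from the cycle formed by $a$ together with the tree path joining its endpoints) shows $\emb_\E(a)\in\bigvee_{a'\in A(\pi)}\emb_\E(a')$ for every edge $a$ of $\pi$. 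Applying this to the edges of $A'(\pi)$ gives $\bigvee_{a'\in A'(\pi)}\emb_\E(a')\subseteq\bigvee_{a\in A(\pi)}\emb_\E(a)$, and by symmetry the two subspaces coincide. This also immediately gives that $\emb$ is rank-preserving, since $\rk_\lam(\emb(\pi))=\dim\emb(\pi)=k=\rk_\pn(\pi)$.

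Next I would check that $\emb$ is order-preserving. Suppose $\pi_1\leq\pi_2$ in $\pn$. Then $\pi_2$ is obtained by merging blocks of $\pi_1$, so one can choose a spanning forest $A(\pi_1)$ of $\pi_1$ and extend it to a spanning forest $A(\pi_2)$ of $\pi_2$ by adding edges that connect the blocks of $\pi_1$ into the blocks of $\pi_2$. Then $\emb(\pi_1)=\bigvee_{a\in A(\pi_1)}\emb_\E(a)\subseteq\bigvee_{a\in A(\pi_2)}\emb_\E(a)=\emb(\pi_2)$, as desired. For injectivity, suppose $\pi_1\neq\pi_2$. Since $\emb$ is rank-preserving it suffices to treat the case $\rk(\pi_1)=\rk(\pi_2)$; then neither partition refines the other, so there is an edge $a$ of $\pi_1$ that is not an edge of $\pi_2$ (say $a=(i,j)$ with $i,j$ in a common block of $\pi_1$ but in distinct blocks of $\pi_2$). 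I would argue that $\emb_\E(a)\subseteq\emb(\pi_1)$ by the well-definedness argument, but $\emb_\E(a)\not\subseteq\emb(\pi_2)$: writing a spanning forest $A(\pi_2)$, the edges all have endpoints within single blocks of $\pi_2$, so by \cref{cor:basis_edges} the subspace $\emb(\pi_2)$ is spanned by vectors each supported on the index set of one block of $\pi_2$; since $i$ and $j$ lie in different blocks, the vector $\emb_\E(a)=e_i+e_j$ (or $e_i$ if $j=n$) cannot lie in that span — one compares coordinates relative to the block decomposition, which is a direct-sum decomposition of $\vs$ (with the $n$-th coordinate dropped). Hence $\emb(\pi_1)\neq\emb(\pi_2)$.

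The main obstacle I expect is the injectivity/coordinate bookkeeping step, specifically handling the block containing the vertex $n$, where the edge $(i,n)$ is sent to $\Braket{e_i}$ rather than to a sum of two basis vectors. One must be careful that the "drop the $n$-th coordinate" convention is compatible with the block-wise direct sum decomposition of $\vs$: the subspace of $\vs$ spanned by $\{e_m\colon m\in B,\ m<n\}$ ranging over blocks $B$ of $\pi_2$ indeed gives an internal direct sum equal to $\vs$, and $\emb(\pi_2)$ is contained in (in fact equals, by dimension count via \cref{lem:rank_forest}) a sum of subspaces adapted to this decomposition. Once this is set up cleanly, the coordinate comparison is routine. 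Finally I would note, as in \cref{rem:old_embedding_ncpn}, that restricting $\emb$ to $\ncpn$ recovers the embedding of \cref{thm:ncpn_pict_embedding}, since a non-crossing spanning forest of a non-crossing partition is in particular a spanning forest in the sense used here, and $\emb_\E$ is the same map in both cases.
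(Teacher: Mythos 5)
Your proposal is correct and its skeleton (order-preserving, rank-preserving, injective) matches the paper's, with the order-preservation and rank-preservation steps being essentially identical: extend a spanning forest of $\pi_1$ to one of $\pi_2$, and use \cref{lem:cycles_dependent} together with \cref{lem:rank_forest} to identify the dimension of the image with the rank. You also supply an explicit well-definedness check (independence of the choice of spanning forest) that the paper leaves implicit; that is a genuine improvement in completeness. Where you diverge is injectivity. The paper argues by contradiction entirely through \cref{lem:cycles_dependent}: if $\emb_\E(a)\subseteq\emb(\pi_2)$, then over $\F_2$ the vector of $a$ is a sum of a subset of the forest vectors of $\pi_2$, so the corresponding edge set is linearly dependent, hence contains a cycle, hence $a$ joins two vertices already connected in the forest and is therefore an edge of $\pi_2$ — contradiction. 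You instead decompose $\vs$ block-wise and compare coordinates. Both work, but note that your "routine coordinate comparison" does need one explicit observation you only gesture at: for a block $B$ of $\pi_2$ with $n\notin B$, the subspace $W_B$ spanned by the forest edges inside $B$ is exactly the coordinate-sum-zero hyperplane of $V_B=\Braket{e_m\colon m\in B}$, so a single standard basis vector $e_i$ with $i\in B$ never lies in $W_B$; since at most one of the two blocks meeting the offending edge can contain $n$, this finishes the argument. The paper's cycle-space route sidesteps this bookkeeping (and the special role of the vertex $n$) entirely, which is why it reads more cleanly; your route has the advantage of exhibiting $\emb(\pi_2)$ concretely as a direct sum of block subspaces, which is structural information reused elsewhere in the paper.
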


\begin{proof}
	We have to prove that $\emb$ is a poset map that is rank-preserving and injective. To show that $\emb$ is a poset map, let $\pi_1,\pi_2\in \pn$ be such that $\pi_1\leq \pi_2$. Then there exists a spanning forest $A_1$ of $\pi_1$ that contains a spanning forest $A_2$ of $\pi_2$. Hence it holds that  $\bigvee_{a\in A_1} \emb_\E(a) \leq \bigvee_{a\in A_2} \emb_\E(a)$ and $\emb$ is a poset map.

	If $A$ is a spanning forest of $\pi$, then the vectors corresponding to $\emb_\E(a)$ for $a\in A$ are linearly independent by \cref{lem:cycles_dependent}, hence a basis of $\emb(A)$. With \cref{lem:rank_forest} it follows that 
	\[
	\rk(\pi)=\#A=\dim(\emb(\pi)) = \rk(\emb(\pi))
	\]
	and therefore $\emb$ is rank-preserving.
	
	In order to show that $\emb$ is injective, suppose that $\pi_1,\pi_2\in \pn$ are such that $\pi_1\neq \pi_2$. Hence there exists an edge $a$ of $\pi_1$ that is not an edge of $\pi_2$. We show that $\emb(a)$ is not contained in $\emb(\pi_2)$, which proves $\emb(\pi_1)\neq \emb(\pi_2)$. Suppose to the contrary that $\emb(a)\subseteq \emb(\pi_2)$. If $A=\Set{a_1, \ldots, a_k}$ is a spanning forest for $\pi_2$, then there exists a subset $M \subseteq \Set{1, \ldots, k}$ such that $v=\sum_{m\in M}v_m$, where by $v_m$ we denote the unique vector spanning $\emb_\E(a_m)$ and $\Braket{v}=\emb_\E(a)$. Since $\Set{v_1, \ldots, v_m,v}$ is linearly dependent, the corresponding edges form a cycle by \cref{lem:cycles_dependent}, that is the edge $a$ is in the same connected component as $\Set{a_i\str i\in M }\subseteq A$. 
	This means that $a\in \bigvee_{m \in M} a_m \in \pi_2$ by the definition of the join in $\pn$, and hence that $a$ is an edge of $\pi_2$, which is a contradiction.
\end{proof}

\begin{rem}
	Each connected component of a spanning forest of a partition can be chosen to be non-crossing. Hence, the restriction of the above embedding to $\ncpn$ coincides with the embedding $\emb\colon \ncpn\to\lam(\vs)$ from \cref{thm:ncpn_pict_embedding}. This justifies the use of the common letter $\emb$ describing the embeddings.
\end{rem}

\subsection{Chamber subcomplexes of the building}

Being equipped with the poset-theoretic pictorial versions of the embedding of $\ncpn$ and $\pn$, we focus on the simplicial point of view from now on. 
The aim of this section is to settle notation and give some basic descriptions
of the simplicial complexes $\on$ and $\op$ inside the spherical building $\si = |\lam(\vs)|$. 

The poset-theoretic embeddings of $\ncpn$ and $\pn$ into the lattice of linear subspaces of $\vs$ from \cref{thm:ncpn_pict_embedding} and \cref{thm:pn_pict_embedding} translate into the following simplicial version by realizing the lattices as order complexes.

\begin{thm}
	The simplicial complexes $\on$ and $\op$ are isomorphic to chamber complexes of $\si$.
\end{thm}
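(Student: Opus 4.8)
The plan is to take the rank-preserving poset embeddings $\emb\colon\ncpn\to\lam(\vs)$ and $\emb\colon\pn\to\lam(\vs)$ from \cref{thm:ncpn_pict_embedding} and \cref{thm:pn_pict_embedding} and push them through the order-complex construction, checking that all the required simplicial features survive. Recall that a poset map induces a simplicial map of order complexes, and an injective poset map induces a non-degenerate simplicial embedding; moreover, by \cref{rem:oc_ind_subcplx}, the image of an induced subposet is an induced subcomplex. So the first step is to observe that $\ncpn$ and $\pn$, viewed via $\emb$, are (isomorphic to) subposets of $\lam(\vs)$, and that passing to order complexes gives simplicial embeddings $\on\hookrightarrow\si$ and $\op\hookrightarrow\si$ whose images are induced subcomplexes. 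Here one has to be slightly careful with the order-complex conventions: $\ocncpn$ and $\op$ are defined as $\|L\setminus\{\mi,\ma\}\|$, and since $\emb$ is rank-preserving it sends minimum to minimum ($\{0\}$) and maximum to maximum ($\vs$), so removing the extreme elements on both sides is compatible with $\emb$; this makes the induced simplicial map well-defined on the order complexes without the two apex vertices.

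Next I would verify that the images are \emph{chamber subcomplexes} of $\si$ in the sense of the definition in \cref{sec:simpl_compl}, i.e. that they are chamber complexes of the same dimension $\dim(\si)=n-2$. For dimension: $\ncpn$ and $\pn$ are graded lattices of rank $n$, so $|\ncpn|$ and $|\pn|$ are pure of dimension $n-2$; since $\emb$ is rank-preserving, a maximal chain in $\ncpn$ (respectively $\pn$) not containing $\mi,\ma$ has $n-1$ elements of ranks $1,\dots,n-1$, and its image is a chain of subspaces of $\vs$ of dimensions $1,\dots,n-1$, hence a chamber of $\si$. Thus every chamber of $\on$ (resp. $\op$) maps to a chamber of $\si$, giving purity and the correct dimension. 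That $\on$ and $\op$ are chamber complexes — any two maximal simplices can be joined by a gallery — follows from the fact that the non-crossing partition lattices for finite Coxeter groups (hence in particular $\ncpn\cong\nc(S_n)$) are shellable \cite{abw}, which forces the order complex to be a chamber complex, and from the classical shellability of $\pn$ \cite[e.g.\ via the same references in \cref{chap:lattices}]{staece}; alternatively, connectedness of the chamber graph can be seen directly since any two maximal chains in a bounded graded lattice of rank $n$ differ by a sequence of single-element modifications, each of which is an adjacency move on the corresponding order complex.

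The remaining points are bookkeeping: a chamber subcomplex must literally be a subcomplex, which is handled by \cref{rem:oc_ind_subcplx} (the order complex of a subposet is an induced subcomplex), and we should record that the map is non-degenerate, i.e. dimension-preserving on simplices — this is exactly injectivity of $\emb$ combined with rank-preservation. Putting these together, $\on$ and $\op$ are isomorphic (as simplicial complexes) to induced chamber subcomplexes of $\si$, which is the assertion.

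I do not expect a genuine obstacle here; the statement is essentially the translation of \cref{thm:ncpn_pict_embedding} and \cref{thm:pn_pict_embedding} into simplicial language (it parallels \cref{thm:simplicial_embedding}). The only place where a little care is needed — and the place I would flag as the subtle step — is the verification that $\on$ and $\op$ are themselves chamber complexes (not merely pure subcomplexes of one), which rests on shellability of the non-crossing partition lattices \cite{abw} and of the partition lattice; everything else is a formal consequence of the fact that order complexes of subposets are induced subcomplexes and that rank-preserving poset embeddings induce non-degenerate simplicial embeddings carrying maximal chains to maximal chains.
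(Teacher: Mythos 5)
Your proposal is correct and follows essentially the same route as the paper, which states this theorem as the direct order-complex translation of \cref{thm:ncpn_pict_embedding} and \cref{thm:pn_pict_embedding}: a rank-preserving injective poset map sends $\mi,\ma$ to $\{0\},\vs$, induces a non-degenerate simplicial embedding onto an induced subcomplex, and carries maximal chains to chains of subspaces of dimensions $1,\dots,n-1$, i.e.\ to chambers of $\si$; gallery-connectedness is exactly what the paper also extracts from shellability of $\nc(W)$ via \cite{abw} (and, for $\op$, from classical shellability of $\pn$, or equivalently from the fact that $\op$ is a union of apartments through a common base chamber). One caveat: your parenthetical ``alternatively'' claim that in \emph{any} bounded graded lattice two maximal chains differ by single-element modifications is false in general (a graded lattice can have a disconnected chamber graph on its proper part), so the shellability argument should be regarded as the actual justification rather than an optional shortcut.
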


\begin{con}\label{con:subcompl}
	We identify $\on$ and $\op$ with their images in $\si$, that is we see them as \emph{subcomplexes} of $\si$. Moreover, we identify $\ncpn$ and $\pn$ with their images in $\lam(\vs)$ as well.
\end{con}

\begin{rem}
	Note that $\on$ and $\op$ are induced subcomplexes of $\si$, as they arise as order complexes of subposets	of $\lam(\vs)$. This means in particular that whether a simplex of $\si$ is contained in one of the induced subcomplexes is completely determined by its vertex set. If all vertices are (non-crossing) partitions, then the simplex they span is in $\on$ or $\op$, respectively.
\end{rem}

In order to create a more convenient way to study the building $\si$ and its subcomplexes, we make a couple of conventions. 

\begin{con}\label{con:general}
	From now on, the following conventions become effective.
	\begin{enumerate}
		\item The embeddings of $\ncpn$ and $\pn$ into $\lam(\vs)$ are rank-preserving, hence the notion of \enquote{rank} is well-defined and we use the term \enquote{rank of a vertex} without further specification. 
		Recall that in \cref{sec:sph_build}, we defined the type of a vertex in the barycentric subdivision of the boundary on a simplex as the cardinality of its label. This type of a vertex, seen as a vertex of an apartment, equals its rank in the corresponding Boolean lattice and hence in the building $\si$. Because of this, we use terms \enquote{type of a vertex} and \enquote{rank of a vertex} interchangeably.
		
		\item The identification of $\on$ and $\op$ with subcomplexes of $\si$ leads to two kinds of vertex labels. Either, the label can be seen as a partition, or as a subspace of $\vs$. Depending on the circumstances, we address the label of a vertex as a partition or a subspace, and sometimes omit the term \enquote{label of} as well.
		
		\item Recall that the one-dimensional subspaces of $\vs$, which are the rank $1$ vertices of $\si$, contain exactly one non-zero vector. If there is no danger of confusion, we refer to a rank $1$ vertex as a vector.
		
		\item We fix the set $\Set{e_1, \ldots, e_{n-1}}$, consisting of the standard basis vectors of $\vs$, as the standard basis of $\vs$. This allows us to use coordinates of vectors.
		
		\item The notions of meet and join in a lattice do not have analogs in the simplicial version. Whenever we write $u\vee v$ for vertices $u,v\in \si$, we mean the vertex that corresponds to the element that arises by taking the join in $\lam(\vs)$. The convention for the meet is analogously. Although the vertices may correspond to the subcomplex $\on$ or $\op$, the meet and join operations are always the ones from the lattice $\lam(\vs)$ if not explicitly stated otherwise.  
	\end{enumerate} 
\end{con}

\subsubsection{Vertex enumeration}
In order to imagine how large the complexes $\on$, $\op$ and $\si$ are in comparison, we enumerate the number of its rank $k$ vertices. \cref{tab:numbers} and \cref{fig:numbers} show the relative number of vertices of $\ncpn$ and $\pn$ inside $\lam(\vs)$ for small $n$.

The number of non-crossing partitions in $\ncpn$ with $k$ blocks, that is of rank $n-k$, is given by the \emph{Narayana number} \cite[Le. 3.1]{edel}, which is
\[
N(n,k) \coloneqq \frac{1}{n} \binom{n}{k} \binom{n}{k-1}.
\]
The total number of non-crossing partitions in $\ncpn$ is counted by the \emph{Catalan number} \cite[Cor. 4.2]{kre} and is given by
\[
C_n \coloneqq \frac{1}{n+1}\binom{2n}{n}.
\]

The number of partitions of $\pn$ with $k$ blocks is counted by the \emph{Stirling number of the second kind} \cite[p. 73f.]{staece}, which is
\[
S(n,k)\coloneqq \frac{1}{k!}\sum_{i=0}^{k}(-1)^{k-i}\binom{k}{i}i^n. 
\]
The total number of partitions $\pn$ is given by the \emph{Bell number}, which is 
\[
B_n \coloneqq \sum_{k=1}^{n} S(n,k).
\]

Finally, the number of $k$-dimensional subspaces of $\F_2^n$ is given by the Gaussian binomial coefficient 
\[
\binom{n}{k}_2 \coloneqq \prod_{i=0}^{k-1} \frac{2^n-2^i}{2^k-2^i}.
\]
\begin{table}
	\begin{center}
		\begin{tabular}{|r||r|r|r|}
			\hhline{|-||-|-|-|}
			n & $\#\ncpn$ & $\#\pn$ &$\#\lam(\vs)$\\
			\hhline{:=::=:=:=:}
			1	&	1	&	1	&	1	\\
			2	&	2	&	2	&	2	\\
			3	&	5	&	5	&	5	\\
			4	&	14	&	15	&	16	\\
			5	&	42	&	52	&	67	\\
			6	&	132	&	203	&	374	\\
			7	&	429	&	877	&	2\,825	\\
			8	&	1\,430	&	4\,140	&	29\,212	\\
			9	&	4\,862	&	21\,147	&	417\,199	\\
			10	&	16\,796	&	115\,975	&	8\,283\,458	\\
			\hhline{|-||-|-|-|}
		\end{tabular}
	\end{center}
	\caption{The number of elements in $\ncpn$, $\pn$ and $\lam(\vs)$ for small $n$.}
	\label{tab:numbers}
\end{table}

Although the complex $\op$ has much fewer vertices than the building $\si$, $\op$ is dispersed in $\si$ in the following sense.

\begin{lem}
	Every rank $n-2$ vertex of the building $\si$ is contained in $\op$.
\end{lem}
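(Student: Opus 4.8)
The plan is to identify rank $n-2$ vertices of $\si$ with hyperplanes of $\vs=\F_2^{n-1}$ and to realize each such hyperplane as the image under $\emb$ of a two-block partition. First I would note that a rank $n-2$ vertex of $\si$ is precisely an $(n-2)$-dimensional linear subspace $U\subseteq\vs$, that is, the kernel of a nonzero linear form $\vs\to\F_2$. Over $\F_2$ every such form has the shape $v\mapsto\sum_{i\in S}v_i$ for a unique nonempty subset $S\subseteq\set{1,\dots,n-1}$, so every rank $n-2$ vertex equals $U_S\coloneqq\Set{v\in\vs\str\sum_{i\in S}v_i=0}$. On the other side, a rank $n-2$ partition of $\set{1,\dots,n}$ has exactly $n-(n-2)=2$ blocks. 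The claim therefore reduces to showing that for every nonempty $S\subseteq\set{1,\dots,n-1}$ the two-block partition $\pi_S\coloneqq\Set{B_1,B_2}$ with $B_2\coloneqq S$ and $B_1\coloneqq\set{1,\dots,n}\setminus S$ satisfies $\emb(\pi_S)=U_S$. Here $B_1$ always contains $n$, and both blocks are nonempty since $1\le\#S\le n-1$.

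Next I would compute $\emb(\pi_S)$ using the pictorial description of the embedding from \cref{thm:pn_pict_embedding}. A spanning forest $A(\pi_S)$ of $\pi_S$ is obtained by taking a spanning tree of $B_1$ together with a spanning tree of $B_2$, so $\emb(\pi_S)$ is spanned by the vectors $\emb_\E(a)$ as $a$ ranges over the edges of these two trees. Every such edge $a=(i,j)$ has both endpoints in a common block, and $\emb_\E(a)$ equals $e_i+e_j$ if $j<n$ and $e_i$ if $j=n$. In either case $\emb_\E(a)$ lies in $U_S$: if $a$ is an edge of the tree on $B_2=S$ then $i,j\in S$ and the coordinate sum of $e_i+e_j$ over $S$ is $1+1=0$; if $a$ is an edge of the tree on $B_1$ then $i,j\notin S$ (and if $j=n$ the vector $e_i$ still has $i\notin S$), so the coordinate sum vanishes as well. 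Hence $\emb(\pi_S)\subseteq U_S$. Since $\emb$ is rank-preserving by \cref{thm:pn_pict_embedding}, we get $\dim\emb(\pi_S)=\rk(\pi_S)=n-2=\dim U_S$, so the inclusion is an equality. This yields $\emb(\pi_S)=U_S$, i.e.\ the rank $n-2$ vertex $U_S$ of $\si$ is a vertex of $\op$.

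The computations here are genuinely routine; the one point requiring care — and the main bookkeeping obstacle — is the asymmetric role of the vertex $n$, which corresponds to the zero vector (equivalently, the convention $e_i+e_n\rightsquigarrow e_i$). One must place $n$ in the block $B_1$ complementary to $S$, so that the spanning tree of $B_1$ contributes generators of the form $e_i$ with $i\notin S$ and never interferes with the linear form cutting out $U_S$; this placement is automatic because $S\subseteq\set{1,\dots,n-1}$. Everything else follows at once from the pictorial embedding of \cref{thm:pn_pict_embedding} together with the fact that a two-block partition is freely spanned by a spanning tree of each of its two blocks. (For $n\le 2$ there are no rank $n-2$ vertices and the statement is vacuous.)
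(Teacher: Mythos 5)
Your proof is correct, but it takes a genuinely different route from the paper. The paper's own proof is a pure counting argument: it computes that $\op$ has $S(n,2)=2^{n-1}-1$ vertices of rank $n-2$ (Stirling number of the second kind) and that $\si$ has $\binom{n-1}{n-2}_2=2^{n-1}-1$ vertices of that rank (Gaussian binomial coefficient), and concludes via the injective rank-preserving embedding that the two sets must coincide. You instead produce an explicit preimage for each hyperplane: you parametrize the rank $n-2$ vertices of $\si$ by nonzero $\F_2$-linear forms, i.e.\ by nonempty subsets $S\subseteq\set{1,\dots,n-1}$, and verify via the pictorial embedding of \cref{thm:pn_pict_embedding} that the two-block partition $\Set{S,\set{1,\dots,n}\setminus S}$ maps onto $U_S$. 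Your observation about the asymmetric role of the vertex $n$ (forcing $n$ into the block complementary to $S$) is exactly the right point of care, and the dimension count closing the inclusion $\emb(\pi_S)\subseteq U_S$ is sound. The trade-off: the paper's argument is shorter given the standard enumeration formulas but gives no information about \emph{which} partition realizes a given hyperplane, whereas your argument is constructive and makes the bijection between hyperplanes and two-block partitions explicit — which is arguably more useful for the later structural analysis of $\op$ inside $\si$.
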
 

\begin{proof}
	The number of rank $n-2$ vertices in $\op$ is counted by the Stirling number of the second kind $S(n,2)$, which is $2^{n-1}-1$ \cite[p. 74f]{staece}. The number of rank $n-2$ vertices of $\si$ equals the Gaussian binomial coefficient $\binom{n-1}{n-2}_2$, which computes as
	\begin{align*}
	\binom{n-1}{n-2}_2 
	&= \prod_{i=0}^{n-3} \frac{2^{n-1}-2^i}{2^{n-2}-2^i}
	=  \prod_{i=0}^{n-3} \frac{2^i(2^{n-1-i}-1)}{2^i(2^{n-2-i}-1)}
	=  \prod_{i=0}^{n-3} \frac{2^{n-1-i}-1}{2^{n-2-i}-1}\\[6pt]
	&= \frac{2^{n-1-0}-1}{1} \cdot \left(\prod_{i=1}^{n-4} \frac{2^{n-1-i}-1}{2^{n-2-(i-1)}-1}\right) \cdot \frac{1}{2^1-1}\\[6pt]
	&= \left(2^{n-1}-1\right) \cdot\prod_{i=1}^{n-4} \frac{2^{n-1-i}-1}{2^{n-1-i}-1}= 2^{n-1}-1.
	\end{align*}
\end{proof}

\begin{figure}
	\begin{center}
		\begin{tikzpicture}
		\foreach \n/\hp/\hn [evaluate=\n as \j using int(\n+2)] in {1/5.000/5.000,
			2/4.688/4.375,
			3/3.881/3.134,
			4/2.714/1.765,
			5/1.552/0.759,
			6/0.709/0.245,
			7/0.253/0.058,
			8/0.070/0.010}
		{
			\draw[draw=none,fill=Superlightgray] (1.5*\n-1,0) rectangle (1.5*\n,5);
			\draw[draw=none,fill=Blue] (1.5*\n-1,0) rectangle (1.5*\n,\hp);
			\draw[draw=none,fill=Orange] (1.5*\n-1,0) rectangle (1.5*\n,\hn);
			\node at (1.5*\n-0.5,-0.5){$\j$}; 
		};
		\end{tikzpicture}
		\caption{The relative numbers of vertices of $\ncpn$ are shown in orange and those of $\pn$ in blue inside $\lam(\vs)$ for $n=3, \ldots, 10$.}
		\label{fig:numbers}
	\end{center}
\end{figure}

\subsubsection{Partition subspaces}
For the following considerations it is helpful to know which subspaces of $\vs$ describe partitions, that is, which subspaces lie in the image of $\on$ or $\op$, respectively. 

\begin{defi}
	A subspace $U \subseteq \vs$ is called \emph{partition subspace} if its corresponding vertex in $\si$ is contained in the subcomplex $\op \subseteq \si$. 
	Analogously, a subspace $U$ is called \emph{non-crossing partition subspace} if it corresponds to a vertex in $\on \subseteq \si$.
\end{defi}

A vector $v\in \vs$ is called \emph{partition vector} if it spans a one-dimensional partition subspace. This means it is of the form $e_i+e_j$ or $e_i$ for standard basis vectors of $\vs$ by the definition of $\emb$. Hence $v$ is a partition vector if and only if it has exactly one or two non-zero entries. Moreover, partition vectors are in bijection with edges of the $n$-gon via the map $\emb_\E$.

\begin{lem}
	Let $U \subseteq \vs$ be a vector space. Then $U$ is a partition vector space if and only if it has a basis consisting of partition vectors.
\end{lem}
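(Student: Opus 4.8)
The plan is to exploit the characterization of partition subspaces as the image of the rank-preserving embedding $\emb\colon \pn \to \lam(\vs)$ of \cref{thm:pn_pict_embedding}, combined with the bijection between partition vectors and edges in $\E$ furnished by $\emb_\E$ and the cycle/dependency dictionary of \cref{lem:cycles_dependent}. Throughout I would use that $U$ is a partition subspace precisely when $U = \emb(\pi)$ for some $\pi \in \pn$.

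For the forward implication: suppose $U = \emb(\pi)$ and fix any spanning forest $A(\pi) \subseteq \E$ of $\pi$. Then \cref{cor:basis_edges} says exactly that the non-zero vectors spanning the subspaces $\emb_\E(a)$, for $a \in A(\pi)$, form a basis of $\emb(\pi) = U$. Since every $\emb_\E(a)$ is spanned by a vector of the form $e_i + e_j$ or $e_i$ — that is, by a partition vector — this is a basis of $U$ consisting of partition vectors.

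For the converse: suppose $\{v_1, \dots, v_k\}$ is a basis of $U$ made of partition vectors. Each $v_m$ spans a one-dimensional partition subspace, hence equals $\emb_\E(a_m)$ for a unique edge $a_m \in \E$; set $A := \{a_1, \dots, a_k\}$, a set of $k$ distinct edges since $\emb_\E$ is injective and the $v_m$ are pairwise distinct. Because $\{v_1, \dots, v_k\}$ is linearly independent, \cref{lem:cycles_dependent} forces $A$ to contain no cycle, i.e.\ $A$ is a forest; hence $\pi := \bigvee A$ is a partition of which $A$ is a spanning forest. Applying the embedding formula of \cref{thm:pn_pict_embedding} then gives
\[
\emb(\pi) \;=\; \bigvee_{a \in A} \emb_\E(a) \;=\; \langle v_1 \rangle + \dots + \langle v_k \rangle \;=\; \langle v_1, \dots, v_k \rangle \;=\; U,
\]
since the join in $\lam(\vs)$ is the sum of subspaces; thus $U$ is a partition subspace. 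As a sanity check, $\rk(\pi) = \#A = k = \dim U$, consistent with $\emb$ being rank-preserving and with \cref{lem:rank_forest}.

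I do not expect a genuine obstacle here: both directions are short. The only point that needs care is the observation that a linearly independent family of partition vectors translates into a \emph{bona fide} forest of edges rather than an arbitrary edge set — which is precisely the content of \cref{lem:cycles_dependent} — together with the fact that the embedding formula of \cref{thm:pn_pict_embedding} recovers $U$ on the nose because it is valid for \emph{every} spanning forest, not merely a distinguished one.
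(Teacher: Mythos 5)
Your proof is correct and follows essentially the same route as the paper's: the forward direction via \cref{cor:basis_edges}, and the converse by passing from the independent partition vectors to a cycle-free edge set using \cref{lem:cycles_dependent}. Your write-up is in fact slightly more complete than the paper's, which leaves the verification that $\emb(\bigvee A)=U$ implicit in the converse direction.
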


\begin{proof}
	Let $U$ be a partition subspace and $\pi \in \pn$ a partition such that $\emb(\pi)=U$. If $A=\Set{a_1, \ldots, a_k}$ is a spanning forest of $\pi$, then $\emb(a_1), \ldots, \emb(a_k)$ is a basis of $\emb(\pi)$ by \cref{cor:basis_edges}, which consists of partition vectors. For the other direction, let $\Set{v_1, \ldots, v_k}$ be a basis of the subspace $U \subseteq \vs$ consisting of partition vectors. Their preimages are edges of $\pi$, which form a spanning tree, since $v_1, \ldots, v_k$ are linearly independent, and therefore do not form cycles. 
\end{proof}

\section{Apartments and chambers}

In this section we describe apartments and chambers of $\on$ and $\op$ by trees. Moreover, we study the properties of universal and base chambers, which are particular chambers of $\on$ and $\op$, respectively. 
\subsection{Descriptions by trees}\label{sec:aptms_cham_nc_trees}

The description of apartments of $\on$ by non-crossing trees first appeared in \cite{hks}. 

\subsubsection{Apartments}

Recall from \cref{sec:sph_build} that apartments of $\si$ are in bijection with frames of $\vs$, where a frame of $\vs$ is a set $\Set{L_1, \ldots, L_{n-1}}$ of one-dimensional subspaces such that $L_1+\ldots + L_{n-1}= \vs$. Because every one-dimensional subspace of $\vs$ contains exactly one element different from zero, the apartments of the building $\si$ are in bijection with the bases of $\vs$. 

\begin{lem}
	The number of apartments in $\si$ equals $\frac{1}{(n-1)!} \prod_{i=0}^{n-2} (2^{n-1}-2^i)$.
\end{lem}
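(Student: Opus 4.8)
The plan is to count apartments in $\si = |\lam(\vs)|$ by counting ordered bases of $\vs = \F_2^{n-1}$ and then dividing by the redundancy. The key fact established just above is that, since every one-dimensional subspace of $\F_2^{n-1}$ contains exactly one non-zero vector, a frame $\Set{L_1, \ldots, L_{n-1}}$ of $\vs$ is the same datum as an \emph{unordered} basis $\Set{v_1, \ldots, v_{n-1}}$ of $\vs$, via $L_i = \Braket{v_i}$. So the count of apartments equals the number of unordered bases of $\vs$.

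First I would count the ordered bases. An ordered basis $(v_1, \ldots, v_{n-1})$ of $\F_2^{n-1}$ is built greedily: $v_1$ can be any non-zero vector, so there are $2^{n-1}-1 = 2^{n-1}-2^0$ choices; having chosen $v_1, \ldots, v_i$ spanning an $i$-dimensional subspace (which has $2^i$ elements over $\F_2$), the vector $v_{i+1}$ must avoid that subspace, giving $2^{n-1}-2^i$ choices. Multiplying, the number of ordered bases is $\prod_{i=0}^{n-2}(2^{n-1}-2^i)$, which is of course $|\gl_{n-1}(\F_2)|$.

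Next I would pass from ordered to unordered bases. Since a basis of a vector space over a field consists of $n-1$ pairwise distinct vectors, the symmetric group $S_{n-1}$ acts freely on the set of ordered bases by permuting the entries, and the orbits are exactly the unordered bases. Hence the number of unordered bases, and therefore the number of frames of $\vs$, is $\frac{1}{(n-1)!}\prod_{i=0}^{n-2}(2^{n-1}-2^i)$. Combining this with the bijection (recalled from \cref{sec:sph_build}) between apartments of $\si$ and frames of $\vs$ yields the claimed formula.

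There is no real obstacle here; the only point that needs a word of care is the justification that the $S_{n-1}$-action on ordered bases is free, i.e.\ that an unordered basis really does have exactly $(n-1)!$ orderings — this is immediate because distinct basis vectors are distinct as set elements, so no non-trivial permutation fixes an ordered basis. Everything else is the standard greedy count of $|\gl_{n-1}(\F_2)|$ together with \cref{con:general}'s observation that one-dimensional subspaces over $\F_2$ are in bijection with non-zero vectors.
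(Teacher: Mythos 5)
Your proof is correct and follows essentially the same route as the paper, which simply identifies apartments with (unordered) bases of $\vs$ and cites the count $\frac{1}{(n-1)!}\prod_{i=0}^{n-2}(2^{n-1}-2^i)$ as well known from linear algebra. You have merely filled in the standard details (the greedy count of ordered bases and the free $S_{n-1}$-action), which the paper omits.
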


\begin{proof}
	The number of apartments of $\si$ equals the number of the bases of $\vs$, which is well-known from linear algebra to be $\frac{1}{(n-1)!} \prod_{i=0}^{n-2} (2^{n-1}-2^i)$.
\end{proof}

An apartment $A\in \A(\si)$ is an apartment of $\on$ or $\op$ if all rank $1$ vertices of $A$ are contained in $\on$ or $\op$, respectively. In the sequel we describe the apartments of $\on$ and $\op$ pictorially by trees.

In \cref{lem:nc_spanning_tree_red_decomp} we already saw that every non-crossing spanning tree gives rise to a reduced decomposition of the Coxeter element and hence an apartment of $\on$ by \cref{cor:construction_boolean_sublatice}. The other way around, every apartment of the non-crossing partition complex $\on$ can be described by a non-crossing spanning tree as well \cite[Prop. 4.4]{hks}.

\begin{lem}\label{lem:aptms}
	Every apartment of $\on$ is described by a non-crossing spanning tree on the vertices of the $n$-gon and vice versa.
\end{lem}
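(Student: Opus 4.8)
The plan is to establish a bijection between apartments of $\on$ and non-crossing spanning trees on the vertices of the $n$-gon, proving both directions. Recall that apartments of $\on$ correspond to Boolean sublattices of $\nc(S_n)$, which by \cref{cor:construction_boolean_sublatice} arise from (some) reduced decompositions of the Coxeter element $\cox = (1\,\;2\ldots n)$. By \cref{lem:nc_spanning_tree_red_decomp}, a set of $n-1$ edges on the $n$-gon admits an ordering that is a reduced decomposition of $\cox$ precisely when it is a non-crossing spanning tree. So one direction is essentially already in hand: given a non-crossing spanning tree $\Set{a_1, \ldots, a_{n-1}}$, by \cref{lem:nc_spanning_tree_red_decomp} there is a permutation $\sigma$ so that $\iota(a_{\sigma(1)})\ldots\iota(a_{\sigma(n-1)})$ is a reduced decomposition of $\cox$, and by \cref{cor:construction_boolean_sublatice} the span $\Braket{\iota(a_{\sigma(1)}), \ldots, \iota(a_{\sigma(n-1)})}$ is a Boolean sublattice of $\nc(S_n)$, whose order complex is an apartment of $\on$. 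Using the pictorial embedding from \cref{thm:ncpn_pict_embedding}, the rank $1$ vertices of this apartment are exactly $\emb_\E(a_1), \ldots, \emb_\E(a_{n-1})$, and since the $a_i$ form a tree (no cycles), \cref{lem:cycles_dependent} guarantees these vectors are linearly independent, hence genuinely form a frame of $\vs$, confirming we have an apartment of the building contained in $\on$.

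The harder direction is showing that every apartment $A$ of $\on$ \emph{is} described by a non-crossing spanning tree. First I would recall that $A$, being an apartment of the building $\si$, corresponds to a basis $\Set{v_1, \ldots, v_{n-1}}$ of $\vs$; since $A \subseteq \on$, every rank $1$ vertex of $A$ is a non-crossing partition subspace, in particular a one-dimensional one, so each $v_i$ is a partition vector, i.e.\ of the form $e_k + e_l$ or $e_k$. Thus $A$ determines a set of $n-1$ edges $A(\pi_1), \ldots$ on the $n$-gon. Because $\Set{v_1, \ldots, v_{n-1}}$ is linearly independent, by \cref{lem:cycles_dependent} this edge set contains no cycle, so it is a forest with $n-1$ edges on $n$ vertices — hence a spanning tree. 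It remains to show this spanning tree is non-crossing. Here is where I expect the main obstacle: one must rule out that the frame of a building apartment lying inside $\on$ could consist of crossing edges. The key observation is that if two of the edges crossed, say $(i,j)$ and $(k,l)$ with $i<k<j<l$, then the rank $2$ vertex $\emb_\E(i,j) \vee \emb_\E(k,l)$ of $A$ would be the join in $\lam(\vs)$ of two reflections whose product is \emph{not} in $\nc(S_n)$; one needs to check this rank $2$ subspace is not a non-crossing partition subspace, contradicting $A \subseteq \on$. Concretely, $\emb_\E(i,j)\vee\emb_\E(k,l) = \Braket{e_i+e_j, e_k+e_l}$, and I would verify directly that this two-dimensional subspace contains no partition vector other than $e_i+e_j$ and $e_k+e_l$ themselves (its other non-zero elements are $e_i+e_j+e_k+e_l$ and sums with $n-1$-coordinate issues), so it cannot be $\emb(\rho)$ for any $\rho \in \ncp_n$ — indeed a non-crossing partition subspace of rank $2$ must contain at least three partition vectors (the three edges on a triangle, or two disjoint edges plus... ) — and this contradicts that every vertex of $A$ lies in $\on$.

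Finally I would assemble the bijection: the two assignments (tree $\mapsto$ apartment via \cref{lem:nc_spanning_tree_red_decomp}/\cref{cor:construction_boolean_sublatice}, and apartment $\mapsto$ tree via reading off partition vectors) are mutually inverse. This is because an apartment of the building is uniquely determined by its set of rank $1$ vertices (its frame), and under $\emb_\E$ these rank $1$ vertices correspond bijectively to the edge set of the tree; conversely a spanning tree is recovered from the apartment it generates by taking the images $\emb_\E^{-1}$ of the rank $1$ vertices. One small point to handle carefully is that different reduced decompositions of $\cox$ can give the same Boolean sublattice, but this causes no trouble since we are parametrizing by the \emph{tree} (the unordered edge set), not by the reduced decomposition — the tree-to-apartment map is well-defined on trees, and \cref{rem:boolean_sublat_chain} together with \cref{cor:construction_boolean_sublatice} shows the resulting apartment depends only on the set of reflections, i.e.\ the set of edges. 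I anticipate the write-up being short modulo the crossing-edges obstruction, which may require a careful case analysis depending on whether the edge $n$-coordinate (the convention $e_n = 0$) is involved.
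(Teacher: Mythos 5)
The paper does not actually prove this lemma; it is quoted as \cite[Prop. 4.4]{hks}, so there is no in-paper argument to compare yours against. Your overall strategy — tree $\to$ apartment via \cref{lem:nc_spanning_tree_red_decomp} and \cref{cor:construction_boolean_sublatice}, and apartment $\to$ tree by reading the frame off as partition vectors, ruling out cycles with \cref{lem:cycles_dependent} and ruling out crossings by showing the rank-$2$ vertex spanned by a crossing pair is not a non-crossing partition subspace — is sound and is the natural proof given the machinery this thesis sets up.

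One step as written is wrong, though. The claim that ``a non-crossing partition subspace of rank $2$ must contain at least three partition vectors'' is false: if $\rho\in\ncpn$ consists of two disjoint non-crossing $2$-blocks $\Set{a,b}$ and $\Set{c,d}$ with $d<n$, then $\emb(\rho)=\Braket{e_a+e_b,\,e_c+e_d}$ contains exactly two partition vectors (the third non-zero vector $e_a+e_b+e_c+e_d$ has four non-zero entries). So counting partition vectors in $\Braket{e_i+e_j,\,e_k+e_l}$ does not by itself distinguish the crossing from the non-crossing configuration. You sensed this (the sentence trails off at ``two disjoint edges plus\dots''), but the gap needs to be closed. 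The clean repair is already available in the paper: by \cref{thm:pn_pict_embedding} the map $\emb\colon\pn\to\lam(\vs)$ is injective on \emph{all} partitions, and $\Braket{e_i+e_j,\,e_k+e_l}$ (respectively $\Braket{e_i+e_j,\,e_k}$ when $l=n$) \emph{is} $\emb$ of the crossing partition $\Set{\Set{i,j},\Set{k,l}}$; injectivity then forbids it from also being $\emb(\rho)$ for any non-crossing $\rho$, which is the contradiction you want. With that substitution the argument goes through. A smaller point worth making explicit: in the hard direction you pass from ``$A$ is an apartment of $\on$'' (a chamber subcomplex isomorphic to a Coxeter complex of the right type) to ``$A$ is an apartment of the building $\si$, hence given by a frame''; this is standard spherical-building theory (uniqueness of the apartment system) and the paper itself uses it tacitly, but it is the one external input your proof leans on and deserves a sentence or a citation.
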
 

Using our embedding $\emb$, we get an explicit bijection from the set of non-crossing trees and apartments of $\on$ by sending the edge set $\Set{a_1, \ldots, a_{n-1}}$ of the non-crossing spanning tree to the frame $\Set{\emb(a_1), \ldots, \emb(a_{n-1})}$.

\begin{cor}
	The number of apartments in $\on$ equals $\frac{1}{2n-1}\binom{3n-3}{n-1}$.
\end{cor}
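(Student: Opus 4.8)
The plan is to count non-crossing spanning trees on the vertices of a regular $n$-gon, since by \cref{lem:aptms} these are in bijection with the apartments of $\on$. So the statement reduces to the combinatorial identity that the number of non-crossing spanning trees on $n$ circularly arranged vertices equals $\frac{1}{2n-1}\binom{3n-3}{n-1}$.

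First I would recall the classical enumeration result: the number of non-crossing spanning trees on $n$ vertices in convex position is known to be $\frac{1}{2n-1}\binom{3n-3}{n-1}$. This is a result that can be cited directly (it appears, for instance, in the work on non-crossing configurations going back to Flajolet and Noy, or in the references on non-crossing trees already used in the excerpt, such as \cite{gy}). If a self-contained argument is preferred, I would set up a generating function: let $T(x)$ be the generating function counting non-crossing trees by number of edges (or rooted at a marked boundary edge/vertex), derive a functional equation of the form $T = x(1+T)^{?}$ coming from the recursive decomposition of a non-crossing tree at the root vertex — cutting at the first vertex splits the tree into an ordered sequence of smaller non-crossing trees hanging off the chords emanating from that vertex — and then apply the Lagrange inversion formula to extract the coefficient, which yields the Fuss–Catalan-type expression $\frac{1}{2n-1}\binom{3n-3}{n-1}$.

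Concretely, the key steps in order: (1) invoke \cref{lem:aptms} to translate "apartment of $\on$" into "non-crossing spanning tree on the $n$-gon"; (2) establish the recursive structure of non-crossing spanning trees by rooting at a fixed boundary vertex and decomposing the tree into a cyclically ordered list of subtrees on consecutive arcs; (3) convert this into a functional equation for the appropriate generating function; (4) apply Lagrange inversion to read off the $n$-th coefficient; (5) simplify to $\frac{1}{2n-1}\binom{3n-3}{n-1}$ and note the small cases ($n=1,2,3$ give $1,1,3$) as a sanity check against the values implicit in the earlier figures.

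The main obstacle I expect is getting the recursive decomposition exactly right — in particular handling the rooting/marking carefully so that no tree is counted with the wrong multiplicity and the functional equation has the correct shift (the "$3n-3$" and "$2n-1$" already signal a Fuss–Catalan structure with parameter $3$, so the decomposition must produce a cubic-type relation, not a quadratic Catalan one). The cleanest route is almost certainly to cite the known enumeration of non-crossing trees rather than re-derive it, since the combinatorics of non-crossing configurations is standard and the paper already relies on \cite{gy} for related facts; then the corollary is immediate from \cref{lem:aptms}.
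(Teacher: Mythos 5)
Your proposal matches the paper's proof: the paper reduces the claim to counting non-crossing spanning trees on $n$ vertices via \cref{lem:aptms} and then simply cites the known enumeration $\frac{1}{2n-1}\binom{3n-3}{n-1}$ from Noy's work \cite{noy}. The generating-function/Lagrange-inversion sketch you offer as a backup is not needed; the citation route you recommend is exactly what the paper does.
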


\begin{proof}
	The number of apartments of $\on$ equals the number of non-crossing spanning trees on $n$ vertices, which is $\frac{1}{2n-1}\binom{3n-3}{n-1}$ \cite[Cor. 1.2]{noy}.
\end{proof}

Recall that \cref{lem:nc_spanning_tree_red_decomp} relates reduced decompositions of the Coxeter element $\cox$ with non-crossing spanning trees: every reduced decomposition gives rise to a non-crossing spanning tree and vice versa, although the correspondence is not one-to-one. Different reduced decompositions of $\cox$ correspond to the same non-crossing spanning tree if they differ by commuting reflections. Hence we get the following group-theoretic description of apartments.

\begin{lem}
	Every apartment of $\on$ is described by a reduced decomposition of the Coxeter element $(1\ldots n)$ and vice versa.
\end{lem}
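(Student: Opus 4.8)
The statement asserts a bijective correspondence between apartments of $\on$ and reduced decompositions of the Coxeter element $\cox = (1\ldots n)$, where two reduced decompositions give the same apartment exactly when they differ by commutations of reflections. My plan is to go through the intermediate object of non-crossing spanning trees, which has already been put in bijection with apartments in \cref{lem:aptms}; then it suffices to establish a correspondence between reduced decompositions of $\cox$ and non-crossing spanning trees, and identify the fibres.

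First I would record the reduction: by \cref{lem:aptms} every apartment of $\on$ is the frame $\{\emb(a_1),\ldots,\emb(a_{n-1})\}$ associated to a non-crossing spanning tree with edge set $\{a_1,\ldots,a_{n-1}\}$, and conversely. So the claim follows once we show: (i) every reduced decomposition $t_1\ldots t_{n-1}$ of $\cox$ determines a non-crossing spanning tree, namely the tree whose edges are $\{\iota\inv(t_1),\ldots,\iota\inv(t_{n-1})\}$; (ii) every non-crossing spanning tree arises from some reduced decomposition of $\cox$; and (iii) two reduced decompositions yield the same apartment iff their underlying reflection \emph{sets} coincide. Parts (i) and (ii) are precisely the content of \cref{lem:nc_spanning_tree_red_decomp}: a set $\{a_1,\ldots,a_{n-1}\}\subseteq\E$ admits an ordering making $\iota(a_{\sigma(1)})\ldots\iota(a_{\sigma(n-1)})$ a reduced decomposition of $\cox$ if and only if the $a_i$ form a non-crossing spanning tree. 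Combined with \cref{cor:construction_boolean_sublatice} and \cref{lem:red_decomp_chambers_apartm}, which tell us that a reduced decomposition $\tau$ of $\cox$ spans a Boolean sublattice $\langle t_1,\ldots,t_{n-1}\rangle$ whose order complex is exactly the apartment $A(\tau)$, we see that the apartment depends only on the set $\{t_1,\ldots,t_{n-1}\}$, since by \cref{cor:construction_boolean_sublatice} the Boolean sublattice is the span of that set of reflections.

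For (iii) I would argue as follows. If $\tau = t_1\ldots t_{n-1}$ and $\tau' = t'_1\ldots t'_{n-1}$ are reduced decompositions of $\cox$ with $\{t_i\} = \{t'_j\}$ as sets, then $\langle t_1,\ldots,t_{n-1}\rangle = \langle t'_1,\ldots,t'_{n-1}\rangle$, so $A(\tau) = A(\tau')$ by the remark above. Conversely, if $A(\tau) = A(\tau')$, then the two apartments are order complexes of Boolean sublattices with the same rank $1$ elements; the rank $1$ elements of the Boolean sublattice $\langle t_1,\ldots,t_{n-1}\rangle$ are exactly the reflections $t_1,\ldots,t_{n-1}$ (they are the atoms), so $\{t_i\} = \{t'_j\}$. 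This establishes that apartments are in bijection with reflection \emph{sets} arising from reduced decompositions of $\cox$, equivalently with non-crossing spanning trees, equivalently with equivalence classes of reduced decompositions of $\cox$ modulo commutation of reflections — and the statement "every apartment is described by a reduced decomposition of $(1\ldots n)$ and vice versa" is precisely this correspondence (with the understood many-to-one nature on the reduced-decomposition side, exactly as in the discussion preceding \cref{lem:nc_spanning_tree_red_decomp}).

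\textbf{Main obstacle.} The only genuinely nontrivial input is \cref{lem:nc_spanning_tree_red_decomp}, which is cited from \cite{gy} and which I may assume. Given that, the argument is essentially a bookkeeping exercise threading together \cref{lem:aptms}, \cref{cor:construction_boolean_sublatice}, and \cref{lem:red_decomp_chambers_apartm}. The subtle point to state carefully is that the map from reduced decompositions to apartments factors through the forgetful map to reflection sets, so that "apartments correspond to reduced decompositions" is a statement about the image, not an honest bijection on the nose — the failure of injectivity is exactly accounted for by commuting reflections, and this should be spelled out to match the phrasing in the surrounding text.
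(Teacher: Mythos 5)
Your argument is correct and follows exactly the route the paper takes: the paper states this lemma without a separate proof, deriving it immediately from \cref{lem:aptms} (apartments correspond to non-crossing spanning trees) together with \cref{lem:nc_spanning_tree_red_decomp} (trees correspond to reduced decompositions up to commutation), precisely as in the paragraph preceding the lemma. Your additional care in checking that the apartment depends only on the reflection set, via the atoms of the Boolean sublattice, is a reasonable spelling-out of what the paper leaves implicit.
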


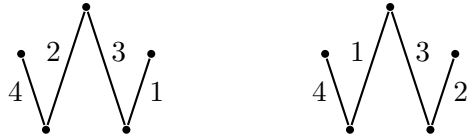
\begin{figure}
	\begin{center}
		\begin{tikzpicture}[every path/.style={thick}]
		\begin{scope}
		\fnfeck
		\draw(p1)to node[midway, right]{1}(p2)
		(p2)to node[pos=0.65, right]{3}(p5)
		(p5)to node[pos=0.35, left]{2}(p3)
		(p3)to node[midway, left]{4}(p4);
		\end{scope}
		
		\begin{scope}[xshift = 4cm]
		\fnfeck
		\draw(p1)to node[midway,  right]{2}(p2)
		(p2)to node[pos=0.65, right]{3}(p5)
		(p5)to node[pos=0.35, left]{1}(p3)
		(p3)to node[midway,  left]{4}(p4);
		\end{scope}
		
		\end{tikzpicture}
		\caption{The non-crossing tree correpsonding to the reduced decompositions $\tau_1=(1\,\;2)(3\,\;5)(2\,\;5)(3\,\;4)$ and $\tau_2=(3\,\;5)(1\,\;2)(2\,\;5)(3\,\;4)$. The induced labeling of $\tau_1$ is depicted on the left-hand side and that of $\tau_2$ on the right-hand side.}
		\label{fig:tree_of_aptm}
	\end{center}
\end{figure}

\begin{exa}
	Consider the reduced decomposition $\tau_1=(1\,\;2)(3\,\;5)(2\,\;5)(3\,\;4)$ of the Coxeter element of $S_5$. Sending each reflection of the reduced decomposition to a subspace of $\F_2^4$ via the embedding $\emb\colon \nc(S_5) \to \lam(\F_2^4)$ yields the basis 
	$\Set{e_1+e_2,e_3,e_2,e_3+e_4}$ of $\F_2^4$. The corresponding tree that arises from the reduced decomposition via the isomorphism $\iota\colon \ncp_5 \to \nc(S_5)$ is depicted in \cref{fig:tree_of_aptm}. The labeling of the tree on the left-hand side is induced from the reduced decomposition $\tau_1$ by assigning the edge corresponding to the reflection on position $i$ the label $i$.
	
	Now consider the reduced decomposition $\tau_2=(3\,\;5)(1\,\;2)(2\,\;5)(3\,\;4)$, which arises from $\tau_1$ by swapping the first two reflections. It gives rise to the same apartment as $\tau_1$ and hence the same tree, but the labeling is different and shown on the right-hand side of \cref{fig:tree_of_aptm}. 
	Note that there are labelings of trees which do \emph{not} correspond to a reduced decomposition of the Coxeter element, since swapping two non-commuting reflections alters the  decomposed element.
\end{exa}

The following is implied by \cite[Thm. 2.2, Le. 2.5]{gy}.

\begin{lem}\label{lem:good_labeling}
	Let $T$ be a non-crossing spanning tree labeled by $\lambda\colon\Set{1,\ldots, n-1} \to T$. Then $\iota(\lambda(1))\ldots \iota(\lambda(n-1))$ is a reduced decomposition of the Coxeter element of $S_n$ if and only if the edge labels are increasing counterclockwise in the interior of the $n$-gon around each vertex.
\end{lem}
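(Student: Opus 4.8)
The plan is to derive the statement from \cref{lem:nc_spanning_tree_red_decomp} together with an induction on $n$, after first stripping away the part of the claim that is automatic. If $T$ has edge set $\{a_1,\dots,a_{n-1}\}$, then since a spanning tree contains no cycle, \cref{lem:cycles_dependent} gives that $\emb_\E(a_1),\dots,\emb_\E(a_{n-1})$ are linearly independent; hence by Carter's Lemma \cref{lem:carter} \emph{every} ordering $\iota(\lambda(1))\cdots\iota(\lambda(n-1))$ is already a reduced word, necessarily of reflection length $n-1$. Since the reflection length of a permutation in $S_n$ is $n$ minus its number of cycles, the product is always a single $n$-cycle. So the content of the lemma is purely: among the $(n-1)!$ orderings of the edges of $T$, exactly the counterclockwise-increasing ones produce the \emph{specific} Coxeter element $\cox=(1\,2\ldots n)$.

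For the inductive step I would peel off the edge $e=\lambda(1)=\{i,j\}$ carrying the smallest label, writing $w=(i\,j)\cdot w'$ with $w'=\iota(\lambda(2))\cdots\iota(\lambda(n-1))$. A Jordan-curve argument shows that removing an edge from a non-crossing spanning tree splits it into two non-crossing trees $T_1\ni i$ and $T_2\ni j$ whose vertex sets $V_1,V_2$ are complementary contiguous arcs of the $n$-gon. The edges of $T_1$ and $T_2$ have disjoint supports, so $w'=\sigma_1\sigma_2$, where $\sigma_k$ is the product of the edge-transpositions of $T_k$ in the order inherited from $\lambda$. The counterclockwise-increasing condition is local at each vertex, and deleting the globally smallest label — which is in particular smallest at both $i$ and $j$, and, because labels increase counterclockwise, occupies the first slot there — leaves the inherited labelings of $T_1$ and $T_2$ counterclockwise-increasing inside the respective sub-polygons. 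By the inductive hypothesis each $\sigma_k$ is then the circular cycle on $V_k$ in the induced cyclic order, and a direct computation shows that $(i\,j)$ times the product of these two circular cycles splices them into the full circular $n$-cycle $(1\,2\ldots n)$, which establishes the forward direction. The converse runs along the same peeling: a labeling that violates the condition at some vertex leads, after removing a suitably chosen extremal edge, by induction to an $n$-cycle different from $\cox$.

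I expect the main obstacle to be the planar bookkeeping in this step: making precise how the rotation system at each vertex — the counterclockwise order of incident edges anchored \enquote{in the interior of the $n$-gon} — restricts to the two sub-polygons after an arc is cut off, and pinning down which endpoints of the two arcs the chord $e$ joins so that the cycle-splicing identity $(i\,j)\sigma_1\sigma_2=(1\,2\ldots n)$ holds with the correct orientation. This is exactly the combinatorics worked out in \cite[Thm.~2.2, Le.~2.5]{gy}, so in the write-up I would either cite that result directly for this step or, if a self-contained argument is preferred, recast the induction in terms of the ribbon-graph/boundary-walk description of the product of edge-transpositions of a plane tree: the counterclockwise-increasing labelings are precisely those for which the resulting ribbon structure has a single boundary face, and that face visits $1,2,\dots,n$ in cyclic order, giving $\cox$. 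As a sanity check, this matches the count in \cref{rem:chambers_bijection}, since both the labeled non-crossing trees arising this way and the reduced decompositions of $\cox$ are enumerated by $n^{n-2}$.
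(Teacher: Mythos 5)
Your proposal is correct in its overall approach, but it is worth noting that the paper does not actually prove this lemma: it simply records it as a consequence of \cite[Thm.~2.2, Le.~2.5]{gy}. So if you take the option of citing Goulden--Yong for the planar bookkeeping, your argument collapses to the paper's; if you carry out the induction yourself, you obtain a genuinely self-contained alternative. Two things in your sketch are worth keeping: the opening reduction via Carter's Lemma (linear independence of the edge vectors of a tree shows that \emph{every} ordering is reduced and yields some $n$-cycle, so the entire content is identifying which orderings yield $\cox=(1\,2\ldots n)$) is a clean observation not made in the paper; and your identification of the crux is accurate. Let me make that crux precise, since it is the one step where the sketch could silently fail: after deleting $e=\lambda(1)=\{i,j\}$, the Jordan-curve argument only gives that the two components occupy \emph{some} pair of complementary arcs containing $i$ and $j$ respectively, whereas the splicing identity $(i\,j)\sigma_1\sigma_2=\cox$ requires them to be exactly $[i,j-1]$ and $[j,i-1]$ (equivalently, that $i$ and $j$ are the counterclockwise-first vertices of their arcs). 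This does follow from the hypothesis that $e$ is counterclockwise-first at both of its endpoints, by one more non-crossing argument: if the component of $i$ extended past $i$ in the clockwise direction, connectivity would force an edge of $T_1$ joining the two sides of the chord $\{i,j\}$. For the converse, the same computation shows that $(i\,j)\cox$ factors uniquely into disjoint cycles supported on $[i,j-1]$ and $[j,i-1]$, so any deviation --- either in the arcs or, via induction, in one of the $\sigma_k$ --- already forces the product to differ from $\cox$. With these two points filled in, your induction is complete. (Minor caveat: your closing sanity check leans on the count $n^{n-2}$ of reduced decompositions of an $n$-cycle, which is D\'enes' theorem and is not stated in \cref{rem:chambers_bijection}; it is a harmless aside but not independent evidence.)
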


\begin{figure}
	\begin{center}
		\begin{tikzpicture}
		\begin{scope}
		\foreach \w in {1,...,5} 
		\node (p\w) at (-\w * 360/5 +90  : 13mm) [kpunkt] {};
		\draw[thick](p1)to node[midway,right]{2}(p2)
		(p2)to node[pos=0.65, right]{3}(p5)
		(p5)to node[pos=0.35, left]{1}(p3)
		(p3)to node[midway, left]{4}(p4);
		\draw[->] (p2)+(30:3mm) arc (30:160:3mm);
		\draw[->] (p3)+(30:3mm) arc (30:160:3mm);
		\draw[->] (p4)+(-120:3mm) arc (-120:30:3mm);
		\draw[->] (p5)+(-150:3mm) arc (-150:-20:3mm);
		\draw[->] (p1)+(150:3mm) arc (150:320:3mm);
		\end{scope}
		
		\begin{scope}[xshift = 5 cm]
		\foreach \w in {1,...,5} 
		\node (p\w) at (-\w * 360/5 +90  : 13mm) [kpunkt] {};
		\draw[thick](p1)to node[midway,  right]{1}(p2)
		(p2)to node[pos=0.65, right]{4}(p5)
		(p5)to node[pos=0.35, left]{3}(p3)
		(p3)to node[midway,  left]{2}(p4);
		\draw[->] (p2)+(30:3mm) arc (30:160:3mm);
		\draw[->,thick, Orange] (p3)+(30:3mm) arc (30:160:3mm);
		\draw[->] (p4)+(-120:3mm) arc (-120:30:3mm);
		\draw[->] (p5)+(-150:3mm) arc (-150:-20:3mm);
		\draw[->] (p1)+(150:3mm) arc (150:320:3mm);
		\node[Orange, kpunkt] at (p3){};
		\end{scope}
		
		\end{tikzpicture}
	\end{center}
	\caption{The labeling on the left satisfies the condition of \cref{lem:good_labeling} and hence gives rise to the reduced decomposition $(3\,\;5)(1\,\;2)(2\,\;5)(3\,\;4)$ of the Coxeter element in $S_5$. The labeling on the right does not correspond to a reduced decomposition, as the condition is violated for the vertex $3$.}
	\label{fig:good_bad_labelings}
\end{figure}
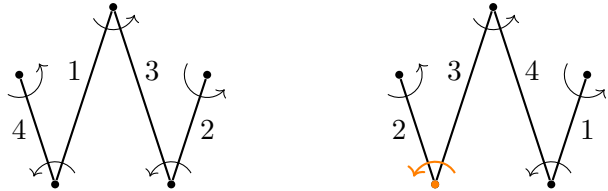

An example of a labeling satisfying the conditions of the lemma is shown on the left-hand side of \cref{fig:good_bad_labelings}, and a labeling that does not on the right-hand side. The apartments of the partition complex have a description in terms of trees as well.

\begin{lem}
	Every apartment of $\op$ is described by a spanning tree and vice versa.
\end{lem}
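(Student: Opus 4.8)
The plan is to establish a bijection between apartments of $\op$ and spanning trees on the vertices of the $n$-gon, mirroring the structure of the proof of \cref{lem:aptms} for the non-crossing case. First I would recall that an apartment of $\si = |\lam(\vs)|$ is an apartment of $\op$ precisely when all its rank $1$ vertices are partition vectors, equivalently when the corresponding frame of $\vs$ consists of partition vectors. Since partition vectors are in bijection with edges $\E$ of the $n$-gon via $\emb_\E$, an apartment of $\op$ corresponds to a set $\Set{a_1, \ldots, a_{n-1}}$ of edges such that $\Set{\emb_\E(a_1), \ldots, \emb_\E(a_{n-1})}$ is a basis of $\vs$.

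The key step is then to translate the linear-algebra condition \enquote{$\emb_\E(a_1), \ldots, \emb_\E(a_{n-1})$ form a basis of $\vs$} into the graph-theoretic condition \enquote{$\Set{a_1, \ldots, a_{n-1}}$ is a spanning tree}. By \cref{lem:cycles_dependent}, a set of edges contains a cycle if and only if the corresponding partition vectors are linearly dependent; hence a set of $n-1$ edges gives linearly independent vectors if and only if the edge set is acyclic, i.e.\ a forest. A forest on $n$ vertices with exactly $n-1$ edges is necessarily a spanning tree (it is connected, since an acyclic graph on $n$ vertices with $n-1$ edges has one connected component). Conversely, any spanning tree has exactly $n-1$ edges which are acyclic, so the associated $n-1$ partition vectors are linearly independent in the $(n-1)$-dimensional space $\vs$ and therefore form a basis. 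This gives a well-defined map in both directions, and the two maps are mutually inverse because $\emb_\E$ is injective and the assignment \enquote{frame $\leftrightarrow$ apartment} is a bijection by the discussion in \cref{sec:sph_build}.

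I would then note that distinct spanning trees give distinct edge sets, hence distinct bases up to the frame identification, hence distinct apartments; and that every apartment of $\op$ arises this way by the argument above. I do not anticipate a serious obstacle here: the main content is already packaged in \cref{lem:cycles_dependent} and the standard fact that acyclic graphs with $n-1$ edges on $n$ vertices are spanning trees. The only point requiring a little care is making sure the apartment really lies inside $\op$ and not merely in $\si$, which follows because $\op$ is an \emph{induced} subcomplex of $\si$ (by \cref{con:subcompl} and the remark following it), so once all rank $1$ vertices of the apartment are partitions, every simplex of that apartment has all its vertices in $\op$, and hence the whole apartment is contained in $\op$.
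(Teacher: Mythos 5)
Your proof is correct and follows essentially the same route as the paper's: both directions reduce to \cref{lem:cycles_dependent} together with the count that $n-1$ acyclic edges on $n$ vertices form a spanning tree and that $n-1$ linearly independent partition vectors form a basis of $\vs$. The extra care you take (injectivity of $\emb_\E$, the induced-subcomplex point guaranteeing the apartment lies in $\op$) is sound but not needed beyond what the paper already records.
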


\begin{proof}
	Let $A$ be an apartment of $\op$ corresponding to the frame $\Set{L_1, \ldots, L_{n-1}}$ consisting of partition subspaces. Then the set of non-zero vectors $\Set{v_1, \ldots, v_{n-1}}$ corresponding to the frame is a basis of $\vs$, which consists of partition vectors. By \cref{lem:cycles_dependent} the set of edges corresponding to the basis under $\emb$ does not contain any cycle. Since it consists of $n-1$ edges, it is a spanning tree on $n$ vertices.
	
	For the other direction, let $A\subseteq \E$ be the edge set of a tree on $n$ vertices. Again by \cref{lem:cycles_dependent}, the $n-1$ non-zero vectors of  $\emb(A)$ are linearly independent and hence a basis.
\end{proof}

\begin{rem}
	The bijection of spanning trees and apartments of $\op$ is induced by the embedding $\emb \colon \op \to \si$. 
\end{rem}

\begin{cor}
	The number of apartments in $\op$ equals $n^{n-2}$.
\end{cor}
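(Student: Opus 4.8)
The plan is to invoke the bijection just established in the preceding lemma, which says that apartments of $\op$ are in one-to-one correspondence with spanning trees on the $n$ vertices of the regular $n$-gon, the correspondence being induced by the embedding $\emb\colon\op\to\si$. Since the labels $1,\dots,n$ are fixed, these are \emph{labeled} spanning trees on $n$ vertices, so counting apartments of $\op$ amounts to counting labeled spanning trees on $n$ vertices.

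I would then appeal to Cayley's formula, which states that the number of labeled spanning trees on $n$ vertices equals $n^{n-2}$. Combining this with the bijection from the preceding lemma immediately yields $\#\A(\op)=n^{n-2}$. No further computation is needed; the argument is a single sentence once the bijection and Cayley's formula are cited. If one wished to be self-contained, one could instead cite Cayley's theorem via a standard reference (e.g. through the Prüfer sequence bijection), but since the excerpt has already been freely citing such combinatorial enumeration results (Narayana, Catalan, Stirling, Bell, Gaussian binomials, and the count of non-crossing spanning trees), citing Cayley's formula in the same spirit is appropriate.

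There is essentially no obstacle here: the only thing to be careful about is that the bijection in the preceding lemma genuinely respects the labeling of the $n$-gon's vertices, i.e.\ that two distinct spanning trees (as labeled graphs) give distinct apartments and conversely — but this is exactly what the preceding lemma asserts, via the injectivity and rank-preserving properties of $\emb$ from \cref{thm:pn_pict_embedding} together with \cref{lem:cycles_dependent}. So the proof reduces to: apply the lemma, then apply Cayley.

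\begin{proof}
	By the preceding lemma, the embedding $\emb$ induces a bijection between the apartments of $\op$ and the spanning trees on the $n$ labeled vertices of the regular $n$-gon. By Cayley's formula, the number of spanning trees on $n$ labeled vertices is $n^{n-2}$. Hence $\op$ has $n^{n-2}$ apartments.
\end{proof}
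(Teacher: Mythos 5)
Your proof is correct and is essentially identical to the paper's: both invoke the bijection between apartments of $\op$ and labeled spanning trees on $n$ vertices from the preceding lemma, and then cite Cayley's formula for the count $n^{n-2}$. No gaps.
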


\begin{proof}
	The number of apartments on $\op$ equals the number of spanning trees on $n$ vertices, which is well-known to be $n^{n-2}$ by Cayley's formula \cite[Prop. 5.3.2]{staecz}.
\end{proof}

\cref{tab:numbers_aptm} shows the numbers of apartments of $\ncpn$, $\on$ and $\lam(\vs)$ for small $n$. In comparison to the number of vertices, the number of apartments of $\on$ and $\op$ inside $\si$ falls even faster.

\begin{table}
	\begin{center}
		\begin{tabular}{|r||r|r|r|}
			\hhline{|-||-|-|-|}
			n & $\#\An$ & $\#\A(\pn)$ &$\#\A(\si)$\\
			\hhline{:=::=:=:=:}
			1	&	1	&	1	&	1	\\
			2	&	1	&	1	&	1	\\
			3	&	3	&	3	&	3	\\
			4	&	12	&	16	&	28	\\
			5	&	55	&	125	&	840	\\
			6	&	273	&	1\,296	&	83\,328	\\
			7	&	1\,428	&	16\,807	&	27\,998\,208	\\
			8	&	7\,752	&	262\,144	&	32\,509\,919\,232	\\
			
			\hhline{|-||-|-|-|}
		\end{tabular}
	\end{center}
	\caption{The number of apartments of $\on$, $\op$ and $\lam(\vs)$ for small $n$ are shown here.}
	\label{tab:numbers_aptm}
\end{table}

\subsubsection{Chambers}

Recall from \cref{sec:sph_build} that the apartments of $\si$ are isomorphic to the barycentric subdivision of the boundary of the $(n-2)$-simplex, which in turn is isomorphic to the Coxeter complex of type $A_{n-2}$.
We denote this complex by $\Sigma$ in what follows. The chambers of $\Sigma$ are given by total orders of the rank $1$ vertices.

\begin{lem}\label{lem:labeled_trees_chambers}
	Let $T$ be a spanning tree corresponding to an apartment $A \in \A(\pn)$. Then the chambers of $A$ are given by labelings of $T$.
\end{lem}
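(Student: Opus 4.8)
The plan is to observe that the apartment $A$ is simultaneously a full apartment of the building $\si$ and then apply the description of the chambers of a type $A$ apartment worked out in \cref{sec:sph_build}.

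First I would recall from the description of apartments of $\op$ by spanning trees that $A$ corresponds to $T$ exactly when the frame of $\vs$ defining $A$ is $\Set{\emb_\E(a_1), \ldots, \emb_\E(a_{n-1})}$, where $\Set{a_1, \ldots, a_{n-1}}$ is the edge set of $T$. Writing $L_i \coloneqq \emb_\E(a_i)$, this is a basis of $\vs$ consisting of rank $1$ partition subspaces, so $A$ is the order complex of the Boolean sublattice $\Braket{L_1, \ldots, L_{n-1}}$ of $\lam(\vs)$. Since $\op$ is an induced subcomplex of $\si$ and all rank $1$ vertices $L_i$ of $A$ lie in $\op$, this apartment of $\op$ coincides with the apartment of the building $\si$ determined by the same frame.

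Next I would invoke the analysis of the Coxeter complex of type $A_{n-2}$ from \cref{sec:sph_build}: the apartment $A$ is isomorphic to the barycentric subdivision of the boundary of the $(n-2)$-simplex with rank $1$ vertices $L_1, \ldots, L_{n-1}$, and its chambers are in bijection with the total orders of $\Set{L_1, \ldots, L_{n-1}}$, a total order $L_{\sigma(1)}, \ldots, L_{\sigma(n-1)}$ corresponding to the flag
\[
\Braket{L_{\sigma(1)}} \subsetneq \Braket{L_{\sigma(1)}, L_{\sigma(2)}} \subsetneq \ldots \subsetneq \Braket{L_{\sigma(1)}, \ldots, L_{\sigma(n-2)}}.
\]
Transporting a total order of $\Set{L_1, \ldots, L_{n-1}}$ along the bijection $\emb_\E$ between the edges of the $n$-gon and the rank $1$ partition subspaces yields a bijection $\Set{1, \ldots, n-1} \to \Set{a_1, \ldots, a_{n-1}}$, that is, a labeling of the tree $T$; this gives the asserted correspondence.

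The one point that needs a word of care — and, I expect, the only genuine subtlety — is checking that the identification of the abstract complex $A$ with the subdivided simplex of \cref{sec:sph_build} matches the identification of $A$ as an apartment of $\op$, so that the two notions of \enquote{chamber of $A$} really agree. Beyond this, the statement reduces to the elementary fact that complete flags of $\vs$ adapted to a fixed basis are in bijection with orderings of that basis, every ordering yielding a valid flag because any subset of a basis is linearly independent and spans a subspace of the expected dimension. Note that, in contrast with the non-crossing situation treated in \cref{lem:good_labeling}, here \emph{every} labeling of $T$ occurs, since no non-crossing condition is imposed.
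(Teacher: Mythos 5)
Your proposal is correct and takes essentially the same route as the paper: both arguments identify labelings of $T$ with total orders of the edges, hence with total orders of the rank $1$ vertices of $A$, and then invoke the fact from \cref{sec:sph_build} that chambers of a type $A_{n-2}$ apartment are exactly the total orders of its rank $1$ vertices. The paper's proof is just a terser version of yours, omitting the explicit check that the apartment of $\op$ coincides with the building apartment on the same frame.
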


\begin{proof}
	Every labeling of a spanning tree gives a total order of its edges. Because the edges of a tree are the rank $1$ vertices of the corresponding apartment, the labeling gives a total order of the rank $1$ vertices and hence a chamber. Since the chambers are uniquely characterized by the total orders of rank $1$ elements, every chamber of the apartment gives rise to a labeling of the associated tree. 
\end{proof}

\begin{cor}
	Let $C \in \C(\pn)$ be a chamber and $A\in\A(\pn)$ an apartment. Then $C$ is contained in $A$ if and only if there exists a labeling of the tree corresponding to $A$ that corresponds to the chamber $C$.
\end{cor}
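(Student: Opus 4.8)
The plan is to derive this directly from Lemma~\ref{lem:labeled_trees_chambers}, which already records that the chambers of an apartment $A$ are precisely the simplices arising from labelings of the associated spanning tree $T$. The only point to check carefully is that ``$C$ contained in $A$'' is the same as ``$C$ is a chamber of $A$'', so that the lemma applies verbatim in both directions, and that both occurrences of the phrase ``the labeling corresponds to the chamber $C$'' are read off the same bijection.

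For the forward implication, I would suppose that $C\in\C(\pn)$ is contained in $A$. Since $A$ is an apartment of $\op$, it is a chamber subcomplex of $\op$ of dimension $\dim(\op)$; hence a maximal simplex of $\op$ that happens to lie inside $A$ is a maximal simplex of $A$, i.e. $C\in\C(A)$. By Lemma~\ref{lem:labeled_trees_chambers} the chamber $C$, being a chamber of $A$, is given by a labeling of $T$, which is the desired labeling.

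For the backward implication, I would suppose that $\lambda$ is a labeling of $T$ corresponding to the chamber $C$. By Lemma~\ref{lem:labeled_trees_chambers} every labeling of $T$ gives a chamber of $A$; let $C'\in\C(A)$ be the chamber produced by $\lambda$. A labeling of $T$ is the same datum as a total order of the edges of $T$, i.e. a total order of the rank $1$ vertices of $A$, and a chamber of a Coxeter complex of type $A$ is uniquely determined by such a total order, as recalled in \cref{sec:sph_build}. Since $C$ and $C'$ are both ``the chamber corresponding to $\lambda$'', we get $C=C'\in\C(A)$, so in particular $C$ is contained in $A$.

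The main (and essentially only) obstacle is the bookkeeping in the last paragraph: one must make sure the phrase ``the labeling corresponds to the chamber $C$'' in the statement is interpreted through the same chain of bijections tree-labelings $\leftrightarrow$ total orders of rank $1$ vertices $\leftrightarrow$ chambers that underlies Lemma~\ref{lem:labeled_trees_chambers}, so that no ambiguity remains and the identification $C=C'$ is legitimate. Everything else is a routine unwinding of definitions.
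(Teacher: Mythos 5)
Your argument is correct and matches the paper's (implicit) reasoning: the corollary is stated without proof as an immediate consequence of Lemma~\ref{lem:labeled_trees_chambers}, and your unwinding — a chamber of $\op$ lying in the full-dimensional chamber subcomplex $A$ is a chamber of $A$, and labelings biject with chambers of $A$ via total orders of the rank $1$ vertices — is exactly the intended content. No gaps.
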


\begin{figure}
	\begin{center}
		\begin{tikzpicture}
		\begin{scope}[yshift=1.1cm]
		\foreach \w in {1,...,5} 
		\node (p\w) at (-\w * 360/5 +90  : 13mm) [mpunkt] {};
		\draw[thick](p1)to node[midway,  right]{1}(p2)
		(p2)to node[pos=0.65, right]{4}(p5)
		(p5)to node[pos=0.35, left]{3}(p3)
		(p3)to node[midway,  left]{2}(p4);
		\end{scope}
		
		\begin{scope}[xshift = 8cm]
		\coordinate (q1) at (-2.4,0);
		\coordinate (q3) at (2.4,0);
		\coordinate (q2) at (0,2.8);
		\filldraw[fill=Superlightgray, thick] (q1)--(q2)--(q3)--(q1);
		\node[mpunkt] at (q1) {};
		\node[mpunkt] at (q2){};
		\node[mpunkt] at (q3){};
		\node[left=2mm] at (q1) {\begin{tikzpicture}\skfnfeck\draw (p1)--(p2);\end{tikzpicture}};
		\node[above=2mm] at (q2) {\begin{tikzpicture}\skfnfeck\draw(p1)--(p2)(p3)--(p4);\end{tikzpicture}};
		\node[right=2mm] at (q3) {\begin{tikzpicture}\skfnfeck\draw (p1)--(p2)(p3)--(p4)--(p5)--(p3);\end{tikzpicture}};
	\end{scope}
	
\end{tikzpicture}
\end{center}
\caption{A labeled tree and the corresponding chamber of $|\ncp_5|$.}
\label{fig:chamber_example}
\end{figure}
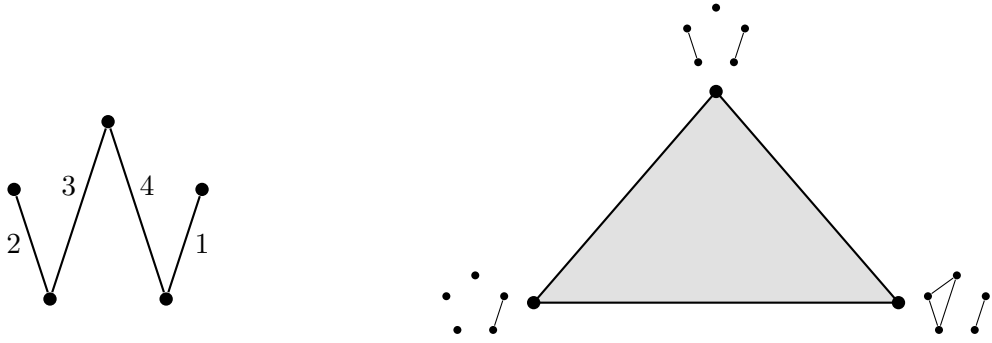

\begin{rem}
We already encountered labeled \emph{non-crossing} trees in \cref{lem:good_labeling}, where the labeling had to satisfy a certain condition in order to give rise to a reduced decomposition of the Coxeter element. The labelings we consider in \cref{lem:labeled_trees_chambers} are different! \emph{Every} labeling gives rise to a particular chamber, which is contained in the apartment corresponding to the tree. Note that if the tree is crossing, this labeling \emph{does not} yield the reduced decomposition that uniquely corresponds to a chamber in general. 

For a non-crossing tree, some of these labelings, at least one, satisfy the condition of \cref{lem:good_labeling}, but \enquote{most} of them do not. If a labeling satisfies the condition, then the reduced decomposition that uniquely corresponds to the described chamber gives rise to the tree we started with. If the labeling violates the condition, then the reduced decomposition associated to that chamber corresponds to another tree.
\end{rem}

\begin{exa}\label{exa:labeled_tree}
Consider the tree $T$ on the right-hand side of \cref{fig:good_bad_labelings} labeled by $\lambda$. Although the labeling does not give rise to a reduced decomposition of the Coxeter element, it corresponds to a chamber of $|\ncp_5|$. The edge set of $T$ is 
\[
A=\Set{a_1=(1,2),\ a_2=(3,4),\ a_3=(3,5),\ a_4=(2,5)},
\]
where we chose the names of the edges accordingly to the labeling of $T$, that is $\lambda(i)=a_i$.
The chamber of $|\ncp_5|$ corresponding to the labeled tree has the vertices $a_1$, $a_1\vee a_2$ and $a_1 \vee a_2 \vee a_3$ and is depicted in \cref{fig:chamber_example}.
\end{exa}

\begin{lem}\label{lem:dist_aptm}
Let $T$ be a tree corresponding to an apartment $A\in \A(\pn)$ and let $\lambda_1, \lambda_2\colon \Set{1, \ldots, n-1}\to T$ 
be two labelings of $T$ corresponding to chambers $C$ and $D$ of $A$, respectively. Consider the simplicial isomorphism $\jmath\colon A \to \Sigma$  that sends $C$ to the fundamental chamber $\Sigma$ of the Coxeter complex of $S_{n-1}$. Then $D$ gets mapped to the chamber with label $\lambda_1\inv\circ\lambda_2$.
\end{lem}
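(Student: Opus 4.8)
The statement is essentially a bookkeeping fact about how the standard identification of an apartment with the Coxeter complex $\Sigma$ of $S_{n-1}$ transports labelings. Recall the dictionary set up in \cref{sec:sph_build}: an apartment of $\si$ is isomorphic to $\Xi$, the barycentric subdivision of the boundary of the $(n-2)$-simplex, whose chambers are in bijection with total orders of the $n-1$ rank $1$ vertices, and $\Xi \cong \Sigma$ so that a total order $i_1 \ldots i_{n-1}$ of the rank $1$ vertices of $\Sigma$ corresponds to the element of $S_{n-1}$ whose one-line notation is $i_1 \ldots i_{n-1}$ (the bijection $j \mapsto i_j$). The fundamental chamber corresponds to $\id$. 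So the plan is: fix the tree $T$ with edge set $\{a_1,\dots,a_{n-1}\}$, where $a_k := \lambda_1(k)$ is \emph{declared} to be the "$k$-th standard rank $1$ vertex" of the apartment; under the isomorphism $\jmath\colon A \to \Sigma$ that sends $C$ to the fundamental chamber, this amounts to mapping the rank $1$ vertex $a_k$ of $A$ to the rank $1$ vertex of $\Sigma$ labelled $k$. This is forced, since a simplicial isomorphism between two copies of the Coxeter complex is determined by where it sends one chamber, and $C$ corresponds precisely to the total order $\lambda_1(1),\lambda_1(2),\dots,\lambda_1(n-1)$ of rank $1$ vertices of $A$.

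First I would make precise how a labeling of $T$ encodes a chamber: by \cref{lem:labeled_trees_chambers} a labeling $\lambda\colon \{1,\dots,n-1\}\to T$ gives the total order $\lambda(1),\lambda(2),\dots,\lambda(n-1)$ of the rank $1$ vertices (edges) of the apartment $A$, hence a chamber. So $C$ is the chamber with rank $1$ vertex order $(\lambda_1(1),\dots,\lambda_1(n-1))$ and $D$ the one with order $(\lambda_2(1),\dots,\lambda_2(n-1))$. Second, track $D$ through $\jmath$: since $\jmath$ carries the rank $1$ vertex $\lambda_1(k)=a_k$ of $A$ to the $k$-th standard rank $1$ vertex of $\Sigma$ (the one whose one-line-notation "position" is $k$), the chamber $D$ — whose rank $1$ vertex at position $j$ is $\lambda_2(j)=a_{\lambda_1^{-1}(\lambda_2(j))}$ — is carried to the chamber of $\Sigma$ whose rank $1$ vertex at position $j$ is the standard one indexed by $\lambda_1^{-1}(\lambda_2(j))$. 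That is exactly the chamber of $\Sigma$ with one-line notation $\big(\lambda_1^{-1}(\lambda_2(1)),\dots,\lambda_1^{-1}(\lambda_2(n-1))\big)$, i.e. the permutation $j \mapsto (\lambda_1^{-1}\circ\lambda_2)(j)$, which is the element $\lambda_1^{-1}\circ\lambda_2 \in S_{n-1}$. This gives the claim.

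The only genuine subtlety — the step I expect to be the real content — is justifying that $\jmath$ does act on rank $1$ vertices by $\lambda_1(k)\mapsto (\text{standard vertex } k)$, i.e. that the chosen identification of the abstract apartment $A$ with $\Xi$ (via the labeling $\lambda_1$, which designates which edge of $T$ plays the role of which "coordinate") is compatible with the labeling of $\Sigma$ by simple reflections and with the correspondence "chamber $\leftrightarrow$ one-line notation" described in \cref{sec:sph_build}. Concretely, one has to invoke that a type-preserving simplicial automorphism (here isomorphism) of a Coxeter complex of type $A$ is uniquely determined by the image of a single chamber (this is the standard chamber-transitivity-plus-rigidity statement for Coxeter complexes, which underlies building axiom (B2)), and then match the combinatorial labels: the chamber $C\subseteq A$, expressed as the total order $(\lambda_1(1),\dots,\lambda_1(n-1))$ of rank $1$ vertices, must go to the total order $(1,2,\dots,n-1)$ of rank $1$ vertices of $\Sigma$, which \emph{is} the fundamental chamber, and adjacency of chambers (passing from one permutation to another by a transposition of consecutive positions, corresponding to a simple reflection — see the discussion around \cref{fig:bary_sd_3simpl}) is preserved. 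Once this identification of $\jmath$ on rank $1$ vertices is pinned down, the computation $D \mapsto \lambda_1^{-1}\circ\lambda_2$ is immediate from composing the two total orders. I would present the argument mostly at the level of rank $1$ vertices, remarking that everything else follows because both $A$ and $\Sigma$, as Coxeter complexes of type $A_{n-2}$, and hence as barycentric subdivisions of boundaries of simplices, are induced subcomplexes determined by their vertex sets (\cref{rem:oc_ind_subcplx}), so a bijection on rank $1$ vertices respecting the forest/tree structure extends uniquely to the whole complex.
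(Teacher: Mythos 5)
Your proposal is correct and follows essentially the same route as the paper: identify $\jmath$ on rank $1$ vertices with $\lambda_1^{-1}$ (since $C$, the total order $(\lambda_1(1),\dots,\lambda_1(n-1))$, must go to the fundamental chamber), then read off the image of $D$ as the one-line notation $\bigl(\lambda_1^{-1}(\lambda_2(1)),\dots,\lambda_1^{-1}(\lambda_2(n-1))\bigr)$, i.e.\ the permutation $\lambda_1^{-1}\circ\lambda_2$. The only difference is that you spell out the rigidity justification for why $\jmath$ acts this way on rank $1$ vertices, which the paper simply asserts.
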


\begin{proof}
Let $T$ be a tree with edge set $\Set{a_1, \ldots, a_{n-1}}$ such that $a_i=\lambda_1(i)$ holds for $1\leq i < n$, that is $\lambda_1\inv(a_i)=i$. This means that on the rank $1$ vertices, the isomorphism  $\jmath\colon A \to \Sigma$ coincides with $\lambda_1\inv$. The chamber corresponding to the labeling $\lambda_2$ has vertices $\lambda_2(1)$, $\lambda_2(1)\vee\lambda_2(2)$, $\ldots$, $\lambda_2(1) \vee \ldots \vee \lambda_2(n-2)$. Hence its image in $\Sigma$ is given by $\jmath(\lambda_2(1))$, $\ldots$, $\jmath(\lambda_2(1)) \vee \ldots \vee \jmath(\lambda_2(n-2))$, which corresponds to the one line notation $\jmath(\lambda_2(1)) \ldots \jmath(\lambda_2(n-1))$ describing a chamber in $\Sigma$. But this is the one line notation of the permutation $\lambda_1\inv\circ\lambda_2$ of $\Set{1, \ldots, n-1}$ as well. This shows that the chamber corresponding to the labeling $\lambda_2$ of $T$ gets mapped to the chamber of $\Sigma$ corresponding to the permutation $\lambda_1\inv\circ\lambda_2$ by $\jmath$.
\end{proof}

\begin{cor}
Let $\lambda_1$ and $\lambda_2$ be two labelings of a spanning tree $T$. Then the corresponding chambers have distance $\ell_S(\lambda_1\inv\circ\lambda_2)$ in the apartment of $\op$ given by $T$. 
\end{cor}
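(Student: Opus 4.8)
The plan is to deduce this immediately from \cref{lem:dist_aptm} together with the standard fact, recalled in \cref{sec:cox_cplx}, that in a Coxeter complex of type $A_{n-2}$ the gallery distance from the fundamental chamber to the chamber corresponding to a group element equals the Coxeter length of that element. First I would fix the apartment $A\in\A(\pn)$ described by the spanning tree $T$ and let $C,D\in\C(A)$ be the chambers determined by the labelings $\lambda_1$ and $\lambda_2$, respectively, via \cref{lem:labeled_trees_chambers}. The distance in question is the distance measured inside $A$ (equivalently, since $A$ is convex in the Coxeter complex it lies in, inside $\op$), which is exactly the setup of \cref{lem:dist_aptm}.

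Applying \cref{lem:dist_aptm}, there is a simplicial isomorphism $\jmath\colon A \to \Sigma$ onto the Coxeter complex $\Sigma$ of $S_{n-1}$ that carries $C$ to the fundamental chamber and carries $D$ to the chamber labeled by the permutation $\lambda_1\inv\circ\lambda_2$ of $\Set{1,\ldots,n-1}$. Since $\jmath$ is a bijective chamber map, it preserves adjacency of chambers in both directions and hence preserves gallery distance; therefore $\di(C,D)$, taken in $A$, equals the distance in $\Sigma$ between the fundamental chamber and the chamber labeled $\lambda_1\inv\circ\lambda_2$. By the identification of galleries in $\Sigma$ with decompositions over the standard generating set of $S_{n-1}$ (that is, \cite[Cor. 1.75]{ab}), this distance is precisely $\ls(\lambda_1\inv\circ\lambda_2)$. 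Chaining the two equalities gives $\di(C,D)=\ls(\lambda_1\inv\circ\lambda_2)$.

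Every step here is a direct invocation of a result already in hand, so there is no genuine obstacle; the only point needing a word of care is purely bookkeeping, namely checking that the "distance in the apartment" appearing in the corollary is literally the quantity to which \cref{lem:dist_aptm} applies, and that the permutation $\lambda_1\inv\circ\lambda_2$ is read in one-line notation consistently with the labeling of chambers of $\Sigma$ used in \cref{sec:sph_build}. Once these conventions are matched, the corollary follows.
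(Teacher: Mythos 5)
Your proof is correct and follows exactly the route the paper intends: the corollary is stated as an immediate consequence of \cref{lem:dist_aptm} combined with the standard identification of gallery distance in the Coxeter complex with Coxeter length, which is precisely the chain of reasoning you spell out. No issues.
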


\begin{exa}
Let $T$ be the tree from \cref{exa:labeled_tree} with edge set 
\[
A=\Set{a_1=(1,2),\ a_2=(3,4),\ a_3=(3,5),\ a_4=(2,5)},
\]
and $C$ the chamber given by the labeling $\lambda_1(i)=a_i$ for $1\leq i \leq 4$. Let $D$ be the chamber corresponding to the labeling $\lambda_2$ of $T$ depicted on the left-hand side of \cref{fig:good_bad_labelings}, which is given by
\[
\lambda_2(1)=a_3, \quad \lambda_2(2)=a_1, \quad \lambda_2(3)=a_4, \quad \lambda_2(4)=a_2.
\]
The total order $a_3a_1a_4a_2$ of the rank $1$ vertices of $A$ gives the chamber $D$. Identifying $A$ with the Coxeter complex of $S_{3}$ such that $C$ corresponds to $\id\in S_4$ implies that $D$ corresponds to the permutation with one line notation $3142$.
\end{exa}

\subsection{Universal and base chambers}

This section is devoted to the study of universal chambers in $\on$, which first appeared in \cite[Def. 4.5]{hks}, and their analog in $\op$, the base chambers. The properties of universal and base chambers resemble those of chambers of spherical buildings. Recall that non-trivial blocks of a partition are called base blocks.

\begin{defi}
A partition of $\ncpn$ is called \emph{universal} if it has exactly one base block that consists of circularly consecutive elements. Such a block is called \emph{universal block}. A chamber of $\on$ is a \emph{universal chamber} if and only if its vertices are universal partitions. The faces of a universal chamber are called \emph{universal faces}. In particular, we say that the vertices of a universal chamber are \emph{universal}. 
\end{defi}

\begin{lem}
There are $n\cdot 2^{n-3}$ universal chambers in $\on$.
\end{lem}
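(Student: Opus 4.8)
The plan is to count universal chambers by counting universal partitions of each rank and tracking how they can form a maximal chain. First I would fix notation: a universal partition $\pi \in \ncpn$ of rank $k$ has exactly one base block, which has $k+1$ elements since $\rk(\pi) = n - \#\pi$ and all other blocks are singletons. The base block must consist of $k+1$ circularly consecutive elements of $\{1, \ldots, n\}$; there are exactly $n$ choices of such a block for each $1 \le k \le n-1$ (choose the starting vertex, then the block is determined by its size). So there are $n$ universal partitions of each rank $k \in \{1, \ldots, n-1\}$.

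Next I would describe a universal chamber as a maximal chain $\pi_1 \lessdot \pi_2 \lessdot \cdots \lessdot \pi_{n-1}$ in $\ncpn$ consisting of universal partitions, where $\pi_k$ has rank $k$. The key structural observation is that if $\pi_k \le \pi_{k+1}$ and both are universal, then the universal block $B_k$ of $\pi_k$ is contained in the universal block $B_{k+1}$ of $\pi_{k+1}$, and since $\# B_{k+1} = \# B_k + 1$, the block $B_{k+1}$ is obtained from $B_k$ by adjoining one of the two circularly adjacent vertices (the one immediately clockwise, or the one immediately counterclockwise, of the consecutive arc $B_k$). Conversely, adjoining either adjacent vertex to $B_k$ yields a universal block of size one larger, hence a universal partition covering $\pi_k$. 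So building a universal chamber amounts to: choosing the rank-$1$ universal partition $\pi_1$ (an edge $\{i, i+1\}$ between circularly consecutive vertices — there are $n$ such choices), and then, at each of the $n-2$ subsequent steps, choosing whether to extend the current consecutive arc at its clockwise end or its counterclockwise end (2 choices each), subject to never wrapping all the way around, which is automatic since we stop at rank $n-1$ where the block has $n-1 < n+1$ vertices. Wait — at rank $n-1$ the block has $n$ vertices, i.e. it is the whole set; but that is still a valid consecutive arc (the starting point is irrelevant), so there is no parity obstruction, and all $2^{n-2}$ extension sequences are legal.

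This gives $n \cdot 2^{n-2}$ sequences, but I expect the main subtlety — and the place the count $n \cdot 2^{n-3}$ comes from — is that distinct such sequences can produce the same chamber, or rather that the rank-$(n-1)$ partition is always the same (the one-block partition), so the last extension step does not genuinely double the count. More precisely: at rank $n-1$ the block is all of $\{1,\ldots,n\}$ regardless of which end we extended last, hence $\pi_{n-1}$ is determined and contributes no choice, while $\pi_{n-2}$ (block of size $n-1$, i.e. one singleton) still has $n$ possibilities arising from the two final ends. I would carefully re-examine this: the chamber is the chain $(\pi_1, \ldots, \pi_{n-1})$, and $\pi_{n-1}$ is forced, so a chamber is determined by $(\pi_1, \ldots, \pi_{n-2})$; building this truncated chain uses one choice of $\pi_1$ ($n$ options) and $n-3$ binary extension choices to reach rank $n-2$, giving $n \cdot 2^{n-3}$. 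The hard part will be making the "distinct extension sequences give distinct partial chains" injectivity argument airtight — specifically checking that for $k \le n-2$ the intermediate consecutive arc, together with the knowledge of which arc it is, records exactly the sequence of clockwise/counterclockwise choices, which holds because an arc of size $< n$ has a well-defined pair of endpoints, so the previous arc and the direction of the last extension are recoverable. Once injectivity is established, the bijection with $\{1, \ldots, n\} \times \{0,1\}^{n-3}$ yields the claimed cardinality $n \cdot 2^{n-3}$.
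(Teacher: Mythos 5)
Your proposal is correct and follows essentially the same route as the paper: choose one of the $n$ universal rank-$1$ partitions, then make a binary clockwise/counterclockwise extension choice at each of the $n-3$ steps up to rank $n-2$ (the top rank of a chamber, since the order complex omits $\hat{0}$ and $\hat{1}$). Your extra care about injectivity of the extension sequences and about why the exponent is $n-3$ rather than $n-2$ only makes explicit what the paper's shorter argument leaves implicit.
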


\begin{proof}
For every universal partition $\pi\in\ncpn$ there are exactly two universal partitions that cover $\pi$, since the universal block of a cover arises from the universal block of $\pi$ by joining a vertex circularly next to it. 

Now let us construct a universal chamber of $\on$. There are $n$ different universal rank $1$ vertices. Once we have chosen one of them, we have two choices at each step from rank $i$ to rank $i+1$. Since a chamber in $\on$ has $n-2$ vertices, we get a total of $n\cdot 2^{n-3}$ universal chambers.
\end{proof} 

The proof of the above lemma shows that universal chambers with fixed rank $1$ vertex are given by a binary sequence that encodes which vertices are added to obtain the universal block of the next rank. The other way around, each such binary sequence gives, together with an edge, a unique universal chamber. \cref{fig:univ_chamb_ncp5} shows the four universal chambers of $|\ncp_5|$ containing the edge $(2,3)$. If a $0$ in the binary sequence means going counterclockwise and a $1$ means going clockwise, then the binary sequences corresponding to the chambers are $00$, $01$, $11$ and $10$, read from left to right in each row.

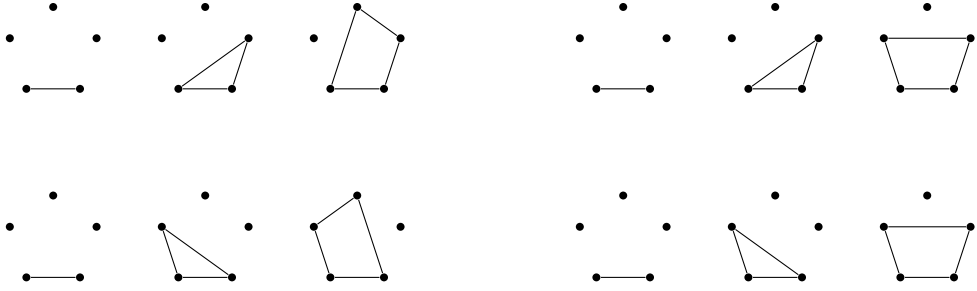
\begin{figure}
\begin{center}
\begin{tikzpicture}
\kfnfeck \draw (p2) -- (p3);
\begin{scope}[xshift = 2cm]
\kfnfeck \draw (p2) -- (p3) -- (p1) -- (p2);
\end{scope}
\begin{scope}[xshift = 4cm]
\kfnfeck \draw (p1) -- (p2) -- (p3) -- (p5) -- (p1);
\end{scope}

\begin{scope}[xshift = 7.5cm]
\kfnfeck \draw (p2) -- (p3);
\begin{scope}[xshift = 2cm]
\kfnfeck \draw (p2) -- (p3) -- (p1) -- (p2);
\end{scope}
\begin{scope}[xshift = 4cm]
\kfnfeck \draw (p1) -- (p2) -- (p3) -- (p4) -- (p1);
\end{scope}
\end{scope}

\begin{scope}[yshift = -2.5cm]
\kfnfeck \draw (p2) -- (p3);
\begin{scope}[xshift = 2cm]
\kfnfeck \draw (p2) -- (p3) -- (p4) -- (p2);
\end{scope}
\begin{scope}[xshift = 4cm]
\kfnfeck \draw (p2) -- (p3) -- (p4) -- (p5) -- (p2);
\end{scope}

\begin{scope}[xshift = 7.5cm]
\kfnfeck \draw (p2) -- (p3);
\begin{scope}[xshift = 2cm]
\kfnfeck \draw (p2) -- (p3) -- (p4) -- (p2);
\end{scope}
\begin{scope}[xshift = 4cm]
\kfnfeck \draw (p1) -- (p2) -- (p3) -- (p4) -- (p1);
\end{scope}
\end{scope}
\end{scope}
\end{tikzpicture}
\end{center}
\caption{The vertices of the four universal chambers on $|\ncp_5|$ containing the edge $(2,3)$ are depicted here.}
\label{fig:univ_chamb_ncp5}
\end{figure}

The next proposition shows that \cref{rem:homotopy_type_building}, which states that a building is the union of all apartments containing a fixed chamber, generalizes to $\on$ for universal chambers.

\begin{prop}\label{prop:univ_chamber_union}
Let $C\in \Cn$ be a chamber. Then the union of all apartments containing $C$ equals the whole complex $\on$ if and only if $C$ is a universal chamber.
\end{prop}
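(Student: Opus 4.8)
The plan is to translate everything into the language of non-crossing spanning trees and then study when a single tree can be ``compatible'' with two chambers at once. For $n\le 3$ the statement is immediate ($\on$ is empty, a point, or three points, and in the last case every chamber is a universal vertex and every pair of apartments already covers $\on$), so I assume $n\ge 4$.

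\emph{Reformulation.} By \cref{lem:aptms} an apartment of $\on$ is a non-crossing spanning tree $T$ on the vertices of the $n$-gon, and by \cref{lem:labeled_trees_chambers}, applied to the apartment $A(T)\subseteq\op$ (which lies in $\on$ since $T$ is non-crossing), a chamber $D$ with flag $\rho_1\lessdot\dots\lessdot\rho_{n-2}$ lies in $A(T)$ exactly when $E(T)$ admits an ordering $b_1,\dots,b_{n-1}$ with $b_1\vee\dots\vee b_k=\rho_k$ for all $k$ (set $\rho_{n-1}:=\hat 1$); since the $b_i$ are edges of a non-crossing tree this says precisely that $\{b_1,\dots,b_k\}$ is a spanning forest of $\rho_k$. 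Call such a $T$ \emph{compatible} with $D$. Thus the union of all apartments through $C$ equals $\on$ if and only if every chamber $D$ admits a non-crossing spanning tree compatible with $C$ \emph{and} with $D$; and for the ``only if'' half it is enough to exhibit one vertex $w$ of $\on$ lying in $E(T)\cup\{\,\text{joins of edges of }T\,\}$ for no $T$ compatible with $C$.

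\emph{``Only if'', by contraposition.} Suppose $C$ is not universal and let $\pi_j$ be the first non-universal vertex of the flag $\hat 0\lessdot\pi_1\lessdot\dots\lessdot\pi_{n-1}=\hat 1$ of $C$, so $\pi_1,\dots,\pi_{j-1}$ are intervals (arcs) and the cover $\pi_{j-1}\lessdot\pi_j$ has one of two shapes. If $\pi_j$ has a single non-trivial block $K$ which is not an interval, I would pick a ``gap'' vertex $m$ of $K$ (a polygon vertex in $\operatorname{conv}(K)\setminus K$) and a vertex $m'\notin\operatorname{conv}(K)$ on the side of $m$ for which the chord $\overline{mm'}$ crosses two consecutive boundary edges of $\operatorname{conv}(K)$; a short check then shows $\overline{mm'}$ crosses an edge of \emph{every} spanning tree of the vertex set $K$. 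Since in any $T$ compatible with $C$ the first $j$ edges form a spanning forest of $\pi_j$, hence a spanning tree of $K$, the rank-$1$ vertex $(m,m')$ would cross an edge of $T$ and so cannot belong to $E(T)$; thus $(m,m')$ lies in no apartment through $C$. If instead $\pi_j$ has at least two non-trivial blocks $K_1,K_2$, I would choose a rank-$2$ non-crossing partition $w^{*}$ whose unique spanning forest consists of two edges joining the two ``sides'' of the configuration $K_1,K_2$ (for $n=4$: $w^{*}=\{\{1,4\},\{2,3\}\}$ against $\pi_j=\{\{1,2\},\{3,4\}\}$) and argue that every $T$ compatible with $C$ contains at most one such edge, the steps $\pi_{j}\lessdot\dots\lessdot\hat 1$ supplying only one merging edge while $\pi_1,\dots,\pi_{j-1}$ being intervals confine the remaining edges; the cases $n=4,5$ already display both phenomena.

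\emph{``If''.} Now $C$ is universal, so its flag is a chain of intervals $B_1\subsetneq\dots\subsetneq B_{n-1}=\{1,\dots,n\}$, equivalently there is an ``arc-growth order'' $v_1,\dots,v_n$ of the polygon vertices with $\{v_1,v_2\}=B_1$ a boundary edge and $\{v_1,\dots,v_k\}=B_{k-1}$ an interval for each $k$; a non-crossing spanning tree $T$ is compatible with $C$ precisely when $T|_{B_j}$ is connected for every $j$. Given an arbitrary chamber $D$ with flag $(\rho_k)$, I would construct a compatible-with-both $T$ by attaching the vertices $v_2,v_3,\dots$ in the arc-growth order: when $v_{k+1}$ is attached it is adjacent to the current interval $B_{k-1}$, and I would join it to a vertex $u\in B_{k-1}$ lying in the same $\rho$-block as $v_{k+1}$ whenever possible, and to the near endpoint of $B_{k-1}$ otherwise. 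The main obstacle — and the heart of the proof — is precisely to show this greedy attachment can always be carried out so as to simultaneously avoid crossings and build up spanning forests of the $\rho_k$, i.e.\ that the freedom provided by the intervals $B_j$ is enough; the key point to establish is that adjacency of $v_{k+1}$ to $B_{k-1}$ always leaves a block-respecting attachment available, using the laminar (non-crossing) structure of the flag of $D$. An alternative I would consider is to observe that the flag of a universal chamber is a modular maximal chain of $\ncpn$ and to deduce the statement from the supersolvability of $\ncpn$ \cite[Thm.~4.3.2]{her} together with the fact that in a supersolvable lattice the order complex is the union of the apartments through the chamber of any modular maximal chain; making either route fully rigorous is where the work lies.
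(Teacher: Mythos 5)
Your ``only if'' direction is essentially the paper's own argument and is sound: for the first non-universal vertex of $C$ you either exhibit a rank-$1$ partition that must cross every spanning tree of the non-interval block $K$, or a rank-$2$ partition whose two edges would close a cycle with the spanning trees of the two base blocks, and in either case no non-crossing spanning tree compatible with $C$ can contain the exhibited vertex. One slip: a gap vertex of $K$ is never a polygon vertex of $\operatorname{conv}(K)\setminus K$ (the chord between its two neighbours in $K$ cuts it off from the hull), so your definition of $m$ is vacuous as written; what you actually want is $a<b<c<d$ with $a,c\in K$ and $b,d\notin K$, and the observation that any path in $K$ from $a$ to $c$ must cross the chord $(b,d)$.

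The genuine gap is the ``if'' direction, and you flag it yourself: you never verify that the greedy attachment can be completed, and that verification \emph{is} the content of the statement. The difficulty is real. Attaching $v_{k+1}$ to a same-$\rho$-block vertex of $B_{k-1}$ can cross a block edge drawn earlier inside the same block (your rule does not say which vertex of the block to pick), while attaching to the near endpoint can leave $v_{k+1}$ with no tree-neighbour in its smallest $\rho$-block, a defect that later attachments are not forced to repair; nothing in the sketch supplies an invariant controlling both constraints at once. The supersolvability detour has its own unproved steps: Herzog's theorem produces \emph{one} M-chain, whereas you need every universal flag to be one (the dihedral group does not act transitively on the $n\cdot2^{n-3}$ universal chambers), and you also need that an M-chain and an arbitrary maximal chain lie in a common Boolean sublattice rather than merely generating a distributive one. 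The paper closes the gap by a different induction, on $n$, organized around $D$ rather than around $C$: let $a=(i,j)$ be the rank-$1$ edge of $D$. If $i,j$ are not circularly consecutive, the chord $a$ splits the polygon into two arcs $M$ and $N$, both containing $i$ and $j$; the induced chambers $C^M$ and $C^N$ are again universal (a nested chain of circular intervals intersected with an arc is again such a chain), so induction yields non-crossing spanning trees $T_M$ and $T_N$, each containing the edge $a$, whose apartments contain $C^M,D^M$ and $C^N,D^N$ respectively, and gluing $T_M$ and $T_N$ along $a$ gives a non-crossing spanning tree whose apartment contains $C$ and $D$. If $i,j$ are consecutive, one deletes $j$, applies induction, and re-attaches the edge $a$. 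I would recommend adopting this divide-and-conquer along the rank-$1$ edge of $D$ rather than trying to repair the greedy construction.
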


In \cite[Le. 4.7]{hks} it is shown that for a universal chamber and an arbitrary chamber there exists an apartment containing both, which implies one direction of the statement of the proposition. The existence of one such chamber is implied by the supersolvability of the non-crossing partition lattice, which was shown in \cite[Thm. 4.3.2]{her}, as well. The proof we include here uses different methods than \cite{hks}, which fit to the considerations above and allows us to translate the statement into the setting on $\op$ in \cref{prop:base_cham_union}.  \cref{prop:univ_chamber_union} and its proof are published as Proposition 4.10 of \cite{hs}.

\begin{defi}
Let $\pi = \Set{B_1, \ldots, B_k} \in \pn$ be a partition and $M \subseteq \Set{1, \ldots, n}$ be a subset. Then $\pi^M \coloneqq \Set{B_1\cap M, \ldots, B_k\cap M}$ is a partition of $M$ and is called the \emph{partition induced by $M$}. If $C=(C_1, \ldots, C_{n-2})$ is a chamber, then the \emph{chamber induced by $M$} is the chamber $C^M=(C_1^M, \ldots, C_{n-2}^M)$, whose vertices are the partitions induced by $M$.
\end{defi}

\begin{rem}\label{rem:induced_partition}
If $M\subseteq \Set{1,\ldots, n}$ is a subset with $m$ elements, then the natural order on $M$ gives a canonical identification of an induced partition $\pi^M$ for $\pi \in \pn$ with a partition in $\p_m$. Moreover, if $\pi$ is a non-crossing partition, then its induced partitions are non-crossing as well.
\end{rem}

\begin{proof}[Proof of \cref{prop:univ_chamber_union}]
First suppose that $C$ is not a universal chamber. Then there exists a vertex $C_k$ of $C$ that is not a universal partition. 
First suppose that $C_k$ has two non-trivial blocks $A$ and $B$ and let $\pi$ be a partition consisting of the two edges $\Set{a_1,b_1}$ and $\Set{a_2,b_2}$, where $a_1,a_2\in A$ are different elements as well as $b_1,b_2\in B$. Then there is no spanning tree that contains both $C_k$ and $\pi$, because the connection of the two blocks $A$ and $B$ with two edges causes cycles. Hence, there is no apartment containing both $C_k$ and $\pi$ and in particular, the union of all apartments containing $C$ is not equal to $\on$.
Now suppose that $C_k$ has exactly one non-trivial block $B$, which is not a universal block. Then there exists an edge that crosses $B$ and hence there is no apartment containing both $C_k$ and this edge. 

Now let $C$ be a universal chamber and $D$ an arbitrary chamber with rank $1$ vertex $D_1$. We show that there exists a non-crossing tree such that $C$ and $D$ are contained in the apartment corresponding to it.

We use induction on $n$. For $n=3$ the statement is trivial, since $|\ncp_3|$  consists of three vertices, which are the chambers, and any two vertices form an apartment. Hence suppose that $n\geq 4$. Let $a\coloneqq(i,j)$ be the unique edge in $D_1$.

First suppose that $i$ and $j$ are not circularly consecutive, hence the edge $a$ divides the vertices of the $n$-gon into two circularly consecutive parts $M \coloneqq \Set{m \str i\leq m \leq j}$ and $N\coloneqq (\Set{1, \ldots, n}\setminus M) \cup \Set{i,j}$, which both contain the vertices $i$ and $j$. Since $i$ and $j$ are not consecutive, we have that $\#M,\#N < n$. By induction there exists a non-crossing spanning tree $T_M$ on the subset $M$ that corresponds to an apartment containing the induced chambers $C^M$ and $D^M$. Analogously, we find a non-crossing tree $T_N$ for $C^N$ and $D^N$. Note that both $D_1^M$ and $D_1^N$ consist of the edge $a$, hence both $T_M$ and $T_N$ contain $a$. This allows us to merge the trees $T_N$ and $T_M$ along the edge $a$ to obtain a non-crossing spanning tree $T$, whose corresponding apartment contains both $C$ and $D$.

Now suppose that $i$ and $j$ are circularly consecutive. Let $M\coloneqq \Set{1, \ldots, n}\setminus \Set{j}$. By induction, there exists a tree $T_M$ that gives rise to an apartment containing both $C^M$ and $D^M$. Joining the edge $a$ to $T_M$ gives a tree $T$, whose corresponding apartment contains both $C$ and $D$.
\end{proof}

\begin{rem}
The non-crossing partition complexes of type $B_n$ and $D_n$ do not have the property for $n > 3$ that there exists a chamber such that the whole complex can be written as a union of apartments containing this particular chamber. For type $B$ we showed this in joint work with Schwer in \cite[Thm. 4.23]{hs}. The proof easily generalizes to type $D$. In particular, this implies that the lattices $\nc(B_n)$ and $\nc(D_n)$ are not supersolvable for $n>3$.
\end{rem}

The concept of universal chambers translates to the partition complex. 

\begin{defi}
A partition is called \emph{basic} if it has exactly one base block. A chamber of $\op$ is a \emph{base chamber} if and only if its vertices are basic partitions. 
\end{defi}

Note that base chambers are non-crossing and that every universal chamber in $\on$ is a base chamber of $\op$.

\begin{lem}
There are $\frac{1}{2}n!$ base chambers in $\op$.
\end{lem}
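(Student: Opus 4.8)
The plan is to unravel the definition of a base chamber into a concrete combinatorial object and then count it directly. Recall that a chamber of $\op$ is a maximal simplex of the order complex $\op=\|\pn\setminus\{\mi,\ma\}\|$, hence corresponds to a maximal chain $\mi\lessdot\pi_1\lessdot\pi_2\lessdot\cdots\lessdot\pi_{n-2}\lessdot\ma$ in the lattice $\pn$ (this chain has $n$ elements and only cover steps, so it is maximal since $\pn$ has rank $n-1$ by $\rk(\pi)=n-\#\pi$). By definition such a chamber is a base chamber precisely when each $\pi_i$ is basic, that is, has a single base block.

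First I would track how the base block evolves along such a chain. A cover relation in $\pn$ merges exactly two blocks, and since $\pi_i$ consists of its single base block $B_i$ together with singletons, the partition $\pi_{i+1}$ arises from $\pi_i$ either by merging $B_i$ with a singleton or by merging two singletons; the second option creates a second base block, contradicting basicness of $\pi_{i+1}$. Hence $B_{i+1}=B_i\cup\{c\}$ for a unique $c\notin B_i$. The same dichotomy at the bottom forces $\pi_1$ to have a $2$-element base block $\{x_1,x_2\}$, and at the top $\pi_{n-2}\lessdot\ma$ forces $\rk(\pi_{n-2})=n-2$, i.e.\ $\pi_{n-2}$ has exactly two blocks, so $B_{n-2}$ has $n-1$ elements and there is exactly one leftover singleton. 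Conversely, given an unordered pair $\{x_1,x_2\}$ and an ordering $x_3,x_4,\dots,x_{n-1}$ of the successively absorbed elements (with $x_n$ the forced leftover), setting $B_i=\{x_1,\dots,x_{i+1}\}$ produces a genuine maximal chain of basic partitions, hence a base chamber.

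Then I would count this data. The initial pair can be chosen in $\binom{n}{2}$ ways, and once $B_i$ of size $i+1$ is fixed there are $n-(i+1)$ choices for the next absorbed element, so the number of base chambers equals $\binom{n}{2}\cdot(n-2)\cdot(n-3)\cdots 2=\tfrac{n(n-1)}{2}\,(n-2)!=\tfrac12 n!$. Equivalently, the map sending a permutation $(x_1,\dots,x_n)$ of $\{1,\dots,n\}$ to the base chamber with $B_i=\{x_1,\dots,x_{i+1}\}$ is surjective onto the set of base chambers and exactly $2$-to-$1$, the only identification being the transposition $x_1\leftrightarrow x_2$; this again yields $n!/2$.

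I do not expect a serious obstacle here: essentially all the content is in the first reduction, namely verifying that basicness of \emph{every} vertex of the chain forces the ``absorb one new element at a time'' behaviour and in particular rules out any chain in which two singletons are merged before the base block has finished growing. Once that structural lemma is in place the enumeration is immediate. The only points needing a little care are the two endpoint cases (that $\pi_1$ is forced to have a $2$-element base block and that $\pi_{n-2}$ has exactly two blocks, both read off from the rank function) and the off-by-one bookkeeping between the $n-2$ vertices of a chamber and the $n$ elements of the partitioned set.
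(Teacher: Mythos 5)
Your proof is correct and follows essentially the same route as the paper: choose the initial two-element base block in $\binom{n}{2}$ ways and then the successively absorbed elements, giving $\tfrac{n(n-1)}{2}(n-2)!=\tfrac12 n!$. The only difference is that you spell out the structural step (that basicness of every vertex forces the base block to grow by exactly one element at each cover, ruling out merging two singletons), which the paper's proof takes for granted.
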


\begin{proof}
We count the possibilities to construct the $n-2$ vertices of a base chamber of $\op$. For the rank $1$ vertex, we can choose freely among the $n$ vertices of the $n$-gon to obtain an edge, which are $\binom{n}{2}=\frac{n(n-1)}{2}$. Since a base chamber has exactly one non-trivial block for each vertex, we have $n-i$ available vertices for the rank $i$ partition, which gives $\frac{n(n-1)}{2}\cdot (n-2)!=\frac{1}{2}n!$ possibilities in total.
\end{proof}

The analog of \cref{prop:univ_chamber_union} holds for base chambers in $\op$ as well.

\begin{prop}\label{prop:base_cham_union}
Let $C\in \C(\pn)$ be a chamber. Then the union of all apartments containing $C$ equals the whole complex $\op$ if and only if $C$ is a base chamber.
\end{prop}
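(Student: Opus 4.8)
The plan is to mirror, essentially verbatim, the proof of \cref{prop:univ_chamber_union}, replacing \enquote{universal} by \enquote{basic}, \enquote{universal chamber} by \enquote{base chamber}, and \enquote{non-crossing spanning tree} by \enquote{spanning tree} throughout. One simplification occurs on the \enquote{only if} side: the crossing-obstruction subcase that appears in the universal setting is \emph{not needed} here, because $\op$ contains every partition of $\Set{1,\dots,n}$, so the only way for a chamber to fail to be a base chamber is to have a vertex with at least two non-trivial blocks.

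For the \enquote{only if} direction I would argue by contraposition. If $C=(C_1,\dots,C_{n-2})\in\C(\pn)$ is not a base chamber, pick a vertex $C_k$ with two distinct non-trivial blocks $A$ and $B$, choose $a_1\neq a_2$ in $A$ and $b_1\neq b_2$ in $B$, and let $\pi\in\pn$ be the rank-$2$ partition whose only non-trivial blocks are $\Set{a_1,b_1}$ and $\Set{a_2,b_2}$. If some apartment of $\op$, i.e. some spanning tree $T$, had both $C_k$ and $\pi$ as vertices, then (using \cref{lem:cycles_dependent} and the description of the embedding) $E(T)$ would contain a spanning forest of each block of $C_k$ — in particular $|A|-1$ edges inside $A$ and $|B|-1$ inside $B$ — together with the two inter-block edges $\Set{a_1,b_1}$ and $\Set{a_2,b_2}$; these are $\geq|A|+|B|$ distinct edges on the $|A|+|B|$ vertices $A\cup B$, forcing a cycle in $T$, a contradiction. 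Hence no apartment contains both, and the union of all apartments through $C$ is a proper subcomplex of $\op$. (For $n\leq 3$ every chamber is trivially a base chamber and the statement is vacuous.)

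For the \enquote{if} direction I would induct on $n$. The base case $n=3$ is immediate: $|\p_3|$ has three vertices, each a chamber, any two distinct edges on three vertices share a vertex and hence span a spanning tree, so any base chamber and any chamber lie in a common apartment. For $n\geq 4$, let $a=(i,j)$ be the unique edge of the rank-$1$ vertex $D_1$. If $i$ and $j$ are not circularly consecutive, $a$ separates the polygon vertices into two proper circular arcs $M$ and $N$, each containing $i$ and $j$; restricting $C$ and $D$ to $M$ and to $N$ gives base chambers $C^M,C^N$ and chambers $D^M,D^N$ of $\p_M\cong\p_{\#M}$ and $\p_N\cong\p_{\#N}$, to which the inductive hypothesis supplies spanning trees $T_M,T_N$ each containing $a$, with apartments containing $C^M,D^M$ and $C^N,D^N$. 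Gluing $T_M$ and $T_N$ along $a$ yields a spanning tree $T$ on $\Set{1,\dots,n}$, and by \cref{lem:labeled_trees_chambers} one then checks that the labelings of $T_M$ and $T_N$ witnessing $C$ (resp. $D$) interleave to a labeling of $T$ witnessing $C$ (resp. $D$) in $A_T$. If $i$ and $j$ are circularly consecutive, take $M=\Set{1,\dots,n}\setminus\Set{j}$, obtain $T_M$ from the inductive hypothesis for $C^M,D^M$, and attach the pendant edge $a$ to $T_M$ to get the required tree $T$.

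The main obstacle, and the only place where more care is needed than in the universal case, is the bookkeeping of the induced objects $C^M$, $D^M$: the restriction of a basic partition to a subset need only be basic \emph{after} allowing its base block to collapse to a trivial (or empty) block, and the induced sequence $D^M$ is a genuine chamber of $\p_M$ only after deleting entries that repeat or drop to the minimum (compare \cref{rem:induced_partition}). I expect the bulk of the work to be verifying that these degenerate steps are harmless — that an edge of $T_M$ not used in building a collapsed vertex of $D^M$ can always be reinserted into the labeling of $T$ in the correct slot, and that gluing apartments along $a$ is compatible with the embedding $\emb$ of \cref{thm:pn_pict_embedding} — so that $C$ and $D$ genuinely reappear as chambers of the apartment $A_T$.
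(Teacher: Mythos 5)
Your proposal is correct and is essentially the paper's own proof: the paper literally proves this proposition by the sentence \enquote{The proof is the same as for \cref{prop:univ_chamber_union} with all \enquote{non-crossing}s deleted}, and your observation that the crossing-obstruction subcase drops out (leaving only the two-base-block cycle argument) matches the paper's remark that only the first case of that proof is needed. Your extra care about degenerate induced chambers goes slightly beyond what the paper writes down, but the route is the same.
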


\begin{proof}
The proof is the same as for \cref{prop:univ_chamber_union} with all \enquote{non-crossing}s deleted.
\end{proof}

\begin{cor}
The complex $\op$ is a union of apartments.
\end{cor}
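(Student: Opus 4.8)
The plan is to deduce this immediately from \cref{prop:base_cham_union}, mirroring the argument already used for \cref{cor:union_of_aptms}. First I would record that base chambers exist: the explicit construction in the preceding lemma produces $\tfrac{1}{2}n!$ of them, so in particular there is at least one, say $C\in\C(\pn)$. Applying \cref{prop:base_cham_union} to this fixed base chamber gives
\[
\op = \bigcup\Set{A \in \A(\pn) \str C \in \C(A)},
\]
which already exhibits $\op$ as a union of apartments, and we are done.

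Alternatively — and this is the route I would actually write down, since it parallels the proof of \cref{cor:union_of_aptms} — I would note that $\op$ is a chamber complex and hence the union of its chambers. For an arbitrary chamber $D\in\C(\pn)$, \cref{prop:base_cham_union} (applied with the base chamber $C$) shows that $D$ lies in some apartment containing $C$, hence $D$ is contained in an apartment of $\op$. Therefore $\op = \bigcup_{D\in\C(\pn)} D \subseteq \bigcup_{A\in\A(\pn)} A$. Since each apartment of $\op$ is by definition a subcomplex of $\op$, the reverse inclusion is trivial, and equality holds.

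I do not expect any genuine obstacle here: the corollary is a formal consequence of \cref{prop:base_cham_union}, and the only input needed beyond it is the existence of a single base chamber, which is immediate from the explicit construction recorded just before. The one point worth stating cleanly in the write-up is the passage from ``the union of apartments through a fixed base chamber is all of $\op$'' to ``every chamber of $\op$ lies in an apartment of $\op$'', but this is a one-line observation.
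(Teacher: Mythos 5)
Your argument is correct and is exactly the paper's intended derivation: the corollary is stated immediately after \cref{prop:base_cham_union} with no separate proof, precisely because it follows at once from that proposition together with the existence of at least one base chamber. Either of your two routes works; the first is the one-line deduction the paper has in mind.
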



Another property of chambers of the building $\si$, which translates to universal and base chambers, respectively, is the following.

\begin{lem}\label{lem:three_chambers_building}
Every codimension $1$ simplex of $\si$ is contained in exactly three chambers.
\end{lem}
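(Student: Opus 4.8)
The plan is to work entirely inside the lattice $\lam(\vs)$, using the order-complex description of $\si$ from \cref{prop:LV_building}. Recall that the codimension $1$ simplices of $\si$ — the panels — are the maximal chains of $\lam(\vs)$ with exactly one rank omitted, and a panel $P$ together with a chamber $C\supseteq P$ amounts to specifying the missing link in the flag. Concretely, if $P$ corresponds to a flag $U_1 \subsetneq U_2 \subsetneq \cdots \subsetneq U_{n-3}$ of proper nonzero subspaces of $\vs$ in which consecutive ranks jump by $1$ except across one gap, say between $U_{i-1}$ and $U_{i}$ with $\dim U_{i} = \dim U_{i-1} + 2$ (with the conventions $U_0 = \{0\}$ and $U_{n-2} = \vs$ when $i$ is at an end), then the chambers containing $P$ are in bijection with the subspaces $W$ with $U_{i-1} \subsetneq W \subsetneq U_{i}$ and $\dim W = \dim U_{i-1}+1$.

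The key step is then a counting argument in linear algebra: the number of such intermediate subspaces $W$ is exactly the number of $1$-dimensional subspaces of the $2$-dimensional quotient $U_{i}/U_{i-1}$, which is a $2$-dimensional vector space over $\F_2$. A $2$-dimensional $\F_2$-space has $3$ nonzero vectors and hence exactly $3$ lines through the origin, so there are precisely three choices of $W$, and therefore precisely three chambers of $\si$ containing the panel $P$. I would spell out the bijection carefully: the map $W \mapsto W/U_{i-1}$ is inclusion-preserving and sends subspaces strictly between $U_{i-1}$ and $U_i$ of the correct dimension onto the lines of $U_i/U_{i-1}$, with inverse given by taking preimages under the quotient map; this uses only the standard correspondence theorem for subspaces.

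There is no real obstacle here — the lemma is a direct consequence of the fact that we are over $\F_2$, together with the description of chambers of a type $A$ building as complete flags. The only point requiring a word of care is the boundary cases where the omitted rank is the first or the last, so that one end of the two-step gap is $\{0\}$ or all of $\vs$; but these are handled by the same quotient argument with the conventions $U_0=\{0\}$, $U_{n-2}=\vs$, and the count of lines in a $2$-dimensional $\F_2$-space is unchanged. I would also remark, as a sanity check consistent with \cref{prop:convex_aptms} and the building axioms, that each such panel together with each chamber extends to an apartment, so the three chambers are genuinely distinct simplices of $\si$.
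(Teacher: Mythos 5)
Your proposal is correct and takes essentially the same approach as the paper: both arguments reduce to counting the rank-$k$ subspaces strictly between the two vertices adjacent to the omitted rank, and both get the answer $3$ from the fact that a $2$-dimensional $\F_2$-space has exactly three lines (the paper exhibits them explicitly as $\Braket{B\cup\Set{a}}$, $\Braket{B\cup\Set{b}}$, $\Braket{B\cup\Set{a+b}}$, while you pass to the quotient $U_i/U_{i-1}$, which is the same count).
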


\begin{proof}
Let $C$ be a chamber of $\si$ with vertices $C_1, \ldots, C_{n-2}$ such that $\rk(C_i)=i$ and let $F = C\setminus C_k$ be a codimension $1$ face. Set $C_0=\Set{0}$ and $C_{n-1}=\vs$. 
Let $B$ be a basis of $C_{k-1}$ and $a,b\in \vs$ be two vectors such that $B\cup \Set{a,b}$ is an arbitrary basis of $C_{k+1}$. 
Every vector of $C_{k+1}$ that is not contained in $C_{k-1}$ has to be contained in exactly one of the three subspaces $\Braket{B\cup \Set{a}}$, $\Braket{B \cup \Set{b}}$ or $\Braket{B\cup \Set{a+b}}$, respectively.
Hence $F$ is contained in exactly three chambers, namely the ones that have the above subspaces as rank $k$ vertices.
\end{proof}

\begin{lem}\label{lem:char_univ_chambers}
Let $C\in \Cn$ be a chamber. Then $C$ is a universal chamber if and only if all its codimension $1$ faces are contained in exactly three chambers of $\on$.
\end{lem}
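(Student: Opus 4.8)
The plan is to translate the statement about codimension~$1$ faces into a statement about rank~$2$ intervals of $\ncpn$, and then to read off universality from those intervals. Write $C=(C_1,\dots,C_{n-2})$ with $\rk(C_i)=i$ and set $C_0\coloneqq\mi$, $C_{n-1}\coloneqq\ma$, so that $(C_0,\dots,C_{n-1})$ is a maximal chain in $\ncpn$; every codimension~$1$ face of $C$ is $F_k\coloneqq C\setminus\{C_k\}$ for a unique $k\in\{1,\dots,n-2\}$. By \cref{lem:three_chambers_building} the face $F_k$ is contained in exactly three chambers of $\si=|\lam(\vs)|$, namely $F_k\cup\{U\}$ where $U$ runs over the three subspaces with $\emb(C_{k-1})\subsetneq U\subsetneq\emb(C_{k+1})$. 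Since $\on$ is an induced subcomplex of $\si$ (\cref{con:subcompl}, \cref{rem:oc_ind_subcplx}) and all vertices of $F_k$ are non-crossing partitions, $F_k\cup\{U\}$ is a chamber of $\on$ precisely when $U$ is a non-crossing partition subspace; as $\emb$ is a rank-preserving embedding, such $U$ correspond bijectively to the atoms of the interval $[C_{k-1},C_{k+1}]$ in $\ncpn$. Hence the number of chambers of $\on$ containing $F_k$ equals the number of atoms of $[C_{k-1},C_{k+1}]$, and the lemma reduces to: $C$ is universal if and only if $[C_{k-1},C_{k+1}]$ has exactly three atoms for every $k\in\{1,\dots,n-2\}$.

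The next ingredient is the structure of rank~$2$ intervals of $\ncpn$. Using the product decomposition of intervals in non-crossing partition lattices (see \cite{armstr}), $[C_{k-1},C_{k+1}]$ has three atoms if and only if exactly one block $B$ of $C_{k+1}$ contains more than one block of $C_{k-1}$, that $B$ contains exactly three blocks of $C_{k-1}$, and these three blocks are circular intervals of $B$ (otherwise it has two atoms). For the direction ``$C$ universal $\Rightarrow$ three atoms for every $k$'', I would use that each $C_j$ has a single base block $b_j$ which is circularly consecutive, with $b_1\subsetneq b_2\subsetneq\cdots$ each adjoining one vertex adjacent to the block, and then directly observe that passing from $C_{k-1}$ to $C_{k+1}$ merges the three blocks $b_{k-1},\{x\},\{y\}$ of $C_{k-1}$ with $b_{k+1}=b_{k-1}\cup\{x,y\}$, which are indeed circular intervals of $b_{k+1}$ (the boundary values $k=1$, $k=n-2$ being covered by $C_0=\mi$, $C_{n-1}=\ma$).

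For the converse I would assume $[C_{k-1},C_{k+1}]$ has three atoms for all $k$ and argue in two stages. First: the cover $C_{j-1}\lessdot C_j$ merges two blocks $P,Q$ of $C_{j-1}$; if the cover $C_j\lessdot C_{j+1}$ did not merge the block $P\cup Q$ with a further block, then $C_{j+1}$ would contain two blocks each covering two blocks of $C_{j-1}$, contradicting the three-atoms criterion. So for every $j$, step $j+1$ merges the block created at step $j$ with one more block; iterating this produces a single ``growing block'' $b_1\subsetneq\cdots\subsetneq b_{n-1}=\{1,\dots,n\}$ with $C_j$ equal to $b_j$ together with singletons and $|b_j|=j+1$, so in particular each $C_j$ has exactly one base block. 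Second: the three-atoms criterion for $[C_{k-1},C_{k+1}]$ now says precisely that $b_{k-1}$ is a circular interval of the set $b_{k+1}$, equivalently that the two vertices adjoined during steps $k$ and $k+1$ are adjacent within $b_{k+1}$. I would then run a downward induction on $j$: $b_{n-1}=\{1,\dots,n\}$ and $b_{n-2}$ (the complement of a single vertex) are circularly consecutive with no hypothesis; and if $b_{j+1}$ is circularly consecutive, the criterion at $k=j+1$ forces the vertex adjoined at step $j+1$ to have a neighbour inside $b_{j+1}$, hence to be adjacent to one end of the circular interval $b_{j+1}$, so $b_j=b_{j+1}$ minus that vertex is again circularly consecutive. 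Thus every $b_j$ is circularly consecutive, every $C_j$ is a universal partition, and $C$ is a universal chamber.

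The main obstacle is the second paragraph: pinning down the equivalence ``three atoms $\iff$ one block of $C_{k+1}$ gains exactly three blocks of $C_{k-1}$, all of them circular intervals of that block'', since this is where the distinction between nested and side-by-side blocks enters and where one must invoke (or reprove) the interval decomposition of $\ncpn$. Once that is in hand, the two-stage argument of the converse is bookkeeping; note that the naive reasoning ``a circular interval of a circular interval is a circular interval'' is false for wrap-around subsets, which is exactly why the induction must use the criterion at level $k=j+1$ rather than at level $k=j$.
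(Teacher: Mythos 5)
Your argument is correct in outline but follows a genuinely different route from the paper's. The paper never leaves the $\F_2$-linear-algebra picture: for the forward direction it exhibits the three partitions between $C_{k-1}$ and $C_{k+1}$ directly, and for the converse it splits into two cases (some vertex of $C$ has two base blocks; $C$ is a base chamber but some base block is not circularly consecutive), in each case locating one specific panel and checking that the third subspace supplied by \cref{lem:three_chambers_building} fails to be a non-crossing partition subspace (because $v_a+v_b$ is not a partition vector, respectively because the edge $\{i,j\}$ crosses the old base block). You instead reduce everything to a purely lattice-theoretic count: panels in three chambers correspond to rank-two intervals $[C_{k-1},C_{k+1}]$ of $\ncpn$ with three atoms, and you characterize those as the intervals in which a single block $B$ of $C_{k+1}$ absorbs three blocks of $C_{k-1}$, each a circular interval of $B$. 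That characterization is correct and elementary (merging $P$ and $Q$ stays non-crossing iff $R$ does not interleave with its complement in $B$, i.e.\ iff $R$ is a circular arc of $B$), but it does have to be proved: the interval decomposition from \cite{armstr} only tells you the interval is $\ncp_3$ or $\ncp_2\times\ncp_2$, not which case occurs for a given chain. What your approach buys, at the cost of that extra lemma, is a cleaner converse with no case split — both of the paper's cases fall out of the same atom count.

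One step is misstated. In the downward induction you claim the criterion at $k=j+1$ forces $v_{j+1}$ ``to have a neighbour inside $b_{j+1}$, hence to be adjacent to one end''; an interior vertex of an arc also has neighbours inside it, so this is a non sequitur as written. The argument that does work: by the induction hypothesis $b_{j+1}$ and $b_{j+2}$ are arcs, so $v_{j+2}$ is an endpoint of $b_{j+2}$; the criterion at $k=j+1$ says the edge $\{v_{j+1},v_{j+2}\}$ does not cross $b_j$; and if $v_{j+1}$ were interior to $b_{j+1}$, then $b_j$ would split into two nonempty arcs circularly separating $v_{j+1}$ from $v_{j+2}$, producing exactly such a crossing. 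Hence $v_{j+1}$ is an endpoint of $b_{j+1}$ and $b_j$ is again an arc. With that repair (and the observation that $k=n-2$ gives the base case $b_{n-3}$ directly, since there the criterion is stated relative to the full circle) the induction closes.
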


\begin{proof}
Let $C$ be a chamber of $\on$ with vertices $C_1, \ldots, C_{n-2}$ such that $\rk(C_i)=i$. Let $F=C\setminus C_k$ be a codimension $1$ face and let $C_0$ be the rank $0$ partition in $\ncpn$ and $C_{n-1}$ the rank $n-1$ partition. 

First suppose that $C$ is a universal chamber.	
If $k=1$, then $C_2$ has three edges, which give rise to three chambers in $\on$ containing $F$. 
If $k>1$, then the universal block  of $C_{k+1}$ arises from the universal block $U$ of $C_{k-1}$ by adding two vertices $a$ and $b$ of the $n$-gon. The two non-crossing partitions with one base block $U\cup \Set{a}$ and $U \cup \Set{b}$ as well as the non-crossing partition with the two base blocks $U$ and $\Set{a,b}$ are all less than $C_{k+1}$ and greater than $C_{k-1}$. Note that $\Set{a,b}$ and $U$ are non-crossing blocks. Either, $a$ and $b$ are consecutive vertices, or $a$ is consecutive to one vertex of $U$ and $b$ to another. Hence $F$ is contained in exactly three chambers.

Now suppose that $C$ is not a universal chamber. We show that there exists a codimension $1$ face that is contained in exactly two chambers of $\on$.

We first consider the case that $C$ has a vertex whose partition has more than one base block. In particular, there has to be a vertex that has exactly two base blocks. Let $k$ be the minimal rank such that $C_{k+1}$ has two base blocks. Hence $C_{k-1}$ and $C_k$ are basic partitions and one of the base blocks of $C_{k+1}$ is a single edge $b$. \cref{fig:lem_univ_1} depicts the situation. Every basis of $C_{k+1}$ that contains a basis $B$ for $C_{k-1}$ has to contain the vector $v_b$ corresponding to the edge $b$ as well as a vector $v_a$ that corresponds to an edge $a$ contained in $C_k$ but not in $C_{k-1}$. Since $a$ and $b$ are in different blocks, their vertices are disjoint. Consequently, the sum $v_a+v_b$ is not a partition vector and $B \cup \Set{v_a+v_b}$ is not a basis for a non-crossing partition subspace. Hence the codimension $1$ face of $C$ that does not contain $C_k$ is contained in exactly two chambers of $\on$, namely the ones with respective rank $k$ vertex $C_k=\Braket{B \cup \Set{v_a}}$ or $\Braket{B \cup \Set{v_b}}$.

We may now assume that $C$ is a base chamber. Further, we suppose that $C$ is not a universal chamber. Let $k>1$ be such that $C_k$ is universal and $C_{k-1}$ is not universal. Such an integer exists, since $C_{n-2}$ is a universal partition. \cref{fig:lem_univ_2} depicts the situation. Let  $B_{k-1}$, $B_k$ and $B_{k+1}$ be the respective base blocks of $C_{k-1}$, $C_k$ and $C_{k+1}$. Suppose that $B_k\setminus B_{k-1}=\Set{i}$ and $B_{k+1}\setminus B_k=\Set{j}$ and without loss of generality that $i \neq 1,n$. 
By \cref{lem:three_chambers_building}, the three chambers of the building that contain $F$ have rank $k$ vertices $\Braket{C_{k-1} \cup \Set{e_i+e_m}}$, $\Braket{C_{k-1} \cup \Set{e_j+e_m}}$ and $\Braket{C_{k-1} \cup \Set{e_i+e_j}}$ for some $m\in B_{k-1}$, respectively.
But $\Braket{C_{k-1} \cup \Set{e_i+e_j}}$ is not a partition subspace, since the edge $\Set{i,j}$, which corresponds to the vector $e_i + e_j$, crosses the edge $\Set{i-1,i+1} \in B_{k-1}$. Hence $F$ is contained in exactly two chambers of $\on$.	 
\end{proof}

\begin{figure}
\begin{center}
\begin{tikzpicture}
\def\r{10mm}
\foreach \w in {1,...,10}
\node (p\w) at (-\w * 36 + 36 : \r) [kpunkt] {};
\draw (p5) -- (p9) -- (p8) -- (p7) -- (p6) -- (p5);
\node[below = 3mm] at ($(p3)!0.5!(p4)$) {$C_{k-1}$};

\begin{scope}[xshift = 4cm]
\foreach \w in {1,...,10}
\node (p\w) at (-\w * 36 + 36 : \r) [kpunkt] {};
\draw[Orange, thick] (p4) -- (p7) node[midway, above right] {$a$};
\draw (p4) -- (p9) -- (p8) -- (p7) -- (p6) -- (p5) -- (p4);
\node[below = 3mm] at ($(p3)!0.5!(p4)$) {$C_k$};
\end{scope}

\begin{scope}[xshift = 8cm]
\foreach \w in {1,...,10}
\node (p\w) at (-\w * 36 + 36 : \r) [kpunkt] {};
\draw[Orange,thick] (p1) -- (p3) node[midway, above left] {$b$};
\draw (p4) -- (p9) -- (p8) -- (p7) -- (p6) -- (p5) -- (p4);
\node[below = 3mm] at ($(p3)!0.5!(p4)$) {$C_{k+1}$};
\end{scope}
\end{tikzpicture}
\end{center}
\caption{The situation in the first case of the second implication of the proof of \cref{lem:char_univ_chambers}.}
\label{fig:lem_univ_1}
\end{figure}
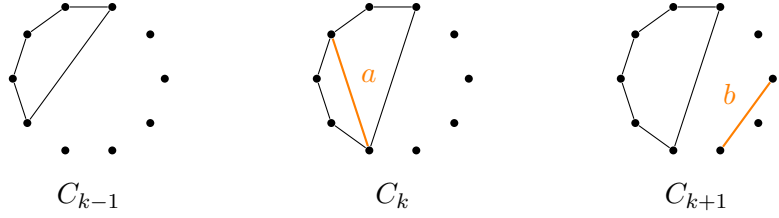

\begin{figure}
\begin{center}
\begin{tikzpicture}
\def\r{10mm}

\foreach \w in {1,...,10}
\node (p\w) at (-\w * 36 + 36 : \r) [kpunkt] {};
\node[above] at (p9) {$i$}; \node[below left] at (p5) {$j$};
\draw (p2) -- (p7) -- (p8) -- (p10) -- (p1) -- (p2);
\draw[Orange, thick] (p9) -- (p5);
\draw[Orange, thick] (p10) -- (p8);
\node[below = 3mm] at ($(p3)!0.5!(p4)$) {$C_{k-1}$};

\begin{scope}[xshift = 4cm]
\foreach \w in {1,...,10}
\node (p\w) at (-\w * 36 + 36 : \r) [kpunkt] {};
\node[above] at (p9) {$i$}; \node[below left] at (p5) {$j$};
\draw (p2) -- (p7) -- (p8) -- (p9)--(p10) -- (p1) -- (p2);
\node[below = 3mm] at ($(p3)!0.5!(p4)$) {$C_k$};
\end{scope}

\begin{scope}[xshift = 8cm]
\foreach \w in {1,...,10}
\node (p\w) at (-\w * 36 + 36 : \r) [kpunkt] {};
\node[above] at (p9) {$i$}; \node[below left] at (p5) {$j$}; \node[right] at (p1) {$m$};
\draw (p1) -- (p2) -- (p5) -- (p7) -- (p8) -- (p9) -- (p10) -- (p1);
\draw[Orange, thick] (p9) -- (p1) -- (p5);
\node[below = 3mm] at ($(p3)!0.5!(p4)$) {$C_{k+1}$};
\end{scope}
\end{tikzpicture}
\end{center}
\caption{The situation in the second case of the second implication of the proof of \cref{lem:char_univ_chambers}.}
\label{fig:lem_univ_2}
\end{figure}
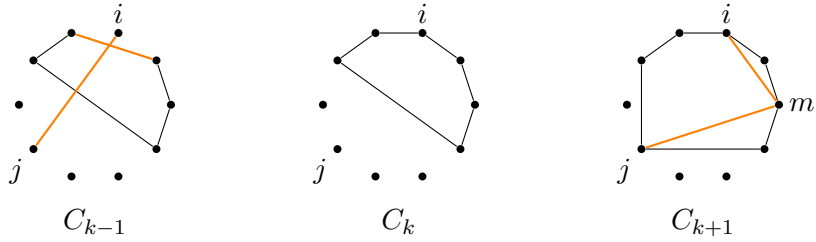

\begin{cor}
Let $C\in \C(\pn)$ be a chamber. Then $C$ is a base chamber if and only if all its codimension $1$ faces are contained in exactly three chambers of $\op$.	
\end{cor}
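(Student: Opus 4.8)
The claim is a direct translation of \cref{lem:char_univ_chambers}, which characterizes universal chambers of $\on$ via the tripleness of their codimension $1$ faces, into the setting of the partition complex $\op$. So the plan is to mirror the proof of \cref{lem:char_univ_chambers} verbatim, removing every occurrence of the adjective ``non-crossing'' -- exactly as was done for \cref{prop:base_cham_union} relative to \cref{prop:univ_chamber_union}. I will write this up as a short proof that invokes the earlier argument rather than repeating it in full.

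First I would fix a chamber $C \in \C(\pn)$ with vertices $C_1, \ldots, C_{n-2}$ ordered by rank, and set $C_0 = \Set{0}$, $C_{n-1} = \vs$. For the forward direction, assume $C$ is a base chamber, i.e.\ every $C_k$ is a basic partition (exactly one base block). Given a codimension $1$ face $F = C \setminus C_k$, I need to show $F$ lies in exactly three chambers of $\op$. If $k=1$, then $C_2$ is a rank $2$ partition, and the three rank $1$ partition subspaces lying strictly between $C_0$ and $C_2$ are the three edges spanning $C_2$ -- here I would note that for a basic partition with a single base block of size $3$ there are three edges, and for a basic partition with two disjoint base blocks of size $2$ (which cannot occur, since $C_2$ is basic) -- so the base block of $C_2$ has three vertices and yields three edges, giving three chambers. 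If $k > 1$, the base block $U'$ of $C_{k+1}$ arises from the base block $U$ of $C_{k-1}$ by adjoining two vertices $a, b$; the three partition subspaces strictly between $C_{k-1}$ and $C_{k+1}$ are the basic partitions with base block $U \cup \Set{a}$, with base block $U \cup \Set{b}$, and the partition with base blocks $U$ and $\Set{a,b}$, all of which are genuine partition subspaces (no non-crossing condition is needed). That gives exactly three chambers of $\op$ through $F$.

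For the converse, assume $C$ is not a base chamber; I must produce a codimension $1$ face contained in only two chambers of $\op$. There is a unique minimal $k$ such that $C_{k+1}$ has two base blocks, and then $C_{k-1}$ and $C_k$ are basic. One base block of $C_{k+1}$ is then a single edge $b$, disjoint from an edge $a \subseteq C_k$ not in $C_{k-1}$; since the vertex sets of $a$ and $b$ are disjoint, the vector $v_a + v_b$ has more than two nonzero entries, so it is not a partition vector, and hence $\langle B \cup \Set{v_a+v_b}\rangle$ is not a partition subspace for a basis $B$ of $C_{k-1}$. Thus $F = C \setminus C_k$ lies in only the two chambers of $\op$ with rank $k$ vertex $\langle B \cup \Set{v_a}\rangle$ and $\langle B \cup \Set{v_b}\rangle$. (Note that in the partition-complex setting the \emph{second} subcase of the proof of \cref{lem:char_univ_chambers} -- a base chamber that fails to be universal because a vertex has a non-consecutive base block -- simply does not arise, since a base chamber of $\op$ need not be universal, so only this one subcase is required.)

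The main and only subtlety is to make sure that dropping the non-crossing hypotheses really does leave the argument intact, i.e.\ that none of the three intermediate partition subspaces produced in the forward direction, nor the counting of edges when $k=1$, secretly relied on non-crossingness. They do not: all three are automatically \emph{partition} subspaces because their defining bases consist of partition vectors (one has only to check that $U \cup \Set{a}$, $U \cup \Set{b}$ are blocks and that $\Set{a,b}$ together with $U$ gives disjoint blocks, which holds by construction), and the crossing/non-crossing distinction played a role in \cref{lem:char_univ_chambers} only to rule out edges crossing a block -- which is exactly the feature we are now allowed to ignore. I therefore expect the proof to be a clean one-paragraph reduction: ``The proof is identical to that of \cref{lem:char_univ_chambers}, with every instance of `non-crossing' deleted and the second subcase of the second implication omitted,'' followed by the explicit two-chamber face in the converse direction for completeness.

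\begin{proof}
	By \cref{thm:pn_pict_embedding} we identify $\op$ with an induced chamber subcomplex of $\si$. Let $C\in\C(\pn)$ be a chamber with vertices $C_1, \ldots, C_{n-2}$, where $\rk(C_i)=i$, and set $C_0 = \Set{0}$ and $C_{n-1}=\vs$.

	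First suppose that $C$ is a base chamber, so that every $C_i$ is a basic partition. Let $F=C\setminus C_k$ be a codimension $1$ face. If $k=1$, the base block of $C_2$ has exactly three vertices and hence exactly three edges, each of which, joined to the trivial partition, is a rank $1$ partition subspace strictly below $C_2$; so $F$ is contained in exactly three chambers of $\op$. If $k>1$, the base block of $C_{k+1}$ arises from the base block $U$ of $C_{k-1}$ by adjoining two vertices $a$ and $b$ of the $n$-gon. The basic partitions with base block $U\cup\Set{a}$ and $U\cup\Set{b}$, together with the partition having the two disjoint base blocks $U$ and $\Set{a,b}$, are exactly the partitions strictly between $C_{k-1}$ and $C_{k+1}$, and all three are partition subspaces. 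Hence $F$ lies in exactly three chambers of $\op$. This proves one implication; note that, in contrast to \cref{lem:char_univ_chambers}, no appeal to non-crossingness of these three partitions is needed.

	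Conversely, suppose that $C$ is not a base chamber. Then there is a minimal $k$ such that $C_{k+1}$ has two base blocks, and $C_{k-1}$ and $C_k$ are basic partitions. One of the base blocks of $C_{k+1}$ is then a single edge $b$, and there is an edge $a$ contained in $C_k$ but not in $C_{k-1}$; the vertex sets of $a$ and $b$ are disjoint, since they lie in different blocks of $C_{k+1}$. Let $v_a, v_b\in\vs$ be the partition vectors corresponding to $a$ and $b$, and let $B$ be a basis of $C_{k-1}$ consisting of partition vectors. Then $v_a+v_b$ has more than two nonzero entries and so is not a partition vector, whence $\Braket{B\cup\Set{v_a+v_b}}$ is not a partition subspace. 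By \cref{lem:three_chambers_building} the only chambers of $\op$ containing $F=C\setminus C_k$ are therefore the two with rank $k$ vertex $\Braket{B\cup\Set{v_a}}$ or $\Braket{B\cup\Set{v_b}}$. Thus $F$ is contained in exactly two chambers of $\op$, and $C$ does not have the stated property.
\end{proof}
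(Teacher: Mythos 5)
Your proof is correct and follows essentially the same route as the paper, which simply reduces to the argument of \cref{lem:char_univ_chambers} with the non-crossing conditions deleted and observes that only the first subcase of the converse (a vertex with more than one base block) can occur. The explicit write-up you give is a faithful expansion of exactly that reduction.
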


\begin{proof}
The proof is analogous to the one of \cref{lem:char_univ_chambers} with \enquote{universal chamber} being replaced by \enquote{base chamber} and \enquote{universal block} by \enquote{base block}. The argumentation to the first direction is exactly the same. For the second implication, we only have to consider the first case, where we assume that the chamber has a vertex with more than one non-trivial block.
\end{proof}

Intuitively, the two statements about universal chambers in $\on$ and base chambers in $\op$ fit together. If a chamber is a universal chamber in $\on$ or a base chamber in $\op$, then its neighborhood is not distinguishable from the one of a chamber in the building.

\section{More on the Coxeter complex of type $A$}\label{sec:more_Cox}

This section is devoted to a more detailed study of the apartments of $\si$, $\op$ and $\on$, seen as Coxeter complexes of type $A_{n-2}$ corresponding to the symmetric group $S_{n-1}$. In particular, we are interested in the structure of link complexes, and opposite vertices and chambers. We set $r\coloneqq n-2$.

The structure of the link complex in a Coxeter complex are described in terms of Coxeter complexes \cite[Prop. 3.16]{ab}.

\begin{lem}\label{lem:type_of_link}
Let $\Sigma$ be the Coxeter complex of type $A_r$ and $v\in \Sigma$ a vertex of type $1\leq i \leq r$. Then the link complex $\lk(p,A)$ is isomorphic to the Coxeter complex of type $A_{i-1}\times A_{r-i}$.
\end{lem}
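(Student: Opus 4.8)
The plan is to prove this by passing to the combinatorial model of the Coxeter complex as the order complex of a Boolean lattice. By the discussion in \cref{sec:sph_build} together with \cref{rem:boolean_lattice_sd_bdry} and the correspondence recorded in \cref{tab:relations_V_lam_buil}, the Coxeter complex $\Sigma$ of type $A_r$ is isomorphic to $|\B_{r+1}| = \|\B_{r+1}\setminus\Set{\mi,\ma}\|$, the barycentric subdivision of the boundary of the $r$-simplex on vertex set $[r+1]\coloneqq\Set{1,\ldots,r+1}$. In this model a vertex corresponds to a proper non-empty subset $X\subseteq[r+1]$ and its type equals its rank $\#X$. I would fix the vertex $v$, corresponding to a subset $X$ with $\#X=i$.

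Next I would compute the link directly. A simplex of $\Sigma$ is a chain $A_0\subsetneq\ldots\subsetneq A_k$ of proper non-empty subsets of $[r+1]$, and such a simplex lies in $\lk(v,\Sigma)$ exactly when $X$ is comparable to, but distinct from, every $A_j$, i.e. when each $A_j$ satisfies $A_j\subsetneq X$ or $X\subsetneq A_j$. Splitting the vertex set of such a simplex according to which alternative holds, and using that every subset contained in $X$ is contained in every subset containing $X$ (so that the two pieces are simplicially "independent"), one checks that the link is precisely the simplicial join
\[
\lk(v,\Sigma)=\Sigma_1\ast\Sigma_2,\qquad
\Sigma_1\coloneqq\|\Set{A\str\emptyset\subsetneq A\subsetneq X}\|,\quad
\Sigma_2\coloneqq\|\Set{A\str X\subsetneq A\subsetneq[r+1]}\|.
\]

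Then I would identify the two join factors. The poset $\Set{A\str\emptyset\subsetneq A\subsetneq X}$ is the open interval $(\mi,X)$ in $\B_{r+1}$, which is $\B_i$ with top and bottom removed; hence $\Sigma_1=|\B_i|$ is the Coxeter complex of type $A_{i-1}$ by \cref{rem:boolean_lattice_sd_bdry} and \cref{tab:relations_V_lam_buil}. For the second factor, the map $A\mapsto A\setminus X$ is an order isomorphism from $\Set{A\str X\subsetneq A\subsetneq[r+1]}$ onto $\Set{B\str\emptyset\subsetneq B\subsetneq[r+1]\setminus X}$, and $\#([r+1]\setminus X)=r+1-i$, so $\Sigma_2=|\B_{r+1-i}|$ is the Coxeter complex of type $A_{r-i}$. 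Finally I would invoke the standard fact — immediate from the description of special cosets in \cref{sec:cox_cplx}, since the special cosets of $W_1\times W_2$ with respect to $S_1\sqcup S_2$ are products of special cosets of the factors — that the Coxeter complex of a reducible Coxeter system is the simplicial join of the Coxeter complexes of its components. Thus $\Sigma_1\ast\Sigma_2$ is isomorphic to the Coxeter complex of type $A_{i-1}\times A_{r-i}$, which is the assertion.

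There is no serious obstacle here; the points needing care are bookkeeping rather than difficulty. I would want to be precise about why the link genuinely decomposes as a \emph{join} and not merely as a union of $\Sigma_1$ and $\Sigma_2$ (this is exactly the comparability observation above), about matching the rank/type labels correctly through both order isomorphisms, and about the boundary cases $i=1$ and $i=r$: there $\B_1$ has $\mi=\ma$, so $|\B_1|$ is the empty complex — the "Coxeter complex of type $A_0$" — and joining with the empty complex is the identity operation, so the stated formula remains correct.
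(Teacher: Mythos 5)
Your argument is correct. Note, though, that the paper does not prove this lemma at all: it is quoted directly from Abramenko--Brown (Prop.\ 3.16 there), where the standard argument identifies the link of the vertex $w\langle S\setminus\{s_i\}\rangle$ with the Coxeter complex of the standard parabolic subgroup $\langle S\setminus\{s_i\}\rangle$ via the special-coset description, and then reads off the type from the Coxeter diagram with the $i$-th node deleted -- this is exactly the diagram heuristic the paper records in the example following the lemma, and it works for arbitrary Coxeter complexes. Your proof instead works entirely inside the type-$A$ model $\Sigma\cong\|\B_{r+1}\setminus\{\hat0,\hat1\}\|$: the key observations -- that a chain lies in the link of $X$ iff each of its members is comparable to $X$, that comparability forces the below-$X$ and above-$X$ parts to be simplicially independent so the link is genuinely a join and not just a union, and that the two open intervals are copies of $\B_i$ and $\B_{r+1-i}$ minus extremes -- are all correct, and you handle the degenerate cases $i=1$ and $i=r$ properly. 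What your route buys is a completely self-contained, elementary verification that needs no parabolic-subgroup machinery; what it gives up is generality, since it uses the Boolean-lattice model special to type $A$ and would have to be redone for other types. Either way the lemma stands, and your proof could replace the external citation if a self-contained treatment were desired.
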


For the study of link complexes, we need to know the structure of reducible Coxeter complexes \cite[Exc. 3.30]{ab}.

\begin{lem}
The Coxeter complex of the Coxeter group of type $A_i \times A_j$ is isomorphic to the simplicial join $\Sigma_i \ast \Sigma_j$, where $\Sigma_i$ is the Coxeter complex of type $A_i$ and $\Sigma_j$ is the Coxeter complex of type $A_j$.
\end{lem}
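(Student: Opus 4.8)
The plan is to prove that the Coxeter complex $\Sigma$ of the reducible Coxeter group $W(A_i\times A_j)$ is isomorphic to the join $\Sigma_i\ast\Sigma_j$ of the Coxeter complexes of the irreducible factors. The natural starting point is the description of the Coxeter complex in terms of special cosets given in \cref{sec:cox_cplx}: the simplices of $\Sigma(W,S)$ are the cosets $w\langle S'\rangle$ for $w\in W$ and $S'\subsetneq S$, ordered by reverse inclusion. For the product group $W=W_1\times W_2$ with $W_1=W(A_i)$ and $W_2=W(A_j)$, the generating set decomposes as $S=S_1\sqcup S_2$ with $S_1$ and $S_2$ the simple systems of the two factors, and crucially, since there are no edges between $S_1$ and $S_2$ in the Coxeter diagram, the elements of $S_1$ commute with the elements of $S_2$. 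This gives $W\cong W_1\times W_2$ as a direct product and, more importantly, every parabolic subgroup splits: for $S'\subseteq S$, writing $S'_k = S'\cap S_k$, we have $\langle S'\rangle = \langle S'_1\rangle\times\langle S'_2\rangle$.

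The key step will be to set up the bijection on simplices explicitly. A simplex of $\Sigma$ is a coset $(w_1,w_2)\langle S'\rangle = w_1\langle S'_1\rangle \times w_2\langle S'_2\rangle$; I would map this to the pair $(w_1\langle S'_1\rangle,\, w_2\langle S'_2\rangle)$, i.e.\ to the union $A_1\cup A_2$ where $A_k = w_k\langle S'_k\rangle$ is a simplex of $\Sigma_k$ (with the convention that the empty subset $S'_k$ of $S_k$ gives a chamber of $\Sigma_k$, and allowing $A_k$ to be the "empty simplex" $\emptyset\cup$-summand only in the degenerate sense needed — actually here $S'_k$ always satisfies $S'_k\subseteq S_k$, so $A_k$ ranges over all simplices of $\Sigma_k$ including, when $S'_k = S_k$, nothing; one must be slightly careful that $S'\subsetneq S$ only forces $S'_1\subsetneq S_1$ or $S'_2\subsetneq S_2$, not both). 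This is precisely the recipe for simplices of the join $\Sigma_i\ast\Sigma_j$ as defined in \cref{sec:simpl_compl}: simplices of $\Sigma_i\ast\Sigma_j$ are $A_1\cup A_2$ with $A_1\in\Sigma_i$, $A_2\in\Sigma_j$. Then I would check that this map is a well-defined bijection (using uniqueness of the coset decomposition of a special coset) and that it respects the face relation in both directions, which reduces to the observation that $w_1\langle S'_1\rangle\times w_2\langle S'_2\rangle \supseteq v_1\langle T'_1\rangle\times v_2\langle T'_2\rangle$ iff $w_k\langle S'_k\rangle\supseteq v_k\langle T'_k\rangle$ for $k=1,2$. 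Since simplicial complexes and their morphisms are determined by vertex sets and face relations, this yields the desired isomorphism.

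The main obstacle I anticipate is bookkeeping around the boundary/degeneracy conditions: the Coxeter complex uses \emph{proper} subsets $S'\subsetneq S$, so the chambers correspond to $S'=\emptyset$, whereas in the join $\Sigma_i\ast\Sigma_j$ a chamber is a union of a chamber of $\Sigma_i$ with a chamber of $\Sigma_j$. One must verify that the condition "$S'\subsetneq S$" translates exactly to "not both $A_1$ and $A_2$ are empty" in a way that matches the join — and since neither $\Sigma_i$ nor $\Sigma_j$ has an "empty" maximal simplex issue ($S'_k=\emptyset$ gives a genuine chamber), this works out, but it requires care to state cleanly. A secondary point is confirming that the bijection is dimension-preserving and hence a genuine chamber-complex isomorphism, not merely a poset isomorphism on faces; this follows since $\dim(w\langle S'\rangle) = |S|-|S'|-1 = (|S_1|-|S'_1|) + (|S_2|-|S'_2|) - 1 = (\dim A_1 + 1) + (\dim A_2 + 1) - 1 = \dim(A_1\cup A_2)$, matching the dimension formula for joins.

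Alternatively, and perhaps more cleanly, one can invoke the geometric picture: by \cref{rem:cox_cplx_homeo_sphere} the Coxeter complex of a finite Coxeter group is the triangulation of the unit sphere in the geometric representation, and for a reducible group $W_1\times W_2$ the geometric representation is the orthogonal direct sum $V_1\oplus V_2$ with $W_k$ acting on $V_k$ and trivially on the other factor; the arrangement of reflection hyperplanes is then $\{H\oplus V_2 : H\subseteq V_1\}\cup\{V_1\oplus H' : H'\subseteq V_2\}$, whose induced cell structure on $S(V_1\oplus V_2)$ is exactly the spherical join of $S(V_1)$ and $S(V_2)$ with their respective triangulations. I would likely present the combinatorial coset argument as the main proof since it is self-contained given \cref{sec:cox_cplx}, and mention the geometric interpretation as a remark.
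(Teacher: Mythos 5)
Your proposal is correct. Note that the paper itself gives no proof of this lemma: it is quoted directly from \cite[Exc.~3.30]{ab}, so there is no argument in the text to compare against. Your coset computation is the standard way to discharge that exercise, and the details check out: the decomposition $S=S_1\sqcup S_2$ with $m(s,t)=2$ across the two factors gives $\langle S'\rangle=\langle S'_1\rangle\times\langle S'_2\rangle$ for every $S'\subseteq S$, the map $w\langle S'\rangle\mapsto w_1\langle S'_1\rangle\cup w_2\langle S'_2\rangle$ is well defined by uniqueness of the standard parabolic coset representation, it matches the join's simplices (with $S'_k=S_k$ corresponding to the empty simplex of $\Sigma_k$, and $S'\subsetneq S$ excluding only the doubly-empty case), reverse inclusion of product cosets is componentwise, and your dimension count $\dim(A_1\cup A_2)=\dim A_1+\dim A_2+1$ confirms the map is non-degenerate. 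The one point worth stating explicitly in a write-up is the parabolic-coset uniqueness fact you invoke in passing, since it is what makes the assignment $w\langle S'\rangle\mapsto(S'_1,S'_2,w_1\langle S'_1\rangle,w_2\langle S'_2\rangle)$ well defined; the geometric argument via the orthogonal sum of the two reflection representations is a fine alternative and is the picture the paper implicitly relies on elsewhere (e.g.\ when metrizing links as spherical joins).
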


\begin{figure}
\begin{center}
\begin{tikzpicture}
\begin{scope}
\node[kpunkt](p1) at (0,0){};
\node[kpunkt](p2) at (1,0){};
\node[kpunkt](p3) at (2,0){};
\draw (p1)--(p2);
\draw (p2)--(p3);
\node[align=center] at (1,-1){$\Sigma$\\ of type $A_3$};
\end{scope}

\begin{scope}[xshift=3.5cm]
\node[lkpunkt](p1) at (0,0){};
\node[kpunkt](p2) at (1,0){};
\node[kpunkt](p3) at (2,0){};
\draw[loosely dotted] (p2)--(p1);
\draw (p2)--(p3);
\node[align=center] at (1,-1){$\lk(v_1, \Sigma)$\\ of type $A_2$};
\end{scope}

\begin{scope}[xshift=7cm]
\node[kpunkt](p1) at (0,0){};
\node[lkpunkt](p2) at (1,0){};
\node[kpunkt](p3) at (2,0){};
\draw[loosely dotted] (p1)--(p2);
\draw [loosely dotted] (p3)--(p2);
\node[align=center] at (1,-1) {$\lk(v_2, \Sigma)$\\ of type $A_1\times A_1$};
\end{scope}

\begin{scope}[xshift=10.5cm]
\node[kpunkt](p1) at (0,0){};
\node[kpunkt](p2) at (1,0){};
\node[lkpunkt](p3) at (2,0){};
\draw[loosely dotted] (p2)--(p3);
\draw (p2)--(p1);
\node[align=center] at (1,-1){$\lk(v_3,\Sigma)$\\ of type $A_2$};
\end{scope}

\end{tikzpicture}
\end{center}
\caption{The types of links in the Coxeter complex $\Sigma$ of type $A_3$.}
\label{fig:exa_cox_dia}
\end{figure}
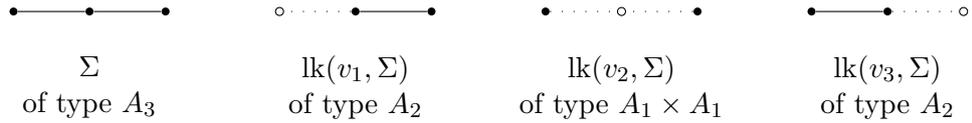

\begin{exa}
Let $\Sigma$ be the Coxeter complex of type $A_3$ and $v\in \Sigma$ be a vertex. If $\rk(v)=1$ or $\rk(v)=3$, then $\lk(v,\Sigma)$ is isomorphic to the Coxeter complex of type $A_2$, hence a hexagon. If $\rk(v)=2$, then $\lk(v,\Sigma)$ is isomorphic to the Coxeter complex of type $A_1 \times A_1$, which is the spherical join of two vertices with two vertices, that is a square. The type of the link complex can be deduced from the Coxeter diagram of $\Sigma$. The type of the link complex of a vertex of type $k$ is given by the Coxeter diagram of $\Sigma$ with the vertex corresponding to type $k$ deleted. The respective Coxeter diagrams for the links in the Coxeter complex of type $A_3$ are shown in \cref{fig:exa_cox_dia}
\end{exa}

\subsection{Opposition}

In this section we are concerned with opposite vertices and chambers in the Coxeter complex of type $A$.
Recall from \cref{sec:sph_build} that the Coxeter complex of type $A_r$ is isomorphic to the barycentric subdivision of the boundary of the $r$-simplex, whose vertices correspond to the proper subsets of $\Set{1, \ldots, r+1}$.

\subsubsection{Vertices}

\begin{defi}
The \emph{distance} of two vertices in a simplicial complex $\si$ is defined as their combinatorial distance in the $1$-skeleton $\si^{(1)}$. Two vertices are called \emph{opposite} if they have maximal distance.
\end{defi}

\begin{lem}
The maximal distance of vertices in a Coxeter complex of type $A_r$ is independent of $r$ and equals three.
\end{lem}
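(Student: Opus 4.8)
The plan is to work with the model of the Coxeter complex $\Sigma$ of type $A_r$ as the barycentric subdivision of the boundary of the $r$-simplex on vertex set $[r+1]=\{1,\dots,r+1\}$, so that vertices of $\Sigma$ are exactly the proper nonempty subsets of $[r+1]$ and two such subsets $X,Y$ span an edge of $\Sigma$ precisely when one contains the other. The distance between two vertices $X$ and $Y$ is then their combinatorial distance in this graph. The claim to prove is that the diameter of this graph equals $3$ whenever $r\ge 1$ (for $r=1$ the complex is two vertices; one should note the small cases are consistent, or simply restrict attention to $r\ge 2$ where the bound is genuinely $3$).

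First I would establish the upper bound: any two proper nonempty subsets $X,Y\subsetneq[r+1]$ have distance at most $3$. If $X\subseteq Y$ or $Y\subseteq X$ they are adjacent (distance $1$). Otherwise, exhibit an explicit path of length at most $3$. The natural candidate is $X \,\text{---}\, (X\cap Y)\,\text{---}\,(X\cup Y)\,\text{---}\, Y$ when $X\cap Y\ne\emptyset$ and $X\cup Y\ne[r+1]$, each consecutive pair being nested; this gives distance $\le 3$. If $X\cap Y=\emptyset$, then pick any singleton $\{a\}\subseteq X$ and any singleton $\{b\}\subseteq Y$; the path $X\,\text{---}\,\{a\}\,\text{---}\,\{a,b\}\,\text{---}\,\{b\}\,\text{---}\,Y$ has length $4$, so instead I would use $X\,\text{---}\,\{a\}\,\text{---}\,\{a\}\cup Y\,\text{---}\,Y$, valid since $\{a\}\cup Y$ is still proper (as $a\notin Y$ only guarantees this when $|Y|<r+1$, which holds) unless $\{a\}\cup Y=[r+1]$; if that happens, swap to a path through $X\cup\{b\}$ instead, and if both fail one checks $r+1$ is small and handles it directly. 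Dually, if $X\cup Y=[r+1]$ one routes through intersections with singletons. A clean way to package all of this is: for arbitrary $X,Y$, the vertex $Z:=\{a\}$ for a fixed $a\in X$ satisfies $X\,\text{---}\,Z$ (distance $1$) and $Z\,\text{---}\,Y$ (distance $\le 2$, via $Z\,\text{---}\,Z\cup Y$ or $Z\,\text{---}\,Z\cap Y$ if nonempty, noting at least one of $Z\cup Y\ne[r+1]$, $Z\cap Y\ne\emptyset$, $Z\subseteq Y$, $Y\subseteq Z$ always holds), giving the bound $3$ uniformly — this is the argument I would write out carefully.

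Next I would establish that $3$ is attained, i.e.\ the bound is sharp and independent of $r$. Take $X=\{1\}$ and $Y=\{2\}$ (for $r\ge 2$, these are distinct proper nonempty subsets). They are not nested, so distance $\ge 2$. Suppose they had distance $2$: then there is a vertex $Z$ adjacent to both, meaning $Z$ is comparable to $\{1\}$ and to $\{2\}$. Comparability with $\{1\}$ forces $Z=\{1\}$ or $\{1\}\subseteq Z$; likewise $Z=\{2\}$ or $\{2\}\subseteq Z$. Ruling out $Z\in\{\{1\},\{2\}\}$ (those are not adjacent to the other endpoint), we get $\{1,2\}\subseteq Z\subsetneq[r+1]$, which is possible — so I must instead pick endpoints whose "span" is all of $[r+1]$: take $X=\{1\}$ and $Y=[r+1]\setminus\{1\}$. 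These are non-nested (for $r\ge 1$), so distance $\ge 2$; any common neighbor $Z$ must contain $\{1\}$ and contain $Y$ or be contained in one of them, and the only set containing both $\{1\}$ and $Y=[r+1]\setminus\{1\}$ is $[r+1]$ itself, which is not a vertex — and $Z$ being contained in $\{1\}$ or in $Y$ makes it disjoint from the other, hence not a superset. So no common neighbor exists, distance $\ge 3$, and combined with the upper bound, $\operatorname{dist}(X,Y)=3$.

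The main obstacle, and the step I'd expect to need the most care, is the upper-bound argument: the case analysis for the "distance $\le 2$ from a singleton to an arbitrary vertex" sub-claim has a few genuinely degenerate configurations (when a singleton union the other set already exhausts $[r+1]$, or their intersection is empty) that need to be dispatched without an ad hoc appeal to small $r$. I would phrase it as: for any proper nonempty $Y$ and any $a$, consider the four candidates $\{a\}$, $Y$, $\{a\}\cap Y$, $\{a\}\cup Y$ and argue that among the relevant nested pairs at least one valid $2$-step detour survives the properness constraint, using that $Y\ne[r+1]$ and $Y\ne\emptyset$ are always available. Once that lemma is in place the theorem follows immediately and the independence of $r$ is automatic since the endpoints $\{1\}$ and $[r+1]\setminus\{1\}$ and the routing through a singleton work verbatim for every $r\ge 2$.
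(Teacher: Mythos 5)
Your model and overall route coincide with the paper's: both identify the type $A_r$ Coxeter complex with the barycentric subdivision of the boundary of the $r$-simplex and run a case analysis on $X\cap Y$ and $X\cup Y$, and your lower-bound argument via the complementary pair $\{1\}$ and $\{1,\dots,r+1\}\setminus\{1\}$ is correct (it even spells out the ``no common neighbour'' step that the paper leaves implicit). The problem is the ``clean packaging'' you settle on for the upper bound. The sub-claim that for $Z=\{a\}$ and an arbitrary proper nonempty $Y$ at least one of $Z\cup Y\neq\{1,\dots,r+1\}$, $Z\cap Y\neq\emptyset$, $Z\subseteq Y$, $Y\subseteq Z$ always holds is false: for $Y=\{1,\dots,r+1\}\setminus\{a\}$ all four fail simultaneously, and indeed $\{a\}$ and this $Y$ are at distance $3$, not $2$ --- this is precisely your own extremal pair. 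So ``step to a singleton of $X$, then reach $Y$ in at most two more steps'' cannot serve as a uniform lemma, and the theorem would not follow ``immediately'' from it as your last paragraph asserts.

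The repair is the four-way case split you already sketched in passing, which is exactly what the paper does: if $X$ and $Y$ are nested, the distance is $1$; if $X\cap Y\neq\emptyset$ and neither contains the other, route through $X\cap Y$ (distance $2$); if they are disjoint with $X\cup Y$ proper, route through $X\cup Y$ (distance $2$); and in the single remaining case, $X\cap Y=\emptyset$ with $X\cup Y=\{1,\dots,r+1\}$, write $Y=B_1\cup B_2$ with both parts nonempty (splitting $X$ instead if $Y$ is a singleton) and take the length-$3$ path $X$, $X\cup B_1$, $B_1$, $Y$, whose intermediate sets are automatically nonempty proper subsets because $B_2\neq\emptyset$. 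This disposes of every ``degenerate configuration'' at once, with no ad hoc appeal to small $r$ beyond excluding $r=1$, where the $1$-skeleton of the two-point complex has no edges and the statement degenerates.
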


\begin{proof}
We identify the Coxeter complex of type $A_r$ with the barycentric subdivision of the boundary of the $r$-simplex and denote it by $\Sigma$. Let $A,B\subseteq \Set{1, \ldots, r+1}$ be two proper subsets that correspond to vertices of $\Sigma$. One of the following four cases holds for the pair $A,B$:
\begin{enumerate}
\item $A \subseteq B$ or $B \subseteq A$,
\item $A \cap B \neq \emptyset$ and $A \cup B \neq A,B$,
\item $A \cap B = \emptyset$ and $A \cup B \neq \Set{1, \ldots, r+1}$, or
\item $A \cap B = \emptyset$ and $A \cup B = \Set{1, \ldots, r+1}$.
\end{enumerate}
Recall that two vertices in $\Sigma$ are connected by an edge if one is a subset of the other. Hence in case a), $A$ and $B$ are connected by an edge and $\di(A,B)=1$. In case b), $A$ and $B$ are not connected by an edge, but both are connected to the vertex $A \cap B$, hence their distance is $\di(A,B)=2$. In the third case c), the two vertices $A$ and $B$ are both connected to the vertex $A \cup B$ and hence their distance is $2$ as well. For case d) note that $A\cup B$ is not a proper subset of $\Set{1, \ldots, r+1}$ and hence not a vertex of $\Sigma$. Consider a partition of, say, $B$ into $B_1 \cup B_2$. Then  
\[
A - A \cup B_1 - B_1 - B
\]
is a sequence of three edges that connects $A$ to $B$ and consequently, $\di(A,B)=3$.
\end{proof}

\begin{cor}\label{cor:opposite_vertices}
Two vertices in a Coxeter complex are opposite if and only if their distance equals three.
\end{cor}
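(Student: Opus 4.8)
The plan is to obtain this corollary as an immediate consequence of the preceding lemma, which computes the maximal vertex distance in a Coxeter complex of type $A_r$ to be $3$, together with the definition of \enquote{opposite vertices} just given. Recall that every apartment of $\si$, $\op$ and $\on$ is a Coxeter complex of type $A_{n-2}$, so it suffices to argue inside a fixed Coxeter complex $\Sigma$ of type $A_r$ with $r = n-2$. By definition, two vertices $v, w \in \Sigma$ are opposite precisely when $\di(v,w)$ equals $\max\Set{\di(v',w') \str v', w' \text{ vertices of } \Sigma}$, and by the preceding lemma this maximum equals $3$, independently of $r$. Hence $v$ and $w$ are opposite if and only if $\di(v,w)=3$.

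Concretely, I would spell this out in two short implications. For the forward direction: if $v$ and $w$ are opposite, then $\di(v,w)$ attains the maximal vertex distance, which is $3$ by the previous lemma. For the backward direction: if $\di(v,w)=3$, then since $3$ is also the maximal value that any vertex distance in $\Sigma$ can attain, the pair $v,w$ realizes this maximum and is therefore opposite. No case analysis is needed beyond what was already carried out in the proof of the previous lemma; the corollary is pure bookkeeping on top of it.

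The only point I would be slightly careful about is non-vacuity, i.e.\ that the value $3$ is actually attained so that \enquote{opposite} is a non-empty relation. This is already contained in case d) of the previous proof: identifying $\Sigma$ with the barycentric subdivision of the boundary of the $r$-simplex with vertex set $\Set{1, \ldots, r+1}$, any two disjoint proper subsets $A,B$ with $A\cup B = \Set{1, \ldots, r+1}$ give a pair of vertices at distance exactly $3$, witnessed by the path $A - (A\cup B_1) - B_1 - B$ for any splitting $B = B_1 \cup B_2$, with no shorter path available. For small $n$ (equivalently small $r$) where such a splitting is unavailable, the statement is trivial or vacuous. Since this is the entire content, I do not expect any real obstacle: the corollary is genuinely a one-line consequence of the lemma preceding it.
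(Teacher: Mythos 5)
Your proposal is correct and is exactly the argument the paper intends: the corollary is stated without proof because it follows immediately from the preceding lemma (maximal vertex distance equals $3$) together with the definition of opposite vertices as those realizing the maximal distance. Your remark about non-vacuity via case d) of the lemma's proof is a reasonable extra precaution but adds nothing beyond what the paper already establishes.
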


\begin{cor}\label{lem:type_opposite_vertex}
Let $p$ be a vertex in the Coxeter complex of type $A_r$. Then there exists a unique vertex $q$ that is opposite to $p$. If $p$ has type $i$, then the type of $q$ equals $r+1-i$. Moreover, in the corresponding Boolean lattice $\B\subseteq \lam(\vs)$ it holds that 
\[
p \vee q = \ma \quad \text{ and } \quad p \wedge q = \mi.
\]	
\end{cor}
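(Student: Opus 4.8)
The statement to prove (\cref{cor:opposite_vertices} together with \cref{lem:type_opposite_vertex}) combines an existence and uniqueness claim about opposite vertices with a description of the type of the opposite vertex and the meet/join behaviour in the associated Boolean lattice. The plan is to work entirely inside the barycentric subdivision $\Xi$ of the boundary of the $r$-simplex $\si_r$ with vertex set $P=\Set{1,\ldots,r+1}$, since by the discussion in \cref{sec:sph_build} this is isomorphic as a labelled complex to the Coxeter complex $\Sigma$ of type $A_r$, and the type of a vertex $X\in\Xi$ is $\#X$. The previous lemma has already established that the maximal distance between vertices is three, so \cref{cor:opposite_vertices} is immediate: two vertices are opposite, i.e.\ at maximal distance, precisely when $\di(A,B)=3$, and from the four-case analysis in that proof this happens exactly when $A\cap B=\emptyset$ and $A\cup B=P$, that is when $B=P\setminus A$ is the complement of $A$.

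First I would record this characterization explicitly: a vertex $B$ is opposite to a vertex $A$ in $\Xi$ if and only if $B = P\setminus A$. Existence and uniqueness of the opposite vertex then follow instantly, since $P\setminus A$ is a well-defined proper subset of $P$ whenever $A$ is (it is nonempty because $A\neq P$, and proper because $A\neq\emptyset$). Next, the type statement: if $A$ has type $i=\#A$, then $P\setminus A$ has $\#(P\setminus A)=(r+1)-i$ elements, which is exactly type $r+1-i$ as claimed.

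For the meet/join part I would translate back through the correspondence of \cref{tab:relations_V_lam_buil} and \cref{sec:sph_build}: the vertex of $\Xi=\Sigma$ labelled by a subset $X\subseteq P$ corresponds, in the associated Boolean sublattice $\B\subseteq\lam(\vs)$, to an element of rank $\#X$; concretely, choosing a basis adapted to the frame that defines the apartment, the subset $X$ indexes the span of the corresponding subset of frame vectors. Under this dictionary, inclusion of subsets corresponds to the partial order, union corresponds to join, and intersection to meet. Hence for $p\leftrightarrow A$ and $q\leftrightarrow P\setminus A$ we get $p\vee q \leftrightarrow A\cup(P\setminus A) = P$, which is the maximum $\ma$ of $\B$, and $p\wedge q \leftrightarrow A\cap(P\setminus A)=\emptyset$, which is the minimum $\mi$. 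I should be slightly careful that the Boolean lattice $\B$ referred to in the statement is indeed the one corresponding to an apartment containing both $p$ and $q$; since $p$ and $q$ are opposite vertices they lie in a common apartment (the convex hull of a pair of opposite chambers through them), and any such apartment gives the same $\B$ on the subset $\Set{p,q}$ by the building axiom (B2), so this is harmless.

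The main obstacle is not any deep argument but rather making the translation between the three pictures — the abstract Coxeter complex $\Sigma$, the barycentric subdivision $\Xi$, and the Boolean sublattice $\B$ of $\lam(\vs)$ — fully precise, in particular pinning down that the "type $k$ of vertex" used in the statement is the rank in $\B$ and matches $\#X$ in $\Xi$; this is exactly what \cref{con:general} and \cref{tab:relations_V_lam_buil} provide, so I would cite those. Once that dictionary is fixed, every assertion reduces to the trivial set-theoretic facts that $P\setminus A$ is the unique subset disjoint from $A$ with union $P$, and that complementation reverses inclusion while swapping union and intersection with the whole set and the empty set.
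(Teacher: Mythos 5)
Your proof is correct and follows essentially the same route as the paper: characterize the opposite vertex as the set-theoretic complement $P\setminus A$ via the case analysis of the preceding lemma, read off its cardinality for the type claim, and use that join and meet in the Boolean sublattice are union and intersection of the labelling subsets. Your extra care in pinning down the dictionary between $\Sigma$, $\Xi$, and $\B$ (and noting that the apartment containing both $p$ and $q$ is well defined) is sound but not needed beyond what the paper already records.
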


\begin{proof}
Let $\Sigma$ be the Coxeter complex of type $A_r$, interpreted as barycentric subdivision as above, and let $A\in \Sigma$ be a vertex of rank $i$. The unique vertex $B\in \Sigma$ that fulfills $A\cap B = \emptyset$ and $A \cup B = \Set{1, \ldots, r+1}$ is $B = \Set{1, \ldots, r+1}\setminus A$, hence $B$ is the unique vertex opposite to $A$. Since the rank in the Coxeter complex equals the cardinality of the corresponding subsets of $\Set{1, \ldots, r+1}$, $A$ has rank $i$ and $B$ has rank $r+1-i$.
The join in the Boolean lattice $\B \subseteq \lam(\vs)$ corresponding to $\Sigma$ is given by the union and the meet by the intersection of subsets. For vertices $A$ and $B$ of $\Sigma$ as above, this means that $A \vee B = \Set{1,\ldots,r,r+1}=\ma$ and $A\wedge B = \emptyset = \mi$.
\end{proof}

\begin{rem}
If we endow the Coxeter complex $\Sigma$ with the spherical metric, then two vertices of $\Sigma$ are opposite if and only if their distance equals $\pi$. 
\end{rem}

The following characterizes opposite vertices in link complexes \cite[Exc. 3.156]{ab}.

\begin{lem}\label{lem:opposition_join}
Let $\Sigma = \Sigma_1 \ast \ldots \ast \Sigma_k$ be a finite Coxeter complex and let $p \in \Sigma_i$ be a vertex for some $1\leq i \leq k$. Then a vertex $q \in \Sigma$ is opposite to $p$ in $\Sigma$ if and only if it is opposite to $p$ in $\Sigma_i$.
\end{lem}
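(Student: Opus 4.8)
Proof proposal for Lemma \ref{lem:opposition_join} (opposition in a join of Coxeter complexes reduces to opposition in the relevant factor).

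The plan is to use the standard facts about joins of simplicial complexes together with the combinatorial characterization of opposition in spherical Coxeter complexes: two chambers are opposite when their gallery distance is maximal, and—what we actually need here—two vertices are opposite when no apartment (here, the whole complex, which is a single apartment) contains both in a common chamber, i.e.\ when they lie in no common simplex and every chamber through one is at maximal gallery distance from every chamber through the other. For a reducible Coxeter complex $\Sigma = \Sigma_1 \ast \cdots \ast \Sigma_k$ the chambers are exactly the joins $C_1 \ast \cdots \ast C_k$ with $C_j$ a chamber of $\Sigma_j$, and the gallery distance decomposes as $\di_\Sigma(C_1\ast\cdots\ast C_k,\, D_1\ast\cdots\ast D_k) = \sum_{j=1}^k \di_{\Sigma_j}(C_j, D_j)$, since an $s$-adjacency in $\Sigma$ is an $s$-adjacency in exactly one factor. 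This additivity of gallery distance over join factors is the computational core, and it is the thing I would state and justify first (it follows directly from the description of the Coxeter system of a reducible type as a direct product, whose length function is additive over the factors).

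First I would fix a vertex $p \in \Sigma_i$ and let $q \in \Sigma$ be an arbitrary vertex. Write $q$ as lying in some factor: since $q$ is a rank-$1$ vertex of the join, $q \in \Sigma_j$ for a unique $j$ (the vertex sets of the $\Sigma_j$ are disjoint in the join). Observe that if $j \neq i$, then $\{p, q\}$ is a simplex of $\Sigma$ (an edge of the join), so $\di(p,q) = 1$, which is certainly not maximal once some factor has diameter $\geq 2$; more to the point, $p$ and $q$ then lie in a common chamber, so they cannot be opposite. Hence for $q$ to be opposite to $p$ we must have $q \in \Sigma_i$. Now assume $q \in \Sigma_i$. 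I would compute $\di_\Sigma$ between a chamber through $p$ and a chamber through $q$: choosing chambers $C = C_1 \ast \cdots \ast C_k$ and $D = D_1 \ast \cdots \ast D_k$ with $p \in C_i$, $q \in D_i$, additivity gives $\di_\Sigma(C,D) = \di_{\Sigma_i}(C_i, D_i) + \sum_{j \neq i} \di_{\Sigma_j}(C_j, D_j)$. The two sums decouple: the term $\sum_{j\neq i}$ is maximized (independently of $p,q$) by taking each $C_j, D_j$ to be opposite chambers in $\Sigma_j$, and the term $\di_{\Sigma_i}(C_i,D_i)$ is maximized over chambers containing $p$ and $q$ precisely when $q$ is opposite to $p$ inside $\Sigma_i$—this last equivalence is exactly the definition of opposition applied within the single factor $\Sigma_i$. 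Combining, the maximal distance between a chamber through $p$ and a chamber through $q$ equals the global chamber diameter of $\Sigma$ if and only if $q$ is opposite to $p$ in $\Sigma_i$, which is the claim.

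The main obstacle I anticipate is purely bookkeeping rather than conceptual: one must be careful that "opposite vertices'' is being used in the sense that matches $\A(\Sigma)$ consisting of the single apartment $\Sigma$, and that for a vertex $v$ the relevant notion ("there exist chambers $C \ni v$, $D$ with $\di(C,D)$ maximal and $D$ lying in the residue opposite to $v$'') agrees with the elementary distance-three characterization used earlier in \cref{cor:opposite_vertices}; I would either cite \cite[Exc.\ 3.156]{ab} for this equivalence or include a one-line reduction to it. A secondary point to handle cleanly: when some factor $\Sigma_j$ with $j \neq i$ is a single point (type $A_0$, i.e.\ empty), the argument degenerates harmlessly, and when $\Sigma_i$ itself is low-dimensional one should check the edge cases, but these are all absorbed by the additivity formula. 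No genuinely hard estimate is needed; the proof is essentially "gallery distance is additive over join factors, and maximization decouples.''
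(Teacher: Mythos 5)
First, a point of reference: the paper does not prove this lemma at all --- it is imported verbatim from \cite[Exc.\ 3.156]{ab} --- so there is no in-paper argument to compare against, and your proposal has to stand on its own. Its computational core (gallery distance in $\Sigma_1 \ast \cdots \ast \Sigma_k$ is additive over the factors, and the maximization over chambers through $p$ and $q$ decouples) is correct. The genuine gap is the characterization of \emph{vertex} opposition you feed into it. You claim that $q$ is opposite $p$ precisely when some chamber through $p$ is at maximal gallery distance from some chamber through $q$, and call this ``exactly the definition''. It is neither the definition nor a correct characterization: in a hexagon with vertices $1,\ldots,6$, the edges $\{1,2\}$ and $\{4,5\}$ are opposite chambers, so the vertices $1$ and $5$ lie in opposite chambers, yet they are at distance $2$ and are not opposite (only $1$ and $4$ are; cf.\ \cref{cor:opposite_vertices} and the uniqueness in \cref{lem:type_opposite_vertex}). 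The correct chamber-theoretic characterization of opposite simplices requires, in addition to the existence of opposite chambers through them, that the types of $p$ and $q$ correspond under the opposition involution $\sigma_0$ --- and showing that \emph{this} condition transfers between $\Sigma$ and $\Sigma_i$ is exactly the content you still owe. (Your earlier formulation ``every chamber through one is at maximal distance from every chamber through the other'' is also false, again visibly in the hexagon, though you do not actually use it.)

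The definitional worry you defer to the end is not bookkeeping but the crux. Under the paper's literal definition of opposite vertices (maximal distance in the $1$-skeleton), the lemma is \emph{false} for a nontrivial join: if $\Sigma_1$ and $\Sigma_2$ are both nonempty, any vertex of $\Sigma_2$ connects any two vertices of $\Sigma_1$ by a path of length $2$, so the maximal vertex distance in $\Sigma$ is $2$ and is attained by \emph{every} non-adjacent pair in $\Sigma_1$, almost none of which are opposite in $\Sigma_1$ (take $\Sigma_1$ a hexagon and $\Sigma_2$ two points). So the statement must be read with the standard building-theoretic notion of opposition, i.e.\ the opposition involution (equivalently, the antipodal map in the spherical realization), and the shortest correct proof goes straight through it: for $W = W_1 \times \cdots \times W_k$ one has $w_0 = w_0^{(1)} \cdots w_0^{(k)}$, conjugation by $w_0$ preserves each $S_i$, hence the opposition involution of $\Sigma$ preserves each join factor and restricts on it to the opposition involution of that factor; since a vertex has a unique opposite, namely its image under this involution, the lemma follows at once, with no gallery-distance computation needed. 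Finally, note that citing \cite[Exc.\ 3.156]{ab} for the missing equivalence, as you propose, would be circular --- that exercise is the source of the lemma itself.
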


\subsubsection{Chambers}
The notion of opposition exists for chambers as well \cite[Def. 4.68]{ab}.

\begin{defi}
Two chambers $C,D \in \Sigma$ in the Coxeter complex of type $A_r$ are called \emph{opposite} if and only if their distance equals $\di(C,D)=\binom{r+1}{2}$. Two chambers of the spherical building $\si$ are said to be \emph{opposite} if and only if they are opposite in an apartment.
\end{defi}

The concepts of opposite vertices and opposite chambers in the spherical building are connected \cite[Exc. 4.78]{ab}.

\begin{lem}
Let $C,D\in \si$ be two chambers with respective vertices $C_1, \ldots, C_r$ and $D_1, \ldots, D_r$ such that $\rk(C_i)=\rk(D_i)=i$. Then $C$ and $D$ are opposite if and only if $C_i$ and $D_{r+1-i}$ are opposite for all $1\leq i \leq r$.
\end{lem}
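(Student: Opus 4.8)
The plan is to trade everything for linear algebra in $\vs$, using the dictionary from \cref{sec:sph_build} between the spherical building $\si=|\lam(\vs)|$, its apartments, and frames of $\vs$; recall $r=n-2$, so $\dim\vs=r+1$ and a chamber of $\si$ is a maximal flag $C_1\subsetneq\cdots\subsetneq C_r$ of proper nonzero subspaces, to which we adjoin $C_0=\{0\}$ and $C_{r+1}=\vs$. The first preparatory step is to record: \emph{two vertices $U,W$ of $\si$ are opposite if and only if $U\oplus W=\vs$.} Indeed, by axiom (B1) any two vertices lie in a common apartment, which is the order complex of the Boolean lattice spanned by a frame $\{L_1,\dots,L_{r+1}\}$, a vertex corresponding to a proper nonempty subset $S\subseteq\{1,\dots,r+1\}$ via $S\mapsto\bigoplus_{j\in S}L_j$. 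By \cref{lem:type_opposite_vertex} and \cref{cor:opposite_vertices}, two vertices are opposite in such an apartment exactly when the subsets are complementary, i.e.\ when the subspaces are complementary; and an elementary check (complementary proper subspaces satisfy neither containment nor share a common neighbour in $\si^{(1)}$) shows that in this case their distance in $\si$ is exactly $3$, which by the bound on vertex distances in a Coxeter complex of type $A$ (established above) is the maximal vertex distance in all of $\si$. Hence vertex opposition in $\si$ is precisely the relation $U\oplus W=\vs$.

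For the forward implication, suppose $C$ and $D$ are opposite chambers. Then they lie in a common apartment $A$ in which they are opposite. Identifying $A$ with a frame and each chamber with the total order it induces on $\{1,\dots,r+1\}$, opposition in $A$ forces $D$ to be the flag whose associated total order is the reverse of that of $C$ (in type $A$ the chamber opposite to a given one reverses its flag; cf.\ \cref{sec:cox_cplx}). Writing $C$'s order as $(a_1,\dots,a_{r+1})$, we get $C_i=L_{a_1}\oplus\cdots\oplus L_{a_i}$ and $D_{r+1-i}=L_{a_{r+1}}\oplus\cdots\oplus L_{a_{i+1}}$, so $C_i\oplus D_{r+1-i}=\vs$ and $C_i$, $D_{r+1-i}$ are opposite vertices by the preparatory step.

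The heart of the argument is the converse. Assume $C_i\oplus D_{r+1-i}=\vs$ for all $1\le i\le r$; with the conventions above this also holds trivially for $i=0$ and $i=r+1$. I would build one basis of $\vs$ adapted to both flags. For $1\le i\le r+1$ set $W_i\coloneqq C_i\cap D_{r+2-i}$. Since $C_{i-1}\oplus D_{r+2-i}=\vs$ (the hypothesis at index $i-1$) and $C_{i-1}\subseteq C_i$, we get $C_i+D_{r+2-i}=\vs$, whence $\dim W_i=i+(r+2-i)-(r+1)=1$; pick $0\ne v_i\in W_i$. Then $\{v_1,\dots,v_{r+1}\}$ is a basis with $C_i=\langle v_1,\dots,v_i\rangle$ and $D_j=\langle v_{r+1},v_r,\dots,v_{r+2-j}\rangle$. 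For the first claim one argues by induction on $i$: if $v_i\in C_{i-1}$ then $v_i\in C_{i-1}\cap D_{r+2-i}=\{0\}$ (again the hypothesis at index $i-1$), contradicting $v_i\ne0$, so $v_1,\dots,v_i$ are independent; and they lie in $C_i$ since $v_k\in W_k\subseteq C_k\subseteq C_i$ for $k\le i$, hence span the $i$-dimensional space $C_i$. The claim for $D$ follows symmetrically, as $v_k\in W_k\subseteq D_{r+2-k}\subseteq D_j$ for $k\ge r+2-j$ gives $j$ independent vectors inside the $j$-dimensional space $D_j$. Consequently the frame $\{\langle v_1\rangle,\dots,\langle v_{r+1}\rangle\}$ spans an apartment $A$ of $\si$ containing every vertex of $C$ and of $D$; in $A$, $C$ carries the total order $(1,2,\dots,r+1)$ and $D$ the order $(r+1,r,\dots,1)$, which correspond to $\id$ and the longest element $w_0$ of $S_{r+1}$, so their distance is $\ell_S(w_0)=\binom{r+1}{2}$ and they are opposite in $A$, hence opposite in $\si$.

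The only genuinely delicate point is the dimension count $\dim(C_i\cap D_{r+2-i})=1$, which is exactly where the hypothesis at \emph{all} indices (not just $i=1$ or $i=r$) is needed; the rest is bookkeeping with flags and frames. If one prefers to avoid isolating the vertex-opposition characterization as a separate step, both implications can be carried out directly inside apartments via the subset model of \cref{sec:sph_build}, at the cost of heavier indexing.
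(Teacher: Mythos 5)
Your proof is correct. Note that the paper does not actually prove this lemma: it is stated with a citation to an exercise in Abramenko--Brown, so there is no in-paper argument to compare against. Your self-contained linear-algebra proof is sound: the characterization of vertex opposition as $U\oplus W=\vs$ is justified properly (including the check that the building distance of complementary subspaces is still $3$, not shortened by edges outside a common apartment), the forward direction via flag reversal in an apartment is standard, and the key dimension count $\dim(C_i\cap D_{r+2-i})=1$ using the hypothesis at index $i-1$ correctly produces a common frame adapted to both flags, in which $C$ and $D$ carry reversed orders and are therefore opposite.
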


\subsubsection{Non-crossing spanning trees}

Let us translate the concept of opposite vertices and chambers in the context of the non-crossing partition complex $\on$. Recall that the apartments of $\on$ are in bijection with non-crossing spanning trees. 

In \cref{lem:type_opposite_vertex} we saw that two vertices in the Coxeter complex of type $A_r$, seen as barycentric subdivision of the boundary of the $r$-simplex, are opposite if and only if they are a partition of $\Set{1, \ldots, r+1}$. The vertices of an apartment $A\in \An$, given by a non-crossing spanning tree $T$, are in bijection with the proper subsets of the edge set of $T$. Hence two vertices in $A$ are opposite if and only if they correspond to a partition of edges of $T$. \cref{fig:exa_opposite_vertices} shows an example of a edge partition and resulting opposite vertices.

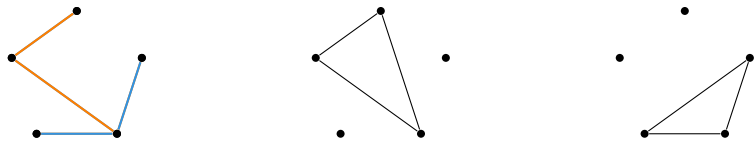
\begin{figure}
\begin{center}
\begin{tikzpicture}

\begin{scope}
\fnfeck
\draw[Orange, thick] (p1) to (p2)(p3) to (p2);
\draw[Blue, thick](p4) to (p2)(p4) to (p5);
\end{scope}

\begin{scope}
\fnfeck
\draw[Blue, thick] (p1) to (p2)(p3) to (p2);
\draw[Orange, thick](p4) to (p2)(p4) to (p5);
\end{scope}

\begin{scope}[xshift = 4cm]
\fnfeck
\draw (p4) to (p2)(p4) to (p5) (p5)to(p2);
\end{scope}

\begin{scope}[xshift = 8cm]
\fnfeck
\draw (p1) to (p2)(p3) to (p2) (p1)to(p3);
\end{scope}

\end{tikzpicture}
\end{center}
\caption{The partition of edges of the non-crossing tree on the left-hand side gives rise to the two opposite vertices in the corresponding apartment.}
\label{fig:exa_opposite_vertices}
\end{figure}

If $\lambda\colon \Set{1, \ldots, r+1} \to T$ is a labeling giving rise to a chamber $C$, then the labeling $\lambda'\colon \Set{1, \ldots, r+1} \to T$ that is defined by $\lambda'(i)\coloneqq \lambda(r+1-i)$ gives rise to the unique chamber opposite to $C$ in the apartment corresponding to $T$, as $\ell_S(\lambda\inv\circ\lambda')=\binom{r+1}{2}$, which is the maximal distance in the Coxeter complex of type $A_r$.
\subsection{Lengths of galleries}

Recall from \cref{sec:cox_cplx} that minimal galleries in the Coxeter complex are given by reduced words in the Coxeter generators $S$. Moreover, the length of a gallery equals the length of the corresponding element in the symmetric group with respect to the Coxeter generating set.

For the proofs of the Theorems \ref{thm:dist_pn_si} and \ref{thm:dist_pn_ncpn} we need a characterization of the length with respect to $S$ via inversions. If $1\leq i<j\leq n$, then the pair $(i,j)$ is an \emph{inversion} of $w\in S_n$ if $w(j)<w(i)$. The \emph{inversion number} of $w$ is the cardinality of its \emph{inversion set}, that is
it is defined as
\[
\invn(w)\coloneqq \#\Set{(i,j)\str 1\leq i<j\leq n, w(i)>w(j)}.
\]

\begin{rem}\label{rem:inv_number}
The inversion number $\invn(w)$ of an element $w\in S_n$ equals its length $\ell_S(w)$ with respect to the Coxeter generators $S$ \cite[Prop. 1.5.2]{bb}. Hence, the inversion number gives the length of a minimal gallery connecting $\id$ to $w$ in the Coxeter complex of $S_n$.
\end{rem}

\begin{exa}\label{exa:inv_number_gallery}
Let us illustrate the computation of the inversion number and a construction of a corresponding minimal gallery in $S_4$. We consider the one line notation $4132$ of $w=(1\,\;4\,\;2)$. To obtain the inversion number and hence the length of $w$, we have to count how many pairs $(i,j)$ are \enquote{out of order}. Starting with $4$ on position one, all numbers $1,2,3$ are less than $4$ and right of it, hence $(1,2)$, $(1,3)$ and $(1,4)$ are inversions. On position two is $1$, which does not contribute to the inversion set as \enquote{$j$}. On position $3$ we find the inversion $(3,4)$. Hence $\invn(w)=4=\ell_S(w)$.
Now we construct a gallery from the identity to $w$ in the Coxeter complex for $S_4$. For this it might me helpful to consider \cref{fig:bary_sd_3simpl}, which shows the Coxeter complex with chambers labeled with one line notations. First, we re-write the one line notation for $\id$ in terms of $w$, that is we have $w(2)w(4)w(3)w(1)$. The idea is to establish the order $w(1)w(2)w(3)w(4)$ by first moving $w(1)$ to position $1$, then $w(2)$ to position $2$, and so on. Note that the number of shifts needed to bring $w(i)$ to position $i$ in the $\ith$ step equals the number of inversions in which $i$ is the second entry. Hence the resulting gallery is a minimal gallery from $\id$ to $w$.
In our concrete example of $w$ being represented by $4132$ we get the following.
Moving $w(1)=4$ to the left in a sequence of one line notations yields
\begin{align*}
1\,\;2\,\;3\,\;\textbf{4} &= w(2)\,\;w(4)\,\;w(3)\,\;\textbf{w(1)},\\
1\,\;2\,\;\textbf{4}\,\;3 &= w(2)\,\;w(4)\,\;\textbf{w(1)}\,\;w(3),\\
1\,\;\textbf{4}\,\;2\,\;3 &= w(2)\,\;\textbf{w(1)}\,\;w(4)\,\;w(3),\\
\textbf{4}\,\;1\,\;2\,\;3 &= \textbf{w(1)}\,\;w(2)\,\;w(4)\,\;w(3),
\end{align*}
where we see that $w(2)$ is already on position $2$. Hence we have to shift $w(3)$ one position to the left and get
\begin{align*}
4\,\;1\,\;2\,\;\textbf{3} &= w(1)\,\;w(2)\,\;w(4)\,\;\textbf{w(3)},\\
4\,\;1\,\;\textbf{3}\,\;2 &= w(1)\,\;w(2)\,\;\textbf{w(3)}\,\;w(4),
\end{align*}
which all in all yields a gallery of type $(s_3,s_2,s_1,s_3)$.
\end{exa}

There is another canonical way to count inversions and construct a minimal gallery, which is demonstrated in the following example. 

\begin{exa}\label{exa:minimal_gallery2}
As in the previous example, we count the inversions of $w=(1\,\;4\,\;2)\in S_4$, but we use a different method here. For this we use the one line notation $4132$ of $w$. This time, we start from the last position and count how many entries on the left-hand side are greater. In our concrete case, there are two entries, namely $3$ and $4$, that are left of position four and greater than two. Left of position three and greater than $3$ is the entry $4$ on position one and finally, left of position two is $4$, which is greater than $1$. Hence, the number of inversions equals $4$. 
To construct a gallery from the identity to $w$ in the corresponding Coxeter complex, we re-write the one line notation of the identity in term of $w$ as before as $w(2)w(4)w(3)w(1)$. Dually to the procedure in \cref{exa:inv_number_gallery}, we establish the order of the one line notation of $w$ by moving the entries to the right, starting from $w(4)$ and proceeding in descended order. We start with moving $w(4)$ on position two to the right, that is
\begin{align*}
1\,\;\textbf{2}\,\;3\,\;4 &= w(2)\,\;\textbf{w(4)}\,\;w(3)\,\;w(1),\\
1\,\;3\,\;\textbf{2}\,\;4 &= w(2)\,\;w(3)\,\;\textbf{w(4)}\,\;w(1),\\
1\,\;3\,\;4\,\;\textbf{2} &= w(2)\,\;w(3)\,\;w(1)\,\;\textbf{w(4)}. 
\end{align*}
Note that we moved $2=w(4)$ two positions to the right, that is we passed the two greater numbers $3$ and $4$. These are exactly the pairs that cause the inversions we encountered above. Next, we move $w(3)$ to the right and finally $w(2)$ as well, which is 
\begin{align*}
1\,\;\textbf{3}\,\;4\,\;2 &= w(2)\,\;\textbf{w(3)}\,\;w(1)\,\;w(4),\\
\textbf{1}\,\;4\,\;\textbf{3}\,\;2 &= \textbf{w(2)}\,\;w(1)\,\;\textbf{w(3)}\,\;w(4),\\
4\,\;\textbf{1}\,\;3\,\;2 &= w(1)\,\;\textbf{w(2)}\,\;w(3)\,\;w(4).
\end{align*}
Note that $3=w(3)$ passed by $4$, which corresponds to the inversion we counted above, and the same is true for $1=w(2)$. Hence, the number of inversions coincides with the length of the gallery, which consequently is minimal. 
\end{exa}

\cleardoublepage

\chapter{Chamber distances and convex hulls}\label{chap:dist}

In this chapter we are filling in the details of the strategy that might prove the \cref{conj:ncpn_cat1}, that is that the non-crossing partition complex is $\cato$. The structure of this chapter follows the one of the outline in \cref{sec:strategy}.

As before, let $n$ be an arbitrary positive integer and $\si \coloneqq \lam(\vs)$ the spherical building associated to $\vs$. The Conventions \ref{conv:edges}, \ref{con:subcompl} and \ref{con:general} are still effective. Moreover, we endow the simplicial complexes $\on$, $\op$ and $\si$ with the spherical metrics induced from the natural round metric on the apartments as described in \cref{sec:metric}. Since we have $\on \subseteq \op \subseteq \si$ and the metrics on these complexes are length metrics, it holds that $\din(C,D) \geq \dip(C,D) \geq \dig(C,D)$ for all chambers $C,D \in \Cn$.

The following shows that we can make the assumptions required in \cref{bow}. We are not aware of any reference in the literature for it.

\begin{prop}\label{prop:locally_cato}
	If $|\ncp_k|$ is $\cato$ for all $k \leq n$, then $\on$ is locally $\cato$.
\end{prop}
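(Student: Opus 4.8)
The plan is to apply Gromov's Link Criterion and reduce the statement to a description of the vertex links of $\on$, which I will identify as spherical joins of order complexes of intervals in $\ncpn$. By \cref{gromov}, the finite spherical complex $\on$ is locally $\cato$ if and only if $\lk(v,\on)$ is $\cato$ for every vertex $v$. Fix such a $v$, corresponding to $\pi\in\ncpn\setminus\{\mi,\ma\}$. Since $\on$ is the order complex of the lattice $\ncpn$, i.e.\ $\on=\|\ncpn\setminus\{\mi,\ma\}\|$, the link of the vertex $\pi$ is the simplicial join of the order complexes of the two open intervals, so
\[
\lk(v,\on)\;\cong\;|[\mi,\pi]_{\ncpn}|\;\ast\;|[\pi,\ma]_{\ncpn}|.
\]
This is moreover an isometry of spherical complexes: an apartment of $\on$ through $v$ is a Coxeter complex of type $A_{n-2}$, and by \cref{lem:type_of_link}, together with the fact that a Coxeter complex of reducible type is the spherical join of the Coxeter complexes of its components (a round sphere splitting as a spherical join of round spheres), the link of $v$ in each such apartment decomposes accordingly; taking the union over all apartments through $v$ gives the displayed isometry, each factor carrying its intrinsic spherical metric (apartments $=$ round spheres). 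Since a spherical join is $\cato$ exactly when both of its factors are (see \cite{bh}), it suffices to prove that $|[\mi,\pi]_{\ncpn}|$ and $|[\pi,\ma]_{\ncpn}|$ are $\cato$.

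The upper factor is immediate: the interval $[\pi,\ma]_{\ncpn}$ is lattice-isomorphic to $\ncp_m$, where $m=\#\pi$, since a non-crossing merging of the blocks of $\pi$ is the same thing as a non-crossing partition of the cyclic sequence of blocks; a lattice isomorphism carries Boolean sublattices to Boolean sublattices, hence is an isometry $|[\pi,\ma]_{\ncpn}|\cong|\ncp_m|$, and as $\pi\ne\mi$ we have $m<n$, so this complex is $\cato$ by hypothesis. For the lower factor, a refinement of $\pi$ refines each block of $\pi$ independently to a non-crossing partition of that block, so $[\mi,\pi]_{\ncpn}$ is (isometrically) lattice-isomorphic to a direct product of non-crossing partition lattices $\ncp_{b}$ with $b\le n$. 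The argument therefore reduces to the claim that for every $N\le n$ and every $\rho\in\ncp_N$ the order complex $|[\mi,\rho]_{\ncp_N}|$ is $\cato$. I would prove this by induction on $\rk(\rho)$: if $\rho=\ma$ then $|[\mi,\rho]_{\ncp_N}|=|\ncp_N|$, which is $\cato$ by hypothesis; if $\rho\ne\ma$, then $|[\mi,\rho]_{\ncp_N}|$ is a finite spherical complex and one applies \cref{gromov} to it again: the link of a vertex $\sigma$ with $\mi<\sigma<\rho$ is the spherical join $|[\mi,\sigma]_{\ncp_N}|\ast|[\sigma,\rho]_{\ncp_N}|$, where $[\mi,\sigma]_{\ncp_N}$ has top element of strictly smaller rank, while $[\sigma,\rho]_{\ncp_N}$ is isometrically isomorphic to an interval $[\mi,\bar\rho]_{\ncp_{\#\sigma}}$ with $\#\sigma\le n$ and $\rk(\bar\rho)<\rk(\rho)$; both factors are then $\cato$ by the induction, so $|[\mi,\rho]_{\ncp_N}|$ is locally $\cato$.

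The main obstacle I expect is the passage from \emph{locally} $\cato$ to \emph{globally} $\cato$ for these interval complexes: in general they are neither spheres nor simply connected, so their $\cato$-ness cannot be inherited from the surrounding apartments or from the ambient building by any soft argument. The plan here is to invoke Bowditch's Criterion \cref{bow}: a finite, locally $\cato$ spherical complex is $\cato$ as soon as every loop in it of length less than $2\pi$ is shrinkable, and one would establish this shrinkability inside $[\mi,\rho]_{\ncp_N}$ by producing a length-non-increasing contraction of loops towards the maximum element $\rho$, using the product decomposition of the interval into smaller non-crossing partition lattices whose order complexes are already $\cato$. Once this shrinkability step is carried out, \cref{gromov} reassembles the pieces and $\on$ is locally $\cato$; it is precisely this shrinkability argument that constitutes the real work.
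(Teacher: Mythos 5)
Your reduction via Gromov's Link Criterion to the two interval complexes $|[\mi,\pi]|$ and $|[\pi,\ma]|$ matches the paper's starting point, but the proof has a genuine gap, and also a factual error along the way.

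The gap: you correctly observe that the intervals are (products of) smaller non-crossing partition lattices, but you then try to show that the order complex of such a product is $\cato$ by an induction using \cref{gromov} and \cref{bow}, and you explicitly leave the passage from locally $\cato$ to globally $\cato$ — the ``shrinkability argument that constitutes the real work'' — undone. That step is exactly what a proof must supply, so as written the argument is incomplete. The paper avoids this entirely by invoking the combinatorial identity that for finite lattices $K$ and $L$ the order complex $|K\times L|$ is homeomorphic to the join $|K|\ast|L|\ast|\B_2|$ (Walker). With this, every proper interval of $\ncpn$, being isomorphic to a product $\ncp_{n_1}\times\cdots\times\ncp_{n_k}$ with all $n_i<n$ (Armstrong, Prop.\ 2.6.11), has order complex equal to a spherical join of the complexes $|\ncp_{n_i}|$ together with some copies of $|\B_2|$ (two points, trivially $\cato$). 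Each factor is $\cato$ by hypothesis, a spherical join of $\cato$ spaces is $\cato$, and \cref{gromov} finishes the proof. No induction on rank and no Bowditch-type global argument is needed; the product-to-join identity is the missing key lemma in your proposal.

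The error: your claim that $[\pi,\ma]_{\ncpn}\cong\ncp_{\#\pi}$ is false in general. For $\pi=\Set{\Set{1,3},\Set{2},\Set{4}}\in\ncp_4$ the upper interval has exactly four elements (the partition $\Set{\Set{1,3},\Set{2,4}}$ is crossing and hence excluded), so it is a Boolean lattice of rank $2$, i.e.\ $\ncp_2\times\ncp_2$, and not $\ncp_3$, which has five elements. The correct statement is that the upper interval, like the lower one, is a \emph{product} of smaller non-crossing partition lattices (via the Kreweras complement, or group-theoretically via $[\pi,\cox]\cong[\id,\pi\inv\cox]$). This does not change the shape of the argument — both intervals must be handled by the product-to-join identity — but the isomorphism you assert for the upper factor is not available.
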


\begin{proof}
	By \cref{gromov}, $\on$ is locally $\cato$ if all vertex links in $\on$ are $\cato$. 
	Let $v\in \on$ be a vertex. Then the link complex contains all elements of $\ncpn$ that are less than $v$ as well as all elements that are greater than $v$ in the absolute order, that is  $\lk(v,\on)= \left|[\mi,v]\right| \ast \left|[v,\ma]\right|$. By \cite[Prop. 2.6.11]{armstr}, every proper interval in $\nc(S_n)$ is isomorphic to $\nc(S_{n_1} \times \ldots \times S_{n_k})$ for some $k$ with $n_i<n$ for all $1\leq i \leq k$. Since $\nc(S_i \times S_j) \cong \nc(S_i) \times \nc(S_j)$, we get that
	\[
	\lk(v,\on) \cong \left|\ncp_{n_1} \times \ldots \times \ncp_{n_k}\right| \ast \left|\ncp_{n_{k+1}} \times \ldots \times \ncp_{n_m}\right|
	\]
	for some $m$ and $n_i < n$ for all $1\leq i \leq m$. The order complex of the direct product of two lattices can be written as spherical join. More precisely, if $K$ and $L$ are two finite lattices, then the order complex $|K \times L|$ is homeomorphic to $|K|\ast |L| \ast |\B_2|$, where $\B_2$ is the Boolean lattice of rank $2$ \cite[Eq. (7) of Chap. 7]{walker}. Note that $|\B_2|$ is a union of two vertices and as such $\cato$. Hence $\lk(v,\on)$ is the spherical join of spaces that can be equipped with a $\cato$ metric by assumption. Since the spherical join of $\cato$ spaces is again $\cato$ \cite[Cor. II.3.15]{bh}, the link complex $\lk(v,\on)$ is $\cato$ and consequently, the complex $\on$ is locally $\cato$.  
\end{proof}

\begin{rem} 
	Since $|\ncp_k|$ is $\cato$ for all $k \leq 6$, we know by the above proposition that $|\ncp_7|$ is locally $\cato$. In a uniform proof of the \cref{conj:ncpn_cat1} for all $n$ we hence may assume that $|\ncp_n|$ is locally $\cato$ for all $n$ by induction. In particular, stars are locally $\cato$ subcomplexes of $\on$ if we assume $\on$ to be locally $\cato$. 
\end{rem}
\section{The first cases}

In this section we consider convex hulls from chambers that are either contained in a common apartment of $\on$ or from chambers that have non-empty intersection.

\begin{lem}
	Let $C,D\in\Cn$ be two chambers such that there exists an apartment $A\in\An$ containing both chambers. Then $\convn(C,D)$ is $\pi$-uniquely geodesic.
\end{lem}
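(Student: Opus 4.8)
The plan is to reduce the statement to a known fact about apartments being spherical and hence $\cato$. Since $C$ and $D$ are both contained in the apartment $A \in \An$, and $A$ is by definition isomorphic to a Coxeter complex of type $A_{n-2}$, its spherical realization is isometric to a round sphere $S^{n-2}$ (see \cref{rem:cox_cplx_homeo_sphere} and the discussion in \cref{sec:metric}). In particular $A$ is $\cato$, and therefore $\pi$-uniquely geodesic by the \cref{unique}.

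The key step is then to show that every geodesic in $\on$ joining a point of $C$ to a point of $D$ already lies in $A$. For this I would invoke \cref{lem:geod_contained_in_conv}: any geodesic joining $x\in C$ and $y\in D$ is contained in a minimal gallery connecting $C$ and $D$. As noted in the excerpt immediately after that lemma, although the lemma is phrased for Coxeter complexes, its proof (based on \cite[Prop.~12.25]{ab}) applies verbatim with $\on$ in place of $\Sigma$. So every geodesic joining a point of $C$ to a point of $D$ lies in $\convn(C,D)$. Next I would argue that $\convn(C,D) \subseteq A$: since $C,D\in \C(A)$ and $A$, being (the spherical realization of) a Coxeter complex, is convex, \cref{prop:convex_aptms} — or rather its Coxeter-complex analog — shows that every minimal gallery in $\on$ connecting $C$ and $D$ that can be realized inside $A$ stays in $A$; more directly, in the Coxeter complex $A$ the minimal galleries between $C$ and $D$ are exactly the galleries computing the gallery distance $\di_A(C,D)$, and since $A$ is a full subcomplex with $\din(C,D)\le \di_A(C,D)$, while any minimal $\on$-gallery is in particular a gallery, one checks that all minimal galleries connecting $C$ and $D$ in $\on$ lie in $A$. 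Hence $\convn(C,D)$ is a chamber subcomplex of $A$.

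Now take any two points $p,q \in \on$ with $\din(p,q) < \pi$. If either $p$ or $q$ does not lie in $\convn(C,D)$ the statement is vacuous for that pair, so the content is: for $p,q \in \convn(C,D)$ with distance $<\pi$ there is a unique geodesic joining them in $\on$. By the previous paragraph, every geodesic joining $p$ and $q$ in $\on$ lies in $A$, and since $A$ is $\pi$-uniquely geodesic, there is exactly one such geodesic (here one should also check that the $\on$-distance and the $A$-distance between $p$ and $q$ coincide, which follows because any geodesic realizing the $\on$-distance lies in $A$, so the $\on$-distance is at least the $A$-distance, and the reverse inequality is automatic from $A \subseteq \on$). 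Therefore $\convn(C,D)$ is $\pi$-uniquely geodesic.

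The main obstacle I anticipate is the careful bookkeeping needed to confirm that $\convn(C,D)$ really is contained in $A$, i.e.\ that passing from the ambient building to the smaller complex $\on$ does not create extra minimal galleries between $C$ and $D$ leaving $A$. This amounts to verifying that $A$ is "gallery-convex" inside $\on$ — equivalently that $\din(C,D) = \di_A(C,D)$ and that every minimal $\on$-gallery between chambers of $A$ stays in $A$ — which should follow from the corresponding convexity statement for Coxeter complexes but deserves an explicit argument. Everything after that point is a formal consequence of the \cref{unique} applied to the sphere $A$.
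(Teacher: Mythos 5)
Your proof is correct and follows essentially the same route as the paper: the apartment $A$ is isometric to a round sphere, hence $\cato$ and $\pi$-uniquely geodesic by the \cref{unique}, and geodesics between points of $C$ and $D$ are trapped in $\convn(C,D)\subseteq A$. The containment $\convn(C,D)\subseteq A$ that you flag as the main obstacle is handled in the paper immediately after the lemma (via $\convn(C,D)=\convg(C,D)$ together with \cref{prop:convex_aptms}), by exactly the distance-comparison argument you sketch.
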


\begin{proof}
	The apartments of $\on$ are as Coxeter complexes of type $A_{n-2}$ isometric to the $(n-3)$-dimensional unit sphere. As such they are $\cato$ and hence $\pi$-uniquely geodesic by the \cref{unique}. Since all geodesics connecting two points in $A$ are contained in the respective convex hull of the chambers containing them, every convex hull is $\pi$-uniquely geodesic.
\end{proof}

Since every apartment of $\on$ is an apartment of $\si$ as well, the convex hulls in both complexes coincide, that is $\convn(C,D)=\convg(C,D)$. Convex hulls in the spherical building $\si$ have an explicit description \cite[Le. 2.21]{hks}.

\begin{lem}\label{lem:conv_hull_building}
	Let $C,D\in \si$ be two chambers with respective vertex sets $V_C$ and $V_D$. Then the convex hull of $C$ and $D$ is
	\[
	\convg(C,D)=|\Braket{V_C \cup V_D}|,
	\]
	where $\Braket{V_C \cup V_D}$ is the sublattice of $\lam(\vs)$ spanned by $V_C \cup V_D$.
\end{lem}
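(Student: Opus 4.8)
The plan is to reduce the statement to a computation inside a single apartment and then to identify the convex hull with a distributive sublattice. First I would use axiom (B1) to choose an apartment $A\in\A(\si)$ containing both $C$ and $D$. By \cref{prop:convex_aptms} every minimal gallery connecting $C$ and $D$ lies in $A$, and conversely every gallery in $A$ is a gallery in $\si$, so the minimal gallery length computed in $A$ agrees with the one in $\si$ and $\convg(C,D)$ is a chamber subcomplex of $A$. Recall from \cref{sec:sph_build} that $A=|\B|$ for a Boolean sublattice $\B\cong\mathcal{B}_{n-1}$ of $\lam(\vs)$ arising from a frame, and that all vertices of $C$ and $D$ lie in $\B$; hence $\Braket{V_C\cup V_D}\subseteq\B$ and it suffices to prove that the two chamber subcomplexes $\convg(C,D)$ and $|\Braket{V_C\cup V_D}|$ of $A$ coincide.

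Next I would translate the problem into the combinatorics of linear orders. Identifying the atoms of $\B$ with $\{1,\ldots,n-1\}$, a chamber of $A$ is a maximal chain of $\B$, i.e.\ a linear order of $\{1,\ldots,n-1\}$, and its rank $k$ vertex is the set of the first $k$ elements of that order. By \cref{rem:inv_number} the gallery distance of two chambers equals the number of pairs they invert, and a short bookkeeping argument with inversion sets shows that a chamber $E$ satisfies $\dig(C,E)+\dig(E,D)=\dig(C,D)$ — i.e.\ lies on a minimal gallery from $C$ to $D$ — precisely when every pair $\{a,b\}$ occurring in the same relative order in $C$ and in $D$ occurs in that order in $E$: a pair swapped on the way from $C$ to $E$ and swapped back from $E$ to $D$ would cost two moves against minimality. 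Thus the chambers of $\convg(C,D)$ are exactly the linear extensions of the poset $P$ on $\{1,\ldots,n-1\}$ defined by $a<_P b$ iff $a$ precedes $b$ both in $C$ and in $D$.

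It then remains to identify $\Braket{V_C\cup V_D}$ with the lattice $J(P)$ of order ideals (down-sets) of $P$, which I would establish by a double inclusion. Each vertex of $C$ and each vertex of $D$ is a down-set of $P$ (if $a<_P b$ and $b$ lies among the first $k$ elements of $C$, so does $a$), and $J(P)$ is closed under $\cup$ and $\cap$; hence $\Braket{V_C\cup V_D}\subseteq J(P)$. Conversely, the principal down-set of $a\in P$ equals the intersection of the rank-$\mathrm{pos}_C(a)$ vertex of $C$ with the rank-$\mathrm{pos}_D(a)$ vertex of $D$, and every down-set is the union of its principal down-sets, so $J(P)\subseteq\Braket{V_C\cup V_D}$. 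Since $J(P)$ is graded by cardinality from $\mi=\emptyset$ to $\ma=\{1,\ldots,n-1\}$, all its maximal chains have length $n-1$ and correspond bijectively to the linear extensions of $P$; therefore $|\Braket{V_C\cup V_D}|=|J(P)|$ is a pure chamber subcomplex whose chambers are exactly those of $\convg(C,D)$, and since both complexes consist of all chains among these vertices (any chain in $J(P)$ refines to a maximal one, hence to a chamber on a minimal gallery), the two subcomplexes agree.

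The genuinely non-routine points are the two facts at the heart of the argument: the characterisation of the chambers on minimal galleries by the order-preservation condition, and the lattice identity $\Braket{V_C\cup V_D}=J(P)$. The reduction to an apartment via \cref{prop:convex_aptms} and the final simplicial bookkeeping (purity, matching of vertex sets and chambers) are formal once these are in hand, so I expect the inversion-set computation and the ``principal down-set as an intersection'' step to be where the actual work lies.
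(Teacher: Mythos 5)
Your argument is correct. It is worth noting first that the paper itself gives no proof of this lemma: it is imported wholesale from \cite[Le. 2.21]{hks}, so there is no internal proof to compare against, and your write-up is a legitimate self-contained substitute. Both of the steps you flag as the real content check out. The inversion-set bookkeeping is exactly right: for each pair of atoms of $\B$, the sum of the disagreement indicators for $(C,E)$ and $(E,D)$ equals the indicator for $(C,D)$ plus $2$ whenever $C$ and $D$ agree on the pair but $E$ does not, so additivity of gallery distances (equivalently, lying on a minimal gallery, by splitting and concatenating galleries) is the same as $E$ being a linear extension of the intersection order $P$; combined with \cref{prop:convex_aptms} and \cref{rem:inv_number} this pins down the chambers of $\convg(C,D)$. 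The identification of $\Braket{V_C\cup V_D}$ with the down-set lattice $J(P)$ via principal down-sets written as pairwise intersections is also correct, and the passage back to simplicial complexes uses only that $J(P)$ is graded, so every chain extends to a maximal one.

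Two small points to tidy in a final version, neither of which is a gap. First, your formula for the principal down-set of $a$ refers to the rank-$(n-1)$ element of $\B$ when $a$ is last in one of the two linear orders; that element is $\ma$, not a vertex of $C$ or $D$, but in that case the intersection degenerates to a single vertex of the other chamber (or to $\ma$ itself, which only happens for the improper down-set), so the argument survives with a case distinction. Second, $\Braket{V_C\cup V_D}$ and $J(P)$ can genuinely differ in the elements $\mi$ and $\ma$ — for instance if the top vertices of $C$ and $D$ coincide then $\ma\notin\Braket{V_C\cup V_D}$ while the full atom set lies in $J(P)$ — so the asserted equality of lattices should be read as an equality after deleting top and bottom; since the order complex discards $\mi$ and $\ma$ anyway, the simplicial conclusion is unaffected.
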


In other words, the convex hull of two chambers $C$ and $D$  in the building $\si$ is the smallest subcomplex of $\si$ containing $C$ and $D$ that is stable under taking meets and joins of all of its vertices. 

\begin{oq}
	Is there an explicit description of convex hulls in $\on$? For instance, if $C,D\in \Cn$ are chambers such that $\din(C,D)=\dig(C,D)$, does then $\convn(C,D) = \convg(C,D) \cap \on$ hold?
\end{oq}

The next statement in the setting of buildings follows from \cite[Le. 5.16, Def. 5.26]{ab}.

\begin{lem}\label{lem:star_building}
	Let $C,D\in \si$ be two chambers with $C\cap D \neq \emptyset$. Then their convex hull is contained in the star of their intersection, that is 
	\[
	\convg(C,D) \subseteq \sta(C\cap D, \si).
	\]
\end{lem}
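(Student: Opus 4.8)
The statement to prove is \cref{lem:star_building}: for two chambers $C,D\in\si$ with $C\cap D\neq\emptyset$, the convex hull $\convg(C,D)$ is contained in $\sta(C\cap D,\si)$.

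\textbf{Approach.} The plan is to use the explicit description of the convex hull from \cref{lem:conv_hull_building}, namely $\convg(C,D)=|\Braket{V_C\cup V_D}|$, together with the lattice-theoretic structure of $\lam=\lam(\vs)$. Write $F\coloneqq C\cap D$; its vertex set is $V_F=V_C\cap V_D$. The star $\sta(F,\si)$ consists of all simplices of $\si$ whose vertex set, together with $V_F$, forms a chain in $\lam$ (equivalently, lies in $\sta(F,\si)=\lk(F,\si)\ast F$ by \cref{rem:star_contractible}). So it suffices to show that every vertex $U$ of $\convg(C,D)$ is comparable with every vertex of $F$, and then to check that a whole chain of such vertices, adjoined to $V_F$, still yields a chain. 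Since $\si$ is an induced subcomplex situation governed entirely by the lattice order, the simplex condition reduces to pairwise comparability plus comparability among the $U$'s themselves, which is automatic for vertices of a single simplex of $\convg(C,D)$.

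\textbf{Key steps.} First I would record the order-theoretic structure of a chamber: a chamber of $\si$ is a maximal chain $U_0<U_1<\dots<U_{n-1}$ in $\lam$ (with $U_0=\mi$, $U_{n-1}=\ma$ suppressed), so $\rk(U_i)=i$. If $F=C\cap D$, then $V_F$ is a subchain, say consisting of the $U_i$ for $i$ in some index set $I\subseteq\{1,\dots,n-2\}$. Second, I would show: for each $V\in V_F$, the interval $[\,\bigwedge\{W\in V_F: W\geq V\}\text{ or more precisely the consecutive elements of }V_F\text{ around }V\,]$ — better phrased, since $V_F$ is a chain, for any vertex $U$ of $\convg(C,D)$ I must show $U$ is comparable to each element of $V_F$. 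Here is where the structure of $\Braket{V_C\cup V_D}$ enters: every element of this sublattice is obtained from $V_C\cup V_D$ by repeated meets and joins. I would argue that because $V_C$ and $V_D$ both refine the chain $V_F$ (every vertex of $C$ is comparable to every vertex of $F$, likewise for $D$), and because for a fixed $W\in V_F$ the set of subspaces comparable to $W$ — namely $\{X:X\leq W\}\cup\{X:X\geq W\}$ — is closed under meet and join (this is a short verification: if $X,Y\leq W$ then $X\vee Y\leq W$ and $X\wedge Y\leq W$; if $X,Y\geq W$ symmetrically; and if $X\leq W\leq Y$ then $X\wedge Y=X\leq W$ and $X\vee Y=Y\geq W$), it follows that the entire sublattice $\Braket{V_C\cup V_D}$ lies in this comparability class of $W$. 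Doing this for every $W\in V_F$ shows every vertex of $\convg(C,D)$ is comparable to every vertex of $F$. Third, I would conclude: a simplex $A$ of $\convg(C,D)$ is a chain in $\lam$; adjoining $V_F$ to it gives a union of two chains each of whose elements is comparable to every element of the other, hence again a chain, hence a simplex of $\si$ containing $F$ — that is, $A\cup V_F\in\sta(F,\si)$, so $A\in\sta(F,\si)$.

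\textbf{Main obstacle.} The only real content is the closure-under-meet-and-join claim for the comparability class of a fixed subspace $W$, and checking that $V_C$ and $V_D$ genuinely lie in this class to begin with — the latter is immediate since $F=C\cap D$ is a common face, so $V_F\subseteq V_C$ and $V_F\subseteq V_D$ as subchains, forcing comparability. The former is the elementary lattice computation sketched above; it is routine in $\lam$ but should be stated carefully since it is the heart of the argument. I do not expect genuine difficulty here, but one must be slightly careful that $\convg(C,D)$ as given by \cref{lem:conv_hull_building} is the \emph{order complex} of the spanned sublattice with its $\mi,\ma$ possibly removed, so that the vertices are exactly the proper elements of $\Braket{V_C\cup V_D}$; the comparability argument applies verbatim to these. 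An alternative, more building-theoretic route would invoke \cref{prop:convex_aptms} together with the fact (\cref{lem:star_building}'s analogue used implicitly) that minimal galleries connecting $C$ and $D$ cannot leave the star of $F$ — one shows a gallery leaving $\sta(F,\si)$ must cross a panel not containing a vertex of $F$, lengthening it, via the projection (retraction) onto $\sta(F,\si)$ — but the lattice argument above is cleaner given the tools already assembled in this excerpt.
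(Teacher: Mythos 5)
Your argument is correct, but it is not the paper's: the paper gives no proof of this lemma at all, instead citing \cite[Le.~5.16, Def.~5.26]{ab}, i.e.\ the general building-theoretic fact that the star of a simplex (a residue) is a convex chamber subcomplex, which is established there via projections/gates — essentially the ``alternative route'' you sketch in your last sentences. Your route instead exploits the concrete realization of $\si$ as the order complex of $\lam(\vs)$ together with \cref{lem:conv_hull_building}, and the whole content is the observation that for fixed $W$ the comparability class $\set{X \str X\leq W \text{ or } X \geq W}$ is a sublattice; since $V_F\subseteq V_C$ and $V_F\subseteq V_D$ are subchains, $V_C\cup V_D$ lies in this class for every $W\in V_F$, hence so does the spanned sublattice, and adjoining the chain $V_F$ to any chain of pairwise-$V_F$-comparable elements again yields a chain, i.e.\ a simplex of $\sta(C\cap D,\si)$. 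All three verifications in your closure argument are right ($X,Y\leq W$; $X,Y\geq W$; $X\leq W\leq Y$, where in the last case $X\wedge Y=X$ and $X\vee Y=Y$ by transitivity), and your identification of $\sta(F,\si)$ with the simplices $A$ satisfying $A\cup V_F\in\si$ matches the paper's definition via \cref{rem:star_contractible}. What the two approaches buy: the citation to \cite{ab} proves convexity of stars in \emph{any} building and needs no lattice model, whereas your argument is self-contained given \cref{lem:conv_hull_building} but is specific to buildings arising as order complexes of subspace lattices — which is all that is needed here, and it has the merit of paralleling the lattice-flavoured techniques used elsewhere in this part of the thesis.
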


The analogous statement holds for the non-crossing partition complex as well.

\begin{lem}\label{lem:star}
	Let $C,D\in\on$ be two chambers with $C\cap D \neq \emptyset$. Then their convex hull is contained in the star of their intersection, that is 
	\[
	\convn(C,D) \subseteq \sta(C\cap D, \on).
	\]
\end{lem}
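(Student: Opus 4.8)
The plan is to reduce the statement to the building version \cref{lem:star_building} by comparing the two gallery metrics. Write $A \coloneqq C \cap D$. Since $\on$ is an induced subcomplex of $\si$ (it is the order complex of a subposet of $\lam(\vs)$), \cref{rem:link_induced_subcplx} gives $\sta(A,\on) = \sta(A,\si) \cap \on$. As $\convn(C,D)$ is by definition contained in $\on$, it therefore suffices to show $\convn(C,D) \subseteq \sta(A,\si)$, and by \cref{lem:star_building} it is enough to prove the inclusion $\convn(C,D) \subseteq \convg(C,D)$.

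First I would observe that $\on$, being a chamber subcomplex of $\si$ of full dimension, carries the same adjacency relation as $\si$, so every gallery of $\on$ is a gallery of $\si$. Consequently, if a minimal gallery of $\on$ connecting $C$ and $D$ (which by definition has length $\din(C,D)$) in fact has length $\dig(C,D)$, then it is a minimal gallery of $\si$, so all of its chambers, hence all of $\convn(C,D)$, lie in $\convg(C,D)$. Since $\din(C,D) \geq \dip(C,D) \geq \dig(C,D)$ always holds, the whole lemma thus comes down to the equality $\din(C,D) = \dig(C,D)$ whenever $C \cap D \neq \emptyset$.

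To establish this, the idea is to place $C$ and $D$ in a common apartment $\Xi$ of $\on$. By \cref{lem:aptms} such an apartment is a Coxeter complex of type $A_{n-2}$, and hence, being the order complex of a Boolean sublattice of $\lam(\vs)$, it is simultaneously an apartment of $\si$; then \cref{prop:convex_aptms} yields $\operatorname{dist}_\Xi(C,D) = \dig(C,D)$, so $\din(C,D) \leq \operatorname{dist}_\Xi(C,D) = \dig(C,D)$ gives the equality. To construct $\Xi$ I would argue by induction on $n$: fix a common vertex $\pi$ of $C$ and $D$; the parts of the two maximal chains below and above $\pi$ are maximal chains of the intervals $[\mi,\pi]$ and $[\pi,\ma]$, which decompose as products of non-crossing partition lattices of strictly smaller rank (on the blocks of $\pi$, respectively on the contracted polygon), cf. \cref{rem:induced_partition}; realizing these inductively by non-crossing spanning trees and gluing a non-crossing spanning tree of the contracted polygon to non-crossing spanning trees on the blocks produces a non-crossing spanning tree realizing both $C$ and $D$.

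The main obstacle is precisely this gluing step: if $C$ and $D$ share only the single vertex $\pi$, the induced chambers in the smaller complexes need no longer share a vertex, and the naive induction stalls. The way around it that I would pursue is to settle first the case where $A$ is a single vertex $v$ — here $\sta(v,\on) = \lk(v,\on) \ast \{v\}$ and, by \cref{prop:locally_cato}, $\lk(v,\on)$ is a join of non-crossing partition complexes of lower rank, so the vertex case follows by induction on $n$ — and then to deduce the general case by an excursion argument: if a minimal gallery of $\on$ from $C$ to $D$ dropped a vertex $v$ of $A$, a suitable subgallery would connect two chambers that both contain $v$ but leave $\sta(v,\on)$, contradicting the vertex case. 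Alternatively, once the distance comparisons for the partition complex are available, one can place $C$ and $D$ in a common apartment of $\op$ and invoke \cref{thm:dist_pn_ncpn} together with \cref{thm:dist_pn_si} to get $\din(C,D) = \dip(C,D) = \dig(C,D)$ directly. It would also be natural to note as a byproduct whether $\convn(C,D) = \convg(C,D) \cap \on$ holds in this situation.
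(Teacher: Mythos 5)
Your central reduction is to a false statement, so the main route cannot be repaired. You reduce the lemma to $\convn(C,D)\subseteq\convg(C,D)$, equivalently to the equality $\din(C,D)=\dig(C,D)$ whenever $C\cap D\neq\emptyset$. This equality fails. Take the two chambers $C_0,D_0$ of $|\ncp_5|$ from \cref{sec:dif_dis} (see \cref{fig:chambers_far_away}), which are opposite in the building, so $\dig(C_0,D_0)=6$ while $\din(C_0,D_0)=7$, and cone them over the rank $4$ vertex $v=\Set{\Set{1,\dots,5},\Set{6}}$ of $|\ncp_6|$ (using $[\mi,v]\cong\ncp_5$ and the triviality of the upper interval). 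The resulting chambers $C=C_0\cup\Set{v}$ and $D=D_0\cup\Set{v}$ satisfy $C\cap D=\Set{v}\neq\emptyset$ and $\dig(C,D)=6$, yet $\din(C,D)\geq 7$: a gallery of length $6$ in $\on$ would be a minimal building gallery, hence lie in $\sta(v,\si)\cap\on=\sta(v,\on)$ by \cref{lem:star_building}, and would induce a gallery of length $6$ from $C_0$ to $D_0$ in $\lk(v,\on)\cong|\ncp_5|$, which does not exist. In particular $\convn(C,D)\not\subseteq\convg(C,D)$. The same example defeats your second fallback: these $C$ and $D$ cannot lie in a common apartment of $\op$ (otherwise \cref{thm:dist_pn_ncpn} and \cref{thm:dist_pn_si} would force $\din(C,D)=\dig(C,D)$), so non-empty intersection does not guarantee a common $\op$-apartment any more than it guarantees a common $\on$-apartment.

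Your first fallback has the right architecture — the general case does reduce trivially to the single-vertex case — but your proof of the vertex case is a non sequitur. Knowing from \cref{prop:locally_cato} that $\lk(v,\on)$ is a join of smaller non-crossing partition complexes describes $\sta(v,\on)$ from the inside, whereas the vertex case is precisely the assertion that a minimal gallery of $\on$ gains nothing by \emph{leaving} the star, and no induction hypothesis about smaller complexes addresses such excursions; indeed the induced chambers in the join factors may be disjoint and at different $\on$- and $\si$-distances, which is exactly why the counterexample above exists. The missing ingredient, which is what the paper's proof supplies, is a step-by-step comparison with the ambient building: along a minimal gallery of $\on$ the $\on$-distance to $D$ drops by one at every step, but at the first step where a shared vertex $v$ is discarded the new chamber lies on no minimal building gallery by \cref{lem:star_building}, so the building distance to $D$ does not drop there, and the two facts are incompatible. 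Some argument of this kind, ruling out excursions directly rather than via the (false) equality of the two metrics, is what your proposal still needs.
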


\begin{proof}
	Let $C,D\in \Cn$ be two chambers such that $C\cap D \neq \emptyset$ and let $v\in C\cap D$ be a vertex of rank $k$. We show that every minimal gallery from $C$ to $D$ contains $v$. Since this holds for all vertices of the intersection, every chamber of each gallery connecting $C$ and $D$ contains the intersection $C\cap D$. Consequently, $\convn(C,D) \subseteq \sta(C\cap D, \on)$.
	
	Recall that every minimal gallery in $\on$ is described by a word in the simple generators $S$ of $s_{n-1}$. Let $r_1\ldots r_m$ be such a word for a minimal gallery $(C=C_0, \ldots, C_m=D)$. If $r_j=s_i$, then the chambers $C_j$ and $C_{j-1}$ differ by a rank $i$ vertex. We show that $r_j\neq s_k$ for all $1\leq j \leq m$. Suppose for the contrary that $r_j=s_k$ such that $r_i \neq s_k$ for all $i<j$. Then $C_{j}$ does not contain $v$ and by \cref{lem:star_building}, $C_j$ is not contained in a minimal gallery from $C$ to $D$ in the building. 
	This means that 
	$\dig(C_j,D)=\dig(C_{j-1},D)+1$ or $\dig(C_j,D)=\dig(C_{j-1},D)$, which implies that
	$\din(C_j,D)=\din(C_{j-1},D)+1$ or $\din(C_j,D)=\din(C_{j-1},D)$, as distances in $\si$ are less or equal to distances in $\on$. Hence if the distance in the building increases, then the distance in $\on$ has to increase as well. On the other hand, we have $\din(C_j,D)=\din(C_{j-1},D)-1$, since $C_{j-1}$ and $C_j$ are consecutive chambers of a gallery in $\on$ connecting $C$ and $D$. This implies that either $\din(C_{j-1},D)+1 = \din(C_{j-1},D)-1$ or $\din(C_{j-1},D) = \din(C_{j-1},D)-1$, which are both contradictions.
\end{proof}

\begin{cor}
	Let $C,D\in\ncpn$ be two chambers with $C\cap D \neq \emptyset$. Then their convex hull is shrinkable.
\end{cor}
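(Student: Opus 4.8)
The plan is to exploit the cone structure of the convex hull. By Lemma~\ref{lem:star} we have $\convn(C,D) \subseteq \sta(C\cap D, \on)$, so in particular every chamber of the chamber subcomplex $\convn(C,D)$ contains a fixed vertex $v \in C \cap D$. Hence $\convn(C,D)$ is itself a simplicial cone with apex $v$, that is $\convn(C,D) = \lk(v,\convn(C,D)) \ast v$, and I would contract it to $v$ by the radial homotopy $h\colon [0,1]\times\convn(C,D)\to\convn(C,D)$ that moves a point $p$ along the great-circle arc from $p$ to $v$ inside the spherical simplex $\bar E$ of any chamber $E\in\Cn$ with $p,v\in\bar E$, so that $h(t,p)$ lies at distance $(1-t)\,\di_{\bar E}(v,p)$ from $v$ on that arc. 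Then $h(0,\cdot)=\id$, $h(1,\cdot)\equiv v$, and $h(t,\cdot)$ carries rectifiable loops to rectifiable loops, so it remains to check that $h$ is well defined, continuous, and length non-increasing.

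The substantive point is the length estimate. First I would record that the chambers of $\convn(C,D)$, being chambers of $\si$, are isometric to the fundamental spherical simplex of the Coxeter complex of type $A_{n-2}$, whose vertices sit on the rays through the fundamental weights $\omega_1,\ldots,\omega_{n-2}$; since $\langle\omega_i,\omega_j\rangle>0$ for all $i,j$ in type $A$, any two points of such a simplex subtend a strictly acute angle, and by compactness the simplex has diameter $\arccos(m)<\pi/2$ for some $m>0$. Consequently $\di_{\bar E}(v,p)<\pi/2$ for every $p\in\bar E$ with $v\in\bar E$, and the arc $[v,p]_{\bar E}$ stays inside $\convn(C,D)$. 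Working in geodesic polar coordinates around $v$ inside a single spherical simplex, a curve $s\mapsto(d(s),\xi(s))$ has speed $\sqrt{\dot d^2+\sin^2(d)\,\dot\xi^2}$, and its image under $p\mapsto((1-t)d,\xi)$ has speed $\sqrt{(1-t)^2\dot d^2+\sin^2((1-t)d)\,\dot\xi^2}$; since $(1-t)^2\le 1$ and $\sin$ is increasing on $[0,\pi/2]$ with $(1-t)d\le d<\pi/2$, the latter is pointwise at most the former. Summing over the finitely many simplicial pieces of a loop $\gamma$ gives $\ell(h(t,\gamma))\le\ell(h(t_0,\gamma))$ for $t\ge t_0$, so $h$ is a contraction of $\convn(C,D)$ shrinking every loop, which is exactly shrinkability.

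I expect the main obstacle to be the bookkeeping that makes $h$ genuinely continuous and independent of the choice of chamber: one must verify that if $p$ lies on a common face $\bar F$ of two chambers $\bar E,\bar E'$ of $\convn(C,D)$ (necessarily with $v\in\bar F$, since all chambers contain $v$), then the arc $[v,p]_{\bar E}$ and $[v,p]_{\bar E'}$ coincide with $[v,p]_{\bar F}$. This follows from Definition~\ref{defi:metric}, because the map $f_{\bar E}^{-1}\circ f_{\bar F}$ embeds $\bar F$ isometrically onto a face of $\bar E$, so great-circle arcs in $\bar F$ are great-circle arcs of $\bar E$; but it should be spelled out carefully, after which the metric estimate above is routine. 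Alternatively, if a cleaner abstract statement is isolated — namely that a chamber subcomplex of $\on$ all of whose chambers share a fixed vertex is a spherical cone of radius $<\pi/2$ over its link, and every such cone is shrinkable by the radial contraction — then the corollary is immediate from that together with Lemma~\ref{lem:star}.
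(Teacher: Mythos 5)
Your proof is correct and takes essentially the same route as the paper: by \cref{lem:star} the convex hull lies in the star of a vertex $v\in C\cap D$, and the radial geodesic contraction onto $v$ shrinks every rectifiable loop. The paper phrases this more tersely (no vertex of the hull is opposite $v$, so the hull sits in a ball of diameter less than $\pi$ centred at $v$, and the geodesic contraction of that ball does the job), whereas you additionally spell out the polar-coordinate computation showing the contraction is length non-increasing and the consistency of the radial arcs across common faces, both of which the paper leaves implicit.
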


\begin{proof}
	Let $C,D\in\on$ be two chambers with $C\cap D \neq \emptyset$ and $v\in C\cap D$ be a vertex. By the above lemma, every chamber $E\in \convn(C,D)$ contains $v$. Moreover, the maximal distance of vertices in $\convn(C,D)$ is two, as every vertex is connected to $v$. By \cref{cor:opposite_vertices}, $\convn(C,D)$ does not contain any vertices that are opposite in $\on$. Hence $\convn(C,D)$ is contained in a ball $B \subseteq \on$ with diameter less than $\pi$ and center $v$. The geodesic contraction of $B$ simultaneously shrinks every simplex in $\convn(C,D)$ onto $v$ and therefore every rectifiable loop in $\convn(C,D)$ as well. This means that $\convn(C,D)$ is shrinkable. 
\end{proof}

If we assume $\on$ to be locally $\cato$, \cref{bow} and the \cref{unique} imply the following.

\begin{cor}\label{cor:conv_intersec}
	Let $C,D\in\ncpn$ be two chambers with $C\cap D \neq \emptyset$. Then their convex hull is $\pi$-uniquely geodesic.
\end{cor}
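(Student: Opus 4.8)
The plan is to verify the two hypotheses of \cref{bow} for the finite spherical complex $X\coloneqq\convn(C,D)$ and then invoke the \cref{unique}. First, $X$ is a finite spherical complex: by definition it is a chamber subcomplex of $\on$, and $\on$ carries the spherical metric induced from the apartments as in \cref{sec:metric}. Second, by the preceding corollary $X$ is shrinkable, so in particular every rectifiable loop in $X$ of length less than $2\pi$ is shrinkable. The only substantive point is to check that $X$ is \emph{locally} $\cato$; once this is done, \cref{bow} yields that $X$ is $\cato$, and the \cref{unique} then gives that $X$ is $\pi$-uniquely geodesic, which is exactly the assertion.

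To establish that $X$ is locally $\cato$ I would argue as follows, by induction on $n$ (the base cases, where $\on$ is low-dimensional, being immediate). By \cref{lem:star} every chamber of $X$ contains the face $F\coloneqq C\cap D$, so inside $\sta(F,\on)=\lk(F,\on)\ast F$ (see \cref{rem:star_contractible}) the subcomplex $X$ splits as $X=Z\ast F$, where $Z$ is the subcomplex of $\lk(F,\on)$ consisting of those simplices of $X$ disjoint from $F$. Since $F$ is a face of a round-sphere apartment it is $\cato$, and the spherical join of $\cato$ spaces is $\cato$, so by \cref{gromov} it suffices to prove that $Z$ is $\cato$. Now $Z$ is the convex hull of the chambers $\bar C,\bar D$ induced by $C,D$ inside $\lk(F,\on)$, and $\lk(F,\on)$ is a simplicial join of order complexes of intervals of $\nc(S_n)$; by \cite[Prop. 2.6.11]{armstr} together with $\nc(S_i\times S_j)\cong\nc(S_i)\times\nc(S_j)$ and \cite[Eq. (7) of Chap. 7]{walker} (exactly as in the proof of \cref{prop:locally_cato}) this is a join of non-crossing partition complexes $|\ncp_m|$ with $m<n$ and copies of $|\B_2|$. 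The convex hull $Z$ decomposes along these join factors, and since $|\B_2|$ is $\cato$ and joins of $\cato$ spaces are $\cato$, the claim reduces to the $\cato$-ness of convex hulls of pairs of chambers in the smaller complexes $|\ncp_m|$, which is covered by the inductive hypothesis (applied through \cref{bow} to each factor).

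The main obstacle is precisely this local-$\cato$ step. The subtlety is that $\convn(C,D)$ is a chamber subcomplex but \emph{not} an induced subcomplex of $\on$, so local $\cato$-ness does not transfer for free from $\on$, and in the reduction to links the induced chambers $\bar C,\bar D$ may fail to share a face even though $C\cap D\neq\emptyset$; hence the preceding corollary cannot be applied verbatim inside the links. This is why I would organize the argument as an induction on $n$ that handles $X$ itself via shrinkability plus \cref{bow}, rather than trying to push the non-emptiness of the intersection through to each join factor of the link. The remaining bookkeeping — checking that the convex hull really does respect the join decomposition of $\lk(F,\on)$, and tracking the spurious $|\B_2|$ factors — is routine and parallels \cref{prop:locally_cato}.
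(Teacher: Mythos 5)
Your top-level route is exactly the paper's: shrinkability of $\convn(C,D)$ from the immediately preceding corollary, local $\cato$-ness, \cref{bow} to get that the convex hull is $\cato$, and then the \cref{unique}. The paper's own proof is literally that one-liner, carried out under the standing inductive assumption of this chapter (see \cref{prop:locally_cato} and the remark following it) that $|\ncp_k|$ is $\cato$ for $k<n$, so that $\on$, its links, and in particular the star $\sta(C\cap D,\on)=\lk(C\cap D,\on)\ast(C\cap D)$ containing $\convn(C,D)$ are already available as (locally) $\cato$ objects. You are right that the passage from ``$\on$ is locally $\cato$'' to ``$\convn(C,D)$ is locally $\cato$'' deserves more care than the paper gives it, since the convex hull is not an induced subcomplex; but the paper resolves this by appealing to the star, not by the induction you propose.

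The genuine gap is in your replacement for that step. After writing $X=Z\ast F$ and decomposing $\lk(F,\on)$ as a join of complexes $|\ncp_m|$ with $m<n$ (plus $|\B_2|$ factors), you reduce to the $\cato$-ness of convex hulls of pairs of chambers in each factor and appeal to ``the inductive hypothesis.'' But, as you yourself observe, the induced chambers in a factor need not intersect, so neither the statement being proved nor the shrinkability corollary (both of which require non-empty intersection) is available there. What you actually need in each factor is the $\cato$-ness of the convex hull of an \emph{arbitrary} pair of chambers of $|\ncp_m|$ --- by the reduction of \cref{sec:strategy} this is tantamount to the \cref{conj:ncpn_cat1} in rank $m$, i.e.\ precisely the standing hypothesis you decline to invoke, and not something an induction on the present corollary can deliver. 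Your closing remark that the induction ``handles $X$ itself via shrinkability plus \cref{bow}'' does not repair this: Bowditch's hypothesis for $X$ is exactly the local $\cato$-ness whose verification you just pushed into the factors, so the argument is circular. The fix is to do what the paper does: assume $|\ncp_m|$ is $\cato$ for all $m<n$, conclude via \cref{prop:locally_cato} and the join decomposition that $\lk(C\cap D,\on)$, and hence $\sta(C\cap D,\on)\supseteq\convn(C,D)$, is $\cato$, and read off the local $\cato$-ness needed for \cref{bow} from that; the rest of your argument then goes through as written.
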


\section{The equal-distance case}

In this section we investigate the case that the distance of two chambers $C,D\in\Cn$ equals their distance measured in the building $\si$.

\begin{lem}
	Let $C,D\in \Cn$ be two chambers such that $\din(C,D)=\dig(C,D)$. Then $\convn(C,D)$ is connected and contained in $\convg(C,D)$ and in particular contained in an apartment of the building $\si$.
\end{lem}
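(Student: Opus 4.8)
The plan is to reduce everything to the convexity of apartments by first showing that a minimal gallery of $\on$ connecting $C$ and $D$ is automatically a minimal gallery of $\si$. The starting observation is that, by \cref{thm:simplicial_embedding}, $\on$ is a chamber subcomplex of $\si$ of full dimension $n-2$; hence two chambers of $\on$ that are adjacent in $\on$ already share a panel and are therefore adjacent in $\si$. Consequently every gallery in $\on$ is a gallery in $\si$ of the same length. Now take any chamber $E$ of $\convn(C,D)$. By definition of the convex hull, $E$ lies on some minimal gallery $\gamma$ of $\on$ connecting $C$ and $D$, and $\gamma$ has length $\din(C,D)$. By hypothesis $\din(C,D)=\dig(C,D)$, so $\gamma$, viewed as a gallery in $\si$, has length $\dig(C,D)$ and is therefore minimal in $\si$; thus $E$ is a chamber of $\convg(C,D)$. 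Since this holds for every chamber of the chamber complex $\convn(C,D)$, we conclude $\convn(C,D)\subseteq\convg(C,D)$.

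Next I would produce the apartment. By axiom (B1) there is an apartment $A\in\A(\si)$ containing both $C$ and $D$, and by \cref{prop:convex_aptms} every minimal gallery of $\si$ connecting $C$ and $D$ lies inside $A$; hence $\convg(C,D)\subseteq A$, and combining with the previous step gives $\convn(C,D)\subseteq A$. This is exactly the asserted containment in an apartment of the building.

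Finally, for connectedness I would argue that $\convn(C,D)$ is gallery-connected. Every chamber $E\in\convn(C,D)$ lies on a minimal gallery of $\on$ from $C$ to $D$, and the initial segment of that gallery from $C$ to $E$ is a sequence of chambers each sharing a panel with its successor, all of which lie in $\convn(C,D)$; hence $E$ is joined to $C$ by a gallery inside $\convn(C,D)$. Applying this to two arbitrary chambers of $\convn(C,D)$ and concatenating through $C$ shows any two chambers are connected by a gallery within $\convn(C,D)$, so $\convn(C,D)$ is gallery-connected and in particular connected as a simplicial complex.

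I do not expect a real obstacle here. The only point that needs a moment of care is the very first one — that adjacency of chambers in the subcomplex $\on$ forces adjacency in $\si$, so that gallery lengths computed in $\on$ and in $\si$ agree on galleries that happen to lie in $\on$ — and this is immediate from $\on$ being a full-dimensional chamber subcomplex by \cref{thm:simplicial_embedding}. Everything else is a direct application of the definition of the convex hull and of \cref{prop:convex_aptms}.
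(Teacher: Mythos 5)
Your proof is correct and follows essentially the same route as the paper's: observe that a minimal gallery in $\on$ is, under the hypothesis $\din(C,D)=\dig(C,D)$, also a minimal gallery in $\si$, deduce $\convn(C,D)\subseteq\convg(C,D)$, invoke \cref{prop:convex_aptms} for the apartment, and note gallery-connectedness via $C$. Your version merely spells out the adjacency and connectedness details that the paper leaves implicit.
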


\begin{proof}
	Suppose that $C,D\in \Cn$ are such that $\din(C,D)=\dig(C,D)$. This means that minimal galleries connecting $C$ and $D$ in $\on$ have the same lengths as the ones in $\si$. Since $\on \subseteq \si$ is a subcomplex, every minimal gallery in $\on$ is also a minimal gallery in $\si$, hence $\convn(C,D) \subseteq \convg(C,D)$. Since $\convg(C,D)$ is contained in an apartment $A\in \A(\si)$ by \cref{prop:convex_aptms}, it holds that $\convn(C,D) \subseteq A$ as well. Finally, $\convn(C,D)$ is connected, since every chamber in $\convn(C,D)$ can be connected to both $C$ and $D$.
\end{proof}

Hence we have to investigate whether the chamber subcomplex $\convn(C,D)$ of a Coxeter complex of type $A$ is $\pi$-uniquely geodesic. Using \cref{bow}, we aim to show that $\convn(C,D)$ is shrinkable.

\subsection{Indications for shrinkability}

An obstruction for contractibility and hence shrinkability are holes. The following excludes the existence of small holes in $\on$.

\begin{nohole}\label{nohole}
	Suppose that $n \geq 5$. Let $A \in \A(\si)$ be an apartment in the building $\si$ and $p\in A$ a vertex. 
	If the link $\lk(p,A)$ is contained in $\on$, then the vertex  $p$ is contained in $\on$ as well. 
	Moreover, the statement is false for $n=4$.
\end{nohole}

For the proof we need the following lemma.

\begin{lem}\label{lem:geodesics_in_links}
	Let $A\in\A(\si)$ be an apartment and $p\in A$ a vertex. Moreover, let $a,b \in \lk(p,A)$ be such that $a$ is opposite $b$ in $\lk(p,A)$. Then the geodesic in $A$ connecting $a$ and $b$ is the concatenation of the edge $e_{a,p}$ between $a$ and $p$ with the edge $e_{p,b}$ between $p$ and $b$.
\end{lem}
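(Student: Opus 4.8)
The plan is to realize $A$ as a round sphere and to prove that the concatenated path is a local geodesic of length less than $\pi$, hence the unique geodesic between its endpoints. Since $A$ is the spherical realization of a Coxeter complex of type $A_{n-2}$, it is isometric to a round $(n-3)$-sphere by \cref{rem:cox_cplx_homeo_sphere} and \cref{rem:metric}. Recall that on a round sphere any two points at distance less than $\pi$ are joined by a unique geodesic, and that a locally geodesic path of length less than $\pi$ between two such points is that geodesic, being an arc of a great circle and therefore minimizing. So it suffices to show that $\gamma \coloneqq e_{a,p} \ast e_{p,b}$ is locally geodesic and has length less than $\pi$.

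First I would check that $\gamma$ is a local geodesic. Along the interiors of the two edges this is immediate, since edges are geodesic segments in the spherical metric by construction, so the only point to verify is that the angle at $p$ between the two edges equals $\pi$. By the general theory of spherical complexes (cf.\ \cite[Chap.~I.7]{bh}), the space of directions at $p$ in $A$ is isometric to the link $\lk(p,A)$ endowed with its natural spherical metric, and under this identification the direction of $e_{a,p}$ (resp.\ $e_{p,b}$) corresponds to the vertex $a$ (resp.\ $b$) of $\lk(p,A)$; hence the angle at $p$ between $e_{a,p}$ and $e_{p,b}$ equals $d_{\lk(p,A)}(a,b)$. Now $\lk(p,A)$ is itself a spherical Coxeter complex, of type $A_{i-1}\times A_{n-2-i}$ with $i = \rk(p)$ by \cref{lem:type_of_link}, and in a spherical Coxeter complex two vertices are opposite precisely when their metric distance is $\pi$ (this follows from \cref{lem:opposition_join} together with \cref{cor:opposite_vertices}, recalling that combinatorial distance $3$ in an irreducible factor corresponds to metric distance $\pi$). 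Since $a$ is opposite $b$ in $\lk(p,A)$ by hypothesis, the angle at $p$ is $\pi$, so $\gamma$ is a local geodesic.

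Second I would bound the length of $\gamma$, using that every edge of a type-$A$ Coxeter complex has length strictly less than $\pi/2$. To see this, realize $A$ as the unit sphere of the sum-zero hyperplane $H\subseteq\mathbb{R}^{m}$ with $m=n-1$, the vertex labelled by a proper non-empty $S\subsetneq\{1,\dots,m\}$ being the ray through $w_S \coloneqq \mathbf{1}_S - \tfrac{|S|}{m}\mathbf{1}$, where $\mathbf{1}_S = \sum_{i\in S}e_i$ and $\mathbf{1}=\mathbf{1}_{\{1,\dots,m\}}$. A short computation gives $\|w_S\|^2 = \tfrac{|S|(m-|S|)}{m}$ and, for $S\subsetneq T$, $\langle w_S,w_T\rangle = \tfrac{|S|(m-|T|)}{m}$, so that
\[
\cos\angle(w_S,w_T) \;=\; \sqrt{\frac{|S|\,(m-|T|)}{(m-|S|)\,|T|}}\,,
\]
which lies in the open interval $(0,1)$ because $1\le|S|<|T|\le m-1$. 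Hence every edge has length in $(0,\tfrac{\pi}{2})$, and therefore the length of $\gamma$ is $|e_{a,p}| + |e_{p,b}| < \pi$.

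Combining the two points, $\gamma$ is a local geodesic of length less than $\pi$ on the round sphere $A$, hence the unique geodesic in $A$ joining $a$ and $b$; in particular $d_A(a,b)=|e_{a,p}|+|e_{p,b}|$. The only genuinely delicate step is the correct invocation of the link of $p$ as its space of directions in $A$, together with the metric characterisation of opposition in the possibly reducible Coxeter complex $\lk(p,A)$; once these standard facts are set up, the remainder is the elementary computation above.
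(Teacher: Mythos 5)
Your proof is correct, but it takes a genuinely different route from the paper's. The paper argues combinatorially: it applies \cref{lem:conv_hull_building} to identify the convex hull of $a$ and $b$ in $A$ with the order complex of the sublattice they generate, uses \cref{lem:opposition_join} and \cref{lem:type_opposite_vertex} to see that $a\vee b$ and $a\wedge b$ are $p$ together with $\mi$ or $\ma$ (depending on whether $\rk(a),\rk(b)$ are below or above $\rk(p)$), so that this convex hull is exactly $e_{a,p}\cup e_{p,b}$, and then concludes via \cref{lem:geod_contained_in_conv} that every geodesic from $a$ to $b$ lies in this one-dimensional complex and hence is the concatenation. You instead work metrically on the round sphere $A$: you show the concatenation is a local geodesic by identifying the space of directions at $p$ with the spherical realization of $\lk(p,A)$ and using that opposite vertices there are antipodal, and you bound its length below $\pi$ by the edge-length estimate (your computation of $\cos\angle(w_S,w_T)$ reproduces \cref{lem:edge_lengths}, which you could simply cite). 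Your argument buys slightly more — uniqueness of the geodesic and the explicit bound $d_A(a,b)=|e_{a,p}|+|e_{p,b}|<\pi$ — and it avoids applying the chamber-level convex-hull lemmas to a pair of vertices; the paper's argument is shorter because that machinery is already in place. The one step you should be careful to justify fully is the identification of the angular metric at $p$ with the natural spherical metric on the Coxeter complex $\lk(p,A)$, so that combinatorial opposition really gives angle $\pi$ at $p$; this is standard, as in \cite[Chap. I.7]{bh}, and in the present situation it can also be verified directly in the geometric realization, since the vertex opposite to $S$ in a type-$A$ factor is its complement and $w_{S^c}=-w_S$.
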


\begin{proof}
	We show that the convex hull of $a$ and $b$ in $A$ is the simplicial complex spanned by $a,b$ and $p$, hence the concatenation of the edges $e_{a,p}$ and $e_{p,b}$. The claim follows since every geodesic is contained in the convex hull by \cref{lem:geod_contained_in_conv}, and the constructed convex hull is a concatenation of two edges.
	
	By \cref{lem:conv_hull_building} the convex hull $\convg(a,b)$ has as vertices the minimal set containing $a$ and $b$ that is stable under taking meets and joins in the corresponding lattice. Note that $a$ and $b$ are opposite in the same join factor $\Sigma$ of $\lk(p,A)$ by \cref{lem:opposition_join}. Hence we have that $a\vee b = \ma$ and $a\wedge b=\mi$ in the Boolean sublattice $\B\subseteq \lam$ corresponding to $\Sigma$ by \cref{lem:type_opposite_vertex}. 
	If $\rk_A(a), \rk_A(b) < \rk_A(p)$, then in the Boolean sublattice $\B_A$ associated to $A$ we have that $a \vee b = p$ and $a \wedge b = \mi$. Hence the set $\Set{a,b,p,\mi}$ is stable under taking meet and joins. 
	If $\rk_A(a), \rk_A(b) > \rk_A(p)$, we get $a\vee b =\ma$ and $a \wedge b = p$ hold in $\B_n$. Consequently, the set $\Set{a,b,p,\ma}$ is stable under taking meets and joins.
	In both cases, the span of the above vertex sets is the concatenation of the edges $e_{a,p}$ and $e_{p,b}$.
\end{proof}

\begin{proof}[Proof of the \cref{nohole}]
	Let $n\geq 5$, let $A\in\A(\si)$ be an apartment and $p\in A$ a vertex. Suppose that $\lk(p,A) \subseteq \on$. We show that $p\in \on$, where we consider the cases that $\rk(p)> 1$ and $\rk(p)=1$. Note that $r \coloneqq n-2 \geq 3.$
	
	First suppose that $\rk(p)=k> 1$. Then the link $\lk(p,A)$ has type $A_{k-1}\times A_{r-k}$ by \cref{lem:type_of_link} and therefore contains $k$ vertices $p_1, \ldots, p_k$ that have rank $1$ in the apartment $A$. Since $k>1$ and $r\geq 3$, there are at least two such vertices. 
	By assumption we have $\lk(p,A)\subseteq \on$, hence these vertices can be represented by edges in the $n$-gon. As rank $1$ vertices of the apartment $A$ they do not form cycles. Moreover, for all $1\leq i,j \leq k$ the join $p_i \vee p_j$ is an element of $\lk(p,A)$ and as such an element of $\on$. This means that the edges corresponding to the rank $1$ vertices $p_1, \ldots, p_k$ are pairwise non-crossing and consequently, it holds that  $p=p_1 \vee \ldots \vee p_k \in \on$.
	
	Now suppose that $\rk(p)=1$. Then $\lk(p,A)$ is of type $A_{r-1}$ and therefore contains $r$ vertices that represent $2$-dimensional subspaces $U_1,\ldots, U_r \subseteq \F_2^{n-1}$ in $\si$.  Suppose that $p$ is not contained in $\on$, which means that it is represented by a vector $v\in \F_2^{n-1}$ that has more than two non-zero entries. On the other hand, $v \in U_i$ for $1\leq i \leq r$, which implies that exactly three or four entries of $v$ are non-zero, since $v$ is the sum of two partition vectors. 
	Suppose that $v=e_a+e_b+e_c$ for $1\leq a < b < c <n$, that is $v$ has exactly three non-zero entries. There are exactly three $2$-dimensional subspaces that contain $v$ and have a basis with at most two non-zero entries. They are $\Braket{e_a+e_b, e_c}$, $\Braket{e_b+e_c,e_a}$ and $\Braket{e_a+e_c,e_b}$. This implies that $r = 3$ and $U_1$, $U_2$ and $U_3$ are these three subspaces. 
	The three $3$-dimensional subspaces that correspond to the remaining vertices in $\lk(p,A)$ are given by the pairwise join of the $2$-dimensional subspaces. But
	\[
	\Braket{e_a+e_b, e_c} \vee \Braket{e_b+e_c,e_a} =\Braket{e_a, e_b, e_c}= \Braket{e_b+e_c,e_a} \vee \Braket{e_a+e_c,e_b},
	\]
	which is a contradiction, because the $3$-dimensional subspaces in $\lk(p, A)$ are different. 
	The same argumentation works in the case that $v$ has four non-zero entries, that is $v=e_a+e_b+e_c+e_d$ for $1\leq a < b < c <d<n$. There are again exactly three subspaces containing $v$ with a basis with vectors corresponding to edges. They are given by $\Braket{e_a+e_b, e_c+e_d}$, $\Braket{e_b+e_c,e_a+e_d}$ and $\Braket{e_a+e_c,e_b+e_d}$, and their pairwise sum equals the subspace $\Braket{e_a+e_b,e_b+e_c,e_c+e_d}$, which is a contradiction.
	
	The statement is not true for $n=4$, since the link of the vertex corresponding to the subspace $\Braket{e_1+e_2+e_3}$ consists of the two non-crossing partitions $\Set{\set{1,2},\set{3,4}}$ and $\Set{\set{1,4},\set{2,3}}$.
\end{proof}

If we replace $\on$ by $\op$ in the above statement, we get a Link Property for the partition complex as well.

\begin{nohole}\label{pn-nohole}
	Suppose that $n \geq 5$. Let $A \in \A(\si)$ be an apartment in the building $\si$ and $p\in A$ a vertex. 
	If the link $\lk(p,A)$ is contained in $|\pn|$, then the vertex  $p$ is contained in $|\pn|$ as well. 
	Moreover, the statement is false for $n=4$.
\end{nohole}

\begin{proof}
	The proof is almost the same as for the \cref{nohole}. With the same assumptions with $\on$ being replaced by $|\pn|$, the proof for $\rk(p)=1$ is the same.
	In the case of $\rk(p)=k>1$, it is enough to know that $p$ is the join of $k$ edges in the $n$-gon that do not form cycles. This is the same argumentation as in the proof of the \cref{nohole}. The counterexample for $n=4$ is the same.
\end{proof}

The \cref{nohole} for $\on$ says that whenever we find a vertex of the building that is not contained in $\on$, then neither is its link complex.
The next theorem shows that in this situation, we even remove a geodesic of $\si$ through this vertex of length $\pi$. For the proof we need the following \cite[Prop. 4.8]{bra-mcc}.

\begin{prop}\label{lem:edge_lengths}
	Let $\Sigma$ be the Coxeter complex of type $A_{r}$ equipped with the standard metric of the $(r-1)$-sphere. The length $l_{ij}$ of an edge connecting a vertex of rank $i$ with a vertex of rank $j$ with $1 \leq i < j \leq r$ is for $s\coloneqq r+1$ given by
	\[
	\cos(l_{ij}) = \sqrt{ \frac{i(s-j)}{j(s-i)}}.
	\]
\end{prop}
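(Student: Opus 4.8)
The plan is to realize the Coxeter complex $\Sigma$ of type $A_r$ as an explicitly triangulated round sphere and then reduce the statement to an elementary inner-product computation. First I would set $s\coloneqq r+1$ and work inside the $r$-dimensional Euclidean space $E\coloneqq\Set{x\in\R^s\str\sum_{k=1}^{s}x_k=0}$, equipped with the restriction of the standard inner product of $\R^s$ with basis $\eps_1,\dots,\eps_s$; write $\mathbf 1\coloneqq\eps_1+\dots+\eps_s$. By the identification of $\Sigma$ with the barycentric subdivision of $\partial\Delta^{r}$ used in \cref{sec:sph_build}, together with \cref{rem:cox_cplx_homeo_sphere} and the construction of the spherical metric in \cref{sec:metric}, the spherical realization of $\Sigma$ is isometric to the unit sphere of $E$, as follows. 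Realize $\partial\Delta^{r}$ as the boundary of the regular $r$-simplex whose vertices are the projections $f_k\coloneqq\eps_k-\tfrac1s\mathbf 1\in E$ for $1\le k\le s$. Then a rank-$i$ vertex of $\Sigma$ corresponds to a subset $T\subsetneq\Set{1,\dots,s}$ with $\#T=i$, and its position on the sphere is the radial projection of the barycenter $\tfrac1i\sum_{k\in T}f_k$ of the face spanned by $\Set{f_k\str k\in T}$; since $\sum_{k\in T}f_k=\sum_{k\in T}\eps_k-\tfrac is\mathbf 1$, this vertex lies on the ray spanned by
\[
v_T\coloneqq\sum_{k\in T}\eps_k-\frac{i}{s}\,\mathbf 1\in E.
\]

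Next I would record the two numerical identities on which the formula rests. Since $v_T$ has $\#T=i$ coordinates equal to $\tfrac{s-i}{s}$ and $s-i$ coordinates equal to $-\tfrac{i}{s}$, one computes $\|v_T\|^2=i\bigl(\tfrac{s-i}{s}\bigr)^2+(s-i)\tfrac{i^2}{s^2}=\tfrac{i(s-i)}{s}$. An edge of the barycentric subdivision joins the rank-$i$ vertex of a subset $T$ to the rank-$j$ vertex of a subset $T'$ precisely when $T$ and $T'$ are comparable, which for $i<j$ forces $T\subsetneq T'$ and hence $\#(T\cap T')=i$; bilinearity of $\langle\cdot,\cdot\rangle$ together with $\langle\sum_{k\in T}\eps_k,\mathbf 1\rangle=i$, $\langle\sum_{k\in T'}\eps_k,\mathbf 1\rangle=j$ and $\langle\mathbf 1,\mathbf 1\rangle=s$ then gives
\[
\langle v_T,v_{T'}\rangle=i-\frac{ij}{s}-\frac{ij}{s}+\frac{ij}{s}=\frac{i(s-j)}{s}.
\]

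Finally, each edge of $\Sigma$ is a spherical $1$-simplex, i.e.\ an arc of a great circle of the unit sphere of $E$, so its length $l_{ij}$ is the angle between the unit vectors $v_T/\|v_T\|$ and $v_{T'}/\|v_{T'}\|$. Hence
\[
\cos(l_{ij})=\frac{\langle v_T,v_{T'}\rangle}{\|v_T\|\,\|v_{T'}\|}=\frac{i(s-j)/s}{\sqrt{i(s-i)/s}\,\sqrt{j(s-j)/s}}=\sqrt{\frac{i(s-j)}{j(s-i)}},
\]
which is the asserted identity; note in passing that it depends only on $i$ and $j$ and not on the chosen nested pair $T\subsetneq T'$, as it must, since $W$ acts on $\Sigma$ by type-preserving isometries and transitively on chambers. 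The only step that is not a routine calculation, and the one I would be most careful about, is the metric identification in the first paragraph: that the spherical realization of the abstract Coxeter complex genuinely is the round unit sphere of $E$, with the rank-$i$ vertex labelled $T$ sitting on the ray through $v_T$. This is assembled from the description of the geometric representation of $W$ in \cref{sec:cox_cplx}, the identification of $\Sigma$ with the barycentric subdivision of $\partial\Delta^{r}$, and the construction of the spherical metric recalled earlier; once it is granted, the remainder is the bookkeeping above.
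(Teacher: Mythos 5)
Your computation is correct, and it is worth noting that the paper does not actually prove this proposition at all: it is imported verbatim from Brady--McCammond (cited as Prop.~4.8 of \cite{bra-mcc}), so there is no internal proof to compare against. Your argument is a clean, self-contained replacement. The numerical steps all check out: $\|v_T\|^2=\tfrac{i(s-i)}{s}$, $\langle v_T,v_{T'}\rangle=i-\tfrac{ij}{s}=\tfrac{i(s-j)}{s}$ for $T\subsetneq T'$, and the quotient simplifies to $\sqrt{\tfrac{i(s-j)}{j(s-i)}}$ (positive since $j\le r<s$, so taking the arccos is legitimate); a sanity check in type $A_2$ gives $l_{12}=\pi/3$, i.e.\ six equal arcs filling the circle, as it must. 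The one step you rightly flag as non-routine --- that the spherical realization of $\Sigma$ is the round unit sphere of $E$ with the rank-$i$ vertex labelled $T$ on the ray through $v_T$ --- is covered by the paper's own setup: by \cref{rem:simplicial_structure_sphere} and \cref{rem:cox_cplx_homeo_sphere} the metric on $\Sigma$ is the one induced by the reflection-hyperplane triangulation of the unit sphere of $E$, and the ray through $v_T$ is exactly the one-dimensional intersection of the walls $x_k=x_l$ ($k,l\in T$ or $k,l\notin T$), i.e.\ the fixed locus of the maximal parabolic stabilizing $T$; equivalently it is the radial projection of the barycenter of the face of the regular simplex indexed by $T$, which is how you phrase it. So the identification is sound, and the $W$-equivariance remark at the end correctly explains why the answer depends only on the pair of ranks.
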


\begin{thm}\label{thm:large_holes}
	Suppose that $n \geq 5$. Let $p\in\si \setminus \on$ be a vertex.
	Then there exists a geodesic $\gamma\subseteq \si$ of length $\pi$ with $p \in \gamma$ such that $\gamma \cap \on = \emptyset$.
\end{thm}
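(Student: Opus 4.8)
The plan is to build the geodesic $\gamma$ as a concatenation of two edges through $p$, using the Link Property as the key input. First I would pick a vertex $q$ of the apartment that witnesses the failure of $p$ being a non-crossing partition. By the \cref{nohole} (valid for $n\geq5$), since $p\notin\on$, the link $\lk(p,A)$ is \emph{not} contained in $\on$ for any apartment $A$ containing $p$; more usefully, I want to produce a single vertex $a\in\lk(p,A)$ with $a\notin\on$. Actually the cleanest route: choose any apartment $A$ through $p$ and use the proof of the \cref{nohole} to locate $a\in\lk(p,A)$ with $a\notin\on$. Let $b$ be the unique vertex of $\lk(p,A)$ opposite to $a$ in $\lk(p,A)$; by \cref{lem:type_opposite_vertex} applied inside the relevant join factor (via \cref{lem:opposition_join}), $b$ is well-defined, and in the Boolean sublattice one has $a\vee b=p$ or $a\wedge b=p$ according to the ranks. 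I also need $b\notin\on$: this should follow from the same meet/join relations together with the fact that if both $a$ and $b$ were non-crossing partitions then $p=a\vee b$ (or the meet) would be too, since $\on$ is closed under the relevant lattice operations when the result still lies in $\lam(\vs)$ — here I would need to argue carefully that $p$ being a join of non-crossing partition subspaces in $A$ forces $p\in\on$, contradicting the hypothesis.

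Next, by \cref{lem:geodesics_in_links} the geodesic $\gamma$ in $A$ joining $a$ and $b$ is exactly the concatenation of the edge $e_{a,p}$ with the edge $e_{p,b}$, and it passes through $p$. I would then compute its length: by \cref{lem:edge_lengths}, the two edges have lengths $l_{ij}$ and $l_{i'j'}$ determined by the ranks of $a$, $p$, $b$ in $A$, which is the Coxeter complex of type $A_{n-2}$. Using the opposition relation — $a$ has rank $i$, $b$ has rank $j$ with $i+j$ equal to the rank of $p$ or to $(n-2)-\mathrm{rk}(p) + \mathrm{rk}(p)$ appropriately — the two cosines $\cos l_{a,p}$ and $\cos l_{p,b}$ are negatives of one another, so $l_{a,p}+l_{p,b}=\pi$. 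This is the standard fact that opposite vertices in a spherical Coxeter complex are at distance $\pi$, here realized through the explicit edge-length formula; the geodesic of length $\pi$ between opposite points is unique and is this concatenation. Hence $\gamma$ is a geodesic of $\si$ (being a geodesic of the apartment $A$, and apartments are isometrically embedded) of length exactly $\pi$ with $p\in\gamma$.

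Finally I must check $\gamma\cap\on=\emptyset$. The geodesic $\gamma$ is the union of two closed edges; its vertices are $a$, $p$, $b$, all of which I have arranged to lie outside $\on$. The relative interiors of the two edges $e_{a,p}$ and $e_{p,b}$ consist of points whose carrier (the minimal simplex of $\si$ containing them) is the full edge $\{a,p\}$ respectively $\{p,b\}$. Since $\on$ is an \emph{induced} subcomplex of $\si$ (it is the order complex of a subposet of $\lam(\vs)$), a simplex of $\si$ lies in $\on$ iff all its vertices do; as $p\notin\on$, neither edge lies in $\on$, and so no point of $\gamma$ — vertex or interior point — lies in $\on$. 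This gives $\gamma\cap\on=\emptyset$ and completes the argument.

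The main obstacle I anticipate is the middle step: guaranteeing that the opposite vertex $b$ of a ``bad'' link vertex $a$ is itself bad, i.e. not a non-crossing partition. The \cref{nohole} as stated only produces \emph{some} failure in the link, and one has to trace through its proof (the case analysis on $\mathrm{rk}(p)=1$ versus $\mathrm{rk}(p)>1$, and within the rank-one case the three- or four-nonzero-entry subcases) to see that one can pick $a$ so that $a$ and its link-opposite $b$ are genuinely an opposite pair with $a\vee b$ (or $a\wedge b$) equal to $p$, and that $b$ inherits non-membership in $\on$. An alternative, possibly cleaner, would be to argue by contradiction directly: if every geodesic of length $\pi$ through $p$ met $\on$, then in particular for the apartment $A$ every link-opposite pair $(a,b)$ would have $a\in\on$ or $b\in\on$; combined with the lattice structure this would force enough of $\lk(p,A)$ into $\on$ to contradict the \cref{nohole}. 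I would try the direct concatenation approach first and fall back on this contrapositive packaging if the bookkeeping on $b$ becomes unwieldy.
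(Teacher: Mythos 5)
There is a genuine gap, and it is fatal to the two-edge construction. Your path $e_{a,p}\circ e_{p,b}$, where $b$ is the vertex opposite $a$ in $\lk(p,A)$, has length strictly less than $\pi$. By \cref{lem:edge_lengths} every edge of the apartment satisfies $\cos(l_{ij})=\sqrt{i(s-j)/(j(s-i))}>0$, so every edge has length strictly less than $\pi/2$ and no concatenation of two edges can reach length $\pi$; your claim that the two cosines ``are negatives of one another'' is incompatible with the formula, whose output is always a non-negative square root. Equivalently: $a$ and $b$ are both adjacent to $p$, hence at combinatorial distance $2$ in the $1$-skeleton of $A$, whereas by \cref{cor:opposite_vertices} opposite vertices (the ones at spherical distance $\pi$) are at combinatorial distance $3$. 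The vertices $a$ and $b$ satisfy $a\vee b=p$ (or $a\wedge b=p$) in the Boolean lattice of $A$, so they are far from opposite in $A$. The paper's proof is forced to use a \emph{three}-edge path for exactly this reason: it takes a second bad vertex $q\in\lk(p,A)\setminus\on$ (supplied by the \cref{nohole}), lets $p_u$ be opposite $q$ in $\lk(p,A)$ and $p_v$ be opposite $p$ in $\lk(q,A)$, and shows by an explicit trigonometric computation with \cref{lem:edge_lengths} that the lengths of $e_{p_u,p}$, $e_{p,q}$, $e_{q,p_v}$ sum to $\pi$. The middle edge $e_{p,q}$, joining two bad vertices, is what makes the whole path avoid $\on$ (every edge of the path contains $p$ or $q$, and $\on$ is an induced subcomplex) --- this is the part of your final paragraph that does survive, but it needs the three-edge path to apply to.

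A secondary problem: your argument that $b\notin\on$ rests on the implication ``$a,b\in\on$ and $a\vee b=p$ in $\lam(\vs)$ implies $p\in\on$,'' and this is false. The join in $\lam(\vs)$ is the sum of subspaces, which does not agree with the join in $\ncpn$; the paper's own example $\emb\bigl((1\;3)\bigr)\vee\emb\bigl((2\;4)\bigr)$ is a subspace spanned by two non-crossing edges whose sum is the (crossing, hence non-$\on$) partition subspace of $\Set{\Set{1,3},\Set{2,4}}$. So even the selection of a bad $b$ cannot be salvaged this way. Your proposed contrapositive fallback does not repair the argument either, since the obstruction is metric (two edges are too short), not combinatorial.
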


\begin{proof}
	Let $p\in \si\setminus \on$ be a vertex and $A\in \A(\si)$ be an apartment containing it. Note that $A$ is a Coxeter complex of type $A_r$ for $r\coloneqq n-2$.
	By the \cref{nohole}, there exists a vertex $q \in \lk(p,A)$ that is not contained in $\on$. Let $\rk(p)=x$ and $\rk(q)=y$, and assume without loss of generality that $x<y$.
	
	First we construct a path passing through $p$ and $q$ by considering opposite vertices in $\lk(p,A)$ and $\lk(q,A)$ by concatenating edges as depicted in \cref{fig:constr_geod}. 
	For two adjacent vertices $a,b$ we denote the edge connecting them by $e_{a,b}$. The concatenation of two edges $e$ and $e'$ is denoted by $e\circ e'$. 
	Then we show that the length of the constructed path indeed equals $\pi$ by using \cref{lem:edge_lengths}. 
	
	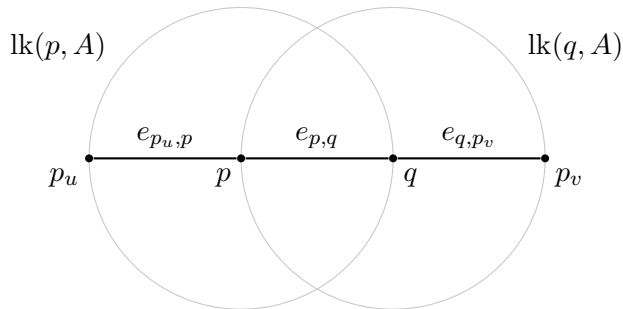
\begin{figure}%
		\begin{center}
			\begin{tikzpicture}
			\def\r{2} 
			
			\begin{scope}
			\draw[Lightgray] (0,0) circle (\r);
			\draw[Lightgray] (\r, 0) circle (\r);
			
			\node(p) at (0,0) [kpunkt] {};
			\node(q) at (\r,0) [kpunkt] {};
			\node(p_u) at (-\r,0) [kpunkt] {};
			\node (p_v) at (2*\r,0) [kpunkt] {};
			\node(e_pup) at (-\r/2,0) {};
			\node(e_pq) at (\r/2,0) {};
			\node(e_qpv) at (3*\r/2,0) {};
			
			
			\node[below left] at (p_u) {$p_u$};
			\node[below right] at (p_v) {$p_v$};
			\node[below left] at (p) {$p$};
			\node[below right] at (q) {$q$};
			
			\node[above] at (e_pup) {$e_{p_u,p}$};
			\node[above] at (e_pq) {$e_{p,q}$};
			\node[above] at (e_qpv) {$e_{q,p_v}$};

			\node[above left] at (-215:\r) {$\lk(p,A)$};
			
			\draw[thick] (p_u) -- (p)-- (q)--(p_v);
			\end{scope}
			
			\begin{scope}[xshift=\r cm]
			\node[above right] at (35:\r) {$\lk(q,A)$};
			\end{scope}
			
			\end{tikzpicture}
			\caption{The construction of a path passing through $p$ and $q$ by a concatenation of edges.}%
			\label{fig:constr_geod}%
		\end{center}
	\end{figure} 
	
	Let us consider the subcomplex $\lk(p,A)=\Sigma_1 \ast \Sigma_2\subseteq A$, which is isomorphic to the Coxeter complex of type $A_{x-1}\times A_{r-x}$. We compute the rank in $A$ of the unique vertex $p_u$ that is opposite to $q$ in $\lk(p,A)$. Since we assumed that $x<y$, the vertex $q$, and hence its opposite, is contained in $\Sigma_2$ by \cref{lem:opposition_join}. The type of $q$ in $\Sigma_2$ is $y-x$. Using \cref{lem:type_opposite_vertex}, we compute the type of the opposite vertex $p_u$ in $\Sigma_2$ as
	\[
	(r-x)+1-(y-x) = r+1-y = s-y
	\] 
	for $s\coloneqq r+1$.
	Consequently, the vertex $p_u$ has type $u \coloneqq s-y+x$ in the apartment $A$. \cref{fig:scheme_types} schematically shows the correspondences of the types of $p$ and $q$, and the corresponding opposite vertices, in the link subcomplexes $\lk(p,A)$ and $\lk(q,A)$. Note that the order of $x,y,u,v$ depends on the relative distance of $x$ and $y$ and may differ from the order depicted in \cref{fig:scheme_types}. 
	
	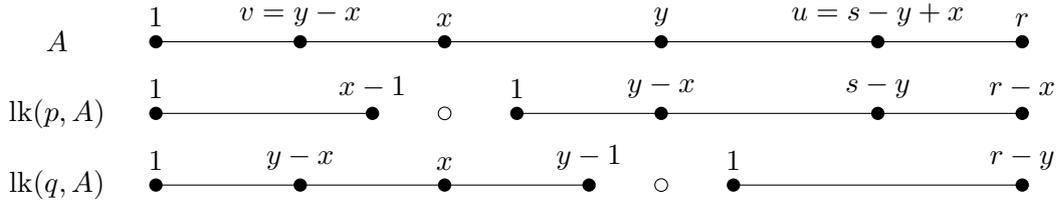
\begin{figure}%
		\begin{center}
			\begin{tikzpicture}[scale=0.95]
			\foreach \x in {1,...,13}
			\foreach \y/\l in {1/A,0/B,-1/C}
			\coordinate (\l\x) at (\x,\y);	
			
			\node[left of= A1, xshift = -0.3cm]{$A$};
			\draw (A1) -- (A13);
			\foreach \x in {1,3,5,8,11,13} \node[mpunkt] at (A\x) {};
			\foreach \x/\l in {1/1,3/v=y-x,5/x,8/y,11/u=s-y+x,13/r}
			\node [above=2pt] at (A\x) {$\l$};
			
			\node[left of= B1, xshift = -0.3cm]{$\lk(p,A)$};
			\draw (B1) -- (B4) (B6) -- (B13);
			\foreach \x in {1,4,6,8,11,13} \node[mpunkt] at (B\x) {};
			\draw (B5) circle (2.5pt);
			\foreach \x/\l in {1/1,4/x-1,6/1,8/y-x,11/s-y,13/r-x}
			\node [above=2pt] at (B\x) {$\l$};
			
			\node[left of= C1, xshift = -0.3cm]{$\lk(q,A)$};
			\draw (C1) -- (C7) (C9) -- (C13);
			\foreach \x in {1,3,5,7,9,13} \node[mpunkt] at (C\x) {};
			\draw (C8) circle (2.5pt);
			\foreach \x/\l in {1/1,3/y-x,5/x,7/y-1,9/1,13/r-y}
			\node [above=2pt] at (C\x) {$\l$};
			
			\end{tikzpicture}
			\caption{The schematic correspondences of vertex types in $A$, $\lk(p,A)$ and $\lk(q,A)$. }%
			\label{fig:scheme_types}%
		\end{center}
	\end{figure} 
	
	Similarly, we compute the type of the vertex $p_v$ that is opposite to $p$ in $\lk(q,A)$. Note that $\lk(q,A)=\Sigma_3\ast\Sigma_4 \subseteq A$ is isomorphic to the Coxeter complex of type $A_{y-1}\times A_{r-y}$. Since $x<y$, the vertex $p$ is contained in $\Sigma_3 \subseteq \lk(q,A)$. The type of $p$ in $\lk(q,A)$ equals its type in $A$, hence, by \cref{lem:type_opposite_vertex}, the type of the opposite vertex $p_v$ is 
	\[
	v \coloneqq (y-1)+1-x = y-x,
	\]
	which is the type of $p_v$ in $A$ as well.
	
	Now we show that  $e_{p_u,p} \circ e_{p,q}\circ e_{q,p_v}$ is a geodesic. By \cref{lem:geodesics_in_links} we know that both $e_{p_u,p} \circ e_{p,q}$ and $e_{p,q}\circ e_{q,p_v}$ are geodesics. Hence $e_{p_u,p} \circ e_{p,q}\circ e_{q,p_v}$ is locally distance minimizing and therefore a geodesic.

	We prove that lengths of the geodesic segments $e_{p_u,p}$, $e_{p,q}$ and $e_{q,p_v}$ add up to $\pi$ using \cref{lem:edge_lengths}. 
	Since $\rk(p)=x<u=s+x-y = \rk(p_u)$, the length of $e_{p_u,p}$ is
	\begin{align*}
	A \coloneqq \arccos\sqrt{a} \coloneqq l_{xu}=
	\arccos \sqrt{\frac{x(y-x)}{(s+x-y)(s-x)}},
	\end{align*}
	the length of $e_{p,q}$ is with $\rk(p)=x<y=\rk(q)$ equal to
	\begin{align*}
	B \coloneqq \arccos\sqrt{b}\coloneqq l_{xy}= 
	\arccos \sqrt{\frac{x(s-y)}{y(s-x)}}
	\end{align*}
	and finally the length of $e_{q,p_v}$ is
	\begin{align*}
	C \coloneqq\arccos \sqrt{c}\coloneqq l_{vy}=
	\arccos \sqrt{\frac{(y-x)(s-y)}{y(s+x-y)}},
	\end{align*}
	since $\rk(p_v)=v=y-x<y=\rk(q)$.
	We show that $\cos(A+B+C)=-1$, which implies the claim. Using the law of sines and cosines gives
	\begin{align*}
	\cos(A+B+C)&= \cos(A)\cos(B)\cos(C)-\sin(A)\sin(B)\cos(C)\\
	&-\sin(A)\cos(B)\sin(C)-\cos(A)\sin(B)\sin(C)\\
	&= \sqrt{abc} - \sqrt{c(1-a)(1-b)} - \sqrt{b(1-a)(1-c)} - \sqrt{a(1-b)(1-c)}.
	\end{align*}
	For the values $a=\frac{x(y-x)}{(s+x-y)(s-x)}$, $b=\frac{x(s-y)}{y(s-x)}$ and $c=\frac{(y-x)(s-y)}{y(s+x-y)}$ we have
	\begin{align*}
	\sqrt{abc} &= \frac{x(s-y)(y-x)}{y(s-x)(s+x-y)},\\[4pt]
	\sqrt{a(1-b)(1-c)} &= \frac{sx(y-x)}{y(s-x)(s+x-y)},\\[4pt]
	\sqrt{b(1-a)(1-c)}&= \frac{sx(s-y)}{y(s-x)(s+x-y)}\text{ and}\\[4pt]
	\sqrt{c(1-a)(1-b)}&= \frac{s(s-y)(y-x)}{y(s-x)(s+x-y)},\\
	\end{align*}
	where we simplified the terms on the left-hand side using the Mathematica-based online tool \cite{matheonline}. Note that the terms above are all positive, because each factor is positive. Comparing the terms on the right-hand side, we see that all fractions have the same denominator 
	\begin{align*}
	d&\coloneqq y(s-x)(s+x-y) = s^2y+sxy-sy^2-sxy-x^2y+xy^2\\
	&=s^2y-sy^2-x^2y+xy^2
	\end{align*}
	Hence we consider the nominators of the fractions of the right-hand sides of the equations above and get
	\begin{align*}
	&d\cdot\left(\sqrt{abc}-\sqrt{a(1-b)(1-c)}-\sqrt{b(1-a)(1-c)}-\sqrt{c(1-a)(1-b)}\right)\\
	&=x(s-y)(y-x)-sx(y-x)-sx(s-y)-s(s-y)(y-x)\\
	&=sxy -sx^2-xy^2+x^2y-sxy+sx^2-s^2x+sxy-sy^2+s^2x+sy^2-sxy\\
	&=x^2y-xy^2-s^2y+sy^2= -d,
	\end{align*}
	which is equivalent to
	\[
	\cos(A+B+C)=\sqrt{abc}-\sqrt{a(1-b)(1-c)}-\sqrt{b(1-a)(1-c)}-\sqrt{c(1-a)(1-b)}=-1.
	\]
\end{proof}

\subsection{Computing distances}

In this section we investigate which properties of pairs of chambers lead to the fact that their distances in $\on$ and $\si$ coincide. For this, the following simplification is helpful.

\begin{thm}\label{thm:dist_pn_si}
	The gallery distances in $\op$ and $\si$ coincide, that is 
	for all chambers $C,D\in \C(P_n)$ it holds that $\dip(C,D)=\dig(C,D)$.
\end{thm}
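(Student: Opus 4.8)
The plan is to show that whenever $\dip(C,D) > \dig(C,D)$, a minimal gallery in $\si$ connecting $C$ and $D$ can be ``pushed'' into the subcomplex $\op$ without increasing its length, contradicting minimality of galleries in $\si$ or producing a shorter gallery in $\op$ than assumed. Since both metrics are length metrics on $\op \subseteq \si$, we always have $\dip(C,D) \geq \dig(C,D)$, so it suffices to prove $\dip(C,D) \leq \dig(C,D)$. I would fix an apartment $A \in \A(\si)$ containing both $C$ and $D$ (this exists by axiom (B1)), so that by \cref{prop:convex_aptms} every minimal gallery in $\si$ connecting $C$ and $D$ lies inside $A$. Now $A$ is a Coxeter complex of type $A_{n-2}$, and I would work entirely inside $A$, translating the problem into the language of trees: by the discussion in \cref{sec:aptms_cham_nc_trees}, $A$ corresponds to a \emph{spanning tree} $T$ of the $n$-gon only if all rank $1$ vertices of $A$ are partition subspaces. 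The key obstacle is that $A$ need not be an apartment of $\op$; its rank $1$ vertices correspond to arbitrary $1$-dimensional subspaces of $\vs$, not necessarily partition vectors.

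The heart of the argument is therefore the following: given a minimal gallery $\gamma = (C = C_0, \ldots, C_m = D)$ in $A$, I want to show it can be chosen so that all its chambers lie in $\op$. First I would use that $C$ and $D$ themselves are chambers of $\op$, hence all their vertices are partition subspaces. I would argue by considering the vertices of $A$ that are \emph{not} partition subspaces and showing that a minimal gallery from $C$ to $D$ can avoid them. The cleanest route is via the \cref{pn-nohole}: if a vertex $p \in A$ is not a partition subspace, then its link $\lk(p,A)$ is not contained in $|\pn|$, so there is a vertex $q \in \lk(p,A)$ that is also not a partition subspace. Combined with \cref{thm:large_holes} (whose proof works verbatim for $\op$ in place of $\on$ using the $\op$ version of the Link Property), the bad vertex $p$ lies on a building geodesic of length $\pi$ missing $\op$ entirely. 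Translating back to galleries: a chamber of $A$ containing $p$ cannot lie on a minimal gallery between two chambers of $\op$, because such a chamber would force the gallery to traverse part of this ``large hole''. More carefully, I would show that for any chamber $E \in A$ with $p \in E$, either $\dig(C,E) + \dig(E,D) > \dig(C,D)$, so $E$ is not on a minimal gallery at all; this uses that $C,D \in \op$ while the geodesic of length $\pi$ through $p$ is disjoint from $\op$, together with convexity properties of galleries in the Coxeter complex.

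Alternatively — and this may be the more robust approach for small $n$ where the Link Property fails — I would give a direct combinatorial argument. Decompose the minimal gallery in $A$ into its type sequence $(s_{j_1}, \ldots, s_{j_m})$, corresponding to a reduced word for the element $w \in S_{n-1}$ with $D = wC$. Using the identification of $A$ with the Coxeter complex of $S_{n-1}$ via a labeling (which, because $C \in \op$, can be chosen compatibly with a spanning tree, cf. \cref{lem:labeled_trees_chambers} and \cref{lem:dist_aptm}), every chamber of the gallery is determined by a total order on the edge set of $T$. The point is that if $T$ is a \emph{spanning tree} of the $n$-gon — which it must be, since $C$'s vertices are partition subspaces forming a chain whose rank $1$ vertex is an edge and whose top vertex is $\ma = \vs$, forcing $n-1$ independent partition vectors, i.e. a spanning tree — then \emph{every} chamber of $A$ is a chamber of $\op$, because every vertex is a join of a subset of the edges of a spanning tree, hence a partition subspace by the characterization of partition subspaces via bases of partition vectors. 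So the entire apartment $A$ lies in $\op$, every minimal gallery in $\si$ between $C$ and $D$ lies in $A \subseteq \op$, and therefore $\dip(C,D) \leq \dig(C,D)$.

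The main obstacle I anticipate is verifying that claim rigorously: one must check that an apartment $A$ containing two chambers $C, D \in \op$ is necessarily an apartment of $\op$, i.e. that its associated frame consists of partition subspaces. This is not automatic from (B1) alone — one genuinely needs that $C$ (or $D$) being in $\op$ pins down enough structure of $A$. I would prove it by observing that the rank $1$ vertices of $A$ are the atoms of the Boolean sublattice $\langle V_C \cup V_D \rangle$ generated by the vertex sets, but more directly: a chamber $C$ of $\op$ has vertices $C_1 < C_2 < \cdots < C_{n-2}$ which, together with $\mi$ and $\ma$, form a maximal chain; the atoms of any Boolean lattice realizing $C$ as a maximal flag are forced once we know $C_1$ is an edge and $\ma/\mi$ — but in general there can be several frames refining a given chain. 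The correct statement, which I would extract from \cref{lem:aptms} and its analog, is that an apartment of $\si$ lies in $\op$ if and only if it is spanned by a spanning tree, and containment of a single base chamber suffices by \cref{prop:base_cham_union} only to cover $\op$, not to force $A \subseteq \op$. Hence I expect the honest proof goes through the Link Property / large-holes route of the second paragraph, reducing to: no chamber containing a non-partition vertex can sit on a minimal $\si$-gallery between two chambers of $\op$, which I would establish by the distance-additivity argument sketched above, handling the small cases $n \leq 4$ by direct inspection.
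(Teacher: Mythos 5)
Your proposal does not close the argument; both of the routes you sketch have a genuine gap, and the paper's actual proof avoids them by a more constructive device.

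The claim at the heart of your third paragraph --- that an apartment $A\in\A(\si)$ containing two chambers $C,D\in\C(\pn)$ is automatically spanned by a spanning tree, hence entirely contained in $\op$ --- is false, and the paper itself contains a counterexample: in \cref{exa:galleries} two chambers of $|\ncp_5|\subseteq|\p_5|$ are opposite in the building, and the frame of their unique common apartment contains vectors such as $(0,1,1,1)^\intercal$ and $(1,0,1,1)^\intercal$, which are not partition vectors. A chamber of $\op$ is a maximal \emph{flag} of partition subspaces, and a flag does not pin down the frame of an apartment through it; many frames refine the same flag. You half-recognize this in your last paragraph, but the fallback you then endorse --- the Link Property / large-holes route --- also does not work as stated. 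The step you would need is that no chamber containing a non-partition vertex lies on \emph{any} minimal $\si$-gallery between $C$ and $D$; this is both unproven in your sketch and too strong to be true in general, because $\convg(C,D)$ is closed under meets (\cref{lem:conv_hull_building}) and the intersection of two partition subspaces need not be a partition subspace (e.g.\ $\langle e_1+e_2,\,e_3+e_4\rangle\cap\langle e_1+e_3,\,e_2+e_4\rangle=\langle e_1+e_2+e_3+e_4\rangle$). So minimal galleries in $\si$ between chambers of $\op$ can pass through non-partition vertices; the theorem only asserts that \emph{some} minimal gallery stays inside $\op$, and an avoidance argument of the kind you propose cannot produce it.

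What the paper does instead is purely constructive. It fixes an apartment $A$ containing $C$ and $D$, writes $D$'s flag as $\langle v_{w(1)}\rangle\subseteq\langle v_{w(1)},v_{w(2)}\rangle\subseteq\cdots$ relative to the total order $v_1,\ldots,v_{r+1}$ giving $C$, and builds the minimal gallery realizing the inversion number of $w$ by successively shifting $w(1)$ to the front (\cref{exa:inv_number_gallery}). The only new vertex appearing at each step of this first phase is $C_{j-1}\vee D_1$, which manifestly has a basis of partition vectors (a partition basis of $C_{j-1}$ together with $v_{w(1)}$), hence lies in $\op$; after that phase every remaining chamber contains $D_1$, so one concludes by induction inside $\sta(D_1,A)\cong$ an apartment of type $A_{r-1}$. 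If you want to salvage your write-up, this explicit gallery is the missing ingredient: you must exhibit a length-minimizing gallery whose chambers you can certify one by one, rather than argue that bad chambers are globally avoidable.
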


\begin{proof}
	Let $C,D\in \C(\pn)$ be chambers and set $r\coloneqq n-2$. 
	We construct a minimal gallery in $\si$ connecting $C$ and $D$ and show that each of its chambers is contained in $\op$.
	
	First we choose an apartment $A\in \A(\si)$ with $C,D\in A$ such that if $\Set{v_1, v_2, \ldots, v_{r+1}}$ is the basis of $\F_2^{n-1}$ corresponding to $A$, then the chamber $C$ has vertices
	\[
	C_1=\Braket{v_1},\ C_2=\Braket{v_1,v_2}, \ldots,\ C_r=\Braket{v_1, \ldots, v_{r}}
	\]
	and the chamber $D$ has vertices 
	\[
	D_1=\Braket{v_{w(1)}},\ D_2=\Braket{v_{w(1)},v_{w(2)}}, \ldots, \  D_{r}=\Braket{v_{w(1)}, \ldots, v_{w(r)}}
	\]
	for a $w\in S_{n-1}$. 
	This means that $C$ corresponds to the total ordering $v_1, v_2, \ldots,v_{r}, v_{r+1}$ of basis vectors and $D$ corresponds to $v_{w(1)}, \ldots, v_{w(r)}, v_{w(r+1)}$. The isomorphism from $A$ to the Coxeter complex $\Sigma$ of $S_{n-1}$ maps $C$ to $\id$ and $D$ to $w$. For clarity, we identify $A$ with $\Sigma$ and use the one line notation to represent chambers of $A$. In one line notation, the chamber $C$ is given by $1 2 \ldots r+1$, which is exactly the sequence of indices of the corresponding vectors $v_i$. Similarly, the chamber $D$ corresponds to $w(1)\ldots w(r+1)$. 
	
	To construct the minimal gallery from $C$ to $D$ in $A$, we use the same method as in \cref{exa:inv_number_gallery}. We represent the minimal gallery by a list of one line notations that correspond to adjacent chambers $(C=C^0, C^1, \ldots, C^{\ell_S(w)}=D)$. Recall that two one line notations represent adjacent chambers if and only if they differ by a swap of adjacent entries. The chambers are obtained as follows. First, move $w(1)$ in $12\ldots r+1$ step-by-step to the left until it is on position one. If $w(1)=k$, then $C^{k-1}$ is the first chamber with $w(1)$ at position one. Then, move $w(2)$ in the one line notation of $C^{k-1}$ to the left and proceed until $w(2)$ is on position two. We repeat this procedure until we end up with $w(1)w(2)\ldots w(r+1)$, which represents $D$. The argumentation that this gallery is indeed minimal is the same as in \cref{exa:inv_number_gallery}.
	
	What is left to show is that each chamber $C^i$ is indeed in $\op$ for all $0\leq i \leq \ell_S(w)$. We show that $C^i$ is in $\C(\pn)$ for all $i<w(1)$, which are all chambers in whose one line notation the entry $w(1)$ is moved to the left. In all subsequent one line notations, the first position is occupied with $w(1)$. Recall that $D_1=\Braket{v_{w(1)}}$, which means that $C^i$ and $D$ have the same rank $1$ vertex $D_1$ for all $i \geq w(1)$. Hence, all those chambers are contained in $\sta(D_1,A)$. 
	Since the link of a rank $1$ vertex in $A$ is isomorphic to an apartment of type $A_{r-1}$, it follows by induction that $C^i$ is in $\C(\pn)$ for all $i\leq \ell_S(w)$, provided that $C^i$ is in $\C(\pn)$ for all $i<w(1)$. 
	
	In order to prove that the chambers  $C=C^0, C^1, \ldots, C^{k-1}$ for $k\coloneqq w(1)$
	are in $\op$, we have to show that its vertices are in $\op$.
	We denote the rank $j$ vertex of the chamber $C^i$ by $C_j^i$ for $0\leq i < k$ and $1\leq j \leq r$ and set $C_0^i \coloneqq \Set{0}$. It holds that
	\[
	C_j^1=
	\begin{cases}
	C_j & \text{ if }j<k-1,\\[4pt]
	C_{j-1} \vee D_1& \text{ if }j=k-1,\\[4pt]
	C_j& \text{ if }j>k-1\\
	\end{cases}
	\]
	and the rank $j$ vertex of the chamber $C^i$ is given by
	\[
	C_j^i=
	\begin{cases}
	C_j & \text{ if }j<k-i,\\[4pt]
	C_{j-1} \vee D_1& \text{ if }j=k-i,\\[4pt]
	C_j^{i-1}& \text{ if }j>k-i,\\
	\end{cases}
	\]
	since in step $i$ the only vertex that gets changed, compared to the vertices of the previous chamber, is the one of rank $k-i-1$. The left-shift of $w(1)$ corresponds to taking the join with $D_1$. Considering the sequence in \cref{exa:inv_number_gallery} helps to unravel the indices.
	
	We show that $C_{k-i}^i=C_{j-1}\vee D_1$ is in $\op$ for all $1\leq j =k-i< k$. Since $C_{k-i}^i$ is the unique vertex that is not contained in $C^{i-1}$, this implies that $C^1, \ldots, C^{k-1}\in \op$, as $C\in \op$ by assumption.
	
	For this, let $1 \leq j <k=w(1)$. The vertex $C_{j-1}\vee D_1$ has rank $j$, hence it is described by a vector space of dimension $j$. Since $C\in \op$, the subspace for $C_{j-1}$ has a basis consisting of partition vectors. The union of this basis with $\Set{v_{w(1)}}$, which is a basis of the subspace for $D_1$, is a basis of the subspace for $C_{j-1}\vee D_1$. Since $D_1 \in \op$ by assumption, $v_{w(1)}$ is a partition vector, which means that $C_{j-1} \vee D$ has a basis consisting of partition vectors and hence $C_{j-1}\vee D_1 \in \op$.
\end{proof}

Hence instead of computing the distances of chambers in the building $\si$, it is enough to compute it in the partition complex $\op$. Using this, we get a class of chambers of $\on$ satisfying $\din(C,D)=\dig(C,D)$.

\begin{thm}\label{thm:dist_pn_ncpn}
	Let $C,D\in \Cn$ be chambers that are contained in a common apartment $A\in \A(\pn)$. Then $\din(C,D)=\dip(C,D)$.
\end{thm}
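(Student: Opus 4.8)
The statement to be proven is: if $C,D\in\Cn$ lie in a common apartment $A\in\A(\pn)$ of the partition complex, then $\din(C,D)=\dip(C,D)$. Since $\on\subseteq\op$ and the metrics are length metrics, the inequality $\din(C,D)\geq\dip(C,D)$ is automatic; the content is the reverse inequality, namely that there exists a minimal gallery in $\op$ from $C$ to $D$ all of whose chambers are actually non-crossing partition chambers. The plan is to exploit the tree description of apartments: by \cref{lem:aptms} the apartment $A$ is given by a non-crossing spanning tree $T$ on the vertices of the $n$-gon, and by \cref{lem:labeled_trees_chambers} the chambers $C$ and $D$ of $A$ correspond to two labelings $\lambda_C,\lambda_D\colon\{1,\dots,n-1\}\to T$ of $T$. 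By \cref{lem:dist_aptm} and its corollary, the gallery distance of $C$ and $D$ inside $A$ equals $\ell_S(\lambda_C^{-1}\circ\lambda_D)$, and a minimal gallery inside $A$ is realized by the same combinatorial shuffling procedure used in the proof of \cref{thm:dist_pn_si} (cf. \cref{exa:inv_number_gallery}).

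\textbf{Key steps.} First I would reduce to the case where $C=\id$ and $D=w$ after identifying $A$ with the Coxeter complex $\Sigma$ of $S_{n-1}$ via the labeling $\lambda_C$, so that the rank $k$ vertices of $C$ are the joins of the first $k$ edges of $T$ (in the order given by $\lambda_C$), and similarly for $D$ with the order $\lambda_D$. Then I would run exactly the left-shift construction from the proof of \cref{thm:dist_pn_si}: produce a list of one-line notations $(C=C^0,C^1,\dots,C^{\ell_S(w)}=D)$ where consecutive chambers differ by a swap of adjacent entries, obtained by moving $w(1)$ into position $1$, then $w(2)$ into position $2$, and so on. The argument that this gallery has length $\ell_S(w)=\invn(w)=\dip(C,D)$ is identical to the one in \cref{thm:dist_pn_si} and \cref{rem:inv_number}. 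The crucial new point is to verify that \emph{every} intermediate chamber $C^i$ lies in $\on$, not merely in $\op$. Here the rank $j$ vertex $C^i_j$ is, for each $i$, either a vertex of the previous chamber, a vertex of $C$, or the join $C^{i-1}_{j-1}\vee D_1$ (in the first shifting stage) — more generally at each stage a vertex of the form $(\text{join of some initial edges of }T\text{ in one order})\vee(\text{an edge of }T)$. Every such join is a join of a subset of edges of the \emph{non-crossing} tree $T$, hence by \cref{cor:nc_forest} (a subset of the edges of a non-crossing tree is a non-crossing forest) it is a non-crossing partition, i.e. its span lies in $\ncpn$. Thus $C^i_j\in\on$ for all $i,j$, and since $\on$ is an induced subcomplex of $\si$ (so membership of a simplex is determined by its vertices), each $C^i$ is a chamber of $\on$. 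This gives a gallery in $\on$ of length $\dip(C,D)$, proving $\din(C,D)\leq\dip(C,D)$ and hence equality.

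\textbf{Main obstacle.} The only delicate point is making the bookkeeping of the shuffling gallery precise enough to see that \emph{every} vertex appearing along the way is a join of a sub-collection of the edges of $T$. In \cref{thm:dist_pn_si} the analogous claim only needed "join of partition vectors", which is automatic; here one must track that the set of edge-vectors involved in any intermediate vertex is always a subset of the fixed edge set of $T$ — equivalently, that the shuffling never creates a subspace spanned by sums of $T$-edges that is not itself spanned by $T$-edges. This follows because each elementary swap of adjacent entries in the one-line notation corresponds, on the level of vertices, to replacing $\langle v_{i_1},\dots,v_{i_{j-1}},v_{i_j}\rangle$ by $\langle v_{i_1},\dots,v_{i_{j-1}},v_{i_{j+1}}\rangle$ for basis vectors $v$ corresponding to edges of $T$ — so no new vectors are introduced, only a re-selection among the $n-1$ fixed $T$-edges. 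I would phrase this as a short lemma (or an explicit formula for $C^i_j$ mirroring the one in \cref{thm:dist_pn_si}) and then invoke \cref{cor:nc_forest} to conclude non-crossingness; the rest is routine. An alternative, perhaps cleaner, route avoiding the explicit gallery is: by \cref{thm:dist_pn_si} we already have $\dip(C,D)=\dig(C,D)$, so it suffices to show $\din(C,D)=\dig(C,D)$; one checks directly that the building-convex hull $\convg(C,D)$, which lies in the apartment $A$ of the building corresponding to $T$ and hence has all its vertices among joins of edges of $T$, is contained in $\on$, and that it is a chamber subcomplex realizing the distance — but the explicit-gallery approach is more self-contained.
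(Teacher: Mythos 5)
Your proposal rests on a misreading of the hypothesis that undermines the whole argument. The theorem assumes $C,D$ lie in a common apartment $A\in\A(\pn)$ of the \emph{partition} complex, not of the non-crossing partition complex. Apartments of $\op$ correspond to arbitrary spanning trees, which may be crossing; \cref{lem:aptms} (non-crossing spanning trees) only describes apartments of $\on$. If $A$ were an apartment of $\on$, the statement would be essentially trivial, since then every chamber of $A$ already lies in $\on$ and $\din(C,D)=\dig(C,D)$ is the distance inside $A$; the entire content of the theorem is the case where the common apartment exists only in $\op$. (Compare \cref{exa:galleries}: the apartment $A\in\A(\p_5)$ containing $\ncaa(C)$ and $\ncaa(D)$ in \cref{fig:exa_chambers_2} is given by a crossing tree.) Consequently your key step — "every intermediate vertex is a join of a subset of edges of the non-crossing tree $T$, hence non-crossing by \cref{cor:nc_forest}" — fails: a subset of the edges of a crossing spanning tree can be a crossing forest, and its join need not be a non-crossing partition. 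The same objection defeats your proposed alternative route via $\convg(C,D)\subseteq A$.

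The paper's proof has to work harder precisely at this point. It constructs the minimal gallery by the right-shift procedure of \cref{exa:minimal_gallery2} (fixing the top vertex $D_r$ first rather than the bottom vertex $D_1$), so that the new vertices created along the way have the form $C_{j+1}\wedge D_r$, and it then shows these \emph{meets} are non-crossing by a replacement argument: if two crossing edges $e_a+e_c$ and $e_b+e_d$ (with $a<b<c<d$) both lay in $C_{j+1}\wedge D_r$, then since $C_{j+1}$ and $D_r$ are non-crossing partition subspaces containing both, each must also contain $e_a+e_b$ and $e_c+e_d$, hence so does the meet, and the crossing basis pair can be exchanged for a non-crossing one. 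Nothing in your write-up supplies a substitute for this step, so the proof as proposed is incomplete.
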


\begin{proof}
	The strategy of the proof is similar to the one of \cref{thm:dist_pn_si}. First we choose a suitable apartment $A$ of $\op$ and then construct a minimal gallery in $A$ that is contained in $\on$.
	
	Let $C,D\in \Cn$ be chambers and set $r\coloneqq n-2$. Let $A\in \A(\pn)$ be an apartment corresponding to the basis $\Set{v_1, \ldots, v_r}$ of $\F_2^{n-1}$ such that the chamber $C$ corresponds to the total ordering $v_1 \ v_2\ldots v_r\ v_{r+1}$ and the chamber $D$ to $v_{w(1)}\ v_{w(2)} \ldots  v_{w(r)}\  v_{w(r+1)}$ for an element $w\in S_{n-1}$. We use the method from \cref{exa:minimal_gallery2} to construct a gallery $(C=C^0, C^1, \ldots, C^{\ell_S(w)}=D)$ from $C$ to $D$, which is minimal by construction. 
	
	First we express the one line notation for $\id$, which represents the chamber $C$, in terms of $w$. Then we move the entry $w(r+1)$ step-by-step to the right until it is on position $r+1$. In the next step, we move $w(r)$ to position $r$ and proceed until we obtained the order $w(1)\ldots w(r+1)$, which represents the chamber $D$.
	Note that the rank $r$ vertex of each chamber $C^i$ for $i \geq r+1-w(r+1)$ equals $D_r=\Braket{v_{w(1)}\ v_{w(2)} \ldots  v_{w(r)}}$, since the last entry in each one line notation is $w(r+1)$. Hence, the chambers $C^i$ are all contained in $\sta(D_r,A)$ for $i \geq r+1-w(r+1)$. So it is enough to show that the chambers $C^i$ have vertices in the non-crossing partitions $\on$. The statement for the remaining chambers follows with induction, since the link of a rank $r$ vertex in $A$ is isomorphic to the Coxeter complex of type $A_{r-1}$. 
	
	We set $k\coloneqq w(r+1)$ and denote the rank $j$ vertex of the chamber $C^i$ by $C_j^i$ for all $1\leq j \leq r$ and $0 \leq i \leq r+1-k$.
	Then we have that the vertices of the chamber adjacent to $C$ in the minimal gallery are given by
	\[
	C^1_j=
	\begin{cases}
	C_j&\text{ if }j>k,\\
	C_{j+1}\wedge D_r &\text{ if }j=k,\\
	C_j &\text{ if }j>k
	\end{cases}
	\]
	and in general, the vertices of $C^i$ are
	\[
	C^i_j=
	\begin{cases}
	C_j^{i-1}&\text{ if }j>k,\\
	C_{j+1}\wedge D_r &\text{ if }j=k,\\
	C_j &\text{ if }j>k.
	\end{cases}
	\]
	
	We prove that $C_{j+1} \wedge D_r$ is in $\on$, which shows that all vertices of $C^1, \ldots, C^{r+1-k}$ are in $\on$.
	For the contrary suppose that the partition corresponding to $C_{j+1}\wedge D_r$ is crossing. Since $C_{j+1} \wedge D_r\in A$, there exist two vectors $u_1, u_2\in \Set{v_1, \ldots, v_{r+1}}$ that correspond to crossing edges in $C_{j+1}\wedge D_r$. Without loss of generality let $u_1=e_a+e_c$ and $u_2=e_b+e_d$ for some $1\leq a<b<c<d\leq n$. Since $u_1$ and $u_2$ are contained in the vector space corresponding to the non-crossing partition $D_r$, the vectors $e_a+e_b$ and $e_c+e_d$ are in $D_r$. The same is true for $C_{j+1}$, hence $e_a+e_b$ and $e_c+e_d$ are contained in the vector space corresponding to $C_{j+1}\wedge D_r$. This contradicts that $C_{j+1}\wedge D_r$ is crossing, since in a basis for the vector space of $C_{j+1}\wedge D_r$, the crossing pair $u_1,u_2$  can be replaced by the non-crossing pair $e_a+e_b,e_c+e_d$. Consequently, we get that $C_{j+1}\wedge D_r\in \on$ for all $1\leq j \leq r$.
\end{proof}

\begin{cor}
	Let $C,D\in \Cn$ be chambers that are contained in a common apartment $A\in \A(\pn)$. Then $\din(C,D)=\dig(C,D)$.
\end{cor}

Recall from \cref{lem:anti-auto} that the map $\ncaa\colon \nc(S_n) \to \nc(S_n)$ given by $\ncaa(w)=w\inv\cox$ is an anti-automorphism of the lattice $\nc(S_n)$. Hence $\ncaa$ induces a simplicial automorphism $\iota \circ \ncaa\circ \iota\inv$, which we denote by $\ncaa\colon \on \to \on$ as well. A detailed description of the pictorial anti-automorphism $\ncaa$ is given in \cite[Chap. 4.3]{armstr}. 

\begin{cor}\label{cor:dist_nc_si}
	Let $C,D\in \Cn$ be chambers such that their images $\ncaa(C)$ and $\ncaa(D)$ are contained in a common apartment $A\in \A(\pn)$. Then $\din(C,D)=\dig(C,D)$. 
\end{cor}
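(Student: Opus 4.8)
The plan is to transfer the statement to the already established case in which the two chambers \emph{themselves} (rather than their $\ncaa$-images) lie in a common apartment of $\op$, by exhibiting $\ncaa$ as a distance-preserving symmetry of both $\on$ and the ambient building $\si$. First I would check that $\ncaa$ is a simplicial automorphism of $\on$: by \cref{lem:anti-auto} the assignment $w\mapsto w\inv\cox$ is a lattice anti-automorphism of $\nc(S_n)$, so it interchanges the extremal elements $\id$ and $\cox$ and maps any $k$-chain of $\nc(S_n)\setminus\Set{\id,\cox}$ to a $k$-chain; transported along the isomorphism $\iota$ this yields a simplicial automorphism $\ncaa\colon\on\to\on$. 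Being a simplicial automorphism it maps $\Cn$ bijectively to itself and preserves adjacency of chambers, hence sends minimal galleries to minimal galleries; the same applied to the (also simplicial) inverse $\ncaa\inv$ gives $\din(\ncaa(C),\ncaa(D))=\din(C,D)$ for all chambers $C,D\in\Cn$.

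Next I would promote $\ncaa$ to a symmetry of $\si$. Since $\ncaa$ lies in the dihedral group $\DD$ and $S_n$ is of classical type $A$, \cref{thm:anti-autos_extend} (via the bilinear form of \cref{prop:typeA_anti-auto_extends} and the criterion \cref{thm:criterion_extension_anti-auto}) produces a lattice anti-automorphism $\aab$ of $\lam(\vs)$ satisfying $\aab\circ\emb=\emb\circ\ncaa$. As $\aab$ reverses the order of $\lam(\vs)$ and interchanges $\mi$ and $\ma$, it induces a simplicial automorphism of $\si=|\lam(\vs)|$ which, under \cref{con:subcompl}, restricts to $\ncaa$ on the subcomplex $\on$. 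A simplicial automorphism of $\si$ preserves adjacency of chambers, so $\dig(\ncaa(C),\ncaa(D))=\dig(C,D)$ for all $C,D\in\Cn$.

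Finally I would assemble the pieces. Given $C,D\in\Cn$ such that $\ncaa(C),\ncaa(D)$ lie in a common apartment $A\in\A(\pn)$, the previous corollary (the case in which the two chambers themselves lie in a common apartment of $\op$) applies to $\ncaa(C),\ncaa(D)\in\Cn$ and yields $\din(\ncaa(C),\ncaa(D))=\dig(\ncaa(C),\ncaa(D))$; combining this with the two invariances above gives
\[
\din(C,D)=\din(\ncaa(C),\ncaa(D))=\dig(\ncaa(C),\ncaa(D))=\dig(C,D).
\]
The one genuinely non-formal ingredient is that $\ncaa$, although defined only on $\on$, nonetheless preserves the building distance $\dig$, which is precisely why the extension result \cref{thm:anti-autos_extend} is invoked; I expect this to be the crux of the argument, the remainder being bookkeeping. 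The same chain of equalities also works verbatim if $\din$ and $\dig$ denote the spherical metric distances, because the induced automorphism of $\si$ is then an isometry (the non-trivial diagram automorphism of type $A_{n-2}$ is realized by a reflection of the round sphere) and restricts to an isometry of $\on$.
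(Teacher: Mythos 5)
Your proposal is correct and follows exactly the route the paper intends: apply the preceding corollary to $\ncaa(C)$ and $\ncaa(D)$, note that $\ncaa$ is a simplicial automorphism of $\on$ (so it preserves $\din$), and use \cref{thm:anti-autos_extend} to extend it to a simplicial automorphism of $\si$ (so it preserves $\dig$). The paper in fact leaves this corollary without a formal proof, only remarking afterwards that "anti-automorphisms map as simplicial isomorphisms galleries to galleries"; your write-up supplies precisely the bookkeeping that remark compresses.
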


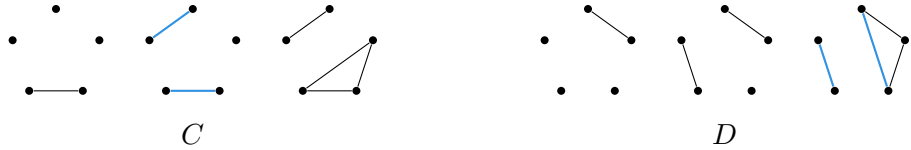
\begin{figure}
	\begin{center}
		\begin{tikzpicture}
		\kfnfeck \draw (p2) -- (p3);
		\begin{scope}[xshift = 1.8cm]
		\kfnfeck \draw[Blue, thick] (p2) -- (p3) (p4) -- (p5);
		\node[below = 3mm] at ($(p2)!0.5!(p3)$) {$C$};
		\end{scope}
		\begin{scope}[xshift = 3.6cm]
		\kfnfeck \draw (p4) -- (p5) (p1) -- (p2) -- (p3) -- (p1);
		\end{scope}
		
		\begin{scope}[xshift = 7cm]
		\kfnfeck \draw (p1) -- (p5);
		\begin{scope}[xshift = 1.8cm]
		\kfnfeck \draw (p1) -- (p5) (p4) -- (p3);
		\node[below = 3mm] at ($(p2)!0.5!(p3)$) {$D$};
		\end{scope}
		\begin{scope}[xshift = 3.6cm]
		\kfnfeck \draw[Blue, thick] (p4) -- (p3) (p2) -- (p5); \draw (p5) -- (p1) -- (p2);
		\end{scope}
		\end{scope}
		\end{tikzpicture}
	\end{center}
	\caption{The vertices of the chamber $C$ from \cref{exa:galleries} are depicted on the left-hand side and those of the chamber $D$ on the right-hand side. The blue edges prevent the chambers from being in a common $\on$ apartment.}
	\label{fig:exa_chambers_1}
\end{figure}

\begin{exa}\label{exa:galleries}
	Let us consider the chambers $C$ and $D$ in $|\ncp_5|$ corresponding to the reduced decompositions $(2\,\;3)(4\,\;5)(1\,\;5)(1\,\;3)$ and $(1\,\;5)(3\,\;4)(1\,\;2)(2\,\;4)$ of the Coxeter element, respectively. Their vertices are depicted in \cref{fig:exa_chambers_1}. Note that they are opposite chambers in the building and hence they are contained in a unique apartment, which is given by the basis
	\[
	\Set{
		\begin{pmatrix}
		0\\1\\1\\0
		\end{pmatrix},
		\begin{pmatrix}
		0\\1\\1\\1
		\end{pmatrix},
		\begin{pmatrix}
		1\\0\\1\\1
		\end{pmatrix},
		\begin{pmatrix}
		1\\0\\0\\0
		\end{pmatrix}
	}
	\]
	of $\F_2^4$. Note that the corresponding apartment is \emph{not} contained in $\op$, as not every element of the basis is a partition vector.
	The images of $C$ and $D$ under $\ncaa$ are opposite in $\si$ as well and given by the reduced decompositions $(3\,\;5)(1\,\;5)(3\,\;4)(1\,\;2)$ and $(2\,\;4)(1\,\;4)(2\,\;3)(4\,\;5)$, respectively. Their vertices are depicted in \cref{fig:exa_chambers_2}. Since $\ncaa(C)$ and $\ncaa(D)$ are both base chambers, they are contained in a common apartment $A$ of $\pn$. It is uniquely determined and shown in \cref{fig:exa_chambers_2}. 
	It holds that $\din(\ncaa(C),\ncaa(D))=\dig(\ncaa(C),\ncaa(D))$ by \cref{cor:dist_nc_si}. Since anti-automorphisms map as simplicial isomorphism galleries to galleries, it follows that $\din(C,D)=\dig(C,D)$. 
\end{exa}

\begin{figure}
	\begin{center}
		\begin{tikzpicture}
		\kfnfeck \draw[Orange, thick] (p5) -- (p3);
		\begin{scope}[xshift =1.8cm]
		\kfnfeck \draw (p1) -- (p3) -- (p5) -- (p1);
		\node[below = 3mm] at ($(p2)!0.5!(p3)$) {$\phi(C)$};
		\end{scope}
		\begin{scope}[xshift = 3.6cm]
		\kfnfeck \draw (p1) -- (p5) -- (p4) -- (p3)-- (p1);
		\end{scope}
		
		\begin{scope}[xshift = 6cm]
		\kfnfeck \draw[Orange, thick] (p2) -- (p4);
		\begin{scope}[xshift = 1.8cm]
		\kfnfeck \draw (p1) -- (p2) -- (p4) -- (p1);
		\node[below = 3mm] at ($(p2)!0.5!(p3)$) {$\phi(D)$};
		\end{scope}
		\begin{scope}[xshift = 3.6cm]
		\kfnfeck \draw (p4) -- (p3) -- (p2) -- (p2) -- (p1) -- (p4);
		\end{scope}
		\end{scope}
		
		\begin{scope}[xshift = 12cm]
		\kfnfeck \draw (p5) -- (p3) -- (p1) -- (p4) -- (p2);
		\node[below = 3mm] at ($(p2)!0.5!(p3)$) {$A$};
		\end{scope}
		\end{tikzpicture}
	\end{center}
	\caption{On the left we see the vertices of the chamber $\ncaa(C)$ from \cref{exa:galleries} and on the right those of the camber $\ncaa(D)$. The orange edges prevent the chambers from being in a common $\on$ apartment.}
	\label{fig:exa_chambers_2}
\end{figure}
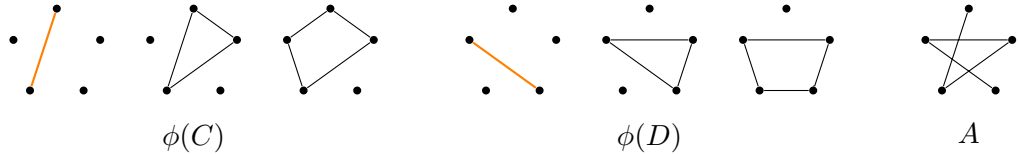

\begin{rem}\label{rem:distances}
	When we look at the chambers $C$ and $D$, and their images $\ncaa(C)$ and $\ncaa(D)$, we see that whenever vertices of $\ncaa(C)$ and $\ncaa(D)$ have crossing blocks, the corresponding vertices of $C$ and $D$ form a \enquote{box}. These are two different kinds of obstacles. Whenever two chambers of $\on$ only have \enquote{cross obstacles}, that is they are contained in a common $\op$ apartment, but not in a $\on$ apartment, then their distance measured in $\on$ coincides with the one measured in  the building $\si$. Dually, if two chambers only have \enquote{box obstacles}, then the two different ways of distance measure agree as well. The other way around, this means that whenever two chambers of $\on$ have a \emph{greater} distance in $\on$ than in $\si$, that they have to have both box and cross obstacles.  
\end{rem}

\begin{oq}
	How can we compute the distance of two chambers in $|\ncpn|$? For instance, does the number of cross and box obstacles give rise to the distance?
\end{oq}

\section{The different-distance case}\label{sec:dif_dis}

This section is devoted to the study of chambers that have greater distance in the non-crossing partition complex than in the building. 
There are two different cases to consider. Either, the chambers are already opposite in the building, or not. We illustrate our procedure, which we believe to work in every dimension, with examples for small $n$. These illustrations are no proofs, but rather ideas and sketches. 

\begin{figure}
	\begin{center}
		\begin{tikzpicture}
		\kfnfeck \draw[Orange] (p1) -- (p3);
		\begin{scope}[xshift = 2cm]
		\kfnfeck \draw[Blue, thick] (p1) -- (p3) (p4) -- (p5);
		\end{scope}
		\begin{scope}[xshift = 4cm]
		\kfnfeck \draw (p1) -- (p2) -- (p3) -- (p1) (p5) -- (p4);
		\end{scope}
		
		\begin{scope}[xshift = 7cm]
		\kfnfeck \draw[Orange, thick] (p2) -- (p4);
		\begin{scope}[xshift = 2cm]
		\kfnfeck \draw (p1) -- (p5) (p2) -- (p4);
		\end{scope}
		\begin{scope}[xshift = 4cm]
		\kfnfeck \draw[Blue, thick] (p1) -- (p5) (p3) -- (p4); \draw (p3) -- (p2) -- (p4);
		\end{scope}
		\end{scope}
		\end{tikzpicture}
	\end{center}
	\caption{The shown chambers are opposite in $\si$ and hence have distance equal to $6$ in $\si$, whereas their distance in $|\ncp_5|$ is $7$. The edges highlighted in orange from a cross obstacle and the blue edges a box obstacle.}
	\label{fig:chambers_far_away}
\end{figure}
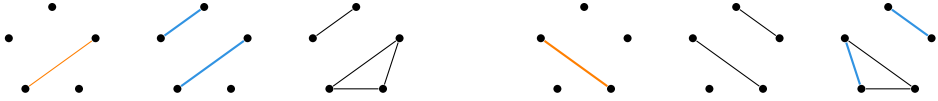

\subsubsection{Opposite in the building}

Let $C$ and $D$ be the chambers of $|\ncp_5|$ that are given by the reduced decompositions $(1\,\;3)(4\,\;5)(1\,\;2)(3\,\;5)$ and $(2\,\;4)(1\,\;5)(2\,\;3)(1\,\;4)$, respectively. Their pictorial representations are shown in \cref{fig:chambers_far_away}.
They are an example for two chambers of $|\ncp_5|$ that are opposite in $\si$, that is their distance equals $6$, but their distance in $|\ncp_5|$ equals $7$. 
The unique apartment $A\in \A(\si)$ containing $C$ and $D$ in $\si$ is given by the basis
\[
\Set{
	\begin{pmatrix}
	0\\1\\0\\1
	\end{pmatrix},
	\begin{pmatrix}
	1\\0\\1\\1
	\end{pmatrix},
	\begin{pmatrix}
	1\\1\\0\\1
	\end{pmatrix},
	\begin{pmatrix}
	0\\1\\0\\1
	\end{pmatrix}
}
\]
of $\F_2^4$. The only two chambers of $|\ncp_5|$ that are contained in $A$ are $C$ and $D$ themselves. This means that every gallery in $|\ncp_5|$ connecting $C$ and $D$ immediately leaves the apartment $A$.

Let us take a closer look at the convex hull $\convn(C,D)$. A schematic picture of it is shown in \cref{fig:conv_hull}. We see that $\convn(C,D)$ is not simply connected. It rather is a union of three strands connecting $C$ and $D$. Note that the three chambers adjacent to $D$ in $\convn(C,D)$, call them $D_1$, $D_2$ and $D_3$, all have distance $6$ from $C$ and they are opposite $C$ in the building. Hence we have

\[
\convn(C,D)=\convn(C,D_1) \cup \convn(C,D_2) \cup \convn(C,D_3) \cup D.
\]

In order to prove that $\convn(C,D)$ is $\pi$-uniquely geodesic, one has to show the following.

\begin{conj}
	If $x\in C$ and $y\in D$ are points and $\gamma_i \subseteq \convn(C,D_i)\cup D$ as well as  $\gamma_j \subseteq \convn(C,D_j)\cup D$ are geodesics joining $x$ and $y$ for $i \neq j$, then $\ell(\gamma_1)+\ell(\gamma_2)\geq \pi$.
\end{conj}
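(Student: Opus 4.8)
The statement is exactly the geometric input still missing to finish the equal--distance opposite case via the \cref{unique}: together with the fact that a geodesic staying inside a single strand $\convn(C,D_i)\cup D$ is already unique (each strand being, or being expected to be, $\cato$), it is what rules out distinct geodesics between nearby points of $\on$ and thus that $\convn(C,D)$ is $\cato$. So the plan is first to reduce, then to develop each strand onto a sphere, and finally to estimate. For the reduction: since $\gamma_1,\gamma_2$ join the same pair, $\ell(\gamma_1)=\ell(\gamma_2)=:L=\din(x,y)$, so if $L\ge\pi/2$ there is nothing to prove; assume $L<\pi/2$ and $\gamma_1\ne\gamma_2$ (if $\gamma_1=\gamma_2$ the common geodesic lies in $\convn(C,D_i)\cap\convn(C,D_j)$, which is a chamber subcomplex of a single apartment, hence $\cato$, so uniqueness there is automatic). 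Because $C$ and $D_i$ are opposite in $\si$, the hull $\convg(C,D_i)$ is a full apartment $A_i\cong S^{n-3}$ of $\si$ by \cref{prop:convex_aptms} and \cref{lem:conv_hull_building}; and since $D_i$ lies on a minimal $\on$--gallery from $C$ to $D$ and is adjacent to $D$, one expects $\din(C,D_i)=\dig(C,D_i)$, so that $\convn(C,D_i)\subseteq A_i$ and the strand is the chamber subcomplex $\convn(C,D_i)\cup D$ of $\si$.

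Next I would use the combinatorial retraction $\rho_i\coloneqq\rho_{A_i,C}\colon\si\to A_i$ centred at $C$: it fixes $C$ pointwise, is $1$--Lipschitz, and — since a minimal gallery from $C$ to $D$ ends with the step $D_i\to D$ across the panel $P_i\coloneqq D\cap D_i$ — it maps the simplex $D$ isometrically onto the chamber $D_i'$ of $A_i$ sharing $P_i$ with $D_i$; in particular $\rho_i(y)=:\tilde y_i\in D_i'$. Writing $\gamma_i=\gamma_i'\ast\sigma_i$ with $\gamma_i'$ the portion inside $\convn(C,D_i)\subseteq A_i$ and $\sigma_i$ the portion inside $D$, one checks that $\rho_i(\gamma_i)$ is locally geodesic at the transition point $p_i\in P_i$ (the folding of the link of $p_i$ carries the straight continuation of $\gamma_i$ into $D$ to the straight continuation into $D_i'$), hence $\rho_i(\gamma_i)$ is a great--circle arc of $A_i$ of length $L<\pi$, i.e. the unique geodesic of $A_i$ from $x$ to $\tilde y_i$, so
\[
\ell(\gamma_i)=\di_{A_i}(x,\tilde y_i).
\]
Now the type--$A$ observation: on the Coxeter complex $A_i$ the antipodal map $z\mapsto-z$ coincides with the opposition map on chambers (because $-w_0$ realises the diagram automorphism in type $A$), so the antipode $x^-$ of $x$ lies in the chamber $D_i$ opposite $C$; as $\tilde y_i\in D_i'$ lies in a chamber adjacent to $D_i$, we get $\ell(\gamma_i)=\pi-\di_{A_i}(x^-,\tilde y_i)\ge\pi-\diam(D_i\cup D_i')$, and hence $\ell(\gamma_1)+\ell(\gamma_2)\ge 2\pi-\diam(D_1\cup D_1')-\diam(D_2\cup D_2')$.

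The main obstacle is that this crude bound falls short: $\diam(D_i\cup D_i')$ exceeds $\pi/2$ already for small $n$, so one must use that $\tilde y_1,\tilde y_2$ are not arbitrary points of $D_1',D_2'$ — they are the developed images of the \emph{single} point $y\in D$ along the two strands, and $\gamma_1,\gamma_2$ are developments of geodesics through the \emph{same} $x$. I would try to close the gap in one of two ways. First, by retracting $\gamma_2$ onto $A_1$ as well, one gets a second path from $x$ to $\tilde y_1$ of length $L$, forcing $\rho_1(\gamma_2)=\rho_1(\gamma_1)$; this rigidity, fed into a spherical--trigonometry estimate using the explicit chamber geometry, might already yield the bound. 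Second, and I think more promisingly, by invoking the Hole Property: for $n\ge 5$ the apartment $A\cong S^{n-3}$ is simply connected, so the loop $\gamma_1\ast\bar\gamma_2\subseteq\on$ bounds a disk in the $\cato$ building $\si$; that disk must meet a vertex $v\in\si\setminus\on$ (by the Link Property \cref{nohole}, the two strands cannot fill a neighbourhood of such a $v$), and by \cref{thm:large_holes} this $v$ lies on a building geodesic of length $\pi$ disjoint from $\on$; a linking/comparison argument between this geodesic and the loop, carried out with the edge--length formula \cref{lem:edge_lengths} in the spirit of the proof of \cref{thm:large_holes}, should force $\ell(\gamma_1\ast\bar\gamma_2)\ge\pi$. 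A secondary obstacle is that $\convn(C,D_i)$ need not in general be a full apartment — this requires $\din(C,D_i)=\binom{n-1}{2}$, which is not automatic — so for the general statement the retraction step must be replaced by a more careful analysis of the (possibly itself branched) strand $\convn(C,D_i)$, presumably an inductive one using \cref{thm:dist_pn_ncpn}.
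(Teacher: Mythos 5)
First, a point of comparison: the paper offers no proof of this statement. It is stated explicitly as a conjecture, and the only evidence the paper gives is a single computation in the $|\ncp_5|$ example, where for $x\in C$ and $y\in D$ the respective rank $1$ vertices one finds, via \cref{lem:edge_lengths}, that the two relevant path lengths sum to $2\pi$. So there is no proof in the paper to measure your argument against, and any complete argument you produce would go strictly beyond the paper.

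As a proof, your proposal has a genuine gap, which you yourself identify: the chain of estimates ends at $\ell(\gamma_1)+\ell(\gamma_2)\ge 2\pi-\diam(D_1\cup D_1')-\diam(D_2\cup D_2')$, which is weaker than the claim because $\diam(D_i\cup D_i')$ exceeds $\pi/2$ already in low rank, and neither of your two proposed repairs is carried out. The first (forcing $\rho_1(\gamma_2)=\rho_1(\gamma_1)$ and extracting rigidity from that) is only asserted to possibly suffice; the second presupposes a linking or comparison argument between the loop $\gamma_1\ast\bar{\gamma}_2$ and the length-$\pi$ geodesic of \cref{thm:large_holes} that is not supplied and does not follow formally from that theorem, which produces a geodesic missing $\on$ but gives no lower bound on the length of loops of $\on$ around it. There are also two unproved hypotheses earlier in the reduction: that $\ell(\gamma_1)=\ell(\gamma_2)$, which holds only if both paths are geodesics of a common ambient space such as $\convn(C,D)$ rather than merely of their respective strands (the conjecture as stated is ambiguous on this point), and that $\din(C,D_i)=\dig(C,D_i)$, without which $\convn(C,D_i)$ need not lie in the apartment $A_i$ and the retraction step collapses; you flag the latter only as a secondary obstacle. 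What your sketch does contribute is a clean mechanism, namely the $1$-Lipschitz retraction $\rho_{A_i,C}$ combined with antipody on the round apartment, that converts the problem into a concrete spherical estimate; that reduction looks sound as far as it goes, but the statement remains open.
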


For instance, let $x\in C$ and $y\in D$ be the respective rank $1$ vertices. Let $D_i$ be the chamber whose common panel with $D$ has color $s_i$. Consider $\gamma_2$ and $\gamma_3$ connecting $x$ and $y$ as in the above conjecture. Then $\gamma_2$ is a concatenation of two edges that both connect a rank $1$ vertex with a rank $3$ vertex. This can be read off \cref{fig:conv_hull}. Similarly, $\gamma_3$ is a concatenation of four edges that all connect a rank $1$ and a rank $2$ vertex. Using \cref{lem:edge_lengths}, we compute that $\ell(\gamma_2)+\ell(\gamma_3)=2\pi$.  

Note that although $x$ and $y$ are contained in disjoint chambers that have distance $7$ in $|\ncp_5|$, their distance in $|\ncp_5|$ equals $\ell(\gamma_2) \approx 0.8 \pi$. Consequently, there exist points   $x'\in C$ and $y'\in D$ in the respective \emph{interiors} such that $\din(x',y')<\pi$.

\begin{figure}
	\begin{center}
		\begin{tikzpicture}
		\def\cola{Orange}
		\def\colb{Green}
		\def\colc{Blue}
		\def\k{1.5}
		\def\h{1}
		\foreach \x/\y in {2/1,8/1, 5/3, 4/5, 5/7} 
		\node(p\x\y) [mpunkt,\cola] at (\x*\k,\y*\h)  {};
		\foreach \x/\y in {3/2,6/2, 4/4} 
		\node(p\x\y) [mpunkt,\cola] at ($( \x *\k + \k/2,\y*\h)$)  {};
		\foreach \x/\y in {3/1,6/1, 3/3,6/3} 
		\node(p\x\y)[mpunkt,\colb] at (\x*\k,\y*\h)  {};
		\foreach \x/\y in {1/2,4/2,7/2, 4/4, 4/6, 4/8} 
		\node(p\x\y)[mpunkt,\colb] at ($( \x *\k + \k/2,\y*\h)$)  {};
		\foreach \x/\y in {1/1,7/1, 4/3, 5/5, 4/7} 
		\node(p\x\y)[mpunkt,\colc] at (\x*\k,\y*\h)  {};
		\foreach \x/\y in {2/2,5/2} 
		\node(p\x\y)[mpunkt,\colc] at ($( \x  *\k + \k/2,\y*\h)$)  {};
		\draw[\cola,thick] (p11) -- (p12) -- (p22)
		(p31) -- (p22) -- (p33) -- (p43) -- (p42) -- (p52) -- (p61) -- (p71) -- (p72)
		(p43) -- (p44) -- (p55) -- (p46) -- (p47) -- (p48)
		(p52) -- (p63);
		\draw[\colb,thick] (p11) -- (p21) -- (p22) --(p32) -- (p43) -- (p53) -- (p52) -- (p62) -- (p71) -- (p81)
		(p53) -- (p55) -- (p57) -- (p47) -- (p45) -- (p43)
		(p55) -- (p45);
		\draw[\colc,thick] (p12) -- (p21) -- (p31) -- (p32) -- (p33)
		(p32) -- (p42) -- (p53) -- (p63) -- (p62) -- (p72) -- (p81)
		(p53) -- (p44) -- (p45) -- (p46) -- (p57) -- (p48)
		(p61) -- (p62);
		\node at ($(p11)!0.5!(p21) + (0,\h/3)$) {$D$};
		\node at ($(p71)!0.5!(p81) + (0,\h/3)$) {$D$};
		\node at ($(p12)!0.5!(p22) - (0,\h/3)$) {$D_1$};
		\node at ($(p62)!0.5!(p72) - (0,\h/3)$) {$D_3$};
		\node at ($(p43)!0.5!(p53) - (0,\h/3)$) {$C$};
		\node at ($(p47)!0.5!(p57) + (0,\h/3)$) {$D$};
		\node at ($(p47)!0.5!(p57) - (0,\h/3)$) {$D_2$};
		\end{tikzpicture}
	\end{center}
	\caption{The schematic picture of $\convn(C,D)$. Blue vertices correspond to vertices of rank $1$, green vertices to rank $2$, and orange vertices to rank $3$.}
	\label{fig:conv_hull}
\end{figure}
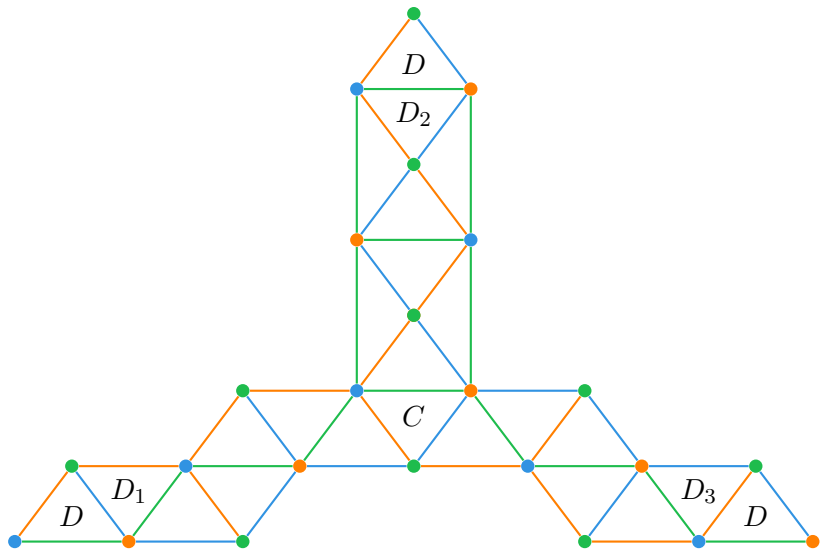

\subsubsection{Not opposite in the building}

The last case we consider is that the two chambers $C,D \in \Cn$ are disjoint, not opposite in the building and satisfy $\din(C,D)> \dig(C,D)$. In the following, we compute the convex hull in an example in $|\ncp_6|$.

Let us consider the chambers $C$ and $D$ of $|\ncp_6|$ corresponding to the reduced decompositions $(1\,\;2)(3\,\;6)(4\,\;5)(2\,\;6)(3\,\;5)$ and  $(2\,\;4)(1\,\;4)(5\,\;6)(2\,\;3)(4\,\;6)$, respectively. The pictorial representations of their vertices are shown in \cref{fig:chambers_C_D}. 
They are clearly disjoint and not opposite in the building, since the rank $1$ partition of $C$ is less than the rank $4$ partition of $D$ in $\ncp_6$ in the absolute order. Their distance in the building equals $\dig(C,D)=7$, which can be seen as follows. The chamber $D$ is adjacent to a chamber $A$ that is contained in the star of the partition $p$ corresponding to $(1\,\;2)\in S_6$. Since both $A$ and $C$ are contained in $\sta(p,\si)$, their distance in $\si$ equals their distance in $\sta(p,\si)$. Hence $\dig(C,A)=6$, because they are opposite in $\sta(p,\si)$. So we get that $\dig(C,D)=7$, since $D$ is not contained in $\sta(p,\si)$. 

\begin{rem}
	The chambers $C$ and $A$ correspond, after a suitable identification of $\sta(p,|\ncp_6|)$ with $|\ncp_5|$, to the chambers $C$ and $D$ from the previous section.
\end{rem}

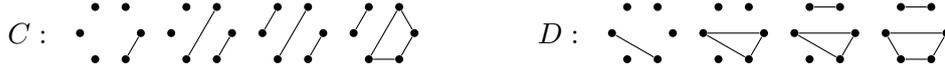
\begin{figure}
	\begin{center}
		\begin{tikzpicture}
		\sechseck \draw (p1) -- (p2);
		\node[left = 3mm] at (p4) {$C:$};
		\begin{scope}[xshift = 1.2cm]
		\sechseck \draw (p1) -- (p2) (p6) -- (p3);
		\end{scope}
		\begin{scope}[xshift = 2.4cm]
		\sechseck \draw (p1) -- (p2) (p6) -- (p3) (p5) -- (p4);
		\end{scope}
		\begin{scope}[xshift = 3.6cm]
		\sechseck \draw (p1) -- (p2) -- (p3) -- (p6) -- (p1) (p5) -- (p4);
		\end{scope}
		
		\begin{scope}[xshift = 7cm]
		\sechseck \draw (p2) -- (p4);
		\node[left = 3mm] at (p4) {$D:$};
		\begin{scope}[xshift = 1.2cm]
		\sechseck \draw (p1) -- (p4) -- (p2)--(p1);
		\end{scope}
		\begin{scope}[xshift = 2.4cm]
		\sechseck \draw (p6) -- (p5) (p1) -- (p4) -- (p2) -- (p1);
		\end{scope}
		\begin{scope}[xshift = 3.6cm]
		\sechseck \draw (p1) -- (p2) -- (p3) -- (p4) -- (p1) (p5) -- (p6);
		\end{scope}		
		\end{scope}
		\end{tikzpicture}
		\caption{The depicted chambers $C$ and $D$ of $|\ncp_6|$ are not opposite in the building $\si$, but their distance in $|\ncp_6|$ is greater than their distance in the building $\si$.}
		\label{fig:chambers_C_D}
	\end{center}
\end{figure}

In order to show that $\din(C,D)=8$, we show that the distance of all chambers in $|\ncp_6|$ that are adjacent to $D$ have distance $7$ or greater in $|\ncp_6|$. We denote the neighbors of $D$ by $A$, $B$, $E$, $F$ and $G$, where by \emph{neighbor}  we mean an adjacent chamber. 
The respective adjacency relations are shown in \cref{fig:nbs} and their vertices are depicted in \cref{fig:nbs_vert}.
We compute the respective distances of the neighbors of $D$ to $C$ in the building using the fact that each pair is contained in a common apartment. Since this is a Coxeter complex, we can adapt the procedure of \cref{lem:dist_aptm} in the setting of buildings. This yields that $\dig(C,B) = \dig(C,E) = \dig(C,G) = 7$ and $\dig(C,F)=8$. Hence we have that $\din(C,B),\din(C,E),\din(C,G) \geq 7$ and that $\din(C,F)\geq 8$. With the same argumentation as for the distance of $C$ and $D$ in $|\ncp_6|$, we get that $\din(C,G)=8$. Hence neither $F$ nor $G$ are contained in $\convn(C,D)$.
In order to compute $\din(C,B)$ and $\din(C,E)$, we consider the chambers $B_1$ and $E_1$, which are adjacent to $A$ and $B$, or $A$ and $E$, respectively. \cref{fig:nbs} shows their adjacency relations and their vertices are depicted in \cref{fig:nbs_BpEp}. Since $B_1,E_1 \in \sta(p,|\ncp_6|)$, we can easily compute their distances and obtain that $\din(C,B_1)=\din(C,E_1)=6$. Consequently the distance in $|\ncp_6|$ is $\din(C,B)=\din(C,E)=7$ and we get that 
\begin{align*}
\convn(C,D) &= \convn(C,A) \cup \convn(C,B) \cup \convn(C,E) \cup D.
\end{align*}
So we have to compute $\convn(C,B)$ and $\convn(C,E)$. Besides $B_1$, there is only one other neighbor, call it $B_2$, of $B \in |\ncp_6|$ that has lesser distance. The same is true for $E$ and its neighbor $E_2$. These chambers are depicted in \cref{fig:nbs_BpEp}. Note that both $B_2$ and $E_2$ have neighbors that are contained in $\sta(p,|\ncp_6|)$, which are called $B_3$ and $E_3$, respectively. They are depicted in \cref{fig:nbs_BpEp} as well. Note that $B_3$ is adjacent to $B_1$, and $E_3$ is adjacent to $E_1$. If we continue to investigate the convex hulls $\convn(C,B)$ and $\convn(C,E)$, we see that the following holds for all their respective chambers: Either they are contained in $\sta(p,|\ncp_6|)$, or they are adjacent to chambers of $|\ncp_6|$ that are contained in $\sta(p,|\ncp_6|)$. Since $\convn(C,D)\cap\sta(p,|\ncp_6|)$ is shrinkable by \cref{lem:star} and $\convn(C,D)$ retracts onto the former one, if follows that $\convn(C,D)$ is shrinkable as well.

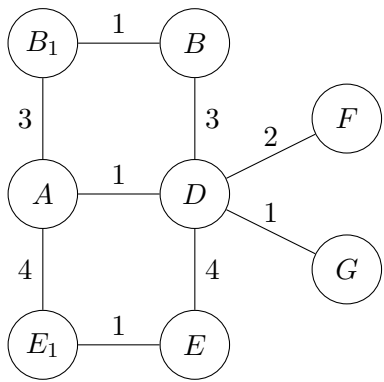
\begin{figure}
	\begin{center}
		\begin{tikzpicture}
		\def\r{2}
		
		\node[blub] (D) at (0,0) {$D$};
		\node[blub] (B) at (0,\r) {$B$}; 
		\node[blub] (E) at (0,-\r) {$E$}; 
		\node[blub] (A) at (-\r,0) {$A$}; 
		\node[blub] (F) at (\r,\r/2) {$F$};
		\node[blub] (G) at (\r,-\r/2) {$G$};
		\node[blub] (Bp) at (-\r,\r) {$B_1$}; 
		\node[blub] (Ep) at (-\r,-\r) {$E_1$};
		\draw (D) to node[midway,right]{3} (B);
		\draw (E) to node[midway,right]{4} (D);
		\draw (A) to node[midway,left]{3} (Bp);
		\draw (Ep) to node[midway,left]{4} (A);
		\draw (A) to node[midway,above]{1} (D);
		\draw (D) to node[midway,above]{2} (F);
		\draw (D) to node[midway,above]{1} (G);
		\draw (Bp) to node[midway,above]{1} (B);
		\draw (Ep) to node[midway,above]{1} (E);
		
		\end{tikzpicture}
	\end{center}
	\caption{A part of the Hurwitz graph corresponding to $|\ncp_6|$ around the chamber $D$ is shown here. The label of an edge corresponds to the rank of the partition the two respective adjacent chambers differ in.}
	\label{fig:nbs}
\end{figure}

\begin{figure}
	\begin{center}
		\begin{tikzpicture}
		\begin{scope}
		\sechseck \draw (p1) -- (p2);
		\node[left = 3mm] at (p4) {$A:$};
		
		\begin{scope}[xshift = 1.2cm]
		\sechseck \draw (p1) -- (p2) -- (p4)--(p1);
		\end{scope}
		
		\begin{scope}[xshift = 2.4cm]
		\sechseck \draw (p1) -- (p2) -- (p4)--(p1) (p5)--(p6);
		\end{scope}
		
		\begin{scope}[xshift = 3.6cm]
		\sechseck \draw (p1) -- (p2) -- (p3)--(p4)--(p1) (p5)--(p6);
		
		\end{scope}
		
		\begin{scope}[xshift = 7cm]
		\sechseck \draw (p2) -- (p4);
		\node[left = 3mm] at (p4) {$F:$};
		
		\begin{scope}[xshift = 1.2cm]
		\sechseck \draw (p2)--(p4)(p5)--(p6);
		\end{scope}
		
		\begin{scope}[xshift = 2.4cm]
		\sechseck \draw (p1) -- (p2) -- (p4)--(p1) (p5)--(p6);
		\end{scope}
		
		\begin{scope}[xshift = 3.6cm]
		\sechseck \draw (p1) -- (p2) -- (p3)--(p4)--(p1) (p5)--(p6);	
		\end{scope}
		
		\end{scope}
		
		\end{scope}
		
		\begin{scope}[yshift=-1.4cm]
		\sechseck \draw (p4) -- (p2);
		\node[left = 3mm] at (p4) {$B:$};
		
		\begin{scope}[xshift = 1.2cm]
		\sechseck \draw (p1) -- (p4) -- (p2)--(p1);
		\end{scope}
		
		\begin{scope}[xshift = 2.4cm]
		\sechseck \draw (p1) -- (p4) --(p3)-- (p2) -- (p1);
		\end{scope}
		
		\begin{scope}[xshift = 3.6cm]
		\sechseck \draw (p1) -- (p2) -- (p3) -- (p4) -- (p1) (p5) -- (p6);
		\end{scope}		
		
		\begin{scope}[xshift = 7cm]
		
		\sechseck \draw (p1) -- (p4);
		\node[left = 3mm] at (p4) {$G:$};
		
		\begin{scope}[xshift = 1.2cm]
		\sechseck \draw (p1) -- (p4) -- (p2)--(p1);
		\end{scope}
		
		\begin{scope}[xshift = 2.4cm]
		\sechseck \draw (p6) -- (p5) (p1) -- (p4) -- (p2) -- (p1);
		\end{scope}
		
		\begin{scope}[xshift = 3.6cm]
		\sechseck \draw (p1) -- (p2) -- (p3) -- (p4) -- (p1) (p5) -- (p6);
		\end{scope}		
		
		\end{scope}
		\end{scope}
		
		\begin{scope}[yshift=-2.8cm]
		\sechseck \draw (p4) -- (p2);
		\node[left = 3mm] at (p4) {$E:$};
		
		\begin{scope}[xshift = 1.2cm]
		\sechseck \draw (p1) -- (p4) -- (p2)--(p1);
		\end{scope}
		
		\begin{scope}[xshift = 2.4cm]
		\sechseck \draw (p6) -- (p5) (p1) -- (p4) -- (p2) -- (p1);
		\end{scope}
		
		\begin{scope}[xshift = 3.6cm]
		\sechseck \draw (p1) -- (p2) -- (p4) -- (p5) -- (p6) -- (p1);
		\end{scope}		
		
		\end{scope}
		
		\end{tikzpicture}
		\caption{The chambers $A$, $B$, $E$, $F$ and $G$ are shown here.}
		\label{fig:nbs_vert}
	\end{center}
\end{figure}
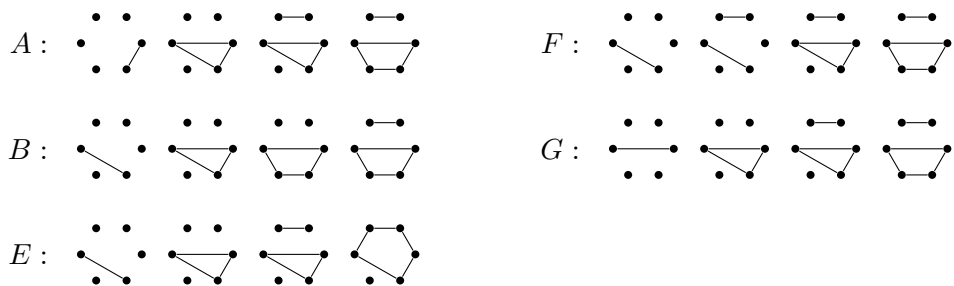

\begin{figure}
	\begin{center}
		\begin{tikzpicture}
		\sechseck \draw (p1) -- (p2);
		\node[left = 3mm] at (p4) {$B_1:$};
		
		\begin{scope}[xshift = 1.2cm]
		\sechseck \draw (p1) -- (p4) -- (p2)--(p1);
		\end{scope}
		
		\begin{scope}[xshift = 2.4cm]
		\sechseck \draw (p1) -- (p4) --(p3)-- (p2) -- (p1);
		\end{scope}
		
		\begin{scope}[xshift = 3.6cm]
		\sechseck \draw (p1) -- (p2) -- (p3) -- (p4) -- (p1) (p5) -- (p6);
		\end{scope}	
		
		\begin{scope}[xshift = 7cm]
		\sechseck \draw (p1) -- (p2);
		\node[left = 3mm] at (p4) {$E_1:$};
		
		\begin{scope}[xshift = 1.2cm]
		\sechseck \draw (p1) -- (p4) -- (p2)--(p1);
		\end{scope}
		
		\begin{scope}[xshift = 2.4cm]
		\sechseck \draw (p6) -- (p5) (p1) -- (p4) -- (p2) -- (p1);
		\end{scope}
		
		\begin{scope}[xshift = 3.6cm]
		\sechseck \draw (p1) -- (p2) -- (p4) -- (p5) -- (p6) -- (p1);
		\end{scope}		
		\end{scope}

		\begin{scope}[yshift=-1.4cm]
		\sechseck \draw (p4) -- (p2);
		\node[left = 3mm] at (p4) {$B_2:$};
		
		\begin{scope}[xshift = 1.2cm]
		\sechseck \draw (p1) -- (p4) -- (p2)--(p1);
		\end{scope}
		
		\begin{scope}[xshift = 2.4cm]
		\sechseck \draw (p1) -- (p4) --(p3)-- (p2) -- (p1);
		\end{scope}
		
		\begin{scope}[xshift = 3.6cm]
		\sechseck \draw (p1) -- (p2) -- (p3) -- (p4) -- (p6) -- (p1);
		\end{scope}	
		
		\begin{scope}[xshift = 7cm]
		\sechseck \draw (p4) -- (p2);
		\node[left = 3mm] at (p4) {$E_2:$};
		
		\begin{scope}[xshift = 1.2cm]
		\sechseck \draw (p1) -- (p4) -- (p2)--(p1);
		\end{scope}
		
		\begin{scope}[xshift = 2.4cm]
		\sechseck \draw  (p1) -- (p5)--(p4) -- (p2) -- (p1);
		\end{scope}
		
		\begin{scope}[xshift = 3.6cm]
		\sechseck \draw (p1) -- (p2) -- (p4) -- (p5) -- (p6) -- (p1);
		\end{scope}		
		\end{scope}
		\end{scope}
		
		\begin{scope}[yshift=-2.8cm]
		\sechseck \draw (p1) -- (p2);
		\node[left = 3mm] at (p4) {$B_3:$};
		
		\begin{scope}[xshift = 1.2cm]
		\sechseck \draw (p1) -- (p4) -- (p2)--(p1);
		\end{scope}
		
		\begin{scope}[xshift = 2.4cm]
		\sechseck \draw (p1) -- (p4) --(p3)-- (p2) -- (p1);
		\end{scope}
		
		\begin{scope}[xshift = 3.6cm]
		\sechseck \draw (p1) -- (p2) -- (p3) -- (p4) -- (p6) -- (p1);
		\end{scope}	
		
		\begin{scope}[xshift = 7cm]
		\sechseck \draw (p1) -- (p2);
		\node[left = 3mm] at (p4) {$E_3:$};
		
		\begin{scope}[xshift = 1.2cm]
		\sechseck \draw (p1) -- (p4) -- (p2)--(p1);
		\end{scope}
		
		\begin{scope}[xshift = 2.4cm]
		\sechseck \draw  (p1) -- (p5)--(p4) -- (p2) -- (p1);
		\end{scope}
		
		\begin{scope}[xshift = 3.6cm]
		\sechseck \draw (p1) -- (p2) -- (p4) -- (p5) -- (p6) -- (p1);
		\end{scope}		
		\end{scope}
		\end{scope}
		\end{tikzpicture}
		\caption{The chambers $B_1$, $B_2$, $B_2$, $E_1$, $E_2$ and $E_3$ are shown here.}
		\label{fig:nbs_BpEp}
	\end{center}
\end{figure}
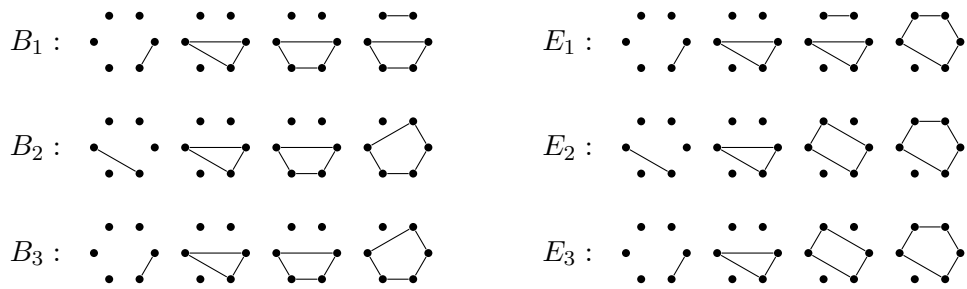


\cleardoublepage
\part*{Appendix}
\cleardoublepage
\appendix

\chapter{Proof of \cref{prop:typeB_anti-auto_extends}}\label{chap:typeB_details}	

In this appendix we are proving \cref{prop:typeB_anti-auto_extends}. For this, we need a listing of reduced decompositions of elements of type rank $2$ in $\nc(B_n)$.
There are 31 different types of reduced decompositions for the 13 different types of non-crossing partitions of rank 2. They are listed in \cref{tab:typeB_rk2_red_decomp}.

\renewcommand{\arraystretch}{1,2}
\begin{table}[h]
	\begin{center}
		\begin{tabular}{|l||l|}
			
			\hhline{|-||-|}	
			rank 2 element& reduced decompositions\\

			\hhline{|-||-|}	
			
			\multicolumn{2}{c}{ }\\[-10pt]
			\multicolumn{2}{c}{$1 \leq a < b <c \leq n$}\\
			
			\hhline{|-||-|}

			$[a\,\;b]$ 
			& $\dka a\,\;b\dkz [b]$, $\dka a\,\;-b\dkz [a]$, $[a]\dka a\,\;b\dkz$, $[b]\dka a\,\;-b\dkz$ \\
			
			$\dka a\,\;-c\dkz [b]$
			&$\dka a\,\;-c\dkz [b]$, $ [b]\dka a\,\;-c\dkz$	\\
			
			$\dka b\,\;c\dkz [a]$ 
			&$\dka b\,\;c\dkz [a]$, $ [a]\dka b\,\;c\dkz$	\\
			
			$\dka a\,\;b\dkz [c]$
			&$\dka a\,\;b\dkz [c]$, $[c]\dka a\,\;b\dkz $	\\
			
			$\dka a\,\;b\,\;c\dkz$
			&	$\dka a\,\;b\dkz\dka b\,\;c\dkz$, $\dka b\,\;c\dkz\dka a\,\;c\dkz$, $\dka a\,\;c\dkz\dka a\,\;b\dkz$	\\
			
			$\dka a\,\;b\,\;-c\dkz$
			&	$\dka a\,\;b\dkz\dka b\,\;-c\dkz$, $\dka b\,\;-c\dkz\dka a\,\;-c\dkz$, $\dka a\,\;-c\dkz\dka a\,\;-b\dkz$	\\
			
			$\dka a\,\;-b\,\;-c\dkz$
			&	$\dka a\,\;-b\dkz\dka b\,\;c\dkz$, $\dka b\,\;c\dkz\dka a\,\;-c\dkz$, $\dka a\,\;-c\dkz\dka a\,\;-b\dkz$	\\	
			
			\hhline{|-||-|}	
			
			\multicolumn{2}{c}{ }\\[-10pt]	
			\multicolumn{2}{c}{$1 \leq a < b <c <d \leq n$}\\
			
			\hhline{|-||-|}	
			
			$\dka a\,\;b\dkz\dka c\,\;d\dkz$
			& $\dka a\,\;b\dkz\dka c\,\;d\dkz$, $\dka c\,\;d\dkz\dka a\,\;b\dkz$	\\ 
			
			$\dka a\,\;b\dkz \dka c\,\;-d\dkz$
			&$\dka a\,\;b\dkz \dka c\,\;-d\dkz$, $\dka c\,\;-d\dkz\dka a\,\;b\dkz$	\\
			
			$\dka a\,\;-b\dkz \dka c\,\;d\dkz$
			&$\dka a\,\;-b\dkz \dka c\,\;d\dkz$, $ \dka c\,\;d\dkz\dka a\,\;-b\dkz$	\\
			
			$\dka a\,\;d\dkz \dka b\,\;c\dkz$
			&$\dka a\,\;d\dkz \dka b\,\;c\dkz$, $\dka b\,\;c\dkz\dka a\,\;d\dkz $	\\
			
			$\dka a\,\;-d\dkz \dka b\,\;c\dkz$
			&$\dka a\,\;-d\dkz \dka b\,\;c\dkz$, $ \dka b\,\;c\dkz\dka a\,\;-d\dkz$	\\
			
			$\dka a\,\;-d\dkz \dka b\,\;-c\dkz$
			&$\dka a\,\;-d\dkz \dka b\,\;-c\dkz$, $ \dka b\,\;-c\dkz\dka a\,\;-d\dkz$	\\	
			\hhline{|-||-|}	
		\end{tabular}
		\caption{The reduced decompositions of rank 2 elements in $\nc(B_n)$.}\label{tab:typeB_rk2_red_decomp}
	\end{center}
\end{table}
\renewcommand{\arraystretch}{1}

\begin{lem}
	Every element in $\nc(B_n)$ of rank $2$ is of one of the following forms for $1\leq a <b<c<d\leq n$:
	\begin{align*}
	&[a\,\;b], \
	\dka a\,\;-c\dkz [b],\ \dka b\,\;c\dkz [a],\ \dka a\,\;b\dkz [c],\ 
	\dka a\,\;b\,\;c\dkz ,\ \dka a\,\;b\,\;-c\dkz ,\ \dka a\,\;-b\,\;-c\dkz,\\
	&\dka a\,\;b\dkz \dka c\,\;d\dkz ,\ 
	\dka a\,\;b\dkz \dka c\,\;-d\dkz ,\ 
	\dka a\,\;-b\dkz \dka c\,\;d\dkz ,\\
	&\dka a\,\;d\dkz \dka b\,\;c\dkz ,\ \dka a\,\;-d\dkz \dka b\,\;c\dkz ,\ \dka a\,\;-d\dkz \dka b\,\;-c\dkz .
	\end{align*}
\end{lem}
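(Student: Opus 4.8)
The statement classifies the rank~$2$ elements of $\nc(B_n)$ up to equivalence of cycles. The plan is to start from the structural constraints already available: by \cref{lem:typeB_length} and \cref{lem:carter} a reduced decomposition of an element of absolute length~$2$ uses exactly two reflections, and by \cref{lem:typeB_cycle_decomp_nc} (and \cref{rem:typeB_one_balanced_cycle}) the disjoint cycle decomposition of an element of $\nc(B_n)$ has at most one balanced cycle. Combining \cref{lem:typeB_length} with the additivity of absolute length over disjoint cycles, an element $w$ of length~$2$ must have one of the following cycle shapes: (i) a single balanced cycle $[x\,\;y]$ of length~$2$; (ii) a single paired cycle $\dka x\,\;y\,\;z\dkz$ of length~$2$; (iii) a balanced cycle of length~$1$ times a paired cycle of length~$1$, i.e. $\dka x\,\;y\dkz [z]$; (iv) two disjoint paired cycles $\dka x\,\;y\dkz\dka z\,\;w\dkz$, each of length~$1$. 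These four shapes are then intersected with the condition $w\leq\cox$, which by \cref{prop:typeB_consistent_orientation-NC} is equivalent to all cycles being oriented consistently and pairwise non-crossing.

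\textbf{Key steps.} First I would treat shape~(i): a consistently oriented balanced cycle on two of the canonically identified indices gives, after reducing modulo equivalence (recall $\dka i\,\;j\dkz$, $\dka -i\,\;-j\dkz$ are equivalent but $[1\,\;2]$, $[2\,\;1]$ are not, as noted after the definition of balanced cycles), exactly the representatives $[a\,\;b]$ with $1\le a<b\le n$. Second, shape~(iii): here the balanced cycle is a single transposition-type $[z]$ and the paired cycle is $\dka x\,\;y\dkz$; the non-crossing condition between them, read off the canonical image in $S_{2n}$, splits into the three positional cases that the zero block $\{z,-z\}$ lies ``inside'', ``outside'', or ``straddling'' the block pair $\dka x\,\;y\dkz$, yielding precisely $\dka a\,\;-c\dkz[b]$, $\dka b\,\;c\dkz[a]$, and $\dka a\,\;b\dkz[c]$ for $1\le a<b<c\le n$. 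Third, shape~(ii): a consistently oriented paired $3$-cycle; writing it as $\dka i_1\,\;i_2\,\;i_3\dkz$ with the indices from $\{\pm1,\dots,\pm n\}$ and normalizing by equivalence and by the convention $0<i_1<|i_2|$, one gets exactly $\dka a\,\;b\,\;c\dkz$, $\dka a\,\;b\,\;-c\dkz$, $\dka a\,\;-b\,\;-c\dkz$ for $1\le a<b<c\le n$ — the fourth a~priori candidate $\dka a\,\;-b\,\;c\dkz$ being equivalent (via sign change or cyclic rotation, since the cycle is a signed permutation) to one already listed. Fourth, shape~(iv): two disjoint non-crossing paired cycles on four of the indices; the non-crossing condition in $S_{2n}$ forces their supports to be nested or unlinked, and normalization gives the six remaining entries $\dka a\,\;b\dkz\dka c\,\;d\dkz$, $\dka a\,\;b\dkz\dka c\,\;-d\dkz$, $\dka a\,\;-b\dkz\dka c\,\;d\dkz$, $\dka a\,\;d\dkz\dka b\,\;c\dkz$, $\dka a\,\;-d\dkz\dka b\,\;c\dkz$, $\dka a\,\;-d\dkz\dka b\,\;-c\dkz$ for $1\le a<b<c<d\le n$. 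Finally I would remark that \cref{tab:typeB_rk2_red_decomp} can be read in reverse to confirm that each listed form indeed occurs and that the list is exhaustive; indeed the lemma is essentially a repackaging of that table.

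\textbf{Main obstacle.} The conceptually clean part is the case split by cycle shape; the tedious part — and the one most prone to error — is the bookkeeping of \emph{which} sign patterns on the indices actually satisfy the consistent-orientation and non-crossing conditions, since the canonical identification of $\{\pm1,\dots,\pm n\}$ with $\{1,\dots,2n\}$ sends $-i$ to $n+i$ and thereby turns a sign choice into a nontrivial cyclic-position condition. In particular, in shapes~(ii) and~(iv) one must carefully verify that the candidates omitted from the list are genuinely equivalent (under cyclic rotation of a cycle, under the simultaneous sign change forced by ``signed permutation'', and under the chosen normalization $0<i_1<|i_2|$) to candidates that are kept, rather than being new elements that were overlooked. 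My expectation is that this reduces to a finite, purely combinatorial check of the relative cyclic order of the at most four relevant indices and their negatives on the $2n$-gon, exactly the kind of check already performed implicitly in the proof sketch of \cref{prop:typeB_consistent_orientation-NC} and in compiling \cref{tab:typeB_rk2_red_decomp}, so no new idea is needed — only care.
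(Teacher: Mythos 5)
Your overall strategy---classify the rank-$2$ elements by cycle shape using \cref{lem:typeB_length} and \cref{lem:typeB_cycle_decomp_nc}, then filter each shape through \cref{prop:typeB_consistent_orientation-NC}---is exactly the paper's; the paper's proof is a short inspection doing precisely this, and your final list is correct. However, there is a genuine error in your treatment of shape (ii), the paired $3$-cycles. You claim that the normalization $0<i_1<|i_2|$ leaves four a priori candidates and that the fourth, $\dka a\,\;-b\,\;c\dkz$, is equivalent to one already listed. Neither claim holds. On a fixed support $\{\pm a,\pm b,\pm c\}$ there are \emph{eight} distinct paired $3$-cycles (two cyclic orders times four sign classes modulo the global sign flip), and $\dka a\,\;-b\,\;c\dkz$ is equivalent to none of $\dka a\,\;b\,\;c\dkz$, $\dka a\,\;b\,\;-c\dkz$, $\dka a\,\;-b\,\;-c\dkz$: it sends $a\mapsto -b\mapsto c\mapsto a$, which is a different signed permutation from all three. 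The correct reason it is absent from the list is that it fails the consistency test---its canonical image contains the cycle $(a,\,n+b,\,c)$, no rotation of which is increasing---and the same goes for $\dka a\,\;-c\,\;b\dkz$ and for the reversed-order candidates $\dka a\,\;c\,\;b\dkz$, $\dka a\,\;c\,\;-b\dkz$, $\dka a\,\;-c\,\;-b\dkz$ that your enumeration omits entirely. The paper's proof disposes of these by explicitly naming the non-consistently oriented elements.

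A similar, smaller slip occurs in shape (iv): after the supports are forced to be nested or unlinked there remain eight sign patterns, and it is not ``normalization'' but the crossing condition that eliminates the two remaining bad ones, $\dka a\,\;-b\dkz\dka c\,\;-d\dkz$ and $\dka a\,\;d\dkz\dka b\,\;-c\dkz$: in the canonical image the chords $(a,n+b)$ and $(c,n+d)$, respectively $(a,d)$ and $(b,n+c)$, interleave. So while your case skeleton is sound and matches the paper's, the completeness of the list in cases (ii) and (iv) is currently justified by an incorrect mechanism; each omitted candidate must be shown to be crossing or not consistently oriented, which is exactly the check the paper's proof records.
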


\begin{proof}
	All listed elements are of rank 2, oriented consistently and non-crossing. By \cref{prop:typeB_consistent_orientation-NC} they are all contained in $\nc(B_n)$. Moreover, every element of rank 2 in $W(B_n)$ has a cycle structure that appears in the list. Note that the element $[a][b]=\dka a\,\;b\dkz \dka a\,\;-b\dkz$ is \emph{not} contained in $\nc(B_n)$. Again by \cref{prop:typeB_consistent_orientation-NC} it is enough to check that all other elements in $W$ of the above cycle structures are crossing or not oriented consistently. The crossing elements are $\dka a\,\;c\dkz [b]$, $\dka b\,\;-c\dkz [a]$, $\dka a\,\;-b\dkz [c]$, $\dka a\,\;-b\dkz \dka c\,\;-d\dkz $ and $\dka a\,\;d\dkz \dka b\,\;-c\dkz $. The non-consistently oriented elements are $[a\,\;-b]$, $\dka a\,\;c\,\;b\dkz $, $\dka a\,\;c\,\;-b\dkz $ and $\dka a\,\;-c\,\;-b\dkz$. This completes the list of elements of rank 2.
\end{proof}

We use the convention for the notation of $\dka x\,\;y\dkz$ that either $1\leq x < y$ or $1\leq x <-y$.

\begin{lem}\label{lem:typeB_classes_red_decomp}
	Let $w\in \nc(B_n)$ be of rank two with reduced decomposition $st$ where $t=\dka i\,\;j\dkz$ or $t=[i]$ for $1\leq i < j \leq n$. Then every reduced decomposition $st$ of $w$ is in one of the following classes for $1\leq k<l\leq n$.
	
	\begin{itemize}
		\item 
		In the case of $s=\dka k\,\;l\dkz $, exactly one of the following cases holds:
		\begin{enumerate}
			\item $i,|j|<k$,
			\item $i,|j|\geq l$,
			\item $i<k$ and $|j|\geq l$,
			\item if $t=[i]$: $i \geq l$,
			\item $k\leq i < j < l$. 
		\end{enumerate}
		\item
		If $s=\dka k\,\;-l\dkz$, then exactly one of the following cases holds:
		\begin{enumerate}\setcounter{enumi}{5}
			\item $k \leq i,|j| < l$,
			\item $i,j < k$, 
			\item $l \leq i,j $, 
			\item $i < k < l \leq -j $. 
		\end{enumerate}
		\item 
		If $s=[k]$, then exactly one of the following cases holds:
		\begin{enumerate}\setcounter{enumi}{9}
			\item $ i,j < k$,
			\item $k \leq i,j$,
			\item $i<k \leq -j$.
		\end{enumerate}
	\end{itemize}
\end{lem}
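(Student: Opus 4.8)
The statement is essentially a bookkeeping lemma: it says that the reduced decompositions $st$ of rank $2$ non-crossing partitions in $W(B_n)$ fall into the twelve listed combinatorial classes. The natural approach is to go through the classification of rank $2$ elements and their reduced decompositions given in the preceding \cref{tab:typeB_rk2_red_decomp}, and for each reduced decomposition $st$ appearing there, read off the first letter $s$ and the second letter $t$, determine the relative positions of the entries, and check that $s$ and $t$ satisfy exactly one of the cases a)--l). First I would fix notation once and for all: write $t = \dka i\,\; j\dkz$ or $t = [i]$ with the convention that $1 \leq i < j \leq n$ (using $[i] \equiv [-i]$ we may always normalize the balanced cycle so that $i > 0$), and write $s$ in one of the three forms $\dka k\,\; l\dkz$, $\dka k\,\;-l\dkz$, or $[k]$ with $1 \leq k < l \leq n$. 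The three cases for the shape of $s$ partition the argument into the three bullet groups of the statement.

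The core of the proof is then a case-by-case walk through \cref{tab:typeB_rk2_red_decomp}. For the balanced-cycle second letter $t = [i]$, the table shows that the possible elements are $[a\,\;b]$ (with decompositions $[a]\dka a\,\;b\dkz$ and $[b]\dka a\,\;-b\dkz$), $\dka a\,\;-c\dkz[b]$, $\dka b\,\;c\dkz[a]$, $\dka a\,\;b\dkz[c]$, and $\dka a\,\;-b\,\;-c\dkz$ (with decomposition $\dka b\,\;c\dkz\dka a\,\;-c\dkz$ — here $t$ is paired, so this belongs to the paired case). I would check: $\dka b\,\;c\dkz[a]$ has $s = \dka b\,\;c\dkz$ with $a < b < c$, so $i = a < k = b$ — case a); $\dka a\,\;b\dkz[c]$ has $s = \dka a\,\;b\dkz$ with $a < b < c$, so $i = c \geq l = b$ and since $t$ is balanced this is case d); $\dka a\,\;-c\dkz[b]$ has $s = \dka a\,\;-c\dkz$ with $a < b < c$, so $k = a < i = b < l = c$ with $|j| = i$, which is case f) (degenerate, $j = i$ in the sense that $t = [i]$ has only one index); $[a]\dka a\,\;b\dkz$ and $[b]\dka a\,\;-b\dkz$ have $s = [a]$ or $[b]$, and one checks $i < k$ or $i \geq k$ as appropriate, landing in j), k), or l). For the paired second letter $t = \dka i\,\; j\dkz$, I would similarly run through the entries $\dka a\,\;b\,\;c\dkz$, $\dka a\,\;b\,\;-c\dkz$, $\dka a\,\;-b\,\;-c\dkz$, and the four-index products $\dka a\,\;b\dkz\dka c\,\;d\dkz$, $\dka a\,\;b\dkz\dka c\,\;-d\dkz$, $\dka a\,\;-b\dkz\dka c\,\;d\dkz$, $\dka a\,\;d\dkz\dka b\,\;c\dkz$, $\dka a\,\;-d\dkz\dka b\,\;c\dkz$, $\dka a\,\;-d\dkz\dka b\,\;-c\dkz$, in each of their two reduced decompositions, verifying the membership in one of a), b), c), e), f), g), h), i).

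After establishing that every reduced decomposition $st$ lands in at least one of the listed cases, I would verify the \emph{mutual exclusivity} — that the cases are pairwise disjoint as conditions on $(i,j)$ versus $(k,l)$ (for fixed shape of $s$). This is a short order-theoretic check: for $s = \dka k\,\;l\dkz$, the five conditions a)--e) are distinguished by whether $i$ and $|j|$ lie below $k$, above $l$, straddle, or lie strictly between $k$ and $l$, which are manifestly incompatible; similarly for the other two shapes. The main obstacle I anticipate is purely organizational rather than conceptual: making sure the normalization conventions for $\dka x\,\; y\dkz$ (namely $1 \leq x < |y|$) and $[i] \equiv [-i]$ are applied uniformly so that, e.g., a decomposition like $\dka b\,\;-c\dkz\dka a\,\;-c\dkz$ from the table for $\dka a\,\;b\,\;-c\dkz$ gets correctly rewritten with $s$ and $t$ in standard form before the positional comparison — a misapplied sign convention could make a case look like it falls outside the list. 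Careful tabulation, perhaps reproducing \cref{tab:typeB_rk2_red_decomp} with an extra column recording the class, should make the verification routine and complete.
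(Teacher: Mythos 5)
Your proposal is correct and takes exactly the approach of the paper: the paper's proof is literally a one-line statement that the lemma follows by inspection of \cref{tab:typeB_rk2_red_decomp}, which is the case-by-case walk through the table that you describe (and elaborate on, including the mutual-exclusivity check that the paper leaves implicit).
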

\begin{proof}
	The proof is by inspection. All possibilities for $s$ and $t$ are listed in \cref{tab:typeB_rk2_red_decomp}.
\end{proof}

\begin{proof}[Proof of \cref{prop:typeB_anti-auto_extends}]
	We show that $\bilb(s,t)=0$ for all reflections $s,t$ such that $st\in \nc(B_n)$. The 31 different reduced decompositions for rank 2 elements are listed in \cref{tab:typeB_rk2_red_decomp}. They can be divided into three classes, depending on the structure of the first reflection $s$.
	
	Let $1\leq a < b \leq n$. Recall that the root in $V$ corresponding to $\dka a\,\; b\dkz$ is $e_a-e_b$, the root corresponding to $\dka a\,\; -b\dkz$ is $e_a + e_b$, and the root corresponding to $[a]$ is $e_a$.
	
	Let $s= \dka k\,\;l \dkz$ with $1\leq k < l \leq n$. This is the first class of \cref{lem:typeB_classes_red_decomp}. For $v\in V$ it holds that
	\[
	\bilb(s,v)= -\left(\sum_{m=k}^{l-1} e_m\tr\right)\cdot v.
	\]
	If $st\in \nc(B_n)$, then $t=\dka i\,\; j\dkz$ or $t=[i]$ fulfills one of the cases \emph{a)}-\emph{e)} in \cref{lem:typeB_classes_red_decomp}. We show that in all of these cases it holds that $\bilb(s,t)=0$.
	
	If $t$ is in case \emph{a)}-\emph{e)}, all non-zero entries of $\alpha_t$ are less than $k$ or greater than $l-1$, hence $\bilb(s,t)=0$.
	In case $\emph{e)}$ the bilinear form evaluates to $\bilb(s,t)=-e_i\tr\cdot e_i+e_j\tr\cdot e_j=0$.
	
	Let $s= \dka k\,\;-l \dkz$ with $1\leq k < l \leq n$. This is the second class of \cref{lem:typeB_classes_red_decomp}. For $v\in V$ it holds that
	\[
	\bilb(s,v)= -2\left(\sum_{m=1}^{k-1} e_m\tr\right)\cdot v + 2\left(\sum_{m=l}^{n} e_m\tr\right)\cdot v.
	\]
	
	As before, let $st\in\nc(B_n)$ and let $t=\dka i\,\; j\dkz$ or $t=[i]$ fulfill case \emph{f)} in \cref{lem:typeB_classes_red_decomp}. Then all non-zero entries of $\alpha_s$ are greater than or equal to $k$, and less than $l$, hence $\bilb(s,t)=0$.
	
	In case \emph{g)} or \emph{h)} of \cref{lem:typeB_classes_red_decomp}, the two non-zero entries of $\alpha_t$ have different signs and get multiplied with the same sign in the evaluation of $\bilb(s,t)$. Hence $\bilb(s,t)=0$. The last case \emph{i)} in this class is similar. Now the two non-zero entries have the same sign, but they are multiplied with different signs, hence they again add up to $\bilb(s,t)=0$.
	
	Let $s=[k]$ with $1\leq k \leq n$ and let $st\in\nc(B_n)$. This is the third class of \cref{lem:typeB_classes_red_decomp}. For $v\in V$ it holds that
	\[
	\bilb(s,v)= -\left(\sum_{m=1}^{k-1} e_m\tr\right)\cdot v + \left(\sum_{m=k}^{n} e_m\tr\right)\cdot v.
	\]
	Comparing this to the previous class, we see that for $k=l$ we are in the same situation except for the factor 2. The analog of case \emph{f)} does not appear, and the argumentation of cases \emph{j}-\emph{l)} is the same as for the cases \emph{g)}-\emph{h)}, in this order.
\end{proof}

\cleardoublepage

\chapter{Proof of \cref{prop:typeD_anti-auto_extends}}\label{chap:typeD_details}

In this appendix we are proving \cref{prop:typeD_anti-auto_extends}. For this, we need a listing of reduced decompositions of elements of type rank $2$ in $\nc(D_n)$, which is given in \cref{tab:typeD_rk2_red_decomp}.

\renewcommand{\arraystretch}{1,15}
\begin{table}[h]
	\begin{center}
		\begin{tabular}{|l||l|}
			\hhline{|-||-|}
			
			rank 2 element& reduced decompositions\\
			\hhline{|-||-|}
			
			\multicolumn{2}{c}{ }\\[-10pt]
			\multicolumn{2}{c}{$1 \leq a < b <c < n$}\\
			
			\hhline{|-||-|}	
			
			$\dka a\,\;b\,\;c\dkz$
			&	$\dka a\,\;b\dkz\dka b\,\;c\dkz$, $\dka b\,\;c\dkz\dka a\,\;c\dkz$, $\dka a\,\;c\dkz\dka a\,\;b\dkz$	\\
			
			$\dka a\,\;b\,\;-c\dkz$
			&	$\dka a\,\;b\dkz\dka b\,\;-c\dkz$, $\dka b\,\;-c\dkz\dka a\,\;-c\dkz$, $\dka a\,\;-c\dkz\dka a\,\;-b\dkz$	\\
			
			$\dka a\,\;-b\,\;-c\dkz$
			&	$\dka a\,\;-b\dkz\dka b\,\;c\dkz$, $\dka b\,\;c\dkz\dka a\,\;-c\dkz$, $\dka a\,\;-c\dkz\dka a\,\;-b\dkz$	\\

			$\dka a\,\;b\dkz \dka c\,\;n\dkz $
			&$\dka a\,\;b\dkz \dka c\,\;n\dkz $, $\dka c\,\;n\dkz \dka a\,\;b\dkz $		\\
			
			$\dka a\,\;b\dkz \dka -c\,\;n\dkz $
			&$\dka a\,\;b\dkz \dka -c\,\;n\dkz $, $\dka -c\,\;n\dkz \dka a\,\;b\dkz $		\\
			
			$\dka b\,\;c\dkz \dka a\,\;n\dkz $
			&$\dka b\,\;c\dkz \dka a\,\;n\dkz $, $\dka a\,\;n\dkz \dka b\,\;c\dkz $		\\
			
			$\dka b\,\;c\dkz \dka -a\,\;n\dkz $
			&$\dka b\,\;c\dkz \dka -a\,\;n\dkz $, $\dka -a\,\;n\dkz \dka b\,\;c\dkz $		\\
			
			$\dka a\,\;-c\dkz \dka b\,\;n\dkz $
			&$\dka a\,\;-c\dkz \dka b\,\;n\dkz $, $\dka b\,\;n\dkz \dka a\,\;-c\dkz $		\\
			
			$\dka a\,\;-c\dkz \dka -b\,\;n\dkz $
			&$\dka a\,\;-c\dkz \dka -b\,\;n\dkz $, $\dka -b\,\;n\dkz \dka a\,\;-c\dkz $		\\

			\hhline{|-||-|}
			
			\multicolumn{2}{c}{ }\\[-10pt]
			\multicolumn{2}{c}{$1 \leq a < b < n$ and $c < n$}\\
			
			\hhline{|-||-|}

			$\dka a\,\;b\,\;n\dkz $
			&$\dka a\,\;b\dkz \dka b\,\;n\dkz $, $\dka b\,\;n\dkz \dka a\,\;n\dkz $, $\dka a\,\;n\dkz \dka a\,\;b\dkz $		\\
			
			$\dka -a\,\;-b\,\;n\dkz $
			&$\dka a\,\;b\dkz \dka -b\,\;n\dkz $, $\dka -b\,\;n\dkz \dka -a\,\;n\dkz $, $\dka -a\,\;n\dkz \dka a\,\;b\dkz $			\\
			
			$\dka b\,\;-a\,\;n\dkz $
			&$\dka a\,\;-b\dkz \dka -a\,\;n\dkz $, $\dka -a\,\;n\dkz \dka b\,\;n\dkz $, $\dka b\,\;n\dkz \dka a\,\;-b\dkz $			\\
			
			$\dka -b\,\;a\,\;n\dkz $
			&$\dka a\,\;-b\dkz \dka a\,\;n\dkz $, $\dka a\,\;n\dkz \dka -b\,\;n\dkz $, $\dka -b\,\;n\dkz \dka a\,\;-b\dkz $			\\
			
			$[c][n]$
			& $\dka a\,\;n\dkz \dka -a\,\;n\dkz $, $\dka -a\,\;n\dkz \dka a\,\;n\dkz $\\

			\hhline{|-||-|}	
			
			\multicolumn{2}{c}{ }\\[-10pt]
			\multicolumn{2}{c}{$1 \leq a < b <c <d <n$}\\
			
			\hhline{|-||-|}

			$\dka a\,\;b\dkz\dka c\,\;d\dkz$
			& $\dka a\,\;b\dkz\dka c\,\;d\dkz$, $\dka c\,\;d\dkz\dka a\,\;b\dkz$	\\ 
			
			$\dka a\,\;b\dkz \dka c\,\;-d\dkz$
			&$\dka a\,\;b\dkz \dka c\,\;-d\dkz$, $\dka c\,\;-d\dkz\dka a\,\;b\dkz$	\\
			
			$\dka a\,\;-b\dkz \dka c\,\;d\dkz$
			&$\dka a\,\;-b\dkz \dka c\,\;d\dkz$, $ \dka c\,\;d\dkz\dka a\,\;-b\dkz$	\\
			
			$\dka a\,\;d\dkz \dka b\,\;c\dkz$
			&$\dka a\,\;d\dkz \dka b\,\;c\dkz$, $\dka b\,\;c\dkz\dka a\,\;d\dkz $	\\
			
			$\dka a\,\;-d\dkz \dka b\,\;c\dkz$
			&$\dka a\,\;-d\dkz \dka b\,\;c\dkz$, $ \dka b\,\;c\dkz\dka a\,\;-d\dkz$	\\
			
			$\dka a\,\;-d\dkz \dka b\,\;-c\dkz$
			&$\dka a\,\;-d\dkz \dka b\,\;-c\dkz$, $ \dka b\,\;-c\dkz\dka a\,\;-d\dkz$	\\	
			\hhline{|-||-|}
		\end{tabular}
		\caption{The reduced decompositions of rank 2 elements in $\nc(D_n)$.}\label{tab:typeD_rk2_red_decomp}
	\end{center}
\end{table}

\begin{lem}\label{lem:typeD_classes_red_decomp}
	Let $w\in \nc(D_n)$ be of rank two. Then every reduced decomposition $st$ of $w$ is in one of the following classes.
	
	\begin{itemize}
		\item If neither $s$ nor $t$ contains the letter $n$, then $s,t\leq [1\ldots n-1]$. Hence they are also elements of $\nc(B_{n-1})$ and the classification of \cref{lem:typeB_classes_red_decomp} holds. 
		
		\item 
		In the case of $s=\dka k\,\;l\dkz $ for $1\leq k < l < n$, exactly one of the following cases holds for $t=\dka i\,\; n\dkz$ with $1\leq |i|< n$:
		\begin{enumerate}
			\item $|i|<k$,
			\item $l\leq |i|$.
		\end{enumerate}
		
		\item 
		In the case of $s=\dka k\,\;-l\dkz $ for $1\leq k < l < n$, it holds  for $t=\dka i\,\; n\dkz$ with $1\leq |i|$ that
		\begin{enumerate}\setcounter{enumi}{2}
			\item $k \leq |i|< l$.
		\end{enumerate}
		
		\item 
		In the case of $s=\dka k\,\;n\dkz $ for $1\leq k < n$, exactly one of the following cases holds for $t=\dka i\,\; j\dkz$ with $1\leq |i| < |j| \leq n $:
		\begin{enumerate}\setcounter{enumi}{3}
			\item $k \leq i<j <n$,
			\item $k \leq -i <j =n$,
			\item $i<k\leq -j <n$,
			\item $i<k<j =n$,
			\item $i<j<k$.
		\end{enumerate}
		
		\item
		In the case of $s=\dka -k\,\;n\dkz $ for $1\leq k < n$, exactly one of the following cases holds for $t=\dka i\,\; j\dkz$ for $1\leq |i| < |j| \leq n$:
		\begin{enumerate}\setcounter{enumi}{8}
			\item $k\leq i<j$,
			\item $i<j<k$,
			\item $i<k\leq -j<n$,
			\item $-i < k < j =n$.
		\end{enumerate}
	\end{itemize}
\end{lem}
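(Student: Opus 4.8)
The plan is to mimic the proof of the type $B$ analogue \cref{lem:typeB_classes_red_decomp}, whose proof is a direct inspection of the table of reduced decompositions of rank $2$ elements. Here the input is \cref{tab:typeD_rk2_red_decomp}, and the work splits according to whether the letter $n$ occurs in the first reflection $s$, in the second reflection $t$, or in both.

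First I would dispose of the case in which neither $s$ nor $t$ involves $n$. Then $s$, $t$ and their product $w = st$ have all entries in $\Set{\pm 1, \ldots, \pm(n-1)}$, so by \cref{cor:typeBD_elements_ncd_as_ncb} we have $w \in \nc(D_n)$ if and only if the corresponding element lies in $\nc(B_{n-1})$; moreover $st$ is then a reduced decomposition in $W(B_{n-1})$ as well, since reduced decompositions of paired cycles in $W(B_{n-1})$ use only reflections of the form $\dka i\,\;j\dkz$ by \cref{lem:typeB_length}, and these are precisely the reflections of $W(D_n)$ not involving $n$. Hence the classification for this case is inherited verbatim from \cref{lem:typeB_classes_red_decomp}, which is the content of the first bullet.

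The remaining cases, in which $n$ appears in $s$, in $t$, or in both, are handled by inspection of \cref{tab:typeD_rk2_red_decomp}. When the first reflection is $s = \dka k\,\;l\dkz$ or $s = \dka k\,\;-l\dkz$ with $t$ involving $n$, the only reduced decompositions recorded are $\dka k\,\;l\dkz\dka i\,\;n\dkz$ and $\dka k\,\;-l\dkz\dka i\,\;n\dkz$, which immediately land in cases (1)--(3). When $s = \dka k\,\;n\dkz$, the reduced decompositions with $\dka k\,\;n\dkz$ in first position coming from $\dka a\,\;b\,\;n\dkz$, $\dka -a\,\;-b\,\;n\dkz$, $\dka b\,\;-a\,\;n\dkz$, $\dka -b\,\;a\,\;n\dkz$ and $[c][n]$, together with the four products $\dka i\,\;j\dkz\dka k\,\;n\dkz$, are checked one at a time to exhaust cases (4)--(8); symmetrically $s = \dka -k\,\;n\dkz$ produces cases (9)--(12). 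The place where I would be most careful is the asymmetry of the index conditions in the sign of $i$: this reflects the fact that consistent orientation in $W(D_n)$ of a paired cycle involving $n$ is \emph{not} the same as consistent orientation in $W(B_n)$, as highlighted in \cref{exa:typeD_not_consistent_orientation}, so one must read off the position of $k$ inside the consistently oriented representative rather than simply compare $|i|$ with $k$ and $l$.

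The one genuine obstacle is not the case analysis itself but justifying that \cref{tab:typeD_rk2_red_decomp} is complete and correct, i.e.\ that the rank $2$ elements of $\nc(D_n)$ are exactly those whose cycle structure appears in the table and that each such element has precisely the listed reduced decompositions. I would establish the former using \cref{lem:typeD_length} and \cref{lem:typeD_cycle_decomp_nc} together with the consistent-orientation criterion, and the latter by applying the Shifting Property \cref{shifting_prop} and Carter's Lemma \cref{lem:carter} (linear independence of the two associated roots) to enumerate the reduced decompositions of each rank $2$ element. Once the table is secured, \cref{lem:typeD_classes_red_decomp} follows by reading it off, so the statement reduces entirely to careful bookkeeping, exactly as in type $B$.
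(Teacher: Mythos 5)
Your proposal matches the paper's argument: the paper's proof of this lemma is literally the single sentence ``The proof is by inspection of \cref{tab:typeD_rk2_red_decomp}.'' Your additional remarks on why the table itself is complete (via \cref{lem:typeD_length}, \cref{lem:typeD_cycle_decomp_nc}, the \cref{shifting_prop} and \cref{lem:carter}) go beyond what the paper records but are consistent with how the analogous type $B$ table is justified, so the approach is essentially identical.
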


\begin{proof}
	The proof is by inspection of \cref{tab:typeD_rk2_red_decomp}.
\end{proof}

\begin{proof}[Proof of \cref{prop:typeD_anti-auto_extends}]
	As in the proof of \cref{prop:typeB_anti-auto_extends}, we show that $\bild(s,t)=0$ for all reflections $s,t$ such that $st\in \nc(D_n)$. 
	The 47 different reduced decompositions for rank 2 elements are listed in \cref{tab:typeD_rk2_red_decomp}. They can be partitioned into five classes, which are listed in \cref{lem:typeD_classes_red_decomp}. 
	
	Recall that the root in $V$ corresponding to $\dka a\,\; b\dkz$ is $e_a-e_b$ and the root corresponding to $\dka a\,\; -b\dkz$ is $e_a + e_b$ for $1 \leq a < b \leq n$.
	
	If $st$ is in the first class of \cref{lem:typeD_classes_red_decomp}, then the letter $n$ neither appears in $s$ nor in $t$, which means that $st\in \nc(B_{n-1})$ by \cref{cor:typeBD_elements_ncd_as_ncb}.
	The bilinear form $\bild$ restricted to the first $n-1$ coordinates equals the bilinar form $\bilb$ for a vector space of dimension $n-1$. By \cref{prop:typeB_anti-auto_extends} we therefore have $\bild(s,t)=0$ for $st$ in the first class.
	
	Let $v\in V$ be an arbitrary element.
	
	Let $st$ be in the second class of \cref{lem:typeD_classes_red_decomp}, then $s=\dka k\,\;l \dkz$ for $1 \leq k < l <n$. The bilinear form evaluated on $s$ and $v$ is
	\[
	\bild(s,v)=   2\left(\sum_{m=k}^{l-1} e_m\tr\right)\cdot v.
	\]
	Let $t=\dka i\,\; n\dkz$ for $1 \leq |i| <n$.
	Both for $|i|<k$ and $|i|\geq l$ we have that $\bild(s,t)=0$. These are all cases for $i$ if $st$ is in the second class.
	
	If $st$ is in the third class of \cref{lem:typeD_classes_red_decomp}, then $s=\dka k\,\;-l \dkz$ for $1 \leq k < l <n$ and
	\[
	\bilb(s,v)= -2\left(\sum_{m=1}^{k-1} e_m\tr\right)\cdot v + 2\left(\sum_{m=l}^{n-1} e_m\tr\right)\cdot v.
	\]
	The second reflection is $t=\dka i\,\; n\dkz$ with $1\leq k \leq |i| < l$ and the evaluation of the bilinear form is $\bild(s,t)=0$.
	
	Now let $st$ be in the fourth class of \cref{lem:typeD_classes_red_decomp}, in which $s=\dka k\,\;n\dkz$ for $1 \leq k <n$. We get that
	\[
	\bilb(s,v)= -\left(\sum_{m=1}^{k-1} e_m\tr +  e_n\tr\right)\cdot v + \left(\sum_{m=k}^{n-1} e_m\tr\right)\cdot v .
	\]
	Let $t=\dka i\,\; j\dkz$ with $1 \leq |i| < |j| \leq n$. If $t$ satisfies case \emph{e)} or \emph{f)}, then $i$ and $j$ have different signs and $\alpha_t=e_i+e_j$. In both cases, they are multiplied with different signs and $\bild(s,t)=0$.
	In every other case of the fourth class, $i$ and $j$ have the same sign and the summands of $\alpha_t$ get multiplied with the same sign, hence the sum adds up to $\bild(s,t)=0$.
	
	For the last class of \cref{lem:typeD_classes_red_decomp}, let $s=\dka -k\,\; n\dkz$ for $1\leq k <n$. Then 
	\[
	\bilb(s,v)= -\left(\sum_{m=1}^{k-1} e_m\tr \right)\cdot v + \left(\sum_{m=k}^{n} e_m\tr\right)\cdot v.
	\]
	Let  $t=\dka i\,\; j\dkz$ with $1 \leq |i| < |j| \leq n$. In the cases \emph{i)} and \emph{j)}, the root $\alpha_t$ is a sum of a positive and a negative basis vector. But since they get multiplied with the same sign, they sum up to $\bild(s,t)=0$. In the remaining two cases of the last class, $i$ and $j$ have different signs and hence the root $\alpha_t$ has two positive summands. Since $|i|<k<|j|$, they get multiplied with different signs in the evaluation of $\bild(s,t)$, which means $\bild(s,t)=0$.
\end{proof}

\cleardoublepage



\end{document}